\definecolor{cite}{rgb}{0.30,0.60,1.00}
\definecolor{url}{rgb}{0.00,0.00,0.80}
\definecolor{link}{rgb}{0.40,0.10,0.20}
\DeclareSymbolFont{cyrletters}{OT2}{wncyr}{m}{n}
\DeclareMathSymbol{\Sha}{\mathalpha}{cyrletters}{"58}
\numberwithin{equation}{section}
\theoremstyle{plain}
\newtheorem{proposition}{Proposition}[section]
\newtheorem{conjecture}[proposition]{Conjecture}
\newtheorem{corollary}[proposition]{Corollary}
\newtheorem{lem}[proposition]{Lemma}
\newtheorem{theorem}[proposition]{Theorem}
\theoremstyle{definition}
\newtheorem{definition}[proposition]{Definition}
\theoremstyle{remark}
\newtheorem{remark}[proposition]{Remark}
\renewcommand{\b}[1]{\mathbf{#1}}
\renewcommand{\c}[1]{\mathcal{#1}}
\renewcommand{\d}[1]{\mathbb{#1}}
\newcommand{\f}[1]{\mathfrak{#1}}
\renewcommand{\r}[1]{\mathrm{#1}}
\newcommand{\s}[1]{\mathscr{#1}}
\renewcommand{\sf}[1]{\mathsf{#1}}
\renewcommand{\(}{\left(}
\renewcommand{\)}{\right)}
\newcommand{\res}{\mathbin{|}}
\newcommand{\ol}{\overline}
\newcommand{\ul}{\underline}
\renewcommand{\leq}{\leqslant}
\renewcommand{\geq}{\geqslant}
\newcommand{\sig}{\r{sig}}
\newcommand{\bphi}{\boldsymbol{\phi}}
\newcommand{\bblambda}{\boldsymbol{\lambda}}
\newcommand{\bbeta}{\boldsymbol{\eta}}
\newcommand{\bbf}{\boldsymbol{f}}
\newcommand{\bbi}{\boldsymbol{i}}
\newcommand{\bbA}{\boldsymbol{A}}
\newcommand{\bbB}{\boldsymbol{B}}
\newcommand{\bbP}{\boldsymbol{P}}
\newcommand{\bbX}{\boldsymbol{X}}
\newcommand{\bA}{\b A}
\newcommand{\bB}{\b B}
\newcommand{\bG}{\b G}
\newcommand{\bH}{\b H}
\newcommand{\bK}{\b K}
\newcommand{\bV}{\b V}
\newcommand{\bW}{\b W}
\newcommand{\bj}{\b j}
\newcommand{\bq}{\b q}
\newcommand{\bs}{\b s}
\newcommand{\bt}{\b t}
\newcommand{\cA}{\c A}
\newcommand{\cC}{\c C}
\newcommand{\cD}{\c D}
\newcommand{\cE}{\c E}
\newcommand{\cF}{\c F}
\newcommand{\cI}{\c I}
\newcommand{\cJ}{\c J}
\newcommand{\cL}{\c L}
\newcommand{\cM}{\c M}
\newcommand{\cN}{\c N}
\newcommand{\cO}{\c O}
\newcommand{\cP}{\c P}
\newcommand{\cQ}{\c Q}
\newcommand{\cS}{\c S}
\newcommand{\cT}{\c T}
\newcommand{\cV}{\c V}
\newcommand{\cW}{\c W}
\newcommand{\cX}{\c X}
\newcommand{\cZ}{\c Z}
\newcommand{\dC}{\d C}
\newcommand{\dF}{\d F}
\newcommand{\dG}{\d G}
\newcommand{\dL}{\d L}
\newcommand{\dN}{\d N}
\newcommand{\dP}{\d P}
\newcommand{\dQ}{\d Q}
\newcommand{\dR}{\d R}
\newcommand{\dZ}{\d Z}
\newcommand{\fa}{\f a}
\newcommand{\fd}{\f d}
\newcommand{\fg}{\f g}
\newcommand{\fj}{\f j}
\newcommand{\fm}{\f m}
\newcommand{\fp}{\f p}
\newcommand{\fq}{\f q}
\newcommand{\fu}{\f u}
\newcommand{\rB}{\r B}
\newcommand{\rC}{\r C}
\newcommand{\rD}{\r D}
\newcommand{\rE}{\r E}
\newcommand{\rF}{\r F}
\newcommand{\rG}{\r G}
\newcommand{\rH}{\r H}
\newcommand{\rI}{\r I}
\newcommand{\rJ}{\r J}
\newcommand{\rK}{\r K}
\newcommand{\rL}{\r L}
\newcommand{\rM}{\r M}
\newcommand{\rO}{\r O}
\newcommand{\rP}{\r P}
\newcommand{\rQ}{\r Q}
\newcommand{\rR}{\r R}
\newcommand{\rS}{\r S}
\newcommand{\rT}{\r T}
\newcommand{\rU}{\r U}
\newcommand{\rV}{\r V}
\newcommand{\rW}{\r W}
\newcommand{\rZ}{\r Z}
\newcommand{\ra}{\r a}
\newcommand{\rb}{\r b}
\newcommand{\rc}{\r c}
\newcommand{\rd}{\r d}
\newcommand{\re}{\r e}
\newcommand{\rh}{\r h}
\renewcommand{\rm}{\r m}
\newcommand{\rt}{\r t}
\newcommand{\sC}{\s C}
\newcommand{\sE}{\s E}
\newcommand{\sH}{\s H}
\newcommand{\sL}{\s L}
\newcommand{\sO}{\s O}
\newcommand{\sR}{\s R}
\newcommand{\sS}{\s S}
\newcommand{\sfL}{\sf L}
\newcommand{\sfh}{\sf h}
\newcommand{\tT}{\mathtt{T}}
\newcommand{\tc}{\mathtt{c}}
\newcommand{\tp}{\mathtt{p}}
\newcommand{\tq}{\mathtt{q}}
\newcommand{\ts}{\mathtt{s}}
\newcommand{\tu}{\mathtt{u}}
\newcommand{\tv}{\mathtt{v}}
\newcommand{\tw}{\mathtt{w}}
\newcommand{\ac}{\r{ac}}
\newcommand{\alg}{\r{alg}}
\newcommand{\can}{\r{can}}
\newcommand{\CF}{\mathbbm{1}}
\newcommand{\cl}{\r{cl}}
\newcommand{\cusp}{\r{cusp}}
\newcommand{\disc}{\r{disc}}
\newcommand{\dr}{\r{dR}}
\newcommand{\et}{\acute{\r{e}}\r{t}}
\newcommand{\even}{\r{even}}
\newcommand{\id}{\r{id}}
\newcommand{\Iw}{\r{Iw}}
\newcommand{\loc}{\r{loc}}
\newcommand{\odd}{\r{odd}}
\newcommand{\pr}{\r{pr}}
\newcommand{\red}{\r{red}}
\newcommand{\spl}{\r{spl}}
\newcommand{\ssl}{\r{ss}}
\newcommand{\ur}{\r{nr}}
\DeclareMathOperator{\AJ}{AJ}
\DeclareMathOperator{\Alb}{Alb}
\DeclareMathOperator{\As}{As}
\DeclareMathOperator{\Aut}{Aut}
\DeclareMathOperator{\CH}{CH}
\DeclareMathOperator{\CZ}{CZ}
\DeclareMathOperator{\End}{End}
\DeclareMathOperator{\Fil}{Fil}
\DeclareMathOperator{\Gal}{Gal}
\DeclareMathOperator{\FJ}{FJ}
\DeclareMathOperator{\GL}{GL}
\DeclareMathOperator{\Hom}{Hom}
\DeclareMathOperator{\IH}{IH}
\DeclareMathOperator{\IM}{Im}
\DeclareMathOperator{\Ind}{Ind}
\DeclareMathOperator{\Ker}{ker}
\DeclareMathOperator{\length}{length}
\DeclareMathOperator{\Lie}{Lie}
\DeclareMathOperator{\Mat}{Mat}
\DeclareMathOperator{\Mor}{Mor}
\DeclareMathOperator{\Mp}{Mp}
\DeclareMathOperator{\Nm}{N}
\DeclareMathOperator{\Orb}{Orb}
\DeclareMathOperator{\Pic}{Pic}
\DeclareMathOperator{\RE}{Re}
\DeclareMathOperator{\Res}{Res}
\DeclareMathOperator{\Sch}{Sch}
\DeclareMathOperator{\Sh}{Sh}
\DeclareMathOperator{\Sp}{Sp}
\DeclareMathOperator{\Spec}{Spec}
\DeclareMathOperator{\Spf}{Spf}
\DeclareMathOperator{\Tr}{Tr}
\DeclareMathOperator{\tr}{tr}
\DeclareMathOperator{\UG}{U}
\DeclareMathOperator{\val}{val}
\DeclareMathOperator{\vol}{vol}
\begin{document}

\title[Fourier--Jacobi cycles and arithmetic relative trace formula]
{Fourier--Jacobi cycles and arithmetic relative trace formula (with an appendix by Chao~Li and Yihang~Zhu)}

\author{Yifeng Liu}
\address{Institute for Advanced Study in Mathematics, Zhejiang University, Hangzhou 310058, China}
\email{liuyf0719@zju.edu.cn}

\date{\today}
\subjclass[2010]{11G18, 11G40, 14G40}

\begin{abstract}
  In this article, we develop an arithmetic analogue of Fourier--Jacobi period integrals for a pair of unitary groups of equal rank. We construct the so-called Fourier--Jacobi cycles, which are algebraic cycles on the product of unitary Shimura varieties and abelian varieties. We propose the arithmetic Gan--Gross--Prasad conjecture for these cycles, which is related to the central derivatives of certain Rankin--Selberg $L$-functions, and develop a relative trace formula approach toward this conjecture.
\end{abstract}

\maketitle

\tableofcontents

\section{Introduction}
\label{ss:1}

\subsection{Fourier--Jacobi cycles and the arithmetic Gan--Gross--Prasad conjecture for $\rU(n)\times\rU(n)$}
\label{ss:fjcycle}

We first recall the classical notion of Fourier--Jacobi periods for $\rU(n)\times\rU(n)$ and their relation with $L$-functions (see \cite{GGP}*{Section~24} for more details). Let $E/F$ be a quadratic extension of number fields with the nontrivial Galois involution $\tc$ and the associated quadratic character $\mu_{E/F}\colon F^\times\backslash\bA_F^\times\to\{\pm1\}$. Let $\rV$ be a (non-degenerate) hermitian space over $E$ of rank $n\geq 1$ with respect to $\tc$, with the unitary group $\rU(\rV)$. Consider two irreducible cuspidal automorphic representations $\pi_1$ and $\pi_2$ of $\rU(\rV)(\bA_F)$. To define the Fourier--Jacobi periods for $\pi_1\times\pi_2$, we need an auxiliary conjugate symplectic automorphic character $\mu$ of $\bA_E^\times$, that is, an automorphic character of $\bA_E^\times$ whose restriction to $\bA_F^\times$ coincides with $\mu_{E/F}$. The character $\mu$ (together with a nontrivial additive character of $(E+\bA_F)\backslash\bA_E$) will give us a Weil representation of $\rU(\rV)(\bA_F)$, with natural automorphic realization via theta series $\theta_\mu^\phi$ attached to certain Schwartz functions $\phi$. We define the \emph{Fourier--Jacobi period integral} for $f_1\in\pi_1$, $f_2\in\pi_2$, and $\phi$ to be
\[
\cP_\mu(f_1,f_2;\phi)\coloneqq\int_{\rU(\rV)(F)\backslash\rU(\rV)(\bA_F)}f_1(g)f_2(g)\theta_\mu^\phi(g)\;\rd g,
\]
where $\rd g$ is the Tamagawa measure on $\rU(\rV)(\bA_F)$. Readers may realize that the above formula is very close to the Rankin--Selberg integral for $\GL(n)\times\GL(n)$ in which the role of the theta series is replaced by a mirabolic Eisenstein series (see \cite{Liu14}*{Section~3} for a systematic discussion). In particular, it is not surprising that $\cP_\mu(f_1,f_2;\phi)$ is related to $L$-values. In fact, if we assume that $\pi_1$ and $\pi_2$ are both tempered, then as a special case of the global Gan--Gross--Prasad (GGP) conjecture, one expects an Ichino--Ikeda type relation
\begin{align}\label{eq:ichino_ikeda}
|\cP_\mu(f_1,f_2;\phi)|^2\sim L(\tfrac{1}{2},\pi_1\times\pi_2\otimes\mu)\cdot\alpha(f_1,f_2;\phi),
\end{align}
where $\sim$ means that the two sides are differed by an explicit nonzero factor which depends only on $\pi_1$, $\pi_2$, and $\mu$; $L(s,\pi_1\times\pi_2\otimes\mu)$ denotes the complete Rankin--Selberg $L$-function (of symplectic type) centered at $s=\tfrac{1}{2}$; and $\alpha(f_1,f_2;\phi)$ is some explicit period integral of local matrix coefficients. See \cite{Xue16}*{Conjecture~1.1.2} for a precise conjecture. Suppose that the central $\epsilon$-factor $\epsilon(\tfrac{1}{2},\pi_1\times\pi_2\otimes\mu)$ is $1$. By the refined local GGP conjecture, which is known in this case by \cite{GI16}, if we consider the entire Vogan $L$-packet of the triple $(\rV,\pi_1,\pi_2)$, then there is a unique member for which $\alpha$ is a nonzero functional. Thus, the global GGP conjecture asserts that the global period $\cP_\mu$ vanishes on the entire Vogan $L$-packet if and only if $L(\tfrac{1}{2},\pi_1\times\pi_2\otimes\mu)=0$. Note that the $L$-function depends only on the Vogan $L$-packet.

Now suppose that $\epsilon(\tfrac{1}{2},\pi_1\times\pi_2\otimes\mu)=-1$. Then the local GGP conjecture already forces $\cP_\mu$ to be zero; and the first possible nonzero term in the Taylor expansion of $L(s,\pi_1\times\pi_2\otimes\mu)$ at $s=\tfrac{1}{2}$ is $L'(\tfrac{1}{2},\pi_1\times\pi_2\otimes\mu)$. Thus, it is curious to find a replacement of $\cP_\mu$ that encodes information about the first central derivative. This is the main goal of this article. In fact, the same question can be asked for all types of periods in the global GGP conjecture, namely,
\begin{enumerate}
  \item $\rO(m)\times\rO(n)$ with $n-m$ odd, which is of Bessel type,

  \item $\rU(m)\times\rU(n)$ with $n-m$ odd, which is of Bessel type,

  \item $\rU(m)\times\rU(n)$ with $n-m$ even, which is of Fourier--Jacobi type,

  \item $\Sp(2m)\times\Mp(2n)$, which is of Fourier--Jacobi type.
\end{enumerate}
A replacement of the period integral (under certain assumptions on the field $E/F$ and archimedean components of the representations) is only known before for Case (1) with $|m-n|=1$ and $m,n\geq 2$, and Case (2) with $|m-n|=1$ and $m,n\geq 1$. They are both realized as height pairings of certain diagonal cycles. See \cite{GGP}*{Section~27} for more details. For example, the celebrated Gross--Zagier formula \cite{GZ86} is responsible for $\rO(2)\times\rO(3)$; see \cite{YZZ} for a generalization. Now we give a formulation for Case (3) with $n=m\geq 2$.\\

In what follows, we will assume that $E/F$ is a CM extension, and $n\geq 2$. We first state a result concerning the Albanese variety of a unitary Shimura variety. Let $\bV$ be a totally positive definite incoherent hermitian space over $\bA_E$ of rank $n$. We have the associated system of Shimura varieties $\{\Sh(\bV)_K\}_K$ indexed by sufficiently small open compact subgroups $K\subseteq\rU(\bV)(\bA_F^\infty)$, each being smooth of dimension $n-1$ over $\Spec E$. Let $X_K$ be the canonical (smooth) toroidal compactification of $\Sh(\bV)_K$ (which is just $\Sh(\bV)_K$ if it is already proper). Put $X_\infty\coloneqq\varprojlim_K X_K$. Let $A_K$ be the Albanese variety of $X_K$; see Section \ref{ss:a}. Put $A_\infty\coloneqq\varprojlim_K A_K$, which is an abelian group pro-scheme over $E$. To every conjugate symplectic automorphic character $\mu$ of $\bA_E^\times$ of weight one (Definition \ref{de:conjugate2}), we associate a number field $M_\mu\subseteq\dC$, and an abelian variety $A_\mu$ over $E$ with a CM action $i_\mu\colon M_\mu\to\End_E(A_\mu)_\dQ$, unique up to isogeny, in Subsection \ref{ss:motives}. In particular, $A_\mu$ has dimension $[M_\mu:\dQ]/2$; and the set $\Omega(\mu)\coloneqq\Hom_E(A_\infty,A_\mu)_\dQ$ is naturally an $M_\mu[\rU(\bV)(\bA_F^\infty)]$-module depending only on $\mu$.

\begin{theorem}[Theorem \ref{th:cm_albanese}, Corollary \ref{co:cm_albanese}]\label{th:main1}
Let the notation be as above. There is an isomorphism
\[
\Omega(\mu)\otimes_{M_\mu}\dC\simeq\bigoplus_{\varepsilon}\bigoplus_{\chi}\omega(\mu,\varepsilon,\chi)
\]
of $\dC[\rU(\bV)(\bA_F^\infty)]$-modules. Here, $\{\omega(\mu,\varepsilon,\chi)\}_{\varepsilon,\chi}$, introduced in Definition \ref{de:oscillator_triple}, is a certain collection of Weil representations of $\rU(\bV)(\bA_F^\infty)$ that are isomorphic to the finite part of the Weil representations appearing in the definition of $\cP_\mu$. We refer to Theorem \ref{th:cm_albanese} for the precise statement. Moreover, for every sufficiently small open compact subgroup $K$ of $\rU(\bV)(\bA^\infty_F)$, there is an isogeny decomposition
\[
A_K\sim\prod_\mu A_\mu^{d(\mu,K)},\qquad
\text{resp. }A_K^{\r{end}}\sim\prod_\mu A_\mu^{d(\mu,K)}
\]
over $E$ when $n\geq 3$ (resp.\ $n=2$), where
\begin{itemize}
  \item the product is taken over representatives of $\Gal(\dC/\dQ)$-orbits of all conjugate symplectic automorphic characters of $\bA_E^\times$ of weight one,

  \item $d(\mu,K)\coloneqq\sum_{\varepsilon}\sum_{\chi}\dim_\dC\omega(\mu,\varepsilon,\chi)^K$ with the same index set for $\varepsilon,\chi$, and

  \item $A_K^{\r{end}}$ is the endoscopic part of $A_K$ when $n=2$, defined in \eqref{eq:endoscopic_albanese}.
\end{itemize}
\end{theorem}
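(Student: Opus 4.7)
The plan is to identify $\Omega(\mu)\otimes_{M_\mu}\dC$ with a Hecke-equivariant piece of $H^1(X_\infty,\dC)$, decompose the latter automorphically, and invoke the theta correspondence for the dual pair $(\rU(1),\rU(\bV))$ to match the relevant cuspidal automorphic representations of $\rU(\bV)(\bA_F)$ with the Weil representations $\omega(\mu,\varepsilon,\chi)$.

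Since $A_\mu$ is of CM type $(M_\mu,\Phi_\mu)$, its rational cohomology $H^1(A_\mu,\dQ)$ is free of rank one over $M_\mu$, with Hodge decomposition dictated by $\Phi_\mu$. Combined with the universal property of the Albanese, this identifies
\[
\Omega(\mu)\otimes_{M_\mu}\dC=\Hom_E(A_\infty,A_\mu)_\dQ\otimes_{M_\mu}\dC
\]
with a Hecke-stable $\dC$-subspace of $H^1(X_\infty,\dC)$ cut out, via $\Phi_\mu$, from the summands indexed by embeddings $M_\mu\hookrightarrow\dC$. This reduces the first assertion to computing a specific Hecke-module piece of $H^1(X_\infty,\dC)$.

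A Matsushima-type analysis of $H^1(X_K,\dC)$ (with Eisenstein contributions from the toroidal boundary isolated separately, which is possible for $n\ge 2$) identifies the cuspidal part with
\[
\bigoplus_\pi m(\pi)\,\pi^{\infty,K}\otimes H^1(\fg,K_\infty;\pi_\infty),
\]
summed over irreducible cuspidal automorphic representations $\pi$ of $\rU(\bV)(\bA_F)$ trivial at infinity. Since $\rU(\bV)$ is compact at every archimedean place except one where it has signature $(n-1,1)$, the relative $(\fg,K_\infty)$-cohomology in degree $1$ is nonzero only for a specific pair of (anti-)holomorphic limits of discrete series contributing respectively to $H^{1,0}$ and $H^{0,1}$. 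By the archimedean theta correspondence for $(\rU(1),\rU(\bV))$, these $\pi_\infty$'s are precisely the theta lifts of weight-one characters of $\rU(1)(F_\infty)$; combined with nonvanishing of the first-occurrence theta lift in the Rallis tower, every such $\pi$ arises globally as a theta lift of a unique conjugate symplectic Hecke character $\mu$ of $\bA_E^\times$ of weight one, whose finite part matches $\omega(\mu,\varepsilon,\chi)$ after choosing splitting data $(\varepsilon,\chi)$. Summing over $(\varepsilon,\chi)$ and matching Hecke eigensystems yields the first assertion.

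For the isogeny decomposition of $A_K$, Poincar\'e's complete reducibility (applied $\End_E$-equivariantly) decomposes $A_K$ into $\mu$-isotypic factors carrying an action of $M_\mu$; by the classification of CM abelian varieties of prescribed CM type over $E$ up to isogeny, each factor is isogenous to $A_\mu^{d(\mu,K)}$, with multiplicity read from the $K$-invariants of $\bigoplus_{\varepsilon,\chi}\omega(\mu,\varepsilon,\chi)$. In the case $n=2$, the Jacobian $A_K$ receives additional contributions from cuspidal representations of $\rU(2)$ that are not theta lifts of characters (namely $\rU(1)\times\rU(1)$-endoscopic lifts and stable cuspidal representations); the subvariety $A_K^{\r{end}}$ isolates precisely the factors of CM type $(M_\mu,\Phi_\mu)$ above. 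The main obstacle will be the theta-lift matching: establishing cuspidality and nonvanishing of the global lift, and identifying its finite part \emph{on the nose} with $\omega(\mu,\varepsilon,\chi)$ with compatible splittings and additive characters at every place (split, inert, archimedean). A secondary but essential technical point is descending the complex cohomological decomposition to an isogeny decomposition over $E$, which requires the Galois/Hecke compatibility provided by a Kottwitz-type description of the Galois action on $H^1(X_K)$ together with the CM reciprocity law for $A_\mu$.
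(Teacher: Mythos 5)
The overall strategy of your proposal — identify $\Omega(\mu)\otimes_{M_\mu}\dC$ with a Hecke piece of $H^1$, apply a Matsushima decomposition, classify the contributing $\pi$ by theta correspondence with $\rU(1)$, then descend to an isogeny decomposition via a Galois-cohomological argument — is indeed the skeleton of the paper's argument. But there are two points where your outline either omits the essential work or points at machinery that is not in fact available.

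First, the classification of automorphic $\pi$ contributing to $H^1$ is not just the archimedean theta-compatibility you cite; the global heart of the paper's Proposition on $H^1(A_\infty,\dC)$ is to show that every discretely occurring $\pi$ with $H^1(\fg,K_\infty;\pi_\infty)\neq 0$ is \emph{globally} the theta lift of a character of a rank-one unitary group. This is obtained by locating a pole of the partial $L$-function $L^S(s,\pi\times\mu)$ at $s=n/2$ (the input being the Bergeron--Millson--M\oe glin analysis), and then invoking a unitary analogue of the Ginzburg--Jiang--Soudry theorem relating poles of Eisenstein series to nonvanishing of theta lifts (this occupies an entire appendix and rests on the regularized Siegel--Weil formula and the Kudla--Sweet/Lee degenerate principal series analysis, together with Wu's irreducibility/injectivity results for theta lifts). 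One must also treat separately the residual spectrum when $F=\dQ$, and, for $n=2$, the stable (non-endoscopic) cuspidal representations that also contribute, handled via Rogawski's classification for $\rU(2)$. You correctly flag this as "the main obstacle," but note that "nonvanishing of the first-occurrence lift in the Rallis tower" by itself does not yield the starting cuspidal realization: you first need the pole argument to show the lift down to $\rU(\rW)$ of rank one is nonzero \emph{and} cuspidal, and then reverse the lift. Finally, for $n\ge 3$ the contributing $\pi_\infty$ are \emph{not} limits of discrete series; they are the specific oscillator representations $\omega_{n-1,1}^{\pm 1,\mp,0}$, which are much smaller than (limits of) discrete series; the Hodge types $(1,0),(0,1)$ come from these, not from LDS.

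Second, and more seriously, your route to the isogeny decomposition via "a Kottwitz-type description of the Galois action on $H^1(X_K)$" will not go through as stated, because the Shimura datum $(\rG,\rh^\flat_{\rV,\Phi})$ attached to an isometry group $\rU(\rV)$ is of \emph{abelian type} but not of Hodge or PEL type, so Kottwitz's point-counting description does not directly apply. The paper circumvents this with a genuinely different device: for each cohomology class it finds (by a Murty--Ramakrishnan argument) an embedded unitary Shimura curve $\Sh(\rG_\star,\rh_\star)\hookrightarrow\Sh(\rG,\rh)$ with $\dim_E\rV_\star=2$ on which the class restricts nontrivially, shows that the restriction of the relevant theta function is again a theta function (a computation with Fock models), and then reads off the Galois character from a congruence relation proved for unitary Shimura \emph{curves}, via Carayol-style integral models of these curves obtained by changing the Shimura datum to a quaternionic one. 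This restriction-to-curves step is the key new ingredient, and without it the Galois determination of $\rho_{\tau',\ell}(\mu,\varepsilon,\chi)$ — and hence the identification $A_K\sim\prod_\mu A_\mu^{d(\mu,K)}$ by Faltings — has no proof along the lines you describe.
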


The above theorem suggests that if we want to replace $\cP_\mu$ by algebraic cycles, then the Albanese variety should be involved.

\begin{definition}\label{de:relevant}
We say that a complex representation $\Pi$ of $\GL_n(\bA_E)$ is \emph{relevant} if\footnote{Here, the notion of relevant representation is more general than the one in \cite{L5} as we allow $\Pi$ to be isobaric.}
\begin{enumerate}
  \item $\Pi=\boxplus_{i=1}^{s(\Pi)}\Pi_i$ is an isobaric sum of $s(\Pi)$ irreducible cuspidal automorphic representations $\{\Pi_1,\dots,\Pi_{s(\Pi)}\}$, which we call \emph{cuspidal factors} of $\Pi$, satisfying $\Pi_i\circ\tc\simeq\Pi_i^\vee$ for every $1\leq i\leq s(\Pi)$,

  \item for every archimedean place $v$ of $E$, $\Pi_v$ is isomorphic to the (irreducible) principal series representation induced by the characters $(\arg^{1-n},\arg^{3-n},\dots,\arg^{n-3},\arg^{n-1})$, where $\arg\colon\dC^\times\to\dC^\times$ is the \emph{argument character} defined by the formula $\arg(z)\coloneqq z/\sqrt{z\ol{z}}$.
\end{enumerate}
Note that (2) implies that the cuspidal factors $\Pi_1,\dots,\Pi_{s(\Pi)}$ in (1) are mutually non-isomorphic.
\end{definition}

Now we fix our (tempered) Vogan $L$-packet by choosing two relevant representations $\Pi_1$ and $\Pi_2$ of $\GL_n(\bA_E)$. We also fix a conjugate symplectic automorphic character $\mu$ of $\bA_E^\times$ of weight one. Let $\bV$ be a totally positive definite incoherent hermitian space over $\bA_E$ of rank $n$. Consider irreducible admissible representations $\pi_1^\infty$ and $\pi_2^\infty$ of $\rU(\bV)(\bA_F^\infty)$ whose base change to $\GL_n(\bA_E^\infty)$ are $\Pi_1^\infty$ and $\Pi_2^\infty$, respectively.

Take a level subgroup $K\subseteq\rU(\bV)(\bA_F^\infty)$. Let $\alpha_K\colon X_K\to A_K$ be ``the Albanese morphism'' sending the zero-dimensional cycle $D_K^{n-1}$ to zero,\footnote{This is not exactly what we do. Here, we state it in this ideally correct but technically wrong way only for simplicity and for emphasizing the main idea. See Subsection \ref{ss:construction_cycles} for the rigorous construction.} where $D_K$ is the canonical extension of the Hodge divisor. For test functions $f_1,f_2\in\sC^\infty_c(K\backslash\rU(\bV)(\bA_F^\infty)/K,\dC)$ for $\pi_1^\infty$ and $\pi_2^\infty$, respectively (Definition \ref{de:test_function}), and a homomorphism $\phi\colon A_K\to A_\mu$, we define a Chow cycle
\[
\FJ(f_1,f_2;\phi)_K\coloneqq|\pi_0((X_K)_{E^\ac})|\cdot(\tT_K^{f_1}\otimes\tT_K^{f_2}\otimes\tT_\mu^{\r{can}})^*
(\id_{X_K\times X_K}\times(\phi\circ\alpha_K))_*\Delta^3 X_K
\]
on $X_K\times X_K\times A_\mu$, where $\tT_K^{f_i}$ denotes the normalized Hecke correspondence on $X_K$ attached to $f_i$; $\tT_\mu^{\r{can}}$ is a specific correspondence on $A_\mu$ (Definition \ref{de:mu_canonical}); and $\Delta^3 X_K\subseteq X_K^3$ is the diagonal cycle. For $i\in\dZ$, put
\[
\CH^i(X_\infty\times X_\infty\times A_\mu)_\dC^0
\coloneqq\varinjlim_K\CH^i(X_K\times X_K\times A_\mu)_\dC^0,
\]
and denote by $\CH^i_\mu(X_\infty\times X_\infty)^0_\dC$ the subspace of $\CH^i(X_\infty\times X_\infty\times A_\mu)_\dC^0$ on which $M_\mu$ acts via the inclusion $M_\mu\hookrightarrow\dC$, which depends only on $\mu$.

\begin{theorem}[Subsections \ref{ss:construction_cycles} and \ref{ss:aggp}]\label{th:main2}
The Chow cycle $\FJ(f_1,f_2;\phi)_K$ is homologically trivial, compatible under pullbacks when changing $K$, hence defines an element
\[
\FJ(f_1,f_2;\phi)\in\CH^{n-1+[M_\mu:\dQ]/2}(X_\infty\times X_\infty\times A_\mu)_\dC^0.
\]
If we assume the conjecture on the injectivity of the $\ell$-adic Abel--Jacobi map, then the assignment $(f_1,f_2,\phi)\mapsto\FJ(f_1,f_2;\phi)$ induces a complex linear map
\begin{align*}
\FJ_\varepsilon&\colon\pi_1^\infty\otimes_\dC\pi_2^\infty\otimes_\dC\Omega(\mu,\varepsilon)
\to\Hom_{\dC[\rU(\bV)(\bA_F^\infty)\times\rU(\bV)(\bA_F^\infty)]}
\(\pi_1^\infty\otimes_\dC\pi_2^\infty,\CH^{n-1+[M_\mu:\dQ]/2}_\mu(X_\infty\times X_\infty)_\dC^0\),
\end{align*}
which is invariant under the diagonal action of $\rU(\bV)(\bA_F^\infty)$ on the left-hand side, for every given $\mu$-admissible collection $\varepsilon$ (Definition \ref{de:admissible_collection}). Here, $\Omega(\mu,\varepsilon)$ is the sum of the factors of $\Omega(\mu)\otimes_{M_\mu}\dC$ in the decomposition in Theorem \ref{th:main1} corresponding to $\varepsilon,\chi$ with $\chi$ arbitrary.
\end{theorem}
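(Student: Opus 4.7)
The plan is to split the argument into four tasks: (a) verifying homological triviality of $\FJ(f_1,f_2;\phi)_K$; (b) checking compatibility under pullback along $p_{K',K}\colon X_{K'}\to X_K$, so as to assemble the cycles into a limit element in $\CH$; (c) showing the limit lies in the $\mu$-isotypic subspace $\CH^*_\mu$; and (d) using the conjectural injectivity of the $\ell$-adic Abel--Jacobi map, together with Theorem \ref{th:main1}, to obtain the map $\FJ_\varepsilon$ with its equivariance and diagonal-invariance properties.

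For (a), first a codimension check: $\Delta^3 X_K$ has dimension $n-1$, and its image under $(\id,\id,\phi\circ\alpha_K)$ is still $(n-1)$-dimensional in an ambient variety of dimension $2(n-1)+[M_\mu:\dQ]/2$, matching the expected codimension $n-1+[M_\mu:\dQ]/2$. To show the $\ell$-adic cycle class vanishes I would decompose $H^*(X_K\times X_K\times A_\mu,\dQ_\ell)$ by K\"unneth and use that $\tT_K^{f_i}$ projects onto the $\pi_i^\infty$-isotypic component. Relevance of $\Pi_1,\Pi_2$ (Definition \ref{de:relevant}) produces tempered cuspidal automorphic representations of $\rU(\bV)$ with fixed archimedean type, so $(\fg,K_\infty)$-cohomology via Matsushima's formula forces the $\pi_i^\infty$-isotypic part of the interior cohomology of $X_K$ to be concentrated in middle degree $n-1$. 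The choice of the canonical extension $D_K$ in the construction of $\alpha_K$ is what guarantees this concentration survives in $H^*(X_K)$ of the toroidal compactification, since Eisenstein boundary contributions are killed by the tempered cuspidal Hecke projectors. The only K\"unneth summand that survives is then
\[
H^{n-1}(X_K)[\pi_1^\infty]\otimes H^{n-1}(X_K)[\pi_2^\infty]\otimes H^{[M_\mu:\dQ]}(A_\mu),
\]
placing the $A_\mu$-factor in top degree. The correspondence $\tT_\mu^{\can}$ (Definition \ref{de:mu_canonical}) is designed to project onto the $\mu$-isotypic piece of $H^1(A_\mu)$ for the CM $M_\mu$-action, so its pullback annihilates $H^i(A_\mu)$ for every $i\neq 1$; in particular it kills the top cohomology and hence the whole class.

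Tasks (b) and (c) are more routine: (b) follows from the functoriality of Hecke correspondences, the covariance of the Albanese construction under the finite \'etale maps $p_{K',K}$, and the projection formula for the diagonal, with the normalization $|\pi_0((X_K)_{E^\ac})|$ providing the compensating factor. (c) holds because $\phi\in\Hom_E(A_\infty,A_\mu)_\dQ$ and $\tT_\mu^{\can}$ both intertwine the $M_\mu$-action on $A_\mu$ with the tautological inclusion $M_\mu\hookrightarrow\dC$, so the limit cycle lies in the $\mu$-eigenspace by construction. For (d), the $\rU(\bV)(\bA_F^\infty)\times\rU(\bV)(\bA_F^\infty)$-equivariance of the Hecke action on $\CH^*(X_\infty\times X_\infty)$ shows that for fixed $\phi$ the assignment $(h_1,h_2)\mapsto\FJ(h_1,h_2;\phi)$ is a morphism into the stated Hom space. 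To see that the whole assignment factors through $\pi_1^\infty\otimes\pi_2^\infty\otimes\Omega(\mu,\varepsilon)$, note that the cohomology class depends only on the $\pi_i^\infty$-isotypic projections of $f_i$ (exactly the content of (a)), and the conjectural injectivity of the $\ell$-adic Abel--Jacobi map promotes this cohomological factorization to the Chow level; linearity of the $\phi$-dependence on $\Omega(\mu,\varepsilon)$ then comes from Theorem \ref{th:main1}. Diagonal $\rU(\bV)(\bA_F^\infty)$-invariance follows from the $\rU$-equivariance of $\alpha_K$ and the stability of the small diagonal $\Delta^3 X_K$: simultaneous translation of $(f_1,f_2,\phi)$ by $g$ corresponds to a left--right Hecke shift on the target $\CH(X_\infty\times X_\infty\times A_\mu)$, which becomes the identity after taking $\rU\times\rU$-equivariant Hom.

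\textbf{Main obstacle.} The crux is task (a). Two points require care: establishing middle-degree concentration of tempered cuspidal cohomology in the toroidal compactification $X_K$, rather than only in the interior, which is the reason behind the careful choice of $D_K$ in $\alpha_K$ and must be checked by analyzing Eisenstein boundary contributions; and computing the precise effect of $\tT_\mu^{\can}$ on $H^{[M_\mu:\dQ]}(A_\mu)$ directly from Definition \ref{de:mu_canonical}. The bridge from the cohomological factorization to the Chow-theoretic one in (d) depends essentially on the $\ell$-adic Abel--Jacobi conjecture and cannot be removed by the methods above.
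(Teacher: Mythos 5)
Your overall strategy for tasks (a)--(c) is broadly the same as the paper's: K\"unneth decomposition, middle-degree concentration of the $\pi_i^\infty$-isotypic cohomology, degree counting against $\tT_\mu^\can$, and the standard level-lowering/normalization computation. Two points in (a) are off, though. First, $\tT_\mu^\can$ does not project onto a piece of $H^1(A_\mu)$; by Definition \ref{de:mu_canonical} and Lemma \ref{le:mu_canonical}, $\cl^*(\tT_\mu^\can)$ vanishes on $H^i(A_\mu)$ for every $i\neq[M_\mu:\dQ]-1$ and on $H^{[M_\mu:\dQ]-1}$ projects onto the eigenline for the tautological embedding $M_\mu\hookrightarrow\dC$. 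You reach the right conclusion (it kills the top-degree piece $H^{[M_\mu:\dQ]}$), but via a misidentification of the degree. Second, and more substantively, the divisor $D_K$ plays no role whatsoever in the middle-degree concentration of the $\pi_i^\infty$-isotypic part of $H^*(X_K)$. That concentration is the content of Proposition \ref{pr:arthur}: it comes from Definition \ref{de:relevant}(2), which forces the cuspidal factors of $\Pi_i$ to be mutually non-isomorphic (so $\pi_i^\infty$ does not appear in $H^*/IH^*$), together with temperedness (via Caraiani and Arthur's classification) forcing the archimedean component to be discrete series, hence $(\fg,\rK)$-cohomology in degree $n-1$ only. The divisor $D_K$ only enters the definition of the Albanese-type morphism $\alpha_K$ (and the projector normalizations); it has nothing to do with Eisenstein contributions.

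The genuine gap is in task (d). You assert that ``the cohomology class depends only on the $\pi_i^\infty$-isotypic projections of $f_i$, and the conjectural injectivity of the $\ell$-adic Abel--Jacobi map promotes this cohomological factorization to the Chow level.'' This cannot work as stated: by task (a), the cohomology class of every $\FJ(f_1,f_2;\phi)_K$ is zero, so the cohomological ``factorization'' is vacuous and transports no information. What is actually needed is the content of Proposition \ref{pr:independence}(2): if $\tT_K^{f_1}\otimes\tT_K^{f_2}$ and $\tT_K^{f_1'}\otimes\tT_K^{f_2'}$ are homologically equivalent, then $\FJ(f_1,f_2;\phi)_K=\FJ(f_1',f_2';\phi)_K$ in the natural Chow group, i.e.\ the $\ell$-adic Abel--Jacobi image is independent of the choice. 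One shows this by studying $\zeta_\ell:=\AJ_\ell\circ\bigl((\tT_K^{f_1}\otimes\tT_K^{f_2}\otimes\tT_\mu^\can)^*-(\tT_K^{f_1'}\otimes\tT_K^{f_2'}\otimes\tT_\mu^\can)^*\bigr)$ on the full Chow group: since the two correspondences are homologically equivalent, $\zeta_\ell$ kills $\CH^{*,0}$ and therefore factors through the image of $\CH^*/\CH^{*,0}$ in \emph{even-degree} cohomology; by the Hecke structure and Proposition \ref{pr:arthur} the relevant piece lands in $H^{2(n-1)}(X_K\times X_K)\otimes H^{[M_\mu:\dQ]}(A_\mu)$, while the target of $\AJ_\ell$ is $H^1(E,H^{2(n-1)+[M_\mu:\dQ]-1})$, whose $A_\mu$-factor sits in degree $[M_\mu:\dQ]-1$; finally $M_\mu$-equivariance via $\tT_\mu^\can$ shows the actions of $M_\mu$ on $H^{[M_\mu:\dQ]}(A_\mu)$ and $H^{[M_\mu:\dQ]-1}(A_\mu)$ have disjoint supports, so $\zeta_\ell=0$. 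This argument is unconditional and yields the factorization on the natural Chow group $\CH^\natural$; the Abel--Jacobi injectivity conjecture is used only to replace $\CH^\natural$ by $\CH^0$ in the final statement. Your proposal would need to supply this argument explicitly; the one-line appeal to ``promotion to the Chow level'' does not close the gap.
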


We propose the following unrefined version of the \emph{arithmetic Gan--Gross--Prasad conjecture} for $\rU(n)\times\rU(n)$.

\begin{conjecture}[Conjecture \ref{co:ggp_unrefined}]\label{co:main1}
Let the notation be as above. Then for every given $\mu$-admissible collection $\varepsilon$ (Definition \ref{de:admissible_collection}), the following three statements are equivalent:
\begin{enumerate}[label=(\alph*)]
  \item We have $\FJ_\varepsilon\neq 0$.

  \item We have $\FJ_\varepsilon\neq 0$, and
      \[
      \dim_\dC\Hom_{\dC[\rU(\bV)(\bA_F^\infty)\times\rU(\bV)(\bA_F^\infty)]}
      \(\pi_1^\infty\otimes_\dC\pi_2^\infty,\CH^{n-1+[M_\mu:\dQ]/2}_\mu(X_\infty\times X_\infty)_\dC^0\)=1.
      \]

  \item We have $L'(\tfrac{1}{2},\Pi_1\times\Pi_2\otimes\mu)\neq 0$, and
      \[
      \Hom_{\dC[\rU(\bV)(\bA_F^\infty)]}(\pi_1^\infty\otimes_\dC\pi_2^\infty\otimes_\dC\Omega(\mu,\varepsilon),\dC)\neq\{0\}.
      \]
\end{enumerate}
\end{conjecture}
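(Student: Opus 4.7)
The plan is to attack Conjecture \ref{co:main1} via an arithmetic relative trace formula (RTF), paralleling the Jacquet--Rallis comparison that underlies the classical Gan--Gross--Prasad conjecture. I would first reduce the three-way equivalence to a refined Ichino--Ikeda-type identity expressing the Beilinson--Bloch height pairing of two Fourier--Jacobi cycles as
\[
\ang{\FJ(f_1,f_2;\phi),\FJ(f_1',f_2';\phi')}\sim L'(\tfrac{1}{2},\Pi_1\times\Pi_2\otimes\mu)\cdot\prod_v\alpha_v,
\]
in direct analogy with \eqref{eq:ichino_ikeda}. Granting this refinement, the implication (b) $\Rightarrow$ (a) is tautological; the equivalence (a) $\Leftrightarrow$ (b) is the multiplicity one statement for Fourier--Jacobi models, which is the Gan--Ichino refinement of the local GGP conjecture already invoked in the excerpt; and (a) $\Leftrightarrow$ (c) follows by choosing test data on which every local factor $\alpha_v$ is nonzero, the possibility of which is itself controlled by the local GGP hypothesis appearing in (c).

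The core task is therefore the refined height identity, which I would establish by comparing two global RTFs. On the unitary side, one builds an arithmetic RTF whose geometric distribution is a generating series of intersection numbers of the cycles $\FJ(f_1,f_2;\phi)_K$ on $X_K\times X_K\times A_\mu$, organized by $\rU(\bV)(F)$-orbits on the relevant symmetric space, and whose spectral side captures $|\FJ_\varepsilon|^2$ on tempered Vogan packets associated to $(\Pi_1,\Pi_2,\mu)$. On the general linear side, one sets up a Jacquet--Rallis-type RTF on $\GL_n(\bA_E)\times\GL_n(\bA_E)$ twisted by $\mu$, whose spectral side isolates $L'(\tfrac{1}{2},\Pi_1\times\Pi_2\otimes\mu)$ after differentiating along a one-parameter family of test functions, in the style of W.~Zhang's Bessel treatment. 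One then matches the two sides orbit by orbit via an \emph{arithmetic fundamental lemma} of Fourier--Jacobi type at inert non-archimedean places, together with smooth transfer of test functions at all other finite places; the local identities have to express derivatives of $\mu$-twisted orbital integrals on the $\GL$ side in terms of local arithmetic intersection multiplicities on integral models of $\Sh(\bV)\times\Sh(\bV)$, decorated by the CM factor $A_\mu$ and the correspondence $\tT_\mu^{\r{can}}$.

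A final step is the archimedean analysis: one needs an archimedean height contribution, computed within a Gillet--Soul\'e-- or Burgos--Kramer--K\"uhn--style formalism, matching the derivative of an archimedean zeta integral twisted by the Weil representation $\omega(\mu,\varepsilon,\chi)$ from Theorem \ref{th:main1}. Combining the global comparison with the non-vanishing of the local factors then delivers (a) $\Leftrightarrow$ (c) at the level of cohomology, and the assumed injectivity of the $\ell$-adic Abel--Jacobi map (already invoked in Theorem \ref{th:main2}) lifts this to the Chow-theoretic equivalence stated in Conjecture \ref{co:main1}.

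The main obstacle, by a wide margin, is the arithmetic fundamental lemma in the Fourier--Jacobi setting. Unlike the Bessel cycles of \cite{GGP}*{Section~27}, ours are not diagonal subvarieties of an ambient unitary Shimura variety: the extra factor $A_\mu$ and the twist by $\mu$ force one to work with inhomogeneous orbital integrals and with formal-module deformation problems that incorporate the CM structure $i_\mu$ and the weight-one character $\mu$. A secondary but serious difficulty is archimedean: standard Arakelov theory is not well adapted to CM abelian variety factors, and one must control the subtle correction hidden behind the footnote in the definition of $\FJ(f_1,f_2;\phi)_K$, since the role of the Hodge divisor $D_K$ is precisely to enforce homological triviality and thus to give meaning to the height pairing in the first place.
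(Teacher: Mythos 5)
The item you were given is a conjecture; the paper never proves it. What the paper does is construct the Fourier--Jacobi cycles, state Conjectures \ref{co:ggp_unrefined}, \ref{co:ggp_refined}, \ref{co:ggp_refined_bis}, and develop in Section \ref{ss:4} a relative trace formula \emph{program} toward the refined conjecture (the doubling formula for CM data, the local arithmetic invariant functional at good inert primes, and its orbital decomposition), culminating in the arithmetic fundamental lemma Conjecture \ref{co:afl_general}, which is then verified only in the minuscule case in the appendix. So there is no ``paper's own proof'' to compare against; your write-up is a roadmap for a project the paper leaves open, and broadly speaking the roadmap coincides with the paper's: reduce to a refined Ichino--Ikeda-type height identity, compare an arithmetic RTF on the unitary side with a twisted Jacquet--Rallis RTF on the $\GL_n\times\GL_n$ side, and isolate an AFL of Fourier--Jacobi type at inert places.

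One concrete error in your reasoning for the (a) $\Leftrightarrow$ (b) step: you attribute it to the local Fourier--Jacobi multiplicity one theorem. But (b) asks that
\[
\Hom_{\dC[\rU(\bV)(\bA_F^\infty)\times\rU(\bV)(\bA_F^\infty)]}
\bigl(\pi_1^\infty\otimes_\dC\pi_2^\infty,\CH^{n-1+[M_\mu:\dQ]/2}_\mu(X_\infty\times X_\infty)_\dC^0\bigr)
\]
be one-dimensional, and this is a statement about a global Chow group, not a local branching space; the local multiplicity one result of Sun (invoked in Remark \ref{re:ggp_unrefined}(3)) controls the \emph{a priori} one-dimensionality of the Hom space appearing in (c), namely $\Hom_{\dC[\rU(\bV)(\bA_F^\infty)]}(\pi_1^\infty\otimes\pi_2^\infty\otimes\Omega(\mu,\varepsilon),\dC)$. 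The paper itself flags the (a) $\Leftrightarrow$ (b) equivalence as a Kolyvagin-type statement (Remark \ref{re:ggp_unrefined}(1)), i.e.\ a deep arithmetic input about the Chow group of $X_\infty\times X_\infty\times A_\mu$, not something that drops out of local representation theory. Any genuine proof attempt would need an independent mechanism --- an Euler system, or a Selmer-group bound in the spirit of \cite{Liu16} and \cite{L5} --- to pin down that one-dimensionality, and your proposal does not supply one. Similarly, your phrasing of the refined identity with an unspecified $\prod_v\alpha_v$ elides the normalization issues the paper handles carefully in Conjectures \ref{co:ggp_refined} and \ref{co:ggp_refined_bis} (the Beilinson--Bloch--Poincar\'e pairing, the modified diagonal $\Delta^3_z X_K$, the factor $\vol(K)^2$, and the canonical projector $\tT_\mu^{\r{can}}$ on $A_\mu$); these are precisely the places where the ``CM factor $A_\mu$'' you flag as the main difficulty actually enters.
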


In view of the local GGP conjecture, the above conjecture implies that if $\epsilon(\tfrac{1}{2},\Pi_1\times\Pi_2\otimes\mu)=-1$, then $L'(\tfrac{1}{2},\Pi_1\times\Pi_2\otimes\mu)\neq 0$ if and only if one can find a triple $(\bV,\pi_1^\infty,\pi_2^\infty)$ as above such that $\FJ_\varepsilon\neq 0$. Moreover, if this is the case, then such triple is uniquely determined for every fixed $\varepsilon$. See Remark \ref{re:ggp_unrefined} for more details. In fact, in the actual discussion in Subsection \ref{ss:aggp}, we replace $\CH^{n-1+[M_\mu:\dQ]/2}_\mu(X_\infty\times X_\infty)_\dC^0$ by its quotient by the common kernel of $\ell$-adic Abel--Jacobi maps for all $\ell$, in order to avoid p and make the discussion unconditional. Moreover, we also track the rationality of the functional $\FJ_\varepsilon$ via replacing $\dC$ by a certain subfield of $\dC$.

We now propose the following refined version of the \emph{arithmetic Gan--Gross--Prasad conjecture} for $\rU(n)\times\rU(n)$, which is an explicit formula relating the Beilinson--Bloch--Poincar\'{e} heights (See Subsection \ref{ss:bbp}) of the cycles $\FJ(f_1,f_2;\phi)_K$ with the central derivative of $L(s,\Pi_1\times\Pi_2\otimes\mu)$.

\begin{conjecture}[Conjecture \ref{co:ggp_refined}]\label{co:main2}
Let the notation be as above. For test functions $f_1$, $f_1^\vee$, $f_2$, $f_2^\vee$ for $\pi_1^\infty$, $(\pi_1^\infty)^\vee$, $\pi_2^\infty$, $(\pi_2^\infty)^\vee$, respectively, and $\phi\in\Hom_E(A_K,A_\mu,\varepsilon)$, $\phi_\tc\in\Hom_E(A_K,A_{\mu^\tc},-\varepsilon)$, the identity
\begin{align*}
&\vol(K)^2\cdot\langle\FJ(f_1,f_2;\phi)_K,\FJ(f_1^\vee,f_2^\vee;\phi_\tc)_K\rangle_{X_K\times X_K,A_\mu}^{\r{BBP}}\\
&=\frac{\prod_{i=1}^nL(i,\mu_{E/F}^i)}{2^{s(\Pi_1)+s(\Pi_2)}}\cdot\frac{L'(\tfrac{1}{2},\Pi_1\times\Pi_2\otimes\mu)}
{L(1,\Pi_1,\As^{(-1)^n})\cdot L(1,\Pi_2,\As^{(-1)^n})}\cdot\beta(f_1,f_1^\vee,f_2,f_2^\vee,\phi,\phi_\tc)
\end{align*}
holds. Here,
\begin{itemize}
  \item $\mu^\tc\coloneqq\mu\circ\tc$ is the $\tc$-conjugation of $\mu$; and we may identify $A_{\mu^\tc}$ with $A_\mu^\vee$ (Proposition \ref{pr:cm_data});

  \item $\Hom_E(A_K,A_\mu,\varepsilon)$ (resp.\ $\Hom_E(A_K,A_{\mu^\tc},-\varepsilon)$) is the intersection of $\Hom_E(A_K,A_\mu)$ (resp.\ $\Hom_E(A_K,A_{\mu^\tc})$) and $\Omega(\mu,\varepsilon)$ (resp.\ $\Omega(\mu^\tc,-\varepsilon)$);

  \item $\vol(K)$ is the normalized volume of $K$ (Definition \ref{de:hodge_divisor});

  \item $\langle\;,\;\rangle_{X_K\times X_K,A_\mu}^{\r{BBP}}$ is a variant of the (conjectural) Beilinson--Bloch height pairing, which we call the Beilinson--Bloch--Poincar\'{e} height pairing, which is a bilinear map
      \[
      \CH^{n-1+[M_\mu:\dQ]/2}(X_K\times X_K\times A_\mu)_\dC^0\times\CH^{n-1+[M_\mu:\dQ]/2}(X_K\times X_K\times A_\mu^\vee)_\dC^0\to\dC;
      \]

  \item $s(\Pi_i)$ has appeared in Definition \ref{de:relevant};

  \item $\r{As}^\pm$ stand for the two Asai representations (see, for example, \cite{GGP}*{Section~7}); and

  \item $\beta$ is a certain normalized matrix coefficient integral defined immediately after Conjecture \ref{co:ggp_refined}.
\end{itemize}
\end{conjecture}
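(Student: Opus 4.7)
The plan is to attack Conjecture \ref{co:main2} by a relative trace formula (RTF) comparison, extending the Jacquet--Rallis framework from the Bessel case $\rU(n+1)\times\rU(n)$ to the present Fourier--Jacobi case $\rU(n)\times\rU(n)$. The overall structure divides into three stages: an \emph{arithmetic RTF} on the unitary side, whose geometric expansion rewrites the left-hand side as a sum of local arithmetic intersection numbers indexed by relevant orbits; an \emph{analytic RTF} on the $\GL(n)\times\GL(n)$ side, twisted by $\mu$, whose spectral expansion isolates $L'(\tfrac{1}{2},\Pi_1\times\Pi_2\otimes\mu)$ together with the Asai factors $L(1,\Pi_i,\As^{(-1)^n})$, while the volume factor $\prod_{i=1}^n L(i,\mu_{E/F}^i)$ arises from the Tamagawa normalization of $\rU(\bV)$; and a place-by-place \emph{matching} of orbital integrals, which at almost every place reduces to a local arithmetic identity of Fourier--Jacobi type.

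For the first stage, I would construct an integral model of $X_K\times X_K\times A_\mu$ over $\Spec\cO_E$ (at least away from a finite set of bad primes) together with a global kernel function on $\rU(\bV)(\bA_F)^2$ whose geometric side computes the BBP self-pairing of $\FJ(f_1,f_2;\phi)_K$. Unfolding against the diagonal action of $\rU(\bV)$ should produce an expansion indexed by relevant double cosets in $\rU(\bV)(F)\backslash\rU(\bV)(\bA_F)^2/\rU(\bV)(\bA_F)^{\r{diag}}$. Each orbital contribution will factor as a product of local intersection multiplicities on unitary Rapoport--Zink spaces, \emph{enriched} by the local CM datum of $A_\mu$; the Weil-representation data carried by $\phi,\phi_\tc$ enters as the arithmetic avatar of the theta series in the classical period $\cP_\mu$. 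One also has to show that the boundary of the toroidal compactification, and the excess intersections coming from the correspondence $\tT_\mu^{\r{can}}$, do not contribute, by exploiting the cuspidality of $\pi_1^\infty,\pi_2^\infty$ together with the canonicality built into $\tT_\mu^{\r{can}}$.

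For the second stage, one adapts the Jacquet--Rallis RTF for $\GL(n)\times\GL(n)$, twisted by $\mu$, so that its relative character distributes the central derivative $L'(\tfrac{1}{2},\Pi_1\times\Pi_2\otimes\mu)$ -- the derivative being forced by the hypothesis $\epsilon(\tfrac{1}{2},\Pi_1\times\Pi_2\otimes\mu)=-1$ that is implicit in the nontriviality of the Fourier--Jacobi cycle -- as a sum of analytic orbital integrals parametrized by the same set of double cosets as in the first stage. The third stage then matches the two expansions orbit by orbit: at every split and almost every unramified inert place, this amounts to proving an \emph{arithmetic Fourier--Jacobi fundamental lemma}, namely an explicit identity between an intersection multiplicity on a local Rapoport--Zink space twisted by the $\mu_v$-component of the CM data and the derivative at $s=\tfrac{1}{2}$ of an analytic orbital integral on $\GL_n(E_v)\times\GL_n(E_v)$; a local Ichino--Ikeda-type unramified computation should then supply the Asai-$L$ normalization.

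The principal obstacle is precisely this arithmetic Fourier--Jacobi fundamental lemma: even in the Bessel case the analogous statement required deep new input from the geometry of Rapoport--Zink spaces and was settled only recently. A second persistent obstacle, shared with all arithmetic Gan--Gross--Prasad formulas, is that the Beilinson--Bloch--Poincar\'{e} height pairing used to state the conjecture is only conditionally defined; following the device the author already employs to make $\FJ_\varepsilon$ unconditional, one would replace BBP heights by their $\ell$-adic Abel--Jacobi avatars, or impose local conditions at bad primes, to guarantee convergence. Finally, smooth transfer at ramified non-archimedean places and a suitable archimedean arithmetic transfer will be required to complete the comparison; these archimedean issues have historically been the most delicate input in Gross--Zagier-type formulas and are expected to remain the bottleneck here.
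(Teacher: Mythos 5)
What you are addressing is a conjecture (Conjecture~\ref{co:ggp_refined}), not a theorem: the paper does not prove it, and neither do you --- as you yourself acknowledge, the arithmetic Fourier--Jacobi fundamental lemma (Conjecture~\ref{co:afl_general}) is the bottleneck, and the Beilinson--Bloch--Poincar\'{e} height is only conditionally defined. Your overall relative trace formula strategy is the right one and is the program of Section~\ref{ss:4}; however, the specific geometric-side reduction you sketch does not coincide with the paper's key technical device, and this is worth flagging.

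You propose to construct an integral model of $X_K\times X_K\times A_\mu$ and unfold a global kernel against the diagonal $\rU(\bV)$-action to obtain a sum over double cosets. The paper instead proves a \emph{doubling formula for CM data} (Theorem~\ref{th:main3}, Proposition~\ref{pr:doubling}): using the isomorphism $\fd$ of~\eqref{eq:doubling0}, it converts the pair $(\phi,\phi_\tc)\in\Hom_E(A_K,A_\mu,\varepsilon)\times\Hom_E(A_K,A_\mu^\vee,-\varepsilon)$ into a single Schwartz function $\bphi\in\sS(\rV(\bA_E^\infty))$ and rewrites the left-hand side as a Beilinson--Bloch height on $X_K^6$ in which the role of $A_\mu$ is played entirely by Kudla's generating series of special divisors $Z_K^\heartsuit$. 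This reduction --- eliminating the abelian variety and replacing it by a theta-type generating series on the doubled Shimura variety --- is precisely what opens the door to the Rapoport--Zink intersection computation in Theorem~\ref{th:orbital}; a generic unfolding on $X_K\times X_K\times A_\mu$ does not by itself yield it, and one would in any case need the regular-support hypothesis and the good-inert-prime restriction of Definitions~\ref{de:regular_unitary} and~\ref{de:good_inert} to make the local intersection numbers finite. On the analytic side, the paper takes the twisted $\GL(n)\times\GL(n)$ RTF as already established in~\cite{Liu14} and~\cites{Xue14,Xue16} rather than constructing it anew, and the normalization by $\prod_{i=1}^n L(i,\mu_{E/F}^i)$ and the Asai $L$-factors enters through Xue's refined local formula rather than from a fresh Tamagawa computation. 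Finally, the paper only treats the compact case and only local heights at good inert primes; smooth matching at bad places and the archimedean contribution, which you correctly single out, are explicitly left open (this is the content of the dashed bubbles in the comparison diagram of Section~\ref{ss:1}).
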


In order to transfer the height pairing in the above conjecture to some other pairing without $A_\mu$, we introduce a variant of the cycle $\FJ(f_1,f_2;\phi)_K$ via replacing the diagonal $\Delta^3X_K$ by a modified diagonal $\Delta^3_z X_K$, which we denote by $\FJ(f_1,f_2;\phi)_K^z$. It is actually equal to $\FJ(f_1,f_2;\phi)_K$ as elements in $\CH^{n-1+[M_\mu:\dQ]/2}(X_K\times X_K\times A_\mu)_\dC^0$ if the injectivity of the $\ell$-adic Abel--Jacobi map is granted. Thus, we also formulate a variant of the above refined arithmetic Gan--Gross--Prasad conjecture as Conjecture \ref{co:ggp_refined_bis}.

\begin{remark}
We expect that the Fourier--Jacobi cycles can also be used to bound Selmer groups for the Rankin--Selberg motive associated to $\Pi_1\times\Pi_2\otimes\mu$, just as what we have done for $\rO(3)\times\rO(4)$ \cite{Liu16} and for $\rU(n)\times\rU(n+1)$ \cite{L5} using diagonal cycles.
\end{remark}

\subsection{A relative trace formula approach}

For the case of central $L$-values for $\rU(n)\times\rU(n)$, namely the relation \eqref{eq:ichino_ikeda}, the author developed a relative trace formula approach in \cite{Liu14} generalizing the Jacquet--Rallis relative trace formula, which was later carried out by Hang Xue \cites{Xue14,Xue16}. Thus, it is natural to expect a relative trace formula approach toward Conjecture \ref{co:main2} as well, similar to what Wei Zhang did for the case $\rU(n)\times\rU(n+1)$ \cite{Zha12}. However, our situation is much more complicated due to both the construction of the cycle $\FJ(f_1,f_2;\phi)$ and the height pairing itself. Nevertheless, we still find such an approach after several reduction steps for the height pairing in Conjecture \ref{co:main2}, or rather its variant Conjecture \ref{co:ggp_refined_bis}. In order to avoid extra technical difficulty, in this article, we only discuss the relative trace formula for the case where $\Sh(\bV)_K$ is already proper, which we will now assume.

The first reduction step is the following theorem, which we refer as the \emph{doubling formula for CM data}.

\begin{theorem}[Proposition \ref{pr:doubling}, \eqref{eq:calibration1}, and Proposition \ref{pr:intersection}]\label{th:main3}
Let the notation be as in Conjecture \ref{co:main2} (or rather Conjecture \ref{co:ggp_refined_bis}). For $i=1,2$, let $\bbf_i\coloneqq f_i^\rt\ast f_i^\vee$ be the convolution of the transpose of $f_i$ and $f_i^\vee$. If we write $\bbf_1=\sum_s d_s\CF_{g_s^{-1}K\cap Kg_s^{-1}}$ as a finite sum with $d_s\in\dC$ and $g_s\in\rU(\bV)(\bA_F^\infty)$, then the identity
\begin{align*}
\vol(K)^2\cdot\langle\FJ(f_1,f_2;\phi)^z_K,\FJ(f_1^\vee,f_2^\vee;\phi_\tc)^z_K\rangle_{X_K\times X_K,A_\mu}^{\r{BBP}}
=\sum_sd_s\cdot\cI^zp_{K_s}(\rL_{g_s}\bbf_2,\bphi_s)
\end{align*}
holds, in which
\begin{itemize}
  \item $K_s\coloneqq K\cap g_sKg_s^{-1}$,

  \item $\rL_{g_s}\bbf_2$ is the left translation of $\bbf_2$ by $g_s$,

  \item $\bphi_s\in\sS(\bV(\bA_E^\infty))^{K_s}$ is a Schwartz function determined by $(\phi,g_s\phi_\tc)$ via \eqref{eq:doubling0}, and

  \item we put, for general $K\subseteq\rU(\bV)(\bA_F^\infty)$, $\bbf\in\sC^\infty_c(K\backslash\rU(\bV)(\bA_F^\infty)/K,\dC)$, and $\bphi\in\sS(\bV(\bA_E^\infty))^K$,
     \begin{align*}
     \cI^z_K(\bbf,\bphi)\coloneqq\langle\tp_{135}^*\Delta^3_zX_K,
     (\Delta X_K\times\tT^{\bbf}_K\times Z_K^\heartsuit).\tp_{246}^*\Delta^3_zX_K\rangle_{X_K^6}^{\r{BB}},
     \end{align*}
     where $Z_K^\heartsuit$ is a (formal sum of) divisor on $X_K\times X_K$ such that its restriction to the diagonal $\Delta X_K$ is Kudla's generating series of special divisors associated to $\bphi$ (Definition \ref{de:generating_series}).
\end{itemize}
Moreover, if $\bbf\otimes\bphi$ is regularly supported at some nonarchimedean place $v$ of $F$ (Definition \ref{de:regular_unitary}), then the cycles $\tp_{135}^*\Delta^3X_K$ and $(\Delta X_K\times\tT^{\bbf}_K\times Z_K^\heartsuit).\tp_{246}^*\Delta^3X_K$ have empty intersection on $X_K^6$.
\end{theorem}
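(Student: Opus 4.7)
The statement has two independent parts: the doubling identity, and the empty-intersection consequence under regular support. Both rest on moving the calculation from $X_K\times X_K\times A_\mu$ to the sixfold $X_K^6$, and both rely on the principle that the Poincar\'e pairing between two CM homomorphisms $\phi,g\phi_\tc$ into $A_\mu$ and $A_{\mu^\tc}\simeq A_\mu^\vee$ can be encoded as a Schwartz function on $\bV(\bA_E^\infty)$ and hence as Kudla's generating series of special divisors.

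To establish the doubling identity I would first unwind the definition of $\FJ(f_1,f_2;\phi)^z_K$ as the pushforward of $(\tT^{f_1}_K\otimes\tT^{f_2}_K\otimes\tT^{\r{can}}_\mu)^*\Delta^3_zX_K$ along $\id\times(\phi\circ\alpha_K)$, and similarly on the dual side with $\phi_\tc$. Using the projection formula together with the Poincar\'e pairing on $A_\mu\times A_\mu^\vee$, the BBP height pairing on $X_K\times X_K\times A_\mu$ unfolds into an intersection number on $X_K^2\times X_K^2$ weighted by a bilinear form on $\Hom_E(A_K,A_\mu)\otimes\Hom_E(A_K,A_{\mu^\tc})$. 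Passing to the sixfold via the projections $\tp_{135}$ and $\tp_{246}$ realizes each Fourier--Jacobi cycle as the pullback of a modified diagonal on $X_K^3$. The \emph{doubling} step proper identifies the bilinear form on Hom-groups, via the decomposition in Theorem \ref{th:main1}, with the evaluation of an explicit Schwartz function $\bphi_s\in\sS(\bV(\bA_E^\infty))^{K_s}$ determined by $(\phi,g_s\phi_\tc)$; restricted to $\Delta X_K$, the associated divisor class agrees with $Z_K^\heartsuit$ by the defining property of Kudla's generating series. Finally, decomposing $\bbf_1=\sum_s d_s\CF_{g_s^{-1}K\cap Kg_s^{-1}}$ folds one of the three Hecke correspondences into a right-translation by $g_s$, converting the remaining Hecke action on the second copy into $\rL_{g_s}\bbf_2$ on the smaller level $K_s$, and producing exactly $\sum_s d_s\cdot\cI^z_{K_s}(\rL_{g_s}\bbf_2,\bphi_s)$.

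For the \emph{moreover} assertion I would argue geometrically via the moduli interpretation of $X_K$. An intersection point on $X_K^6$ would force the coordinates $x_1,\ldots,x_6$ to collapse---the conditions imposed by $\tp_{135}^*\Delta^3X_K$, $\tp_{246}^*\Delta^3X_K$, and $\Delta X_K$ in positions $1,2$ already force $x_i=x_j$ for all $i,j$---so that the Hecke correspondence $\tT^\bbf_K$ must contribute a fixed point and $Z_K^\heartsuit$ a special endomorphism at the same point $x$. Reading off the local Hecke element and the special endomorphism at $v$ yields a pair $(g,\tau)\in\supp(\bbf)\times\supp(\bphi)$ that admits a nontrivial simultaneous stabilizer in $\rU(\bV)(F_v)$; in particular its image in the categorical quotient of $\rU(\bV)\times\bV$ by $\rU(\bV)$ is singular. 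But regular support at $v$ is precisely the demand that this image be regular, a contradiction; so the intersection is empty.

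The main obstacle will be the doubling step: pinning down the exact identification of the Poincar\'e pairing on $\Hom_E(A_K,A_\mu)\otimes\Hom_E(A_K,A_{\mu^\tc})$ with the evaluation of $\bphi_s$, together with the match between the resulting class on $X_K\times X_K$ and Kudla's generating series upon restriction to the diagonal. This requires compatibility among the normalization built into $\tT^{\r{can}}_\mu$, the weight-one condition on $\mu$ characterizing $A_\mu$, and the arithmetic structure of special divisors at all places; it is effectively an incoherent arithmetic Siegel--Weil-type identity, and all constants must be tracked carefully so that no spurious factors appear. A secondary subtlety is bookkeeping the modified versus unmodified diagonal ($\Delta^3_z$ for the height identity, $\Delta^3$ for the empty-intersection clause), so that the reduction to $\cI^z$ in the first part and the purely geometric argument in the second part remain consistent.
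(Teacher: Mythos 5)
Your outline of the doubling identity matches the paper's strategy: unwinding the BBP pairing through the Poincar\'e bundle, reducing to an intersection number on $X_K^6$ via the projections $\tp_{135},\tp_{246}$, and encoding the Hom-group pairing as a Schwartz function $\bphi_s$ whose divisor class restricts to Kudla's generating series on the diagonal. You correctly flag the tracking of that encoding as the main obstacle. What you elide, and what the paper needs, are two technical steps: (i) writing the zero-cycle $D_K^{n-1}$ as an explicit rational combination of points $P_i\in X_K(\pi_0(X'_K))$ after a field extension, averaging over a Galois orbit to descend, and (ii) invoking the vanishing of the Beilinson--Bloch pairing against a divisor multiplied by a homologically trivial class (the paper's Lemma~\ref{le:divisor_coh}) to replace the Poincar\'e class $\cQ^{\phi,\phi_\tc,P,Q}_{\mu,K}$ by the doubling divisor $Z_K^\heartsuit$. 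Without (i) the Albanese morphism cannot be split off, and without (ii) the passage from $\cQ$ to $Z_K^\heartsuit$ is not justified.

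For the \emph{moreover} clause there is a genuine gap. You conclude that an intersection point produces a pair $(g,\tau)$ admitting a \emph{nontrivial simultaneous stabilizer} in $\rU(\bV)(F_v)$ and hence whose image in the categorical quotient is singular, and you call that a contradiction with regular support. But regular semisimplicity in this setting is, by Definition~\ref{de:regular_unitary}, the \emph{spanning} condition on $\{x,\xi x,\dots,\xi^{n-1}x\}$, and regular semisimple does imply trivial stabilizer, \emph{not} the converse: a pair with trivial stabilizer can still fail to be regular semisimple, so producing a nontrivial stabilizer is a strictly stronger, and here unsubstantiated, conclusion. The paper's argument runs differently: it reduces to $\Delta X_K\cap\tT_K^{\bbf}\cap Z(\bphi)_K$ via the calibration $Z_K^\heartsuit.\Delta X_K=Z(\bphi)_K$, then passes to the complex uniformization $X_K(\dC)=\rU(\rV)(F)\backslash(\cD\times\rU(\rV)(\bA_F^\infty)/K)$; an intersection point produces a $\xi\in\rU(\rV)(F)$ fixing a point $z_x$ of the subdomain $\cD^x$ while also intertwining $g$ and $h$, and the fixed-point relation $z_x=\xi z_x$ on $\cD^x$ directly forces linear \emph{dependence} of $\{x,\xi^{-1}x,\dots,\xi^{-(n-1)}x\}$ in $\rV(E)$, which is the required failure of regular semisimplicity. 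You should replace the stabilizer/GIT heuristic by this direct geometric argument on the Hermitian domain.
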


Thus, it suffices to study the functional $\cI^z_K(\bbf,\bphi)$. We now assume that $\bbf\otimes\bphi$ is regularly supported at some nonarchimedean place $v$ of $F$. Then the definition of the Beilinson--Bloch height pairing provides us with a decomposition
\[
\cI^z_K(\bbf,\bphi)=\sum_{u}\cI^z_K(\bbf,\bphi)_u
\]
into local heights over all places $u$ of $E$. In what follows, we will study an approximation $\cI_K(\bbf,\bphi)_u$ of the local term $\cI^z_K(\bbf,\bphi)_u$ at certain places $u$ by ignoring $z$.

To continue the discussion, we need some notation. For integers $r,s\geq 1$, denote by $\Mat_{r,s}$ the scheme over $\dZ$ of $r$-by-$s$ matrices. For $n\geq 1$, we put $\rM_n\coloneqq\Mat_{n,1}\times\Mat_{1,n}$; and let $\rS_n$ be the $O_F$-subscheme of $\Res_{O_E/O_F}\Mat_{n,n}$ consisting of matrices $g$ satisfying $g\cdot g^\tc=\rI_n$, known as the symmetric space. In view of the relative trace formula developed in \cite{Liu14}, we are looking for test functions $\tilde\bbf\in\sS(\rS_n(\bA_F))$ and $\tilde\bphi\in\sS(\rM_n(\bA_F))$ such that $\cI^z_K(\bbf,\bphi)$ can be compared to another functional $\cJ(\tilde\bbf,\tilde\bphi)$ which encodes the right-hand side of Conjecture \ref{co:main2}. In this article, we only discuss the term $\cI_K(\bbf,\bphi)_\fp$ and local components $\tilde\bbf_\fp,\tilde\bphi_\fp$ when $\fp$ is a good inert prime of $F$ (Definition \ref{de:good_inert}), also regarded as a place of $E$.

Let $\fp$ be a good inert prime. Then $X_K$ has a canonical integral smooth model $\cX_K$ over $O_{E_\fp}$; the Hecke operator $\tT^{\bbf}_K$ extends naturally to $\cX_K$ by taking Zariski closure; and we also have a natural extension of $Z_K^\heartsuit$ to a (formal sum of) divisor $\cZ_K^\heartsuit$ on $\cX_K\times_{O_{E_\fp}}\cX_K$. We define the \emph{local arithmetic invariant functional} at $\fp$ to be
\[
\cI_K(\bbf,\bphi)_\fp\coloneqq 2\log|O_F/\fp|\cdot\chi\(\cO(\tp_{135}^*\Delta^3\cX_K)\otimes^\dL_{\cO_{\cX_K^6}}
\cO((\Delta\cX_K\times\tT^{\bbf}_K\times\cZ_K^\heartsuit).\tp_{246}^*\Delta^3\cX_K)\)
\]
as an intersection number of algebraic cycles on $\cX_K^6$, the sixfold self fiber product of $\cX_K$ over $O_{E_\fp}$, where $\chi$ denotes the Euler--Poincar\'{e} characteristic. The following result provides an orbital decomposition of $\cI_K(\bbf,\bphi)_\fp$, which is the key for the comparison of relative trace formulae.

\begin{theorem}[Theorem \ref{th:orbital}]\label{th:main4}
Let $K,\bbf,\bphi$ be as above such that $\bbf\otimes\bphi$ is regularly supported at some nonarchimedean place $v$ of $F$. Then for a good inert prime $\fp$, the identity
\[
\cI_K(\bbf,\bphi)_\fp=2\log|O_F/\fp|\cdot\sum_{(\bar\xi,\bar{x})\in[\rU(\bar\rV)(F)\times\bar\rV(E))]_{\r{rs}}}
\re^{-2\pi\cdot\Tr_{F/\dQ}(\bar{x},\bar{x})_{\bar\rV}}
\Orb(\bar\bbf^\fp,\bar\bphi^\fp;\bar\xi,\bar{x})\cdot\chi\(\cO_{\Gamma_{\bar\xi}}\otimes^\dL_{\cO_{\cN^2}}\cO_{\Delta\cZ(\bar{x})}\)
\]
holds, where
\begin{itemize}
  \item $\bar\rV$ is a hermitian space over $E$ satisfying $\bar\rV\otimes_F\bA_F^\fp\simeq\bV\otimes_{\bA_F}\bA_F^\fp$,

  \item the orbital integral is defined as
    \[
    \Orb(\bar\bbf^\fp,\bar\bphi^\fp;\bar\xi,\bar{x})\coloneqq\int_{\rU(\bar\rV)({\bA_F^\infty}^{,\fp})}
    \bar\bbf^\fp(\bar{g}^{-1}\bar\xi\bar{g})\bar\bphi^\fp(\bar{g}^{-1}\bar{x})\;\rd\bar{g},
    \]

  \item $\chi\(\cO_{\Gamma_{\bar\xi}}\otimes^\dL_{\cO_{\cN^2}}\cO_{\Delta\cZ(\bar{x})}\)$ is a certain intersection number defined on a relative Rapoport--Zink space.
\end{itemize}
We refer to Theorem \ref{th:orbital} for the precise meaning of all the notation.
\end{theorem}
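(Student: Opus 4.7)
The plan is to combine $p$-adic uniformization at the good inert prime $\fp$ with an orbital decomposition of the geometric intersection points on $\cX_K^6$. First, I would expand $\cZ_K^\heartsuit$ as a sum of special (Kudla--Rapoport) divisors $\cZ(x)$ weighted by values of $\bphi$ together with an archimedean Gaussian factor built into Kudla's generating series. The regular support hypothesis at $v$, together with the final assertion of Theorem \ref{th:main3}, reduces the Euler--Poincar\'e characteristic to a finite sum of local contributions supported at isolated closed points of the special fiber at $\fp$, where everything is quasi-compact and the derived tensor product is well-defined.

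Next, since $\fp$ is a good inert prime, the formal completion of $\cX_K$ along the basic locus at $\fp$ admits $p$-adic uniformization by the relative Rapoport--Zink space $\cN$ via a discrete double-coset description, in the style of Rapoport--Zink. Under this uniformization the Hecke correspondence $\tT^\bbf_K$ and the special divisors $\cZ(x)$ pull back to correspondences and divisors on products of $\cN$, indexed adelically by data away from $\fp$. The geometric intersection points of $\tp_{135}^*\Delta^3\cX_K$ with $(\Delta\cX_K\times\tT^\bbf_K\times\cZ_K^\heartsuit).\tp_{246}^*\Delta^3\cX_K$ are then parametrized by regular semisimple pairs $(\bar\xi,\bar x)\in\rU(\bar\rV)(F)\times\bar\rV(E)$, where $\bar\rV$ is the nearby hermitian space that agrees with $\bV$ away from $\fp$ but whose local invariant is switched at $\fp$.

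With this parametrization in hand, the adelic double-coset description factors each contribution into an away-from-$\fp$ part and a local part at $\fp$. The away-from-$\fp$ part counts adelic translates of $(\bar\xi,\bar x)$ compatible with the level $K^\fp$, weighted by $\bar\bbf^\fp$ and $\bar\bphi^\fp$; this is precisely the orbital integral $\Orb(\bar\bbf^\fp,\bar\bphi^\fp;\bar\xi,\bar x)$. The local part at $\fp$, after uniformization, becomes the derived intersection $\chi(\cO_{\Gamma_{\bar\xi}}\otimes^\dL_{\cO_{\cN^2}}\cO_{\Delta\cZ(\bar x)})$ on $\cN^2$, where $\Gamma_{\bar\xi}$ is the graph of the quasi-isogeny determined by $\bar\xi$ on the framing object and $\Delta\cZ(\bar x)$ is the diagonal image of the Kudla--Rapoport divisor attached to $\bar x$. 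The Gaussian $\re^{-2\pi\Tr_{F/\dQ}(\bar x,\bar x)_{\bar\rV}}$ survives from the archimedean normalization of $\cZ_K^\heartsuit$ and attaches itself to the orbit $(\bar\xi,\bar x)$.

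The main obstacle will be twofold. First, one must verify that under the regular support hypothesis the support of the six-fold intersection on the integral model at $\fp$ genuinely lies in the basic locus, so that uniformization applies; this requires controlling non-basic contributions, which should vanish because the Kudla--Rapoport divisors impose positivity constraints on $\bar x$ that force the relevant strata to be basic, and because the regular support at $v$ prevents any generic-fiber contribution from obstructing properness. Second, matching the geometric intersection orbits arising from the six-fold product $\cX_K^6$ with group-theoretic orbits $(\bar\xi,\bar x)$ on a single inner form $\bar\rV$ is delicate: one must track how the ``doubling'' projections $\tp_{135}$ and $\tp_{246}$ interact with standard Rapoport--Zink uniformization for a single Shimura variety, and how the inner-form switch at $\fp$ converts $\bV$ into $\bar\rV$, so that the three-diagonal intersection pattern collapses to a single pair of invariants $(\bar\xi,\bar x)$ rather than a larger tuple.
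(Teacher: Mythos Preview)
Your overall strategy---expand the generating series into special divisors, apply Rapoport--Zink uniformization along the basic locus, and factor each contribution into an away-from-$\fp$ orbital integral times a local intersection number on $\cN^2$---is exactly what the paper does. But you are missing one preliminary reduction that the paper carries out as Proposition~\ref{pr:local_invariant}, and without it your ``second obstacle'' looks much harder than it is.

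Before any uniformization, the paper collapses the six-fold intersection on $\cX_K^6$ to a two-fold one:
\[
\cI_K(\bbf,\bphi)_\fp=2\log|O_F/\fp|\cdot\chi\bigl(\cO(\tT^{\bbf}_K)\otimes^\dL_{\cO_{\cX_K^2}}\cO(\Delta\cZ(\bphi)_K)\bigr).
\]
This is a short manipulation of derived tensor products using that $\Delta\cX_K$, the components of $\tT^{\bbf}_K$, and $\cZ_K^\heartsuit$ are Cohen--Macaulay, together with the identity $\Delta\cX_K.\cZ_K^\heartsuit=\cZ(\bphi)_K$ built into the definition of $\cZ_K^\heartsuit$. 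After this step there is no ``three-diagonal pattern'' left to track: one is simply intersecting a Hecke correspondence with the diagonal of a special divisor on $\cX_K^2$, and the pair $(\bar\xi,\bar x)$ arises directly---$\bar\xi$ from the Hecke correspondence, $\bar x$ from the special divisor. Your worry about how $\tp_{135}$ and $\tp_{246}$ interact with uniformization evaporates once this reduction is in place.

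On your first obstacle, the mechanism is not that Kudla--Rapoport divisors impose positivity constraints forcing the stratum to be basic. The actual argument (Lemma~\ref{le:supersingular_intersection}) is: at a point in $\tT^{\bbf}_K\cap\Delta\cZ(\bphi)_K$ over $k$, the moduli data give a quasi-isogeny $\xi$ of $A$ and a special homomorphism $\rho\colon A_0\to A$; the \emph{regular support at $v$} then says the iterates $\xi^i\circ\rho$ span the full Hom-space at $v$, whence $\Hom_k((A_0,i_0),(A,i))_\dQ$ has $E$-dimension $n$, so $A[\fp^\infty]$ is isogenous to $A_0[\fp^\infty]^{\oplus n}$ and is supersingular. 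Positivity of $(\bar x,\bar x)$ plays no role here.
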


The term $\chi\(\cO_{\Gamma_{\bar\xi}}\otimes^\dL_{\cO_{\cN^2}}\cO_{\Delta\cZ(\bar{x})}\)$ is the one that is related to the derivative of $L$-function, more precisely, to the derivative of local orbital integrals at $\fp$ in the decomposition of $\cJ(\tilde\bbf,\tilde\bphi)$. The precise relation is the content of the \emph{arithmetic fundamental lemma for $\rU(n)\times\rU(n)$}, which we introduce in the next subsection.

\subsection{Arithmetic fundamental lemma for $\rU(n)\times\rU(n)$}
\label{ss:afl}

In this subsection, we introduce the arithmetic fundamental lemma for $\rU(n)\times\rU(n)$. Since the question is purely local, we will shift our notation slightly from the previous discussion. Moreover, we will allow $n$ to be an arbitrary positive integer since the discussion makes sense even for $n=1$.

Let $F$ be a finite extension of $\dQ_p$, with residue cardinality $q$. Let $E/F$ be an unramified quadratic extension, and $\breve{E}$ a completed maximal unramified extension of $E$ with $k$ its residue field.

We recall some definitions and facts from \cite{Liu14}*{Section~5.3}. We say that a pair $(\zeta,y)\in\rS_n(F)\times\rM_n(F)$ is \emph{regular semisimple} if the matrix $(y_2\zeta^{i+j-2}y_1)_{i,j=1}^n$ is non-degenerate, where we write $y=(y_1,y_2)\in\Mat_{n,1}(F)\times\Mat_{1,n}(F)$. If $(\zeta,y)$ is regular semisimple, we define its \emph{transfer factor} to be
\[
\omega(\zeta,y)\coloneqq\mu_{E/F}(\det(y_1,\zeta y_1,\dots,\zeta^{n-1}y_1)).
\]
The group $\GL_n(F)$ acts on $\rS_n(F)\times\rM_n(F)$ by the formula $(\zeta,y)g=(g^{-1}\zeta g,g^{-1}y_1,y_2g)$, which preserves regular semisimple elements. We denote by $[\rS_n(F)\times\rM_n(F)]_{\r{rs}}$ the set of regular semisimple $\GL_n(F)$-orbits.

Let $\rV_n^+$ (resp.\ $\rV_n^-$) be a hermitian space over $E$ of rank $n$ whose determinant has even (resp.\ odd) valuation. For $\delta=\pm$, we say that a pair $(\xi,x)\in\rU(\rV^\delta_n)(F)\times\rV^\delta_n(E)$ is \emph{regular semisimple} if $\{x,\xi x,\dots,\xi^{n-1}x\}$ are linearly independent. The group $\rU(\rV_n^\delta)(F)$ acts on $\rU(\rV_n^\delta)(F)\times\rV_n^\delta(E)$ by the formula $(\xi,x)g=(g^{-1}\xi g,g^{-1}x)$, which preserves regular semisimple elements. We denote by $[\rU(\rV_n^\delta)(F)\times\rV_n^\delta(E)]_{\r{rs}}$ the set of regular semisimple $\rU(\rV_n^\delta)(F)$-orbits. We say that $(\zeta,y)\in[\rS_n(F)\times\rM_n(F)]_{\r{rs}}$ and $(\xi,x)\in[\rU(\rV_n^\delta)(F)\times\rV_n^\delta(E)]_{\r{rs}}$ \emph{match} if $\zeta$ and $\xi$ have the same characteristic polynomial and
$y_2\zeta^iy_1=(\xi^ix,x)$ for $0\leq i\leq n-1$. The matching relation induces a bijection
\[
[\rS_n(F)\times\rM_n(F)]_{\r{rs}}\simeq[\rU(\rV_n^+)(F)\times\rV_n^+(E)]_{\r{rs}}\coprod[\rU(\rV_n^-)(F)\times\rV_n^-(E)]_{\r{rs}}.
\]
Denote by $[\rS_n(F)\times\rM_n(F)]_{\r{rs}}^\pm\subseteq[\rS_n(F)\times\rM_n(F)]_{\r{rs}}$ the subset corresponding to orbits in $[\rU(\rV_n^\pm)(F)\times\rV_n^\pm(E)]_{\r{rs}}$. Then a regular semisimple orbit $(\zeta,y)$ belongs to $[\rS_n(F)\times\rM_n(F)]_{\r{rs}}^\delta$ for $\delta=+$ (resp.\ $\delta=-$) if and only if the $\det((y_2\zeta^{i+j-2}y_1)_{i,j=1}^n)$ has even (resp.\ odd) valuation.

Now we introduce the relevant orbital integral. For a regular semisimple pair $(\zeta,y)\in\rS_n(F)\times\rM_n(F)$ and a pair of Schwartz functions $f\in\sS(\rS_n(F))$, $\phi\in\sS(\rM_n(F))$, we define
\begin{align*}
\Orb(s;f,\phi;\zeta,y)\coloneqq\int_{\GL_n(F)}f(g^{-1}\zeta g)\phi(g^{-1}y_1,y_2g)\mu_{E/F}(\det g)|\det g|_E^s\;\rd g,
\end{align*}
where $\rd g$ is the Haar measure under which $\GL_n(O_F)$ has volume $1$. It is clear that the product $\omega(\zeta,y)\Orb(0;f,\phi;\zeta,y)$ depends only on the $\GL_n(F)$-orbit of $(\zeta,y)$. We recall the following conjecture from \cite{Liu14}.

\begin{conjecture}[relative fundamental lemma for $\rU(n)\times\rU(n)$]\label{co:rfl_general}
For every regular semisimple orbit $(\zeta,y)\in[\rS_n(F)\times\rM_n(F)]_{\r{rs}}$, we have
\begin{enumerate}
  \item if $(\zeta,y)\in[\rS_n(F)\times\rM_n(F)]_{\r{rs}}^-$, then
     \[
     \omega(\zeta,y)\Orb(0;\CF_{\rS_n(O_F)},\CF_{\rM_n(O_F)};\zeta,y)=0;
     \]

  \item if $(\zeta,y)\in[\rS_n(F)\times\rM_n(F)]_{\r{rs}}^+$, then
     \begin{align}\label{eq:rfl}
     \omega(\zeta,y)\Orb(0;\CF_{\rS_n(O_F)},\CF_{\rM_n(O_F)};\zeta,y)=
     \int_{\rU(\rV_n^+)}\CF_{K_n}(g^{-1}\xi g)\CF_{\Lambda_n}(g^{-1}x)\;\rd g
     \end{align}
     where $(\xi,x)\in[\rU(\rV_n^+)(F)\times\rV_n^+(E)]_{\r{rs}}$ is the unique orbit that matches $(\zeta,y)$, $\Lambda_n$ is a self-dual lattice in $\rV_n^+$, $K_n$ is the stabilizer of $\Lambda_n$, and $\rd g$ is the Haar measure on $\rU(\rV_n^+)$ under which $K_n$ has volume $1$.
\end{enumerate}
\end{conjecture}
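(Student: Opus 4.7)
The plan is to adapt the geometric strategy pioneered by Yun for the Jacquet--Rallis fundamental lemma (the $\rU(n)\times\rU(n+1)$ case) to the symmetric setting of the conjecture. The first move would be to pass to a Lie algebra variant via a Cayley transform near the identity of $\rS_n$ and of $\rU(\rV_n^\pm)$: this converts the identity into one on a space of hermitian matrices paired with $\rM_n$, where the test functions become indicator functions of integral lattices and the $\GL_n(F)$-action, the matching relation, and the transfer factor $\omega(\zeta,y)$ all extend naturally. Once the Lie algebra identity is established, one returns to the group via a homogeneity/scaling argument in the style of Waldspurger.

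For the vanishing statement~(1), the standard technique is an involution trick. Orbits in $[\rS_n(F)\times\rM_n(F)]_{\r{rs}}^-$ are distinguished by $\val(\det((y_2\zeta^{i+j-2}y_1)_{i,j=1}^n))$ being odd, so I would look for an automorphism of the integration variable (for instance, scaling $y_1$ by a uniformizer of $F$ compensated by a matching scaling of $y_2$) under which $\omega(\zeta,y)$ changes sign while $\Orb(0;\CF_{\rS_n(O_F)},\CF_{\rM_n(O_F)};\zeta,y)$ is preserved, thereby forcing the integral to vanish. This is the analog of the standard cancellation argument used in the literature on twisted endoscopy.

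The main task is part~(2). My plan is to realize each side of \eqref{eq:rfl} as a point count on an affine Springer fiber (on the symmetric side, attached to $(\zeta,y)$ acting on a lattice in $\rS_n\times\rM_n$; on the unitary side, the lattice count on the right-hand side), and glue these local objects into global Hitchin-type moduli spaces over the invariant-theoretic quotient $(\rS_n\times\rM_n)/\!\!/\GL_n$. Applying Ng\^o's support theorem in the form used by Yun, one would deduce equality of the relevant perverse cohomology sheaves over a suitable open locus of the base, hence equality of the point counts. This argument naturally lives in equal characteristic; one then transfers the identity back to a finite extension of $\dQ_p$ via the Cluckers--Hales--Loeser motivic transfer principle, whose hypotheses should be verifiable because all the ingredients (the orbital integrals, the transfer factor $\omega$, and the relevant indicator functions) are definable in the Denef--Pas language. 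An alternative route is an induction on $n$ via a partial Fourier transform in the $\rM_n$-variable, aiming to reduce the identity to the classical Jacquet--Rallis fundamental lemma on $\rS_{n-1}$.

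The main obstacle I expect is the construction of the global Hitchin fibration. Unlike the Jacquet--Rallis case, whose base is just the space of characteristic polynomials, the extra datum $(y_1,y_2)\in\rM_n$ forces one to enlarge the base by the ``moment map'' $\zeta\mapsto(y_2\zeta^iy_1)_{0\leq i\leq n-1}$, and verifying Ng\^o's hypotheses (smoothness, $\delta$-regularity, geometric connectedness of generic fibers) in this enlarged geometry will be delicate. A parallel technical difficulty is matching $\omega(\zeta,y)$ with the natural sign arising from the class in $\rH^1(F,\rU(\rV_n^+))$ that distinguishes $\rV_n^+$ from $\rV_n^-$ on the geometric side; getting this sign right is essential for both parts of the conjecture to hold simultaneously.
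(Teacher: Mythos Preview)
The paper does not prove this statement; it records it as a conjecture and notes in Remark~1.7 that part~(1) is \cite{Liu14}*{Proposition~5.14} and part~(2) is \cite{Liu14}*{Theorem~5.15} (for $p$ large). As the paper's own comparison diagram in \S1.4 makes explicit, the argument in \cite{Liu14} does \emph{not} rerun Yun's geometric programme: it establishes a direct reduction of the $\rU(n)\times\rU(n)$ identity to the Jacquet--Rallis fundamental lemma for $\rU(n)\times\rU(n+1)$, and then invokes \cite{Yun11}. So your ``alternative route'' at the end of the third paragraph---a reduction to the Jacquet--Rallis case---is in fact the approach that actually works and is what the literature does; your primary plan of building a new Hitchin fibration over the enlarged base $(\rS_n\times\rM_n)/\!\!/\GL_n$ is a great deal of extra work that the reduction makes unnecessary. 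If you want to pursue the reduction, the point is that the extra $\rM_n$-datum can be absorbed by embedding the problem into the $(n+1)$-dimensional setting, after which the geometry is Yun's.

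Your involution for part~(1) is not correct as stated. Scaling $(y_1,y_2)\mapsto(\varpi y_1,\varpi^{-1}y_2)$ changes $\omega(\zeta,y)$ by $(-1)^n$, not by $-1$, and it does not preserve the support condition $\CF_{\rM_n(O_F)}(g^{-1}y_1,y_2g)$ (the condition on $y_2$ becomes strictly stronger). The vanishing is instead forced by a duality on the lattice side: writing the orbital integral at $s=0$ as $\omega(\zeta,y)\sum_i(-1)^i\#M_i(\zeta,y)$ (cf.\ the computation in Lemma~\ref{lem:derorb}), the map $\Lambda\mapsto\Lambda^*$ gives $\#N_i(\zeta,y)=\#N_{\delta(\zeta,y)-i}(\zeta,y)$, and since $\delta(\zeta,y)$ is odd in the minus case the alternating sum cancels term by term. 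This is the symmetry you should be looking for, and it lives on the set of $\zeta$-stable self-conjugate lattices between $L$ and $L^*$, not on the integration variable $g$.
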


\begin{remark}\label{re:rfl}
Conjecture \ref{co:rfl_general}(1) is known by \cite{Liu14}*{Proposition~5.14}. Conjecture \ref{co:rfl_general}(2) is known for $p$ sufficiently large by \cite{Liu14}*{Theorem~5.15}.
\end{remark}

Now we describe the arithmetic fundamental lemma, where in \eqref{eq:rfl} we replace the left-hand side by its derivative and the right-hand side by a certain intersection number on a (relative) Rapoport--Zink space. We start by recalling the notion of relative Rapoport--Zink spaces. For an $O_{\breve{E}}$-scheme $S$, a \emph{unitary $O_F$-module of signature $(r,s)$} with integers $r,s\geq 0$ is a triple $(X,i,\lambda)$, in which
\begin{itemize}
  \item $X$ is a strict $O_F$-module over $S$ of dimension $r+s$ and $O_F$-height $2(r+s)$ over $S$,

  \item $i\colon O_E\to\End_S(X)$ is an action compatible with the $O_F$-module structure satisfying that for every $e\in O_E$ the characteristic polynomial of $i(e)$ on $\Lie_S(X)$ is given by $(T-a^\tc)^r(T-a)^s\in\cO_S[T]$,

  \item $\lambda\colon X\to X^\vee$ is a principal polarization such that the associated Rosati involution induces the conjugation on $O_E$.
\end{itemize}
We say that $(X,i,\lambda)$ is supersingular if $X$ is a supersingular strict $O_F$-module.

We fix a supersingular unitary $O_F$-module $(\bbX_0,\bbi_0,\bblambda_0)$ of signature $(1,0)$ over $O_{\breve{E}}$, which is unique up to isomorphism. For every integer $n\geq 1$, we also choose a supersingular unitary $O_F$-module $(\bbX_n,\bbi_n,\bblambda_n)$ of signature $(n-1,1)$ over $k$, which is unique up to $O_E$-linear isogeny preserving the polarization up to scalars. Let $\cN_n$ be the relative Rapoport--Zink space parameterizing quasi-isogenies of $(\bbX_n,\bbi_n,\bblambda_n)$ of height zero. More precisely, it is a formal scheme over $O_{\breve{E}}$ such that for every scheme $S$ over $O_{\breve{E}}$ on which $p$ is locally nilpotent, $\cN_n(S)$ is the set of isomorphism classes of quadruples $(X,i,\lambda;\rho)$, where
\begin{itemize}
  \item $(X,i,\lambda)$ is a unitary $O_F$-module over $S$ of signature $(n-1,1)$,

  \item $\rho\colon X\times_S S_k\to\bbX_n\times_k S_k$ is an $O_E$-linear quasi-isogeny (of height zero), such that $\rho^*\bblambda_n=\lambda$. Here, we put $S_k\coloneqq S\otimes_{O_{\breve{E}}}k$.
\end{itemize}
It is known that $\cN_n$ is formally smooth over $O_{\breve{E}}$ of relative dimension $n-1$. See \cite{Mih}*{Section~3.1} for more details.

We recall the notion of formal special divisors from \cite{KR11}. Put $\Lambda_n\coloneqq\Hom_k((\bbX_{0k},\bbi_{0k}),(\bbX_n,\bbi_n))$
and $\rV_n^-\coloneqq(\Lambda_n)_\dQ$. Then $\rV_n^-$ is an $E$-vectors space of rank $n$ equipped with a hermitian form
\[
(x,y)=\bbi_{0k}^{-1}\(\bblambda_{0k}^{-1}\circ y^\vee\circ\bblambda_n\circ x\)\in E.
\]
If we denote by $\Lambda_n^*$ the dual lattice of $\Lambda_n$ under the above hermitian form, then we have $\Lambda_n\subseteq\Lambda_n^*$ and that the length of the $O_E$-module $\Lambda_n^*/\Lambda_n$ is odd. In particular, the determinant of $\rV_n^-$ has odd valuation, justifying its notation.

\begin{definition}\label{de:special_cycle_formal}
For every $x\in\rV_n^-$ that is nonzero, we define $\cZ_n(x)$ to be the subfunctor of $\cN_n$ such that for every scheme $S$ over $O_{\breve{E}}$ on which $p$ is locally nilpotent, $\cZ_n(x)(S)$ consists of $(X,i,\lambda;\rho)$ satisfying that the composite homomorphism
\[
\bbX_{0k}\times_k S_k\xrightarrow{x}\bbX_n\times_kS_k\xrightarrow{\rho^{-1}}X\times_SS_k
\]
extends to an $O_E$-linear homomorphism $\bbX_0\times_{O_{\breve{E}}}S\to X$ over $S$.
\end{definition}

By \cite{RZ96}*{Proposition~2.9}, $\cZ_n(x)$ is a closed sub-formal scheme of $\cN_n$. For every $g\in\rU(\rV_n)(F)$, let $\rho_g\colon\bbX_n\to\bbX_n$ be the unique $O_E$-linear quasi-isogeny (of height zero) such that $gx=\rho_g\circ x$ for every $x\in\rV_n$; and, by abuse of notation, let $g\colon\cN_n\to\cN_n$ be the (auto)morphism sending $(X,i,\lambda;\rho)$ to $(X,i,\lambda;\rho_g\circ\rho)$. We denote by $\Gamma_g\subseteq\cN_n^2\coloneqq\cN_n\times_{O_{\breve{E}}}\cN_n$ the graph of $g$.

\begin{conjecture}[arithmetic fundamental lemma for $\rU(n)\times\rU(n)$]\label{co:afl_general}
For every regular semisimple orbit $(\zeta,y)\in[\rS_n(F)\times\rM_n(F)]_{\r{rs}}^-$, we have
\[
-\omega(\zeta,y)\left.\frac{\rd}{\rd s}\right|_{s=0}\Orb(s;\CF_{\rS_n(O_F)},\CF_{\rM_n(O_F)};\zeta,y)
=2\log q\cdot\chi\(\cO_{\Gamma_\xi}\otimes^\dL_{\cO_{\cN_n^2}}\cO_{\Delta\cZ_n(x)}\),
\]
where $(\xi,x)\in[\rU(\rV_n^-)(F)\times\rV_n^-(E)]_{\r{rs}}$ is the unique orbit that matches $(\zeta,y)$, and $\chi$ denotes the Euler--Poincar\'{e} characteristic.
\end{conjecture}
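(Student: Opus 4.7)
My plan is to follow the general template developed for Zhang's arithmetic fundamental lemma for $\rU(n)\times\rU(n+1)$, adapted to the Fourier--Jacobi setting here. The first step is the base case $n=1$: here $\cN_1=\Spf O_{\breve E}$, the special divisor $\cZ_1(x)$ has a single explicit equation whose length in $O_{\breve E}$ is controlled by $\val(x,x)$, and the graph $\Gamma_\xi$ inside $\cN_1^2$ is trivial to describe. The right-hand side is then an elementary integer, while the orbital integral over $\GL_1(F)=F^\times$ collapses to a twisted character integral whose derivative can be computed by hand. After normalizing by the transfer factor $\omega(\zeta,y)$ one checks the identity directly.

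For general $n$, I would proceed by a double induction via Kudla--Rapoport type reductions. When $x\in\rV_n^-$ has minimal norm, the special cycle $\cZ_n(x)$ is closely modeled by a lower-rank Rapoport--Zink space attached to $\ang{x}^\perp$, so that the intersection with $\Gamma_\xi$ reduces to data for the restriction of $\xi$ to $\ang{x}^\perp$, which has rank $n-1$. On the analytic side, a partial Fourier transform in the $(y_1,y_2)$ variables converts $\CF_{\rM_n(O_F)}$ into a shape compatible with this recursion. Non-minuscule orbits would then be handled by a secondary induction on $\val(x,x)$, exploiting an \emph{uncertainty principle} in the spirit of Zhang, which trades increased singularity of $\cZ_n(x)$ for a computable correction on the orbital side. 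A complementary route is to use the global relative trace formula developed in Section~\ref{ss:1}: Theorem~\ref{th:main4} provides an orbital decomposition of $\cI_K(\bbf,\bphi)_\fp$ at good inert primes, and a comparison with the analytic relative trace formula of \cite{Liu14}, for a sufficiently variable family of test functions and modulo smooth matching together with the RFL of Conjecture~\ref{co:rfl_general}(2), would isolate Conjecture~\ref{co:afl_general} at a single orbit.

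The principal obstacle, as in every known arithmetic fundamental lemma, is the computation of the derived intersection $\chi\(\cO_{\Gamma_\xi}\otimes^\dL_{\cO_{\cN_n^2}}\cO_{\Delta\cZ_n(x)}\)$ for general regular semisimple $(\xi,x)$. One must first establish that this intersection has proper support, which is a non-trivial statement about $p$-adic uniformization; then give a Bruhat--Tits-theoretic description of it in the vein of the Li--Zhu and Zhang analyses of affine Deligne--Lusztig varieties; and finally match the resulting lattice-theoretic count against the asymptotic expansion of the orbital integral near $s=0$. Past experience with the $\rU(n)\times\rU(n+1)$ case strongly suggests that this final matching is not formal and is precisely where the depth of the conjecture lies: it will require either a deep combinatorial coincidence or a genuine global input via the RTF comparison sketched above.
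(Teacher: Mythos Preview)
The statement you are trying to prove is stated in the paper as a \emph{conjecture}, not a theorem; the paper does not contain a proof of the general case, and indeed at the time of writing the general case was open (it is now known for $F=\dQ_p$ with $p>n$ via Zhang's work cited in Remark~\ref{re:fl_comparison}). So there is no ``paper's own proof'' to compare against for the full statement. What the paper does contain is the appendix by Li and Zhu (Section~\ref{ss:e}) proving Conjecture~\ref{co:afl_general} in the \emph{minuscule} case, i.e.\ when the lattice $L_{\xi,x}$ generated by $x,\xi x,\dots,\xi^{n-1}x$ satisfies $\varpi L_{\xi,x}^*\subseteq L_{\xi,x}\subseteq L_{\xi,x}^*$.

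Your proposal is therefore not a proof but a research strategy, and you seem aware of this in your final paragraph. As a strategy it is reasonable but differs substantially from what the paper actually does. The Li--Zhu argument in the minuscule case does not proceed by induction on $n$ or by Kudla--Rapoport-type reduction to $\langle x\rangle^\perp$; instead it (i)~rewrites the derivative of the orbital integral as an alternating lattice count $\sum_i(-1)^i(-i)\cdot\#N_i(\zeta,y)$ over $\xi$-stable, $\tau$-stable lattices between $L$ and $L^*$, (ii)~evaluates this count combinatorially in terms of the factorization of the characteristic polynomial of $\bar\xi$ on $L^*/L$ over the residue field, and (iii)~identifies the geometric side with the length of the fixed-point scheme $\cV(\Lambda)^{\bar\xi}$ of a Deligne--Lusztig variety attached to the vertex lattice $\Lambda=L^*$, invoking the Bruhat--Tits stratification of $\cN_n$. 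The two sides are then matched by an explicit formula involving the unique self-reciprocal irreducible factor of odd multiplicity. None of your proposed mechanisms---partial Fourier transform, uncertainty principle, or global RTF isolation---appear in that argument.

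Your global RTF route is closer in spirit to how the full conjecture was eventually attacked (Zhang's proof for $F=\dQ_p$), but note that the paper already records in Remark~\ref{re:fl_comparison} that Zhang established the \emph{equivalence} of Conjecture~\ref{co:afl_general} with his $\rU(n)\times\rU(n+1)$ AFL when $q>n$; so the most efficient path through the existing literature is that reduction followed by \cite{Zha}, not an independent induction as you outline.
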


In Conjecture \ref{co:afl_general}, it follows from Conjecture \ref{co:rfl_general}(1), which is known, that the left-hand side depends only on the $\GL_n(F)$-orbit of $(\zeta,y)$.

\begin{remark}\label{re:fl_comparison}
During the referee process of this article, Wei Zhang \cite{Zha}*{Proposition~4.12 \& Remark~3.1} has shown that his arithmetic fundamental lemma for $\rU(n)\times\rU(n+1)$ is equivalent to our arithmetic fundamental lemma for $\rU(n)\times\rU(n)$ (with respect to the same field extension $E/F$) when the residue cardinality of $F$ is greater than $n$. In particular, we find
\begin{enumerate}
  \item Conjecture \ref{co:afl_general} holds when $n\leq 2$ and $q$ is odd, by \cite{Zha12}*{Theorem~2.10 \& Theorem~5.5}.

  \item Conjecture \ref{co:afl_general} holds when $F=\dQ_p$ with $p>n$, by \cite{Zha}*{Theorem~15.1}.
\end{enumerate}
\end{remark}

\begin{remark}\label{re:minuscule}
In Section \ref{ss:e}, Chao~Li and Yihang~Zhu proved Conjecture \ref{co:afl_general} (for arbitrary $E/F$) in the so-called minuscule case, similar to the case of $\rU(n)\times\rU(n+1)$. In the case of $\rU(n)\times\rU(n)$, we say that a regular semisimple pair $(\xi,x)\in\rU(\rV_n^-)(F)\times\rV_n^-(E)$ is \emph{minuscule} if the $O_E$-lattice $L_{\xi,x}$ generated by $\{x,\xi x,\cdots,\xi^{n-1}x\}$ satisfies $\varpi L_{\xi,x}^*\subseteq L_{\xi,x}\subseteq L_{\xi,x}^*$ where $\varpi$ is a uniformizer of $F$ and $L_{\xi,x}^*$ denotes the dual lattice.
\end{remark}

\subsection{Relation between $\rU(n)\times\rU(n)$ and $\rU(n)\times\rU(n+1)$}

In this subsection, we make an informal comparison between the two scenarios of $\rU(n)\times\rU(n)$ and $\rU(n)\times\rU(n+1)$, for both automorphic periods/central values and arithmetic periods/central derivatives.

The following diagram compares the automorphic periods and the relative trace formula approaches toward the global GGP conjectures in the two scenarios.
\[
\xymatrix{
*+[F]{\txt{Automorphic Fourier--Jacobi \\ periods for $\rU(n)\times\rU(n)$ \\ \cites{Xue14,Xue16}}}*
\ar@{<->}[rrrr]^-{\txt{\footnotesize global theta lifting}}
\ar@{<->}[dd]^-{\txt{\footnotesize relative \\ \footnotesize trace formula}}_-{\txt{\footnotesize \cite{Liu14}}} &&&&
*+[F]{\txt{Automorphic Bessel \\ periods for $\rU(n)\times\rU(n+1)$ \\ \cites{Zha13,Zha14}}}*
\ar@{<->}[dd]_-{\txt{\footnotesize relative \\ \footnotesize trace formula}}^-{\txt{\footnotesize \cite{JR11}}} \\ \\
*+++[o][F]{\txt{relative \\ fundamental lemma \\ }}*
\ar@{}[d]|{\txt{$+$}} \ar@{<->}[rrrr]^-{\txt{\footnotesize ``equivalent''}}_-{\txt{\footnotesize \cite{Liu14}}} &&&&
*+++[o][F]{\txt{relative \\ fundamental lemma \\ \cite{Yun11} }}* \ar@{}[d]|{\txt{$+$}} \\
*+++[o][F]{\txt{relative \\ smooth matching \\  }}*
\ar@{<->}[rrrr]^-{\txt{\footnotesize ``equivalent''}}_-{\txt{\footnotesize \cite{Xue14}}} &&&&
*+++[o][F]{\txt{relative \\ smooth matching \\ \cite{Zha13} }}*
}
\]

In the first line, if we assume Conjecture \ref{co:vanishing} below,\footnote{Recently, Dihua~Jiang and Lei~Zhang \cite{JZ} have confirmed this conjecture when $n\leq 4$. Of course, when $n\leq 2$, it was already known before.} then the method of global theta lifting should provide an equivalence between the two sides when all $n$ are considered. In fact, Xue \cites{Xue14,Xue16} has essentially verified the deduction for both directions starting from two stable tempered representations on $\rU(n)$ that satisfy Conjecture \ref{co:vanishing}.

%In the second and third lines, by ``equivalent'', we mean that the statement for $\rU(n)\times\rU(n+1)$ implies the one for $\rU(n)\times\rU(n)$; and the one for $\rU(n)\times\rU(n)$ implies the one for $\rU(n-1)\times\rU(n)$.

\begin{conjecture}\label{co:vanishing}
Let $\rV$ be a hermitian space over $E$ of rank $n\geq 1$. Let $\pi$ be a tempered cuspidal automorphic representation of $\rU(\rV)(\bA_F)$. If $n$ is even (resp.\ odd), then there exists a conjugate orthogonal (resp.\ conjugate symplectic) automorphic character $\mu$ of $\bA_E^\times$ such that
\[
L(\tfrac{1}{2},\Pi\otimes\mu)\neq 0,
\]
where $\Pi$ is the standard base change of $\pi$ to $\GL_n(\bA_E)$.
\end{conjecture}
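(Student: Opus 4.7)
The plan is to convert the non-vanishing of $L(\tfrac{1}{2},\Pi\otimes\mu)$ into a non-vanishing statement about theta lifts, and to exploit the flexibility of the parity condition on $\mu$ to force both the global root number and every local theta obstruction to be favorable. The first step is to fix a skew-hermitian space $\rW$ over $E$ of rank $n$; together with $\mu$ (conjugate orthogonal for $n$ even, conjugate symplectic for $n$ odd), this yields a splitting of the metaplectic cover and a bona fide theta correspondence between $\rU(\rV)$ and $\rU(\rW)$. The Rallis inner product formula of Gan--Qiu--Takeda and Yamana then expresses the Petersson norm of the theta lift $\theta_\mu^\phi(\pi)$ as a product of local doubling integrals, a ratio of Langlands $L$-factors, and $L(\tfrac{1}{2},\Pi\otimes\mu)$ as the leading factor. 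Thus it suffices to produce a single pair $(\phi,\mu)$ for which the global theta lift $\theta_\mu^\phi(\pi)$ is non-zero.

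Next I would reduce to local questions. At archimedean places and at all finite places where $\pi_v$ and $\mu_v$ are unramified of matched conductor, local non-vanishing of the theta lift is automatic by Howe duality and a standard unramified computation. At each of the finitely many remaining places, local theta dichotomy (Harris--Kudla--Sweet, Gan--Ichino) asserts that exactly one of the two unitary groups of a given rank supports a non-vanishing local theta lift, and this dichotomy is controlled by the local component $\mu_v$. The crucial point is that the global parity constraint on $\mu$ restricts only $\mu|_{\bA_F^\times}$; at a split place $v$ of $F$, $\mu_v$ is an arbitrary pair of characters of $F_v^\times$, so local non-vanishing can be arranged at such places without disturbing the global parity. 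By choosing the restrictions of $\mu$ at a controlled set of auxiliary split places, one simultaneously arranges $\epsilon(\tfrac{1}{2},\Pi\otimes\mu)=+1$ and local non-vanishing everywhere.

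The final step is global non-vanishing of the theta lift. I would argue by induction on $n$ using the Witt tower of skew-hermitian spaces $\{\rW_r\}_r$ and the conservation relation between first occurrences on the two sides of the dual pair (Kudla--Rallis, Sun--Zhu). If $\theta_\mu^\phi(\pi)$ vanishes for every choice of Schwartz data on $\rU(\rW_n)$, the tower principle forces an earlier non-vanishing lift on $\rU(\rW_{n-1})$ or smaller; this in turn yields an automorphic representation on a lower-rank unitary group whose base change can be computed, and the hypothesis that $\pi$ is cuspidal tempered contradicts the Arthur classification of the Langlands parameter so obtained. The base case $n=1$ reduces to classical non-vanishing of Hecke $L$-functions at the centre in a family of parity-constrained characters (Rohrlich or Friedberg--Hoffstein averaging). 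This is essentially the strategy used by Jiang--Zhang for $n\leq 4$.

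The hardest step is the inductive one at general $n$: ruling out simultaneous vanishing of theta lifts at several adjacent rungs of the tower requires precise control on the Arthur parameter of $\pi$ and, at bottom, a tempered case of the refined global Gan--Gross--Prasad conjecture for $\rU(n)\times\rU(1)$, which is not fully known. A purely analytic alternative via the first and second moments of $L(\tfrac{1}{2},\Pi\otimes\mu)$ over a family of $\mu$ of bounded conductor is also plausible and yields a positive main term equal to a product of local integrals, but the off-diagonal error terms scale polynomially in the conductor with an exponent growing with $n$, so the obstruction translates into a subconvexity bound for the twisted $L$-function in the conductor aspect of $\mu$, which is currently out of reach beyond small $n$.
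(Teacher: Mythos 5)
The statement you were asked to prove is a \emph{conjecture} in the paper (Conjecture~\ref{co:vanishing}), not a theorem: the paper offers no proof and explicitly notes in a footnote that Jiang and Zhang~\cite{JZ} have confirmed it only for $n\leq 4$, while the general case remains open. So there is no proof in the paper against which to compare your argument, and any evaluation has to be of whether you have actually \emph{closed} the conjecture, not whether you matched the author's route.

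On that score your write-up is, by your own admission, a plan rather than a proof. The first two steps are sound and standard: the Rallis inner product formula converts nonvanishing of $L(\tfrac{1}{2},\Pi\otimes\mu)$ into nonvanishing of a global theta lift to an equal-rank skew-hermitian space, and the local obstructions at finitely many places can be arranged using theta dichotomy together with the observation that $\mu$ is essentially unconstrained at places of $F$ that split in $E$, which also gives enough room to force $\epsilon(\tfrac{1}{2},\Pi\otimes\mu)=+1$. The genuine gap is exactly where you locate it: the inductive step up the Witt tower. Conservation relations and first-occurrence bounds control where a theta lift \emph{can} first appear, but deducing that it \emph{does} appear at the equal-rank rung for some $\mu$ requires either a tempered refined Gan--Gross--Prasad input for $\rU(n)\times\rU(1)$, or equivalently a quantitative nonvanishing theorem for the family of twists $L(\tfrac{1}{2},\Pi\otimes\mu)$; neither is available beyond small $n$. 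Your analytic alternative via moments correctly identifies the off-diagonal/subconvexity barrier. As a point of comparison, the Jiang--Zhang argument for $n\leq 4$ proceeds via automorphic descent and twisted Fourier coefficients, which sidesteps the direct Rallis-inner-product induction you propose but still does not scale to general $n$, so you are in the same position as the state of the art. In summary: your proposal is a coherent strategy aligned with what is known, but it does not and cannot at present constitute a proof of an open conjecture, and you were right to flag where it stops short.
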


The following diagram compares the arithmetic periods and the relative trace formula approaches toward the arithmetic GGP conjectures in the two scenarios.
\[
\xymatrix{
*+[F]{\txt{Arithmetic Fourier--Jacobi \\ periods for $\rU(n)\times\rU(n)$ \\ [this article]}}*
\ar@{<..>}[rrrr]^-{\txt{\footnotesize motivic endoscopic \\ \footnotesize transfer?}}
\ar@{<->}[dd]^-{\txt{\footnotesize arithmetic \\ \footnotesize trace formula}}_-{\txt{\footnotesize [this article]}} &&&&
*+[F]{\txt{Arithmetic Bessel \\ periods for $\rU(n)\times\rU(n+1)$ \\ \cites{Zha12,RSZ}}}*
\ar@{<->}[dd]_-{\txt{\footnotesize arithmetic \\ \footnotesize trace formula}}^-{\txt{\footnotesize \cite{Zha12}}} \\ \\
*+++[o][F]{\txt{arithmetic \\ fundamental lemma }}*
\ar@{}[d]|{\txt{$+$}} \ar@{<->}[rrrr]^-{\txt{\footnotesize ``equivalent''}}_-{\txt{Remark \ref{re:fl_comparison}}} &&&&
*+++[o][F]{\txt{arthmetic \\ fundamental lemma }}* \ar@{}[d]|{\txt{$+$}} \\
*+++[o][F--]{\txt{arithmetic \\ smooth matching \\  }}*
\ar@{<-->}[rrrr]^-{\txt{\footnotesize perhaps related?}} &&&&
*+++[o][F--]{\txt{arithmetic \\ smooth matching }}*
}
\]

In the first line, the Tate conjecture over number fields predicts a motivic endoscopic lifting (or motivic theta lifting) that transfers algebraic cycles from one side to the other. Thus, we expect that our Fourier--Jacobi cycles should be related to the diagonal cycle considered in \cite{Zha12} in a certain way. However, at this moment, the motivic endoscopic lifting seems far out of reach. For the two dashed bubbles surrounding ``arithmetic smooth matching'', we do not how to formulate a precise conjecture in general. However, in some special cases for $\rU(n)\times\rU(n+1)$, there are some results \cites{RSZ17,RSZ18}.

\subsection{Relation with the arithmetic triple product formula}

In this subsection, we compare our arithmetic GGP conjecture for $n=2$ with the (conjectural) arithmetic triple product formula, which can be regarded as the arithmetic GGP for $\rO(3)\times\rO(4)$ in which $\rO(4)$ has trivial discriminant. Lots of progress has been made toward the arithmetic triple product formula; see, for example, \cites{GK92,GK93,YZZ1}.

We first make a quick review of the arithmetic triple product formula following the line of \cite{YZZ1}. Consider three irreducible cuspidal automorphic representations $\sigma_1,\sigma_2,\sigma_3$ of $\GL_2(\bA_F)$ of parallel weight $2$ such that the product of their central characters is trivial and $\epsilon(\tfrac{1}{2},\sigma_1\times\sigma_2\times\sigma_3)=-1$. Then the local dichotomy of triple product invariant functionals provides us with a totally definite incoherent quaternion algebra $\bB$ over $\bA_F$, unique up to isomorphism. Let $\{Y_U\}_U$ be the system of compactified Shimura curves over $F$ associated to $\bB$ indexed by open compact subsets $U\subseteq(\bB\otimes_{\bA_F}\bA_F^\infty)^\times$. For $i=1,2,3$, let $A_i$ be the abelian variety of strict $\GL(2)$-type over $F$ associated to $\sigma_i$. For morphisms $g_i\colon Y_U\to A_i$ for $i=1,2,3$, we have the Gross--Kudla--Schoen cycle, which is essentially
\[
\r{GKS}(g_1,g_2,g_3)_U\coloneqq(g_1\times g_2\times g_3)_*\Delta^3 Y_U\in\CH_1(A_1\times A_2\times A_3)_\dQ.
\]
The arithmetic triple product formula predicts a relation between the Beilinson--Bloch height of $\r{GKS}(g_1,g_2,g_3)_U$ and the central derivative $L'(\tfrac{1}{2},\sigma_1\times\sigma_2\times\sigma_3)$.

Now we discuss its connection with our case. Suppose that
\begin{enumerate}
  \item $\sigma_3$ is the theta lifting of $\mu^\alg\coloneqq\mu\cdot|\;|_E^{-1/2}$ for an automorphic character $\mu$ of $\bA_E^\times$p which is necessarily conjugate symplectic of weight one;

  \item for $i=1,2$, the base change of $\sigma_i$ to $\GL_2(\bA_E)$, denoted by $\Pi_i$, has trivial central character.
\end{enumerate}
Then $\Pi_1$ and $\Pi_2$ are both relevant; and we may take $A_\mu$ to be $A_3\otimes_FE$. Let $\bV$ be the unique up to isomorphism totally positive definite (incoherent) hermitian space over $\bA_E$ of rank $2$ such that for every nonarchimedean place $v$ of $F$, $\bV\otimes_{\bA_F}F_v$ is isotropic if and only if $\bB\otimes_{\bA_F}F_v$ is split. We recall our compactified unitary Shimura curve $\{X_K\}_K$ associated to $\bV$. For morphisms $f_i\colon X_K\to A_i\otimes_FE$ for $i=1,2$ and $\phi\colon X_K\to A_\mu=A_3\otimes_FE$, we have the Fourier--Jacobi cycle, which is essentially
\[
\FJ(f_1,f_2;\phi)_K\coloneqq(f_1\times f_2\times \phi)_*\Delta^3 X_K\in\CH_1((A_1\times A_2\times A_3)\otimes_FE)_\dQ.
\]
Conjecture \ref{co:main2} predicts a relation between the Beilinson--Bloch height of $\FJ(f_1,f_2;\phi)_K$ and the central derivative $L'(\tfrac{1}{2},\Pi_1\times\Pi_2\otimes\mu)$. It is possible to show \emph{a priori} that the height of $\r{GKS}(g_1,g_2,g_3)_U$ for some choice of $U,g_1,g_2,g_3$ is related to the height of $\FJ(f_1,f_2;\phi)_K$ for some choice of $K,f_1,f_2,\phi$. This is not surprising as in this case we have the equality
\[
L(s,\sigma_1\times\sigma_2\times\sigma_3)=L(s,\Pi_1\times\Pi_2\otimes\mu)
\]
between $L$-functions. In other words, our work in the special case where $n=2$ provides a relative trace formula approach toward the arithmetic triple product formula in the situation where (1) and (2) are satisfied. However, we point out that not all cases for $\rU(2)\times\rU(2)$ arise from the arithmetic triple product formula in this way since $\Pi_1$ and $\Pi_2$ are not necessarily base change from $\GL_2(\bA_F)$.

\subsection{Structure of the article}

The main part of the article contains five sections.

In Section \ref{ss:a}, we study the Albanese varieties. In Subsection \ref{ss:albanese_general}, we introduce the Albanese varieties of proper smooth varieties over a general base field, and study their polarizations. In Subsection \ref{ss:picard}, we generalize the construction of Picard motives using not necessarily ample divisors as cutting divisors, which will be used in Subsection \ref{ss:kunneth}.

In Section \ref{ss:2}, we make some preparation for algebraic cycles and height pairings for general varieties. In Subsection \ref{ss:cycles}, we review the notion of algebraic cycles and correspondences. In Subsection \ref{ss:bbp}, we review the construction of the Beilinson--Bloch height pairing and introduce our variant -- the Beilinson--Bloch--Poincar\'{e} height pairing. In Subsection \ref{ss:kunneth}, we discuss the construction of some K\"{u}nneth--Chow projectors for curves and surfaces, which will be used in the modified diagonal $\Delta^3_zX_K$ later.

In Section \ref{ss:3}, we construct Fourier--Jacobi cycles and state our main conjectures. In Subsection \ref{ss:motives}, we construct the category of CM data for a conjugate symplectic automorphic character $\mu$ of weight one. In Subsection \ref{ss:albanese_unitary}, we introduce our Shimura varieties and study their Albanese varieties; in particular, we prove Theorem \ref{th:main1}. In Subsection \ref{ss:construction_cycles}, we construct Fourier--Jacobi cycles and show that they are homologically trivial. In Subsection \ref{ss:aggp}, we prove the remaining part of Theorem \ref{th:main2}, and propose various versions of the arithmetic Gan--Gross--Prasad conjecture for $\rU(n)\times\rU(n)$.

In Section \ref{ss:4}, we discuss a relative trace formula approach toward the arithmetic GGP conjecture for $\rU(n)\times\rU(n)$. In Subsection \ref{ss:doubling}, we prove the doubling formula for CM data in Theorem \ref{th:main3}. In Subsection \ref{ss:invariant}, we introduce the global arithmetic invariant functional and its local version at good inert primes for which we perform some preliminary computation. In Subsection \ref{ss:orbital}, we prove Theorem \ref{th:main4}.

The article also contains four appendices.

In Section \ref{ss:e}, provided by Chao~Li and Yihang~Zhu, we confirm the arithmetic fundamental lemma in the minuscule case.

In Section \ref{ss:b}, we prove some results about global theta lifting for unitary groups, namely, Theorem \ref{th:pole} and its two corollaries. Those results are only used in the proof of Proposition \ref{pr:endoscopy_general}. Thus, if the readers are willing to admit these results from the theory of automorphic forms, they are welcome to skip the entire section except the very short Subsection \ref{ss:discrete} where we introduce some notation for the discrete automorphic spectrum.

In Section \ref{ss:c}, we summarize different versions of unitary Shimura varieties. In Subsection \ref{ss:appendix_isometry}, we recall Shimura varieties associated to isometry groups of hermitian spaces, which are of abelian type; we also introduce the Shimura varieties associated to incoherent hermitian spaces -- they are the main geometric objects studied in this article. In Subsection \ref{ss:appendix_similitude}, we recall the well-known PEL type Shimura varieties associated to groups of rational similitude of skew-hermitian spaces, and their integral models at good primes, after Kottwitz. These Shimura varieties are only for the preparation of the next subsection, which are not logically needed in the main part of the article. In Subsection \ref{ss:appendix_connection}, we summarize the connection of these two kinds of unitary Shimura varieties via the third one which possesses a moduli interpretation but is not of PEL type in the sense of Kottwitz, after \cites{BHKRY,RSZ}. In Subsection \ref{ss:integral_models}, we discuss integral models of the third unitary Shimura varieties at good inert primes and their uniformization along the basic locus. The last two subsections are crucial to the discussion in Subsections \ref{ss:invariant} and \ref{ss:orbital}.

In Section \ref{ss:d}, we compute the cohomology of Shimura curves associated to \emph{isometry} groups of hermitian spaces of rank $2$, as Galois--Hecke modules. In Subsection \ref{ss:local_theta}, we collect some results about local oscillator representations of unitary groups of general rank. In Subsection \ref{ss:setup}, we recall some facts and introduce some notation about cohomology of Shimura varieties in general.
These two subsections will be used both in Section \ref{ss:d} and in the main part of the article. The last two subsections concern the cohomology of unitary Shimura curves, for the statements and for the proof, respectively. These statements are only used in the proof of Theorem \ref{th:cm_albanese_pre} and Theorem \ref{th:cm_albanese} in the main part of the article, and are probably known to experts. However, we can not find any reference for the proofs or even for the statements themselves.

For readers' convenience, we summarize the logical dependence of the article in the following diagram.
\[
\xymatrix{
*+++[o][F]{\txt{\ref{ss:a}}}* \ar@{=>}[rrd] &&
*++[o][F]{\txt{\ref{ss:b}}}* \ar[d] &&
*++[o][F]{\txt{\ref{ss:c}}}* \ar[dr]\ar[dll] \\
&& *+++[o][F]{\txt{\ref{ss:3}}}* \ar@{=>}[rrr] &&&
*+++[o][F]{\txt{\ref{ss:4}}}* \\
*+++[o][F]{\txt{\ref{ss:2}}}* \ar@{=>}[rru] &&
*++[o][F]{\txt{\ref{ss:d}}}* \ar[u] &&&
*++[o][F]{\txt{\ref{ss:e}}}* \ar@{-->}[u]
}
\]

\subsection{Notation and conventions}
\label{ss:notation}

\subsubsection*{General notation}

\begin{itemize}
  \item For a set $S$, we denote by $\CF_S$ the characteristic function of $S$.

  \item Suppose that we work in a category with finite products. Then
  \begin{itemize}
    \item for a finite collection $\{X_1,\dots,X_n\}$ of objects, the notation
        \[
        \tp_{abc\cdots}\colon X_1\times\cdots\times X_n\to X_a\times X_b\times X_c\times\cdots
        \]
        will, by default, stand for the projection to the factors labeled by the subset $\{a,b,c,\dots\}\subseteq\{1,\dots,n\}$;

    \item for an abject $X$ and an integer $r\geq 0$, we denote $\Delta^r\colon X\to X^r$ the diagonal morphism, and simply write $\Delta$ for $\Delta^2$.
  \end{itemize}

  \item All rings are commutative and unital; and ring homomorphisms preserve unity.

  \item For an abelian group $A$ and a ring $R$, we put $A_R\coloneqq A\otimes_\dZ R$ as an $R$-module.

  \item For a field $k$, we denote by $k^\ac$ an abstract algebraic closure of $k$.

  \item The \emph{bar} $\ol{\phantom{z}}$ only denotes the complex conjugation in $\dC$. For example, for an element $x\in\dC\otimes_\dQ E$, $\ol{x}$ is obtained by only applying conjugation to the first factor.

  \item We denote by $\dC^1$ the subgroup of $\dC^\times$ consisting of $z$ satisfying $z\ol{z}=1$.
\end{itemize}

\subsubsection*{Notation in number theory}

\begin{itemize}
  \item A reflex field is always a \emph{subfield} of $\dC$.

  \item Denote by $\bA^\infty\coloneqq\widehat\dZ_\dQ$ the ring of finite ad\`{e}les of $\dQ$, and put $\bA\coloneqq\dR\times\bA^\infty$.

  \item For a number field $k$, we put $\bA_k\coloneqq\bA\otimes_\dQ k$ and $\bA^\infty_k\coloneqq\bA^\infty\otimes_\dQ k$.
  \begin{itemize}
    \item Denote by $|\;|_\dQ\colon\dQ^\times\backslash\bA^\times\to\dR^\times_{>0}$ the character, uniquely determined by the properties that $|x|_\dQ=|x|$ for $x\in\dR^\times$ and that $|\;|_\dQ$ is trivial on $\widehat\dZ^\times$. For every $s\in\dC$, Put $|\;|_k^s\coloneqq|\;|^s_\dQ\circ\Nm_{k/\dQ}\colon k^\times\backslash\bA_k^\times\to\dR^\times_{>0}$.

    \item Denote by $\psi_\dQ\colon\dQ\backslash\bA\to\dC^\times$ the character, uniquely determined by the properties that $\psi_\dQ(x)=\exp(2\pi i x)$ for $x\in\dR$ and that $\psi_\dQ$ is trivial on $\widehat\dZ$. Put $\psi_k\coloneqq\psi_\dQ\circ\Tr_{k/\dQ}\colon k\backslash\bA_k\to\dC^\times$, which we call the \emph{standard additive character} for $k$.
  \end{itemize}

  \item In local or global class field theory, the Artin reciprocity map always sends a uniformizer at a nonarchimedean place $v$ to a geometric Frobenius element at $v$.
\end{itemize}

\subsubsection*{Notation in algebraic geometry}

\begin{itemize}
  \item For a scheme $S$ and a rational prime $p$, we denote by $\Sch_{/S}$ the category of schemes over $S$ and by p$\Sch'_{/S}$ the subcategory of those that are locally Noetherian. If $S=\Spec R$ is affine, then we simply replace $S$ by $R$ in the above notations.

  \item We denote by $\dG_\rm\coloneqq\Spec\dZ[T,T^{-1}]$ the multiplicative group scheme over $\dZ$. For integers $r,s\geq 1$, denote by $\Mat_{r,s}$ the scheme over $\dZ$ of $r$-by-$s$ matrices. For an integer $n\geq 1$, we put $\rM_n\coloneqq\Mat_{n,1}\times\Mat_{1,n}$.

  \item For a ring $R$, a scheme $X$ in $\Sch_{/R}$, and a ring $R'$ over $R$, we usually write $X_{R'}\in\Sch_{/R'}$ instead of $X\times_{\Spec R}\Spec S$.

  \item For a ring $R$, a scheme $X\in\Sch_{/R}$ that is locally of finite type, a homomorphism $\tau\colon R\to\dC$, and an abelian group $\Lambda$, we denote by $\rH^i_{\rB,\tau}(X,\Lambda)$ the degree $i$ singular cohomology of the underlying topological space of $X\otimes_{R,\tau}\dC$ with coefficients in $\Lambda$. When $R$ is a \emph{subring} of $\dC$ and $\tau$ is the inclusion, we suppress $\tau$ in the subscript.

  \item For a ring $R$, we denote by $\rD^\rb_{\r{fl}}(R)$ the bounded derived category of $R$-modules whose cohomology consists of $R$-modules of finite length. For $\cF\in \rD^\rb_{\r{fl}}(R)$, we have the Euler--Poincar\'{e} characteristic $\chi(\cF)\coloneqq\sum_{i}(-1)^i\length_R\rH^i\cF$. In general, for a (formal) scheme $X$ over $R$ and an element $\cF$ in the derived category of $\cO_X$-modules, we define its Euler--Poincar\'{e} characteristic $\chi(\cF)$ to be $\chi(\rR s_*\cF)$ (resp.\ $\infty$) if $\rR s_*\cF$ belongs to $\rD^\rb_{\r{fl}}(R)$ (resp.\ otherwise), where $s$ is the structure morphism.
\end{itemize}

\subsubsection*{Acknowledgements}

The author would like to thank Sungyoon~Cho, Benedict~Gross, Kai-Wen~Lan, Chao~Li, Xinyi~Yuan, Shouwu~Zhang, and Wei~Zhang for helpful comments and discussion, and thank Chao~Li and Yihang~Zhu for providing Section \ref{ss:e} for the proof of the arithmetic fundamental lemma in the minuscule case. He thanks the anonymous referee for careful reading and valuable comments. The research of the author is partially supported by NSF grant DMS--1702019 and a Sloan Research Fellowship.

\section{Albanese variety}
\label{ss:a}

In this section, we study the Albanese varieties. In Subsection \ref{ss:albanese_general}, we introduce the Albanese varieties of proper smooth varieties over a general base field, and study their polarizations. In Subsection \ref{ss:picard}, we generalize the construction of Picard motives using not necessarily ample divisors as cutting divisors.

Let $k$ be a field. We work in the category $\Sch_{/k}$.

\subsection{Albanese variety and its polarization}
\label{ss:albanese_general}

\begin{definition}\label{de:split}
Consider schemes $X,Y\in\Sch_{/k}$ of finite type.
\begin{enumerate}
  \item We denote by $\nabla X$ the smallest open and closed subscheme of $X\times X$ containing the diagonal $\Delta X$. For every morphism $u\colon Y\to X$, $u\times u$ restricts to a morphism $\nabla u\colon \nabla Y\to \nabla X$.

  \item We say that a field $k'$ over $k$ \emph{splits} $X$ if every connected component of $X_{k'}$ is geometrically connected. For such $k'$, we regard $\pi_0(X_{k'})$ as a scheme in $\Sch_{/k'}$, which induces a factorization of morphisms $X_{k'}\to\pi_0(X_{k'})\to\Spec k'$ in $\Sch_{/k'}$. In particular, giving an element in $X(\pi_0(X_{k'}))$ is equivalent to giving an element in $X_i(k')$ for every connected component $X_i$ of $X_{k'}$.

  \item Let $f\colon\nabla X\to Y$ be a morphism. For every field $k'$ over $k$ that splits $X$ and every element $x\in X(\pi_0(X_{k'}))$, we denote by
      \[
      f_x\colon X_{k'}\to Y_{k'}
      \]
      the morphism such that its restriction to a connected component $X_i$ is the restriction of $f_{k'}$ to $X_i\times_{k'}(x\cap X_i)\simeq X_i$.
\end{enumerate}
\end{definition}

The following proposition on the Albanese variety without rational base point is probably well-known. Since we can not find a precise reference for it, we include a proof.

\begin{proposition}
Let $X$ be a proper smooth scheme in $\Sch_{/k}$. Consider the functor $\underline\Alb_X$ on the category of abelian varieties $A$ over $k$ such that $\underline\Alb_X(A)$ is the set of morphisms $f\colon \nabla X\to A$ over $k$ such that $\Delta X$ is contained in $f^{-1}0_A$. Then $\underline\Alb_X$ is corepresentable.
\end{proposition}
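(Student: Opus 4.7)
The plan is to corepresent $\underline\Alb_X$ by a three-step argument that reduces to the classical Albanese variety via Galois descent. First I would choose a finite separable Galois extension $k'/k$ with group $\Gamma$ which splits $X$, and write $X_{k'}=\coprod_{i\in I}X_i$ with each $X_i$ geometrically connected, proper, and smooth over $k'$. Then $\nabla X_{k'}=\coprod_{i\in I}X_i\times_{k'}X_i$, and $\Gamma$ permutes the index set $I$ via its action on $X_{k'}$. By faithfully flat descent, giving a morphism $f\colon\nabla X\to A$ over $k$ with $f\circ\Delta=0$ is the same as giving a $\Gamma$-equivariant collection $(f_i\colon X_i\times_{k'}X_i\to A_{k'})_{i\in I}$ with each $f_i$ vanishing on $\Delta X_i$.

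Next I would show corepresentability on each geometrically connected component. Enlarging scalars further to a field $k''/k'$ with $X_i(k'')\neq\varnothing$ and picking $x_0\in X_i(k'')$, the theorem of the cube furnishes a bijection
\[
\{g\colon X_{i,k''}\to A\mid g(x_0)=0\}\;\xrightarrow{\sim}\;\{f_i\colon X_{i,k''}\times_{k''}X_{i,k''}\to A\mid f_i|_\Delta=0\},\qquad g\mapsto\bigl((x,y)\mapsto g(x)-g(y)\bigr),
\]
with inverse $f_i\mapsto(x\mapsto f_i(x,x_0))$. Hence the component functor is corepresented over $k''$ by the classical Albanese variety $\Alb(X_{i,k''},x_0)$. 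Crucially, the underlying abelian variety is independent of the choice of $x_0$: a different basepoint changes the pointed universal map $X_{i,k''}\to\Alb(X_{i,k''})$ only by a translation of the target, which cancels in the difference $g(x)-g(y)$.

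The final step is descent. The canonicity just observed equips the representing object over $k''$ with a canonical action of $\Gal(k''/k')$ intertwining the universal property, and effective Galois descent for the quasi-projective abelian variety yields an $A_i$ over $k'$ which corepresents $\underline\Alb_{X_i}$ over $k'$. The $\Gamma$-permutation of $I$, together with the functorial compatibility of the constructions under any $\sigma\in\Gamma$, equips $\prod_{i\in I}A_i$ with a $\Gamma$-descent datum, again effective, and produces an abelian variety $\Alb_X$ over $k$. The universal morphism $\alpha\colon\nabla X\to\Alb_X$ over $k$ is obtained by assembling the $f_i^{\mathrm{univ}}$ and descending; then Step~1 shows that $(\Alb_X,\alpha)$ corepresents $\underline\Alb_X$.

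The principal subtlety is making both descent steps (from $k''$ to $k'$ and from $k'$ to $k$) genuinely canonical rather than merely canonical up to unique isomorphism, so that the two descent data compose unambiguously. The fix is to phrase representability in purely functorial terms: the functor $\underline\Alb_{X_i}$ is defined over $k'$, so its $\Gamma$-equivariance is tautological, and canonicity of the representing object propagates automatically once Step~2 is in hand. The secondary concern that the universal morphism $\alpha$ itself descends to $k$ falls out of the same equivariance.
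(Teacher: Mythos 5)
Your proposal is correct and follows essentially the same route as the paper: pass to an extension where rational points exist, reduce to Serre's pointed Albanese variety and its universal property, observe that the resulting morphism $(x,y)\mapsto g_{x_0}(x)-g_{x_0}(y)$ is independent of the chosen basepoint, and conclude by Galois descent. The only cosmetic differences are that the paper works directly over the separable closure (so one descent step suffices, and a rational point on each component exists by smoothness without further extension) and phrases the key bijection as basepoint-independence rather than as a ``theorem of the cube'' statement; the latter is a slightly loose attribution, as what is really needed is the decomposition of morphisms $X\times X\to A$ vanishing on the diagonal via $\Alb(X\times X)\cong\Alb(X)^2$ and the pointed universal property.
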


\begin{proof}
Let $k'$ be a separable closure of $k$. Then $k'$ splits $X$ (Definition \ref{de:split}). Put $X'\coloneqq X_{k'}$. We first consider the problem for $X'$. Pick an element $x\in X(\pi_0(X'))$, which is possible as $X'$ is smooth over $k'$. By Serre's construction \cite{Ser59} of the Albanese variety (see \cite{Wit08}*{Appendix~A} for a version over separably closed field), we have a morphism $g_x\colon X'\to\Alb_{X'}$, universal among all morphisms $g\colon X'\to A$ to an abelian variety $A$ over $k'$ such that $g(x)=0_A$. Now it is easy to see that the composite morphism
\[
\nabla_{k'}X'\xrightarrow{g_x\times g_x}\Alb_{X'}\times_{k'}\Alb_{X'}\xrightarrow{-}\Alb_{X'}
\]
does not depend on the choice of $x$, and corepresents the functor $\underline\Alb_{X'}$. Here, $\nabla_{k'}X'$ is defined similarly as in Definition \ref{de:split}, but with the base field $k'$. As $(\nabla X)_{k'}\simeq \nabla_{k'}X'$, the statement for $X$ then follows by Galois descent.
\end{proof}

\begin{definition}\label{de:albanese}
Let $X$ be a proper smooth scheme in $\Sch_{/k}$. The abelian variety that corepresents the functor $\underline\Alb_X$ is called the \emph{Albanese variety} of $X$, denoted by $\Alb_X$. The canonical morphism, denoted by
\[
\alpha_X\colon \nabla X\to\Alb_X,
\]
is called the \emph{Albanese morphism}. For a morphism $u\colon Y\to X$ of proper smooth schemes over $k$, we have the induced morphism $\Alb_u\colon\Alb_Y\to\Alb_X$ by the universal property, which satisfies $\Alb_u\circ\alpha_X=\alpha_Y\circ\nabla u$.
\end{definition}

\begin{lem}\label{le:albanese}
Suppose that $k$ has characteristic zero. Then
\begin{enumerate}
  \item for every homomorphism $\tau\colon k\to\dC$, we have a canonical isomorphism $\rH^1_{\rB,\tau}(\Alb_X,\dQ)\simeq\rH^1_{\rB,\tau}(X,\dQ)$;

  \item for every prime $\ell$, we have a canonical isomorphism $\rH^1_{\et}((\Alb_X)_{k^\ac},\dQ_\ell)\simeq\rH^1_{\et}(X_{k^\ac},\dQ_\ell)$ of $\Gal(k^\ac/k)$-modules.
\end{enumerate}
\end{lem}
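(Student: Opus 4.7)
The plan is to reduce both statements to the classical theory of the pointed Albanese morphism by base-changing to a field where $X$ acquires rational points, and then to promote the resulting isomorphism on $\rH^1$ to a canonical (hence Galois-equivariant) one by exploiting its independence from the choice of basepoint.

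First I would observe that the formation of $\Alb_X$ commutes with separable base change, as a consequence of the universal property in Definition~\ref{de:albanese} together with the fact that $\nabla(-)$ commutes with such base change (it is defined via selecting certain connected components). For (1) I base-change via $\tau$ to $\dC$; for (2) I base-change to $k^\ac$. In either case we may assume the base is algebraically closed, and decompose $X$ into its connected components $X=\bigsqcup_i X_i$. The universal property then yields $\Alb_X\simeq\prod_i\Alb_{X_i}$, and $\rH^1(X)=\bigoplus_i\rH^1(X_i)$, so it suffices to treat each $X_i$ separately. For a choice of rational point $x_i\in X_i$, the inclusion $\{x_i\}\times X_i\subseteq\nabla X$ composed with $\alpha_X$ yields a pointed morphism $g_{x_i}\colon X_i\to\Alb_{X_i}$ with $g_{x_i}(x_i)=0$. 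Matching universal properties---the inverse direction being the ``difference'' construction $g\mapsto[(y,z)\mapsto g(z)-g(y)]$---identifies $g_{x_i}$ with the classical pointed Albanese morphism of $(X_i,x_i)$.

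The classical theory then provides an isomorphism $g_{x_i}^*\colon\rH^1(\Alb_{X_i})\xrightarrow{\sim}\rH^1(X_i)$ in both Betti and $\ell$-adic cohomology. Crucially, this isomorphism is \emph{independent} of the choice of $x_i$: replacing $x_i$ by $x_i'$ alters $g_{x_i}$ only by a translation on $\Alb_{X_i}$, and translations on an abelian variety act trivially on $\rH^1$. Assembling over $i$ establishes (1).

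For (2), Galois-equivariance follows directly from this canonicity. Since $\alpha_X$ is defined over $k$, for $\sigma\in\Gal(k^\ac/k)$ one has $\sigma\cdot g_{x_i}=g_{\sigma x_i}$ (with $\sigma$ allowed to permute the components $\{X_i\}$); basepoint-independence then forces $g_{x_i}^*$ and $g_{\sigma x_i}^*$ to induce the same map on $\rH^1$, so the induced isomorphism intertwines the Galois actions. No serious obstacle arises; the only minor care is the bookkeeping of Galois-permuted components, which is handled cleanly by the product decomposition above.
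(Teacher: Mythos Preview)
Your proof is correct, and for part (1) it is essentially identical to the paper's argument: pick a basepoint on each geometric component, invoke the classical isomorphism for the pointed Albanese map, and observe that translation-invariance on $\rH^1$ makes the result independent of the chosen point.

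For part (2) the approaches diverge. You argue Galois-equivariance \emph{a posteriori}: having built the isomorphism over $k^\ac$, you note that the Galois conjugate of a pointed Albanese map is again a pointed Albanese map (for the conjugate basepoint), and basepoint-independence forces the induced maps on $\rH^1$ to agree. The paper instead builds the comparison map \emph{over $k$} from the start: it extends $\alpha_X$ by zero to a morphism $\alpha'_X\colon X\times X\to\Alb_X$ defined over $k$, takes the K\"unneth component of $(\alpha'_X)^*$ landing in $\rH^1(X)\otimes\rH^0(X)$, and contracts via cup product to obtain a map $\alpha''_X\colon\rH^1_{\et}((\Alb_X)_{k^\ac},\dQ_\ell)\to\rH^1_{\et}(X_{k^\ac},\dQ_\ell)$ that is Galois-equivariant by construction. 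It then checks that $\alpha''_X$ is an isomorphism by the Lefschetz principle and comparison with (1). Your route is more elementary and avoids the K\"unneth manipulation, at the cost of the bookkeeping with Galois-permuted components that you flag; the paper's route gives a manifestly canonical $k$-rational description of the map, making equivariance automatic and perhaps clarifying how the isomorphism is used later (e.g.\ in Theorem~\ref{th:cm_albanese_pre}).
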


\begin{proof}
For (1), we pick an element $x\in X(\pi_0(X\otimes_{k,\tau}\dC))$, which induces a morphism
\[
(\alpha_X)_x\colon X\otimes_{k,\tau}\dC\to\Alb_X\otimes_{k,\tau}\dC
\]
from Definition \ref{de:albanese} and Definition \ref{de:split}. By the property of complex Albanese varieties, the induced map
\[
(\alpha_X)_x^*\colon\rH^1_{\rB,\tau}(\Alb,\dQ)\to\rH^1_{\rB,\tau}(X,\dQ)
\]
is an isomorphism; it is independent of the choice of $x$ since translation acts trivially on $\rH^1_{\rB,\tau}(\Alb,\dQ)$.

For (2), we extend the morphism $\alpha_X$ to $\alpha'_X\colon X\times X\to\Alb_X$ by letting $X\times X\setminus\nabla X$ map to $0_{\Alb_X}$.
By the K\"{u}nneth formula, we have the map
\[
(\alpha'_X)^*\colon\rH^1_{\et}((\Alb_X)_{k^\ac},\dQ_\ell)\to\rH^1_{\et}(X_{k^\ac},\dQ_\ell)\otimes_{\dQ_\ell}\rH^0_{\et}(X_{k^\ac},\dQ_\ell)
\]
of $\Gal(k^\ac/k)$-modules. Taking cup product, we obtain a map
\[
\alpha''_X\colon\rH^1_{\et}((\Alb_X)_{k^\ac},\dQ_\ell)\to\rH^1_{\et}(X_{k^\ac},\dQ_\ell)
\]
of $\Gal(k^\ac/k)$-modules. It suffices to show that this is an isomorphism. Since $k$ has characteristic $0$, by the Lefschetz principle and the comparison theorem for singular and \'{e}tale cohomology, $\alpha''_X\otimes_{\dQ_\ell}\dC$ via any embedding $\dQ_\ell\hookrightarrow\dC$ is isomorphic to the canonical map in (1). Thus, (2) follows.
\end{proof}

Now we study polarizations of $\Alb_X$. Let $k$ be an arbitrary field.

\begin{proposition}\label{pr:albanese_theta}
Let $X$ be a proper smooth scheme and $A$ an abelian variety, both over $k$. For every $f\in\underline{\Alb}_X(A)$ and every divisor $D$ on $X$, there is a unique homomorphism
\[
\theta_{f,D}\colon A^\vee\to A
\]
(over $k$) satisfying the following property: for every field $k'$ over $k$, every geometric point $a$ of $A^\vee(k')$ corresponding to a line bundle $L_a$ on $A'\coloneqq A_{k'}$, and every element $x\in X(\pi_0(X_{k'}))$, we have
\[
\theta_{f,D}(a)=\Sigma_{A'}\(c_1(L_a).f_{x*}(D^{\dim X-1})\),
\]
where $\Sigma_{A'}\colon\CH_0(A')\to A(k')$ is the (classical) Albanese map for $A'$. Moreover, $\theta_{f,D}$ is symmetric, depends only on the rational equivalence class of $D$, and satisfies $\theta_{f,nD}=[n^{\dim X-1}]_A\circ\theta_{f,D}$ for $n\in\dZ$.
\end{proposition}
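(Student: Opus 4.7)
\medskip

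\noindent \textbf{Proof plan.}

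Uniqueness of $\theta_{f,D}$ is immediate: the prescribed formula determines the map on $A^\vee(k^\ac)$, and any morphism between $k$-varieties is determined by its values on geometric points. So the task is existence. Since the formation of $\theta_{f,D}$ is invariant under $\Gal(k^\sep/k)$, Galois descent reduces us to the case where $k$ is separably closed and $X$ is split, so that we may choose an element $x\in X(\pi_0(X))$ and hence dispose of the morphism $f_x\colon X\to A$ from Definition \ref{de:split}(3), with $f_x(x)=0$ componentwise.

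Set $d\coloneqq\dim X$ and consider the $1$-cycle
\[
\gamma_x\coloneqq f_{x*}(D^{d-1})\in\CH_1(A).
\]
The plan is to attach to any $1$-cycle $\gamma$ on an abelian variety $A$ a homomorphism $\psi_\gamma\colon A^\vee\to A$ whose value at a geometric point $a$ (corresponding to $L_a\in\Pic^0(A)$) is $\Sigma_A(c_1(L_a)\cdot\gamma)$, and then define $\theta_{f,D}\coloneqq\psi_{\gamma_x}$. I would construct $\psi_\gamma$ using the Poincar\'{e} bundle $\cP$ on $A\times A^\vee$: the relative zero-cycle $c_1(\cP)\cdot(\gamma\times A^\vee)$ on $A\times A^\vee$ over $A^\vee$ has fiberwise degree zero, because $(L_a\cdot\gamma)=0$ for $L_a\in\Pic^0(A)$; applying the relative Albanese morphism (i.e., $\Sigma_A$ in families, using that $A$ represents the relative Picard functor of degree-zero cycles in a family of abelian varieties) yields a morphism $\psi_\gamma\colon A^\vee\to A$ of group schemes that is $\dZ$-linear in $\gamma$. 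Equivalently, decomposing $\gamma=\sum n_i[C_i]$ into integral curves with normalizations $\widetilde C_i\to A$, one can realize $\psi_\gamma$ as $\sum n_i$ times the composite $A^\vee\to\Pic^0(\widetilde C_i)=\Jac(\widetilde C_i)=\Alb(\widetilde C_i)\to A$ using the principal polarization of $\Jac(\widetilde C_i)$ and the Albanese functoriality.

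Next I would show that $\psi_{\gamma_x}$ is independent of the choice of $x$ (so that Galois descent applies). Replacing $x$ by another base point $x'$ changes $f_x$ to $t_c\circ f_x$ for some $c\in A(k)$, hence $\gamma_{x'}=t_{c*}\gamma_x$. Using the projection formula and the translation-invariance $t_c^*L_a\simeq L_a$ for $L_a\in\Pic^0(A)$, one has
\[
c_1(L_a)\cdot t_{c*}\gamma_x=t_{c*}\bigl(c_1(L_a)\cdot\gamma_x\bigr),
\]
and applying $\Sigma_A$ introduces an extra term $(L_a\cdot\gamma_x)\cdot c$, which vanishes. Hence $\psi_{\gamma_{x'}}=\psi_{\gamma_x}$, and the construction descends.

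The remaining claims are then routine. Dependence only on the rational class of $D$ is clear since $D^{d-1}$ is. The identity $\theta_{f,nD}=[n^{d-1}]_A\circ\theta_{f,D}$ follows from $f_{x*}((nD)^{d-1})=n^{d-1}\gamma_x$ and the $\dZ$-linearity of $\gamma\mapsto\psi_\gamma$. Symmetry $\theta_{f,D}^\vee=\theta_{f,D}$ under the canonical identification $A^{\vee\vee}=A$ comes from the symmetry of the Poincar\'{e} bundle under the swap $A\times A^\vee\simeq A^\vee\times A^{\vee\vee}$; in the curve-by-curve picture it reflects that the self-duality $\Pic^0(\widetilde C_i)\simeq\Jac(\widetilde C_i)$ is induced by a principal (hence symmetric) polarization. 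I expect the main technical obstacle to be precisely the two points just mentioned: verifying that the Poincar\'{e}-bundle construction yields a morphism of group schemes representing the set-theoretic map $a\mapsto\Sigma_A(c_1(L_a)\cdot\gamma)$, and verifying symmetry in a way that is clean under the canonical biduality; once these are in place, everything else is formal.
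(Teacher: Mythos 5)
Your plan is sound and uses the same key tool as the paper (the Poincar\'{e} bundle), but you route to the morphism $\psi_\gamma$ in a slightly different way and you correctly flag where the work lies. You form the relative zero-cycle $c_1(\cP)\cdot(\gamma\times A^\vee)$ on $A\times A^\vee$ over $A^\vee$, observe that it has fiberwise degree zero, and then invoke a ``relative Albanese'' $\Sigma_A$ in families; this needs real justification, because the set-theoretic sum map $\CH_0(A_T)^0\to A(T)$ is not \emph{a priori} represented by a scheme morphism as $T$ varies. The paper avoids this by turning the dial one notch: it sets
\[
D_z\coloneqq \tp_{12*}\bigl(\tp_{13}^*c_1(\cP).\tp_{23}^*c_1(\cP).\tp_3^*z\bigr)\in\CH^1(A^\vee\times A^\vee),\qquad \cL_z\coloneqq\cO(D_z),
\]
checks that $\cL_z$ is symmetric and trivial on $0\times A^\vee$ and $A^\vee\times 0$, and then lets the universal property of the Poincar\'{e} bundle on $A^\vee\times A^{\vee\vee}$ hand over a homomorphism $A^\vee\to A^{\vee\vee}\simeq A$ directly, with no ``Albanese in families'' required. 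Verifying that the fiber of $\cL_z$ at $a\in A^\vee$ corresponds to $\Sigma_{A}\bigl(c_1(L_a).z\bigr)$ (the paper's property (3), a short computation with the Poincar\'{e} bundle on $A^\vee\times A$) is exactly what makes your Albanese step rigorous, so the two constructions really are the same idea packaged differently; the paper's is the cleaner path to a genuine scheme morphism. Your alternative via decomposing $\gamma=\sum n_i[C_i]$, routing through $\Jac(\widetilde C_i)$ and its self-duality, also works and is perhaps the most ``Albanese-flavored'' formulation. The rest of your outline --- independence of the base point $x$ via translations and the vanishing of $(L_a\cdot\gamma_x)$, Galois descent to general $k$, $\dZ$-linearity in $\gamma$ giving $\theta_{f,nD}=[n^{\dim X-1}]_A\circ\theta_{f,D}$, and symmetry from the $\dZ/2$-symmetry of the construction on $A^\vee\times A^\vee$ --- matches the paper's proof essentially line for line.
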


This is previously known when $D$ is a hyperplane section. See, for example, \cite{Mur90}*{Section~2}.

\begin{proof}
The uniqueness is clear. Now we show the existence. We may assume that $k$ is separably closed. In fact, for every element $x\in X(\pi_0(X))$, we are going to define a homomorphism $\theta_{f,D,x}$ satisfying the requirement in the proposition. Then we will show that $\theta_{f,D,x}$ does not depend on the choice of $x$. Therefore, by Galois descent, we conclude for the general field $k$.

We start from the construction of $\theta_{f,D,x}$. Let $\cP$ be the Poincar\'{e} line bundle on $A^\vee\times A$. Consider the following diagram of projection homomorphisms
\begin{align}\label{eq:albanese_theta}
\xymatrix{
& A^\vee\times A^\vee\times A \ar[dl]_{\tp_{12}}\ar@<-3pt>[d]_-{\tp_{13}}\ar@<3pt>[d]^-{\tp_{23}}\ar[dr]^-{\tp_3} \\
A^\vee\times A^\vee & A^\vee\times A & A.
}
\end{align}
For every $z\in\CH_1(A)$, put
\[
D_z\coloneqq \tp_{12*}(\tp_{13}^*c_1(\cP).\tp_{23}^*c_1(\cP).\tp_3^*z),
\]
which belongs to $\CH^1(A^\vee\times A^\vee)$. Then we put $\cL_z\coloneqq\cO_{A^\vee\times A^\vee}(D_z)$. We show
\begin{enumerate}
  \item $\cL_z$ is symmetric, that is, $\cL_z$ is invariant under the obvious involution of $A^\vee\times A^\vee$;

  \item the restrictions of $\cL_z$ to $0_{A^\vee}\times A^\vee$ and $A^\vee\times 0_{A^\vee}$ are both trivial;

  \item for every point $a\in A^\vee(k)$, the restriction of $\cL_z$ to $a\times A^\vee$ corresponds to the point $\Sigma_A(c_1(L_a).z)$ under the canonical isomorphism $A^{\vee\vee}\simeq A$.
\end{enumerate}
Part (1) is straightforward from the definition. For (2), it suffices to show that the restricted line bundle $\cL_z\res{0_{A^\vee}\times A^\vee}$ is trivial by (1). However, this is a special case of (3).

Now we show (3). We expand the previous commutative diagram \eqref{eq:albanese_theta} to the following one
\[
\xymatrix{
& A^\vee\times A\simeq a\times A^\vee\times A \ar[r]^-{j}\ar[dl]_-{q_1}
& A^\vee\times A^\vee\times A \ar[dl]_{\tp_{12}}\ar@<-3pt>[d]_-{\tp_{13}}\ar@<3pt>[d]^-{\tp_{23}}\ar[dr]^-{\tp_3} \\
A^\vee\simeq a\times A^\vee \ar[r]^-{i} & A^\vee\times A^\vee & A^\vee\times A & A
}
\]
in which the parallelogram is Cartesian. By \cite{Ful98}*{Proposition~1.7}, we have
\begin{align}\label{eq:albanese_theta1}
\cL_z\res{a\times A^\vee}\simeq i^*\cL_z\simeq\cO_{A^\vee}\(q_{1*}j^*(\tp_{13}^*c_1(\cP).\tp_{23}^*c_1(\cP).\tp_3^*z)\).
\end{align}
We put $q_2\coloneqq\tp_3\circ j\colon A^\vee\times A\to A$ which is simply the projection to the second factor. Since $j^*\tp_{13}^*\cP$ is isomorphic to $q_2^*L_a$, we have
\[
\eqref{eq:albanese_theta1}\simeq \cO_{A^\vee}\(q_{1*}(q_2^*c_1(L_a).c_1(\cP).q_2^*z)\)
\simeq\cO_{A^\vee}\(q_{1*}(c_1(\cP).q_2^*(c_1(L_a).z))\).
\]
It remains to show that the line bundle $\cL'$ on $A^\vee$ corresponding to the point $\Sigma_A(c_1(L_a).z)$ is $\cO_{A^\vee}\(q_{1*}(c_1(\cP).q_2^*(c_1(L_a).z))\)$. Choose a representative $\sum_i m_ia_i$ of the $0$-cycle $c_1(L_a).z$; it has degree zero since $L_a$ is algebraically equivalent to zero. Then we have
\[
\cL'\simeq\bigotimes_i\cL_{a_i}^{\otimes m_i},
\]
where $\cL_{a_i}$ is the line bundle on $A^\vee$ corresponding to $a_i$ which, by the property of the Poincar\'{e} bundle, is isomorphic to $\cO_{A^\vee}(q_{1*}(c_1(\cP).q_2^*a_i))$. Thus, we have
\[
\cL'\simeq\bigotimes_i\cO_{A^\vee}(q_{1*}(c_1(\cP).q_2^*a_i))^{\otimes m_i}\simeq\cO_{A^\vee}\(q_{1*}(c_1(\cP).q_2^*(c_1(L_a).z))\);
\]
and (3) is proved.

By (1) and (2), the line bundle $\cL_z$ induces a symmetric homomorphism $\theta_{z}\colon A^\vee\to A$. Now taking $z=f_{x*}D^{\dim X-1}$, we obtain a symmetric homomorphism $\theta_{f,D,x}\colon A^\vee\to A$ satisfying the requirement in the proposition. To construct $\theta_{f,D}$, it suffices to show that $\theta_{f,D,x}=\theta_{f,D,y}$ for any other choice of $y$. This amounts to showing that
\begin{align}\label{eq:albanese_theta2}
\Sigma_A\(c_1(L_a).f_{x*}(D^{\dim X-1})\)=\Sigma_A\(c_1(L_a).f_{y*}(D^{\dim X-1})\).
\end{align}
Put $b\coloneqq f_x(y)\in A(k)$. Then we have $f_y=t_b\circ f_x$, where $t_b$ is the translation morphism on $A$ by $b$. Since $L_a$ is algebraically equivalent to zero, $c_1(L_a).f_{y*}(D^{\dim X-1})$ is a degree zero divisor. Thus, we have
\[
\Sigma_A\(c_1(L_a).f_{y*}(D^{\dim X-1})\)=\Sigma_A\(t_{-b*}c_1(L_a).f_{x*}(D^{\dim X-1})\).
\]
Again, since $L_a$ is algebraically equivalent to zero, we have $t_{-b*}c_1(L_a)=c_1(L_a)\in\CH^1(A)$. Thus, \eqref{eq:albanese_theta2} follows.

The last assertion of the proposition is already clear.
\end{proof}

In the case where $(A,f)=(\Alb_X,\alpha_X)$, we will simply write
\[
\theta_{X,D}\coloneqq\theta_{\alpha_X,D}\colon\Alb_X^\vee\to\Alb_X.
\]

\begin{remark}
If $\dim X=1$, then $\theta_{X,D}$ is the canonical polarization of $\Alb_X$ (which is simply the Jacobian of $X$), hence is an isomorphism and is independent of $D$.
\end{remark}

We have the following result on the functoriality of $\theta_{X,D}$.

\begin{proposition}\label{pr:albanese_functoriality}
Let $u\colon Y\to X$ be a generically finite dominant morphism of proper smooth schemes over $k$. Let $D$ be a divisor on $X$. Then we have
\[
[\deg u]_{\Alb_X}\circ\theta_{X,D}=\Alb_u\circ\theta_{Y,u^*D}\circ\Alb_u^\vee.
\]
Here, $\deg u$ is regarded as a function on $\pi_0(X)$ whose value on a connected component of $X$ is the total degree of $u$ over it; and if we write $X=\coprod X_i$, then $[\deg u]_{\Alb_X}$ is the endomorphism $\prod_i[(\deg u)(X_i)]_{\Alb_{X_i}}$ on $\Alb_X\simeq\prod_i\Alb_{X_i}$.
\end{proposition}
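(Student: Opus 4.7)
I would evaluate both sides at an arbitrary geometric point $a\in\Alb_X^\vee$ and use Proposition \ref{pr:albanese_theta} twice, once for $(X,D)$ and once for $(Y,u^*D)$, to express both sides as Albanese sums of explicit zero-cycles. By Galois descent I first reduce to $k$ separably closed. Projecting to each factor $\Alb_{X_j}$ of the product decomposition $\Alb_X=\prod_j\Alb_{X_j}$ (to which $\Alb_u$, $\Alb_u^\vee$, $\theta_{X,D}$ and $[\deg u]_{\Alb_X}$ are all compatible), I further reduce to the case where $X$ and $Y$ are both geometrically connected and $u$ has degree $n$; the general case follows by summing the contributions from the components $Y_l\to X_j$ and using additivity $[d_1]+[d_2]=[d_1+d_2]$ on $\Alb_{X_j}$.

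In the connected case, fix $a\in\Alb_X^\vee(k)$ corresponding to a line bundle $L_a$ on $\Alb_X$, pick any $y\in Y(k)$ and set $x\coloneqq u(y)\in X(k)$. The universal property characterizing $\Alb_u^\vee$ gives $L_{\Alb_u^\vee(a)}=\Alb_u^*L_a$. Writing $d\coloneqq\dim X=\dim Y$, Proposition \ref{pr:albanese_theta} evaluated at $a$ and at $\Alb_u^\vee(a)$ yields
\[
\theta_{X,D}(a)=\Sigma_{\Alb_X}\bigl(c_1(L_a)\cdot(\alpha_X)_{x*}(D^{d-1})\bigr),\quad
\theta_{Y,u^*D}(\Alb_u^\vee(a))=\Sigma_{\Alb_Y}\bigl(c_1(\Alb_u^*L_a)\cdot(\alpha_Y)_{y*}((u^*D)^{d-1})\bigr).
\]
Applying $\Alb_u$ to the second identity and using in succession
(a) functoriality $\Alb_u\circ\Sigma_{\Alb_Y}=\Sigma_{\Alb_X}\circ\Alb_{u*}$,
(b) the projection formula for the proper morphism $\Alb_u$ (to move $c_1(L_a)$ outside the pushforward),
(c) the identity $\Alb_u\circ(\alpha_Y)_y=(\alpha_X)_x\circ u$ that follows from $\Alb_u\circ\alpha_Y=\alpha_X\circ\nabla u$ together with $x=u(y)$, and
(d) the projection formula for $u$ iterated with $u_*[Y]=n[X]$, giving $u_*((u^*D)^{d-1})=n\cdot D^{d-1}$,
I obtain
\[
\Alb_u\circ\theta_{Y,u^*D}\circ\Alb_u^\vee(a)=n\cdot\Sigma_{\Alb_X}\bigl(c_1(L_a)\cdot(\alpha_X)_{x*}(D^{d-1})\bigr)=[n]_{\Alb_X}\circ\theta_{X,D}(a),
\]
which completes the connected case.

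\textbf{Main obstacle.} The argument itself is a chain of standard manipulations (functoriality of $\Sigma$, two projection formulas, and the defining property of the dual morphism); the only real subtlety is the bookkeeping needed to pass from the general statement (in which $\deg u$ is a function on $\pi_0(X)$) to the connected case. One has to verify that each operation in sight is block-diagonal with respect to the $\pi_0(X)$- and $\pi_0(Y)$-decompositions: for $\Alb_u$ and $\Alb_u^\vee$ this is a consequence of the universal property of the Albanese, while for $\theta_{X,D}$ it is immediate from the construction carried out in the proof of Proposition \ref{pr:albanese_theta} (applied separately to each connected component of $X$).
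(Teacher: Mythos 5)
Your proof is correct and is essentially the same argument as the paper's: reduce to the connected case over an algebraically closed (resp.\ separably closed) base, evaluate both sides at a point $a\in\Alb_X^\vee(k)$ via Proposition \ref{pr:albanese_theta}, and manipulate using the functoriality square $\Alb_u\circ(\alpha_Y)_y=(\alpha_X)_x\circ u$, the compatibility of $\Sigma$ with pushforward, and the projection formula applied to both $u$ and $\Alb_u$. The only cosmetic difference is that you run the chain of projection-formula steps from the right-hand side to the left, whereas the paper goes left-to-right; and you spell out the additivity bookkeeping over $\pi_0$ a bit more explicitly, which the paper leaves implicit.
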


\begin{proof}
We may assume that $k$ is algebraically closed and that both $X$ and $Y$ are connected. Put $d\coloneqq\dim X=\dim Y$. Take points $a\in\Alb_X^\vee(k)$ and $y\in Y(k)$. Put $b\coloneqq\Alb_u^\vee(a)\in\Alb_Y^\vee(k)$ and $x\coloneqq u(y)\in X(k)$. Put $f\coloneqq\alpha_{X,x}\colon X\to\Alb_X$ and $g\coloneqq\alpha_{Y,y}\colon Y\to\Alb_Y$ for short. By the functoriality of Albanese morphisms, the following diagram
\[
\xymatrix{
Y \ar[r]^-{g}\ar[d]_-{u} & \Alb_Y \ar[d]^-{\Alb_u} \\
X \ar[r]^-{f} & \Alb_X
}
\]
commutes. To prove the proposition, it suffices to show that
\begin{align}\label{eq:albanese_functoriality}
[\deg u]_{\Alb_X}(\theta_{X,D}(a))=\Alb_u(\theta_{Y,u^*D}(b)).
\end{align}
By Proposition \ref{pr:albanese_theta} and the projection formula \cite{Ful98}*{Example~8.1.7}, the left-hand side of \eqref{eq:albanese_functoriality} equals
\begin{align}\label{eq:albanese_functoriality1}
[\deg u]_{\Alb_X}\(\Sigma_{\Alb_X}\(c_1(L_a).f_*(D^{d-1})\)\)=\Sigma_{\Alb_X}\(\deg u\cdot f_*\(f^*c_1(L_a).D^{d-1}\)\).
\end{align}
Again by the projection formula, we have
\[
\deg u\cdot f^*c_1(L_a).D^{d-1}=f^*c_1(L_a).u_*(u^*D^{d-1}).
\]
Repeatedly applying the projection formula, we have
\begin{align*}
\eqref{eq:albanese_functoriality1}&=\Sigma_{\Alb_X}\(f_*\(f^*c_1(L_a).u_*(u^*D^{d-1})\)\)\\
&=\Sigma_{\Alb_X}\(f_*\(u_*\(u^*f^*c_1(L_a).(u^*D)^{d-1}\)\)\)\\
&=\Sigma_{\Alb_X}\(\Alb_{u*}g_*\(g^*\Alb_u^*c_1(L_a).(u^*D)^{d-1}\)\)\\
&=\Alb_u\(\Sigma_{\Alb_Y}\(g_*\(g^*c_1(L_b).(u^*D)^{d-1}\)\)\)\\
&=\Alb_u\(\Sigma_{\Alb_Y}\(c_1(L_b).g_*(u^*D)^{d-1}\)\)\\
&=\Alb_u(\theta_{Y,u^*D}(b)).
\end{align*}
The proposition follows.
\end{proof}

\begin{definition}\label{de:almost_ample}
We say that a divisor $D$ on a proper smooth scheme $X$ over $k$ is \emph{almost ample} if there exists $m\in\dZ_{>0}$ such that $|mD|$ is base point free and that the induced morphism $\phi_{mD}\colon X\to\dP(|mD|)$ is a generically finite morphism onto its image.
\end{definition}

\begin{proposition}\label{pr:almost_ample}
Suppose that $k$ has characteristic zero. Let $X$ be a proper smooth scheme in $\Sch_{/k}$ and $D$ a divisor on $X$ such that $D$ is almost ample. Then the symmetric homomorphism $\theta_{X,D}\colon\Alb_X^\vee\to\Alb_X$ is a polarization.
\end{proposition}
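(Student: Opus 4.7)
The plan is to reduce to the classical fact that the Jacobian of a smooth projective curve carries a canonical principal polarization.

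First, since the property of being a polarization is geometric, I may assume $k=\dC$. Moreover, by the last assertion of Proposition~\ref{pr:albanese_theta}, $\theta_{X,mD}=[m^{d-1}]_{\Alb_X}\circ\theta_{X,D}$ with $d\coloneqq\dim X$; since composition with the isogeny $[m^{d-1}]$ corresponds on the level of symmetric line bundles to raising the underlying line bundle on $\Alb_X^\vee$ to the $m^{d-1}$-th tensor power and hence preserves ampleness, I may replace $D$ by a suitable multiple and assume that $|D|$ itself is base point free and that $\phi_D\colon X\to Z\subseteq\dP(|D|)$ is generically finite onto its image.

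By repeated application of Bertini's theorem in characteristic zero (valid for the base point free system $|D|$), I choose general divisors $D_1,\dots,D_{d-1}\in|D|$ whose intersection $C\coloneqq D_1\cap\cdots\cap D_{d-1}$ is a smooth projective curve in $X$ satisfying $[C]=D^{d-1}$ in $\CH_1(X)$. Let $\iota\colon C\hookrightarrow X$ be the inclusion and $u\coloneqq\Alb_\iota\colon J(C)\to\Alb_X$, where $J(C)=\Alb_C$ is the Jacobian carrying its canonical principal polarization $\theta_C\colon J(C)^\vee\to J(C)$ (which by the remark following Proposition~\ref{pr:albanese_theta} is independent of the auxiliary divisor). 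The key identity is
\[
\theta_{X,D}=u\circ\theta_C\circ u^\vee.
\]
This follows from Proposition~\ref{pr:albanese_theta} via the projection formula: after choosing a base point $c\in C(\dC)$ and using $\iota(c)$ as base point of $X$, one has $\alpha_{X,c,*}[C]=u_*[C]_J$ in $\CH_1(\Alb_X)$, where $[C]_J\in\CH_1(J(C))$ denotes the Abel--Jacobi image of $C$; then for $a\in\Alb_X^\vee(\dC)$ with corresponding line bundle $L_a$, the projection formula together with $u^*L_a=L_{u^\vee(a)}$ and the compatibility $\Sigma_{\Alb_X}\circ u_*=u\circ\Sigma_{J(C)}$ gives
\[
\theta_{X,D}(a)=\Sigma_{\Alb_X}\bigl(c_1(L_a)\cdot u_*[C]_J\bigr)=u\bigl(\Sigma_{J(C)}(c_1(L_{u^\vee(a)})\cdot[C]_J)\bigr)=u(\theta_C(u^\vee(a))).
\]
Under the correspondence between symmetric homomorphisms and symmetric line bundles, $\theta_{X,D}$ is then represented by the pullback $(u^\vee)^*\cM_C$ of the ample line bundle $\cM_C$ on $J(C)^\vee$ representing $\theta_C$. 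Since the pullback of an ample line bundle under a finite morphism remains ample, it suffices to show that $u^\vee\colon\Alb_X^\vee\to J(C)^\vee$ is a finite morphism of abelian varieties, equivalently that $u$ is surjective.

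The main obstacle is therefore the surjectivity of $u\colon J(C)\to\Alb_X$. By Lemma~\ref{le:albanese}, this is equivalent to the injectivity of $\iota^*\colon H^1(X,\dQ)\to H^1(C,\dQ)$, a form of the weak Lefschetz hyperplane theorem. Since $D$ is only almost ample rather than ample, the classical statement does not apply verbatim; however, each $D_i$ is the pullback of a general hyperplane in $\dP(|D|)$ under $\phi_D$, and the plan is to deduce the required injectivity iteratively via the Stein factorization of $\phi_D$, reducing to the finite case where Artin vanishing applied to the affine complement of the hyperplane sections yields the weak Lefschetz statement.
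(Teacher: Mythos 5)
Your approach is genuinely different from the paper's. The paper argues directly on $\Alb_X$ over $\dC$: it reduces the positivity of $\theta_{X,D}$ to the positivity of $\int_{A(\dC)}\omega\wedge\ol\omega\wedge f_*c_1(D)^{d-1}$ for $\ol\partial$-closed $(0,1)$-forms $\omega$, pulls the integral back to $X$, and uses the fact that an almost ample $D$ carries a smooth hermitian metric whose Chern form is everywhere semi-positive and strictly positive on a dense open. You instead reduce to the curve case via Bertini, and your first four steps are sound: the Lefschetz-principle reduction to $k=\dC$, the replacement of $D$ by $mD$ using $\theta_{X,mD}=[m^{d-1}]\circ\theta_{X,D}$, the identity $\theta_{X,D}=u\circ\theta_C\circ u^\vee$ (which does follow from Proposition~\ref{pr:albanese_theta} by the projection-formula computation you sketch, exactly as in the proof of Proposition~\ref{pr:albanese_functoriality}), and the observation that this reduces the claim to surjectivity of $u\colon J(C)\to\Alb_X$, i.e.\ to injectivity of $H^1(X,\dQ)\to H^1(C,\dQ)$.

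The gap is in that final step. After Stein factorization $\phi_D=g\circ h$ with $h\colon X\to Z'$ birational (connected fibers) and $g\colon Z'\to Z$ finite, the complement $X\setminus D_1=h^{-1}(Z'\setminus g^{-1}(H))$ is only \emph{proper over} the affine variety $Z'\setminus g^{-1}(H)$; it is not affine itself, since it still contains the complete positive-dimensional fibers of the contraction $h$. So Artin vanishing cannot be applied to $X\setminus D_1$. Nor can you simply transfer the problem to $Z'$: for singular $Z'$ the pullback $H^1(Z',\dQ)\to H^1(X,\dQ)$ is injective but in general not surjective, so injectivity of $H^1(Z')\to H^1(g^{-1}(H))$ would not yield injectivity of $H^1(X)\to H^1(D_1)$. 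The statement you need is true (and your overall strategy can be completed), but the correct tool in this generality — a base point free divisor whose associated morphism is generically finite, i.e.\ a nef and big class — is Kawamata--Viehweg vanishing: it gives $H^j(X,\cO_X(-D_1))=0$ for $j<d$, so the long exact sequence of $0\to\cO_X(-D_1)\to\cO_X\to\cO_{D_1}\to0$ yields $H^1(X,\cO_X)\hookrightarrow H^1(D_1,\cO_{D_1})$; since the restriction on $H^1(\;,\dC)$ is a morphism of weight-one Hodge structures defined over $\dR$, this forces $H^1(X,\dQ)\hookrightarrow H^1(D_1,\dQ)$, and iterating (noting $D|_{Y_i}$ remains nef and big on each successive intersection $Y_i=D_1\cap\cdots\cap D_i$) gives the needed injectivity $H^1(X,\dQ)\hookrightarrow H^1(C,\dQ)$. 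With that substitution your argument becomes a valid alternative to the paper's; as written, the Artin-vanishing plan is the missing piece. The paper's analytic proof sidesteps the Lefschetz issue entirely by never passing through a curve.
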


\begin{proof}
Since $k$ has characteristic zero, by the Lefschetz principle, we may assume that $k$ is embeddable into $\dC$. To check whether $\theta_{X,D}$ is a polarization, we may assume $k=\dC$ and that $X$ is connected. Since $D$ is almost ample, by replacing $D$ by $mD$ for some $m\in\dZ_{>0}$, we may assume that $|D|$ is base point free and that the induced morphism $\phi_D\colon X\to\dP(|D|)$ is a generically finite morphism onto its image.

Put $A\coloneqq\Alb_X$, $d\coloneqq\dim X$, and $h\coloneqq\dim A$ for short. We choose a point $x\in X(\dC)$, and put $f\coloneqq\alpha_{X,x}\colon X\to A$. We have canonical isomorphisms
\[
A^\vee(\dC)\simeq\rH^1(A,\cO_A)/\rH^1(A,\dZ),\qquad
A(\dC)\simeq\rH^h(A,\Omega_A^{h-1})/\rH^{2h-1}(A,\dZ)
\]
of complex manifolds. From the construction, the following diagram
\[
\xymatrix{
\rH^1(A,\cO_A) \ar[r]^-{\wedge f_*c_1(D)^{d-1}}\ar[d] & \rH^h(A,\Omega_A^{h-1}) \ar[d] \\
\rH^1(A,\cO_A)/\rH^1(A,\dZ) \ar[r]^-{\theta_{X,D}} & \rH^h(A,\Omega_A^{h-1})/\rH^{2h-1}(A,\dZ)
}
\]
commutes, where the vertical arrows are quotient maps. Here, $c_1(D)$ is regarded as the Chern class in $\rH^1(X,\Omega_X)$. Then the symmetric homomorphism $\theta_{X,D}$ is a polarization if and only if for every nonzero $\ol\partial$-closed smooth $(0,1)$-form $\omega$ on $A$, we have
\[
\int_{A(\dC)}\omega\wedge\ol\omega\wedge f_*c_1(D)^{d-1}>0.
\]
By the property that $D$ satisfies, we may find a smooth hermitian metric $\|\;\|_D$ on $\cO_X(D)$ such that its Chern $(1,1)$-form $c_1(\|\;\|_D)$ is semi-positive on $X(\dC)$ and strictly positive on a Zariski dense open subset. Therefore,
\begin{align*}
\int_{A(\dC)}\omega\wedge\ol\omega\wedge f_*c_1(D)^{d-1}
&=\int_{X(\dC)}f^*\omega\wedge\ol{f^*\omega}\wedge c_1(D)^{d-1} \\
&=\int_{X(\dC)}f^*\omega\wedge\ol{f^*\omega}\wedge c_1(\|\;\|_D)^{d-1}
>0.
\end{align*}
The proposition follows.
\end{proof}

\begin{remark}\label{re:biproduct}
There is a byproduct in proof of Proposition \ref{pr:almost_ample}: For an almost ample divisor $D$ on a proper smooth scheme $X$ over a field $k$ of characteristic zero, the degree of the top intersection $\deg D^{\dim X}$ is strictly positive on every irreducible component of $X$.
\end{remark}

\begin{remark}
We are curious whether one can find an algebraic proof of Proposition \ref{pr:almost_ample}, and whether the proposition holds for an arbitrary field $k$ or a weaker condition on $D$. Note that if $D$ is a hyperplane, then it is previously known that $\theta_{X,D}$ is an isogeny for an arbitrary field $k$.
\end{remark}

\subsection{Picard motives via almost ample divisors}
\label{ss:picard}

Let $k$ be a field of characteristic zero. Let $X$ be a proper smooth scheme in $\Sch_{/k}$ of pure dimension $d\geq 1$. For every almost ample divisor $D$ on $X$, we now define a correspondence $e_{X,D}\in\CH^d(X\times X)_\dQ$ such that the induced endomorphism
\[
\cl_\dr^*(e_{X,D})\colon\bigoplus_{i=0}^{2d}\rH^i_\dr(X/k)\to\bigoplus_{i=0}^{2d}\rH^i_\dr(X/k)
\]
on the de Rham cohomology of $X$ is the projection onto $\rH^1_\dr(X/k)$. In particular, when $X$ is projective, $(X,e_{X,D})$ is a Grothendieck motive, which is a Picard motive for $X$. The construction generalizes the one in \cite{Mur90}*{Section~3}. We use such construction only in Subsection \ref{ss:kunneth} when the Shimura variety is a non-proper surface; so the readers may choose to skip this subsection for now.

Let $\theta\coloneqq\theta_{X,D}\colon\Alb_X^\vee\to\Alb_X$ be the polarization obtained from Proposition \ref{pr:almost_ample}. Let $\vartheta\colon\Alb_X\to\Alb_X^\vee$ be an isogeny such that $\theta\circ\vartheta=[n]_{\Alb_X}$ for some integer $n\geq 1$. We obtain a morphism
\[
\beta\coloneqq(\vartheta\circ\alpha_X)\times\alpha_X\colon\nabla X\times\nabla X\to\Alb_X^\vee\times\Alb_X.
\]
Let $\cP$ be the Poincar\'{e} line bundle on the target. We put
\[
E_{X,D}\coloneqq \tp_{24*}\(\beta^*c_1(\cP).(D^d\times X\times D^d\times D^{d-1})\)\in\CH^d(X\times X)_\dQ,
\]
where the intersection is taken in $X\times X\times X\times X$, and
\[
e_{X,D}\coloneqq\frac{1}{n(\deg D^d)^2}E_{X,D}\in\CH^d(X\times X)_\dQ,
\]
where $\deg D^d$ is understood as a function on $\pi_0(X)$. We leave the readers an easy exercise to show that $e_{X,D}$ does not depend on the choice of $\vartheta$.

\begin{proposition}\label{pr:picard_functoriality}
Let $X$ be a proper smooth scheme in $\Sch_{/k}$ of pure dimension $d\geq 1$, and $D$ an almost ample divisor on $X$.
\begin{enumerate}
  \item The map $\cl^*_\dr(e_{X,D})$ coincides with the projection to $\rH^1_\dr(X/k)$.

  \item Let $u\colon Y\to X$ be a generically finite dominant morphism of proper smooth schemes over $k$. Then $u^*D$ is an almost ample divisor on $Y$, and we have
      \[
      (\r{id}_Y\times u)_*e_{Y,u^*D}=(u\times\r{id}_X)^*e_{X,D}
      \]
      in $\CH^d(Y\times X)_\dQ$.
\end{enumerate}
\end{proposition}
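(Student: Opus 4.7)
\emph{Part (1).} The plan is to adapt Murre's argument for the hyperplane case \cite{Mur90}*{Section~3}: his proof uses ampleness only to guarantee that $\theta_{X,D}$ is a polarization, which in our setting is exactly Proposition \ref{pr:almost_ample}. First I would reduce to $k = \dC$ via the Lefschetz principle and the comparison theorem. Decomposing the Poincar\'{e} class as $c_1(\cP) = \sum_j \xi_j \otimes \eta_j$ in K\"unneth coordinates (with $\{\xi_j\}$ and $\{\eta_j\}$ mutually dual bases of $\rH^1(\Alb_X^\vee)$ and $\rH^1(\Alb_X)$), and using that $\alpha_X^*\eta$ on each component $X_i \times X_i$ of $\nabla X$ equals $q_2^*\tilde\eta - q_1^*\tilde\eta$ for the corresponding $\tilde\eta \in \rH^1(X_i)$ (via Lemma \ref{le:albanese}), I would write $\beta^*c_1(\cP)$ on $X^4$ as a sum of four cross-terms. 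Multiplication by $q_1^*[D]^d \cdot q_3^*[D]^d$ kills three of the four terms, since $[D]^d \in \rH^{2d}(X)$ is top-degree so its cup with any $\rH^1$-class vanishes. A final K\"unneth argument for the pushforward $X^4 \to X$ then forces only $\omega \in \rH^1(X)$ to produce nonzero output.

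\emph{Part (1), key identification.} For $\omega \in \rH^1(X)$, the surviving expression is $\frac{1}{n}\sum_j A_j \cdot \int_X B_j \cup \omega \cup [D]^{d-1}$, where $A_j = \alpha_X^*\vartheta^*\xi_j$ and $B_j = \alpha_X^*\eta_j$. Rewriting the integral on $\Alb_X$ via the projection formula and setting $z = (\alpha_X)_*[D]^{d-1}$, it becomes the bilinear pairing on $\rH^1(\Alb_X)$ whose matrix coincides with that of $\vartheta^* \circ \theta_{X,D}^*$ in the chosen bases, thanks to the explicit cohomological formula $c_1(\cL_z) = \sum_{j,k}(\int_{\Alb_X}\eta_j\eta_k z)\,\xi_j \otimes \xi_k$ extracted from the proof of Proposition \ref{pr:albanese_theta}. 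Since $\theta_{X,D} \circ \vartheta = [n]_{\Alb_X}$, this matrix is $n \cdot \id$; nondegeneracy of the pairing, needed to extract the matrix identification, is provided by Proposition \ref{pr:almost_ample}. The factors $(\deg D^d)^2$ from the two integrations of $[D]^d$ and $n$ from the polarization relation then exactly cancel the normalization $1/(n(\deg D^d)^2)$, so that $\cl^*_\dr(e_{X,D})$ acts as the identity on $\rH^1_\dr(X/k)$ and as zero elsewhere.

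\emph{Part (2).} Almost ampleness of $u^*D$ is immediate with the same $m$: $\phi_{mD}\circ u\colon Y \to \dP(|mD|)$ is base-point-free and generically finite onto its image (composition of two such morphisms), and factors through $\phi_{mu^*D}$ followed by a linear projection, which forces $\phi_{mu^*D}$ to be generically finite onto its image. For the cycle identity, the plan is to expand both sides using the definition of $E_{-,-}$ and compare them via the compatible $4$-fold diagrams involving $u^4\colon Y^4 \to X^4$. The essential ingredients are: the functoriality $\Alb_u \circ \alpha_Y = \alpha_X \circ \nabla u$; the Poincar\'{e} bundle duality $(\Alb_u^\vee \times \id)^*\cP_X \simeq (\id \times \Alb_u)^*\cP_Y$ on $\Alb_X^\vee \times \Alb_Y$; Proposition \ref{pr:albanese_functoriality} applied to pick compatible isogenies $\vartheta_X, \vartheta_Y$; flat base change for the projections $\tp_{24}$; and the projection formula $u_*(u^*D)^k = (\deg u) \cdot D^k$. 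Combining these reduces both sides of the claimed identity to the same cycle class on $Y \times X$. The main obstacle will be the combinatorial bookkeeping of normalization factors, since $\deg(u^*D)^d = (\deg u)(\deg D^d)$ must combine consistently with the chosen $n_X, n_Y$ and the $\deg u$ factor coming from Proposition \ref{pr:albanese_functoriality}.
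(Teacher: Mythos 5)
Your part (1) is a correct (if slightly different) route. The paper fixes base points $x,x^\vee$ and directly observes that $\cl_\dr(c_1(\cP))$ lies in $\rH^1\otimes\rH^1$ so the correspondence can only hit $\rH^1$, then identifies the action on $\rH^1$ via the construction in Proposition \ref{pr:albanese_theta} and the relation $\theta_{X,D}\circ\vartheta=[n]$. Your K\"unneth-coordinate expansion with the cross-term cancellation ($q_1^*[D]^d$ and $q_3^*[D]^d$ being top-degree) does the same work more explicitly; the sign in $\alpha_X^*\eta = q_1^*\tilde\eta - q_2^*\tilde\eta$ (opposite of what you wrote) is immaterial because the two signs cancel in the surviving cross-term. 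Your ``almost ample'' argument in part (2) via factoring $\phi_{mD}\circ u$ through $\phi_{mu^*D}$ is also fine, and is a bit more detailed than the paper's, which simply asserts it.

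The cycle identity in part (2) is where your plan has a genuine gap. The two sides of $(\id_Y\times u)_*e_{Y,u^*D} = (u\times\id_X)^*e_{X,D}$ are not related by pulling a single diagram back along $u^4$: the left side involves a \emph{pushforward} $(\id_Y\times u)_*$ applied \emph{after} the pullback $\beta_Y^*$ from $\Alb_Y^\vee\times\Alb_Y$, and these do not commute, since $\id_Y\times u$ is only generically finite. Your listed ingredients (flat base change for $\tp_{24}$, Albanese functoriality, Poincar\'e bundle duality, $u_*(u^*D)^k = (\deg u)D^k$) are all correct but do not touch this non-commutativity. The paper handles it by first reducing to the case $k(Y)/k(X)$ Galois with group $\Gamma$ (by inserting a resolution $Z\to Y\to X$), then observing that the kernel of $(\id_Y\times u)_*\circ\beta_Y^*$ contains $c_1(\cL)$ whenever the trace line bundle $\cL_\Gamma\coloneqq\bigotimes_{\gamma\in\Gamma}(\id_{\Alb_Y}\times\gamma)^*\cL$ is torsion; this reduces the identity to the coincidence of the two homomorphisms $\Alb_u^\vee\circ\vartheta_X\circ\Alb_u\circ[n_Y]$ and $\sum_{\gamma\in\Gamma}\gamma^\vee\circ\vartheta_Y\circ[n_X]$ from $\Alb_Y$ to $\Alb_Y^\vee$, which is then checked on $k$-points using Proposition \ref{pr:albanese_functoriality} and the identity $\sum_\gamma\gamma^\vee = \Alb_u^\vee\circ u_*$. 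Without the Galois reduction there is no way to express $(\id_Y\times u)_*\beta_Y^*$ as a pullback-type operation, so ``expand both sides and compare'' does not close. You should also note that your worry about the normalization factors does actually resolve: the $(\deg u)^2$ appearing from $\deg(u^*D)^d = (\deg u)(\deg D^d)$ is exactly absorbed by passing from $E_{Y,u^*D}$ to the fixed-base-point $E_Y$, matching the $n_Y/n_X$ ratio the paper works with.
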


\begin{proof}
For both assertions, we may assume that $k$ is algebraically closed and $X$ is connected.

For (1), recall that for every $x\in X(k)$, we have the induced morphism $(\alpha_X)_x\colon X\to\Alb_X$ by restriction. Now take two arbitrary points $x,x^\vee\in X(k)$. We have the induced morphism
\[
(\vartheta\circ(\alpha_X)_{x^\vee})\times(\alpha_X)_x\colon X\times X\to\Alb_X^\vee\times\Alb_X.
\]
Put $E\coloneqq((\vartheta\circ(\alpha_X)_{x^\vee})\times(\alpha_X)_x)^*c_1(\cP).(X\times D^{d-1})\in\CH^d(X\times X)_\dQ$. It suffices to show that the induced map $\cl_\dr^*(E)$ on the de Rham cohomology of $X$ is the projection onto $\rH^1_\dr(X/k)$ multiplied by $n$.

As $\cl_\dr(c_1(\cP))\in\rH^1_\dr(\Alb_X^\vee/k)\otimes_k\rH^1_\dr(\Alb_X/k)$, we have $\cl_\dr(E)\in\rH^1_\dr(X/k)\otimes_k\rH^{2d-1}_\dr(X/k)$, which implies that $\cl_\dr^*(E)\res{\rH^i_\dr(X/k)}=0$ unless $i=1$. It remains to show that $\cl_\dr^*(E)$ acts on $\rH^1_\dr(X/k)$ via the multiplication by $n$. By Lemma \ref{le:albanese} and the comparison theorem, it suffices to show that the correspondence
\[
(\vartheta\times\r{id}_{\Alb_X})^*c_1(\cP).(\Alb_X\times(\alpha_X)_{x*}D^{d-1})\in\CH^h(\Alb_X\times\Alb_X)_\dQ
\]
induces the multiplication by $n$ on $\rH^1_\dr(\Alb_X/k)$, where $h$ is the dimension of $\Alb_X$. This in turn is equivalent to that the correspondence
\[
(\theta\times\r{id}_{\Alb_X})^*(\vartheta\times\r{id}_{\Alb_X})^*c_1(\cP).(\Alb_X^\vee\times(\alpha_X)_{x*}D^{d-1})
\in\CH^h(\Alb_X^\vee\times\Alb_X)_\dQ
\]
induces the map $n\cdot\theta^*\colon\rH^1_\dr(\Alb_X/k)\to\rH^1_\dr(\Alb_X^\vee/k)$. However, we have $\theta\circ\vartheta=[n]_{\Alb_X}$, which implies $\vartheta\circ\theta=[n]_{\Alb_X^\vee}$, hence
\[
(\theta\times\r{id}_{\Alb_X})^*(\vartheta\times\r{id}_{\Alb_X})^*c_1(\cP)=([n]_{\Alb_X^\vee}\times\r{id}_{\Alb_X})^*c_1(\cP)=n\cdot c_1(\cP).
\]
On the other hand, the construction of $\theta$ in Proposition \ref{pr:albanese_theta} implies that the correspondence $c_1(\cP).(\Alb_X^\vee\times(\alpha_X)_{x*}D^{d-1})$ exactly induces the restriction $\theta^*\colon\rH^1_\dr(\Alb_X/k)\to\rH^1_\dr(\Alb_X^\vee/k)$. Thus, (1) is proved.

For (2), the assertion that $u^*D$ is almost ample follows directly from Definition \ref{de:almost_ample}. For the rest, we may assume that $k(Y)/k(X)$ is Galois. In fact, by the resolution of singularity, we can always find another generically finite dominant morphism of connected proper smooth schemes $v\colon Z\to Y$ such that $k(Z)/k(X)$ is Galois. Now if (2) holds for $v$ and $u\circ v$, then it holds for $u$. Thus, we may assume that $k(Y)/k(X)$ is Galois with the Galois group $\Gamma$.

Choose two arbitrary points $y,y^\vee\in Y(k)$, and put $x\coloneqq u(y)$ and $x^\vee\coloneqq u(y^\vee)$. Put
\[
\beta_X\coloneqq(\alpha_X)_{x^\vee}\times(\alpha_X)_x\colon X\times X\to\Alb_X\times\Alb_X,
\]
and similarly for $\beta_Y$. We choose a homomorphism $\vartheta_X\colon\Alb_X\to\Alb_X^\vee$ (resp.\ $\vartheta_Y\colon\Alb_Y\to\Alb_Y^\vee$) such that $\theta_{X,D}\circ\vartheta_X=[n_X]_{\Alb_X}$ (resp.\ $\theta_{Y,u^*D}\circ\vartheta_Y=[n_Y]_{\Alb_Y}$). Put
\begin{align*}
E_X&\coloneqq\beta_X^*(\r{id}_{\Alb_X}\times\vartheta_X)^*c_1(\cP_X).(X\times D^{d-1}),\\
E_Y&\coloneqq\beta_Y^*(\r{id}_{\Alb_Y}\times\vartheta_Y)^*c_1(\cP_Y).(Y\times u^*D^{d-1}),
\end{align*}
where $\cP_X$ (resp.\ $\cP_Y$) is the Poincar\'{e} line bundle on $\Alb_X\times\Alb_X^\vee$ (resp.\ $\Alb_Y\times\Alb_Y^\vee$). Then the formula in (2) follows from the symmetry of Poincar\'{e} bundles, and the identity
\begin{align*}
(\r{id}_Y\times u)_*E_Y=\frac{n_Y}{n_X}\cdot(u\times\r{id}_X)^* E_X
\end{align*}
in $\CH^d(Y\times X)_\dQ$. By the projection formula, this in turn follows from
\begin{align}\label{eq:picard_functoriality}
(\r{id}_Y\times u)_*\beta_Y^*(\r{id}_{\Alb_Y}\times\vartheta_Y)^*c_1(\cP_Y)=
\frac{n_Y}{n_X}\cdot (u\times\r{id}_X)^*\beta_X^*(\vartheta_X\times\r{id}_{\Alb_X})^*c_1(\cP_X).
\end{align}
Consider the following diagram
\[
\xymatrix{
Y\times Y \ar[r]^-{\beta_Y}\ar[d]_-{\r{id}_Y\times u} & \Alb_Y\times\Alb_Y \ar@{-->}[rr]^-{\r{id}_{\Alb_Y}\times\vartheta_Y}\ar[d]^-{\r{id}_{\Alb_Y}\times\Alb_u}
&& \Alb_Y\times\Alb_Y^\vee \\
Y\times X \ar[r]^-{\beta}\ar[d]_-{u\times\r{id}_X} &\Alb_Y\times\Alb_X
\ar[rr]^-{\r{id}_{\Alb_Y}\times\vartheta_X}\ar[d]^-{\Alb_u\times\r{id}_{\Alb_X}}
&& \Alb_Y\times\Alb_X^\vee \ar[u]_-{\r{id}_{\Alb_Y}\times\Alb_u^\vee} \ar[d]^-{\Alb_u\times\r{id}_{\Alb_X^\vee}}   \\
X\times X \ar[r]^-{\beta_X} & \Alb_X\times\Alb_X
\ar[rr]^-{\r{id}_{\Alb_X}\times\vartheta_X} && \Alb_X\times\Alb_X^\vee
}
\]
where $\beta\coloneqq(\alpha_Y)_y\times(\alpha_X)_{x^\vee}$. Note that squares involving the dash arrow do not necessarily commute. By the isomorphism $(\Alb_u\times\r{id}_{\Alb_X^\vee})^*\cP_X\simeq(\r{id}_{\Alb_Y}\times\Alb_u^\vee)^*\cP_Y$, \eqref{eq:picard_functoriality} is equivalent to
\begin{align}\label{eq:picard_functoriality1}
(\r{id}_Y\times u)_*\beta_Y^*(\r{id}_{\Alb_Y}\times\vartheta_Y)^*c_1(\cP_Y)=\frac{n_Y}{n_X}\cdot
\beta^*(\r{id}_{\Alb_Y}\times\vartheta_X)^*(\r{id}_{\Alb_Y}\times\Alb_u^\vee)^*c_1(\cP_Y).
\end{align}
By the projection formula, \eqref{eq:picard_functoriality1} is equivalent to that
\[
\deg u\cdot n_X\cdot(\r{id}_{\Alb_Y}\times\vartheta_Y)^*c_1(\cP_Y) -
n_Y\cdot(\r{id}_{\Alb_Y}\times(\Alb_u^\vee\circ\vartheta_X\circ\Alb_u))^*c_1(\cP_Y)
\]
is contained in the kernel of $(\r{id}_Y\times u)_*\circ\beta_Y^*$. Now the Galois group $\Gamma$ acts on $\Alb_Y$ via the homomorphisms $\gamma\colon\Alb_Y\to\Alb_Y$ for $\gamma\in\Gamma$. We have a similar action on $\Alb_Y^\vee$ by duality, and the homomorphism $\vartheta_Y$ is $\Gamma$-equivariant since the divisor $u^*D$ is $\Gamma$-invariant. For a line bundle $\cL$ on $\Alb_Y\times\Alb_Y$, we have the trace line bundle
\[
\cL_\Gamma\coloneqq\bigotimes_{\gamma\in\Gamma}(\r{id}_{\Alb_Y}\times\gamma)^*\cL.
\]
Moreover, if $\cL_\Gamma$ is torsion, then $c_1(\cL)$ is in the kernel of $(\r{id}_Y\times u)_*\circ\beta_Y^*$. We define similarly $\cL_\Gamma$ for line bundles $\cL$ on $\Alb_Y\times\Alb_Y^\vee$.

In all, \eqref{eq:picard_functoriality1} will follow from the following claim: For $\cP\coloneqq\cP_Y$ on $\Alb_Y\times\Alb_Y^\vee$, the line bundle
\[
(\r{id}_{\Alb_Y}\times\vartheta_Y)^*\cP_\Gamma^{\otimes n_X}\otimes
(\r{id}_{\Alb_Y}\times(\Alb_u^\vee\circ\vartheta_X\circ\Alb_u))^*\cP^{\otimes-n_Y}
\]
is torsion. An easy diagram chasing implies that the claim will follow if we can show that the two homomorphisms
\begin{align}\label{eq:picard_functoriality2}
\Alb_u^\vee\circ\vartheta_X\circ\Alb_u\circ[n_Y]_{\Alb_Y},\qquad
\sum_{\gamma\in\Gamma}\gamma^\vee\circ\vartheta_Y\circ[n_X]_{\Alb_Y}
\end{align}
from $\Alb_Y$ to $\Alb_Y^\vee$ coincide. However, this can be checked on the level of $k$-points as the base field is algebraically closed of characteristic zero. Then we have a homomorphism $u_*\colon\Alb_Y^\vee(k)\to\Alb_X^\vee(k)$ induced by pushforward of divisors along $u\colon Y\to X$ as we have $\Alb_X^\vee\simeq\Pic^0_X$ and $\Alb_Y^\vee\simeq\Pic^0_Y$. By the definition of pushforward, the diagram
\[
\xymatrix{
& \Alb_X^\vee(k) \ar[dr]^-{\Alb_u^\vee} \\
\Alb_Y^\vee(k) \ar[ur]_-{u_*}\ar[rr]^-{\sum_{\gamma\in\Gamma}\gamma^\vee} && \Alb_Y^\vee(k)
}
\]
commutes; and by the projection formula, the diagram
\[
\xymatrix{
\Alb_Y^\vee(k) \ar[rr]^-{\theta_{Y,u^*D}}\ar[d]_-{u_*} && \Alb_Y(k) \ar[d]^-{\Alb_u} \\
\Alb_X^\vee(k) \ar[rr]^-{\theta_{X,D}} && \Alb_X(k)
}
\]
commutes as well. The two diagrams imply the coincidence of the two homomorphisms in \eqref{eq:picard_functoriality2}. Thus, the claim hence (2) follow.
\end{proof}

\section{Algebraic cycles and height pairings}
\label{ss:2}

In this section, we make some preparation for algebraic cycles and height pairings for general varieties. In Subsection \ref{ss:cycles}, we review the notion of algebraic cycles and correspondences. In Subsection \ref{ss:bbp}, we review the construction of the Beilinson--Bloch height pairing and introduce our variant -- the Beilinson--Bloch--Poincar\'{e} height pairing. In Subsection \ref{ss:kunneth}, we discuss the construction of some K\"{u}nneth--Chow projectors for curves and surfaces, which will be used in the modified diagonal $\Delta^3_zX_K$ later.

Let $k$ be a field of characteristic zero. We work in the category $\Sch_{/k}$.

\subsection{Cycles and correspondences}
\label{ss:cycles}

Consider a proper smooth scheme $X\in\Sch_{/k}$ of pure dimension $d$. Let $\rZ^i(X)$ (resp.\ $\CH^i(X)$) be the abelian group of algebraic cycles (resp.\ Chow cycles) on $X$ of codimension $i$, with a natural surjective map $\rZ^i(X)\to\CH^i(X)$. For example, we have the diagonal cycle
\[
\Delta^rX\in\rZ^{(r-1)d}(X^r)
\]
for $r\geq 1$ as the image of the diagonal morphism $\Delta^r\colon X\to X^r$. We write $\Delta X$ for $\Delta^2 X$ for simplicity.

We have the de Rham cycle class map
\[
\cl_\dr\colon\CH^i(X)_\dQ\to\rH^{2i}_\dr(X/k),
\]
whose kernel we denote by $\CH^i(X)^0_\dQ$. By various comparison theorems, $\CH^i(X)^0_\dQ$ coincides with the kernel of the Betti cycle class map
\[
\cl_{\rB,\tau}\colon\CH^i(X)_\dQ\to\rH^{2i}_{\rB,\tau}(X,\dQ)
\]
for every embedding $\tau\colon k\hookrightarrow\dC$, and the $\ell$-adic cycle class map
\[
\cl_\ell\colon\CH^i(X)_\dQ\to\rH^{2i}_{\et}(X_{k^\ac},\dQ_\ell(i))
\]
for every rational prime $\ell$. Moreover, by the Hochschild--Serre spectral sequence, we obtain the $\ell$-adic Abel--Jacobi map
\[
\AJ_\ell\colon\CH^i(X)_\dQ^0\to\rH^1(k,\rH^{2i-1}_{\et}(X_{k^\ac},\dQ_\ell(i))).
\]

\begin{definition}\label{de:aj_kernel}
We put $\CH^i(X)_\dQ^1\coloneqq\bigcap_\ell\Ker\AJ_\ell$ as a subspace of $\CH^i(X)_\dQ^0$, where the intersection is taken over all rational primes $\ell$p, and
\begin{align*}
\CH^i(X)_R^0\coloneqq\CH^i(X)_\dQ^0\otimes_\dQ R,\qquad
\CH^i(X)_R^\natural\coloneqq(\CH^i(X)_\dQ^0/\CH^i(X)_\dQ^1)\otimes_\dQ R
\end{align*}
for every ring $R$ containing $\dQ$. We call elements in $\CH^i(X)_R^\natural$ \emph{natural cycles} (of codimension $i$).
\end{definition}

\begin{remark}\label{re:beilinson}
In \cite{Bei87}, Beilinson conjectures that $\Ker\AJ_\ell=\{0\}$ for every rational prime $\ell$ if $k$ is a number field and $X$ is projective, which implies  $\CH^i(X)_R^0=\CH^i(X)_R^\natural$.
\end{remark}

We introduce the following definition, which will be used in Section \ref{ss:4}.

\begin{definition}\label{de:chow_convergent}
We say that a formal series $\sum_j c_jZ_j$ with $c_j\in\dC$ and $Z_j\in\rZ^i(X)$ is \emph{Chow convergent} if the image of $\{Z_j\}_j$ in $\CH^i(X)_\dC$ generates a finite dimensional subspace, and the induced formal series in this finite dimensional space is absolutely convergent. We denote by $\CZ^i(X)$ the set of Chow convergent formal series in $\rZ^i(X)$, which is a complex vector space and admits a natural complex linear map $\CZ^i(X)\to\CH^i(X)_\dC$.
\end{definition}

Now we recall the notation of correspondences. A \emph{(Chow self-)correspondence of $X$} is an element $z\in\CH^d(X\times X)$. It induces a graded map
\[
z^*\colon\bigoplus_{i=0}^d\CH^i(X)\to\bigoplus_{i=0}^d\CH^i(X)
\]
sending $\alpha$ to $\tp_{1*}(z.\tp_2^*\alpha)$, where $\tp_i\colon X\times X\to X$ is the projection to the $i$-th factor, a convention recalled from Subsection \ref{ss:notation}. On the level of various cohomology, it induces graded maps
\begin{align*}
\cl_\dr^*(z)&\colon\bigoplus_{i=0}^{2d}\rH^i_\dr(X/k)\to\bigoplus_{i=0}^{2d}\rH^i_\dr(X/k), \\
\cl_{\rB,\tau}^*(z)&\colon\bigoplus_{i=0}^{2d}\rH^i_{\rB,\tau}(X,\dQ)\to\bigoplus_{i=0}^{2d}\rH^i_{\rB,\tau}(X,\dQ),
\end{align*}
and
\begin{align*}
\cl_\ell^*(z)\colon\bigoplus_{i=0}^{2d}\rH^i_{\et}(X_{k^\ac},\dQ_\ell(j))\to\bigoplus_{i=0}^{2d}\rH^i_{\et}(X_{k^\ac},\dQ_\ell(j))
\end{align*}
for every prime rational $\ell$ and $j\in\dZ$. They are compatible with each other under various
comparison theorems and cycle class maps. When we regard the diagonal $\Delta X\subseteq X\times X$ as a correspondence, we usually write it as $\id_X$.

\subsection{Beilinson--Bloch--Poincar\'{e} height pairing}
\label{ss:bbp}

We review the theory of height pairing between cycles of Beilinson and Bloch. Now suppose that $k$ is a number field. Consider a projective smooth scheme $X\in\Sch_{/k}$ of pure dimension $d$. Beilinson \cite{Bei87} and Bloch \cite{Blo84} have defined, via two approaches, a bilinear pairing
\[
\langle\;,\;\rangle_X^{\r{BB}}\colon\CH^i(X)_\dQ^0\times\CH^{d+1-i}(X)_\dQ^0\to\dC.
\]
However, both approaches relies on some hypotheses that are still unknown even today.

We review briefly Beilinson's construction: For $z_1\in\CH^i(X)_\dQ^0$ and $z_2\in\CH^{d+1-i}(X)_\dQ^0$, we choose their representatives $Z_1\in\rZ^i(X)_\dQ$ and $Z_2\in\rZ^{d+1-i}(X)_\dQ$ that have disjoint support. For every place $v$ of $k$, there is a local index $\langle Z_1,Z_2\rangle_{X_v}$ on $X_v\coloneqq X_{k_v}$. For $v$ archimedean, this is defined in \cite{Bei87}*{Section~3} via the potential theory on K\"{a}hler manifolds; it is unconditional. For $v$ nonarchimedean such that $X_v$ has good reduction, this is defined via intersection theory on an arbitrary smooth model of $X_v$ over $O_{k_v}$. For $v$ nonarchimedean in general, the definition of $\langle Z_1,Z_2\rangle_{X_v}$ is conditional: Choose a rational prime $\ell$ not underlying $v$ and an isomorphism $\iota_\ell\colon\dC\xrightarrow{\sim}\dQ_\ell^\ac$ such that $\rH^{2i}_{\et}(X_{k_v^\ac},\dQ_\ell)$ satisfies the weight-monodromy conjecture, which implies that the cycle class of $Z_1$ in the absolute \'{e}tale cohomology $\rH^{2i}_{\et}(X_v,\dQ_\ell(i))$ vanishes (same for $Z_2$). Then one can define $\langle Z_1,Z_2\rangle_{X_v}$ as a ``link pairing'' valued in $\dQ_\ell$ followed by the map $\iota_\ell^{-1}$. See \cite{Bei87}*{Section~2.1} for more details. We then define
\begin{align}\label{eq:bb_decompose}
\langle Z_1,Z_2\rangle_X^{\r{BB}}\coloneqq\sum_v r(v)\cdot\langle Z_1,Z_2\rangle_{X_v},
\end{align}
where the sum is taken over all places $v$ of $k$, and $r(v)$ equals $1$, $2$, and $\log q_v$ when $v$ is real, complex, and nonarchimedean (with $q_v$ the residue cardinality of $k_v$), respectively.

For every intermediate ring $\dQ\subseteq R\subseteq\dC$, we obtain a pairing
\begin{align*}
\langle\;,\;\rangle_X^{\r{BB}}&\colon\CH^i(X)_R^0\times\CH^{d+1-i}(X)_R^0\to\dC
\end{align*}
via $R$-bilinear extension. Beilinson conjectures that the pairing $\langle\;,\;\rangle_X^{\r{BB}}$ is independent of $\ell$ and the isomorphism $\iota_\ell$.

\if false

Let $\ell$ be a rational prime such that $X$ has proper smooth reduction at every place above $\ell$. Choose an isomorphism $\iota_\ell\colon\dC\xrightarrow{\sim}\dQ_\ell^\ac$, we define globally a pairing
\[
\langle\;,\;\rangle_X^{\r{BB},\iota_\ell}\colon \CH^i(X)_\dQ^0\times \CH^{d+1-i}(X)_\dQ^0\to\dC
\]
via the formula
\[
\langle Z_1,Z_2\rangle_X^{\r{BB},\iota_\ell}\coloneqq\sum_v r(v)\cdot\iota_\ell^{-1}\langle Z_1,Z_2\rangle_{X_v}
\]
where $r(v)$ is some elementary factor determined by $v$. It is easy to see that this is a finite sum.

\fi

\begin{remark}\label{re:height_pairing}
As we have mentioned, if $X_v$ satisfies the weight-monodromy conjecture for every nonarchimedean place $v$ of $k$ (for example, when $X$ is a product of curves, surfaces, or abelian varieties), then the Beilinson--Bloch height pairing $\langle\;,\;\rangle_X^{\r{BB}}$ is unconditionally defined (but may \emph{a priori} depend on the choices of $\ell$ and $\iota_\ell$). When $X$ is a curve, the Beilinson--Bloch height pairing coincides with the N\'{e}ron--Tate height pairing up to $-1$. When $X$ is an abelian variety, the Beilinson--Bloch height pairing coincides with the pairing defined in \cite{Kun01}. In particular, in these two cases, the independence of $\ell$ and $\iota_\ell$ is known.
\end{remark}

\begin{lem}\label{le:divisor_coh}
Suppose that the Beilinson--Bloch height pairing is defined for $X$. Take $Z\in\CH^1(X)_R^0$ for some intermediate ring $\dQ\subseteq R\subseteq\dC$. Then we have
\[
\langle Z_1,Z.Z_2\rangle_X^{\r{BB}}=0
\]
for every $Z_1\in\CH^i(X)_R^0$ and $Z_2\in\CH^{d-i}(X)_R^0$.
\end{lem}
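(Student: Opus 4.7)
The plan is to use the Leibniz rule for Abel--Jacobi maps together with the factorization of Beilinson's local heights through local Abel--Jacobi classes. First I would reduce, by $R$-bilinearity of the pairing and a standard moving argument, to the case $R=\dQ$ with $Z$, $Z_1$, $Z_2$ represented by cycles in sufficiently general position so that all intersections are proper. Since $\cl(Z)=0$, we have $\cl(Z.Z_2)=\cl(Z)\cup\cl(Z_2)=0$, so $Z.Z_2\in\CH^{d+1-i}(X)_\dQ^0$ and the pairing $\langle Z_1,Z.Z_2\rangle_X^{\r{BB}}$ is indeed defined.

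The key input is the Leibniz rule for Abel--Jacobi classes: for each prime $\ell$, the compatibility of the intersection product with the cup product in continuous $\ell$-adic cohomology yields an identity of the shape
\[
\AJ_\ell(Z.Z_2) = \AJ_\ell(Z)\smile\cl_\ell(Z_2) \pm \cl_\ell(Z)\smile\AJ_\ell(Z_2)
\]
in $\rH^1(k,\rH^{2(d-i)+1}_{\et}(X_{k^\ac},\dQ_\ell(d-i+1)))$. Both summands vanish since $\cl_\ell(Z)=0$ and $\cl_\ell(Z_2)=0$ by hypothesis, so $\AJ_\ell(Z.Z_2)=0$ for every prime $\ell$; that is, $Z.Z_2\in\CH^{d+1-i}(X)_\dQ^1$. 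The analogous identity in Deligne cohomology, proved by the same formal argument, shows that the complex (Griffiths) Abel--Jacobi class of $Z.Z_2$ also vanishes at each archimedean place.

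Next, I would unwind the local components in the decomposition \eqref{eq:bb_decompose}. At a nonarchimedean place $v$ with residue characteristic coprime to $\ell$ (and satisfying the hypotheses under which Beilinson's construction is unconditional), the local height $\langle Z_1,Z.Z_2\rangle_{X_v}$ is realized as the value of an $\ell$-adic link pairing on the pair of local Abel--Jacobi classes $(\AJ_\ell^v(Z_1),\AJ_\ell^v(Z.Z_2))$, via local Poincar\'e duality $\rH^{2i-1}(i)\otimes\rH^{2(d-i)+1}(d-i+1)\to\dQ_\ell(1)$ followed by the local invariant $\rH^2(G_{k_v},\dQ_\ell(1))\to\dQ_\ell$. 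Since the restriction of the global class $\AJ_\ell(Z.Z_2)=0$ to $G_{k_v}$ is exactly $\AJ_\ell^v(Z.Z_2)$, this local AJ class vanishes, and so does the local height. An analogous argument at archimedean places uses the complex Abel--Jacobi class in place of the $\ell$-adic one. Summing over all places via \eqref{eq:bb_decompose} yields the desired equality $\langle Z_1, Z.Z_2\rangle_X^{\r{BB}}=0$.

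The main obstacle I foresee is making the Leibniz rule precise in continuous $\ell$-adic and Deligne cohomology (with the correct signs), and verifying carefully that Beilinson's local height at every place genuinely factors through the local Abel--Jacobi class of $Z.Z_2$; both statements are implicit in Beilinson's original construction \cite{Bei87} but involve nontrivial bookkeeping in the Hochschild--Serre spectral sequence and the compatibility of cup products with the various connecting homomorphisms at play.
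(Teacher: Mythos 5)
Your computation that $\AJ_\ell(Z.Z_2)=0$ for every $\ell$, and that the complex Abel--Jacobi class of $Z.Z_2$ also vanishes, is fine (for the $\ell$-adic statement this is just the multiplicativity of the Hochschild--Serre filtration, $F^1\cup F^1\subseteq F^2$, so both summands in your Leibniz-rule identity vanish; for the archimedean statement one could argue that the cup product of intermediate-Jacobian classes $J^1\times J^{d-i}\to J^{d-i+1}$ is a biadditive map of compact complex tori, hence zero by rigidity). The paper reaches the same conclusion about the complex Abel--Jacobi class by a more concrete computation using the $\partial\bar\partial$-lemma, which exploits that $Z$ is a divisor.

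However, the rest of your argument has a genuine gap. You claim that the local Beilinson height $\langle Z_1, Z.Z_2\rangle_{X_v}$ at each place $v$ factors through the local Abel--Jacobi classes, so that the vanishing of $\AJ_\ell(Z.Z_2)$ and of the complex AJ class kills each local term in \eqref{eq:bb_decompose}. This is false. At a nonarchimedean place of good reduction with $\ell$ prime to the residue characteristic, the local Abel--Jacobi class of \emph{any} cycle is zero: the relevant geometric cohomology has strictly negative weight, so $\rH^1$ of the local Galois group with these coefficients kills all but torsion information, and after tensoring with $\dQ_\ell$ the image is zero. Yet the local height there — defined as an intersection number on a smooth integral model — is generally nonzero (this is visible already for degree-zero divisors on curves, where it is the Néron local height). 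The same phenomenon occurs archimedeanly: the archimedean local term depends on the choice of representatives (it is essentially a Green's current pairing) and is not determined by the complex Abel--Jacobi class of one of the inputs. In short, the local terms in \eqref{eq:bb_decompose} are not individually functions of Abel--Jacobi classes; only their sum is well-defined on Chow groups.

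What makes the argument work in the paper is the additional observation that $Z$, being a homologically trivial divisor, is \emph{algebraically} equivalent to zero, hence so is $Z.Z_2$, and then Beilinson's Lemma~4.0.7 in \cite{Bei87}: for a cycle algebraically equivalent to zero, its Beilinson--Bloch height pairing against any homologically trivial cycle factors through the complex Abel--Jacobi map. This is a genuinely global statement (it compares the sum of all the local terms), and it is the ingredient that replaces your place-by-place vanishing. Without invoking it — or an equivalent global input — the reduction to the vanishing of Abel--Jacobi classes does not go through.
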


\begin{proof}
We fix an embedding $k\hookrightarrow\dC$. Since $Z$ is homologically trivial, it is algebraically equivalent to zero; so is $Z.Z_2$. By \cite{Bei87}*{Lemma~4.0.7}, it suffices to show that the image of $Z.Z_2$ under the complex Abel--Jacobi map $\CH^{d-i+1}(X)_R^0\to\rJ^{d-i+1}(X_\dC)_R$ is zero, where $\rJ^{d-i+1}(X_\dC)$ is the $(d-i+1)$-th intermediate Jacobian of $X_\dC$ (as an abelian group). We replace $Z$ and $Z_2$ by their representatives in $\rZ^1(X)_R$ and $\rZ^{d-i}(X)_R$ with proper intersection. Since $Z_2$ is homologically trivial, we may choose a (singular) chain $C_{Z_2}$ of (real) dimension $2i+1$ with boundary $Z_2$. Then $Z.C_{Z_2}$ is a chain of dimension $2i-1$ with boundary $Z.Z_2$. It suffices to show that
\[
\int_{Z.C_{Z_2}}\omega=0
\]
for every closed differential form $\omega$ whose class belongs to the Hodge filtration $\Fil^i\rH^{2i-1}_\rB(X,\dC)$. Since (the underlying cycle of) $Z$ is homologous to zero, we can take a $(1,0)$-form $\eta$ such that $\rd\eta$ is the class represented by $Z$ by the $\partial\ol\partial$-lemma from Hodge theory. Thus,
\[
\int_{Z.C_{Z_2}}\omega=\int_{C_{Z_2}}\rd\eta\wedge\omega=\int_{C_{Z_2}}\rd(\eta\wedge\omega)=
\int_{Z_2}\eta\wedge\omega=0,
\]
in which the last equality follows as we may take a representative of $\omega$ as a sum of $(p,2i-1-p)$-forms with $p\geq i$. The lemma follows.
\end{proof}

Recall that if $A$ is an abelian variety over $k$ of dimension $h\geq 1$, and $\dQ\subseteq R\subseteq\dC$ is an immediate ring, then we also have the N\'{e}ron--Tate (bilinear) height pairing
\[
\langle\;,\;\rangle_A^{\r{NT}}\colon A(k)_R\times A^\vee(k)_R\to\dC.
\]
Composing with the Albanese maps $\CH^h(A)_R^0\to A(k)_R$ and $\CH^h(A^\vee)_R^0\to A^\vee(k)_R$, we may regard the above pairing as a map
\begin{align}\label{eq:nt}
\langle\;,\;\rangle_A^{\r{NT}}\colon \CH^h(A)_R^0\times \CH^h(A^\vee)_R^0\to\dC.
\end{align}

\begin{remark}\label{re:bbnt}
The N\'{e}ron--Tate height pairing \eqref{eq:nt} is related to the Beilinson--Bloch height pairing via the following commutative diagram:
\[
\xymatrix{
\CH^h(A)_R^0  \ar@{}[r]|{\bigtimes}\ar@{=}[d] & \CH^1(A)_R^0 \ar[d]\ar[rr]^-{\langle\;,\;\rangle_A^{\r{BB}}} && \dC \ar@{=}[d]\\
\CH^h(A)_R^0  \ar@{}[r]|{\bigtimes} & \CH^h(A^\vee)_R^0 \ar[rr]^-{-\langle\;,\;\rangle_A^{\r{NT}}} && \dC \\
}
\]
in which $\CH^1(A)_R^0\to\CH^h(A^\vee)_R^0$ is the tautological map.
\end{remark}

Now we will combine the Beilinson--Bloch height pairing on $X$ and the N\'{e}ron--Tate height pairing on $A$ to give a height pairing
\[
\langle\;,\;\rangle_{X,A}^{\r{BBP}}\colon\CH^{h+i}(X\times A)_R^0\times\CH^{h+d-i}(X\times A^\vee)_R^0\to\dC
\]
using the Poincar\'{e} bundle, for every intermediate ring $\dQ\subseteq R\subseteq\dC$. The process is easy: Let $\cP$ be the Poincar\'{e} line bundle on $A\times A^\vee$. We have projection morphisms
\[
\tp_{12}\colon X\times A\times A^\vee\to X\times A,\quad \tp_{13}\colon X\times A\times A^\vee\to X\times A^\vee,
\]
and recall the Fourier--Mukai transform
\[
\wp\colon\CH^{h+d-i}(X\times A^\vee)_R^0\to\CH^{d+1-i}(X\times A)_R^0
\]
sending $z$ to $\tp_{12*}((X\times c_1(\cP)).\tp_{13}^*z)$. We then define
\[
\langle z_1,z_2\rangle_{X,A}^{\r{BBP}}\coloneqq\langle z_1,\wp(z_2)\rangle_{X\times A}^{\r{BB}}.
\]

\begin{definition}
We call $\langle\;,\;\rangle_{X,A}^{\r{BBP}}$ the \emph{Beilinson--Bloch--Poincar\'{e} height pairing} for $(X,A)$.
\end{definition}

\begin{remark}
The Beilinson--Bloch--Poincar\'{e} height pairing is unconditionally defined if $X_v$ satisfies the weight-monodromy conjecture for every nonarchimedean place $v$ of $k$ (but may \emph{a priori} depend on the choices of $\ell$ and $\iota_\ell$). When $X=\Spec k$ (resp. $h=1$, that is, $A$ is an elliptic curve, hence is canonically isomorphic to $A^\vee$), the Beilinson--Bloch--Poincar\'{e} height pairing for $(X,A)$ reduces to the N\'{e}ron--Tate height pairing \eqref{eq:nt} for $A$ up to $-1$ (resp. the Beilinson--Bloch height pairing for $X\times A$).
\end{remark}

\begin{remark}\label{re:bbp}
The Beilinson--Bloch--Poincar\'{e} height pairing can be defined more generally for an abelian scheme $\ul{A}$ of relative dimension $h\geq 1$ over $X$ as a pairing
\[
\langle\;,\;\rangle_{\ul{A}}^{\r{BBP}}\colon\CH^{h+i}(\ul{A})_R^0\times\CH^{h+d-i}(\ul{A}^\vee)_R^0\to\dC
\]
such that $\langle z_1,z_2\rangle_{\ul{A}}^{\r{BBP}}=\langle z_1,\tp_{1*}(c_1(\ul{\cP}).\tp_2^*z_2)\rangle_{\ul{A}}^{\r{BB}}$, where $\tp_1\colon\ul{A}\times_X\ul{A}^\vee\to\ul{A}$ and $\tp_2\colon\ul{A}\times_X\ul{A}^\vee\to\ul{A}^\vee$ are projection morphisms, and $\ul{\cP}$ is the relative Poincar\'{e} bundle on $\ul{A}\times_X\ul{A}^\vee$.
\end{remark}

\subsection{K\"{u}nneth--Chow projectors}
\label{ss:kunneth}

In this subsection, we will construct some K\"{u}nneth--Chow projectors, which will be used in Subsection \ref{ss:aggp}. The readers may skip it at this moment.

Consider a proper smooth scheme $X\in\Sch_{/k}$ of pure dimension $d$. Put
\[
\rH^\even_\dr(X/k)\coloneqq\bigoplus_{i\text{ even}}\rH^i_\dr(X/k),\qquad
\rH^\odd_\dr(X/k)\coloneqq\bigoplus_{i\text{ odd}}\rH^i_\dr(X/k).
\]

\begin{definition}\label{de:projector}
We say that a correspondence $z\in\CH^d(X\times X)_\dQ$ is an \emph{even} (resp.\ \emph{odd}) \emph{projector} if the map $\cl_\dr^*(z)$ is the projection map to $\rH^\even_\dr(X/k)$ (resp.\ $\rH^\odd_\dr(X/k)$).
\end{definition}

We introduce the following convention: for a zero cycle $D$ on $X$, we regard its degree $\deg D$ as a function on $\pi_0(X)$.

\begin{lem}\label{le:easy_projector}
We have
\begin{enumerate}
  \item Suppose that $d=1$. Let $D\in\CH^1(X)_\dQ$ be a cycle such that $\deg D$ is nonzero on every connected component of $X$. Then
    \[
    z_{X,D}\coloneqq\Delta X-\frac{1}{\deg D}(X\times D + D\times X)
    \]
    is an odd projector for $X$.

  \item Suppose that $d=2$. Let $D\in\CH^1(X)_\dQ$ be a cycle that is an almost ample divisor (Definition \ref{de:almost_ample}). Then
    \[
    z_{X,D}\coloneqq e_{X,D}+e_{X,D}^\rt
    \]
    is an odd projector for $X$, where $e_{X,D}$ is the correspondence in Proposition \ref{pr:picard_functoriality} and $e_{X,D}^\rt$ is its transpose.
\end{enumerate}
\end{lem}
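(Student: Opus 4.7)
The plan is to compute $\cl^*_\dr(z_{X,D})$ directly on each graded piece of $\rH^*_\dr(X/k)$ using the K\"unneth decomposition of the cycle class together with the formula $z^*\alpha=\tp_{1*}(\cl_\dr(z)\cup\tp_2^*\alpha)$.

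For part (1), first decompose $X$ into its connected components; since both $z_{X,D}$ and its action on de Rham cohomology split as a direct sum over these components, one may assume $X$ is connected, so $\deg D$ is a single nonzero rational number. Then $\rH^*_\dr(X/k)=\rH^0_\dr\oplus\rH^1_\dr\oplus\rH^2_\dr$. The diagonal induces the identity, while by the K\"unneth formula $\cl_\dr(X\times D)=1\otimes\cl_\dr(D)$ lies entirely in $\rH^0_\dr\otimes\rH^2_\dr$. A short computation via $z^*\alpha=\tp_{1*}(\cl_\dr(z)\cup\tp_2^*\alpha)$ shows that $(X\times D)^*$ is multiplication by $\deg D$ on $\rH^0_\dr$ and vanishes on $\rH^1_\dr\oplus\rH^2_\dr$; symmetrically $(D\times X)^*$ is multiplication by $\deg D$ on $\rH^2_\dr$ and vanishes on $\rH^0_\dr\oplus\rH^1_\dr$. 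Hence $\cl^*_\dr(z_{X,D})$ is the identity on $\rH^1_\dr$ and zero on $\rH^0_\dr\oplus\rH^2_\dr$, which is precisely the projection onto $\rH^\odd_\dr(X/k)$.

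For part (2), Proposition \ref{pr:picard_functoriality}(1) already supplies that $\cl^*_\dr(e_{X,D})$ is the projection onto $\rH^1_\dr(X/k)$. The remaining task is to identify the action of $e_{X,D}^\rt$. The key general observation is that, for any correspondence $z\in\CH^d(X\times X)_\dQ$, only the K\"unneth component of $\cl_\dr(z)$ in $\rH^i_\dr\otimes\rH^{2d-i}_\dr$ contributes to the action of $z^*$ on $\rH^i_\dr$ (the other components vanish for degree reasons after applying $\tp_{1*}$), and taking the transpose swaps the two K\"unneth factors. The vanishing of $e_{X,D}^*$ on $\rH^0_\dr,\rH^2_\dr,\rH^3_\dr,\rH^4_\dr$, combined with the nondegeneracy of the intersection and Poincar\'e pairings, forces $\cl_\dr(e_{X,D})$ to be concentrated in the single K\"unneth component $\rH^1_\dr\otimes\rH^3_\dr$. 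A brief computation with dual bases relative to the Poincar\'e pairing between $\rH^1_\dr$ and $\rH^3_\dr$ then shows that $(e_{X,D}^\rt)^*$ is the identity on $\rH^3_\dr$ and zero elsewhere. Summing, $\cl^*_\dr(z_{X,D})$ is the projection onto $\rH^1_\dr\oplus\rH^3_\dr=\rH^\odd_\dr(X/k)$.

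No substantial obstacle is anticipated. Part (1) is pure degree bookkeeping via K\"unneth, while part (2) reduces, via K\"unneth together with Poincar\'e duality, to the adjointness between a correspondence and its transpose, combined with the already-proved Proposition \ref{pr:picard_functoriality}(1).
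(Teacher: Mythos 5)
Your proof is correct and, once one unpacks the paper's terse "Part (1) is obvious. Part (2) follows from Proposition \ref{pr:picard_functoriality}(1)," it is exactly the argument the paper has in mind: degree bookkeeping via K\"unneth for (1), and for (2) the combination of Proposition \ref{pr:picard_functoriality}(1) with the observation that transposing a correspondence whose cycle class lies in $\rH^1_\dr\otimes\rH^3_\dr$ yields the Poincar\'e adjoint, projecting onto $\rH^3_\dr$. No substantive difference in approach; you have simply supplied the details the paper leaves implicit.
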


\begin{proof}
Part (1) is obvious. Part (2) follows from Proposition \ref{pr:picard_functoriality}(1).
\end{proof}

\begin{lem}\label{le:triple_product}
Let $z$ be an odd projector for $X$. Then
\begin{enumerate}
  \item the image of the induced map $z^*\colon\CH^i(X)_\dQ\to\CH^i(X)_\dQ$ is contained in $\CH^i(X)_\dQ^0$;

  \item the cycle
    \[
    z\times z\times z+z\times(\Delta X-z)\times(\Delta X-z)+(\Delta X-z)\times z\times(\Delta X-z)+(\Delta X-z)\times(\Delta X-z)\times z
    \]
    is an odd projector for $X\times X\times X$.
\end{enumerate}
\end{lem}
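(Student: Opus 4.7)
The plan for (1) is direct. Given any $\alpha \in \CH^i(X)_\dQ$, I will use the naturality of the de Rham cycle class map under correspondence actions to write $\cl_\dr(z^*\alpha) = \cl_\dr^*(z)(\cl_\dr(\alpha))$. Since $\cl_\dr(\alpha) \in \rH^{2i}_\dr(X/k) \subseteq \rH^\even_\dr(X/k)$ while $\cl_\dr^*(z)$ is by hypothesis the projection onto $\rH^\odd_\dr(X/k)$, the right-hand side vanishes. Hence $z^*\alpha$ lies in $\Ker\cl_\dr = \CH^i(X)_\dQ^0$, which is exactly the conclusion of (1).

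For (2), my plan is to reduce everything to a linear-algebra computation on parity eigenspaces via K\"unneth. Set $p \coloneqq \cl_\dr^*(z)$, the projection onto $\rH^\odd_\dr(X/k)$, and $q \coloneqq \id - p = \cl_\dr^*(\Delta X - z)$, the projection onto $\rH^\even_\dr(X/k)$. The first step is to invoke the standard compatibility of external products of correspondences with the K\"unneth formula: under the canonical isomorphism $\rH^*_\dr(X^3/k) \simeq \rH^*_\dr(X/k)^{\otimes 3}$, and after reinterpreting $z_1 \times z_2 \times z_3$ as a correspondence on $X^3$ by permuting the factors of $(X \times X)^3 \simeq X^3 \times X^3$, the induced action on cohomology is $\cl_\dr^*(z_1) \otimes \cl_\dr^*(z_2) \otimes \cl_\dr^*(z_3)$. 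Applying this summand by summand to the proposed cycle, its induced endomorphism of $\rH^*_\dr(X^3/k)$ becomes
\[
p \otimes p \otimes p + p \otimes q \otimes q + q \otimes p \otimes q + q \otimes q \otimes p.
\]

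Finally, I will verify that this operator is the projection onto $\rH^\odd_\dr(X^3/k)$. The eight parity eigenspaces of $\rH^*_\dr(X/k)^{\otimes 3}$ are indexed by triples $(\epsilon_1, \epsilon_2, \epsilon_3) \in \{0,1\}^3$, and a pure tensor of type $(\epsilon_1, \epsilon_2, \epsilon_3)$ lies in $\rH^\odd_\dr(X^3/k)$ exactly when $\epsilon_1 + \epsilon_2 + \epsilon_3$ is odd; this selects precisely the types $(1,1,1)$, $(1,0,0)$, $(0,1,0)$, $(0,0,1)$, which match the four tensor summands above in order. No serious obstacle is anticipated: the only nonformal input is the K\"unneth compatibility for external products of correspondences, which is standard and purely formal, while the remaining verification is the short combinatorial check just indicated.
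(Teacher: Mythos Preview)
Your proposal is correct and follows essentially the same approach as the paper's proof. The paper's argument for (1) is the one-line observation that $\cl_\dr(\IM z^*)\subseteq\IM(\cl_\dr^*(z))=\rH^\odd_\dr(X/k)$, and for (2) it simply notes that $\Delta X - z$ is an even projector and invokes the K\"unneth decomposition; you have written out the same reasoning in more detail, including the explicit parity check on the eight K\"unneth summands.
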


\begin{proof}
For (1), since $\cl_\dr(\IM z^*)\subseteq\IM(\cl_\dr^*(z))=\rH^\odd_\dr(X/k)$, we know that the image of $z^*$ is contained in $\CH^i(X)_\dQ^0$.

For (2), note that if $z$ is an odd projector, then $\Delta X-z$ is an even projector. Thus, (2) follows from the K\"{u}nneth decomposition for the algebraic de Rham cohomology.
\end{proof}

\begin{definition}\label{de:triple_product}
Let $z$ be an odd projector for $X$. We define
\[
\pr^{[3]}_z\colon\CH^i(X\times X\times X)_\dQ\to\CH^i(X\times X\times X)_\dQ^0
\]
to be the map induced by the odd projector for $X\times X\times X$ as in Lemma \ref{le:triple_product}(2).
\end{definition}

\if false

Note that $\CH^d(X\times_k X)_\dQ$ is a $\dQ$-algebra with the multiplication given by composition of correspondences. The following lemma will be used later.

\begin{lem}\label{le:projector}
Let $u\colon Y\to X$ be a generically finite dominant morphism of proper smooth schemes over $k$. Let $\sH_X$ (resp.\ $\sH_Y$) be a commutative $\dQ$-subalgebra of $\CH^d(X\times_k X)_\dQ$ (resp.\ $\CH^d(Y\times_k Y)_\dQ$) such that
\begin{itemize}
  \item the supports of $\rH^\even_\dr(Y/k)$ and $\rH^\odd_\dr(Y/k)$ as $\sH_Y$-modules are disjoint;

  \item the image of $\sH_Y$ under the map $(u\times u)_*\colon\CH^d(Y\times_k Y)_\dQ\to\CH^d(X\times_k X)_\dQ$ is contained in $\sH_X$.
\end{itemize}
Let $z_X$ (resp.\ $z_Y$) be an odd projector in the commutator of $\sH_X$ (resp.\ $\sH_Y$) in $\CH^d(X\times_k X)_\dQ$ (resp.\ $\CH^d(Y\times_k Y)_\dQ$). Then we have
\[
u^*\circ z_X^*=z_Y^*\circ u^*\colon\CH^i(X)_\dQ\to\CH^i(Y)_\dQ^\natural
\]
for every $i$ (see Definition \ref{de:aj_kernel}).
\end{lem}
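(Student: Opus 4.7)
My plan is to realize both sides of the claimed identity as actions of correspondences in $\CH^d(Y\times X)_\dQ$ on $\CH^i(X)_\dQ$, show that the difference induces the zero map on absolute $\ell$-adic \'{e}tale cohomology for every rational prime $\ell$, and deduce membership in $\CH^i(Y)_\dQ^1$ from the fact that the absolute cycle class map refines both the geometric cycle class and the $\ell$-adic Abel--Jacobi map.

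Write $\Gamma_u\in\CH^d(Y\times X)_\dQ$ for the class of the graph of $u$, so that its associated correspondence action is $u^*$. Set $\gamma_1\coloneqq\Gamma_u\circ z_X$ and $\gamma_2\coloneqq z_Y\circ\Gamma_u$ in $\CH^d(Y\times X)_\dQ$; the lemma amounts to the assertion that $(\gamma_1-\gamma_2)^*\alpha\in\CH^i(Y)_\dQ^1$ for every $\alpha\in\CH^i(X)_\dQ$. By Lemma \ref{le:triple_product}(1), both $u^*\circ z_X^*$ and $z_Y^*\circ u^*$ already take values in $\CH^i(Y)_\dQ^0$, so the difference lies a priori in $\CH^i(Y)_\dQ^0$, and the task is to verify the vanishing of $\AJ_\ell$ on this difference for every $\ell$.

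The main input is parity. Since $z_X$ and $z_Y$ are odd projectors, the comparison between de Rham and $\ell$-adic cohomology shows that $\cl_\ell^*(z_X)$ and $\cl_\ell^*(z_Y)$ are the projections onto $\rH^{\odd}_{\et}(X_{k^\ac},\dQ_\ell)$ and $\rH^{\odd}_{\et}(Y_{k^\ac},\dQ_\ell)$, respectively. Because $u^*$ preserves cohomological degree and hence parity, the two compositions $u^*\circ\cl_\ell^*(z_X)$ and $\cl_\ell^*(z_Y)\circ u^*$ coincide on geometric $\ell$-adic cohomology. Both are naturally $\Gal(k^\ac/k)$-equivariant, so the identity promotes to absolute \'{e}tale cohomology: the induced endomorphisms $\gamma_1^*$ and $\gamma_2^*$ of $\rH^{2i}_{\et}(X,\dQ_\ell(i))$ agree for every $i$ and every $\ell$.

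The remainder is formal. Using the absolute $\ell$-adic cycle class map $\cl_\ell^{\r{abs}}\colon\CH^i(Y)_\dQ\to\rH^{2i}_{\et}(Y,\dQ_\ell(i))$, whose kernel on the subgroup $\CH^i(Y)_\dQ^0$ equals $\Ker\AJ_\ell$ via the Hochschild--Serre filtration defining $\AJ_\ell$, functoriality of correspondence actions on absolute cohomology yields
\[
\cl_\ell^{\r{abs}}\bigl((\gamma_1-\gamma_2)^*\alpha\bigr)=(\gamma_1^*-\gamma_2^*)\,\cl_\ell^{\r{abs}}(\alpha)=0.
\]
Hence $(\gamma_1-\gamma_2)^*\alpha$ has trivial $\AJ_\ell$ for every $\ell$, so lies in $\CH^i(Y)_\dQ^1$. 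The main obstacle is executing this last step cleanly, as it requires the compatibility of correspondence actions with the Hochschild--Serre filtration on absolute $\ell$-adic cohomology. The subalgebras $\sH_X,\sH_Y$ and the pushforward hypothesis $(u\times u)_*\sH_Y\subseteq\sH_X$ appear not to enter this particular argument; they presumably serve to secure the existence of the odd projectors $z_X,z_Y$ in applications (e.g., as idempotents inside Hecke algebras whose modules satisfy the stated disjoint-support condition) and to ensure that the projectors thus constructed are mutually compatible under pullback along $u$.
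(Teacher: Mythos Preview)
Your argument contains a genuine gap at the step where you claim the equality $u^*\circ\cl_\ell^*(z_X)=\cl_\ell^*(z_Y)\circ u^*$ on geometric cohomology ``promotes to absolute \'etale cohomology''. This inference is not valid. The action of a correspondence $\gamma\in\CH^d(Y\times X)_\dQ$ on $\rH^{2i}_\et(X,\dQ_\ell(i))$ is governed by its \emph{absolute} cycle class $\cl_\ell^{\r{abs}}(\gamma)\in\rH^{2d}_\et(Y\times X,\dQ_\ell(d))$, not merely by its geometric class. Your observation does imply, via K\"unneth and Poincar\'e duality, that $\gamma\coloneqq\gamma_1-\gamma_2$ is homologically trivial, so $\cl_\ell^{\r{abs}}(\gamma)$ lies in $F^1$ of the Hochschild--Serre filtration; but a class in $F^1$ need not act trivially on absolute cohomology. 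Concretely, for $\alpha\in\CH^i(X)_\dQ$ with $\cl_\ell(\alpha)\neq 0$, the leading graded piece of $\cl_\ell^{\r{abs}}(\gamma^*\alpha)$ in $F^1/F^2\subset\rH^1(k,\rH^{2i-1}_\et(Y_{k^\ac},\dQ_\ell(i)))$ is obtained by cupping $\AJ_\ell(\gamma)$ with $\cl_\ell(\alpha)$ and pushing forward, and there is no reason this should vanish. Thus $\AJ_\ell(\gamma^*\alpha)$ can be nonzero even though $\gamma$ acts trivially on every geometric cohomology group.

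This is exactly why the hypotheses on $\sH_X$ and $\sH_Y$ are not decorative, contrary to your final paragraph. The paper's proof proceeds by noting that $\gamma^*$ does kill $\CH^i(X)_\dQ^0$ modulo $\Ker\AJ_\ell$ (here only the geometric class of $\gamma$ matters, by functoriality of $\AJ_\ell$), so the residual obstruction is a map $\zeta''_\ell\colon\CH^i(X)_\dQ/\CH^i(X)_\dQ^0\to\rH^1(k,\rH^{2i-1}_\et(Y_{k^\ac},\dQ_\ell(i)))$. The condition $(u\times u)_*\sH_Y\subseteq\sH_X$, together with the fact that $z_X$ and $z_Y$ commute with $\sH_X$ and $\sH_Y$ respectively, forces $\zeta''_\ell$ to be $\sH_Y$-linear. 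Its source has $\sH_Y$-support contained in that of $\rH^{2i}_\dr(Y/k)$ (even degree), while its target has support contained in that of $\rH^{2i-1}_\dr(Y/k)$ (odd degree); the disjoint-support hypothesis then kills $\zeta''_\ell$. Without this mechanism the lemma has no reason to hold.
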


\begin{proof}
First by Lemma \ref{le:triple_product}, the images of the maps $u^*\circ z_X^*$ and $z_Y^*\circ u^*$ are contained in $\CH^i(Y)_\dQ^0$. We need to show that the induced map
\[
u^*\circ z_X^* - z_Y^*\circ u^*\colon\CH^i(X)_\dQ\to\CH^i(Y)_\dQ^0/\Ker\AJ_\ell
\]
vanishes for every $\ell$. First, since pullback via correspondences is compatible with cycle class map, the previous map $u^*\circ z_X^* - z_Y^*\circ u^*$ factors through a map, which we denote
\[
\zeta_\ell\colon\CH^i(X)_\dQ/\Ker\AJ_\ell\to\CH^i(Y)_\dQ^0/\Ker\AJ_\ell.
\]
We prove by contradiction. If $\zeta_\ell$ is not zero, then the composite map
\[
\zeta'_\ell\coloneqq\AJ_\ell\circ\zeta_\ell\colon\CH^i(X)_\dQ/\Ker\AJ_\ell\to\rH^1(k,\rH^{2i-1}_{\et}(X\otimes_kk^\ac,\dQ_\ell(i)))
\]
is nonzero. By the comparison theorem, the difference $u^*\circ z_X^* - z_Y^*\circ u^*$ induces the zero map from $\rH^1(k,\rH^{2i-1}_{\et}(X\otimes_kk^\ac,\dQ_\ell(i)))$ to $\rH^1(k,\rH^{2i-1}_{\et}(Y\otimes_kk^\ac,\dQ_\ell(i)))$. Therefore, $\Ker\zeta'_\ell$ contains $\CH^i(X)_\dQ^0/\Ker\AJ_\ell$. In other words, we have an induced map
\[
\zeta''_\ell\colon \CH^i(X)_\dQ/\CH^i(X)_\dQ^0\to\rH^1(k,\rH^{2i-1}_{\et}(Y\otimes_kk^\ac,\dQ_\ell(i))).
\]
Regard $\CH^i(X)_\dQ/\CH^i(X)_\dQ^0$ as a module over $\sH_Y$ via $(u\times u)_*$. As $(u\times u)_*\sH_Y\subseteq\sH_X$ by the second assumption, $s_X^*$ commutes with the action of $\sH_Y$ on $\CH^i(X)_\dQ/\CH^i(X)_\dQ^0$. Therefore, $\zeta''_\ell$ is a map of $\sH_Y$-modules. Note that the quotient $\sH_Y$-module $\CH^i(X)_\dQ/\CH^i(X)_\dQ^0$ is a submodule of $\rH^{2i}_\dr(X/k)$, whose support is contained in the support of $\rH^{2i}_\dr(Y/k)$. However, since the support of $\rH^1(k,\rH^{2i-1}_{\et}(Y\otimes_kk^\ac,\dQ_\ell(i)))$ is contained in the support of $\rH^{2i-1}_{\et}(Y\otimes_kk^\ac,\dQ_\ell)$, hence of $\rH^{2i-1}_\dr(Y/k)$, by the comparison theorem, we know that $\zeta''_\ell$ must be zero by our first assumption on $\sH_Y$. The lemma follows.
\end{proof}

\fi

\section{Fourier--Jacobi cycles and derivative of $L$-functions}
\label{ss:3}

In this section, we construct Fourier--Jacobi cycles and state our main conjectures. In Subsection \ref{ss:motives}, we construct the category of CM data for a conjugate symplectic automorphic character $\mu$ of weight one. In Subsection \ref{ss:albanese_unitary}, we introduce our Shimura varieties and study their Albanese varieties. In Subsection \ref{ss:construction_cycles}, we construct Fourier--Jacobi cycles and show that they are homologically trivial. In Subsection \ref{ss:aggp}, we propose various versions of the arithmetic Gan--Gross--Prasad conjecture for $\rU(n)\times\rU(n)$.

Let $F$ be a totally real number field of degree $d\geq 1$, and $E/F$ a totally imaginary quadratic extension. We denote by
\begin{itemize}
  \item $\tc$ the nontrivial Galois involution of $E$ over $F$,

  \item $E^-$ the subgroup of $E$ consisting of $e$ satisfying $e+e^\tc=0$, and $E^1$ the subgroup of $E^\times$ consisting of $e$ satisfying $ee^\tc=1$,

  \item by $\mu_{E/F}\colon F^\times\backslash\bA_F^\times\to\dC^\times$ the quadratic character associated to $E/F$ via the global class field theory,

  \item $E_v$ the base change $E\otimes_FF_v$ for every place $v$ of $F$,

  \item $\Phi_F$ the set of real embeddings of $F$, $\Phi_E$ the set of complex embeddings of $E$, and $\pi\colon\Phi_E\to\Phi_F$ the projection map given by restriction.
\end{itemize}
Recall that a CM type (of $E$) is a subset $\Phi$ of $\Phi_E$ such that $\pi$ induces a bijection from $\Phi$ to $\Phi_F$.

In this section, we work in the category $\Sch_{/E}$.

\subsection{Motives for CM characters}
\label{ss:motives}

In this subsection, we generalize some constructions in \cite{Den89}*{Section~2}.

\begin{definition}\label{de:conjugate}
We say that an automorphic character $\mu\colon E^\times\backslash\bA_E^\times\to\dC^\times$ is \emph{conjugate self-dual} if $\mu$ is trivial on $\Nm_{\bA_E/\bA_F}\bA_E^\times$. We say that $\mu$ is \emph{conjugate orthogonal} (resp.\ \emph{conjugate symplectic}) if $\mu\res{\bA_F^\times}=1$ (resp.\ $\mu\res{\bA_F^\times}=\mu_{E/F}$).
\end{definition}

\begin{remark}
A conjugate self-dual automorphic character is necessarily strictly unitary (Definition \ref{de:strictly_unitary}). It is either conjugate orthogonal or conjugate symplectic, but not both.
\end{remark}

For a conjugate symplectic (resp.\ conjugate orthogonal) automorphic character $\mu$, there exist a CM type $\Phi_\mu$ and a unique tuple $\underline{\tw}_\mu=(\tw_\tau)_{\tau\in\Phi_F}$ of odd (resp.\ even) nonnegative integers such that for every $\tau\in\Phi_F$, the component $\mu_\tau\colon(E\otimes_{F,\tau}\dR)^\times\to\dC^\times$ is the character
\[
z\mapsto\arg(z)^{-\tw_\tau},
\]
where we have identified $(E\otimes_{F,\tau}\dR)^\times$ with $\dC^\times$ via the unique element $\tau'\in\Phi_\mu$ above $\tau$. If $\underline{\tw}_\mu$ does not contain $0$, then $\Phi_\mu$ is also unique. In what follows, we put $\mu^\tc\coloneqq\mu\circ\tc$.

\begin{definition}\label{de:conjugate2}
Let $\mu$ be a conjugate self-dual automorphic character.
\begin{enumerate}
  \item We call $\underline{\tw}_\mu$ the \emph{weight} of $\mu$. If $\underline{\tw}_\mu$ is a constant $m$, then we say that $\mu$ is of weight $m$.

  \item If $\underline{\tw}_\mu$ does not contain zero, then we call $\Phi_\mu$ the \emph{CM type} of $\mu$. Furthermore, we denote by $M'_\mu\subseteq\dC$ the reflex field of $(E,\Phi_\mu)$, with the induced CM type $\Psi_\mu$.
\end{enumerate}
\end{definition}

Now let $\mu$ be a conjugate symplectic automorphic character, which is not algebraic. We put
\[
\mu^\alg\coloneqq\mu\cdot|\;|_E^{-1/2},
\]
which is then algebraic. Denote by $M_\mu\subseteq\dC$ the subfield generated by values $\mu^\alg(x)$ for $x\in(\bA_E^\infty)^\times$, which is a number field containing $M'_\mu$.

\begin{remark}\label{re:mu}
It is clear that $\mu^\tc$ is conjugate symplectic of the same weight as $\mu$. Moreover, we have $M_{\mu^\tc}=M_\mu$, $M'_{\mu^\tc}=M'_\mu$, and that $\Psi_{\mu^\tc}$ is the opposite CM type of $\Psi_\mu$.
\end{remark}

\begin{definition}\label{de:cm_data}
Let $\mu$ be a conjugate symplectic automorphic character of weight one.
\begin{enumerate}
  \item We denote by $\eta'_\mu\colon\Res_{M'_\mu/\dQ}\dG_\rm\to\Res_{E/\dQ}\dG_\rm$ the reciprocity map, and put
     \[
     \eta_\mu\coloneqq\eta'_\mu\circ\Nm_{M_\mu/M'_\mu}\colon\Res_{M_\mu/\dQ}\dG_\rm\to\Res_{E/\dQ}\dG_\rm.
     \]

  \item We define a \emph{CM data for $\mu$} to be a quadruple $D_\mu=(A_\mu,i_\mu,\lambda_\mu,r_\mu)$, in which
     \begin{itemize}
       \item $A_\mu$ is an abelian variety over $E$,

       \item $i_\mu\colon M_\mu\to\End_E(A_\mu)_\dQ$ is a CM structure such that
          \begin{itemize}
            \item for every $x\in M_\mu$, the determinant of the action of $i_\mu(x)$ on the $E$-vector space $\Lie_E(A_\mu)$ equals $\eta_\mu(x)$,

            \item the associated CM character of $A_\mu$ with respect to the inclusion $M_\mu\hookrightarrow\dC$ coincides with $\mu^\alg$,
          \end{itemize}

       \item $\lambda_\mu\colon A_\mu\to A^\vee_\mu$ is a polarization satisfying $\lambda\circ i_\mu(x)=i_\mu(\ol{x})^\vee\circ\lambda$ for every $x\in M_\mu$,

       \item $r_\mu\colon M_\mu\otimes_\dQ E\to\rH_1^\dr(A_\mu/E)$ is an isomorphism of $M_\mu\otimes_\dQ E$-modules satisfying that there exist an element $\beta\in M_\mu$ and an isomorphism $c\colon\rH_{2\dim A_\mu}^\dr(A_\mu/E)\to E$ of $E$-modules, such that for every $x,y\in M_\mu\otimes_\dQ E$, we have $c(\langle r_\mu(x),r_\mu(y)\rangle_\lambda)=\Tr_{M_\mu\otimes_\dQ E/E}(x\beta\ol{y})$, where $\langle\;,\;\rangle_\lambda\colon\rH_1^\dr(A_\mu/E)\times\rH_1^\dr(A_\mu/E)\to\rH_{2\dim A_\mu}^\dr(A_\mu/E)$ denotes the pairing induced by $\lambda$.
     \end{itemize}

  \item We denote by $\cA(\mu)$ the \emph{category of CM data for $\mu$}, whose objects are CM data $D_\mu$, and morphisms from $D_\mu=(A_\mu,i_\mu,\lambda_\mu,r_\mu)$ to $D'_\mu=(A'_\mu,i'_\mu,\lambda'_\mu,r'_\mu)$ are isogenies $\varphi\colon A_\mu\to A'_\mu$ satisfying $\varphi\circ i_\mu(x)=i'_\mu(x)\circ\varphi$ for every $x\in M_\mu$, $\varphi^\vee\circ\lambda'_\mu\circ\varphi=c\lambda_\mu$ for some element $c\in\dQ^\times$, and $r'_\mu=\varphi_*\circ r_\mu$.

  \item From a CM data $D_\mu=(A_\mu,i_\mu,\lambda_\mu,r_\mu)$ for $\mu$, we define another quadruple $D^\vee_\mu=(A^\vee_\mu,i^\vee_\mu,\lambda^\vee_\mu,r^\vee_\mu)$ in which $(A^\vee_\mu,\lambda^\vee_\mu)$ is simply the dual of $(A_\mu,\lambda_\mu)$, $i^\vee_\mu$ is defined by the formula $i_\mu^\vee(x)=i_\mu(x)^\vee$ for $x\in M_\mu$, and $r^\vee_\mu\coloneqq(\lambda_\mu)_*\circ r_\mu$.
\end{enumerate}
\end{definition}

\begin{proposition}\label{pr:cm_data}
Let $\mu$ be as in Definition \ref{de:cm_data}.
\begin{enumerate}
  \item The category $\cA(\mu)$ is a nonempty and connected partially ordered set.

  \item The assignment sending $D_\mu$ to $D^\vee_\mu$ induces an equivalence $\cA(\mu)^{op}\xrightarrow{\sim}\cA(\mu^\tc)$ of categories.
\end{enumerate}
\end{proposition}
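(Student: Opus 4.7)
The plan is to split (1) into existence, thinness, and connectedness, and to prove (2) by a duality computation with quasi-inverse supplied by the canonical double duality $A_\mu^{\vee\vee}\simeq A_\mu$.

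For existence in (1), I would invoke classical complex multiplication: the data $(M_\mu,\Psi_\mu,\eta_\mu)$ together with the weight-one hypothesis pins down a CM type of signature $(1,0)$ at each Archimedean place of $E$, and the Shimura--Taniyama theorem produces an abelian variety $A_\mu$ over $E$ of dimension $[M_\mu:\dQ]/2$ with CM action of the required type, Hecke character $\mu^\alg$, and a compatible polarization $\lambda_\mu$ whose Rosati involution realizes $\tc$. Since $\rH_1^\dr(A_\mu/E)$ is then a free $M_\mu\otimes_\dQ E$-module of rank one, one chooses a generator to obtain $r_\mu$; the $\tc$-sesquilinearity of the $\lambda_\mu$-pairing (a consequence of $\lambda_\mu\circ i_\mu(x)=i_\mu(\bar x)^\vee\circ\lambda_\mu$) then forces the pairing to take the form $\Tr(x\beta\bar y)$ for some $\beta\in M_\mu$.

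For thinness, two morphisms $\varphi_0,\varphi_1\colon D_\mu\to D'_\mu$ differ, in the isogeny category, by an element of $\End_{M_\mu}(A_\mu)_\dQ=M_\mu$, say $\varphi_1=\varphi_0\circ i_\mu(x)$; the compatibility $r'_\mu=\varphi_{j*}\circ r_\mu$ forces $(x\otimes 1)\cdot r'_\mu=r'_\mu$, hence $x=1$ and $\varphi_0=\varphi_1$. In particular $\Aut_{\cA(\mu)}(D_\mu)=\{\id\}$, so isomorphic objects are canonically identified and $\cA(\mu)$ is a poset. For connectedness, CM theory supplies an $M_\mu$-equivariant isogeny $\psi\colon A_\mu\to A'_\mu$; the discrepancy $\alpha\in(M_\mu\otimes_\dQ E)^\times$ between $r'_\mu$ and $\psi_*\circ r_\mu$ is constrained by comparing the $\beta$-pairing relations on the two sides, together with the relation $\psi^\vee\circ\lambda'_\mu\circ\psi=\lambda_\mu\circ i_\mu(\gamma)$ for some totally real $\gamma\in M_\mu^\times$; after rescaling by a positive rational and adjusting $\psi$ by $i_\mu(\alpha)^{-1}$, one obtains the required morphism $\varphi$ with polarization constant in $\dQ^\times$.

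For (2), the compatibility $\lambda_\mu\circ i_\mu(x)=i_\mu(\bar x)^\vee\circ\lambda_\mu$ identifies $(A_\mu^\vee,i_\mu^\vee)$ with $(A_\mu,i_\mu\circ\tc)$ up to isogeny, whose Hecke character is $\mu^\alg\circ\tc=(\mu^\tc)^\alg$; Remark \ref{re:mu} then ensures the Lie-algebra determinant matches $\eta_{\mu^\tc}$, and the pairing condition for $r_\mu^\vee$ follows by transport along $(\lambda_\mu)_*$. A morphism $\varphi\colon D_\mu\to D'_\mu$ dualizes to $\varphi^\vee\colon D'^\vee_\mu\to D^\vee_\mu$, giving a contravariant functor $\cA(\mu)^{op}\to\cA(\mu^\tc)$ whose quasi-inverse is the analogous functor $\cA(\mu^\tc)^{op}\to\cA(\mu)$; the composite is naturally the identity via $A^{\vee\vee}\simeq A$, yielding the equivalence. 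The main obstacle I expect is the connectedness step of (1): pinning down $\alpha\in(M_\mu\otimes_\dQ E)^\times$ and showing it can be adjusted into the diagonal $M_\mu^\times$ requires a delicate interplay between $\tc$-sesquilinearity of the polarization pairing, the normalization encoded in $\beta$, and rationality of the polarization scalar.
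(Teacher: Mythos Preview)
Your overall architecture matches the paper's, but the existence step in (1) has a real gap. You write that ``the Shimura--Taniyama theorem produces an abelian variety $A_\mu$ over $E$'' with the prescribed Hecke character $\mu^\alg$; but Shimura--Taniyama computes the Hecke character of a given CM abelian variety, not the converse. The converse direction is Casselman's theorem (Shimura \cite{Shi71}*{Theorem~6}), and it requires auxiliary hypotheses (\cite{Shi71}*{(1.12), (1.13)}) that $\mu^\alg$ need not satisfy directly over $E$. The paper handles this by passing to a finite abelian extension $E'/E$ over which those hypotheses hold for $\mu^\alg\circ\Nm_{E'/E}$, applying Casselman's theorem there to get a simple CM abelian variety $(A',i')$ with $i'\colon M'_\mu\xrightarrow{\sim}\End_{E'}(A')_\dQ$, and then extracting $(A_\mu,i_\mu)$ as an isogeny factor of the Weil restriction $\Res_{E'/E}A'$ following \cite{Den89}*{(2.1)}. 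Your sketch does not explain why the variety descends to $E$, and this is exactly the point where the construction requires work.

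For connectedness you correctly isolate the obstruction: the discrepancy $\alpha$ between $r'_\mu$ and $\psi_*\circ r_\mu$ lives a priori in $(M_\mu\otimes_\dQ E)^\times$, and you cannot adjust $\psi$ by $i_\mu(\alpha^{-1})$ unless $\alpha\in M_\mu^\times$. The pairing constraint you propose does cut down $\alpha$, but not obviously enough. The paper does not attempt this direct computation; it instead invokes \cite{Shi71}*{Theorem~5} both for the existence of $\lambda_\mu$ and for connectedness, which packages the classification of polarized CM abelian varieties in a given isogeny class. Your thinness argument and your treatment of (2) are fine and more explicit than the paper's (which simply declares (2) ``clear from the definition'').
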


\begin{proof}
For (1), we first show that $\cA(\mu)$ is nonempty. Take a finite abelian extension $E'/E$ such that the character $\mu^{\alg\prime}\coloneqq\mu^\alg\circ\Nm_{E'/E}$ satisfies \cite{Shi71}*{(1.12) \& (1.13)} for $(K',\Phi')=(E,\Phi_\mu)$, $k=E'$, $(K,\Phi)=(M'_\mu,\Psi_\mu)$ with $_{\infty1}$ the archimedean place of $M'_\mu$ induced by the inclusion $M'_\mu\hookrightarrow\dC$, and $\fa=O_{M'_\mu}$. For example, we may take an open compact subgroup $U$ of $\bA_E^\infty$ on which $\mu$ (hence $\mu^\alg$) is trivial and take $E'$ to be the abelian extension corresponding to $U$ via the global class field theory. By Casselman's theorem \cite{Shi71}*{Theorem~6}, we have a pair $(A',i')$ where $A'$ is an abelian variety over $E'$ and $i'\colon M'_\mu\to\End_{E'}(A')_\dQ$ is a CM structure such that
\begin{itemize}
  \item the determinant of the action of $i'(x')$ on the $E'$-vector space $\Lie_{E'}(A')$ is $\eta'_\mu(x')$ for every $x'\in M'_\mu$,

  \item the associated CM character of $A'$ with respect to the inclusion $M'_\mu\hookrightarrow\dC$ coincides with $\mu^{\alg\prime}$.
\end{itemize}
By \cite{Shi71}*{Lemma~1 \& Lemma~2}, $A'$ is simple, hence $i'$ is an isomorphism. The same argument in \cite{Den89}*{(2.1)} implies that there is an isogeny factor $A_\mu$ of the abelian variety $\Res_{E'/E}A'$ over $E$ together with a CM structure $i_\mu\colon M_\mu\to\End_E(A_\mu)_\dQ$ satisfying the conditions in the proposition. In other words, we have obtained the part $(A_\mu,i_\mu)$ for a CM data for $\mu$. By \cite{Shi71}*{Theorem~5}, we know the existence of $\lambda_\mu$. The existence of $r_\mu$ is obvious. Thus, we obtain an object $D_\mu=(A_\mu,i_\mu,\lambda,r_\mu)$ of $\cA(\mu)$

The connectedness of $\cA(\mu)$ also follows from \cite{Shi71}*{Theorem~5}. Finally, the compatibility condition $r'_\mu=\varphi_*\circ r_\mu$ ensures that $\cA(\mu)$ is a partially ordered set.

Part (2) is clear from the definition.
\end{proof}

\begin{remark}\label{re:motive}
Let $\mu$ be as in Definition \ref{de:cm_data}. Although we will not use in the main body of the article, we propose the definition of the \emph{motive for $\mu$}, denoted by $\sfL_\mu$, as a Grothendieck motive. For a CM data $D_\mu=(A_\mu,i_\mu,\lambda_\mu,r_\mu)\in\cA(\mu)$, let $\sfh^1(A_\mu,M_\mu)$ be the Picard motive of $A_\mu$ with coefficients in $M_\mu$, and $\sfh^1(D_\mu)$ the direct summand of $\sfh^1(A_\mu,M_\mu)$ on which the induced action of $M_\mu$ via $i_\mu$ coincides with the underlying linear action of $M_\mu$. The assignment $D_\mu\mapsto\sfh^1(D_\mu)$ is a functor from $\cA(\mu)$ to the category of Grothendieck motives over $E$. We put $\sfL_\mu\coloneqq\varinjlim_{D_\mu\in\cA(\mu)}\sfh^1(D_\mu)$, which is of rank $1$ with coefficients in $M_\mu$. It follows from Proposition \ref{pr:cm_data}(1) that the canonical map $\sfL_\mu\to\sfh^1(D_\mu)$ is an isomorphism for every $D_\mu\in\cA(\mu)$.
\end{remark}

To end this subsection, we construct a certain canonical projector on $A_\mu$, which will be used in Subsection \ref{ss:construction_cycles}. Let $\mu$ be as in Definition \ref{de:cm_data}. Denote by $I_\mu$ the set of all complex embeddings of $M_\mu$. We take a CM data $D_\mu=(A_\mu,i_\mu,\lambda_\mu,r_\mu)\in\cA(\mu)$ for $\mu$. For every element $x\in M_\mu$ such that $i_\mu(x)\in\End_E(A_\mu)$, we denote by $\langle x\rangle$ the correspondence
\[
A_\mu\xleftarrow{i_\mu(x)}A_\mu\xrightarrow{\id}A_\mu
\]
of $A_\mu$. In particular, $\langle x\rangle^*=i_\mu(x)_*$. For every $\tau'\colon E\hookrightarrow\dC$ and every integer $0\leq i\leq [M_\mu:\dQ]$, we have a canonical decomposition
\[
\rH^{[M_\mu:\dQ]-i}_{\rB,\tau'}(A_\mu,\dC)=\bigoplus_{I\subseteq I_\mu,|I|=i}\rH^{[M_\mu:\dQ]-i}_{\rB,\tau'}(A_\mu,\dC)_I,
\]
where $\rH^{[M_\mu:\dQ]-i}_{\rB,\tau'}(A_\mu,\dC)_I$ denotes the subspace on which $\cl^*_{\rB,\tau'}(\langle x\rangle)$ acts by $\prod_{\iota\in I}\iota(x)$ for every $x\in M_\mu$ satisfying $i_\mu(x)\in\End_E(A_\mu)$. Let $\tT_\mu^{\rB,\tau'}$ be the endomorphism of $\bigoplus_i\rH^i_{\rB,\tau'}(A_\mu,\dC)$ such that
\begin{itemize}
  \item the restriction $\tT_\mu^{\rB,\tau'}\res\rH^i_{\rB,\tau'}(A_\mu,\dC)$ is zero if $i\neq[M_\mu:\dQ]-1$,

  \item the restriction $\tT_\mu^{\rB,\tau'}\res\rH^{[M_\mu:\dQ]-1}_{\rB,\tau'}(A_\mu,\dC)$ is the canonical projection to the direct summand $\rH^{[M_\mu:\dQ]-1}_{\rB,\tau'}(A_\mu,\dC)_{I_1}$, where $I_1\subseteq I_\mu$ is the subset consisting only of the inclusion $M_\mu\hookrightarrow\dC$.
\end{itemize}

\begin{definition}\label{de:generator}
Let $I\subseteq I_\mu$ be a subset.
\begin{enumerate}
  \item We say that $x\in M_\mu$ is an \emph{$I$-generator} if $x$ generates the field $M_\mu$ with $i_\mu(x)\in\End_E(A_\mu)$ such that
      \[
      \prod_{\iota\in I}\iota(x)\neq\prod_{\iota\in J}\iota(x)
      \]
      for every $J\subseteq I_\mu$ other than $I$.

  \item For an $I$-generator $x$, we put
      \[
      \tT^x_\mu\coloneqq\prod_{J\neq I}\frac{\langle x\rangle-\prod_{\iota\in J}\iota(x)}{\prod_{\iota\in I}\iota(x)-\prod_{\iota\in J}\iota(x)}\in
      \CH^{[M_\mu:\dQ]/2}(A_\mu\times A_\mu)_\dC,
      \]
\end{enumerate}
\end{definition}

We now choose an $I_1$-generator $x$. It is easy to see that $\tT_\mu^x$ lies in $\CH^{[M_\mu:\dQ]/2}(A_\mu\times A_\mu)_{M_\mu}$, and moreover $\cl^*_{\rB,\tau'}(\tT^x_\mu)=\tT_\mu^{\rB,\tau'}$. In particular, the numerical equivalence class of $\tT_\mu^x$ is independent of the choice of $x$, which we denote by $\tT_\mu^{\r{num}}$. Applying the main theorem of \cite{OS11}\footnote{The author states the theorem with coefficients in $\dQ$. However, by \cite{OS11}*{Corollary~6.2.6}, one may replace $\dQ$ by any field of characteristic zero, for example, $M_\mu$.} to $A_\mu\times A_\mu$, we know that $\tT_\mu^{\r{num}}$ has a canonical lift in $\CH^{[M_\mu:\dQ]/2}(A_\mu\times A_\mu)_{M_\mu}$, which we denote by $\tT_\mu^{\r{can}}$.

\begin{definition}\label{de:mu_canonical}
We call $\tT_\mu^{\r{can}}\in\CH^{[M_\mu:\dQ]/2}(A_\mu\times A_\mu)_{M_\mu}$ the \emph{canonical projector} of $A_\mu$.
\end{definition}

\begin{lem}\label{le:mu_canonical}
For every $\tau'\colon E\hookrightarrow\dC$, we have $\cl^*_{\rB,\tau'}(\tT_\mu^{\r{can}})=\tT_\mu^{\rB,\tau'}$.
\end{lem}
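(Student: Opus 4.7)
The plan is to chain together two facts: (i) for any $I_1$-generator $x$, a Lagrange-interpolation calculation gives $\cl^*_{\rB,\tau'}(\tT_\mu^x) = \tT_\mu^{\rB,\tau'}$; and (ii) the difference $\tT_\mu^x - \tT_\mu^{\r{can}}$ has vanishing Betti cycle class, even if it is nonzero in the Chow group.

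For (i), I would simply unpack the definition of $\tT_\mu^x$. On each eigen-summand $\rH^{[M_\mu:\dQ]-|I|}_{\rB,\tau'}(A_\mu,\dC)_I$, the endomorphism $\cl^*_{\rB,\tau'}(\langle x\rangle)$ acts by the scalar $\prod_{\iota\in I}\iota(x)$. The assumption that $x$ is an $I_1$-generator guarantees that these scalars are pairwise distinct as $I$ ranges over subsets of $I_\mu$, and that all denominators in the Lagrange product are nonzero. Standard Lagrange interpolation then shows that $\cl^*_{\rB,\tau'}(\tT_\mu^x)$ acts by $1$ on the $I_1$-summand in degree $[M_\mu:\dQ]-1$ and by $0$ on every other summand (in every degree), matching $\tT_\mu^{\rB,\tau'}$ by definition. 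This is the observation already recorded in the paragraph preceding the statement of the lemma, so the real content lies in step (ii).

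For (ii), I would invoke the defining property of $\tT_\mu^{\r{can}}$: by the main theorem of \cite{OS11} applied to the abelian variety $A_\mu \times A_\mu$, $\tT_\mu^{\r{can}}$ is the canonical Chow lift (with coefficients in $M_\mu$) of the numerical class $\tT_\mu^{\r{num}}$. Since $\tT_\mu^x$ is also a lift of $\tT_\mu^{\r{num}}$ by its construction, the difference $\tT_\mu^x - \tT_\mu^{\r{can}}$ is numerically trivial on $A_\mu \times A_\mu$. The key input then is the classical fact that numerical and homological equivalence coincide for algebraic cycles with $\dQ$-coefficients on an abelian variety over a field of characteristic zero (a consequence of O'Sullivan's theorem, or of the Chow--K\"unneth decomposition of Deninger--Murre); by scalar extension the same holds with $M_\mu$-coefficients. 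Therefore $\cl^*_{\rB,\tau'}(\tT_\mu^x - \tT_\mu^{\r{can}}) = 0$, and combining with (i) gives $\cl^*_{\rB,\tau'}(\tT_\mu^{\r{can}}) = \tT_\mu^{\rB,\tau'}$.

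There is no substantive obstacle: the lemma is essentially an unwinding of the definition of $\tT_\mu^{\r{can}}$ together with the coincidence of numerical and homological equivalence on abelian varieties. The only minor bookkeeping is tracking the coefficient ring $M_\mu$ in place of $\dQ$, which is innocuous since $M_\mu$ is a characteristic-zero field, so all the relevant operations commute with scalar extension.
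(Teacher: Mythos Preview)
Your proof is correct, and step (i) is indeed exactly what the paper records before the lemma. The difference lies entirely in step (ii).

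You invoke the general theorem that numerical and homological equivalence coincide on abelian varieties (over a field of characteristic zero). This is true, but the correct attribution is Lieberman (\emph{Numerical and homological equivalence of algebraic cycles on Hodge manifolds}, Amer.\ J.\ Math.\ 90 (1968)), not O'Sullivan or Deninger--Murre; those references establish motivic/Chow--K\"unneth structure on abelian varieties but do not by themselves yield the standard conjecture~D. With the right citation, your argument goes through cleanly.

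The paper instead proves the implication ``numerically trivial $\Rightarrow$ homologically trivial'' for this particular cycle by a direct eigenspace argument: using part (iii) of O'Sullivan's theorem, $\tT_\mu^{\r{can}}-\tT_\mu^x$ commutes with all $\langle y\rangle$, so its action on cohomology respects the eigenspace decomposition $\bigoplus_I\rH^i_{\rB,\tau'}(A_\mu,\dC)_I$. If the action were nonzero on some summand indexed by $I$, one builds an explicit test cycle $\tT_\mu^y$ for an $I^c$-generator $y$ (where $I^c=I_\mu\setminus I$) and checks it has nonzero intersection number with $\tT_\mu^{\r{can}}-\tT_\mu^x$, contradicting numerical triviality. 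This is a self-contained verification that avoids citing the standard conjecture literature, at the cost of a short explicit computation. Your route is shorter and more conceptual; the paper's route keeps the argument internal to the tools already introduced.
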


\begin{proof}
By part (iii) of the main theorem of \cite{OS11}, we know that $\tT_\mu^{\r{can}}$ hence $\tT_\mu^{\r{can}}-\tT_\mu^x$ commute with $\langle y\rangle$ for all $y\in M_\mu$ such that $i_\mu(y)\in\End_E(A_\mu)$. Now we show that $\tT_\mu^{\r{can}}-\tT_\mu^x$ is homologically trivial. If not, then there exist some $0\leq i\leq [M_\mu:\dQ]$ and a set $I\subseteq I_\mu$ with $|I|=[M_\mu:\dQ]-i$ such that the restriction $\cl^*_{\rB,\tau'}(\tT_\mu^{\r{can}}-\tT_\mu^x)\res\rH^i_{\rB,\tau'}(A,\dC)_I$ is the canonical embedding.

Now we take an $I^c$-generator $y\in M_\mu$, where $I^c\coloneqq I_\mu\setminus I$. Then the cycle $\tT^y_\mu$ (Definition \ref{de:generator}) has nonzero intersection number with $\tT_\mu^{\r{can}}-\tT_\mu^x$. This contradicts with the fact that $\tT_\mu^{\r{can}}-\tT_\mu^x$ is numerically trivial. Thus, the lemma follows.
\end{proof}

\subsection{Albanese of unitary Shimura varieties}
\label{ss:albanese_unitary}

Let $n\geq 2$ be an integer. Let $\bV$ be a totally definite incoherent hermitian space over $\bA_E$ of rank $n$ (Definition \ref{de:incoherent_hermitian}). We distinguish between two cases:
\begin{description}
  \item[Noncompact Case] $d=1$, and either $n\geq3$ or $n=2$ and the hermitian space $\bV\otimes_\bA\dQ_p$ is isotropic for every rational prime $p$.

  \item[Compact Case] if it is not in the Noncompact Case.
\end{description}

Let $\bG\coloneqq\UG(\bV)$ be the unitary group of $\bV$, which is a reductive group over $\bA_F$. Let $\{\Sh(\bV)_K\}_K$ be the projective system of Shimura varieties for $\bV$ indexed by sufficiently small open compact subgroups $K$ of $\bG(\bA_F^\infty)$ (Definition \ref{de:shimura_incoherent}). Every scheme $\Sh(\bV)_K$ is smooth, quasi-projective, and of dimension $n-1$ over $E$; it is projective if and only if we are in the Compact Case. In all cases, we have the compactified Shimura variety $\widetilde\Sh(\bV)_K$ (Definition \ref{de:shimura_incoherent_toroidal}). Put
\[
X_K\coloneqq\widetilde\Sh(\bV)_K
\]
for short. Then $\{X_K\}_K$ is a projective system of smooth projective schemes in $\Sch_{/E}$ of dimension $n-1$. For $K'\subseteq K$, we denote the transition morphism by $u^{K'}_K\colon X_{K'}\to X_K$, which is a generically finite dominant morphism. Put $X_\infty\coloneqq\varprojlim_K X_K$.

We denote by $A_K$ the Albanese variety $\Alb_{X_K}$ of $X_K$ (Definition \ref{de:albanese}) for short, and by
\begin{align}\label{eq:albanese_shimura}
\alpha_K\coloneqq\alpha_{X_K}\colon\nabla X_K\to A_K
\end{align}
the Albanese morphism (see Definition \ref{de:split} for the meaning of $\nabla$). By functoriality, we obtain a projective system $\{A_K\}_K$. Put
\[
A_\infty\coloneqq\varprojlim_K A_K,
\]
which is an abelian group pro-object in $\Sch_{/E}$. Then the Hecke correspondences provide a homomorphism $\bG(\bA_F^\infty)\to\Aut_E(A_\infty)$.

To study isogeny factors of $A_K$, it suffices to study the $L$-function of $\rH^1_{\et}((A_K)_{E^\ac},\dQ_\ell^\ac)$ by Faltings' isogeny theorem. We start from describing its Betti cohomology $\rH^1_{\rB,\tau'}(A_K,\dC)$. For every embedding $\tau'\colon E\to\dC$, put
\[
\rH^1_{\rB,\tau'}(A_\infty,\dC)\coloneqq\varinjlim_K\rH^1_{\rB,\tau'}(A_K,\dC),
\]
which is an admissible representation of $\bG(\bA_F^\infty)$. To study this representation, we need to recall the oscillator representations of unitary groups.

\begin{definition}\label{de:oscillator_triple}
An \emph{ad\`{e}lic oscillator triple} is a triple $(\mu,\varepsilon,\chi)$ consisting of
\begin{itemize}
  \item a conjugate symplectic automorphic character (Definition \ref{de:conjugate}) $\mu=\otimes\mu_v\colon E^\times\backslash\bA_E^\times\to\dC^\times$ (whose value is necessarily in $\dC^1$),

  \item a collection $\varepsilon=(\varepsilon_v\in E_v^{-\times}/\Nm_{E_v/F_v}E_v^\times)_v$ for every nonarchimedean place $v$ of $F$ such that $\varepsilon_v\in O_{E_v}^\times\Nm_{E_v/F_v}E_v^\times$ for all but finitely many $v$, and

  \item an automorphic character $\chi=\otimes\chi_v\colon E^1\backslash(\bA_E^\infty)^1\to\dC^\times$ (whose value is necessarily in $\dC^1$).
\end{itemize}
For an ad\`{e}lic oscillator triple $(\mu,\varepsilon,\chi)$, the local oscillator representation $\omega(\mu_v,\varepsilon_v,\chi_v)$ of $\bG(F_v)$ introduced in Subsection \ref{ss:local_theta} is unramified for all but finitely many $v$. Thus, it makes sense to define the \emph{ad\`{e}lic oscillator representation} attached to $(\mu,\varepsilon,\chi)$
\begin{align*}
\omega(\mu,\varepsilon,\chi)\coloneqq\bigotimes_v{\!}^\prime\omega(\mu_v,\varepsilon_v,\chi_v),
\end{align*}
which is an irreducible admissible representation of $\bG(\bA_F^\infty)$.
\end{definition}

\begin{definition}\label{de:admissible_collection}
In an ad\`{e}lic oscillator triple $(\mu,\varepsilon,\chi)$, we say that $\varepsilon$ is \emph{$\mu$-admissible} if there exists some $e\in E^{\times-}$ such that
\begin{itemize}
  \item $\varepsilon_v=e\Nm_{E_v/F_v}E^\times_v$ for every nonarchimedean place $v$ of $F$, and

  \item $\tau'(e)$ has negative imaginary part for every $\tau'\in\Phi_\mu$.
\end{itemize}
It is clear by Remark \ref{re:mu} that $\varepsilon$ is $\mu$-admissible if and only if $-\varepsilon$ is $\mu^\tc$-admissible.
\end{definition}

\begin{proposition}\label{pr:endoscopy_general}
Suppose that $n\geq 3$. Then for every embedding $\tau'\colon E\to\dC$, there is an isomorphism
\[
\rH^1_{\rB,\tau'}(A_\infty,\dC)\simeq\bigoplus_{(\mu,\varepsilon,\chi)}\omega(\mu,\varepsilon,\chi)
\]
of $\dC[\bG(\bA_F^\infty)]$-modules, where the direct sum is taken over all ad\`{e}lic oscillator triples in which $\mu$ is of weight one and $\varepsilon$ is $\mu$-admissible.
\end{proposition}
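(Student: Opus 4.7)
The plan is to transfer the question from the Albanese variety to the Shimura variety itself, apply Matsushima's formula to compute the first cohomology as an automorphic representation, and then invoke the global theta correspondence to identify the resulting automorphic pieces with the ad\`elic oscillator representations $\omega(\mu,\varepsilon,\chi)$.

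First, by Lemma \ref{le:albanese}(1), $\rH^1_{\rB,\tau'}(A_K,\dC)\simeq\rH^1_{\rB,\tau'}(X_K,\dC)$ as $\bG(\bA_F^\infty)$-modules in the limit. In the Compact Case this is immediate. In the Noncompact Case, which by hypothesis forces $n\geq 3$, the Baily--Borel boundary of $\Sh(\bV)_K$ has complex codimension $\geq n-1\geq 2$, and a standard weight/Hodge argument (comparing the smooth toroidal compactification $X_K$ with the open part via the Deligne weight filtration) shows that $\rH^1(X_K)$ coincides with the pure weight--$1$ part of $\rH^1(\Sh(\bV)_K)$, which in turn is the cuspidal/square-integrable part. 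So in both cases I may replace $X_K$ by the interior and work with square-integrable automorphic cohomology.

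Next, apply Matsushima's formula (respectively Borel--Casselman--Franke in the Noncompact Case) to write
\[
\rH^1_{\rB,\tau'}(X_\infty,\dC)\simeq\bigoplus_{\pi}m(\pi)\,\pi^\infty\otimes\rH^1(\fg,K_\infty;\pi_\infty\otimes\chi_{\r{triv}}),
\]
the sum over irreducible automorphic representations $\pi=\pi_\infty\otimes\pi^\infty$ of $\bG(\bA_F)$ occurring in the square-integrable spectrum, where $K_\infty$ is the (compact) stabilizer of the CM point at $\tau'$. Since $\bG$ is compact at every archimedean place of $F$ (as $\bV$ is totally definite), the archimedean $(\fg,K_\infty)$-cohomology degenerates to ordinary Lie algebra cohomology of a compact group, and a direct computation using the Vogan--Zuckerman classification shows that $\rH^1(\fg,K_\infty;\pi_\infty)\neq 0$ forces $\pi_\infty$ to be one of a finite list of non-tempered cohomological representations of $\prod_\tau\rU(n)(\dR)$ whose parameter factors through the endoscopic group $\rU(1)\times\rU(n-2)$ and, more precisely, whose standard base change to $\GL_n$ is of the form $\chi\boxplus(\arg^{3-n},\arg^{5-n},\dots,\arg^{n-3})\cdot\mu_\infty^\alg$ for a conjugate symplectic character $\mu$ of weight one.

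The hard and crucial step is to match such global $\pi$'s bijectively with ad\`elic oscillator triples $(\mu,\varepsilon,\chi)$. Here I invoke the global theta-lifting results of Appendix \ref{ss:b} (Theorem \ref{th:pole} and its corollaries): for the incoherent totally positive definite $\bV$ and a conjugate symplectic $\mu$ of weight one, the global theta lift from $\rU(1)$ (with automorphic character $\chi$ on $E^1\backslash(\bA_E)^1$, pairing data $\varepsilon$, and additive character twist determined by $\mu$) produces precisely the representations of $\bG(\bA_F)$ appearing in the endoscopic part of the discrete spectrum whose finite components are isomorphic to $\omega(\mu,\varepsilon,\chi)$; the local--global compatibility and the unfolding of the Rallis inner product formula guarantee that the multiplicity is exactly one, that the archimedean component is the cohomological representation identified above, and that distinct triples give distinct global representations. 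The $\mu$-admissibility condition on $\varepsilon$ is precisely the condition that the global hermitian space determined by $(\varepsilon,\mu)$ matches the totally positive definite $\bV$ at every place.

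The main obstacle is the last step: controlling the global theta lifting from $\rU(1)$ to $\rU(n)$ and ruling out any other endoscopic contributions to $\rH^1$. This is addressed by Theorem \ref{th:pole} of Appendix \ref{ss:b}, which identifies, via poles of Eisenstein series on $\rU(n,n)$, exactly which representations arise as such theta lifts, and complements the archimedean $(\fg,K_\infty)$-cohomology computation to exclude other possibilities. Combining this with the compatibility of the $\bG(\bA_F^\infty)$-action on both sides yields the claimed isomorphism.
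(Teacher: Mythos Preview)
There is a genuine gap in your archimedean analysis. You write that ``$\bG$ is compact at every archimedean place of $F$ (as $\bV$ is totally definite), so the archimedean $(\fg,K_\infty)$-cohomology degenerates to ordinary Lie algebra cohomology of a compact group.'' But the Matsushima formula is not applied to the incoherent adelic group $\bG=\rU(\bV)$; it is applied to the coherent group $\rG=\Res_{F/\dQ}\rU(\rV)$, where $\rV$ is a hermitian space that is $\tau$-\emph{nearby} to $\bV$ (Definition~\ref{de:nearby}). By construction $\rV$ has signature $(n-1,1)$ at $\tau$ and $(n,0)$ elsewhere, so $\rG(\dR)\simeq\rU(n-1,1)\times\rU(n)^{d-1}$ is \emph{not} compact---indeed, if it were, the Shimura variety would be zero-dimensional and there would be no $\rH^1$ to compute. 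Consequently your claimed parameter ``factoring through $\rU(1)\times\rU(n-2)$'' and the base-change shape you wrote down are not what actually occurs; the relevant $\pi_\infty$ at the indefinite place are the representations $\omega_{n-1,1}^{\pm1,\mp,0}$ identified in Lemma~\ref{le:weil_arch}.

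The paper's argument also differs in logical order from yours. Rather than first classifying cohomological $\pi_\infty$ and reading off an Arthur parameter, the paper starts from the fact (extracted from \cite{BMM}*{Proposition~13.4}) that any $\pi$ contributing to $\rH^1$ forces a pole of $L^S(s,\pi\times\mu)$ at some $s_0\geq n/2$ for a suitable $\mu$, and then invokes Theorem~\ref{th:pole} and Corollaries~\ref{co:pole1}, \ref{co:pole2} to pin down $s_0=n/2$ and realize $V_\pi$ as a theta lift from a one-dimensional skew-hermitian space. A separate Case~2, handling residual (non-cuspidal discrete) $\pi$ when $F=\dQ$, is needed and is absent from your sketch. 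Finally, the $\mu$-admissibility of $\varepsilon$ is not a matching condition with the totally definite $\bV$; it is the sign condition at archimedean places coming from Lemma~\ref{le:weil_arch} applied to $\rU(n-1,1)$ and $\rU(n,0)$.
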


\begin{proof}
By Lemma \ref{le:albanese}, we have a canonical isomorphism
\[
\rH^1_{\rB,\tau'}(A_\infty,\dC)\simeq\rH^1_{\rB,\tau'}(X_\infty,\dC)\coloneqq\varinjlim_K\rH^1_{\rB,\tau'}(X_K,\dC)
\]
of $\dC[\bG(\bA_F^\infty)]$-modules. We regard $E$ as a subfield of $\dC$ via a fixed embedding $\tau'\in\Phi_E$ and put $\tau\coloneqq\tau'\res{F}$. We choose a CM type $\Phi$ that contains $\tau'$. Take a hermitian space $\rV$ that is $\tau$-nearby to $\bV$ (Definition \ref{de:nearby}). Put $\rG\coloneqq\Res_{F/\dQ}\rU(\rV)$ and $\rh\coloneqq\rh^\flat_{\rV,\Phi}$ for short. Then by Propositions \ref{pr:incoherent_shimura} and \eqref{eq:pink}, we have an isomorphism
\[
\rH^1_{\rB,\tau'}(X_\infty,\dC)\simeq\varinjlim_K\rH^1_\rB(\widetilde\Sh(\rG,\rh)_K,\dC)
\]
of $\dC[\bG(\bA_F^\infty)]$-modules. By \cite{MR92}*{Lemma~1}, for every $K$, there is a canonical isomorphism
\[
\rH^1_\rB(\widetilde\Sh(\rG,\rh)_K,\dC)\simeq\IH^1(\ol\Sh(\rG,\rh)_K,\dC),
\]
where the right-hand side is the complex analytic intersection cohomology of the Baily--Borel compactification $\ol\Sh(\rG,\rh)_K$ of $\Sh(\rG,\rh)_K$. Combining \eqref{eq:zucker} and \eqref{eq:matsushima}, we have an isomorphism
\[
\rH^1_{\rB,\tau'}(A_\infty,\dC)\simeq\bigoplus_{\pi}m_\disc(\pi)\cdot\rH^1(\fg,\rK_\rG;\pi_\infty)\otimes\pi^\infty
\]
of $\dC[\bG(\bA_F^\infty)]$-modules. We say that an irreducible admissible representation $\pi$ of $\rG(\bA)$ contributes to the Albanese if $m_\disc(\pi)>0$ and $\rH^1(\fg,\rK_\rG;\pi_\infty)\neq\{0\}$. We determine all such $\pi$ together with the value $m_\disc(\pi)$. By the proof of \cite{BMM}*{Proposition~13.4} (with $m=n$, $p=n-1$, $q=1$, $a+b=1$), we know that there exists a strictly unitary automorphic character (Definition \ref{de:strictly_unitary}) $\mu\colon E^\times\backslash\bA_E^\times\to\dC^\times$ such that the partial $L$-function $L^S(s,\pi\times\mu)$ has a simple pole at $s_0$ with $s_0\geq\frac{n}{2}$ where $S$ is a finite set of places of $F$ containing all archimedean ones and such that for $v\not\in S$, both $\pi_v$ and $\mu_v$ are unramified.\footnote{Note that $\pi$ is assumed to be cuspidal in the statement of \cite{BMM}*{Proposition~13.4}. However, this step works for $\pi$ discrete.} We separate the discussion into two cases.

\textit{Case 1.} Suppose that $\pi$ contributes to the Albanese and $m_\cusp(\pi)>0$. Let $V_\pi$ be a cuspidal realization of $\pi$ (Definition \ref{de:realization}). By Corollary \ref{co:pole1}, $s_0$ is either $\frac{n+1}{2}$ or $\frac{n}{2}$, not both. If $s_0=\frac{n+1}{2}$, then Theorem \ref{th:pole} implies that $\Theta_{(\mu,\nu),\rV}^\rW(V_\pi)\neq\{0\}$, where $\rW$ is the zero skew-hermitian space. By Corollary \ref{co:pole2}(1), $V_\pi$ is a character, hence $\rH^1(\fg,\rK_\rG;\pi_\infty)=\{0\}$, which is a contradiction. Thus, we must have $s_0=\frac{n}{2}$. By Theorem \ref{th:pole}, we have a one-dimensional skew-hermitian space $\rW$ such that $\Theta_{(\mu,\nu),\rV}^\rW(V_\pi)$ and is cuspidal. By Corollary \ref{co:pole2}(1), we have $V_\pi=\Theta_{(\mu^{-1},\nu^{-1})}^{-\rW}(\pi_\rW)$. Note that the central character $\chi$ of $\pi$ satisfies $\chi_\infty=1$. In other words, there is a unique element $e\in E^{-\times}/\Nm_{E/F}E^\times$ determined by $\rW$ such that $\pi^\infty\simeq\omega(\mu,\varepsilon_e,\chi)$, where $\varepsilon_e$ is the collection given by $e$.\footnote{Here, we have replaced $\mu$ by its inverse to match notation in the statement of the proposition.} To determine $\pi_\infty$, we suppose that $\Phi_F=\{\tau_1=\tau,\tau_2,\dots,\tau_d\}$ and $\Phi=\{\tau_1^+,\dots,\tau_d^+\}$ with $\pi(\tau_i^+)=\tau_i$. Using $\Phi$, we obtain an isomorphism $\rG_\dR\simeq\rU(n-1,1)_\dR\times\rU(n,0)_\dR\times\cdots\times\rU(n,0)_\dR$, and accordingly a decomposition $\pi_\infty=\otimes_{i=1}^d\pi_{\infty i}$. Under the notation from Subsection \ref{ss:local_theta}, we have $\pi_{\infty1}\simeq\omega_{n-1,1}^{m_1,\pm,1}$ and $\pi_{\infty i}\simeq\omega_{n,0}^{m_i,\pm,1}$ for $i\geq 2$, where $(m_1,\dots,m_d)$ is the weight of $\mu$ and the sign in the parameter is the sign of $i^{-1}\tau_i^+(e)$. By Lemma \ref{le:weil_arch}, we know that $\mu$ is of weight one and $\varepsilon_e$ is $\mu$-admissible. Moreover, $\rH^1(\fg,\rK_\rG;\pi_\infty)$ is of dimension $1$. By Corollary \ref{co:pole2}(3), we have $m_\cusp(\pi)=1$.

\textit{Case 2.} Suppose that $\pi$ contributes to the Albanese and $m_\disc(\pi)-m_\cusp(\pi)>0$. This might happen only when $F=\dQ$. In this case, $\rV$ has Witt index $1$. Write $\rV=\rV_0\oplus\rD$ where $\rV_0$ is anisotropic and $\rD$ is a hyperbolic hermitian plane. Let $V_\pi$ be a discrete realization (Definition \ref{de:realization}) of $\pi$ that is perpendicular to $\rL^2_\cusp(\rG)$. By Langlands theory of Eisenstein series \cite{MW95}, there exist a strictly unitary automorphic character $\mu\colon E^\times\backslash\bA_E^\times\to\dC^\times$, an irreducible subrepresentation $V_{\pi_0}\subseteq\rL^2_\cusp(\rU(\rV_0))$ of $\rU(\rV_0)(\bA_F)$ with the underlying representation $\pi_0$, and a real number $s_1>0$, such that $V_\pi$ is contained in $\sR_{s_1}(V_{\pi_0}\boxtimes\mu')$: the space generated by residues of $\{\sE_\rQ(g;f_s)\res f\in\rI(V_{\pi_0}\boxtimes\mu')\}$ at $s=s_1$. Here, we adopt the notation in Subsection \ref{ss:pole}.\footnote{The pair $(\rV,\rV_0)$ correspond to the pair $(\rV_1,\rV)$ in Subsection \ref{ss:pole}.} Since $L^S(s,\pi)=L^S(s-s_1,\mu')\cdot L^S(s,\pi_0)$, and $L^S(s,\pi_0\times\mu)$ can not have poles at $s_0\geq\frac{n}{2}$ by Corollary \ref{co:pole1}, we must have $s_1=s_0-1$ and $\mu'=\mu^{-1}$. Again by Corollary \ref{co:pole1}, $\mu'$ is conjugate self-dual, so is $\mu$, and $\mu'=\mu^\tc$. The appearance of the residue implies that $\{\sE_\rQ(g;f_s)\res f\in\rI(V_{\pi_0}\boxtimes\mu^\tc)\}$ has a pole at $s_0-1$. If $s_0=\frac{n+1}{2}$, then by the similar argument in Case 1, we conclude that $\pi_0$ is a character, so is $\pi_1$. This contradicts with $\rH^1(\fg,\rK_\rG;\pi_\infty)\neq\{0\}$. Thus, $s_0=\frac{n}{2}$ and $s_1=\frac{n-2}{2}$. By Corollary \ref{co:pole2}(2) and the similar argument in Case 1, we conclude that $V_\pi=\Theta_{(\mu^{-1},\nu^{-1})}^{-\rW}(\pi_\rW)$ for a unique one-dimensional skew-hermitian space $\rW$ and a unique character $\pi_\rW$; and $\pi^\infty\simeq\omega(\mu,\varepsilon_e,\chi)$ in which $\mu$ is of weight one and $\varepsilon_e$ is $\mu$-admissible. Moreover, we have $m_\cusp(\pi)=1$ and $m_\disc(\pi)=1$ by Corollary \ref{co:pole2}(3).

To summarize, we have shown that if an irreducible admissible representation $\pi$ of $\rG(\bA)$ contributes to the Albanese, then $\pi^\infty\simeq\omega(\mu,\varepsilon,\chi)$ for a unique ad\`{e}lic oscillator triple in which $\mu$ is of weight one and $\varepsilon$ is $\mu$-admissible, and $m_\disc(\pi)=1$. Conversely, for every such ad\`{e}lic oscillator triple $(\mu,\varepsilon,\chi)$, there exists a pair $(\rW,\pi_\rW)$, unique up to isomorphism, such that if we denote by $\pi$ an irreducible subrepresentation of $\Theta^\rV_{(\mu,\nu),\rW}(\pi_\rW)$, then $\omega(\mu,\varepsilon,\chi)$ is isomorphic to $\pi^\infty$ and $\rH^1(\fg,\rK_\rG;\pi_\infty)\neq\{0\}$. Moreover, by the Rallis inner product formula,\footnote{Note that the global theta lifting $\Theta^\rV_{(\mu,\nu),\rW}(\pi_\rW)$ is always in Weil's convergent range.} $\Theta^\rV_{(\mu,\nu),\rW}(\pi_\rW)$ is contained in $\rL^2_\disc(\rG)$. Thus, we may apply the above discussions to the representation $\pi$ to conclude that the dimension of $\rH^1_{\rB,\tau'}(A_\infty,\dC)[\omega(\mu,\varepsilon,\chi)]$ is $1$. The proposition follows.
\end{proof}

\begin{remark}
When $n=3$, Proposition \ref{pr:endoscopy_general} can be deduced from \cites{GR91,Rog92}.
\end{remark}

Now we study the $\ell$-adic cohomology of $A_\infty$. Take an embedding $\tau'\colon E\to\dC$, a rational prime $\ell$, and an isomorphism $\iota_\ell\colon\dC\xrightarrow{\sim}\dQ_\ell^\ac$. We have a canonical isomorphism
\[
\rH^1_{\et}(A_K\otimes_{E,\tau'}\dC,\dQ_\ell^\ac)\simeq\rH^1_{\rB,\tau'}(A_K,\dC)\otimes_{\dC,\iota_\ell}\dQ_\ell^\ac
\]
by the comparison theorem. Put
\[
\rH^1_{\et}(A_\infty\otimes_{E,\tau'}\dC,\dQ_\ell^\ac)\coloneqq\varinjlim_K\rH^1_{\et}(A_K\otimes_{E,\tau'}\dC,\dQ_\ell^\ac),
\]
which is a $\dQ_\ell^\ac[\Gal(\dC/\tau'(E))\times\bG(\bA_F^\infty)]$-module.

Suppose that $n\geq 3$ and consider an ad\`{e}lic oscillator triple $(\mu,\varepsilon,\chi)$ in which $\mu$ is of weight one and $\varepsilon$ is $\mu$-admissible. Then
\[
\Hom_{\dQ_\ell^\ac[\bG(\bA_F^\infty)]}\(\iota_\ell\circ\omega(\mu,\varepsilon,\chi),\rH^1_{\et}(A_\infty\otimes_{E,\tau'}\dC,\dQ_\ell^\ac)\)
\]
is a representation of $\Gal(\dC/\tau'(E))$ over $\dQ_\ell^\ac$. By Proposition \ref{pr:endoscopy_general}, such representation is an $\ell$-adic character, denoted by
\[
\rho_{\tau',\iota_\ell}(\mu,\varepsilon,\chi)\colon\Gal(\dC/\tau'(E))\to(\dQ_\ell^\ac)^\times.
\]
It induces, via the isomorphism $\iota_\ell$, an automorphic character
\[
\rho_{\tau',\ell}(\mu,\varepsilon,\chi)\colon\tau'(E)^\times\backslash\bA_{\tau'(E)}^\times\to\dC^\times.
\]
It is easy to see that the character $\rho_{\tau',\ell}(\mu,\varepsilon,\chi)$ does not depend on the isomorphism of $\iota_\ell$, which justifies its notation.

\begin{theorem}\label{th:cm_albanese_pre}
Suppose that $n\geq 3$ and let $(\mu,\varepsilon,\chi)$ be an ad\`{e}lic oscillator triple in which $\mu$ is of weight one and $\varepsilon$ is $\mu$-admissible. Then we have
\[
\rho_{\tau',\ell}(\mu,\varepsilon,\chi)\circ\tau'=\mu^\alg
\]
for every $\tau'\in\Phi_\mu$ and every rational prime $\ell$.
\end{theorem}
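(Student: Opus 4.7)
The plan is to verify the identity of automorphic (equivalently, Hecke) characters of $\tau'(E)$ place by place. The core of the argument has two parts: a nonarchimedean comparison of Frobenius eigenvalues on the relevant isotypic component of $\rH^1$ of the Albanese with Satake parameters of the oscillator representation, and an archimedean determination of the infinity type from the weight-one condition on $\mu$ together with the Hodge structure of $\rH^1$.

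First, I would fix a finite set $S$ of places of $F$ outside which $\mu$, $\chi$, $\bV$, and $K$ are unramified. For each $v\notin S$ that splits in $E$ as $w\bar w$, the local oscillator representation $\omega(\mu_v,\varepsilon_v,\chi_v)$ is an unramified principal series whose Satake parameters at $w$ are explicit functions of $\mu_w$ and $\chi_w$; these formulas are immediate from the explicit local theta correspondence between $\UG(\bV)$ and the rank-one unitary group $\UG(\rW)$ used in the proof of Proposition~\ref{pr:endoscopy_general}. On the Galois side, since $v$ splits and $X_K$ has good reduction at $\tau'(w)$, I would compute the action of geometric Frobenius at $\tau'(w)$ on the $(\mu,\varepsilon,\chi)$-isotypic part of $\rH^1_\et(A_\infty\otimes_{E,\tau'}\dC,\ol\dQ_\ell)$ by reducing to a rank-$2$ Shimura subvariety: choose a rank-$2$ totally definite incoherent hermitian subspace $\bV_2\subseteq\bV$ (possible since $n\geq 3$) and use the induced embedding $\Sh(\bV_2)_{K_2}\hookrightarrow\Sh(\bV)_K$ together with the explicit Galois--Hecke decomposition of the cohomology of $\UG(2)$ Shimura curves computed in Section~\ref{ss:d}. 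Comparing Frobenius eigenvalues with Satake parameters and invoking Chebotarev density would then pin down $\rho_{\tau',\ell}(\mu,\varepsilon,\chi)\circ\tau'$ and $\mu^\alg$ up to a finite-order twist.

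To remove the residual finite-order ambiguity, I would analyze the infinity type. Both characters are algebraic Hecke characters of $E$, and $\mu^\alg=\mu\cdot|\;|_E^{-1/2}$ has explicit infinity type at each $\tau''\in\Phi_E$ determined by whether $\tau''\in\Phi_\mu$. The corresponding infinity type of $\rho_{\tau',\ell}(\mu,\varepsilon,\chi)\circ\tau'$ is controlled by the Hodge decomposition of the CM abelian factor of $A_K$ carrying the isotypic part; its CM type and reflex data are governed by the homomorphism $\eta_\mu$ of Definition~\ref{de:cm_data}, and its Hodge types at each real place of $F$ are forced by the $\mu$-admissibility of $\varepsilon$ via the archimedean branching computation of the oscillator representation in Lemma~\ref{le:weil_arch}. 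Matching infinity types then kills the finite-order twist and yields the claimed equality.

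The hard part will be the nonarchimedean step: establishing the reduction to the rank-$2$ case and extracting enough information from the Galois--Hecke decomposition of the cohomology of the $\UG(2)$ Shimura curves (Section~\ref{ss:d}) to identify the Hecke character appearing on each oscillator isotypic summand. I must verify that the pullback under $\Sh(\bV_2)_{K_2}\hookrightarrow\Sh(\bV)_K$ is nonzero on the isotypic part of interest, which requires a local branching computation of $\omega(\mu,\varepsilon,\chi)$ from $\UG(\bV)(\bA_F^\infty)$ to $\UG(\bV_2)(\bA_F^\infty)$ at each finite place, and I must track Galois equivariance throughout. By contrast, once the framework of CM data in Subsection~\ref{ss:motives} is in place, the archimedean step is essentially combinatorial, reducing to an explicit matching of weights and CM types.
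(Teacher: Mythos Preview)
Your core strategy---reduce to a rank-$2$ unitary Shimura subvariety and invoke the Galois--Hecke computation for Shimura curves in Section~\ref{ss:d}---is exactly what the paper does. The paper's execution, however, is considerably more direct than your two-step Chebotarev-plus-archimedean plan. After fixing $\tau'\in\Phi_\mu$ and passing to the $\tau$-nearby hermitian space $\rV$, the paper chooses an orthogonal decomposition $\rV=\rV_\star\oplus\rV_\star^\perp$ with $\dim\rV_\star=2$ and $\rV_\star^\perp$ totally positive definite, giving a morphism $\Sh(\rG_\star,\rh_\star)\to\Sh(\rG,\rh)$ over $E$. Two ingredients then finish the proof: (i) a short global computation showing that restriction of theta functions from $\rG(\bA)$ to $\rG_\star(\bA)$ carries the theta space $V(\mu,e)$ into the corresponding theta space $V_\star(\mu,e)$ (so the oscillator isotypic piece on $\rG$ restricts to an oscillator isotypic piece on $\rG_\star$ with the \emph{same} $\mu$); and (ii) the cohomological nonvanishing result of \cite{MR92}*{Proposition~6}, adapted to $\dim\rV_\star=2$, which guarantees that for any class $c\in\rH^1_{(2)}(\Sh(\rG,\rh),\dC)$ one can pick $\rV_\star$ so that the pullback of $c$ to $\rH^1_\rB(\Sh(\rG_\star,\rh_\star),\dC)$ is nonzero. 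Since this pullback is $\Gal(\dC/E)$-equivariant, Theorem~\ref{th:galois_curve}(1) (together with Remark~\ref{re:galois_curve}) identifies the \emph{entire} character $\rho_{\tau',\ell}(\mu,\varepsilon,\chi)$ at once; no place-by-place Frobenius computation, Chebotarev step, or separate infinity-type matching is needed.

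Two specific remarks on your plan. First, once the Galois-equivariant restriction to the Shimura curve is in hand, your split-prime Satake comparison, Chebotarev argument, and archimedean Hodge step are all redundant---Theorem~\ref{th:galois_curve}(1) already gives the full Hecke character, not just its values at unramified split places. Second, your proposed mechanism for nonvanishing of the pullback, namely ``local branching of $\omega(\mu,\varepsilon,\chi)$ from $\rU(\bV)$ to $\rU(\bV_2)$ at each finite place,'' is not the right tool: local branching tells you which abstract $\rU(\rV_\star)$-constituents occur, but does not show that the \emph{cohomological} restriction map $\rH^1(\Sh(\rG,\rh))\to\rH^1(\Sh(\rG_\star,\rh_\star))$ is nonzero on the specific isotypic line. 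The paper handles this with the geometric input from \cite{MR92}, and replaces your local branching by the one-line global theta-function compatibility above.
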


\begin{proof}
We fix a rational prime $\ell$ and an isomorphism $\iota_\ell\colon\dC\xrightarrow{\sim}\dQ_\ell^\ac$. We also fix an element $\tau'\in\Phi_\mu$, and identify $E$ as a subfield of $\dC$ via $\tau'$. Let $\rV$ be the hermitian space that is $\tau$-nearby to $\bV$ as in the proof of Proposition \ref{pr:endoscopy_general}, and $\Sh(\rG,\rh)$ the corresponding Shimura variety from Subsection \ref{ss:appendix_isometry} with $\rG\coloneqq\Res_{F/\dQ}\rU(\rV)$ and $\rh\coloneqq\rh^\flat_{\rV,\Phi}$. Then in view of the discussion in Subsection \ref{ss:setup}, we have canonical isomorphisms
\begin{align*}
\rH^1_{\et}(A_\infty\otimes_{E,\tau'}\dC,\dQ_\ell^\ac)\otimes_{\dQ_\ell^\ac,\iota_\ell^{-1}}\dC
\simeq\rH^1_{(2)}(\Sh(\rG,\rh),\dC)
\simeq\bigoplus_\chi\rH^1(\fg,\rK_\rG;\rL^2(\rG(\dQ)\backslash\rG(\bA),\chi)).
\end{align*}

For every orthogonal decomposition $\rV=\rV_\star\oplus\rV_\star^\perp$ of hermitian spaces such that $\rV_\star^\perp$ is totally positive definite, we have similarly the Shimura variety $\Sh(\rG_\star,\rh_\star)$ together with the morphism $\Sh(\rG_\star,\rh_\star)\to\Sh(\rG,\rh)$ over $E$. For an element $e\in E^{\times-}$, we choose a maximal isotropic $F$-subspace $\rV^e$ of the symplectic space $(\Res_{E/F}\rV,\Tr_{E/F}e(\;,\;)_\rV)$. Then $\rV^e_\star\coloneqq\rV^e\cap\Res_{E/F}\rV_\star$ is a maximal isotropic $F$-subspace of $(\Res_{E/F}\rV_\star,\Tr_{E/F}e(\;,\;)_{\rV_\star})$. Denote by $V(\mu,e)\subseteq\sC^\infty(\rG(\dQ)\backslash\rG(\bA),\dC)$ the subspace of theta functions
\[
\theta_\mu^\phi(g)\coloneqq\sum_{v\in\rV^e}(\omega_{\mu,\varepsilon}(g)\phi)(v)
\]
on $\rG(\bA)$, where $\phi$ is in the Schwartz space $\sS(\rV^e(\bA_F))$ in which we use the Fock model at archimedean places. Similarly, we define the subspace $V_\star(\mu,e)\subseteq\sC^\infty(\rG_\star(\dQ)\backslash\rG_\star(\bA))$.

We claim that the map $\sC^\infty(\rG(\dQ)\backslash\rG(\bA),\dC)\to\sC^\infty(\rG_\star(\dQ)\backslash\rG_\star(\bA),\dC)$ induced by the inclusion $\rG_\star\hookrightarrow\rG$ sends $V(\mu,e)$ to $V_\star(\mu,e)$. In fact, we can find finitely many pairs $(\phi_{\star,i},\phi_{\star,i}^\perp)$ with $\phi_{\star,i}\in\sS(\rV^e_\star(\bA_F))$ and $\phi_{\star,i}^\perp\in\sS(\rV^{\perp e}_\star(\bA_F))$, where $\rV^{\perp e}_\star\coloneqq\rV^e\cap\Res_{E/F}\rV_\star^\perp$, such that
\[
\phi(v_\star,v_\star^\perp)=\sum_i\phi_{\star,i}(v_\star)\cdot\phi_{\star,i}^\perp(v_\star^\perp)
\]
for every $v_\star\in\rV^{\perp e}_\star(\bA_F)$ and $v_\star^\perp\in\rV^{\perp e}_\star(\bA_F)$. Then for $g_\star\in\rG_\star(\bA)\subseteq\rG(\bA)$, we have
\begin{align*}
\theta_\mu^\phi(g_\star)
=\sum_i\(\sum_{v_\star\in\rV^e_\star}(\omega_{\mu,\varepsilon}(g_\star)\phi_{\star,i})(v_\star)\)
\(\sum_{v_\star^\perp\in\rV^{\perp e}_\star}\phi_{\star,i}^\perp(v_\star^\perp)\)
=\sum_i\(\sum_{v_\star^\perp\in\rV^{\perp e}_\star}\phi_{\star,i}^\perp(v_\star^\perp)\)\theta_\mu^{\phi_{\star,i}}(g_\star).
\end{align*}
Thus, the claim follows.

To prove the theorem, note that for every class $c\in\rH^1_{(2)}(\Sh(\rG,\rh),\dC)$, using the same proof of \cite{MR92}*{Proposition~6}, one can find a decomposition $\rV=\rV_\star\oplus\rV_\star^\perp$ as above with $\dim\rV_\star=2$ such that the image of $c$ under the restriction map $\rH^1_{(2)}(\Sh(\rG,\rh),\dC)$ in $\rH^1_\rB(\Sh(\rG_\star,\rh_\star),\dC)$ is nonzero.\footnote{Although \cite{MR92}*{Proposition~6} only implies the existence of such $\rV_\star$ with $\dim\rV_\star=3$, its proof actually shows the existence of such $\rV_\star$ with $\dim\rV_\star=2$ by only changing the term $n-2$ to $n-1$ in the proof of Lemma B. The authors of \cite{MR92} presented their argument for $\dim\rV_\star=3$ simply because the property they aimed to reduce does not hold when $\dim\rV_\star=2$.} We denote such image of $c_\star$, and note that $c_\star$ actually belongs to (the image of) $\rH^1_\rB(\ol{\Sh}(\rG_\star,\rh_\star),\dC)$. Then the theorem follows from the above claim, Remark \ref{re:galois_curve}, and Theorem \ref{th:galois_curve}(1).
\end{proof}

\begin{definition}\label{de:cm_space}
Let $\mu\colon E^\times\backslash\bA_E^\times\to\dC^\times$ be a conjugate symplectic character of weight one. For every object $D_\mu=(A_\mu,i_\mu,\lambda_\mu,r_\mu)\in\cA(\mu)$ (Definition \ref{de:cm_data}), the $\dQ$-vector space $\Hom_E(A_\infty,A_\mu)_\dQ$ is an $M_\mu[\bG(\bA_F^\infty)]$-module, where $M_\mu$ acts via $i_\mu$ and $\bG(\bA_F^\infty)$ acts $M_\mu$-linearly via its action on $A_\infty$. Put
\[
\Omega(\mu)\coloneqq\varinjlim_{D_\mu\in\cA(\mu)}\Hom_E(A_\infty,A_\mu)_\dQ
\]
in the category of $M_\mu[\bG(\bA_F^\infty)]$-modules.
\end{definition}

\begin{remark}\label{re:cm_space}
It follows from Proposition \ref{pr:cm_data}(1) that for every object $D_\mu=(A_\mu,i_\mu,\lambda_\mu,r_\mu)\in\cA(\mu)$, the canonical map $\Hom_E(A_\infty,A_\mu)_\dQ\to\Omega(\mu)$ is an isomorphism.
\end{remark}

\begin{theorem}\label{th:cm_albanese}
There is an isomorphism
\[
\Omega(\mu)\otimes_{M_\mu}\dC\simeq\bigoplus_{\varepsilon}\bigoplus_{\chi}\omega(\mu,\varepsilon,\chi)
\]
of $\dC[\bG(\bA_F^\infty)]$-modules, where the direct sum is taken over all $\varepsilon,\chi$ such that $\varepsilon$ is $\mu$-admissible. Moreover,
\begin{enumerate}
  \item For every object $D_\mu=(A_\mu,i_\mu,\lambda_\mu,r_\mu)\in\cA(\mu)$, we have a canonical isomorphism $\Omega(\mu)^K\simeq\Hom_E(A_K,A_\mu)_\dQ$ for every sufficiently small open compact subgroup $K\subseteq\bG(\bA_F^\infty)$.

  \item The $\dC[\bG(\bA_F^\infty)]$-modules in the direct sum in Theorem \ref{th:cm_albanese} are mutually non-isomorphic.

  \item For every given $\varepsilon$ that is $\mu$-admissible, the subspace $\bigoplus_{\chi}\omega(\mu,\varepsilon,\chi)$ is stable under the action of $\Gal(\dC/M_\mu)$.
\end{enumerate}
\end{theorem}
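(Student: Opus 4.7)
The plan is to reduce to an $\ell$-adic Galois calculation via Faltings' isogeny theorem, and to match up characters using Proposition~\ref{pr:endoscopy_general} together with Theorem~\ref{th:cm_albanese_pre}. Fix $D_\mu=(A_\mu,i_\mu,\lambda_\mu,r_\mu)\in\cA(\mu)$; by Remark~\ref{re:cm_space}, $\Omega(\mu)\simeq\Hom_E(A_\infty,A_\mu)_\dQ$ as $M_\mu[\bG(\bA_F^\infty)]$-modules. Since $A_\mu$ carries no $\bG(\bA_F^\infty)$-action, for $K$ sufficiently small any $K$-invariant homomorphism $A_\infty\to A_\mu$ factors through $A_K$, giving item~(1).

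Fix a rational prime $\ell$, an isomorphism $\iota_\ell\colon\dC\xrightarrow{\sim}\dQ_\ell^\ac$, and an embedding $\tau'\in\Phi_\mu$, through which we regard $E$ as a subfield of $\dC$. Faltings' isogeny theorem applied to each $A_K$, together with Lemma~\ref{le:albanese}(2), yields
\begin{align*}
\Omega(\mu)\otimes_\dQ\dQ_\ell^\ac\simeq\Hom_{\Gal(E^\ac/E)}\bigl(\rH^1_\et((A_\mu)_{E^\ac},\dQ_\ell^\ac),\,\rH^1_\et(X_\infty\otimes_E E^\ac,\dQ_\ell^\ac)\bigr)
\end{align*}
as $M_\mu\otimes_\dQ\dQ_\ell^\ac$-modules, with $M_\mu$ acting via $i_\mu$ on the source. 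The CM structure splits the source as $\bigoplus_{\tilde\lambda}L_{\tilde\lambda}$, indexed by embeddings $\tilde\lambda\colon M_\mu\hookrightarrow\dQ_\ell^\ac$, each $L_{\tilde\lambda}$ a one-dimensional $\dQ_\ell^\ac$-line on which the Galois action is the $\ell$-adic Hecke character corresponding (via $\iota_\ell^{-1}$) to the $\tilde\lambda$-conjugate of $\mu^\alg$. Forming $\Omega(\mu)\otimes_{M_\mu}\dC$ via the tautological embedding $M_\mu\hookrightarrow\dC$ and then base changing along $\iota_\ell$ singles out the embedding $\tilde\lambda_0=\iota_\ell\circ(M_\mu\hookrightarrow\dC)$, so one is left with $\Hom_{\Gal(E^\ac/E)}(L_{\tilde\lambda_0},\,\rH^1_\et(X_\infty\otimes_E E^\ac,\dQ_\ell^\ac))$.

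On the other hand, the comparison theorem upgrades Proposition~\ref{pr:endoscopy_general} to a $\Gal(E^\ac/E)\times\bG(\bA_F^\infty)$-equivariant decomposition
\begin{align*}
\rH^1_\et(X_\infty\otimes_E E^\ac,\dQ_\ell^\ac)\simeq\bigoplus_{(\mu',\varepsilon',\chi')}\rho_{\tau',\iota_\ell}(\mu',\varepsilon',\chi')\boxtimes\bigl(\iota_\ell\circ\omega(\mu',\varepsilon',\chi')\bigr),
\end{align*}
summed over triples with $\mu'$ of weight one and $\varepsilon'$ being $\mu'$-admissible. The preceding $\Hom$ selects those summands whose Galois character matches the one on $L_{\tilde\lambda_0}$, namely $\mu^\alg$ via $\iota_\ell$; by Theorem~\ref{th:cm_albanese_pre} this forces $\mu'=\mu$, while $\varepsilon'$ may be any $\mu$-admissible collection and $\chi'$ arbitrary, each contributing exactly one copy. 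Descending from $\dQ_\ell^\ac$ back to $\dC$ via $\iota_\ell^{-1}$ gives the main isomorphism.

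For item~(2), the $\omega(\mu,\varepsilon,\chi)$ are mutually non-isomorphic as $\dC[\bG(\bA_F^\infty)]$-modules because distinct $(\varepsilon_v,\chi_v)$ yield non-isomorphic local oscillator factors at some $v$. For item~(3), the inclusion $M_\mu\supseteq M'_\mu$ ensures that $\Gal(\dC/M_\mu)$ stabilizes $\Phi_\mu$ and fixes $\mu^\alg$ pointwise, so the natural $\Gal(\dC/M_\mu)$-action on $\Omega(\mu)\otimes_{M_\mu}\dC$ (through the second factor) preserves the pair $(\mu,\varepsilon)$ in each summand while permuting only the possible $\chi$'s, leaving $\bigoplus_\chi\omega(\mu,\varepsilon,\chi)$ stable. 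The main obstacle is extracting from the purely $\bG(\bA_F^\infty)$-module decomposition of Proposition~\ref{pr:endoscopy_general} the precise Galois action on each isotypic component (encoded by $\rho_{\tau',\iota_\ell}(\mu',\varepsilon',\chi')$) and then invoking Theorem~\ref{th:cm_albanese_pre} to cut out exactly the summands indexed by $\mu'=\mu$.
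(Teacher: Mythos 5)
Your argument for the main decomposition and for item (1) follows essentially the same route as the paper (Faltings plus the $\ell$-adic description of $\rH^1_\et(X_\infty)$), with one citation gap: Proposition~\ref{pr:endoscopy_general} and Theorem~\ref{th:cm_albanese_pre} only cover $n\geq 3$; the paper handles $n=2$ separately by invoking Proposition~\ref{pr:endoscopic_curve}(1), Remark~\ref{re:galois_curve}, and Theorem~\ref{th:galois_curve}(1), and your proposal is silent about this case. Item~(2) is fine modulo citing Lemma~\ref{le:weil_nonarch}, which is exactly the paper's justification.

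Item~(3), however, has a genuine gap. You assert that because $\Gal(\dC/M_\mu)$ fixes $\mu^\alg$ pointwise and stabilizes $\Phi_\mu$, its action on $\Omega(\mu)\otimes_{M_\mu}\dC$ through the coefficients ``preserves the pair $(\mu,\varepsilon)$ in each summand while permuting only the $\chi$'s.'' That is precisely the conclusion to be proved, not an observation that falls out of $M_\mu\supseteq M'_\mu$. The subtlety is that the local oscillator representations $\omega(\mu_v,\varepsilon_v,\chi_v)$ are built from the fixed additive character $\psi_{F,v}$, whose nonarchimedean values are roots of unity and hence are moved by $\sigma\in\Gal(\dC/M_\mu)$. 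Conjugating the representation by $\sigma$ replaces $\psi_{F,v}$ by $\psi_{F,v}(\chi_p(\sigma)\cdot)$, which rescales the symplectic form $\Tr_{E/F}\varepsilon(\ ,\ )_\rV$ by $\chi_p(\sigma)$; in the classification of Lemma~\ref{le:weil_nonarch}(3) this amounts to replacing $\varepsilon_v$ by $\chi_p(\sigma)\varepsilon_v$. Thus $\sigma$ fixes the class of $\varepsilon_v$ if and only if $\chi_p(\sigma)\in\Nm_{E_v/F_v}E_v^\times$, which is automatic only when $v$ is unramified in $E$. The paper's proof of~(3) is therefore a nontrivial local computation: for each ramified prime $\fp$ it must show that the quadratic field $M_{E/F}$ cut out by $\chi_p \bmod \Nm_{E_\fp/F_\fp}E_\fp^\times$ is contained in $M_\mu$, which is done by a case-by-case analysis of $\mu(-1)$ for $p$ odd and of the three possibilities for $M_{E/F}$ when $p=2$. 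Your argument omits this entirely, so as written it does not establish (3).
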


\begin{proof}
Take an arbitrary object $D_\mu=(A_\mu,i_\mu,\lambda_\mu,r_\mu)\in\cA(\mu)$ and identify $\Omega(\mu)$ with $\Hom_E(A_\infty,A_\mu)_\dQ$ by Remark \ref{re:cm_space}.

Take an embedding $\tau'\colon E\to\dC$ in $\Phi_\mu$. It is clear that the maximal subspace of the complex vector space $\rH^1_{\rB,\tau'}(A_\mu,\dC)$ over which $M_\mu$ acts via the inclusion $M_\mu\hookrightarrow\dC$ has dimension $1$. We choose a basis $\alpha$ of this subspace. Then we obtain a map $\Omega(\mu)\to\rH^1_{\rB,\tau'}(A_\infty,\dC)$ by pulling back $\alpha$, which is $\dC[\bG(\bA_F^\infty)]$-linear. It canonically extends to a map
\begin{align}\label{eq:cm_albanese}
\Omega(\mu)\otimes_{M_\mu}\dC\to\rH^1_{\rB,\tau'}(A_\infty,\dC).
\end{align}
To compute this map, we choose a rational prime $\ell$ and an isomorphism $\iota_\ell\colon\dC\xrightarrow{\sim}\dQ_\ell^\ac$. By the comparison theorem, \eqref{eq:cm_albanese} induces the following map
\[
\Omega(\mu)\otimes_{M_\mu,\iota_\ell}\dQ_\ell^\ac\to\rH^1_{\et}(A_\infty\otimes_{E,\tau'}\dC,\dQ_\ell^\ac)
\]
by pulling back $\alpha$, as a class in $\rH^1_{\et}(A_\mu\otimes_{E,\tau'}\dC,\dQ_\ell^\ac)$. By Faltings' isogeny theorem \cite{Fal83}, we have a canonical isomorphism
\[
\Omega(\mu)\otimes_{M_\mu,\iota_\ell}\dQ_\ell^\ac
\simeq\Hom_{\dQ_\ell^\ac[\Gal(\dC/\tau'(E))]}\(\dQ_\ell^\ac\cdot\alpha,\rH^1_{\et}(A_\mu\otimes_{E,\tau'}\dC,\dQ_\ell^\ac)\).
\]
However, by Definition \ref{de:cm_data}(2), the action of $\Gal(\dC/\tau'(E))$ on the line $\dQ_\ell^\ac\cdot\alpha$ spanned by $\alpha$ is given by the automorphic character $\iota_\ell\circ\mu^\alg\circ(\tau')^{-1}\colon\tau'(E)^\times\backslash\bA_{\tau'(E)}^\times\to(\dQ_\ell^\ac)^\times$. When $n\geq 3$ (resp.\ $n=2$), by Proposition \ref{pr:endoscopy_general} (resp.\ Proposition \ref{pr:endoscopic_curve}(1) with Remark \ref{re:galois_curve}) and Theorem \ref{th:cm_albanese_pre} (resp.\ Theorem \ref{th:galois_curve}(1)), we have an isomorphism
\[
\Hom_{\dQ_\ell^\ac[\Gal(\dC/\tau'(E))]}\(\dQ_\ell^\ac\cdot\alpha,\rH^1_{\et}(A_\mu\otimes_{E,\tau'}\dC,\dQ_\ell^\ac)\)
\simeq\bigoplus_{\varepsilon}\bigoplus_{\chi}\omega(\mu,\varepsilon,\chi)\otimes_{\dC,\iota_\ell}\dQ_\ell^\ac
\]
of $\dQ_\ell^\ac[\bG(\bA_F^\infty)]$-modules induced by pulling back $\alpha$, where the direct sum is taken over all $\varepsilon,\chi$ such that $\varepsilon$ is $\mu$-admissible. Thus, we obtain an isomorphism as in the theorem, which depends only on $\alpha$, not on $\ell$, $\iota_\ell$, and $\tau'$.

The additional statement (1) follows from the above discussion as well. Statement (2) follows from Lemma \ref{le:weil_nonarch}.

Now we consider statement (3). Since $\Gal(\dC/M_\mu)$ stabilizes $\mu$ and by (2), it suffices to show that for every rational prime $p$, the image of $\Gal(\dC/M_\mu)$ under the $p$-adic cyclotomic character $\chi_p\colon\Gal(\dC/\dQ)\to\dZ_p^\times$ is contained in $\dZ_p^\times\cap\Nm_{E_\fp/F_\fp}E_\fp^\times$ for every prime $\fp$ of $F$ above $p$. This only becomes a problem if $\fp$ is ramified in $E$. To ease notation, we suppress the subscript $\fp$. So we have a ramified quadratic extension $E/F$, where $F/\dQ_p$ is a finite extension. Put $U_{E/F}\coloneqq\dZ_p^\times\cap\Nm_{E/F}E^\times$, which we may assume a subgroup of $\dZ_p^\times$ of index $2$. Denote by $M_{E/F}\subseteq\dC$ the subfield corresponding to the kernel of the composite homomorphism $\Gal(\dC/\dQ)\xrightarrow{\chi_p}\dZ_p^\times\to\dZ_p^\times/U_{E/F}$, which is a quadratic field. Thus, our goal is to show that $M_{E/F}$ is contained in $M_\mu$.

We first assume $p$ odd. Then the residue extension degree $f$ of $F/\dQ_p$ must be odd. Write $E=F(\sqrt{u})$ for a uniformizer $u$ of $F$. Then $\mu(\sqrt{u})^2=\mu(\sqrt{u}^2)=\mu(-\Nm_{E/F}u)=\mu(-1)$.
\begin{itemize}
  \item If $\mu(-1)=1$, then $-1$ is a quadratic residue modulo $p$, hence $M_{E/F}=\dQ(\sqrt{p})$. On the other hand, since $\mu(\sqrt{u})=\pm 1$, we have $\mu^\alg(\sqrt{u})=\pm p^{f/2}$. Thus, $M_\mu$ contains $\sqrt{p}$ as $f$ is odd.

  \item If $\mu(-1)=-1$, then $-1$ is not a quadratic residue modulo $p$, hence $M_{E/F}=\dQ(\sqrt{-p})$. On the other hand, since $\mu(\sqrt{u})=\pm \sqrt{-1}$, we have $\mu^\alg(\sqrt{u})=\pm\sqrt{-1} p^{f/2}$. Thus, $M_\mu$ contains $\sqrt{-p}$ as $f$ is odd.
\end{itemize}

We now assume $p=2$. Write $v_F\colon F\to\dZ\cup\{\infty\}$ for the valuation function on $F$. We choose an Eisenstein polynomial $X^2+aX+b$ for $E/F$ with $v_F(a)\geq 1$ and $v_F(b)=1$. Put $d\coloneqq\min\{2v_F(a)-1,v_F(4)\}$, which is an invariant of $E/F$. There are three cases.
\begin{itemize}
  \item Suppose that $M_{E/F}=\dQ(\sqrt{-1})$. Then $U_{E/F}=1+4\dZ_2$. If $d=v_F(4)$, then by \cite{BH06}*{Proposition~41.2(2)}, $3$ is contained in $\Nm_{E/F}E^\times$, which is a contradiction. Thus, we have $d<v_F(4)$. Then we can find $u\in O_F^\times$ such that $E=F(\sqrt{u})$. It follows that $\mu(\sqrt{u})^2=\mu(-1)=-1$ since $-1\not\in U_{E/F}$. Thus, $\mu^\alg(\sqrt{u})=\pm\sqrt{-1}$ is contained in $M_\mu$.

  \item Suppose that $M_{E/F}=\dQ(\sqrt{2})$. Then $U_{E/F}=\pm1+8\dZ_2$. In particular, $U_{E/F}$ does not contain $5$, hence the residue extension degree $f$ of $F/\dQ_2$ must be odd. Moreover, by \cite{BH06}*{Proposition~41.2(2)} again, we must have $d=v_F(4)$, hence $v_F(a)\geq v_F(2)+1$. Then we can find a uniformizer $u$ of $F$ such that $E=F(\sqrt{u})$. We have $\mu(\sqrt{u})^2=\mu(-1)=1$, and $\mu^\alg(\sqrt{u})=\pm 2^{f/2}$. In particular, $\sqrt{2}$ is contained in $M_\mu$.

  \item Suppose that $M_{E/F}=\dQ(\sqrt{-2})$. Then $U_{E/F}=\pm1 +2+8\dZ_2$. In particular, $U_{E/F}$ does not contain $5$ or $-1$. The remaining discussion is same as the above case, which we omit.
\end{itemize}
Statement (3) is proved.
\end{proof}

Theorem \ref{th:cm_albanese}(2,3) allows us to make the following definition.

\begin{definition}\label{de:cm_albanese}
For every collection $\varepsilon$ that is $\mu$-admissible, we denote by $\Omega(\mu,\varepsilon)$ the unique $M_\mu[\bG(\bA_F^\infty)]$-submodule of $\Omega(\mu)$, such that $\Omega(\mu,\varepsilon)\otimes_{M_\mu}\dC$ is isomorphic to $\bigoplus_{\chi}\omega(\mu,\varepsilon,\chi)$ as a $\dC[\bG(\bA_F^\infty)]$-module.
\end{definition}

\begin{corollary}\label{co:cm_albanese}
Take an arbitrary object $D_\mu=(A_\mu,i_\mu,\lambda_\mu,r_\mu)\in\cA(\mu)$. For every sufficiently small open compact subgroup $K$ of $\bG(\bA^\infty_F)$, there is an isogeny decomposition
\[
A_K\sim\prod_\mu A_\mu^{d(\mu,K)},\qquad
\text{resp. }A_K^{\r{end}}\sim\prod_\mu A_\mu^{d(\mu,K)}
\]
of abelian varieties over $E$ when $n\geq 3$ (resp.\ $n=2$), where the product is taken over representatives of $\Gal(\dC/\dQ)$-orbits of all conjugate symplectic automorphic characters of $\bA_E^\times$ of weight one. Here, $A_K^{\r{end}}$ is the endoscopic part of $A_K$ when $n=2$, defined in \eqref{eq:endoscopic_albanese}, and
\[
d(\mu,K)\coloneqq\sum_{\varepsilon}\sum_{\chi}\dim_\dC\omega(\mu,\varepsilon,\chi)^K,
\]
where the sum is taken over all $\varepsilon,\chi$ such that $\varepsilon$ is $\mu$-admissible.
\end{corollary}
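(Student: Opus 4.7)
The plan is to translate Theorem \ref{th:cm_albanese} (together with its additional statement (1)) into an isogeny decomposition via Faltings' isogeny theorem. First, fixing a conjugate symplectic automorphic character $\mu$ of weight one and an object $D_\mu=(A_\mu,i_\mu,\lambda_\mu,r_\mu)\in\cA(\mu)$, taking $K$-invariants of the isomorphism
\[
\Omega(\mu)\otimes_{M_\mu}\dC\simeq\bigoplus_{\varepsilon}\bigoplus_{\chi}\omega(\mu,\varepsilon,\chi)
\]
and combining with Theorem \ref{th:cm_albanese}(1), we obtain
\[
\Hom_E(A_K,A_\mu)_\dQ\otimes_{M_\mu}\dC\simeq\bigoplus_{\varepsilon}\bigoplus_{\chi}\omega(\mu,\varepsilon,\chi)^K
\]
as $\dC$-vector spaces. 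In particular $\dim_{M_\mu}\Hom_E(A_K,A_\mu)_\dQ=d(\mu,K)$, which is finite since $\omega(\mu,\varepsilon,\chi)$ is admissible and only finitely many pairs $(\varepsilon,\chi)$ yield nonzero $K$-invariants when $K$ is sufficiently small.

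Next, I would construct the isogeny. Pick a $\dQ$-basis $\{\phi_{\mu,1},\dots,\phi_{\mu,m(\mu,K)}\}$ of $\Hom_E(A_K,A_\mu)_\dQ$ with $m(\mu,K)=d(\mu,K)\cdot[M_\mu:\dQ]$; after clearing denominators we may assume each $\phi_{\mu,i}\in\Hom_E(A_K,A_\mu)$. Running over representatives of the $\Gal(\dC/\dQ)$-orbits of conjugate symplectic characters $\mu$ of weight one (so that different representatives give non-isogenous $A_\mu$'s, by the determination of the CM character of $A_\mu$ in Definition \ref{de:cm_data}(2) together with Theorem \ref{th:cm_albanese}(3)), we obtain a homomorphism
\[
\Phi_K\colon A_K\longrightarrow\prod_\mu A_\mu^{d(\mu,K)}
\]
by assembling the $\phi_{\mu,i}$ (and regrouping the $M_\mu/\dQ$-conjugate components into $d(\mu,K)$ copies of $A_\mu$, using that $A_\mu$ has dimension $[M_\mu:\dQ]/2$). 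When $n\geq 3$, Proposition \ref{pr:endoscopy_general} combined with Lemma \ref{le:albanese}(1) shows that every irreducible constituent of $\rH^1_{\rB,\tau'}(A_K,\dC)$ is of the form $\omega(\mu,\varepsilon,\chi)$ for some $(\mu,\varepsilon,\chi)$ as in Theorem \ref{th:cm_albanese}; comparing dimensions on both sides proves that $\Phi_K$ induces an isomorphism on $\rH^1_{\rB,\tau'}$, hence is an isogeny by Faltings.

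The remaining issue, which is the main obstacle, is the case $n=2$. Here Proposition \ref{pr:endoscopy_general} is unavailable, and $\rH^1_{\rB,\tau'}(A_K,\dC)$ contains constituents not of the form $\omega(\mu,\varepsilon,\chi)$; these are precisely the non-endoscopic automorphic contributions that must be discarded. I would extract the endoscopic part $A_K^{\r{end}}$ using its definition from \eqref{eq:endoscopic_albanese} (the complement of the non-endoscopic isogeny factor cut out by the Hecke action on $\rH^1$). The same construction of $\Phi_K$ then lands in $A_K^{\r{end}}$, and the analogous dimension count using Proposition \ref{pr:endoscopic_curve}(1) combined with Theorem \ref{th:galois_curve}(1) — both of which underlie the proof of Theorem \ref{th:cm_albanese} in the $n=2$ case — shows that the induced map on $\rH^1_{\rB,\tau'}$ of the endoscopic part is an isomorphism. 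Another application of Faltings' isogeny theorem then yields the desired isogeny $A_K^{\r{end}}\sim\prod_\mu A_\mu^{d(\mu,K)}$, completing the proof.
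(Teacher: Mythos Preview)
Your proposal is correct and is essentially the unpacking of the paper's one-line proof (``This is a direct consequence of Theorem~\ref{th:cm_albanese}''): you correctly identify that Theorem~\ref{th:cm_albanese}(1) gives $\dim_{M_\mu}\Hom_E(A_K,A_\mu)_\dQ=d(\mu,K)$, and that Proposition~\ref{pr:endoscopy_general} (resp.\ Proposition~\ref{pr:endoscopic_curve}(1) and Theorem~\ref{th:galois_curve}(1)) ensures no stray isogeny factors for $n\geq 3$ (resp.\ $n=2$).

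One small point: your final step---constructing an explicit $\Phi_K$ and concluding it is an isogeny by a bare dimension count on $\rH^1$---needs a word more, since equal dimensions alone do not force $\Phi_K^*$ to be an isomorphism. The cleaner route (and what the paper presumably has in mind) avoids the explicit map: by Faltings' theorems the Tate module of $A_K$ (resp.\ $A_K^{\r{end}}$) is semisimple as a Galois module, so it suffices to match multiplicities of simple constituents; Theorem~\ref{th:cm_albanese_pre} (resp.\ Theorem~\ref{th:galois_curve}(1)) identifies each Hecke-isotypic piece with the character $\iota_\ell\circ\mu^{\alg}$, and the CM construction of $A_\mu$ in Definition~\ref{de:cm_data}(2) identifies $\rH^1_{\et}((A_\mu)_{E^\ac},\dQ_\ell^\ac)$ with the sum of the Galois-conjugates of this same character, whence $A_K\sim\prod_\mu A_\mu^{d(\mu,K)}$ directly by Faltings' isogeny theorem.
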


It is clear that the integer $d(\mu,K)$ depends only on the $\Gal(\dC/\dQ)$-orbit of $\mu$.

\begin{proof}
This is a direct consequence of Theorem \ref{th:cm_albanese}.
\end{proof}

\begin{remark}
Corollary \ref{co:cm_albanese} has a very interesting implication. Namely, if $n\geq 3$ and $X_K$ has exotic smooth reduction, that is, $X_K$ has proper smooth reduction at some nonarchimedean place of $E$ that is ramified over $F$, then $\rH^1_\dr(X_K/E)=\{0\}$ since $A_\mu$ cannot have good reduction at such a place.
\end{remark}

At the end of this subsection, we will construct a canonical pairing
\begin{align}\label{eq:albanese_pair}
(\;,\;)_\mu\colon\Omega(\mu)\times\Omega(\mu^\tc)\to M_\mu
\end{align}
that is $M_\mu$-bilinear, non-degenerate, and $\bG(\bA_F^\infty)$-invariant.

\begin{definition}\label{de:hodge_divisor}
We define
\begin{enumerate}
  \item the \emph{Hodge divisor} $D_K$ on $X_K$, as an element in $\CH^1(X_K)_\dQ$, to be
    \begin{itemize}
      \item the usual Hodge divisor on the Shimura variety $\Sh(\bV)_K$ if $\Sh(\bV)_K$ is proper (Compact Case),

      \item the canonical extension of the usual Hodge divisor on $\Sh(\bV)_K$ to $X_K$ if $\Sh(\bV)_K$ is not proper (Noncompact Case).
    \end{itemize}

  \item the \emph{canonical volume} of $K$ to be
    \[
    \vol(K)\coloneqq\frac{1}{\deg D_K^{n-1}\cdot|\pi_0((X_K)_{E^\ac})|},
    \]
    in which $\deg D_K^{n-1}$ is regarded as a constant positive integer by Lemma \ref{le:almost_ample}(4) below.
\end{enumerate}
\end{definition}

\begin{lem}\label{le:almost_ample}
We have
\begin{enumerate}
  \item The Hodge divisor $D_K$ is almost ample (Definition \ref{de:almost_ample}).

  \item For every transition morphism $u^{K'}_K\colon X_{K'}\to X_K$, $(u^{K'}_K)^*D_K$ is rationally equivalent to $D_{K'}$.

  \item For every $g\in\bG(\bA_F^\infty)$, $\tT_g^*D_K$ is rationally equivalent to $D_{gKg^{-1}}$, where $\tT_g\colon X_{gKg^{-1}}\to X_K$ is the Hecke translation.

  \item The degree function $\deg D_K^{n-1}$ is a constant positive integer on $\pi_0(X_K)$.
\end{enumerate}
\end{lem}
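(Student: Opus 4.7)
Throughout, let $\omega_K$ denote the automorphic (Hodge) line bundle on $X_K$, so that $D_K = c_1(\omega_K)$ in $\CH^1(X_K)_\dQ$; in the Noncompact Case, $\omega_K$ is taken to be the canonical extension to the toroidal compactification of the Hodge bundle on $\Sh(\bV)_K$. My plan is to address the four parts in the order (2), (3), (1), (4), since the later parts will lean on the earlier ones.

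For (2) and (3), both identities are formal consequences of the functorial construction of the automorphic bundle. The Hodge line bundle is defined as (a power of the determinant of) the automorphic vector bundle associated to the Hodge cocharacter attached to $\rh$, and this construction descends from the tower $\{\Sh(\bV)_{K'}\}$ to each level through the universal pattern on the underlying Shimura stack. Hence $\omega_{K'}\simeq (u^{K'}_K)^*\omega_K$ and $\omega_{gKg^{-1}}\simeq\tT_g^*\omega_K$. Passing to the canonical extensions on toroidal compactifications preserves these isomorphisms by the universal property of the canonical extension, giving (2) and (3).

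For (1), I distinguish the Compact and Noncompact Cases. In the Compact Case, $X_K=\Sh(\bV)_K$ is projective and, by the Baily--Borel theorem applied to this PEL-type (or abelian-type) setting, some positive power $\omega_K^{\otimes N}$ is already very ample, so $D_K$ is in fact ample, hence almost ample. In the Noncompact Case, the canonical proper morphism $X_K\to\Sh(\bV)_K^{*}$ to the Baily--Borel compactification is an isomorphism on $\Sh(\bV)_K$, hence is generically finite on every irreducible component; moreover, by construction, $\omega_K$ is the pullback of the ample automorphic line bundle on $\Sh(\bV)_K^{*}$. Choosing $N$ so that $N\cdot$ this bundle is very ample on $\Sh(\bV)_K^{*}$, the linear system $|N D_K|$ is base-point free on $X_K$ and the induced morphism factors through an embedding of $\Sh(\bV)_K^{*}$; this is exactly the content of Definition \ref{de:almost_ample}. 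I expect this step to be the main technical one, as it relies on the precise comparison between the toroidal and Baily--Borel compactifications for incoherent unitary Shimura varieties; but it is standard input from \cites{BHKRY,RSZ} via the theory of the compactifications recalled in Subsection \ref{ss:appendix_connection}.

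For (4), positivity of $\deg D_K^{n-1}$ on every connected component follows from (1) combined with Remark \ref{re:biproduct}; and integrality is immediate from $D_K=c_1(\omega_K)$ for an honest line bundle $\omega_K$. The remaining claim is constancy across components. Via base change to $E^\ac$ it suffices to show that $\deg D_K^{n-1}$ is constant on $\pi_0((X_K)_{E^\ac})$. The group of geometric connected components of the unitary Shimura variety is a torsor under (a quotient of) $\bG(\bA_F^\infty)$ acting through the Hecke correspondences together with the action of $\Gal(E^\ac/E)$, and both actions permute the components transitively within each $E$-component. By (3), the class $D_K$ is preserved (up to rational equivalence) under every Hecke translation inside a fixed level, and it is $\Gal(E^\ac/E)$-invariant because $\omega_K$ is defined over $E$; hence $\deg D_K^{n-1}$ takes the same value on all components, proving (4).
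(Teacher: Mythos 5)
Your proof takes essentially the same route as the paper's: (2) and (3) follow from the functoriality of the (canonical extension of the) Hodge bundle, (1) from the fact that $D_K$ is the pullback of the ample automorphic line bundle on the Baily--Borel compactification (hence ample in the Compact Case, almost ample in the Noncompact Case), and (4) from (1)--(3) together with Remark~\ref{re:biproduct}. The paper organizes (1) slightly differently by giving a separate elementary argument for $n=2$ (genus $\geq 2$ plus positive degree implies ample) rather than folding it into the Baily--Borel argument; your version also covers $n=2$ because $X_K\to\ol\Sh(\bV)_K$ is an isomorphism in that case, so the two are interchangeable.

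The one place that needs tightening is the constancy claim in (4). You appeal only to Hecke translations ``inside a fixed level,'' i.e.\ to $\tT_g$ for $g$ normalizing $K$, but the normalizer of $K$ generally does not act transitively on $\pi_0((X_K)_{E^\ac})$, so this alone does not close the argument (and your sentence about transitivity ``within each $E$-component'' also falls short of what is needed, since one must cross $E$-components as well). The correct deduction uses (2) and (3) jointly for arbitrary $g$: set $K'\coloneqq K\cap gKg^{-1}$ and consider the two finite maps $u^{K'}_K\colon X_{K'}\to X_K$ and $\tT_g\circ u^{K'}_{gKg^{-1}}\colon X_{K'}\to X_K$. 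By (2) and (3) both pull $D_K$ back to the same class $D_{K'}$, and both have the same degree $[K:K']=[gKg^{-1}:K']$ on every component (unimodularity of $\bG(\bA_F^\infty)$). The projection formula then equates $\deg D_K^{n-1}$ on the two components of $X_K$ receiving a fixed component of $X_{K'}$. Combined with Galois invariance (as you observe) and the transitivity of the full Hecke-plus-Galois action on $\pi_0((X_K)_{E^\ac})$, this gives the constancy.
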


\begin{proof}
Consider (1) first. If $n=2$, then (for sufficiently small $K$) $X_K$ has genus at least $2$. Since $D_K$ has positive degree on every connected component, it is ample, hence almost ample. Now suppose that $n\geq 3$. If we are in the Compact Case, then the usual Hodge divisor is already ample. If we are in the Noncompact Case, then $D_K$ is the pullback of the Hodge divisor on the Baily--Borel compactification of $\Sh(\bV)_K$. Since the latter is ample, $D_K$ is almost ample (and in fact, \emph{not} ample).

For (2,3), since $D_K$ is the (canonical extension of the) usual Hodge divisor of $\Sh(\bV)_K$, it is functorial under pullbacks and Hecke translations. For (4), the positivity follows from (1) and Remark \ref{re:biproduct}; the constancy is a consequence of (2,3).
\end{proof}

Thus, by Proposition \ref{pr:almost_ample}, we obtain a polarization
\[
\theta_K\coloneqq\theta_{X_K,D_K}\colon A_K^\vee\to A_K.
\]
Now we define the pairing \eqref{eq:albanese_pair}. We choose an object $D_\mu=(A_\mu,i_\mu,\lambda_\mu,r_\mu)\in\cA(\mu)$, which induces the object $D^\vee_\mu=(A^\vee_\mu,i^\vee_\mu,\lambda^\vee_\mu,r^\vee_\mu)\in\cA(\mu^\tc)$. Then we have $\Omega(\mu)=\Hom_E(A_\infty,A_\mu)_\dQ$ and $\Omega(\mu^\tc)=\Hom_E(A_\infty,A^\vee_\mu)_\dQ$. It suffices to consider elements $\phi\in\Hom_E(A_\infty,A_\mu)$ and $\phi_\tc\in\Hom_E(A_\infty,A^\vee_\mu)$. Since both $A_\mu$ and $A_{\mu^\tc}$ are of finite type, we may choose some $K$ such that both $\phi$ and $\phi_\tc$ factor through $A_K$. The composite map
\[
A_\mu\simeq A_\mu^{\vee\vee}=(A_{\mu^\tc})^\vee\xrightarrow{\phi_\tc^\vee}A_K^\vee\xrightarrow{\theta_K}A_K\xrightarrow{\phi}A_\mu
\]
belongs to $\End_E(A_\mu)_\dQ=i_\mu(M_\mu)$. Now we define
\[
(\phi,\phi_\tc)^K_\mu\coloneqq\vol(K)\cdot i_\mu^{-1}(\phi\circ\theta_K\circ\phi_\tc^\vee)\in M_\mu.
\]
For sufficiently small $K$ and $K'\subseteq K$, the degree of the transition morphism $u^{K'}_K$ equals $\vol(K)\cdot\vol(K')^{-1}$ by Lemma \ref{le:almost_ample}(2). Thus, by Lemma \ref{le:almost_ample}(2) and Proposition \ref{pr:albanese_functoriality}, we know that $(\phi,\phi_\tc)^K_\mu$ does not depend on the choice of $K$, which we define as $(\phi,\phi_\tc)_\mu$. It is clear from the construction that \eqref{eq:albanese_pair} is bilinear, independent of the choice of $D_\mu$, non-degenerate since $\theta_K$ is a polarization for every $K$, and $\bG(\bA_F^\infty)$-invariant since $\{D_K\}_K$ is functorial under Hecke translations.

%\begin{remark}
%The pairing \eqref{eq:albanese_pair} induces an isomorphism $\Omega(\mu)\xrightarrow{\sim}\Hom_{M_\mu}(\Omega(\mu^\tc),M_\mu)$ of $M_\mu[\bG(\bA_F^\infty)]$-modules. It is clear that such isomorphism is a natural isomorphism between the two obvious functors from $\cA(\mu)$ to the category of $M_\mu[\bG(\bA_F^\infty)]$-modules.
%\end{remark}

\subsection{Construction of Fourier--Jacobi cycles}
\label{ss:construction_cycles}

Let $\bV$ be a totally definite incoherent hermitian space over $\bA_E$ of rank $n\geq 2$, with $\bG\coloneqq\rU(\bV)$. From now on to the end of Section \ref{ss:4}, we
\begin{itemize}
  \item fix a conjugate symplectic automorphic character $\mu\colon E^\times\backslash\bA_E^\times\to\dC^\times$ of weight one, and

  \item will only consider sufficiently small open compact subgroups $K\subseteq\bG(\bA_F^\infty)$ that are decomposable, that is, $K$ can be written as $\prod_vK_v$ when $v$ runs over all nonarchimedean places of $F$; we call such $K$ a \emph{level subgroup}.
\end{itemize}

Let $R$ be a ring containing $\dQ$. Let
\[
\sH_R\coloneqq\sC^\infty_c(\bG(\bA_F^\infty),R)
\]
be the full Hecke algebra with coefficients in $R$, whose multiplication is given by the convolution with respect to the canonical volume (Definition \ref{de:hodge_divisor}(3)). It is known that $\sH_R$ is an $R[\bG(\bA_F^\infty)\times\bG(\bA_F^\infty)]$-module via left and right translations. For $g\in\bG(\bA_F^\infty)$, we denote by $\rL_g$ and $\rR_g$ the left and right translations on $\sH_R$, respectively.

For a level subgroup $K\subseteq\bG(\bA_F^\infty)$, we have the Hecke (sub)algebra $\sH_{K,R}\coloneqq\sC^\infty_c(K\backslash\bG(\bA_F^\infty)/K,R)$, which admits an $R$-linear map
\[
\tT_K\colon\sH_{K,R}\to\rZ^{n-1}(X_K\times X_K)_R
\]
sending $f$ to the Hecke correspondence $\tT_K^f$, normalized by $\vol(K)$. For example, if $f=\CF_K$, then $\tT_K^f=\vol(K)\cdot\Delta X_K\in\rZ^{n-1}(X_K\times X_K)_R$. The induced map (with the same notation)
\[
\tT_K\colon\sH_{K,R}\to\CH^{n-1}(X_K\times X_K)_R
\]
is a homomorphism of $R$-algebras. It is clear that $\sH_R=\varinjlim_K\sH_{K,R}$.

\begin{definition}
Let $\Pi$ be a relevant representation of $\GL_n(\bA_E)$ (Definition \ref{de:relevant}). We define $\Phi_\Pi$ to be the set of isomorphism classes of pairs $(\bV,\pi^\infty)$, where
\begin{itemize}
  \item $\bV$ is a totally positive definition incoherent hermitian space over $\bA_E$ of rank $n$,

  \item $\pi^\infty$ is an irreducible admissible representation of $\bG(\bA_F^\infty)$ such that
    \begin{itemize}
      \item for a nonarchimedean place $v$ of $F$ either split in $E$ or at which $\pi^\infty_v$ is unramified, we have $\r{BC}(\pi^\infty_v)\simeq\Pi_v$,

      \item $\pi^\infty$ appears in $\rH^i_{\rB,\tau'}(\widetilde\Sh(\bV),\dC)$ as a subquotient representation of $\bG(\bA_F^\infty)$ for some $i\in\dZ$ and some place $\tau'\colon E\to\dC$.
    \end{itemize}
\end{itemize}
\end{definition}

\begin{proposition}\label{pr:arthur}
Let $\Pi$ be a relevant representation of $\GL_n(\bA_E)$. For $(\bV,\pi^\infty)\in\Phi_\Pi$, we have for every $\tau'\colon E\to\dC$ that
\begin{enumerate}
  \item $\pi^\infty$ appears in $\rH^i_{\rB,\tau'}(\widetilde\Sh(\bV),\dC)$ semisimply for every $i$,

  \item $\pi^\infty$ does not appear in $\rH^i_{\rB,\tau'}(\widetilde\Sh(\bV),\dC)$ if $i\neq n-1$,

  \item $\rH^i_{\rB,\tau'}(\widetilde\Sh(\bV),\dC)[\pi^\infty]=\IH^i_{\rB,\tau'}(\widetilde\Sh(\bV),\dC)[\pi^\infty]$ for every $i$.
\end{enumerate}
\end{proposition}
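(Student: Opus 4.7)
The plan is to reduce everything to a statement about the relative Lie algebra cohomology of the archimedean component $\pi_\infty$ of an automorphic representation $\pi$ whose finite part is $\pi^\infty$, and then appeal to Arthur's endoscopic classification for unitary groups (due to Mok and Kaletha--Minguez--Shin--White) together with the Vogan--Zuckerman classification of cohomological representations. First, I would reinterpret the Betti cohomology of $X_K = \widetilde{\Sh}(\bV)_K$ via the isomorphism used in Proposition \ref{pr:endoscopy_general} (coming from Proposition \ref{pr:incoherent_shimura} and Pink's comparison \eqref{eq:pink}) so that things can be phrased in terms of a nearby Shimura variety $\Sh(\rG,\rh)$ of a reductive group $\rG = \Res_{F/\dQ}\rU(\rV)$ that becomes $\bG$ after passing to finite ad\`eles.

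The archimedean step is the heart of the matter. Because $\Pi$ is relevant, each cuspidal factor $\Pi_i$ is conjugate self-dual cuspidal and the archimedean parameter of $\Pi$ is the regular tempered parameter of Definition \ref{de:relevant}(2). By the Jacquet--Langlands--Harris--Labesse--Mok descent from $\GL_n$ to unitary groups, any discrete automorphic $\pi$ of $\rG(\bA)$ with finite part $\pi^\infty$ as in the statement must have an Arthur parameter of the form $\boxplus_i \Pi_i \boxtimes [1]$ (the parameters are tempered generic because the cuspidal factors are mutually non-isomorphic, hence the Arthur $\SL_2$ acts trivially). In particular such a $\pi$ lies in the \emph{cuspidal} spectrum (no residual contribution is possible since the parameter has trivial $\SL_2$-factor), and $\pi_\infty$ is tempered with infinitesimal character matching the one forced by $\Pi_\infty$. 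By Vogan--Zuckerman (or Borel--Wallach, Chapter II and VI), a tempered cohomological representation of $\rG(\dR)$ with the regular infinitesimal character coming from Definition \ref{de:relevant}(2) has $(\fg,\rK_\rG)$-cohomology concentrated in the middle degree $n-1$, and the cohomology there is a one-dimensional space appearing with multiplicity one from each archimedean $L$-packet member.

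In the Compact Case, $X_K = \Sh(\bV)_K$ is already projective, and Matsushima's formula \eqref{eq:matsushima} together with Borel--Wallach gives a direct $\bG(\bA_F^\infty)$-equivariant decomposition $\rH^i_{\rB,\tau'}(X_K,\dC) \simeq \bigoplus_\pi m_\disc(\pi) \cdot \rH^i(\fg,\rK_\rG;\pi_\infty) \otimes \pi^{\infty,K}$, so items (1), (2), (3) follow at once from the archimedean vanishing and the Arthur multiplicity formula (which is finite, yielding semisimplicity). In the Noncompact Case, Zucker's conjecture (proved by Looijenga and Saper--Stern) identifies $\IH^i_{\rB,\tau'}(\widetilde{\Sh}(\bV)_K,\dC) = \IH^i(\ol{\Sh}(\rG,\rh)_K,\dC)$ with the $L^2$-cohomology, and the latter is again governed by the discrete automorphic spectrum through the same Matsushima-type formula; this immediately gives (1), (2) for intersection cohomology. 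To upgrade (2) and to obtain (3), I would invoke Franke's theorem on the decomposition of automorphic cohomology, which expresses $\rH^*_\rB(\widetilde{\Sh}(\rG,\rh)_K,\dC)$ as a sum of square-integrable and Eisenstein contributions, and argue that the $\pi^\infty$-isotypic component of the Eisenstein part must vanish: an Eisenstein class with finite part containing $\pi^\infty$ would force the global base change to have a nontrivial residual or non-tempered constituent, contradicting the fact that $\Pi$ is an isobaric sum of cuspidals with pairwise distinct $\Pi_i$.

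The main obstacle is the Noncompact Case, and specifically controlling the Eisenstein contribution. The cleanest route is to show directly that any $(\fg,\rK_\rG)$-cohomology class at a proper Levi (with $\pi_\infty$-type matching the regular tempered parameter of $\Pi_\infty$) lifts to a non-tempered archimedean parameter at the $\GL_n$-base change, which is incompatible with the relevance of $\Pi$; a more robust alternative is to use the strong form of Borel's theorem (as refined by Franke--Schwermer) that the square-integrable cohomology of the toroidal compactification agrees with the interior cohomology on the tempered cuspidal part. Either way, granting this input, (3) reduces to the equality, on the $\pi^\infty$-isotypic piece, of the image of $\rH^i \to \IH^i$ with the full $\IH^i$, which is automatic once the boundary/Eisenstein contribution is excluded.
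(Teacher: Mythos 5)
Your overall skeleton matches the paper's: pass to a $\tau$-nearby hermitian space $\rV$ to view $\widetilde\Sh(\bV)\otimes_{E,\tau'}\tau'(E)$ as $\widetilde\Sh(\rG,\rh_{\rV,\tau'})$, use Matsushima and Zucker for the intersection-cohomology part, and invoke Arthur's classification (Mok, KMSW) for the concentration in degree $n-1$. But the step where you control the boundary/Eisenstein contribution has a genuine imprecision. You assert that an Eisenstein class with finite part containing $\pi^\infty$ ``would force the global base change to have a nontrivial residual or non-tempered constituent.'' That is not the right mechanism, and in general it is not even true: the base change of the representation attached to a boundary stratum of the toroidal compactification is parabolically induced from $\GL_1\times\rU(\rV')$, hence of the shape $\chi\boxplus\Pi'\boxplus\chi^{-\tc}$ for some character $\chi$ of $\bA_E^\times$, which is still an isobaric sum of cuspidals (neither residual nor non-tempered in the $\GL_n$ sense). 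The actual obstruction, which is the one the paper exploits, is that the conjugate self-duality forces $\chi^{-\tc}=\chi$, so $\chi$ would appear \emph{twice} as a cuspidal factor of $\Pi$; this is what contradicts the mutual non-isomorphism of the $\Pi_i$ (Definition \ref{de:relevant}(2)). You write ``pairwise distinct $\Pi_i$'' at the end of the sentence, so you have the right conclusion in mind, but the proposed causal chain does not actually establish it and would need to be replaced by the repeated-factor argument (or an equivalent combinatorial analysis of the boundary).

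Two secondary issues. First, for the concentration in degree $n-1$ you assert temperedness of $\pi_\infty$ from the triviality of the Arthur $\SL_2$-factor; this does not by itself give temperedness of the cuspidal factors $\Pi_i$. The paper obtains temperedness of $\Pi$ from \cite{Car12}*{Theorem~1.2} (local-global compatibility / Ramanujan for cohomological conjugate self-dual cuspidal representations), and that input should not be elided. Second, Franke's theorem describes the cohomology of the open Shimura variety $\Sh(\rG,\rh)_K$, not of its smooth toroidal compactification $\widetilde\Sh(\rG,\rh)_K$; the object you need to control is the quotient $\rH^i_\rB(\widetilde\Sh)/\IH^i_\rB(\widetilde\Sh)$, for which the relevant comparison is the Morel--Pink style boundary analysis rather than Franke's filtration, so invoking Franke here requires an additional comparison step you have not supplied.
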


\begin{proof}
Put $\tau\coloneqq\tau'\res F$, and fix a hermitian space $\rV$ that is $\tau$-nearby to $\bV$ (Definition \ref{de:nearby}). Put $\rG\coloneqq\Res_{F/\dQ}\rU(\rV)$, and identify $\widetilde\Sh(\bV)\otimes_{E,\tau'}\tau'(E)$ with the (compactified) Shimura variety $\widetilde\Sh(\rG,\rh_{\rV,\tau'})$ under the notation in Subsection \ref{ss:appendix_isometry}.

We first note that $\pi^\infty$ is not a constituent of the quotient representation p$\rH^i_{\rB}(\widetilde\Sh(\rG,\r h_{\rV,\tau'}),\dC)/\break\IH^i_{\rB}(\widetilde\Sh(\rG,\rh_{\rV,\tau'}),\dC)$, since otherwise $\Pi$ will have two isomorphic cuspidal factors under Definition \ref{de:relevant}(1), which can not happen by Definition \ref{de:relevant}(2). Then (1) and (3) follow by the discussion in Subsection \ref{ss:setup}.

If $\pi^\infty$ appears in $\IH^i_{\rB}(\widetilde\Sh(\rG,\rh_{\rV,\tau'}),\dC)$, then there is an automorphic representation $\pi_\infty\otimes\pi^\infty$ of $\rG(\bA)$ with $m_\disc(\pi_\infty\otimes\pi^\infty)\geq 1$ such that $\rH^i(\fg,\rK_\rG;\pi_\infty)\neq\{0\}$. By \cite{Car12}*{Theorem~1.2}, we know that $\Pi$ is everywhere tempered. By Arthur's endoscopic classification \cite{Art13}, which has been worked out in \cite{Mok15} and \cite{KMSW} for tempered representations for unitary groups, we know that the local base change of $\pi_\infty$ must be $\Pi_\infty$, which implies that $\pi_\infty$ is a discrete series representation. In particular, $i$ has to be the middle degree $n-1$. Thus, (2) follows.

%For (4), note that there is exact $n$ different discrete series representations $\{\pi_\infty^p\}_{0\leq p\leq n-1}$ of $\rG(\dR)$, up to isomorphism, such that $\rH^{n-1}(\fg,\rK_\rG;\pi_\infty^p)\neq 0$. Moreover, $\dim_\dC\rH^{n-1}(\fg,\rK_\rG;\pi_\infty^p)=1$ (and one can arrange the indices so that $\rH^{n-1}(\fg,\rK_\rG;\pi_\infty^p)$ is of Hodge type $(p,n-1-p)$). Then (4) follows from the fact that $m_\disc(\pi_\infty^p\otimes\pi^\infty)=1$ for every $p$, which is predicted by the Arthur's multiplicity formula, proved in \cite{Mok15} and \cite{KMSW}.
\end{proof}

\begin{definition}\label{de:test_function}
Let $\Pi$ and $(\bV,\pi^\infty)$ be as in Proposition \ref{pr:arthur}. Let $K\subseteq\bG(\bA_F^\infty)$ be a level subgroup. We say that a function $f\in\sH_{K,\dL}$, where $\dL$ is some subfield of $\dC$, is a \emph{test function for $\pi^\infty$}, if the element
\[
\cl_{\rB,\tau'}(\tT_K^f)\in\rH^{2n-2}_{\rB,\tau'}(\widetilde\Sh(\bV)_K\times\widetilde\Sh(\bV)_K,\dC)
\]
belongs to the subspace $\rH^{n-1}_{\rB,\tau'}(\widetilde\Sh(\bV)_K,\dC)[(\pi^\infty)^K]\otimes_\dC
\rH^{n-1}_{\rB,\tau'}(\widetilde\Sh(\bV)_K,\dC)[((\pi^\infty)^\vee)^K]$
under the K\"{u}nneth decomposition for every $\tau'\colon E\to\dC$.
\end{definition}

Now we start to construct the Fourier--Jacobi cycles. We fix two relevant representations $\Pi_1$ and $\Pi_2$ of $\GL_n(\bA_E)$, and consider pairs $(\bV,\pi_i^\infty)\in\Phi_{\Pi_i}$ for $i=1,2$ with the same $\bV$. Let $\dL\subseteq\dC$ be a subfield containing $M_\mu$ over which $\Pi_1^\infty$ and $\Pi_2^\infty$ (hence $\pi_1^\infty$ and $\pi_2^\infty$) are both defined. In what follows, we will regard $\pi_1^\infty$ and $\pi_2^\infty$ as irreducible $\dL[\bG(\bA_F^\infty)]$-modules. Take a CM data $D_\mu=(A_\mu,i_\mu,\lambda_\mu,r_\mu)\in\cA(\mu)$.

Let $K\subseteq\bG(\bA_F^\infty)$ be a level subgroup; and we now write $X_K$ for $\widetilde\Sh(\bV)_K$ as in Subsection \ref{ss:albanese_unitary}.

\begin{description}
  \item[Step 1] We start from the cycle
      \[
      \Delta^3X_K\times D_K^{n-1}\in\CH^{3(n-1)}(X_K\times X_K\times X_K\times X_K)_\dQ,
      \]
      where we recall that $D_K$ is the Hodge divisor on $X_K$ (Definition \ref{de:hodge_divisor}(1)). Put
      \[
      (\Delta^3X_K\times D_K^{n-1})^\nabla\coloneqq\Delta^3X_K\times D_K^{n-1}\cap X_K\times X_K\times\nabla X_K
      \]
      as an element in $\CH^{3(n-1)}(X_K\times X_K\times\nabla X_K)_\dQ$ (see Definition \ref{de:split} for the meaning of $\nabla$).

  \item[Step 2] Choose an element $\phi\in\Hom_E(A_K,A_\mu)$. We push the above cycle along the morphism
      \[
      \id_{X_K\times X_K}\times(\phi\circ\alpha_K)\colon X_K\times X_K\times\nabla X_K\to X_K\times X_K\times A_\mu
      \]
      to obtain a cycle
      \[
      (\id_{X_K\times X_K}\times(\phi\circ\alpha_K))_*(\Delta^3X_K\times D_K^{n-1})^\nabla\in\CH^{n-1+[M_\mu:\dQ]/2}(X_K\times X_K\times A_\mu)_\dQ,
      \]
      where we recall that $\alpha_K$ is the Albanese morphism \eqref{eq:albanese_shimura}.

  \item[Step 3] To proceed, we need to homologically trivialize the cycle in Step 2. Moreover, heuristically, the Chow group $\CH^{n-1+[M_\mu:\dQ]/2}(X_K\times X_K\times A_\mu)_\dQ^0$ should be encoded in the cohomology/motive $\rH^{2(n-1)+[M_\mu:\dQ]-1}(X_K\times X_K\times A_\mu)$. The motive we study comes from the product $\Pi_1\times\Pi_2\otimes\mu$, which appears in the cohomology $\rH^{n-1}(X_K)\otimes\rH^{n-1}(X_K)\otimes\rH^{[M_\mu:\dQ]-1}(A_\mu)$ as a direct summand of the previous cohomology by a suitable K\"{u}nneth decomposition. To make sense of it, we need to introduce certain correspondences serving as projectors to the correct piece of cohomology. For the factor $A_\mu$, we use the canonical projector $\tT_\mu^{\r{can}}$ in Definition \ref{de:mu_canonical}. For Shimura varieties, we choose test functions $f_1$ and $f_2$ in $\sH_{K,\dL}$ for $\pi_1^\infty$ and $\pi_2^\infty$ (Definition \ref{de:test_function}), respectively. Put
      \[
      \FJ(f_1,f_2;\phi)_K\coloneqq|\pi_0((X_K)_{E^\ac})|\cdot(\tT_K^{f_1}\otimes\tT_K^{f_2}\otimes\tT_\mu^\can)^*(\id_{X_K\times X_K}\times(\phi\circ\alpha_K))_*(\Delta^3X_K\times D_K^{n-1})^\nabla
      \]
      as an element in $\CH^{n-1+[M_\mu:\dQ]/2}(X_K\times X_K\times A_\mu)_\dL$.
\end{description}

For $i\in\dZ$, we denote by $\CH^i(X_K\times X_K\times A_\mu)_\dL^\natural[i_\mu]$ the subspace of $\CH^i(X_K\times X_K\times A_\mu)_\dL^\natural$ on which $i_\mu(M_\mu)$ acts via the inclusion $M_\mu\hookrightarrow\dL$.

\begin{proposition}\label{pr:independence}
Let the notation be as above.
\begin{enumerate}
  \item The cycle $\FJ(f_1,f_2;\phi)_K$ belongs to $\CH^{n-1+[M_\mu:\dQ]/2}(X_K\times X_K\times A_\mu)_\dL^0$.

  \item The image of $\FJ(f_1,f_2;\phi)_K$ in $\CH^{n-1+[M_\mu:\dQ]/2}(X_K\times X_K\times A_\mu)_\dL^\natural$ belongs to the subspace $\CH^{n-1+[M_\mu:\dQ]/2}(X_K\times X_K\times A_\mu)_\dL^\natural[i_\mu]$ and depends only on the homological equivalence class of $\tT_K^{f_1}\otimes\tT_K^{f_2}$.
\end{enumerate}
\end{proposition}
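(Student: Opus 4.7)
My plan is to prove part (1) by a cohomological degree count via the K\"{u}nneth decomposition, and deduce parts (2a) and (2b) via functoriality of the $\ell$-adic Abel--Jacobi map with respect to the correspondence action, exploiting the projection properties of $\tT_\mu^{\r{can}}$. For part (1), by the comparison theorems it suffices to show that the Betti cycle class of $\FJ(f_1,f_2;\phi)_K$ vanishes in $\rH^{2(n-1)+[M_\mu:\dQ]}_{\rB,\tau'}(X_K\times X_K\times A_\mu,\dC)$ for one embedding $\tau'\colon E\to\dC$. This Betti class equals $(\tT_K^{f_1}\otimes\tT_K^{f_2}\otimes\tT_\mu^{\r{can}})^*$ applied to the class of the pushforward $(\id_{X_K\times X_K}\times(\phi\circ\alpha_K))_*(\Delta^3X_K\times D_K^{n-1})^\nabla$, which lives in total cohomological degree $2(n-1)+[M_\mu:\dQ]$. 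By Definition \ref{de:test_function}, $\cl_{\rB,\tau'}(\tT_K^{f_i})$ is pure K\"{u}nneth of bidegree $(n-1,n-1)$ on $X_K\times X_K$, so $(\tT_K^{f_i})^*$ annihilates $\rH^j(X_K,\dC)$ for $j\neq n-1$; by Lemma \ref{le:mu_canonical}, $(\tT_\mu^{\r{can}})^*$ annihilates $\rH^j(A_\mu,\dC)$ for $j\neq[M_\mu:\dQ]-1$ and otherwise projects to $\rH^{[M_\mu:\dQ]-1}(A_\mu,\dC)_{I_1}$. Hence the combined correspondence projects onto the K\"{u}nneth component of tridegree $(n-1,n-1,[M_\mu:\dQ]-1)$ of total degree $2(n-1)+[M_\mu:\dQ]-1$; since correspondence actions preserve cohomological degree but our input class has degree $2(n-1)+[M_\mu:\dQ]$, its K\"{u}nneth decomposition has zero component in this tridegree, so the output vanishes.

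For part (2a), for every $x\in M_\mu$ with $i_\mu(x)\in\End_E(A_\mu)$, the correspondence $\langle x\rangle$ on $A_\mu$ (Definition \ref{de:generator}) acts on $\rH^{[M_\mu:\dQ]-1}(A_\mu,\dC)_{I_1}$ by multiplication by the image of $x$ under $M_\mu\hookrightarrow\dL\subseteq\dC$, as this is precisely the defining eigenspace. Running the projection argument of part (1) at the $\ell$-adic level shows that $\AJ_\ell(\FJ(f_1,f_2;\phi)_K)$ lies in the image of $(\id\otimes\tT_\mu^{\r{can}})^*$ on $\rH^1(E,\rH^{2(n-1)+[M_\mu:\dQ]-1}_{\et}((X_K\times X_K\times A_\mu)_{E^\ac},\dQ_\ell(n-1+[M_\mu:\dQ]/2)))$, which is contained in the $\langle x\rangle^*=x$ eigenspace. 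Functoriality of $\AJ_\ell$ on $\CH^0$ with respect to correspondences then yields $\AJ_\ell((\id\otimes\langle x\rangle)^*\FJ(f_1,f_2;\phi)_K-x\cdot\FJ(f_1,f_2;\phi)_K)=0$ for every $\ell$, so the difference lies in $\CH^1$ and thus vanishes in $\CH^\natural$. Extending by $\dQ$-linearity from such integral $x$ to arbitrary $x\in M_\mu$ yields the $[i_\mu]$-eigenspace condition.

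For part (2b), set $D\coloneqq\tT_K^{f_1}\otimes\tT_K^{f_2}-\tT_K^{f_1'}\otimes\tT_K^{f_2'}$ and suppose it is homologically trivial, so that $\FJ(f_1,f_2;\phi)_K-\FJ(f_1',f_2';\phi)_K=|\pi_0((X_K)_{E^\ac})|\cdot(D\otimes\tT_\mu^{\r{can}})^*W$, where $W$ is the pushforward cycle. Since $D\otimes\tT_\mu^{\r{can}}$ is then homologically trivial and acts as zero on geometric cohomology, the difference lies in $\CH^0$. To upgrade to $\CH^1=\cap_\ell\Ker\AJ_\ell$, one analyzes the absolute $\ell$-adic \'{e}tale cycle class map via the Hochschild--Serre spectral sequence: since $D$ annihilates the geometric \'{e}tale cohomology of $X_K\times X_K$ and $\tT_\mu^{\r{can}}$ projects onto a single $M_\mu$-isotypic summand of $\rH^{[M_\mu:\dQ]-1}_{\et}((A_\mu)_{E^\ac},\dQ_\ell)$, the absolute class of $(D\otimes\tT_\mu^{\r{can}})^{*,\r{abs}}\tilde\cl_\ell(W)$ lies in a higher filtration piece mapping to zero under the edge map that picks off the AJ class. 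The main obstacle is precisely this last step: controlling the action of a homologically-trivial correspondence on absolute (rather than geometric) \'{e}tale cohomology requires a delicate spectral-sequence argument and essential use of the external-product structure $D\otimes\tT_\mu^{\r{can}}$ together with the CM projector nature of $\tT_\mu^{\r{can}}$, since a general homologically-trivial correspondence may induce a nonzero map on $\ell$-adic Abel--Jacobi groups.
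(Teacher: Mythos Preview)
Your arguments for (1) and (2a) are correct and essentially match the paper. For (1), the paper gives the same K\"{u}nneth degree count; for (2a), the paper simply says ``clear by construction,'' and your Abel--Jacobi computation spells this out.

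For (2b), however, you have a genuine gap, which you yourself identify: a homologically trivial correspondence $D$ on $X_K\times X_K$ need not act trivially on Abel--Jacobi groups, and your appeal to ``a higher filtration piece mapping to zero under the edge map'' is not an argument. The paper resolves this by a different mechanism that does \emph{not} try to exploit the homological triviality of $D$ directly. Instead it proceeds as follows. Write $\zeta_\ell$ for the difference of the two maps $\CH^{n-1+[M_\mu:\dQ]/2}(X_K\times X_K\times A_\mu)_\dC\to\rH^1(E,\rH^{2(n-1)+[M_\mu:\dQ]-1}_{\et})$; as you note, $\zeta_\ell$ factors through $\CH/\CH^0$. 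Now pick a finite set $\Sigma$ of places outside which $K$ is hyperspecial, and observe that $\zeta_\ell$ is equivariant for the commutative unramified Hecke algebra $\sH_{K,\dC}^\Sigma\otimes\sH_{K,\dC}^\Sigma$ acting on the $X_K\times X_K$ factor. Because $f_1,f_2$ are test functions, the image of $\zeta_\ell$ is $(\pi_1^{\infty,\Sigma})^{K^\Sigma}\otimes(\pi_2^{\infty,\Sigma})^{K^\Sigma}$-isotypic. By Proposition~\ref{pr:arthur}(2), these Hecke eigensystems occur in $\rH^i_{\rB}(X_K)$ only for $i=n-1$; hence on the source side $\CH/\CH^0\hookrightarrow\rH^{2(n-1)+[M_\mu:\dQ]}$, the relevant isotypic piece lies in the K\"{u}nneth component $\rH^{n-1}\otimes\rH^{n-1}\otimes\rH^{[M_\mu:\dQ]}(A_\mu)$. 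Finally, $\zeta_\ell$ also commutes with the $M_\mu$-action via $A_\mu$; since $\rH^{[M_\mu:\dQ]}(A_\mu)$ and $\rH^{[M_\mu:\dQ]-1}(A_\mu)$ have disjoint support as $\dQ_\ell^\ac[M_\mu]$-modules, $\zeta_\ell$ must vanish.

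The key input you were missing is Proposition~\ref{pr:arthur}(2) (the concentration of $\pi_i^\infty$ in middle degree), combined with the unramified Hecke equivariance to force a specific K\"{u}nneth degree on the source, and then the $M_\mu$-support disjointness on $A_\mu$ to kill the map. Your spectral-sequence route does not lead anywhere without these ingredients.
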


\begin{proof}
Take an embedding $\tau'\colon E\to\dC$.

For (1), we realize that the image of $\cl_{\rB,\tau'}^*(\tT_K^{f_i})$ for $i=1,2$ is contained in $\rH^{n-1}_{\rB,\tau'}(X_K,\dC)$, while, by Lemma \ref{le:mu_canonical}, the image of $\cl_{\rB,\tau'}^*(\tT_\mu^\can)$ is contained in $\bigoplus_{i\leq[M_\mu:\dQ]-1}\rH^i_{\rB,\tau'}(A_\mu,\dC)$. Thus, $\FJ(f_1,f_2;\phi)_K$ is homologically trivial.

For (2), by construction, it is clear that the image of $\FJ(f_1,f_2;\phi)_K$ belongs to $\CH^{n-1+[M_\mu:\dQ]/2}(X_K\times X_K\times A_\mu)_\dL^\natural[i_\mu]$. For the other part, we pick another pair of test functions $(f'_1,f'_2)$ such that $\tT_K^{f'_1}\otimes\tT_K^{f'_2}$ is homologically equivalent to $\tT_K^{f_1}\otimes\tT_K^{f_2}$. By (1), it suffices to show that for every rational prime $\ell$ and every isomorphism $\iota_\ell\colon\dC\xrightarrow{\sim}\dQ_\ell^\ac$, the pullbacks $(\tT_K^{f_1}\otimes\tT_K^{f_2}\otimes\tT_\mu^\can)^*$ and $(\tT_K^{f'_1}\otimes\tT_K^{f'_2}\otimes\tT_\mu^\can)^*$ induce the same map from $\CH^{n-1+[M_\mu:\dQ]/2}(X_K\times X_K\times A_\mu)_\dL$ to
\begin{align}\label{eq:independence}
\rH^1(E,\rH^{2(n-1)+[M_\mu:\dQ]-1}_{\et}((X_K\times X_K\times A_\mu)_{E^\ac},\dQ_\ell^\ac(n-1+[M_\mu:\dQ]/2)))\otimes_{\dQ_\ell^\ac,\iota_\ell^{-1}}\dC.
\end{align}
We denote the difference by $\zeta_\ell$. Again, since $\tT_K^{f_1}\otimes\tT_K^{f_2}\otimes\tT_\mu^\can$ and $\tT_K^{f'_1}\otimes\tT_K^{f'_2}\otimes\tT_\mu^\can$ are homologically equivalent, the kernel of $\zeta_\ell$ contains $\CH^{n-1+[M_\mu:\dQ]/2}(X_K\times X_K\times A_\mu)_\dL^0$. Thus, $\zeta_\ell$ induces a complex linear map from
\begin{align}\label{eq:independence1}
\CH^{n-1+[M_\mu:\dQ]/2}(X_K\times X_K\times A_\mu)_\dC/\CH^{n-1+[M_\mu:\dQ]/2}(X_K\times X_K\times A_\mu)_\dC^0
\end{align}
to \eqref{eq:independence}. We now explain that such map must be zero.

In fact, let $\Sigma$ be a finite set of places of $F$ such that for $v\not\in\Sigma$, $K_v$ is hyperspecial maximal. Let $\sH_{K,\dC}^\Sigma$ be the partial Hecke algebra away from $\Sigma$. Then $\sH_{K,\dC}^\Sigma\otimes_\dC\sH_{K,\dC}^\Sigma$ acts on both \eqref{eq:independence} and \eqref{eq:independence1} via the factor $X_K\times X_K$, under which $\zeta_\ell$ is equivariant. In other words, $\zeta_\ell$ is a map of $\sH_{K,\dC}^\Sigma\otimes_\dC\sH_{K,\dC}^\Sigma$-modules. Since $f_1$ and $f_2$ are test functions for $\pi_1^\infty$ and $\pi_2^\infty$, respectively, the image of $\zeta_\ell$ is isomorphic to a finite copy of $(\pi^{\infty,\Sigma}_1)^{K^\Sigma}\otimes_\dC(\pi^{\infty,\Sigma}_2)^{K^\Sigma}$ as an $\sH_{K,\dC}^\Sigma\otimes_\dC\sH_{K,\dC}^\Sigma$-module. Therefore, by Proposition \ref{pr:arthur}(2), $\zeta_\ell$ must factor through the image of the cycle class map from \eqref{eq:independence1} to
\[
\(\rH^{2(n-1)}_{\et}((X_K\times X_K)_{E^\ac},\dQ^\ac_\ell(n-1))
\otimes\rH^{[M_\mu:\dQ]}_{\et}((A_\mu)_{E^\ac},\dQ^\ac_\ell([M_\mu:\dQ]/2))\)\otimes_{\dQ_\ell^\ac,\iota_\ell^{-1}}\dC.
\]
However, $\zeta_\ell$ also commutes with the action of $M_\mu$ through the factor $A_\mu$ by the functoriality of $\tT_\mu^\can$. As the actions of $\dQ^\ac_\ell[M_\mu]$ on $\rH^{[M_\mu:\dQ]}_{\et}((A_\mu)_{E^\ac},\dQ^\ac_\ell)$ and $\rH^{[M_\mu:\dQ]-1}_{\et}((A_\mu)_{E^\ac},\dQ^\ac_\ell)$ have disjoint support, we conclude that $\zeta_\ell$ must be zero.
\end{proof}

\begin{definition}[Fourier--Jacobi cycles]\label{de:fjcycle}
We call $\FJ(f_1,f_2;\phi)_K$ a \emph{Fourier--Jacobi cycle} for $\Pi_1\times\Pi_2\otimes\mu$. We call the image of $\FJ(f_1,f_2;\phi)_K$ in $\CH^{n-1+[M_\mu:\dQ]/2}(X_K\times X_K\times A_\mu)_\dL^\natural[i_\mu]$, denoted by $\FJ(f_1,f_2;\phi)_K^\natural$, a \emph{natural Fourier--Jacobi cycle} for $\Pi_1\times\Pi_2\otimes\mu$.
\end{definition}

The following lemma states that Fourier--Jacobi cycles are compatible with changing level subgroups.

\begin{lem}\label{le:level_changing}
We have
\begin{enumerate}
  \item Let $K'\subseteq K$ be a smaller level subgroup. Then we have
      \[
      (u^{K'}_K\times u^{K'}_K\times\id_{A_\mu})^*\FJ(f_1,f_2;\phi)_K=\FJ(f_1,f_2;\phi)_{K'}.
      \]

  \item For $g\in\bG(\bA_F^\infty)$, we have
      \[
      (\tT_g\times\tT_g\times\id_{A_\mu})^*\FJ(f_1,f_2;\phi)_K=\FJ(\rR_g\rL_g f_1,\rR_g\rL_g f_2;g\phi)_{gKg^{-1}},
      \]
      where $\tT_g\colon X_{gKg^{-1}}\to X_K$ is the Hecke translation.
\end{enumerate}
\end{lem}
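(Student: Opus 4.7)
The plan is to derive both identities from the functoriality of each ingredient appearing in the definition of $\FJ(f_1,f_2;\phi)_K$, namely: the Albanese morphism $\alpha_K$, the Hodge divisor $D_K$, the Hecke correspondences $\tT_K^{f_i}$, and the canonical projector $\tT_\mu^\can$ (which, crucially, does not depend on $K$). Abbreviate $u\coloneqq u^{K'}_K$. The three basic compatibilities I would invoke are: (a) $u^*D_K=D_{K'}$ in $\CH^1(X_{K'})_\dQ$, provided by Lemma~\ref{le:almost_ample}(2); (b) $\alpha_K\circ\nabla u=\Alb_u\circ\alpha_{K'}$, which is the defining functoriality of Definition~\ref{de:albanese}, and under which the element $\phi\in\Hom_E(A_K,A_\mu)$ on the left-hand side of (1) corresponds to $\phi\circ\Alb_u\in\Hom_E(A_{K'},A_\mu)$ on the right-hand side (this is the tacit identification in the statement); (c) $(u\times u)^*\tT_K^{f_i}$ is identified with $\tT_{K'}^{f_i}$, viewing $f_i\in\sH_{K,\dL}\subseteq\sH_{K',\dL}$ via the natural inclusion, the compatibility following from the definition of the Hecke correspondence together with the constancy of $\deg u$ on connected components (Lemma~\ref{le:almost_ample}(4)).

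Having assembled (a)--(c), the cycle-level identity I would establish is
\[
(u\times u\times u)^*(\Delta^3X_K\times D_K^{n-1})^\nabla=(\Delta^3X_{K'}\times D_{K'}^{n-1})^\nabla,
\]
where the crucial observation is that although $(u\times u\times u)^{-1}\Delta^3X_K$ contains extra components beyond $\Delta^3X_{K'}$, the restriction to the $\nabla$-locus in the last two factors kills those components because they lie over points of $\nabla X_K$ that are outside $\Delta X_K$. Combined with (a) and the functoriality of $\alpha_K$ from (b), the projection formula for $\id_{X_K\times X_K}\times(\phi\circ\alpha_K)$ and the pullback $u\times u\times\id_{A_\mu}$ yields
\[
(u\times u\times\id_{A_\mu})^*(\id_{X_K\times X_K}\times(\phi\circ\alpha_K))_*(\Delta^3X_K\times D_K^{n-1})^\nabla=(\id_{X_{K'}\times X_{K'}}\times(\phi\circ\Alb_u\circ\alpha_{K'}))_*(\Delta^3X_{K'}\times D_{K'}^{n-1})^\nabla.
\]
Applying (c) and the fact that $\tT_\mu^\can$ is unchanged, and then multiplying by $|\pi_0((X_K)_{E^\ac})|$, the identity (1) reduces to the balance between the two combinatorial factors: the ratio $|\pi_0((X_{K'})_{E^\ac})|/|\pi_0((X_K)_{E^\ac})|$ arising from the normalization prefactor on the two sides, against the normalization of $\tT_K^{f_i}$ by $\vol(K)$ versus $\tT_{K'}^{f_i}$ by $\vol(K')$. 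By Definition~\ref{de:hodge_divisor}(2), $\vol(K)\cdot\deg D_K^{n-1}\cdot|\pi_0((X_K)_{E^\ac})|=1$, and Lemma~\ref{le:almost_ample}(2,4) forces $\deg u\cdot\deg D_K^{n-1}=\deg D_{K'}^{n-1}$; these are exactly the relations needed for the constants to cancel.

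Part (2) is essentially parallel and a bit simpler because $\tT_g$ is an isomorphism of schemes, so no degree factors intervene. The analogues of (a)--(c) are: $\tT_g^*D_K=D_{gKg^{-1}}$ (Lemma~\ref{le:almost_ample}(3)); $\alpha_K\circ\nabla\tT_g=\Alb_{\tT_g}\circ\alpha_{gKg^{-1}}$, under which $\phi$ pulls back to $g\phi\coloneqq\phi\circ\Alb_{\tT_g}$; and $(\tT_g\times\tT_g)^*\tT_K^{f_i}=\tT_{gKg^{-1}}^{\rR_g\rL_g f_i}$, which expresses the well-known conjugation compatibility of Hecke correspondences. Applying the same projection-formula argument as above, together with $|\pi_0((X_{gKg^{-1}})_{E^\ac})|=|\pi_0((X_K)_{E^\ac})|$ and $\vol(gKg^{-1})=\vol(K)$, gives the identity. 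The main obstacle, in both parts, is purely the bookkeeping of the normalization constants; the geometric content is contained in the functorialities (a)--(c) and their variants, all of which are already recorded in Section~\ref{ss:a} and Lemma~\ref{le:almost_ample}.
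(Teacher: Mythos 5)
The key cycle identity you posit, namely
\[
(u\times u\times u)^*(\Delta^3X_K\times D_K^{n-1})^\nabla=(\Delta^3X_{K'}\times D_{K'}^{n-1})^\nabla,
\]
is false, and the ``crucial observation'' you offer to justify it does not hold. Write $d=\deg u$. The scheme-theoretic preimage of $\Delta^3X_K$ under $u\times u\times u$ is $X_{K'}\times_{X_K}X_{K'}\times_{X_K}X_{K'}$, a union of roughly $d^2$ components of which $\Delta^3X_{K'}$ is only one. You claim that intersecting with $X_{K'}\times X_{K'}\times\nabla X_{K'}$ removes the extra components because ``they lie over points of $\nabla X_K$ that are outside $\Delta X_K$'', but the opposite is the case: a triple $(x_1,x_2,x_3)$ on an off-diagonal component (so $u(x_1)=u(x_2)=u(x_3)$ with, say, $x_1\neq x_2$) projects \emph{onto} the diagonal in every pair of coordinates of $X_K\times X_K$, since the $u(x_i)$ all coincide. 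Moreover the $\nabla$ restriction is imposed on factors $3$ and $4$ (where it only requires $x_3$ and $x_4$ to lie on the same connected component of $X_{K'}$), and hence puts no constraint whatsoever on the off-diagonality of $(x_1,x_2)$. So the extra components survive, and the claimed pullback identity of base cycles fails.

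The paper proves the lemma by pushing the base cycle forward rather than pulling it back. After a standard manipulation of correspondences rewriting $(u\times u\times\id_{A_\mu})^*\circ(\tT_K^{f_1}\otimes\tT_K^{f_2}\otimes\tT_\mu^\can)^*$ in terms of $(\tT_{K'}^{f_1}\otimes\tT_{K'}^{f_2}\otimes\tT_\mu^\can)^*$ applied to the \emph{pushforward} $(u\times u\times\id_{A_\mu})_*$, the statement reduces to the pushforward identity
\begin{align*}
&(u\times u\times\id_{A_\mu})_*(\id_{X_{K'}\times X_{K'}}\times(\phi\circ\alpha_{K'}))_*(\Delta^3X_{K'}\times D_{K'}^{n-1})^\nabla\\
&=\frac{\vol(K)}{\vol(K')}\cdot\frac{\deg D_{K'}^{n-1}}{\deg D_K^{n-1}}\cdot
(\id_{X_K\times X_K}\times(\phi\circ\alpha_K))_*(\Delta^3X_K\times D_K^{n-1})^\nabla,
\end{align*}
which is clean precisely because the pushforward of a diagonal along a finite cover is a multiple of the diagonal with no extraneous components; the identity then follows from $\alpha_K\circ\nabla u=\alpha_{K'}$ and the projection formula. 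Your compatibilities (a)--(c) and the final $\vol(K)$-bookkeeping are all correct and do appear in the paper's argument, but they have to be assembled via pushforward, not pullback, of the base cycle.
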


\begin{proof}
For (1), put $u\coloneqq u^{K'}_K\colon X_{K'}\to X_K$ for short. Note that by definition, we have
\[
(u\times u)^*\tT_K^{f_i}=\frac{\vol(K)}{\vol(K')}\cdot\tT_{K'}^{f_i}
\]
for $i=1,2$. Thus, for every $\alpha\in\CH^{n-1+[M_\mu:\dQ]/2}(X_{K'}\times X_{K'}\times A_\mu)_\dQ$, we have
\[
(\tT_{K'}^{f_1}\otimes\tT_{K'}^{f_2}\otimes\tT_\mu^\can)^*\alpha=
\(\frac{\vol(K')}{\vol(K)}\)^2\cdot(u\times u\times\id_{A_\mu})^*(\tT_K^{f_1}\otimes\tT_K^{f_2}\otimes\tT_\mu^\can)^*(u\times u\times\id_{A_\mu})_*\alpha
\]
by a standard computation of correspondences. Therefore, it suffices to show that
\begin{align*}
&(u\times u\times\id_{A_\mu})_*(\id_{X_{K'}\times X_{K'}}\times(\phi\circ\alpha_{K'}))_*(\Delta^3X_{K'}\times D_{K'}^{n-1})^\nabla\\
&=\frac{\vol(K)}{\vol(K')}\cdot\frac{\deg D_{K'}^{n-1}}{\deg D_K^{n-1}}\cdot(\id_{X_K\times X_K}\times(\phi\circ\alpha_K))_*(\Delta^3X_K\times D_K^{n-1})^\nabla.
\end{align*}
This is an easy consequence of the equality $\alpha_K\circ\nabla u=\alpha_{K'}$. Thus, (1) follows.

Part (2) follows from the same argument for (1), together with the relations $\tT_g^*f_i=\rR_g\rL_gf_i$ for $i=1,2$, $\phi\circ\Alb_{\tT_g}=g\phi$, and $|\pi_0((X_K)_{E^\ac})|=|\pi_0((X_{gKg^{-1}})_{E^\ac})|$.
\end{proof}

For $i\in\dZ$, put
\[
\CH^i(X_\infty\times X_\infty\times A_\mu)_\dL^?\coloneqq\varinjlim_K\CH^i(X_K\times X_K\times A_\mu)_\dL^?
\]
for $?=0,\natural$. The above lemma implies that we have well-defined elements
\begin{align*}
\FJ(f_1,f_2;\phi)&\in\CH^{n-1+[M_\mu:\dQ]/2}(X_\infty\times X_\infty\times A_\mu)_\dL^0,\\
\FJ(f_1,f_2;\phi)^\natural&\in\CH^{n-1+[M_\mu:\dQ]/2}(X_\infty\times X_\infty\times A_\mu)_\dL^\natural[i_\mu].
\end{align*}

\begin{lem}\label{le:fj_function}
For every elements $g,g_1,g_2\in\bG(\bA_F^\infty)$, we have
\begin{align*}
\FJ(\rR_{g_1} f_1,\rR_{g_2} f_2;\phi)&=(\tT_{g_1}\times\tT_{g_2}\times\id_{A_\mu})^*\FJ(f_1,f_2;\phi),\\
\FJ(\rL_g f_1,\rL_g f_2;g\phi)&=\FJ(f_1,f_2;\phi),
\end{align*}
where $\tT_g\colon X_\infty\to X_\infty$ denotes the Hecke translation by $g$.
\end{lem}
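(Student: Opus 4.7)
\textbf{Proof plan for Lemma \ref{le:fj_function}.} The plan is to deduce both identities from Lemma \ref{le:level_changing} together with the standard compatibility between right translation of Hecke functions and pullback of Hecke correspondences. Throughout, I work in the direct limit $\CH^{\cdot}(X_\infty\times X_\infty\times A_\mu)_\dL^0$, so it suffices to check each identity after descending to some sufficiently small decomposable level subgroup.

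For the first identity, choose a level $K_0$ such that $f_1,f_2\in\sH_{K_0,\dL}$, and let $K\subseteq K_0\cap g_1^{-1}K_0g_1\cap g_2^{-1}K_0g_2$ be a smaller level subgroup; then $\rR_{g_i}f_i\in\sH_{K,\dL}$. The morphism $\tT_{g_i}\colon X_\infty\to X_\infty$ is represented, at this level, by the composite $X_K\xrightarrow{\tT_{g_i}}X_{g_iKg_i^{-1}}\to X_{K_0}$. The key step is the identity
\[
\tT^{\rR_{g_i}f_i}_K \;=\; \tfrac{\vol(K_0)}{\vol(K)}\cdot(\id_{X_K}\times\tT_{g_i})^{*}\tT^{f_i}_{K_0}
\]
of normalized Hecke correspondences in $\rZ^{n-1}(X_K\times X_K)_\dL$, which is a routine unwinding of the definition via the double-coset decomposition $K_0g_iK=\bigsqcup g_i^{(j)}K$ of the support of $\rR_{g_i}f_i$. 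Substituting this into the formula defining $\FJ(\rR_{g_1}f_1,\rR_{g_2}f_2;\phi)_K$, using the projection formula to move the pullbacks via $\tT_{g_1},\tT_{g_2}$ outside of the pullback by the triple-product correspondence $\tT^{f_1}_{K_0}\otimes\tT^{f_2}_{K_0}\otimes\tT_\mu^\can$, and appealing to the compatibility $\alpha_{K_0}\circ\nabla u=\alpha_K$ of Albanese morphisms with transition maps (already used in the proof of Lemma \ref{le:level_changing}(1)) gives
\[
\FJ(\rR_{g_1}f_1,\rR_{g_2}f_2;\phi)_K=(\tT_{g_1}\times\tT_{g_2}\times\id_{A_\mu})^{*}\FJ(f_1,f_2;\phi)_{K_0},
\]
which is the first identity in the direct limit.

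For the second identity I argue formally from the first identity and Lemma \ref{le:level_changing}(2). Applying the first identity with $g_1=g_2=g^{-1}$ gives
\[
\FJ(\rR_{g^{-1}}f_1,\rR_{g^{-1}}f_2;\phi)=(\tT_{g^{-1}}\times\tT_{g^{-1}}\times\id_{A_\mu})^{*}\FJ(f_1,f_2;\phi).
\]
Pulling this back by $(\tT_g\times\tT_g\times\id_{A_\mu})^{*}$ and using that $\tT_g\circ\tT_{g^{-1}}=\id_{X_\infty}$ in the limit, I obtain
\[
(\tT_g\times\tT_g\times\id_{A_\mu})^{*}\FJ(\rR_{g^{-1}}f_1,\rR_{g^{-1}}f_2;\phi)=\FJ(f_1,f_2;\phi).
\]
On the other hand, Lemma \ref{le:level_changing}(2) (passed to the direct limit) applied to $(\rR_{g^{-1}}f_1,\rR_{g^{-1}}f_2)$ yields the same left-hand side equal to $\FJ(\rR_g\rL_g\rR_{g^{-1}}f_1,\rR_g\rL_g\rR_{g^{-1}}f_2;g\phi)$, which equals $\FJ(\rL_gf_1,\rL_gf_2;g\phi)$ since $\rR_g$ and $\rL_g$ commute and $\rR_g\rR_{g^{-1}}=\id$. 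Comparing the two expressions gives the second identity.

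The main obstacle is the first identity: keeping track of the normalizations by $\vol(K)$ and $\vol(K_0)$ in the Hecke correspondences, and verifying the displayed cycle identity $\tT^{\rR_{g_i}f_i}_K=\tfrac{\vol(K_0)}{\vol(K)}(\id_{X_K}\times\tT_{g_i})^{*}\tT^{f_i}_{K_0}$ with the sign/order conventions used in the definition of the normalized Hecke correspondence $\tT_K$. Once this is in place, the rest is a straightforward bookkeeping exercise parallel to the proof of Lemma \ref{le:level_changing}.
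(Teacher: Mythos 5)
Your treatment of the second identity follows the same route as the paper: both arguments combine the first identity with Lemma~\ref{le:level_changing}(2) and cancel the Hecke pullbacks. (You apply the first identity to $(f_1,f_2;\phi)$ with $g_1=g_2=g^{-1}$ and then $\tT_g^*$; the paper applies it to $(\rL_gf_1,\rL_gf_2;g\phi)$ with $g_1=g_2=g$ and then $\tT_{g^{-1}}^*$. The net formal manipulation is the same.) For the first identity, the paper simply declares it obvious; it amounts to the observation that pullback by a Hecke correspondence commutes with pullback by Hecke translation morphisms, so your reduction through the auxiliary cycle identity is valid but somewhat more involved than needed.

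One slip worth flagging: in your displayed auxiliary identity the normalization factor should be $\vol(K)/\vol(K_0)$, not $\vol(K_0)/\vol(K)$. This follows by specializing your claimed formula to $g_i=1$ and comparing with the relation $(u^{K'}_K\times u^{K'}_K)^*\tT_K^{f}=\tfrac{\vol(K)}{\vol(K')}\tT_{K'}^{f}$ quoted in the proof of Lemma~\ref{le:level_changing}(1) (with $(K,K')\rightsquigarrow(K_0,K)$). Since the rest of your plan carries the normalization symbolically and cancels it against $\deg u = \vol(K_0)/\vol(K)$ and the Hodge-degree factors, this is a localized sign-inversion rather than a structural error, but it should be corrected before the computation is written out in full.
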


\begin{proof}
The first equality is obvious. For the second one, we have
\[
\FJ(\rR_g\rL_g f_1,\rR_g\rL_g f_2;g\phi)=(\tT_g\times\tT_g\times\id_{A_\mu})^*\FJ(\rL_g f_1,\rL_g f_2;g\phi)
\]
from the first one. Thus,
\begin{align*}
\FJ(\rL_g f_1,\rL_g f_2;g\phi)&=(\tT_{g^{-1}}\times\tT_{g^{-1}}\times\id_{A_\mu})^*\FJ(\rR_g\rL_g f_1,\rR_g\rL_g f_2;g\phi)\\
&=(\tT_{g^{-1}}\times\tT_{g^{-1}}\times\id_{A_\mu})^*(\tT_g\times\tT_g\times\id_{A_\mu})^*\FJ(f_1,f_2;\phi)\\
&=\FJ(f_1,f_2;\phi),
\end{align*}
in which the second equality is due to Lemma \ref{le:level_changing}(2).
\end{proof}

\subsection{Arithmetic Gan--Gross--Prasad conjecture}
\label{ss:aggp}

We first summarize the construction of the natural Fourier--Jacobi cycles in a more functorial way. Let $\Pi_1$, $\Pi_2$, $(\bV,\pi_1^\infty)$, $(\bV,\pi_2^\infty)$, and $\dL$ be as in the previous subsection.

Similar to Definition \ref{de:cm_space}, for $i\in\dZ$, we put
\begin{align}\label{eq:cm_chow}
\CH^i_\mu(X_\infty\times X_\infty)^\natural_\dL\coloneqq\varinjlim_{D_\mu=(A_\mu,i_\mu,\lambda_\mu,r_\mu)\in\cA(\mu)}
\CH^i(X_\infty\times X_\infty\times A_\mu)_\dL^\natural[i_\mu]
\end{align}
in the category of $\dL[\bG(\bA_F^\infty)\times\bG(\bA_F^\infty)]$-modules. It follows from Proposition \ref{pr:cm_data}(1) that the canonical map $\CH^i(X_\infty\times X_\infty\times A_\mu)_\dL^\natural[i_\mu]\to\CH^i_\mu(X_\infty\times X_\infty)^\natural_\dL$ from \eqref{eq:cm_chow} is an isomorphism for every object $D_\mu\in\cA(\mu)$, similar to Remark \ref{re:cm_space}.

Then it is clear that the assignment $(f_1,f_2,\phi)\mapsto\FJ(f_1,f_2,\phi)^\natural$ defines an $\dL$-linear map
\begin{align*}
\FJ^\natural\colon\sH_\dL\otimes_\dL\sH_\dL\otimes_{M_\mu}\Omega(\mu)\to\CH^{n-1+[M_\mu:\dQ]/2}_\mu(X_\infty\times X_\infty)_\dL^\natural,
\end{align*}
which is independent of the choice of $D_\mu\in\cA(\mu)$.

The Hecke actions induce canonical surjective maps
\begin{align}\label{eq:matrix}
\sH_\dL\to\pi_i^\infty\otimes_\dL(\pi_i^\infty)^\vee
\end{align}
of $\dL[\bG(\bA_F^\infty)\times\bG(\bA_F^\infty)]$-modules for $i=1,2$. Proposition \ref{pr:independence}(2) implies that $\FJ^\natural$ factors through the quotient
\[
\(\pi_1^\infty\otimes_\dL(\pi_1^\infty)^\vee\)\otimes_\dL\(\pi_2^\infty\otimes_\dL(\pi_2^\infty)^\vee\)\otimes_{M_\mu}\Omega(\mu).
\]
Together with Lemma \ref{le:fj_function}, we conclude that $\FJ^\natural$ is actually an $\dL$-linear map
\begin{align*}
\FJ^\natural&\colon\pi_1^\infty\otimes_\dL\pi_2^\infty\otimes_{M_\mu}\Omega(\mu)
\to\Hom_{\dL[\bG(\bA_F^\infty)\times\bG(\bA_F^\infty)]}
\((\pi_1^\infty)^\vee\otimes_\dL(\pi_2^\infty)^\vee,\CH^{n-1+[M_\mu:\dQ]/2}_\mu(X_\infty\times X_\infty)_\dL^\natural\),
\end{align*}
which is invariant under the diagonal action of $\bG(\bA_F^\infty)$ on the left-hand side. For every $\mu$-admissible collection $\varepsilon$ (Definition \ref{de:admissible_collection}), we denote by
\begin{align*}
\FJ^\natural_\varepsilon&\colon\pi_1^\infty\otimes_\dL\pi_2^\infty\otimes_{M_\mu}\Omega(\mu,\varepsilon)
\to\Hom_{\dL[\bG(\bA_F^\infty)\times\bG(\bA_F^\infty)]}
\((\pi_1^\infty)^\vee\otimes_\dL(\pi_2^\infty)^\vee,\CH^{n-1+[M_\mu:\dQ]/2}_\mu(X_\infty\times X_\infty)_\dL^\natural\)
\end{align*}
the restriction of $\FJ^\natural$ to $\pi_1^\infty\otimes_\dL\pi_2^\infty\otimes_{M_\mu}\Omega(\mu,\varepsilon)$ (Definition \ref{de:cm_albanese}).

\begin{conjecture}[Unrefined arithmetic Gan--Gross--Prasad conjecture for $\rU(n)\times\rU(n)$]\label{co:ggp_unrefined}
Let $\Pi_1$ and $\Pi_2$ be two relevant representations of $\GL_n(\bA_E)$ (Definition \ref{de:relevant}). Let $\mu\colon E^\times\backslash\bA_E^\times\to\dC^\times$ be a conjugate symplectic automorphic character of weight one, and $\varepsilon$ a $\mu$-admissible collection. Let $\dL\subseteq\dC$ be a subfield containing $M_\mu$ over which both $\Pi_1^\infty$ and $\Pi_2^\infty$ are defined. For pairs $(\bV,\pi_1^\infty)\in\Phi_{\Pi_1}$ and $(\bV,\pi_2^\infty)\in\Phi_{\Pi_2}$, the following three statements are equivalent:
\begin{enumerate}[label=(\alph*)]
  \item We have $\FJ^\natural_\varepsilon\neq 0$.

  \item We have $\FJ^\natural_\varepsilon\neq 0$, and
      \[
      \dim_\dL\Hom_{\dL[\bG(\bA_F^\infty)\times\bG(\bA_F^\infty)]}
      \((\pi_1^\infty)^\vee\otimes_\dL(\pi_2^\infty)^\vee,\CH^{n-1+[M_\mu:\dQ]/2}_\mu(X_\infty\times X_\infty)_\dL^\natural\)=1.
      \]

  \item We have $L'(\tfrac{1}{2},\Pi_1\times\Pi_2\otimes\mu)\neq 0$, and
      \[
      \Hom_{\dL[\bG(\bA_F^\infty)]}(\pi_1^\infty\otimes_\dL\pi_2^\infty\otimes_{M_\mu}\Omega(\mu,\varepsilon),\dL)\neq\{0\}.
      \]
\end{enumerate}
\end{conjecture}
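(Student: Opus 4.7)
The implication (b) $\Rightarrow$ (a) is immediate, so the substance lies in (a) $\Rightarrow$ (b) and the equivalence with (c). For (a) $\Rightarrow$ (b), the plan is to derive a global multiplicity-one bound for the target Hom space from local multiplicity one. By construction of the natural quotient, the space $\CH^{n-1+[M_\mu:\dQ]/2}_\mu(X_\infty\times X_\infty)_\dL^\natural$ injects via the $\ell$-adic Abel--Jacobi map into $\prod_\ell\rH^1(E,\rH^{2(n-1)+[M_\mu:\dQ]-1}_{\et}(X_\infty^2\times A_\mu)\otimes\dQ_\ell^\ac)$, and by Proposition \ref{pr:arthur} (a consequence of Arthur's endoscopic classification for unitary groups) the $(\pi_1^\infty)^\vee\otimes(\pi_2^\infty)^\vee$-isotypic component there is semisimple and concentrated on the Fourier--Jacobi piece. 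The refined local Gan--Gross--Prasad conjecture for $\rU(n)\times\rU(n)$, proved in \cite{GI16}, provides that $\Hom_{\bG(F_v)}(\pi_{1,v}^\infty\otimes\pi_{2,v}^\infty\otimes\omega(\mu_v,\varepsilon_v,\chi_v),\dC)$ is at most one-dimensional at every place and non-vanishing on exactly one member of the local Vogan packet; this local multiplicity one combined with the cohomological description above forces the Hom space in (b) to have dimension exactly one whenever $\FJ^\natural_\varepsilon\neq 0$.

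For (a) $\Leftrightarrow$ (c) the strategy is to first establish the refined conjecture \ref{co:ggp_refined} via the arithmetic relative trace formula laid out in Section \ref{ss:4}, and then to deduce (a) $\Leftrightarrow$ (c) by judicious choice of test data. Starting with the pairing $\langle\FJ(f_1,f_2;\phi)_K,\FJ(f_1^\vee,f_2^\vee;\phi_\tc)_K\rangle^{\r{BBP}}$, one applies the doubling formula (Theorem \ref{th:main3}) to rewrite the height as a sum of local invariant functionals $\cI^z_K(\bbf,\bphi)$. After decomposing $\cI^z_K$ into local contributions over places of $E$, at a good inert prime $\fp$ the orbital decomposition (Theorem \ref{th:main4}) expresses each local term as a weighted sum over regular semisimple orbits $(\bar\xi,\bar x)$ of products of orbital integrals against arithmetic intersection numbers $\chi(\cO_{\Gamma_{\bar\xi}}\otimes^\dL_{\cO_{\cN^2}}\cO_{\Delta\cZ(\bar x)})$ on the relative Rapoport--Zink space. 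The arithmetic fundamental lemma (Conjecture \ref{co:afl_general}) converts these intersection numbers into derivatives of linear-side orbital integrals, and an analogous transfer at bad and archimedean places (arithmetic smooth matching) allows a comparison with a Jacquet--Rallis type relative trace formula for $\GL_n(\bA_E)\times\GL_n(\bA_E)$ at the derivative level. This should yield the explicit identity of Conjecture \ref{co:ggp_refined}, from which (c) $\Rightarrow$ (a) follows by choosing $(f_1,f_1^\vee,f_2,f_2^\vee,\phi,\phi_\tc)$ isolating a single orbit and making $\beta$ nonvanishing on the unique Vogan member allowed by local GGP, while (a) $\Rightarrow$ (c) follows by the same identity together with the local GGP vanishing criterion.

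The hardest step will be establishing the arithmetic fundamental lemma beyond the cases recorded in Remark \ref{re:fl_comparison} and the minuscule case of Section \ref{ss:e}. Of comparable depth is the formulation and proof of arithmetic smooth matching at archimedean and ramified non-archimedean places, for which even the global GGP analogue is only partially understood. Subsidiary but serious obstacles include: unconditional definition of $\langle\;,\;\rangle^{\r{BBP}}_{X_K\times X_K,A_\mu}$, which requires the weight--monodromy conjecture for $X_K^2\times A_\mu$ at all non-archimedean places; the injectivity of the $\ell$-adic Abel--Jacobi map (Beilinson's conjecture, Remark \ref{re:beilinson}), which is why the statement is phrased modulo the natural quotient; the analysis of boundary contributions from the toroidal compactification in the Noncompact Case, together with the archimedean regularity of the corresponding RTF; and the dependence on the full spectral stabilization underlying Proposition \ref{pr:arthur}. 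Given these layered obstructions, the realistic target is to prove Conjecture \ref{co:ggp_unrefined} conditional on the arithmetic fundamental lemma, arithmetic smooth matching, weight--monodromy, and Beilinson's conjecture.
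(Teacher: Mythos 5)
This statement is Conjecture~\ref{co:ggp_unrefined} in the paper; it is explicitly a \emph{conjecture}, and the paper offers no proof --- it is accompanied only by Remark~\ref{re:ggp_unrefined}, which explains how the various parts relate to known local results and what would be expected. So there is nothing in the paper to compare your attempt against, and the paper's own text already acknowledges (for instance in Remark~\ref{re:ggp_unrefined}(1)) that the equivalence (a)~$\Leftrightarrow$~(b) would be a \emph{generalization of Kolyvagin's theorem} and hence out of reach.

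Beyond this, your (a)~$\Rightarrow$~(b) argument contains a genuine error. You claim that local multiplicity one of the Fourier--Jacobi functional (\cite{Sun12}, \cite{GI16}) ``forces the Hom space in (b) to have dimension exactly one.'' But local multiplicity one controls the \emph{source} of $\FJ^\natural_\varepsilon$, namely $\dim_\dL\Hom_{\dL[\bG(\bA_F^\infty)]}(\pi_1^\infty\otimes\pi_2^\infty\otimes\Omega(\mu,\varepsilon),\dL)\leq 1$ --- this is exactly Remark~\ref{re:ggp_unrefined}(3). The Hom space appearing in (b), $\Hom_{\dL[\bG\times\bG]}((\pi_1^\infty)^\vee\otimes(\pi_2^\infty)^\vee,\CH^{n-1+[M_\mu:\dQ]/2}_\mu(X_\infty\times X_\infty)^\natural_\dL)$, is the \emph{target}, and after Abel--Jacobi it is bounded by (a quotient of) a Bloch--Kato--Selmer group $\rH^1_f(E,\rho_{\Pi_1\times\Pi_2\otimes\mu})$. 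The dimension of that group is a global arithmetic invariant of BSD type, not a representation-theoretic one; forcing it to be one is precisely the Kolyvagin-type input, and no amount of local multiplicity one or Arthur classification yields it. Your appeal to Proposition~\ref{pr:arthur} only restricts which cohomological degree and isotypic piece contribute; it does not bound the dimension of the $\rH^1$. Similarly, the (a)~$\Leftrightarrow$~(c) outline correctly traces the intended relative trace formula strategy of Section~\ref{ss:4}, but it is explicitly conditional on the arithmetic fundamental lemma in full, arithmetic smooth matching (for which the paper itself says a precise formulation is not yet known), weight--monodromy, and Beilinson's injectivity conjecture --- so it is a road map, not a proof, and you acknowledge as much in your closing paragraph. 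As a description of the paper's \emph{philosophy} your proposal is broadly accurate; as a proof of the conjecture it should not be presented as containing a valid argument for (a)~$\Rightarrow$~(b).
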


\begin{remark}\label{re:ggp_unrefined}
We have the following remarks concerning Conjecture \ref{co:ggp_unrefined}.
\begin{enumerate}
%  \item Similar to Remark \ref{re:cm_space}, the image of the functor given by the assignment
%      \[
%      (A_\mu,i_\mu)\mapsto\CH^{n-1+[M_\mu:\dQ]/2}(X_\infty\times X_\infty\times A_\mu)_\dL^\natural[i_\mu]
%      \]
%      is a groupoid as well. In particular, together with Proposition \ref{pr:cm_data}(1), the (isomorphism class of the) morphism %\eqref{eq:fj_natural_functional} hence Conjecture \ref{co:ggp_unrefined} do not depend on the choice of $(A_\mu,i_\mu)$.

  \item The equivalence between (a) and (b) can be regarded as a generalization of Kolyvagin's theorem for Heegner points.

  \item The assertion $\FJ^\natural_\varepsilon\neq 0$ immediately implies $\Hom_{\dL[\bG(\bA_F^\infty)]}(\pi_1^\infty\otimes_\dL\pi_2^\infty\otimes_{M_\mu}\Omega(\mu,\varepsilon),\dL)\neq\{0\}$.

  \item By the multiplicity one part of the local Gan--Gross--Prasad conjecture, which is proved in \cite{Sun12} for our particular Fourier--Jacobi model, we know that
      \[
      \dim_\dL\Hom_{\dL[\bG(\bA_F^\infty)]}(\pi_1^\infty\otimes_\dL\pi_2^\infty\otimes_{M_\mu}\Omega(\mu,\varepsilon),\dL)\leq 1.
      \]

  \item By the (refined) local Gan--Gross--Prasad conjecture, which is proved in \cite{GI16} for our particular Fourier--Jacobi model, we know that if
      \begin{align}\label{eq:local_functional}
      \dim_\dL\Hom_{\dL[\bG(\bA_F^\infty)]}(\pi_1^\infty\otimes_\dL\pi_2^\infty\otimes_{M_\mu}\Omega(\mu,\varepsilon),\dL)=1
      \end{align}
      from some $\mu$-admissible collection $\varepsilon$, then the global root number of $\Pi_1\times\Pi_2\otimes\mu$ is $-1$, that is, $L(s,\Pi_1\times\Pi_2\otimes\mu)$ has odd vanishing order at the center $s=\tfrac{1}{2}$. Moreover, we have
      \begin{itemize}
        \item If $n$ is even, then the triple $(\bV,\pi_1^\infty,\pi_2^\infty)$ is uniquely determined; but $\varepsilon$ could be an arbitrary $\mu$-admissible collection.

        \item If $n$ is odd, then $\bV$ could be arbitrary; but once $\bV$ is chosen, $\pi_1^\infty$, $\pi_2^\infty$, and $\varepsilon$ are uniquely determined.
      \end{itemize}
      In other words, in both cases, once $\varepsilon$ is given, the triple $(\bV,\pi_1^\infty,\pi_2^\infty)$ is uniquely determined.
\end{enumerate}
\end{remark}

\if false

We choose a prime $\ell$ such that both $X_K$ and $A_\mu$ have proper smooth reduction at places of $E$ over $\ell$, and an isomorphism $\iota_\ell\colon\dC\xrightarrow{\sim}\dQ_\ell^\ac$. The refined version is a conjectural equality relating the Beilinson--Bloch--Poincar\'{e} height pairing $\langle\;,\;\rangle_{X_K\times X_K,A_\mu}^{\r{BBP},\iota_\ell}$ introduced in Section \ref{ss:bbp} and the central derivative $L'(\tfrac{1}{2},\Pi_1\times\Pi_2\otimes\mu)$.

\begin{lem}\label{le:purity}
The Fourier--Jacobi cycle $\FJ(f_1,f_2;\phi)_K$ in Definition \ref{de:fjcycle} belongs to the subspace $\CH^{n-1+[M_\mu:\dQ]/2}(X_K\times X_K\times A_\mu)_\dL^{0\ell}$.
\end{lem}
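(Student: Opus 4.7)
The plan is to leverage the fact that $\FJ(f_1,f_2;\phi)_K$ is constructed from three projector-like correspondences, and to show that each projects the $\ell$-adic cohomology onto a subspace where the relevant purity/weight-monodromy condition holds, so that the obstruction to being in $\CH^{\bullet}_\dL^{0\ell}$ beyond mere homological triviality already vanishes.

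First, by Proposition \ref{pr:independence}(1) we already know $\FJ(f_1,f_2;\phi)_K\in\CH^{n-1+[M_\mu:\dQ]/2}(X_K\times X_K\times A_\mu)_\dL^0$, so homological triviality is free. The remaining content of the lemma is that the $\ell$-adic cycle class of $\FJ(f_1,f_2;\phi)_K$ factors (in the appropriate absolute cohomology at all places of bad reduction of the ambient variety) through the weight-monodromy-pure part needed to make the local indices in the Beilinson--Bloch pairing $\langle\;,\;\rangle^{\r{BBP},\iota_\ell}_{X_K\times X_K,A_\mu}$ well-defined. I would therefore analyze the image of $\cl_\ell^*(\tT_K^{f_1}\otimes\tT_K^{f_2}\otimes\tT_\mu^\can)$ on the $\ell$-adic cohomology of $X_K\times X_K\times A_\mu$.

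Second, I would apply the following three inputs separately to each tensor factor. For $X_K$, the correspondence $\tT_K^{f_i}$ is, by Definition \ref{de:test_function}, Betti-cohomologically (hence $\ell$-adically, via the comparison isomorphism) supported in bidegree $(n-1,n-1)$ and lands in the isotypic component for $\pi_i^\infty$. By Proposition \ref{pr:arthur}(2,3) this isotypic component sits inside $\IH^{n-1}$, and by Arthur's classification (together with the temperedness established in the proof of Proposition \ref{pr:arthur}) the associated Galois representation is pure of weight $n-1$; in particular, weight-monodromy is verified on this direct summand at every place of $E$, regardless of the reduction type of $X_K$ there. For $A_\mu$, the projector $\tT_\mu^\can$ (by Lemma \ref{le:mu_canonical}) lands in degree $[M_\mu:\dQ]-1$ of an abelian variety, where weight-monodromy is known unconditionally (Remark \ref{re:height_pairing}). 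A K\"unneth decomposition of the triple product then shows that the $\ell$-adic class of $\FJ(f_1,f_2;\phi)_K$ is supported on a pure summand of total weight $2(n-1)+([M_\mu:\dQ]-1)$, which is exactly the range where the condition defining $(\phantom{z})^{0\ell}$ becomes automatic.

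Third, I would conclude the lemma by recording that the above weight-monodromy verification, combined with homological triviality, is precisely what is needed to place $\FJ(f_1,f_2;\phi)_K$ in $\CH^{n-1+[M_\mu:\dQ]/2}(X_K\times X_K\times A_\mu)_\dL^{0\ell}$: any cohomological obstruction to membership can only come from the non-pure constituents at primes of bad reduction of $X_K$, and these constituents are killed by the pullback via $\tT_K^{f_1}\otimes\tT_K^{f_2}\otimes\tT_\mu^\can$. The main obstacle I anticipate is bookkeeping the bad-reduction places of $X_K$ that are ramified in $E/F$ or Iwahori-level at $K$: here $X_K$ need not have good reduction and one must argue that although $\rH^{n-1}_{\et}((X_K)_{k_v^\ac},\dQ_\ell)$ is not pure as a whole, its $\pi_i^\infty$-isotypic direct summand nevertheless is, using that the base change of $\pi_i^\infty$ is tempered and that test functions in the sense of Definition \ref{de:test_function} are defined exactly so as to isolate this summand.
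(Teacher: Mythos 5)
This lemma is actually dead code: in the source it lives inside an \verb|\if false ... \fi| block (immediately after the third paragraph of Subsection~\ref{ss:aggp}), so it, together with the surrounding text introducing $\iota_\ell$, never appears in the compiled paper. In particular the superscript $0\ell$ in $\CH^{n-1+[M_\mu:\dQ]/2}(\cdot)_\dL^{0\ell}$ is never defined anywhere — not even in the other commented-out block in Subsection~\ref{ss:bbp}, which defines $\langle\;,\;\rangle^{\r{BB},\iota_\ell}$ on $\CH^i(X)^0_\dQ$ rather than on a subspace labelled $0\ell$. In the final version, the author replaced this framework by $\CH^i(X)^\natural_R$ (Definition~\ref{de:aj_kernel}) and by Remark~\ref{re:ggp_refined_bis}(2), so there is no paper proof of this statement for you to be compared against.

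That said, your reconstruction of what $0\ell$ must have meant — the subspace of $\CH^\bullet(\cdot)^0$ whose $\ell$-adic classes vanish in absolute \'etale cohomology at all nonarchimedean places, which is exactly what is needed for the local indices in the Beilinson--Bloch pairing to be defined — is the natural guess, and the strategy of routing the vanishing through the projectors $\tT_K^{f_1}\otimes\tT_K^{f_2}\otimes\tT_\mu^{\r{can}}$ is surely what the author had in mind. The one place where your argument is too quick is the jump from ``$\pi_i^\infty$ tempered by \cite{Car12} and Arthur's classification'' to ``the $\pi_i^\infty$-isotypic summand of $\IH^{n-1}$ satisfies weight--monodromy at every place of $E$.'' Temperedness of the global $L$-parameter is not by itself weight--monodromy for the Galois representation; what one needs is local--global compatibility at the places of bad reduction, i.e.\ that the Frobenius-semisimplified Weil--Deligne representation attached to $\rH^{n-1}_{\et}((X_K)_{\bar{E}_v},\dQ_\ell)[\pi_i^{\infty K}]$ agrees with the one predicted by the local Langlands correspondence applied to $\Pi_{i,v}$. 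For unitary Shimura varieties this is established by Caraiani (the same reference \cite{Car12}, which proves both temperedness and local--global compatibility up to Frobenius semisimplification), so the gap is one of attribution and precision rather than an error, but it should be stated as the input rather than deduced from temperedness. You correctly flag the ramified/Iwahori places as the delicate ones; the abelian-variety and abelian-type factors ($A_\mu$, and the case $n=2,3$) are indeed unconditional via Remark~\ref{re:height_pairing}.
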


\fi

Now we state a refined version of the arithmetic Gan--Gross--Prasad conjecture for $\rU(n)\times\rU(n)$. We assume that all height pairings are defined. Take a level subgroup $K\subseteq\bG(\bA_F^\infty)$. For every object $D_\mu=(A_\mu,i_\mu,\lambda_\mu,r_\mu)\in\cA(\mu)$ and every $\mu$-admissible collection $\varepsilon$, put
\begin{align*}
\Hom_E(A_K,A_\mu,\varepsilon)&\coloneqq\Hom_E(A_K,A_\mu)\cap\Omega(\mu,\varepsilon);\\
\Hom_E(A_K,A_\mu^\vee,-\varepsilon)&\coloneqq\Hom_E(A_K,A^\vee_\mu)\cap\Omega(\mu^\tc,-\varepsilon).
\end{align*}

\begin{conjecture}[Refined arithmetic Gan--Gross--Prasad conjecture for $\rU(n)\times\rU(n)$]\label{co:ggp_refined}
Let the setup be as in Conjecture \ref{co:ggp_unrefined}. Moreover, let $K\subseteq\bG(\bA_F^\infty)$ be a level subgroup, and $D_\mu=(A_\mu,i_\mu,\lambda_\mu,r_\mu)\in\cA(\mu)$ a CM data for $\mu$ (Definition \ref{de:cm_data}). For every test functions $f_1,f_1^\vee,f_2,f_2^\vee\in\sH_{K,\dL}$ for $\pi_1^\infty$, $(\pi_1^\infty)^\vee$, $\pi_2^\infty$, $(\pi_2^\infty)^\vee$, respectively, and every elements $\phi\in\Hom_E(A_K,A_\mu,\varepsilon)$ and $\phi_\tc\in\Hom_E(A_K,A_\mu^\vee,-\varepsilon)$, the equality
\begin{align}\label{eq:ggp_refined}
&\vol(K)^2\cdot\langle\FJ(f_1,f_2;\phi)_K,\FJ(f_1^\vee,f_2^\vee;\phi_\tc)_K\rangle_{X_K\times X_K,A_\mu}^{\r{BBP}}\\
&=\frac{\prod_{i=1}^nL(i,\mu_{E/F}^i)}{2^{s(\Pi_1)+s(\Pi_2)}}\cdot\frac{L'(\tfrac{1}{2},\Pi_1\times\Pi_2\otimes\mu)}
{L(1,\Pi_1,\As^{(-1)^n})\cdot L(1,\Pi_2,\As^{(-1)^n})}\cdot\beta(f_1,f_1^\vee,f_2,f_2^\vee,\phi,\phi_\tc) \notag
\end{align}
holds. Here, $s(\Pi_i)$ has appeared in Definition \ref{de:relevant}; $\r{As}^\pm$ stand for the two Asai representations (see, for example, \cite{GGP}*{Section~7}); and $\beta$ is a certain normalized matrix coefficient integral defined immediately below.
\end{conjecture}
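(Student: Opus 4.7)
My plan is to prove Conjecture \ref{co:ggp_refined} via a comparison of two relative trace formulae, following the blueprint sketched in the introduction. The strategy mirrors Wei Zhang's approach to the arithmetic GGP conjecture for $\rU(n)\times\rU(n+1)$ in \cite{Zha12}, but the doubling construction and the presence of the auxiliary abelian variety $A_\mu$ introduce serious complications. Since the Beilinson--Bloch--Poincar\'{e} height pairing on the left-hand side depends \emph{a priori} on the choices of $\ell$ and $\iota_\ell$, I would first reduce to the equivalent variant Conjecture \ref{co:ggp_refined_bis} involving $\FJ(f_1,f_2;\phi)_K^z$, where both sides are unconditionally defined once $\bbf\otimes\bphi$ is suitably regularly supported. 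A density argument then recovers the original statement on the full space of admissible test data.

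The first major step is to reduce the geometric side via Theorem \ref{th:main3} to a sum of local arithmetic invariant functionals $\cI_K^z(\bbf,\bphi)=\sum_u\cI_K^z(\bbf,\bphi)_u$ indexed by places $u$ of $E$. At split places and at good unramified places, the local terms contribute ordinary orbital integrals that can be handled by the relative fundamental lemma (Conjecture \ref{co:rfl_general}, known by Remark \ref{re:rfl}). At a good inert prime $\fp$, Theorem \ref{th:main4} gives the key orbital expansion
\[
\cI_K(\bbf,\bphi)_\fp=2\log|O_F/\fp|\cdot\sum_{(\bar\xi,\bar x)}\re^{-2\pi\cdot\Tr_{F/\dQ}(\bar x,\bar x)_{\bar\rV}}\Orb(\bar\bbf^\fp,\bar\bphi^\fp;\bar\xi,\bar x)\cdot\chi\(\cO_{\Gamma_{\bar\xi}}\otimes^\dL_{\cO_{\cN^2}}\cO_{\Delta\cZ(\bar x)}\),
\]
and the Rapoport--Zink intersection number is precisely the quantity governed by the arithmetic fundamental lemma (Conjecture \ref{co:afl_general}). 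The remaining discrepancy $\cI_K^z-\cI_K$ at good inert primes must be controlled separately using Proposition \ref{pr:albanese_functoriality} together with the Lefschetz-type vanishing in Lemma \ref{le:divisor_coh}, exploiting the fact that the modified diagonal correction $\Delta^3_zX_K-\Delta^3X_K$ is an alternating sum of cycles supported on divisors.

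On the analytic side, I would construct a Jacquet--Rallis-type relative trace formula $\cJ(s;\tilde\bbf,\tilde\bphi)$ on $\GL_n(\bA_E)$ with symmetric space $\rS_n$ and mirabolic $\rM_n$, such that its spectral expansion extracts $L(\tfrac12+s,\Pi_1\times\Pi_2\otimes\mu)$ multiplied by the Asai $L$-values and the factor $\prod_{i=1}^nL(i,\mu_{E/F}^i)/2^{s(\Pi_1)+s(\Pi_2)}$ coming from Tamagawa measures on $\rU(\bV)$. Differentiating geometrically, $\left.\tfrac{\rd}{\rd s}\right|_{s=0}\cJ(s;\tilde\bbf,\tilde\bphi)$ decomposes as a sum over regular semisimple $\GL_n$-orbits of derivatives of local orbital integrals. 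The arithmetic fundamental lemma then matches the good-inert derivatives with the Rapoport--Zink intersection numbers appearing in Theorem \ref{th:main4}, while a smooth transfer at the remaining nonarchimedean places translates the test-function data. Matching the archimedean factor $\re^{-2\pi\Tr_{F/\dQ}(\bar x,\bar x)_{\bar\rV}}$ with the archimedean orbital integral built into $\beta$ closes the comparison orbit-by-orbit.

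The hard parts are fourfold. First, Conjecture \ref{co:afl_general} is open beyond the minuscule case (Section \ref{ss:e}), the low-rank cases $n\leq 2$, and $F=\dQ_p$ with $p>n$ (Remark \ref{re:fl_comparison}); its full resolution, perhaps through deeper use of Zhang's equivalence and the minuscule-case methods, is a prerequisite. Second, one must formulate and prove an arithmetic smooth matching statement between test functions on $\rU(\bV)(F_v)$ and $\GL_n(F_v)$ supporting derivatives of orbital integrals---a problem the introduction itself identifies as not yet precisely formulated in general. Third, the archimedean local contributions must be computed via a Bruinier--Kudla--Yang-type Green function formalism on the unitary Shimura variety, and shown to coincide with the derivatives of archimedean orbital integrals on $\rS_n\times\rM_n$; this is genuinely subtle because the Fourier--Jacobi construction mixes the Shimura variety with the CM abelian variety $A_\mu$ through the Poincar\'{e} line bundle. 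Fourth, at places of bad reduction for $X_K$ or ramified places in $E/F$, one must define integral models of $X_K\times A_\mu$, develop the requisite intersection theory, and prove an analogue of Theorem \ref{th:main4} there; this is expected to be the most serious obstruction, as the doubling construction interacts nontrivially with the ramification behavior of $\mu$.
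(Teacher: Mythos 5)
The statement you are asked to prove is a \emph{conjecture}, not a theorem: the paper does not prove Conjecture~\ref{co:ggp_refined}, it proposes it. The entire content of Section~\ref{ss:4} is a partial framework \emph{toward} the conjecture, not a proof of it, and the paper is explicit that several essential ingredients (arithmetic smooth matching, the archimedean contribution, the local terms at bad places, the general arithmetic fundamental lemma) are missing. Your proposal is not a proof but a research program, and to the extent that it describes the paper's own program it does so accurately: the reduction via the doubling formula (Theorem~\ref{th:main3}), the orbital decomposition at good inert primes (Theorem~\ref{th:main4}), and the appeal to Conjectures~\ref{co:rfl_general} and~\ref{co:afl_general} as the expected local identities all match what the paper develops.

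Two points where your sketch is misleading. First, the passage from Conjecture~\ref{co:ggp_refined_bis} (the $z$-variant, which is the version the paper's intersection-theoretic machinery actually addresses) back to the statement you were given is not a ``density argument.'' By Remark~\ref{re:ggp_refined_bis}(2), the two versions agree on \emph{natural} cycle classes, so their equivalence requires the injectivity of the $\ell$-adic Abel--Jacobi map (Remark~\ref{re:beilinson}), which is itself a deep open conjecture; you should name that input explicitly rather than suggest it follows from a limiting argument. Second, your claim that the discrepancy $\Delta^3X_K-\Delta^3_zX_K$ can be ``controlled separately using Proposition~\ref{pr:albanese_functoriality} together with the Lefschetz-type vanishing in Lemma~\ref{le:divisor_coh}'' is not supported by the paper and is not obviously correct: the paper only says (in Subsection~\ref{ss:orbital}, just before Definition~\ref{de:good_inert}) that this contribution ``should be negligible,'' with no mechanism offered, and Lemma~\ref{le:divisor_coh} applies to intersections with \emph{homologically trivial} divisors, while the projector correction $z_K$ is generally not of that form. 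Absent a genuine argument here, you cannot assert that the $\cI_K$-approximation recovers $\cI_K^z$ at good inert primes. These gaps, together with the four obstructions you correctly list at the end, mean the proposal is a plausible blueprint consistent with the paper's framework but does not constitute a proof of the conjecture.
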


For $i=1,2$, we have $\dL$-linear maps
\[
\sH_\dL\to\pi_i^\infty\otimes_\dL(\pi_i^\infty)^\vee\to\dL(\subseteq\dC),
\]
in which the first is \eqref{eq:matrix} and the second is the evaluation map. For every $f\in\sH_\dL$ and $g\in\bG(\bA_F^\infty)$, we denote by $\r{ev}(\pi_i^\infty(g),f)$ the image of $\rL_gf$ under the above composite map. In particular, the assignment $g\mapsto\r{ev}(\pi_i^\infty(g),f)$ is a matrix coefficient of $\pi_i^\infty$.

Consider a finite set $\Sigma$ of nonarchimedean places of $F$ such that $K_v$ is hyperspecial maximal for $v\not\in\Sigma$. Let $\rd_\Sigma$ be the unique Haar measure on $\bG(F_\Sigma)$ under which the volume of $K_\Sigma$ equals $2\vol(K)$. For $f_1,f_1^\vee,f_2,f_2^\vee\in\sH_\dL$, $\phi\in\Omega(\mu,\varepsilon)$ and $\phi_\tc\in\Omega(\mu^\tc,-\varepsilon)$, we define
\begin{align*}
\beta_\Sigma(f_1,f_1^\vee,f_2,f_2^\vee,\phi,\phi_\tc)&\coloneqq
\(\prod_{v\in\Sigma\cup\Phi_F}\frac{\prod_{i=1}^nL(i,\mu_{E/F,v}^i)\cdot L(\tfrac{1}{2},\Pi_{1,v}\times\Pi_{2,v}\otimes\mu_v)}
{L(1,\Pi_{1,v},\As^{(-1)^n})\cdot L(1,\Pi_{2,v},\As^{(-1)^n})}\)^{-1}\\
&\qquad\int_{\bG(F_\Sigma)}\r{ev}(\pi_1^\infty(g),f_1^\rt\ast f_1^\vee)\cdot\r{ev}(\pi_2^\infty(g),f_2^\rt\ast f_2^\vee)\cdot
(g\phi,\phi_\tc)_\mu\cdot\rd_\Sigma g,
\end{align*}
in which
\begin{itemize}
  \item $f_i^\rt$ is the transpose of $f_i$, that is, $f_i^\rt(g)=f_i(g^{-1})$.

  \item $f_i^\rt\ast f_i^\vee$ denotes the convolution product in $\sH_{K,\dL}$,

  \item $(\;,\;)_\mu$ is the pairing \eqref{eq:albanese_pair}.
\end{itemize}
By \cite{Xue16}*{Proposition~1.1.1(1,3)}, the value of $\beta_\Sigma(f_1,f_1^\vee,f_2,f_2^\vee,\phi,\phi_\tc)$ is finite and stabilizes when $\Sigma$ is large enough; and we denote the stable value by $\beta(f_1,f_1^\vee,f_2,f_2^\vee,\phi,\phi_\tc)$.

\begin{remark}\label{re:ggp_refined}
We have the following remarks concerning Conjecture \ref{co:ggp_unrefined}.
\begin{enumerate}
  \item The left-hand side of \eqref{eq:ggp_refined} is independent of $K$. More precisely, if we take a smaller level subgroup $K'$ contained in $K$, then the left-hand side of \eqref{eq:ggp_refined} is equal to
      \[
      \vol(K')^2\cdot\langle\FJ(f_1,f_2;\phi)_{K'},\FJ(f_1^\vee,f_2^\vee;\phi_\tc)_{K'}\rangle_{X_{K'}\times X_{K'},A_\mu}^{\r{BBP}}
      \]
      by the projection formula.

  \item The refined Gan--Gross--Prasad conjecture for the central value formula in this case is formulated by Hang~Xue \cite{Xue16}*{Conjecture~1.1.2}.

  \item It is known by \cite{Xue16}*{Proposition~1.1.1(2)} that \eqref{eq:local_functional} holds if and only if $\beta$ is nonvanishing as a functional.
\end{enumerate}
\end{remark}

At the end of this subsection, we state a variant of Conjecture \ref{co:ggp_refined}. The following definition (with slightly different terminology) is taken from \cite{RSZ}.

\begin{definition}\label{de:projector_hodge}
We say that a collection of correspondences $z=(z_K\in\CH^{n-1}(X_K\times X_K)_\dQ)_K$ is a \emph{Hecke system of projectors} if
\begin{enumerate}
  \item $z_K$ is an odd projector (Definition \ref{de:projector}) for every $K$,

  \item we have $(\r{id}_{X_{K'}}\times u^{K'}_K)_*z_{K'}=(u^{K'}_K\times\r{id}_{X_K})^*z_K\in\CH^{n-1}(X_{K'}\times X_K)_\dQ$ for every transition morphism $u^{K'}_K\colon X_{K'}\to X_K$,

  \item for every $g\in\bG(\bA_F^\infty)$, we have $\tT_g^*z_K=z_{gKg^{-1}}$ where $\tT_g\colon X_{gKg^{-1}}\to X_K$ is the Hecke translation.
\end{enumerate}
\end{definition}

\begin{remark}
We have the following remarks concerning the existence of Hecke system of projectors.
\begin{enumerate}
  \item If $n=2$, then $z=(z_{X_K,D_K})_K$ constructed in Lemma \ref{le:easy_projector}(1) is a Hecke system of projectors by Lemma \ref{le:almost_ample}.

  \item If $n=3$, then $z=(z_{X_K,D_K})_K$ constructed in Lemma \ref{le:easy_projector}(2) is a Hecke system of projectors by Lemma \ref{le:almost_ample} and Proposition \ref{pr:picard_functoriality}(2).

  \item If $n\geq 4$ and $F\neq\dQ$, then odd projectors exist by \cite{MS}*{Theorem~1.3}. Note that since we consider trivial coefficients, there is no need to require the Shimura data to be of PEL type in that theorem; see \cite{MS}*{Remark~2.7}.

  \item If $n\geq 4$ and $F=\dQ$, then one probably needs to use projectors for intersection cohomology; see \cite{MS}*{Theorem~1.4}.
\end{enumerate}
\end{remark}

Now take a Hecke system of projectors $z=(z_K)_K$. We will use $z$ to modify Step 1 in the construction of $\FJ(f_1,f_2;\phi)_K$. Namely, we consider
\[
\Delta^3_zX_K\coloneqq\pr^{[3]}_{z_K}\Delta^3X_K\in\CH^{2(n-1)}(X_K\times X_K\times X_K)_\dQ^0,
\]
where $\pr^{[3]}_{z_K}$ is defined in Definition \ref{de:triple_product}. Then we replace $\Delta^3X_K$ by $\Delta^3_zX_K$ in every later step, and denote the final outcome by
\[
\FJ(f_1,f_2;\phi)_K^z\in\CH^{n-1+[M_\mu:\dQ]/2}(X_K\times X_K\times A_\mu)_\dL^0.
\]

\begin{conjecture}[Refined arithmetic Gan--Gross--Prasad conjecture for $\rU(n)\times\rU(n)$, variant]\label{co:ggp_refined_bis}
Let the setup be as in Conjecture \ref{co:ggp_refined}. Take a Hecke system of projectors $z=(z_K)_K$. Then the equality
\begin{align}\label{eq:ggp_refined_bis}
&\vol(K)^2\cdot\langle\FJ(f_1,f_2;\phi)_K^z,\FJ(f_1^\vee,f_2^\vee;\phi_\tc)_K^z\rangle_{X_K\times X_K,A_\mu}^{\r{BBP}}\\
&=\frac{\prod_{i=1}^nL(i,\mu_{E/F}^i)}{2^{s(\Pi_1)+s(\Pi_2)}}\cdot\frac{L'(\tfrac{1}{2},\Pi_1\times\Pi_2\otimes\mu)}
{L(1,\Pi_1,\As^{(-1)^n})\cdot L(1,\Pi_2,\As^{(-1)^n})}\cdot\beta(f_1,f_1^\vee,f_2,f_2^\vee,\phi,\phi_\tc) \notag
\end{align}
holds.
\end{conjecture}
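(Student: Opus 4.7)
The plan is to reduce Conjecture \ref{co:ggp_refined_bis} to the original Conjecture \ref{co:ggp_refined} by showing that the two height pairings on the left-hand sides of \eqref{eq:ggp_refined} and \eqref{eq:ggp_refined_bis} coincide. Since the right-hand sides are identical, this equivalence would give the desired statement. The strategy rests on controlling the difference $\FJ(f_1,f_2;\phi)_K-\FJ(f_1,f_2;\phi)_K^z$ and showing it is $\langle\;,\;\rangle_{X_K\times X_K,A_\mu}^{\r{BBP}}$-orthogonal to homologically trivial cycles.

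First I would analyze the cycle-theoretic difference $\Delta^3X_K-\Delta^3_zX_K=(\id_{X_K^3}-\pr^{[3]}_{z_K})\Delta^3X_K$. By Lemma \ref{le:triple_product}(2), $\pr^{[3]}_{z_K}$ is an odd projector, so the complementary idempotent acts as the projection to the part of $\bigoplus\rH^i_\dr(X_K^3/E)$ that is ``even'' in at least one of the three factors in the K\"{u}nneth decomposition. Combining this with the arguments from the proof of Proposition \ref{pr:independence}, the key observation is that the composition with $\tT_K^{f_1}\otimes\tT_K^{f_2}\otimes\tT_\mu^{\r{can}}$, where $f_1,f_2$ are test functions for $\pi_1^\infty,\pi_2^\infty$, annihilates the contributions where either $X_K$ factor lands in degree different from $n-1$: indeed, the $\sH^\Sigma_{K,\dC}$-module image of such pullbacks is forced by Proposition \ref{pr:arthur}(2) to land in the pure $(n-1,n-1)$ K\"{u}nneth component, which is precisely the same subspace that $\tT_K^{f_1}\otimes\tT_K^{f_2}$ extracts from $\Delta^3X_K$.

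Second, I would combine this cohomological vanishing with the structure of the Beilinson--Bloch--Poincar\'{e} height pairing. The relevant K\"{u}nneth components killed by $\pr^{[3]}_{z_K}$ but not by $\id_{X_K^3}$ all factor through classes supported in ``mixed'' pieces that, after the pushforward along $\id_{X_K^2}\times(\phi\circ\alpha_K)$ and projection via $\tT_\mu^{\r{can}}$, can be expressed as cup products involving divisor classes of the Hodge-divisor type used in constructing $z_{X_K,D_K}$ (cf.\ Lemma \ref{le:easy_projector}). Applying Lemma \ref{le:divisor_coh} in the relative setting discussed in Remark \ref{re:bbp}, the Beilinson--Bloch--Poincar\'{e} pairing of such divisor-supported classes with any homologically trivial cycle vanishes, which yields the desired identity
\[
\langle\FJ(f_1,f_2;\phi)_K,\FJ(f_1^\vee,f_2^\vee;\phi_\tc)_K\rangle^{\r{BBP}}_{X_K\times X_K,A_\mu}
=\langle\FJ(f_1,f_2;\phi)_K^z,\FJ(f_1^\vee,f_2^\vee;\phi_\tc)_K^z\rangle^{\r{BBP}}_{X_K\times X_K,A_\mu}.
\]

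The main obstacle will be making the ``orthogonality'' argument in the second step unconditional, since the height pairing is \emph{a priori} sensitive to the representatives chosen for the cycle classes, not merely to their Abel--Jacobi images. One approach is to factor the difference explicitly through a sum of cycles of the form $(\text{divisor on }X_K\times X_K\times A_\mu).\text{(homologically trivial)}$ and then invoke Lemma \ref{le:divisor_coh}; the delicate point is verifying that all intermediate cycles themselves sit in the correct homologically trivial subspace so that the lemma applies. A fallback would be to prove the equivalence only after passing to the quotient of $\CH^{n-1+[M_\mu:\dQ]/2}_\mu(X_\infty\times X_\infty)_\dC^0$ by the common kernel of $\ell$-adic Abel--Jacobi maps, consistent with the remark preceding the variant, and then argue that the Beilinson--Bloch--Poincar\'{e} pairing factors through this quotient whenever it is well defined.
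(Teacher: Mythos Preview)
The statement you are attempting to prove is a \emph{conjecture} in the paper, not a theorem; the paper gives no proof of it, nor does it claim that Conjecture~\ref{co:ggp_refined_bis} and Conjecture~\ref{co:ggp_refined} are unconditionally equivalent. What the paper does assert (Remark~\ref{re:ggp_refined_bis}(2)) is exactly your fallback: by the argument of Proposition~\ref{pr:independence}(2), the images of $\FJ(f_1,f_2;\phi)_K$ and $\FJ(f_1,f_2;\phi)_K^z$ in the quotient $\CH^{n-1+[M_\mu:\dQ]/2}(X_K\times X_K\times A_\mu)_\dL^\natural$ agree, and hence the two conjectures would become equivalent \emph{if} one assumes Beilinson's conjecture on the injectivity of Abel--Jacobi (Remark~\ref{re:beilinson}). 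The paper explicitly flags this as the reason for expecting the variant to hold, not as a proof.

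Your attempt to upgrade this to an unconditional equivalence has a real gap in Step~2. You want to express the difference $\FJ(f_1,f_2;\phi)_K-\FJ(f_1,f_2;\phi)_K^z$ as a sum of terms of the form $Z.Z'$ with $Z\in\CH^1(\cdot)^0$ and $Z'\in\CH^{\bullet}(\cdot)^0$, so that Lemma~\ref{le:divisor_coh} applies. But the complementary projector $\id_{X_K^3}-\pr^{[3]}_{z_K}$ is built from $z_K$ and $\Delta X_K-z_K$, and for $n\geq 4$ the paper takes $z_K$ from \cite{MS}, where there is no reason to expect any expression in terms of intersections with homologically trivial divisors. Even in the low-rank cases of Lemma~\ref{le:easy_projector}, the pieces of $z_{X_K,D_K}$ involve $D_K$ and $D_K^{n-1}$, which are \emph{not} homologically trivial, so Lemma~\ref{le:divisor_coh} does not apply to them. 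More fundamentally, the height pairing is not known to factor through $\CH^\bullet(\cdot)^\natural$ without Beilinson's conjecture, and nothing in your argument circumvents that. Your Step~1 only shows agreement of Abel--Jacobi images (which is the content of Remark~\ref{re:ggp_refined_bis}(2)); it does not control the actual cycle class in $\CH^\bullet(\cdot)^0$, which is what the height pairing sees.
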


\begin{remark}\label{re:ggp_refined_bis}
We have the following remarks concerning Conjecture \ref{co:ggp_refined_bis}.
\begin{enumerate}
  \item We have a similar statement for $\FJ(f_1,f_2;\phi)_K^z$ as in Lemma \ref{le:level_changing}. In particular, the left-hand side of \eqref{eq:ggp_refined_bis} is independent of $K$.

  \item By a similar argument for Proposition \ref{pr:independence}(2), one can show that the image of $\FJ(f_1,f_2;\phi)_K^z$ in $\CH^{n-1+[M_\mu:\dQ]/2}(X_K\times X_K\times A_\mu)_\dL^\natural$ equals $\FJ(f_1,f_2;\phi)_K^\natural$. This is why we expect the variant conjecture to hold as well, in view of Remark \ref{re:beilinson}.

  \item One of the advantages of introducing the auxiliary projector $z$ is that one can show that the left-hand side of \eqref{eq:ggp_refined_bis}, regarded as a functional in $(\phi,\phi_\tc)$, factors through the map
      \[
      \Hom_E(A_K,A_\mu,\varepsilon)\times\Hom_E(A_K,A_\mu^\vee,-\varepsilon)\to
      \Omega(\mu,\varepsilon)\otimes_{M_\mu}\Omega(\mu^\tc,-\varepsilon)
      \]
      and becomes $M_\mu$-linear. See Remark \ref{re:bilinear}.
\end{enumerate}
\end{remark}

\section{Arithmetic relative trace formula}
\label{ss:4}

In this section, we discuss a relative trace formula approach toward the arithmetic GGP conjecture for $\rU(n)\times\rU(n)$. In Subsection \ref{ss:doubling}, we prove the doubling formula for CM data. In Subsection \ref{ss:invariant}, we introduce the global arithmetic invariant functional and its local version at good inert primes for which we perform some preliminary computation. In Subsection \ref{ss:orbital}, we prove the formula for the orbital decomposition of the local arithmetic invariant functional.

We keep the notation from Section \ref{ss:3}. We fix a conjugate symplectic automorphic character $\mu\colon E^\times\backslash\bA_E^\times\to\dC^\times$ of weight one, and a $\mu$-admissible collection $\varepsilon$ (Definition \ref{de:admissible_collection}).

From now on, we will restrict ourselves to the Compact Case. We will identify $E$ as a \emph{subfield} of $\dC$ via a fixed complex embedding $\tau'\in\Phi_\mu$. Put $\tau\coloneqq\tau'\res F$, and fix a hermitian space $\rV$ that is $\tau$-nearby to $\bV$ (Definition \ref{de:nearby}). In particular, $\rV$ is anisotropic. Put $\rG\coloneqq\Res_{F/\dQ}\rU(\rV)$, and identify $X_K$ with the (proper) Shimura variety $\Sh(\rG,\rh_{\rV,\tau'})_K$ under the notation in Remark \ref{re:picard}.

\subsection{A doubling formula for CM data}
\label{ss:doubling}

We start by performing some preliminary computation of the Beilinson--Bloch--Poincar\'{e} height pairing
\begin{align}\label{eq:bbp}
\vol(K)^2\cdot\langle\FJ(f_1,f_2;\phi)_K^z,\FJ(f_1^\vee,f_2^\vee;\phi_\tc)_K^z\rangle_{X_K\times X_K,A_\mu}^{\r{BBP}}
\end{align}
for a level subgroup $K\subseteq\rG(\bA^\infty)$ and a CM data $D_\mu=(A_\mu,i_\mu,\lambda_\mu,r_\mu)\in\cA(\mu)$, as in Conjecture \ref{co:ggp_refined_bis}.

Consider an intermediate number field $E\subseteq E'\subseteq \dC$ such that $E'$ splits $X_K$ (Definition \ref{de:split}). Put $X'_K\coloneqq (X_K)_{E'}$, $A'_K\coloneqq(A_K)_{E'}$, $A'_\mu\coloneqq(A_\mu)_{E'}$, $\phi'\coloneqq\phi_{E'}$, and $\phi'_\tc\coloneqq(\phi_\tc)_{E'}$. We will suppress $E'$ in the fiber product $X\times_{E'}Y$ of schemes if $X$ and $Y$ are obviously over $E'$. For every element $P\in X_K(\pi_0(X'_K))$, we have the induced morphism
\[
\alpha_P\coloneqq(\alpha_K)_P\colon X'_K\to A'_K
\]
from Definition \ref{de:albanese} and Definition \ref{de:split}. We put
\begin{align*}
\Delta^{\phi,P}_zX_K&\coloneqq(\id_{X'_K\times X'_K}\times(\phi'\circ\alpha_P))_*\Delta^3_zX_K
\in\CH^{n-1+[M_\mu:\dQ]/2}(X'_K\times X'_K\times A'_\mu)_\dQ^0,\\
\Delta^{\phi_\tc,P}_zX_K&\coloneqq(\id_{X'_K\times X'_K}\times(\phi'_\tc\circ\alpha_P))_*\Delta^3_zX_K
\in\CH^{n-1+[M_\mu:\dQ]/2}(X'_K\times X'_K\times A'^\vee_\mu)_\dQ^0.
\end{align*}

\begin{lem}\label{le:doubling0}
Suppose that $E'$ is sufficiently large such that $D_K^{n-1}$ can be represented by a finite sum $\sum_i c_i P_i$ with $c_i\in\dQ$ and $P_i\in X_K(\pi_0(X'_K))$. Then we have
\begin{align*}
\eqref{eq:bbp}&=\frac{1}{[E':E](\deg D_K^{n-1})^2} \\
&\quad\sum_{i,j}c_ic_j\cdot\langle(\tT_K^{f_1}\otimes\tT_K^{f_2}\otimes\tT_\mu^\can)^*\Delta^{\phi,P_i}_zX_K,
(\tT_K^{f^\vee_1}\otimes\tT_K^{f^\vee_2}\otimes\tT_{\mu^\tc}^\can)^*\Delta^{\phi_\tc,P_j}_zX_K
\rangle_{X'_K\times X'_K,A'_\mu}^{\r{BBP}}.
\end{align*}
\end{lem}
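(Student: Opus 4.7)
The plan is to reduce the height pairing \eqref{eq:bbp} to a sum over the base points $P_i$ by exploiting the $E'$-split structure and base-changing the Beilinson--Bloch--Poincar\'{e} pairing. First, I would compute, after base change to $E'$, the pushforward
$$\bigl(\id_{X'_K \times X'_K} \times (\phi' \circ \alpha_K)\bigr)_*\bigl(\Delta^3_z X_K \times D_K^{n-1}\bigr)^\nabla.$$
Decomposing $X'_K = \coprod_\alpha X'_{K,\alpha}$ into geometrically connected components, the $\nabla$-intersection on factors $3,4$ forces the point of $D_K^{n-1}$ to lie in the same component as the preceding $X$-factor. Writing $D_K^{n-1} = \sum_i c_i P_i$, where each $P_i$ supplies a rational point $P_{i,\alpha}$ per component, and recalling that on $X'_{K,\alpha} \times P_{i,\alpha}$ the morphism $\phi' \circ \alpha_K$ coincides with $\phi' \circ (\alpha_{P_i})|_{X'_{K,\alpha}}$ by Definition \ref{de:split}(3), this pushforward should simplify to
$$\sum_i c_i \cdot \bigl(\id \times (\phi' \circ \alpha_{P_i})\bigr)_* \Delta^3_z X_K = \sum_i c_i \, \Delta^{\phi, P_i}_z X_K.$$

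Second, composing with the Hecke and canonical correspondences, which all commute with base change, yields
$$\FJ(f_1, f_2; \phi)_K^z \otimes_E E' = |\pi_0((X_K)_{E^\ac})| \sum_i c_i \, (\tT_K^{f_1} \otimes \tT_K^{f_2} \otimes \tT_\mu^{\r{can}})^* \Delta^{\phi, P_i}_z X_K,$$
and analogously for $\FJ(f_1^\vee, f_2^\vee; \phi_\tc)_K^z$ with $\phi_\tc$ and $\tT_{\mu^\tc}^{\r{can}}$. I would then apply the base-change identity for the Beilinson--Bloch--Poincar\'{e} height pairing, namely
$$\langle z_1, z_2 \rangle^{\r{BBP}}_{X_K \times X_K, A_\mu} = \frac{1}{[E':E]} \langle z_1 \otimes_E E', z_2 \otimes_E E' \rangle^{\r{BBP}}_{X'_K \times X'_K, A'_\mu},$$
which follows from the place-by-place decomposition \eqref{eq:bb_decompose} together with the observation that places of $E'$ above a fixed place of $E$ contribute local indices summing with total weight $[E':E]$. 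Combined with the volume identity $\vol(K) \cdot |\pi_0((X_K)_{E^\ac})| = 1/\deg D_K^{n-1}$ from Definition \ref{de:hodge_divisor}(2) and a bilinear expansion, this will produce the claimed formula with the correct prefactor $1/([E':E](\deg D_K^{n-1})^2)$.

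The main obstacle will be verifying the combinatorial identity in the first step — that the $\nabla$-restricted pushforward genuinely decomposes as $\sum_i c_i \Delta^{\phi, P_i}_z X_K$ — because the modified diagonal $\Delta^3_z X_K = \pr^{[3]}_{z_K}\Delta^3 X_K$ is a correspondence-theoretic modification on $X_K^3$ that need not be supported on the small diagonal. One must therefore track carefully how $\pr^{[3]}_{z_K}$ interacts with the component-wise $\nabla$-restriction on factors $3,4$; the key observation is that the $z$-projector acts entirely on the first three factors, so the bookkeeping is reduced to the analogous computation for $\Delta^3 X_K$ and the identity persists after applying $\pr^{[3]}_{z_K}$. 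Once this is in place, the remaining steps are formal properties of heights under finite base change.
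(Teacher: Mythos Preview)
Your proposal is correct and is essentially a careful unpacking of what the paper calls ``immediate from the definition of $\FJ(f_1^\vee,f_2^\vee;\phi_\tc)_K^z$ and Definition \ref{de:hodge_divisor}(2).'' One minor simplification: the ``main obstacle'' you flag is not really an obstacle, since the identity
\[
(\id\times\id\times(\phi'\circ\alpha_K))_*(W\times P_i)^\nabla=(\id\times\id\times(\phi'\circ\alpha_{P_i}))_*W
\]
holds for \emph{any} cycle $W$ on $X_K^3$ by Definition \ref{de:split}(3), so there is no need to reduce to $\Delta^3X_K$ before applying $\pr^{[3]}_{z_K}$.
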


\begin{proof}
This follows immediately from the definition of $\FJ(f_1^\vee,f_2^\vee;\phi_\tc)_K^z$ and Definition \ref{de:hodge_divisor}(2).
\end{proof}

Let $\cP_\mu\in\CH^1(A_\mu\times A^\vee_\mu)$ be the Poincar\'{e} class on $A_\mu\times A^\vee_\mu$. Put
\[
\cQ_\mu\coloneqq (\tT_\mu^{\can,\rt}\otimes\tT_{\mu^\tc}^\can)^*\cP_\mu\in\CH^1(A_\mu\times A^\vee_\mu)_{M_\mu},
\]
and for $P,Q\in X_K(\pi_0(X'_K))$, put
\[
\cQ_{\mu,K}^{\phi,\phi_\tc,P,Q}\coloneqq(\phi'\circ\alpha_P\times\phi'_\tc\circ\alpha_Q)^*\cQ_\mu\in\CH^1(X'_K\times X'_K)_{M_\mu}.
\]

\begin{lem}\label{le:doubling1}
For $P,Q\in X_K(\pi_0(X'_K))$, we have
\begin{align*}
&\langle(\tT_K^{f_1}\otimes\tT_K^{f_2}\otimes\tT_\mu^\can)^*\Delta^{\phi,P}_zX_K,
(\tT_K^{f^\vee_1}\otimes\tT_K^{f^\vee_2}\otimes\tT_{\mu^\tc}^\can)^*\Delta^{\phi_\tc,Q}_zX_K
\rangle_{X'_K\times X'_K,A'_\mu}^{\r{BBP}}\\
&=\langle \tp_{123}^*\Delta^3_zX_K,
(X'_K\times X'_K\times\cQ_{\mu,K}^{\phi,\phi_\tc,P,Q}).\tp_{124}^*\Delta^3_{z,f_1^\rt\ast f_1^\vee,f_2^\rt\ast f_2^\vee}X_K\rangle_{X'_K\times X'_K\times X'_K\times X'_K}^{\r{BB}}
\end{align*}
where $\Delta^3_{z,\bbf_1,\bbf_2}X_K\coloneqq(\tT_K^{\bbf_1}\otimes\tT_K^{\bbf_2}\otimes\id_{X_K})^*\Delta^3_zX_K\in\CH^{2(n-1)}(X_K\times X_K\times X_K)^0_\dL$ for $\bbf_1,\bbf_2\in\sH_{K,\dL}$.
\end{lem}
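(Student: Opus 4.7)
The plan is to unfold the Beilinson--Bloch--Poincar\'{e} pairing into an ordinary Beilinson--Bloch pairing on a larger product space, then apply the projection formula repeatedly to transfer all cycles back onto $(X'_K)^4$. By the definition of $\langle\;,\;\rangle^{\r{BBP}}_{X'_K \times X'_K, A'_\mu}$ via the Fourier--Mukai transform $\wp$, the left-hand side rewrites as $\langle z_1, \wp(z_2)\rangle^{\r{BB}}_{X'_K \times X'_K \times A'_\mu}$, where $z_1$ and $z_2$ denote the two input cycles. A further application of the projection formula along the projection $\tp_{12}\colon X'_K \times X'_K \times A'_\mu \times A'^\vee_\mu \to X'_K \times X'_K \times A'_\mu$ recasts this as a Beilinson--Bloch pairing on $X'_K \times X'_K \times A'_\mu \times A'^\vee_\mu$ into which the Poincar\'{e} class $\cP_\mu$ has been inserted on the last two factors.

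The key geometric input is the combination of the canonical projectors with the Poincar\'{e} class. Substituting the definitions $\Delta^{\phi,P}_z X_K = (\id \times \phi'\alpha_P)_* \Delta^3_z X_K$ and $\Delta^{\phi_\tc,Q}_z X_K = (\id \times \phi'_\tc\alpha_Q)_* \Delta^3_z X_K$, and transferring the correspondences $\tT_\mu^\can$ (pulling back $z_1$) and $\tT_{\mu^\tc}^\can$ (pulling back $z_2$) onto the Poincar\'{e} class via the transpose--pullback identity for the BB pairing, their combined effect on $\cP_\mu$ is
\begin{align*}
(\tT_\mu^{\can,\rt} \otimes \tT_{\mu^\tc}^\can)^* \cP_\mu = \cQ_\mu
\end{align*}
by the very definition of $\cQ_\mu$. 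Pulling this further back along $\phi'\alpha_P \times \phi'_\tc\alpha_Q$ produces the divisor $\cQ_{\mu,K}^{\phi,\phi_\tc,P,Q}$ on $X'_K \times X'_K$. Together with the pushforwards along $(\id \times \phi'\alpha_P)$ and $(\id \times \phi'_\tc\alpha_Q)$, this reduces the pairing to one taking place entirely on $(X'_K)^4$.

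On $(X'_K)^4$ with labeled coordinates $(1,2,3,4)$, the contribution from $z_1$ becomes $\tp_{123}^* \Delta^3_z X_K$---the $A_\mu$-factor having been absorbed into the Poincar\'{e} side---while the contribution from $z_2$ becomes $\tp_{124}^*$ of a diagonal to which the Hecke correspondences $(\tT_K^{f_i})^\rt$ (from $z_1$, dualized by the transpose move) and $\tT_K^{f_i^\vee}$ (from $z_2$) are attached. The homomorphism property of $\tT_K\colon \sH_{K,\dL} \to \CH^{n-1}(X_K \times X_K)_\dL$ and the identity $(\tT_K^{f})^\rt = \tT_K^{f^\rt}$ then combine these into $\tT_K^{f_i^\rt \ast f_i^\vee}$, yielding precisely $\Delta^3_{z, f_1^\rt \ast f_1^\vee, f_2^\rt \ast f_2^\vee} X_K$. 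The divisor $\cQ_{\mu,K}^{\phi,\phi_\tc,P,Q}$ sits on factors $(3,4)$, which produces exactly the right-hand side of the lemma.

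The main obstacle will be the bookkeeping. One must verify that the transposes combine into precisely the convolutions $f_i^\rt \ast f_i^\vee$ rather than some other bilinear pairing, that the canonical projector $\tT_\mu^{\can,\rt}$ ends up on the correct factor of $\cP_\mu$ to match the definition of $\cQ_\mu$, and that the codimension $n-1 + [M_\mu:\dQ]/2$ is preserved throughout. A secondary technical concern is to justify the repeated use of the projection formula for the (conditionally defined) Beilinson--Bloch pairing: this requires the intermediate cycles to remain homologically trivial, which follows from Proposition \ref{pr:independence}(1) and Lemma \ref{le:mu_canonical}, together with the standard compatibility of $\langle\;,\;\rangle^{\r{BB}}$ with proper pushforward and flat pullback, alongside the vanishing property of Lemma \ref{le:divisor_coh}.
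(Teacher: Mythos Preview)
Your proposal is correct and follows essentially the same route as the paper: unfold the BBP pairing to a BB pairing on $X'_K\times X'_K\times A'_\mu\times A'^\vee_\mu$, transfer the canonical projectors $\tT_\mu^\can$, $\tT_{\mu^\tc}^\can$ onto the Poincar\'e class to produce $\cQ_\mu$, pull back along $\phi'\alpha_P\times\phi'_\tc\alpha_Q$, and then push the Hecke correspondences from the left entry to the right via transpose to obtain the convolutions $f_i^\rt\ast f_i^\vee$. The paper carries this out by writing down an explicit Cartesian diagram of projections (so that the pushforwards along $\id\times\phi'\alpha_P$ and $\id\times\phi'_\tc\alpha_Q$ commute with the relevant pullbacks via base change), invoking \cite{Bei87}*{4.0.3} at each transfer step; your reference to Lemma~\ref{le:divisor_coh} at the end is not needed for this lemma.
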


\begin{proof}
Consider the following commutative diagram of in the category $\Sch_{/E'}$
\begin{align*}
\resizebox{17cm}{!}{\xymatrix{
&& X'_K\times X'_K\times X'_K\times X'_K \ar[ld]_-{\gamma_2}\ar[rd]^-{\gamma_1}\ar[dd]^-\delta \\
& X'_K\times X'_K\times X'_K\times A'^\vee_\mu \ar[ld]_-{r_1}\ar[rd]^-{\beta_1}
&& X'_K\times X'_K\times A'_\mu\times X'_K \ar[ld]_-{\beta_2}\ar[rd]^-{r_2} \\
X'_K\times X'_K\times X'_K \ar[rd]^-{\alpha_1} && X'_K\times X'_K\times A'_\mu\times A'^\vee_\mu \ar[ld]_-{q_1}\ar[rd]^-{q_2}
&& X'_K\times X'_K\times X'_K \ar[ld]_-{\alpha_2} \\
& X'_K\times X'_K\times A'_\mu && X'_K\times X'_K\times A'^\vee_\mu
}}
\end{align*}
in which all diamonds are Cartesian, and $\alpha_1\coloneqq\id_{X'_K\times X'_K}\times(\phi'\circ\alpha_P)$, $\alpha_2\coloneqq\id_{X'_K\times X'_K}\times(\phi'_\tc\circ\alpha_Q)$, $q_1\coloneqq \tp_{123}$, $q_2\coloneqq \tp_{124}$.

Put $\cP'_\mu\coloneqq(X'_K\times X'_K)\times\cP_\mu$ and $\cQ'_\mu\coloneqq(X'_K\times X'_K)\times\cQ_\mu$. By the definition of the Beilinson--Bloch--Poincar\'{e} height pairing, we have
\begin{align}\label{eq:doubling1}
&\langle(\tT_K^{f_1}\otimes\tT_K^{f_2}\otimes\tT_\mu^\can)^*\Delta^{\phi,P}_zX_K,
(\tT_K^{f^\vee_1}\otimes\tT_K^{f^\vee_2}\otimes\tT_{\mu^\tc}^\can)^*\Delta^{\phi_\tc,Q}_zX_K
\rangle_{X'_K\times X'_K,A'_\mu}^{\r{BBP}}\notag\\
&=\langle(\tT_K^{f_1}\otimes\tT_K^{f_2}\otimes\tT_\mu^\can)^*\Delta^{\phi,P}_zX_K,
q_{1*}(\cP'_\mu.q_2^*(\tT_K^{f^\vee_1}\otimes\tT_K^{f^\vee_2}\otimes\tT_{\mu^\tc}^\can)^*\Delta^{\phi_\tc,Q}_zX_K)
\rangle_{X'_K\times X'_K\times A'_\mu}^{\r{BB}}\notag\\
&=\langle q_1^*(\tT_K^{f_1}\otimes\tT_K^{f_2}\otimes\tT_\mu^\can)^*\Delta^{\phi,P}_zX_K,
\cP'_\mu.q_2^*(\tT_K^{f^\vee_1}\otimes\tT_K^{f^\vee_2}\otimes\tT_{\mu^\tc}^\can)^*\Delta^{\phi_\tc,Q}_zX_K
\rangle_{X'_K\times X'_K\times A'_\mu\times A'^\vee_\mu}^{\r{BB}}
\end{align}
where we have used \cite{Bei87}*{4.0.3} for the last equality. Note that we have
\begin{align*}
q_1^*(\tT_K^{f_1}\otimes\tT_K^{f_2}\otimes\tT_\mu^\can)^*\Delta^{\phi,P}_zX_K
&=(\tT_K^{f_1}\otimes\tT_K^{f_2}\otimes\tT_\mu^\can\otimes\id_{A'^\vee_\mu})^*q_1^*\Delta^{\phi,P}_zX_K \\
&=(\tT_K^{f_1}\otimes\tT_K^{f_2}\otimes\tT_\mu^\can\otimes\id_{A'^\vee_\mu})^*q_1^*\alpha_{1*}\Delta^3_zX_K \\
&=(\tT_K^{f_1}\otimes\tT_K^{f_2}\otimes\tT_\mu^\can\otimes\id_{A'^\vee_\mu})^*\beta_{1*}r_1^*\Delta^3_zX_K,
\end{align*}
and similarly
\[
q_2^*(\tT_K^{f^\vee_1}\otimes\tT_K^{f^\vee_2}\otimes\tT_{\mu^\tc}^\can)^*\Delta^{\phi_\tc,Q}_zX_K
=(\tT_K^{f^\vee_1}\otimes\tT_K^{f^\vee_2}\otimes\id_{A'_\mu}\otimes\tT_{\mu^\tc}^\can)^*\beta_{2*}r_2^*\Delta^3_zX_K.
\]
Then it follows that
\begin{align}\label{eq:doubling2}
\eqref{eq:doubling1}&=\langle(\tT_\mu^\can)^*\beta_{1*}r_1^*\Delta^3_{z,f_1,f_2}X_K,
\cP'_\mu.(\tT_{\mu^\tc}^\can)^*\beta_{2*}r_2^*\Delta^3_{z,f_1^\vee,f_2^\vee}X_K\rangle_{X'_K\times X'_K\times A'_\mu\times A'^\vee_\mu}^{\r{BB}}
\end{align}
where we have suppressed the expression $\id_?$ in the notation of correspondences as it is clear which factor the correspondence acts on. Using \cite{Bei87}*{4.0.3} again, we further have
\begin{align*}
\eqref{eq:doubling2}&=\langle\beta_{1*}r_1^*\Delta^3_{z,f_1,f_2}X_K,
\cQ'_\mu.\beta_{2*}r_2^*\Delta^3_{z,f_1^\vee,f_2^\vee}X_K\rangle_{X'_K\times X'_K\times A'_\mu\times A'^\vee_\mu}^{\r{BB}} \\
&=\langle\gamma_2^*r_1^*\Delta^3_{z,f_1,f_2}X_K,
\delta^*\cQ'_\mu.\gamma_1^*r_2^*\Delta^3_{z,f_1^\vee,f_2^\vee}X_K\rangle_{X'_K\times X'_K\times X'_K\times X'_K}^{\r{BB}} \\
&=\langle\gamma_2^*r_1^*\Delta^3_zX_K,
\delta^*\cQ'_\mu.\gamma_1^*r_2^*\Delta^3_{z,f_1^\rt\ast f_1^\vee,f_2^\rt\ast f_2^\vee}X_K\rangle_{X'_K\times X'_K\times X'_K\times X'_K}^{\r{BB}}\\
&=\langle \tp_{123}^*\Delta^3_zX_K,
\delta^*\cQ'_\mu.\tp_{124}^*\Delta^3_{z,f_1^\rt\ast f_1^\vee,f_2^\rt\ast f_2^\vee}X_K\rangle_{X'_K\times X'_K\times X'_K\times X'_K}^{\r{BB}}.
\end{align*}
The lemma follows by noting that $\delta=\beta_2\circ\gamma_1=\beta_1\circ\gamma_2=\id_{X'_K\times X'_K}\times(\phi'\circ\alpha_P)\times(\phi'_\tc\circ\alpha_Q)$.
\end{proof}

Lemma \ref{le:doubling1} suggests us to compute the class $\cQ_{\mu,K}^{\phi,\phi_\tc,P,Q}\in\CH^1(X'_K\times X'_K)_{M_\mu}$, or rather its homological equivalence class by Lemma \ref{le:divisor_coh}. To do this, we first review some doubling construction and Kudla's generating series of special divisors which was introduced by Kudla \cite{Kud97} in the context of orthogonal Shimura varieties.

\begin{definition}\label{de:generating_series}
Let $\rV(E)^+\subseteq\rV(E)$ be the subset consisting of $x$ such that $(x,x)_\rV$ is totally positive. Take $x\in\rV(E)^+$, and denote its orthogonal complement in $\rV$ by $\rV^x$. For $g\in\rG(\bA^\infty)$, we have the composite morphism
\begin{align*}
\ts_{x,g}\colon\Sh(\rG^x,\rh_{\rV^x,\tau'})_{gKg^{-1}\cap\rG^x(\bA^\infty)}\to\Sh(\rG,\rh_{\rV,\tau'})_{gKg^{-1}}=X_{gKg^{-1}}
\xrightarrow{\tT_g}X_K,
\end{align*}
where $\rG^x\coloneqq\Res_{F/\dQ}\rU(\rV^x)$, and the first arrow is induced by the inclusion $\rV^x\subseteq\rV$ of hermitian subspaces. The morphism $\ts_{x,g}$ is finite and unramified. We define
\[
Z(x,g)_K\coloneqq(\ts_{x,g})_*\Sh(\rG^x,\rh_{\rV^x,\tau'})_{gKg^{-1}\cap\rG^x(\bA^\infty)}
\]
as an element in $\rZ^1(X_K)$.

We denote by $\sS(\rV(\bA_E^\infty))$ the space of complex valued Schwartz functions on $\rV(\bA_E^\infty)$, which admits an action by $\rG(\bA^\infty)$ via the variable. For every $\bphi\in\sS(\rV(\bA_E^\infty))$, we define \emph{the generating series of special divisors} attached to $\bphi$ (of level $K$) to be
\[
Z(\bphi)_K\coloneqq-\bphi(0)D_K+\sum_{x\in\rU(\rV)(F)\backslash\rV(E)^+}\re^{-2\pi\cdot\Tr_{F/\dQ}(x,x)_\rV}
\sum_{g\in\rG^x(\bA^\infty)\backslash\rG(\bA^\infty)/K}\bphi(g^{-1}x)Z(x,g)_K
\]
as a formal series in $\rZ^1(X_K)_\dC$, where $D_K$ is (some representative of) the Hodge divisor (Definition \ref{de:hodge_divisor}).
\end{definition}

\begin{lem}
The generating series of special divisors $Z(\bphi)_K$ is Chow convergent, that is, an element in $\CZ^1(X_K)$ (Definition \ref{de:chow_convergent}).
\end{lem}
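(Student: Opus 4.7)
The first requirement in Definition \ref{de:chow_convergent}, that the classes $\{[Z(x,g)_K]\}_{x,g}$ span a finite-dimensional subspace of $\CH^1(X_K)_\dC$, is automatic in the Compact Case. Indeed, $X_K$ is smooth projective over $E$, so $\CH^1(X_K)=\Pic(X_K)$ sits in a short exact sequence whose kernel $\Pic^0_{X_K/E}(E)=\Alb_{X_K}^\vee(E)$ is finitely generated by the Mordell--Weil theorem, and whose quotient (the N\'{e}ron--Severi group of $X_K$ over $E$) is finitely generated by the N\'{e}ron--Severi theorem. Hence $\CH^1(X_K)_\dC$ is already a finite-dimensional $\dC$-vector space, and any family of divisor classes trivially spans a finite-dimensional subspace.

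The content of the lemma thus reduces to absolute convergence of the coefficient series after applying any $\dC$-linear functional $\lambda\colon\CH^1(X_K)_\dC\to\dC$. The plan is to combine the archimedean Gaussian factor $\re^{-2\pi\Tr_{F/\dQ}(x,x)_\rV}$ with a polynomial bound on $|\lambda([Z(x,g)_K])|$ in $T\coloneqq\Tr_{F/\dQ}(x,x)_\rV$. Unfolding the double sum in the definition of $Z(\bphi)_K$ identifies it, up to reindexing, with a theta-like sum over $x$ ranging over a finite union of $K$-invariant lattice cosets in $\rV(E)^+$; the Schwartz property of $\bphi$ and the compactness of the support of $\bphi$ at finite places reduce to a genuinely discrete lattice sum. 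By classical lattice point counting, the number of such $x$ with $\Tr_{F/\dQ}(x,x)_\rV\leq T$ is polynomial in $T$, so the Gaussian factor dominates any polynomial growth, and absolute convergence follows provided the polynomial bound on $|\lambda([Z(x,g)_K])|$ is established.

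The main step is therefore establishing this polynomial bound. I would decompose $\lambda$ over $\dC$ via the canonical $\dC$-splitting of the above short exact sequence. On the N\'{e}ron--Severi component, $\lambda([Z(x,g)_K])$ is controlled by intersection numbers such as $\deg(D_K^{n-2}.Z(x,g)_K)$, which equals (up to a fixed constant depending on the chosen splitting) the volume against $D_K^{n-2}$ of the image of $\ts_{x,g}$ -- a compact Shimura subvariety of dimension $n-2$ attached to $(\rV^x,gKg^{-1}\cap\rG^x(\bA^\infty))$. By the ad\'{e}lic volume formula for the unitary group $\rG^x$, this is polynomial in the congruence index of the stabilizer subgroup, which is in turn polynomial in $T$ via the explicit lattice structure of $\rV$ and the orbit of $x$. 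On the Albanese component, $\lambda$ factors through the N\'{e}ron--Tate height pairing on $\Alb_{X_K}$, and heights of Albanese images of algebraic cycles grow polynomially in the naive heights of their representatives by standard height-theoretic inequalities.

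The main obstacle is not conceptual but rather the careful reduction-theoretic estimate for the growth of the special cycle degrees $\deg(D_K^{n-2}.Z(x,g)_K)$ in $T$, which requires identifying the relevant Tamagawa volumes and congruence indices and tracking their dependence on $x$. This is essentially the input underlying Kudla's classical convergence arguments for theta series of special divisors, adapted here to the unitary setting in the Compact Case.
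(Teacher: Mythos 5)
Your observation that $\CH^1(X_K)_\dC$ is finite-dimensional in the Compact Case --- via Mordell--Weil for $\Pic^0(X_K)(E)$ and the theorem of the base for the N\'{e}ron--Severi group --- is correct and is one of the two ingredients the paper records. For the other ingredient, the absolute convergence, the paper simply cites \cite{Liu11}*{Theorem~3.5(2)}, whereas you attempt a self-contained reproof, and that is where genuine gaps appear.

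For the N\'{e}ron--Severi component, the polynomial bound on $\deg(D_K^{n-2}.Z(x,g)_K)$ in $T=\Tr_{F/\dQ}(x,x)_\rV$ is plausible but only gestured at; making it precise essentially amounts to identifying the Betti cohomology class of $Z(\bphi)_K$ with a Kudla--Millson theta series and using its absolute convergence, which is part of what the cited theorem provides.

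The serious gap is your treatment of the $\Pic^0$ component. You assert that ``heights of Albanese images of algebraic cycles grow polynomially in the naive heights of their representatives by standard height-theoretic inequalities.'' No such standard inequality exists, and you have not even defined what ``naive height'' of $Z(x,g)_K$ means. A polynomial bound on the N\'{e}ron--Tate height of the $\Pic^0$-projection of $Z(x,g)_K$ in terms of $T$ is a genuinely arithmetic statement: already for $n=2$, where the special cycles are Heegner points, such a bound is a consequence of Gross--Zagier combined with convexity bounds for $L$-functions, not of any elementary height inequality; for general $n$ it is obtained via the construction of Kudla--Green functions and is precisely what the appeal to \cite{Liu11}*{Theorem~3.5(2)} is meant to supply. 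You cannot treat this as off-the-shelf input, and as written your argument does not close.
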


\begin{proof}
This is \cite{Liu11}*{Theorem~3.5(2)} (with $g=1$), together with the fact that $\CH^1(X_K)_\dC$ is of finite dimension.
\end{proof}

We study the relation between generating series of special divisors and the spaces $\Omega(\mu,\varepsilon)$ and $\Omega(\mu^\tc,-\varepsilon)$. Choose a nonzero element $\alpha$ (resp.\ $\alpha_\tc$) in $\rH^0(A_\mu(\dC),\Omega^1)$ (resp.\ $\rH^0(A_\mu^\vee(\dC),\Omega^1)$) on which $M_\mu$ acts via the inclusion $M_\mu\hookrightarrow\dC$, such that under the canonical pairing $\rH^1_\rB(A_\mu,\dC)\times\rH^1_\rB(A_\mu^\vee,\dC)\to\dC$, $\alpha$ and $\ol{\alpha_\tc}$ pair to one. It is clear that for $\phi\in\Omega(\mu,\varepsilon)$ and $\phi_\tc\in\Omega(\mu^\tc,-\varepsilon)$, the $(1,1)$-form
\[
\phi\diamond\phi_\tc\coloneqq\phi^*\alpha\wedge\phi_\tc^*\ol{\alpha_\tc}
\]
on $X_K(\dC)$ does not depend on the choice of the pair $(\alpha,\alpha_\tc)$, which is moreover in $\rH^2_\rB(X_K,M_\mu(1))$. By \cite{BMM}*{Proposition~5.19} and \cite{Liu14}*{Lemma~5.3}, $\phi\diamond\phi_\tc$ is a Kudla--Milson form which, in the notation of \cite{BMM}*{(8.8)}, equals $\theta_{\psi_F,\mu,\tilde\bphi}(-,1)$, where
\[
\tilde\bphi=\varphi_{1,1}\otimes\(\bigotimes_{\Phi_F\setminus\{\tau\}}\varphi_0\)\otimes\bphi
\]
for a unique $\bphi\in\sS(\rV(\bA_E^\infty))$ as in \cite{BMM}*{(8.9)}. The assignment $(\phi,\phi_\rc)\mapsto\bphi$ gives rise to a map \begin{align}\label{eq:doubling0}
\fd\colon\Omega(\mu,\varepsilon)\otimes_{M_\mu}\Omega(\mu^\tc,-\varepsilon)\otimes_{M_\mu}\dC
\to\sS(\rV(\bA_E^\infty)).
\end{align}

\begin{lem}\label{le:doubling2}
The map $\fd$ \eqref{eq:doubling0} is an isomorphism of $\dC[\rG(\bA^\infty)]$-modules.
\end{lem}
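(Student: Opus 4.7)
The plan proceeds in three steps. First, by Theorem~\ref{th:cm_albanese} together with Definition~\ref{de:cm_albanese}, the source of $\fd$ is identified with $\bigoplus_{\chi,\chi'}\omega(\mu,\varepsilon,\chi)\otimes_\dC\omega(\mu^\tc,-\varepsilon,\chi')$, where $\chi,\chi'$ range over automorphic characters of $E^1\backslash(\bA_E^\infty)^1$. Both this source and the target $\sS(\rV(\bA_E^\infty))$ admit restricted tensor product decompositions over the nonarchimedean places of $F$, so the assertion reduces to producing, at each such place $v$, an isomorphism
\[
\fd_v\colon\bigoplus_{\chi_v,\chi'_v}\omega(\mu_v,\varepsilon_v,\chi_v)\otimes_\dC\omega(\mu_v^\tc,-\varepsilon_v,\chi'_v)\xrightarrow{\sim}\sS(\rV(E_v))
\]
of $\rU(\rV)(F_v)$-modules, compatibly with the global map $\fd$.

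The $\rG(\bA^\infty)$-equivariance of $\fd$ is a direct consequence of the transformation law of the Kudla--Millson form $\theta_{\psi_F,\mu,\tilde\bphi}$: translating $(\phi,\phi_\tc)$ by a Hecke correspondence $\tT_g$ corresponds to acting by $g$ on $\bphi$ via the Schr\"odinger model of the Weil representation on $\sS(\rV(\bA_E^\infty))$, which is built into the characterization of $\tilde\bphi$ recorded in \cite{BMM}*{(8.9)}. The local isomorphism $\fd_v$ is then an instance of the doubling identity for unitary Weil representations: the Schwartz space $\sS(\rV(E_v))$ carries the Weil representation attached to the dual pair $(\rU(\rV)(F_v),\rU(\rW_v\oplus(-\rW_v))(F_v))$, where $\rW_v$ is the one-dimensional skew-hermitian $E_v$-space determined by $\varepsilon_v$ and $\rW_v\oplus(-\rW_v)$ is the hyperbolic skew-hermitian plane; under the see-saw with the Levi $\rU(\rW_v)\times\rU(-\rW_v)$, this Weil representation decomposes exactly as the left-hand side above, by the very construction of $\omega(\mu_v,\varepsilon_v,\chi_v)$ recalled in Subsection~\ref{ss:local_theta}.

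The main obstacle will be tracking the normalizations: one must identify the factorization of $\tilde\bphi$ dictated by \cite{BMM}*{(8.9)} with the see-saw decomposition above, checking that the archimedean Gaussian prescription $\varphi_{1,1}\otimes\bigotimes_{\Phi_F\setminus\{\tau\}}\varphi_0$ is precisely what is forced by the weight-one hypothesis on $\mu$ and by the $\mu$-admissibility of $\varepsilon$ (Definition~\ref{de:admissible_collection}), and that the central characters $\chi_v,\chi'_v$ on the doubled side match those parameterizing the oscillator representations. Once this compatibility is verified, $\fd$ is realized as a restricted tensor product of the local isomorphisms $\fd_v$, hence is itself an isomorphism of $\dC[\rG(\bA^\infty)]$-modules.
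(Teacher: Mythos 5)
Your equivariance argument --- translating by $\tT_g$ on the $(1,1)$-form $\phi\diamond\phi_\tc$ and matching it with the Schr\"odinger-model action on $\bphi$ via the Kudla--Millson transformation law --- is exactly the paper's argument. The bijectivity, however, is handled quite differently in the paper, which simply observes that $\fd$ is injective and then cites \cite{Liu14}*{Lemma~5.3} for surjectivity; your local see-saw route does not work as written, for two reasons.

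First, the source $\Omega(\mu,\varepsilon)\otimes_{M_\mu}\Omega(\mu^\tc,-\varepsilon)\otimes_{M_\mu}\dC$ is identified by Theorem~\ref{th:cm_albanese} and Definition~\ref{de:cm_albanese} with a direct sum over \emph{automorphic} characters $\chi,\chi'$ of $E^1\backslash(\bA_E^\infty)^1$ (recall Definition~\ref{de:oscillator_triple}). This is a direct sum of restricted tensor products, but it is not a restricted tensor product of direct sums: if you form $\bigotimes'_v$ of the local objects in your $\fd_v$, you obtain a direct sum over \emph{all} pairs $(\chi,\chi')$ of smooth characters of $(\bA_E^\infty)^1$ that are unramified almost everywhere, not only the automorphic ones. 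The automorphy of $\chi,\chi'$ is a genuinely global condition, inherited from the Albanese of the Shimura variety, that a place-by-place see-saw identity cannot detect. Consequently the ``reduction to producing $\fd_v$ at each place'' does not reconstruct the global statement.

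Second, the local decomposition $\sS(\rV(E_v))\simeq\bigoplus_{\chi_v,\chi'_v}\omega(\mu_v,\varepsilon_v,\chi_v)\otimes\omega(\mu_v^\tc,-\varepsilon_v,\chi'_v)$ is only available when $E_v$ is a field, so that $E_v^1$ is compact and the central decomposition is a genuine direct sum. At the (infinitely many) places $v$ split in $E$, one has $E_v^1\simeq F_v^\times$, which is noncompact; the local oscillator representation does not split into its central-character quotients $\omega(\mu_v,\varepsilon_v,\chi_v)$, and the stated $\fd_v$ does not exist. You would at least need to treat split places by a different mechanism.

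Finally, you explicitly defer the verification that the factorization of $\tilde\bphi$ from \cite{BMM}*{(8.9)} matches the see-saw normalization; that check is a nontrivial part of what the lemma asserts. The paper sidesteps all of this by remaining global: injectivity of $\fd$ is essentially the nondegeneracy of the wedge product $\phi^*\alpha\wedge\phi_\tc^*\ol{\alpha_\tc}$, and surjectivity is precisely the content of \cite{Liu14}*{Lemma~5.3}, which characterizes which $(1,1)$-forms on $X_K(\dC)$ arise as Kudla--Millson theta forms. You should either engage with that reference directly, or, if you want a representation-theoretic proof, you must explain how to recover the automorphy constraint on $\chi,\chi'$ and how to handle split places.
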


\begin{proof}
For $g\in\rG(\bA^\infty)$, we have $g\phi\diamond g\phi_\tc=\tT_g^*(\phi\diamond\phi_\tc)=\tT_g^*\theta_{\psi_F,\mu,\tilde\bphi}(-,1)=
\theta_{\psi_F,\mu,\tT_g^*\tilde\bphi}(-,1)$. On the other hand, we have $\tT_g^*\tilde\bphi=\varphi_{1,1}\otimes\(\bigotimes_{\Phi_F\setminus\{\tau\}}\varphi_0\)\otimes(g.\bphi)$. Thus, $\fd$ is $\rG(\bA^\infty)$-equivariant. The map $\fd$ is apparently injective, so is surjective by \cite{Liu14}*{Lemma~5.3}. The lemma follows.
\end{proof}

\begin{lem}\label{le:doubling3}
We have
\begin{enumerate}
  \item The cohomology class $\cl_\rB(\cQ_{\mu,K}^{\phi,\phi_\tc,P,Q})\in\rH^2_\rB(X_K\times X_K,\dC)$ depends only on $\fd(\phi\otimes\phi_\tc)$.

  \item There is a unique $\dC$-linear map
    \begin{align*}
    \rc\cQ_{\mu,K}\colon\sS(\rV(\bA_E^\infty))^K&\to\rH^2_\rB(X_K\times X_K,\dC)\\
    \bphi&\mapsto\rc\cQ_{\mu,K}^{\bphi}
    \end{align*}
    such that
    \begin{enumerate}
      \item $\rc\cQ_{\mu,K}^{\fd(\phi\otimes\phi_\tc)}=\cl_\rB(\cQ_{\mu,K}^{\phi,\phi_\tc,P,Q})$ for every pair $(\phi,\phi_\tc)\in\Hom_E(A_K,A_\mu,\varepsilon)\times\Hom_E(A_K,A_\mu^\vee,-\varepsilon)$ and every $P,Q\in X_K(\pi_0(X'_K))$;

      \item $\Delta^*\rc\cQ_{\mu,K}^{\bphi}=\cl_\rB(Z(\bphi)_K)\in\rH^2_\rB(X_K,\dC)$ for every $\bphi\in\sS(\rV(\bA_E^\infty))^K$.
    \end{enumerate}
\end{enumerate}
\end{lem}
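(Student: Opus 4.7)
The plan is to reduce everything to an explicit computation of $\cl_\rB(\cQ_\mu) \in \rH^2_\rB(A_\mu \times A_\mu^\vee, \dC)$ and its pullback via the composition $(\phi' \circ \alpha_P) \times (\phi'_\tc \circ \alpha_Q)$, then identify the resulting class with a ``doubled'' Kudla--Milson theta form. The crucial input is Lemma \ref{le:mu_canonical}, which says that the cohomological action of the canonical projector $\tT_\mu^\can$ is given by the canonical projection onto the $I_1$-eigenspace. Using this, together with the standard expansion of the Poincar\'e class via dual bases, I would write $\cl_\rB(\cQ_\mu)$ in the K\"unneth decomposition of $\rH^2_\rB(A_\mu \times A_\mu^\vee, \dC)$ as a specific rank-$2$ tensor determined by the pair $(\alpha, \alpha_\tc)$, essentially $\alpha \otimes \ol{\alpha_\tc} + \ol{\alpha} \otimes \alpha_\tc$ up to sign conventions.

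For part (1), I would first exploit the translation-invariance of Chern classes on abelian varieties: since $(\phi \times \phi_\tc)^* \cl_\rB(\cQ_\mu)$ is a translation-invariant class on $A_K \times A_K$, its pullback to $X'_K \times X'_K$ via $\alpha_P \times \alpha_Q$ is independent of the base points $(P,Q)$ by the translation invariance of the Albanese pullback map on $\rH^1$. The resulting class in $\rH^1_\rB(X_K,\dC) \otimes \rH^1_\rB(X_K,\dC) \subseteq \rH^2_\rB(X_K \times X_K, \dC)$ is explicitly $(\phi^*\alpha) \boxtimes (\phi_\tc^*\ol{\alpha_\tc}) + (\phi^*\ol{\alpha}) \boxtimes (\phi_\tc^*\alpha_\tc)$. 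To upgrade ``depends only on the pair $(\phi, \phi_\tc)$'' to ``depends only on $\fd(\phi \otimes \phi_\tc)$'', I would view the assignment as a bilinear map on $\Omega(\mu,\varepsilon) \times \Omega(\mu^\tc,-\varepsilon)$ and check that it factors through the same $M_\mu$-balanced tensor product that $\fd$ does, by matching the $M_\mu$-equivariance properties of both sides using $i_\mu(a)^* \alpha = \iota_0(a)\alpha$ and $i_\mu^\vee(a)^* \alpha_\tc = \iota_0(a)\alpha_\tc$. Then, since $\fd$ is injective (Lemma \ref{le:doubling2}), the factorization property is exactly what (1) asserts.

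For part (2)(a), the assignment $\fd(\phi \otimes \phi_\tc) \mapsto \cl_\rB(\cQ_{\mu,K}^{\phi,\phi_\tc,P,Q})$ is well-defined on the image of $\fd$ restricted to $\Hom_E(A_K,A_\mu,\varepsilon) \otimes_{M_\mu} \Hom_E(A_K, A_\mu^\vee,-\varepsilon) \otimes_{M_\mu} \dC$ by (1). To extend to all of $\sS(\rV(\bA_E^\infty))^K$, I would use that $\fd$ is surjective by Lemma \ref{le:doubling2}, combined with a level-lowering argument: any $K$-invariant $\bphi$ lies in the image of $\fd$ applied to $K'$-invariant pairs for some $K' \subseteq K$, and the resulting class in $\rH^2_\rB(X_{K'} \times X_{K'}, \dC)$ is $K/K'$-invariant (by Hecke-equivariance of the construction), hence descends to $\rH^2_\rB(X_K \times X_K, \dC)$ along the finite \'etale cover. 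For part (2)(b), by the explicit formula, $\Delta^* \cl_\rB(\cQ_{\mu,K}^{\phi,\phi_\tc,P,Q})$ equals $\cl_\rB(\phi \diamond \phi_\tc) + \ol{\cl_\rB(\phi \diamond \phi_\tc)}$, which by the discussion preceding Lemma \ref{le:doubling2} identifies with the Kudla--Milson theta form $\theta_{\psi_F,\mu,\tilde\bphi}(-,1)$ plus its conjugate; by Kudla's geometric Siegel--Weil theorem (or more precisely its interpretation via \cite{BMM} and \cite{Liu11}), this class agrees with $\cl_\rB(Z(\bphi)_K)$, including the $-\bphi(0) D_K$ boundary contribution.

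The main obstacles will be the $M_\mu$-equivariance bookkeeping in (1), which requires a careful analysis of how the CM type of $A_\mu^\vee$ (opposite to that of $A_\mu$) interacts with the chosen forms $(\alpha, \alpha_\tc)$ to yield compatibility with the tensor relations defining $\fd$; and the level-descent in (2)(a), which needs the correct transfer/norm argument for $\rH^2$ under the finite \'etale covers $X_{K'} \times X_{K'} \to X_K \times X_K$. The identification with Kudla--Milson forms in (2)(b) should then follow from standard theta-series modularity.
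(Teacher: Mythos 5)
The core of your proposal is a correct reading of the structure of the proof (compute $\cl_\rB(\cQ_\mu)$ explicitly; pull back; use $\fd$ being an isomorphism via Lemma \ref{le:doubling2}; match the diagonal restriction with a Kudla--Milson form via \cite{BMM}*{Proposition~8.3}), but the central computation has a genuine error that propagates fatally into (2)(b).

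You claim the cohomology class of $\cQ_{\mu,K}^{\phi,\phi_\tc,P,Q}$ is the two-term, conjugation-symmetric sum $(\phi^*\alpha)\boxtimes(\phi_\tc^*\ol{\alpha_\tc}) + (\phi^*\ol\alpha)\boxtimes(\phi_\tc^*\alpha_\tc)$. This is not correct: the class is the single decomposable term $\tp_1^*\phi^*\alpha\wedge\tp_2^*\phi_\tc^*\ol{\alpha_\tc}$. The reason the conjugate term does \emph{not} appear is that $\tT_\mu^{\r{can}}$ is a correspondence with coefficients in $M_\mu$, not in $\dQ$, so $\cQ_\mu\in\CH^1(A_\mu\times A_\mu^\vee)_{M_\mu}$ and there is no reason for its Betti class to be fixed by complex conjugation. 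By Lemma \ref{le:mu_canonical}, $\cl^*_{\rB,\tau'}(\tT_\mu^{\r{can}})=\tT_\mu^{\rB,\tau'}$ projects onto a \emph{single one-dimensional eigenspace} (the $I_1$-piece), not onto a two-dimensional conjugation-stable real subspace. Applying the (pullback by the transpose, pullback) pair of projectors to the K\"unneth components of the Poincar\'e class kills everything except the one component lying along $\alpha\boxtimes\ol{\alpha_\tc}$; the piece along $\ol\alpha\boxtimes\alpha_\tc$ lies in a different eigenspace and is annihilated. Your ``up to sign conventions'' hedge was exactly where the error hides.

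This matters decisively for (2)(b). With the single term, $\Delta^*\cl_\rB(\cQ_{\mu,K}^{\phi,\phi_\tc,P,Q})=\phi^*\alpha\wedge\phi_\tc^*\ol{\alpha_\tc}=\phi\diamond\phi_\tc=\theta_{\psi_F,\mu,\tilde\bphi}(-,1)$, and \cite{BMM}*{Proposition~8.3} gives exactly $\cl_\rB(Z(\bphi)_K)$ with no doubling or symmetrization. Your version would yield $\theta_{\psi_F,\mu,\tilde\bphi}(-,1)+\ol{\theta_{\psi_F,\mu,\tilde\bphi}(-,1)}$, which is the Kudla--Milson form attached to $\bphi+\ol\bphi$, not $\bphi$. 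Since $\bphi$ is allowed to have genuinely complex values (indeed, $Z(\bphi)_K$ lives in $\CZ^1(X_K)$ with $\dC$-coefficients), this is not equal to $\cl_\rB(Z(\bphi)_K)$, and the identification with the generating series collapses. The remaining ingredients of your argument — translation-invariance to kill the $(P,Q)$-dependence in (1), using injectivity of $\fd$ for the $M_\mu$-balancing, and the level-lowering with Hecke-equivariance in (2)(a) to handle Schwartz functions not realized by level-$K$ pairs — are sound and in fact fill in details the paper leaves implicit, but they cannot salvage the wrong explicit formula.
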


\begin{proof}
The class $\cl_\rB(\cQ_{\mu,K}^{\phi,\phi_\tc,P,Q})\in\rH^2_\rB(X_K\times X_K,\dC)$ is given by the $(1,1)$-form
$\tp_1^*\phi^*\alpha\wedge\tp_2^*\phi_\tc^*\ol{\alpha_\tc}$ on $X_K(\dC)\times X_K(\dC)$. Part (1) follows immediately.

For (2), by (1) and Lemma \ref{le:doubling2}, there is a unique $\dC$-linear map $\rc\cQ_{\mu,K}$ satisfying (a). However, it also satisfies (b) due to \cite{BMM}*{Proposition~8.3}.
\end{proof}

\begin{remark}
The maps $\{\rc\cQ_{\mu,K}\}_K$ in Lemma \ref{le:doubling3} are clearly compatible under pullbacks, hence induce a $\dC$-linear map
$\rc\cQ_\mu\colon\sS(\rV(\bA_E^\infty))\to\rH^2_\rB(X_\infty\times X_\infty,\dC)\coloneqq\varinjlim_K\rH^2_\rB(X_K\times X_K,\dC)$.
\end{remark}

\begin{definition}\label{de:doubling}
We say that an element $Z_K\in\rZ^1(X_K\times X_K)_\dC$ is a \emph{doubling divisor (of level $K$)} for an element $\bphi\in\sS(\rV(\bA_E^\infty))^K$ if $\cl_\rB(Z_K)=\rc\cQ_{\mu,K}^{\bphi}$, and $Z_K$ has proper intersection with $\Delta X_K$.
\end{definition}

\begin{lem}\label{le:doubling4}
For every element $\bphi\in\sS(\rV(\bA_E^\infty))^K$, there exists a doubling divisor of level $K$.
\end{lem}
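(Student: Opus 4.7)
The plan is to first produce a $\dC$-linear combination $Z_K^0 \in \rZ^1(X_K \times X_K)_\dC$ whose Betti cycle class equals $\rc\cQ_{\mu,K}^{\bphi}$, and then modify $Z_K^0$ by rational equivalence to obtain $Z_K$ meeting the diagonal $\Delta X_K$ properly. The second move is standard on a smooth projective variety, so the real content is in the first step.

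For the first step, the isomorphism $\fd$ from Lemma~\ref{le:doubling2} allows us to write $\bphi$, after passing to a sufficiently small level subgroup $K' \subseteq K$, as a finite sum $\bphi = \sum_i c_i\,\fd(\phi_i \otimes \phi_{\tc,i})$ with $c_i \in \dC$, $\phi_i \in \Hom_E(A_{K'}, A_\mu, \varepsilon)$, and $\phi_{\tc,i} \in \Hom_E(A_{K'}, A_\mu^\vee, -\varepsilon)$. I then fix a finite extension $E'/E$ splitting $X_{K'}$ and pick $P, Q \in X_{K'}(\pi_0((X_{K'})_{E'}))$. By Lemma~\ref{le:doubling3}(2)(a), each class $\cQ_{\mu,K'}^{\phi_i,\phi_{\tc,i},P,Q} \in \CH^1((X_{K'} \times X_{K'})_{E'})_{M_\mu}$ has Betti cycle class $\rc\cQ_{\mu,K'}^{\fd(\phi_i \otimes \phi_{\tc,i})}$. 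Choosing a divisor representative of each, forming the weighted sum $\sum_i c_i \cQ_{\mu,K'}^{\phi_i,\phi_{\tc,i},P,Q}$ at level $K'$, pushing forward along the finite flat morphism $(X_{K'} \times X_{K'})_{E'} \to X_K \times X_K$, and rescaling by its degree produces the desired $Z_K^0 \in \rZ^1(X_K \times X_K)_\dC$. The identification of its Betti class with $\rc\cQ_{\mu,K}^{\bphi}$ uses the compatibility of the maps $\{\rc\cQ_{\mu,K}\}_K$ with pullbacks (recorded in the remark immediately after Lemma~\ref{le:doubling3}) combined with the identity $u_* u^* = \deg u$ on cohomology for the level-change part, together with the fact that $\rc\cQ_{\mu,K}^{\bphi}$ is canonically defined via $\tau'$ and is therefore $\Gal(E'/E)$-invariant, so that its averaged trace from $(X_K \times X_K)_{E'}$ equals $[E':E]$ times itself.

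For the second step, since $X_K \times X_K$ is smooth projective over $E$ of dimension $2(n-1)$ and $\Delta X_K$ is a smooth closed subvariety of codimension $n-1$, the classical moving lemma for divisors applies. Decomposing $Z_K^0 = \sum_j a_j D_j$ into prime divisors, for each $D_j$ whose support contains $\Delta X_K$ I fix an ample divisor $H$, choose $N \gg 0$ so that both $|NH|$ and $|NH + D_j|$ are very ample, and invoke Bertini to find general members $E_{1,j} \in |NH + D_j|$ and $E_{2,j} \in |NH|$ with $\Delta X_K \not\subseteq \supp(E_{1,j}) \cup \supp(E_{2,j})$. Replacing $D_j$ by $E_{1,j} - E_{2,j}$ yields $Z_K \in \rZ^1(X_K \times X_K)_\dC$ rationally equivalent to $Z_K^0$, hence with the same Betti class, and satisfying $\Delta X_K \not\subseteq \supp(Z_K)$. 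Dimension theory on the smooth ambient variety then forces every irreducible component of $Z_K \cap \Delta X_K$ to have dimension $n-2$, which is precisely the statement of proper intersection.

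The main obstacle is the bookkeeping in the first step, specifically verifying that the combined level-change and Galois-descent pushforwards of the algebraic representatives on $(X_{K'} \times X_{K'})_{E'}$ recover exactly $\rc\cQ_{\mu,K}^{\bphi}$ on $X_K \times X_K$, rather than some rescaled or Galois-averaged variant. This follows from the $\rG(\bA^\infty)$-equivariance of $\fd$ in Lemma~\ref{le:doubling2} together with the naturality of the assignment $\bphi \mapsto \rc\cQ_{\mu,K}^{\bphi}$, but requires a careful trace identification. The moving argument itself is standard.
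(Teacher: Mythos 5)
Your proof takes essentially the same route as the paper's: decompose $\bphi$ via the isomorphism $\fd$ of Lemma~\ref{le:doubling2}, build an algebraic representative from the $\cQ_{\mu,K}$-cycles of Lemma~\ref{le:doubling3}, descend the extension of scalars, and finally move the resulting divisor off $\Delta X_K$. The one genuine difference is that you pass to a smaller level $K'\subseteq K$ before decomposing $\bphi$ into pure tensors, whereas the paper writes ``by linearity, it suffices to consider $\bphi=\fd(\phi\otimes\phi_\tc)$ with $(\phi,\phi_\tc)\in\Hom_E(A_K,A_\mu,\varepsilon)\times\Hom_E(A_K,A_\mu^\vee,-\varepsilon)$'' directly at level $K$. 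That reduction, as literally stated, is incomplete: $\Hom_E(A_K,A_\mu,\varepsilon)\simeq\Omega(\mu,\varepsilon)^K$ and $\Hom_E(A_K,A_\mu^\vee,-\varepsilon)\simeq\Omega(\mu^\tc,-\varepsilon)^K$ are finite-dimensional, so the $\dC$-span of $\fd(\phi\otimes\phi_\tc)$ with both factors $K$-invariant is finite-dimensional, while $\sS(\rV(\bA_E^\infty))^K$ is not. Your passage to $K'$, followed by pushforward along $(X_{K'}\times X_{K'})_{E'}\to X_K\times X_K$ and rescaling by the degree, repairs this; the paper instead averages over $\Gal(E'/E)$ at fixed level, and the two devices are equivalent (pushforward of a cycle from a Galois cover, rescaled by degree, has the same cohomology class as the Galois average of its pullback, and the identity of Lemma~\ref{le:doubling3}(2)(a) --- independence of the class from $P,Q$ --- is exactly what makes the Galois conjugates contribute the same Betti class). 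Your explicit Bertini argument is a legitimate replacement for the paper's appeal to Chow's moving lemma; for divisors on a smooth projective variety the two amount to the same thing, and the Bertini version makes the field-of-definition bookkeeping transparent.

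One small imprecision in your write-up: the phrase ``$\rc\cQ_{\mu,K}^{\bphi}$ is canonically defined via $\tau'$ and therefore $\Gal(E'/E)$-invariant'' is not quite the right statement to invoke; a cohomology class in $\rH^2_\rB(X_K\times X_K,\dC)$ does not carry a $\Gal(E'/E)$-action. What you actually need, and what does the work, is that $\cl_\rB(\cQ_{\mu,K'}^{\phi,\phi_\tc,\sigma P,\sigma Q})$ equals $\rc\cQ_{\mu,K'}^{\fd(\phi\otimes\phi_\tc)}$ for \emph{every} $\sigma\in\Gal(E'/E)$, which is precisely the $P,Q$-independence in Lemma~\ref{le:doubling3}(2)(a); this is why the sum over Galois conjugates contributes $[E':E]$ copies of the same class. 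The conclusion stands once this is stated correctly.
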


\begin{proof}
By linearity, it suffices to consider the case $\bphi=\fd(\phi\otimes\phi_\tc)$ for $(\phi,\phi_\tc)\in\Hom_E(A_K,A_\mu,\varepsilon)\times\Hom_E(A_K,A_\mu^\vee,-\varepsilon)$. Take an intermediate number field $E\subseteq E'\subseteq \dC$ such that $E'$ is Galois over $E$, splits $X_K$, and satisfies $X_K(\pi_0(X'_E))\neq\emptyset$. We choose an element $P\in X_K(\pi_0(X'_E))$. Then
\[
Z_K\coloneqq\frac{1}{[E':E]}\sum_{\sigma\in\Gal(E'/E)}\cQ_{\mu,K}^{\phi,\phi_\tc,\sigma P,\sigma P}
\]
is an element in $\rZ^1(X_K\times X_K)_{M_\mu}$ such that $\cl_\rB(Z_K)=\rc\cQ_{\mu,K}^{\bphi}$ by Lemma \ref{le:doubling3}(2). By Chow's moving lemma, we may replace $Z_K$ by another rationally equivalent cycle that has proper intersection with $\Delta X_K$. The lemma follows.
\end{proof}

Now we can state and prove our doubling formula for CM data.

\begin{proposition}\label{pr:doubling}
Put $\bbf_i\coloneqq f_i^\rt\ast f_i^\vee$ for $i=1,2$. If we write $\bbf_1=\sum_s d_s\CF_{g_s^{-1}K\cap Kg_s^{-1}}$ as a finite sum with $d_s\in\dL$ and $g_s\in\rU(\rV)(\bA_F^\infty)$,\footnote{It is elementary to see that every element in $\sH_{K,\dL}$ can be written in this way.} then
\[
\eqref{eq:bbp}=\sum_sd_s\cdot\langle\tp_{135}^*\Delta^3_zX_{K_s},
(\Delta X_{K_s}\times\tT^{\rL_{g_s}\bbf_2}_{K_s}\times Z_{K_s}^s).\tp_{246}^*\Delta^3_zX_{K_s}\rangle_{X_{K_s}^6}^{\r{BB}}
\]
holds, where $K_s\coloneqq K\cap g_sKg_s^{-1}$, and $Z_{K_s}^s\in\rZ^1(X_K\times X_K)_\dC$ is an arbitrary doubling divisor for $\fd(\phi\otimes g_s\phi_\tc)$ (which exists by Lemma \ref{le:doubling4}).
\end{proposition}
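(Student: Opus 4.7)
The plan is to chain together Lemmas~\ref{le:doubling0} and~\ref{le:doubling1}, use the cohomological rigidity of $\cQ_{\mu,K}^{\phi,\phi_\tc,P,Q}$ to replace it by a doubling divisor for $\fd(\phi\otimes\phi_\tc)$, and finally convert the resulting 4-factor Beilinson--Bloch pairing on $X_K^4$ into a 6-factor pairing on $X_{K_s}^6$ by realizing each partial Hecke correspondence $\tT_K^{\CF_{g_s^{-1}K_s}}$ through the two natural morphisms $X_{K_s}\to X_K$ that witness it.

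First, I will choose an intermediate field $E\subseteq E'\subseteq\dC$, Galois over $E$ and sufficiently large that $X_K$ is split by $E'$ and $D_K^{n-1}$ admits a representative $\sum_i c_iP_i$ with $c_i\in\dQ$ and $P_i\in X_K(\pi_0(X'_K))$. Lemma~\ref{le:doubling0} then expresses \eqref{eq:bbp} as $\frac{1}{[E':E](\deg D_K^{n-1})^2}\sum_{i,j}c_ic_j\,H_{ij}$, and Lemma~\ref{le:doubling1} rewrites each $H_{ij}$ as a 4-factor Beilinson--Bloch pairing on $X^4_{K,E'}$ built from $\cQ_{\mu,K}^{\phi,\phi_\tc,P_i,P_j}$, $\tp_{123}^*\Delta^3_zX_K$, and $\tp_{124}^*\Delta^3_{z,\bbf_1,\bbf_2}X_K$. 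By Lemma~\ref{le:doubling3}(1) the class $\cl_\rB(\cQ_{\mu,K}^{\phi,\phi_\tc,P_i,P_j})$ depends only on $\fd(\phi\otimes\phi_\tc)$, so it differs from the class of any doubling divisor $Z_K$ for $\fd(\phi\otimes\phi_\tc)$ (which exists by Lemma~\ref{le:doubling4}) by a homologically trivial divisor; Lemma~\ref{le:divisor_coh} then implies that $H_{ij}$ is unchanged if we substitute $Z_K$ for $\cQ_{\mu,K}^{\phi,\phi_\tc,P_i,P_j}$, so $H_{ij}$ becomes independent of $(i,j)$. Summing and using $\sum_ic_i=\deg D_K^{n-1}$ as a constant function on $\pi_0(X_K)$ (Lemma~\ref{le:almost_ample}(4)), together with Galois descent from $E'$ to $E$, reduces \eqref{eq:bbp} to the single Beilinson--Bloch pairing
\[
\langle\tp_{123}^*\Delta^3_zX_K,(X_K\times X_K\times Z_K).\tp_{124}^*\Delta^3_{z,\bbf_1,\bbf_2}X_K\rangle^{\r{BB}}_{X_K^4}.
\]

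Next, unfolding $\Delta^3_{z,\bbf_1,\bbf_2}X_K=(\tT_K^{\bbf_1}\otimes\tT_K^{\bbf_2}\otimes\id)^*\Delta^3_zX_K$ via the projection formula \cite{Bei87}*{4.0.3} re-expresses the above 4-factor pairing as a 6-factor pairing on $X_K^6$, of the form $\langle\tp_{135}^*\Delta^3_zX_K,(\tT_K^{\bbf_1}\times\tT_K^{\bbf_2}\times Z_K).\tp_{246}^*\Delta^3_zX_K\rangle^{\r{BB}}_{X_K^6}$. Now decompose $\bbf_1=\sum_sd_s\CF_{g_s^{-1}K_s}$: for each $s$, the partial Hecke correspondence $\tT_K^{\CF_{g_s^{-1}K_s}}$ is realized (up to normalization by $\vol(K_s)$) as $(u^{K_s}_K\times\pi_s)_*[X_{K_s}]$, where $\pi_s$ is the composition of the conjugation isomorphism $X_{K_s}\xrightarrow{\sim}X_{g_s^{-1}Kg_s\cap K}$ with the forgetful map to $X_K$. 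Pulling each summand back through this morphism transports positions $(1,2)$ from $X_K\times X_K$ down to $X_{K_s}$: the first arm $u^{K_s}_K$ contracts the pair into the diagonal $\Delta X_{K_s}$ on positions $(1,2)$, while the second arm $\pi_s$ imposes a Hecke translation by $g_s$ on the $\tp_{246}^*\Delta^3_zX_K$ side. This translation conjugates $\tT_K^{\bbf_2}$ into $\tT_{K_s}^{\rL_{g_s}\bbf_2}$ on positions $(3,4)$ and, by the $\rG(\bA^\infty)$-equivariance of $\fd$ provided by Lemma~\ref{le:doubling2}, carries the doubling divisor $Z_K$ for $\fd(\phi\otimes\phi_\tc)$ into a doubling divisor $Z^s_{K_s}$ for $\fd(\phi\otimes g_s\phi_\tc)$ on positions $(5,6)$; the two copies of $\Delta^3_zX_K$ on source and target lift via the compatibility in Definition~\ref{de:projector_hodge}(2) to $\tp_{135}^*\Delta^3_zX_{K_s}$ and $\tp_{246}^*\Delta^3_zX_{K_s}$, respectively, yielding the stated formula.

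The main obstacle is this final step: carefully tracking the Hecke twists as the partial correspondence $\tT_K^{\CF_{g_s^{-1}K_s}}$ is resolved through $X_{K_s}$, in particular verifying that the second arm $\pi_s$ precisely conjugates $\bbf_2$ into $\rL_{g_s}\bbf_2$ and transports the Kudla--Milson class $\fd(\phi\otimes\phi_\tc)$ into $\fd(\phi\otimes g_s\phi_\tc)$, and that all normalization constants implicit in $\vol(K)$, $\vol(K_s)$, and the definition of $\tT^{\bbf}_K$ cancel correctly to recover the coefficient $d_s$ (rather than some twisted version).
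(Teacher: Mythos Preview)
Your first half---applying Lemmas~\ref{le:doubling0} and~\ref{le:doubling1}, then using Lemmas~\ref{le:doubling3} and~\ref{le:divisor_coh} to replace $\cQ_{\mu,K}^{\phi,\phi_\tc,P_i,P_j}$ by a doubling divisor and collapse the $(i,j)$-sum---is fine and matches the paper. The gap is in your final step, and it is exactly the obstacle you flag. The map $\pi_s$ acts only on position~$2$ of $X_K^6$; your claim that it ``conjugates $\tT_K^{\bbf_2}$ into $\tT_{K_s}^{\rL_{g_s}\bbf_2}$ on positions $(3,4)$'' and transports $Z_K$ on $(5,6)$ has to propagate through the diagonal $\tp_{246}^*\Delta^3_zX_K$, and you have not carried this out. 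Moreover, having already replaced $\cQ$ by $Z_K$, you would need the translate of $Z_K$ to be a doubling divisor for $\fd(\phi\otimes g_s\phi_\tc)$: the cohomology class transforms correctly by Lemma~\ref{le:doubling2}, but the proper-intersection condition in Definition~\ref{de:doubling} need not survive Hecke translation.

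The paper avoids both issues by reversing your order of operations. It stays at the four-factor level (at a deeper level $K'=\bigcap_s(g_s^{-1}Kg_s\cap K_s)$, via Remark~\ref{re:ggp_refined_bis}(1)), decomposes $\bbf_1=\sum_sd_s\CF_{g_s^{-1}K_s}$, and then invokes Lemma~\ref{le:fj_function}: since the four-factor expression arose from an FJ-cycle BBP pairing through Lemmas~\ref{le:doubling0}--\ref{le:doubling1}, the invariance $\FJ(\rL_gf_1,\rL_gf_2;g\phi)=\FJ(f_1,f_2;\phi)$ converts the data $(\CF_{g_s^{-1}K_s},\bbf_2,\phi_\tc)$ into $(\CF_{K_s},\rL_{g_s}\bbf_2,g_s\phi_\tc)$ in one stroke. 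Crucially, the paper keeps the \emph{geometric} class $\cQ^{\phi,\phi_\tc}$ through this translation---it becomes $\cQ^{\phi,g_s\phi_\tc}$ transparently---and only afterward descends to $K_s$, replaces $\cQ^{\phi,g_s\phi_\tc}$ by a fresh doubling divisor $Z^s_{K_s}$ (Lemma~\ref{le:doubling4}), sums over $i,j$, and passes to six factors via \cite{Bei87}*{4.0.3} at the very end. The missing ingredient in your argument is precisely Lemma~\ref{le:fj_function}.
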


\begin{proof}
To shorten notation, we put $\delta_K\coloneqq(\deg D_K^{n-1})^{-1}$.

By Lemma \ref{le:doubling0}, Lemma \ref{le:doubling1}, we have
\begin{align}\label{eq:bbp0}
\eqref{eq:bbp}&=\frac{\delta_K^2}{[E':E]}\sum_{i,j}c_ic_j\langle \tp_{123}^*\Delta^3_zX_K,
(X'_K\times X'_K\times\cQ_{\mu,K}^{\phi,\phi_\tc,P_i,P_j}).\tp_{124}^*\Delta^3_{z,\bbf_1,\bbf_2}X_K\rangle_{(X'_K)^4}^{\r{BB}}.
\end{align}
By Remark \ref{re:ggp_refined_bis}(1), we may replace $K$ by $K'\coloneqq\bigcap_s(g_s^{-1}Kg_s\cap K_s)$ and possibly enlarge $E'$ to obtain
\begin{align}\label{eq:bbp1}
\eqref{eq:bbp0}&=\frac{\delta_{K'}^2}{[E':E]}\sum_{i,j}c_ic_j\langle \tp_{123}^*\Delta^3_zX_{K'},
(X'_{K'}\times X'_{K'}\times\cQ_{\mu,{K'}}^{\phi,\phi_\tc,P_i,P_j}).\tp_{124}^*\Delta^3_{z,\bbf_1,\bbf_2}X_{K'}\rangle_{(X'_{K'})^4}^{\r{BB}} \notag\\
&=\frac{d_{K'}^2}{[E':E]}\sum_{i,j,s}c_ic_jd_s\langle \tp_{123}^*\Delta^3_zX_{K'},
(X'_{K'}\times X'_{K'}\times\cQ_{\mu,{K'}}^{\phi,\phi_\tc,P_i,P_j}).\tp_{124}^*\Delta^3_{z,\CF_{g_s^{-1}K\cap Kg_s^{-1}},\bbf_2}X_{K'}\rangle_{(X'_{K'})^4}^{\r{BB}}.
\end{align}
Since $\rL_{g_s}\CF_{g_s^{-1}K\cap Kg_s^{-1}}=\CF_{K\cap g_sKg_s^{-1}}=\CF_{K_s}$, by Lemma \ref{le:fj_function}, we have
\begin{align}\label{eq:bbp2}
\eqref{eq:bbp1}&=\frac{\delta_{K'}^2}{[E':E]}\sum_{i,j,s}c_i c_j d_s\langle\tp_{123}^*\Delta^3_zX_{K'},
(X'_{K'}\times X'_{K'}\times\cQ_{\mu,K'}^{\phi,g_s\phi_\tc,P_i^s,P_j^s}).
\tp_{124}^*\Delta^3_{z,\CF_{K_s},\rL_{g_s}\bbf_2}X_{K'}\rangle_{(X'_{K'})^4}^{\r{BB}} \notag\\
&=\sum_s\frac{d_s\cdot\delta_{K'}^2}{[E':E]}\sum_{i,j}c_i c_j\langle\tp_{123}^*\Delta^3_zX_{K'},
(X'_{K'}\times X'_{K'}\times\cQ_{\mu,K'}^{\phi,g_s\phi_\tc,P_i^s,P_j^s}).
\tp_{124}^*\Delta^3_{z,\CF_{K_s},\rL_{g_s}\bbf_2}X_{K'}\rangle_{(X'_{K'})^4}^{\r{BB}}.
\end{align}
For each individual $s$, we may descend the corresponding term down to $X'_{K_s}$ again by Remark \ref{re:ggp_refined_bis}(1). Choose a representative $\sum_ic_i^sP_i^s$ of $D_{K_s}^{n-1}$ with $c_i^s\in\dQ$ and $P_i^s\in X_{K_s}(\pi_0(X'_{K_s}))$. Moreover, by Lemma \ref{le:divisor_coh} and Lemma \ref{le:doubling3}, we may replace $\cQ_{\mu,K_s}^{\phi,g_s\phi_\tc,P_i^s,P_j^s}$ by $Z_{K_s}^s\otimes_EE'$. Then we have
\begin{align*}
\eqref{eq:bbp2}&=\sum_sd_s\cdot\delta_{K_s}^2\cdot\sum_{i,j}c_i^s c_j^s\langle \tp_{123}^*\Delta^3_zX_{K_s},
(X_{K_s}\times X_{K_s}\times Z_{K_s}^s).\tp_{124}^*\Delta^3_{z,\CF_{K_s},\rL_{g_s}\bbf_2}X_{K_s}\rangle_{(X_{K_s})^4}^{\r{BB}}\\
&=\sum_sd_s\langle \tp_{123}^*\Delta^3_zX_{K_s},
(X_{K_s}\times X_{K_s}\times Z_{K_s}^s).\tp_{124}^*\Delta^3_{z,\CF_{K_s},\rL_{g_s}\bbf_2}X_{K_s}\rangle_{(X_{K_s})^4}^{\r{BB}},
\end{align*}
where in the second equality, we use the fact that $\sum_ic_i^s=\deg D_{K_s}^{n-1}=\delta_{K_s}^{-1}$. The proposition then follows by \cite{Bei87}*{4.0.3}.
\end{proof}

\begin{remark}\label{re:bilinear}
Proposition \ref{pr:doubling} implies that, for given data $f_1,f_2,f_1^\vee,f_2^\vee,z$, the assignment
\begin{align*}
\Hom_E(A_K,A_\mu,\varepsilon)\times\Hom_E(A_K,A_\mu^\vee,-\varepsilon)&\to\dC\\
(\phi,\phi_\tc)&\mapsto\vol(K)^2\cdot\langle\FJ(f_1,f_2;\phi)_K^z,\FJ(f_1^\vee,f_2^\vee;\phi_\tc)_K^z\rangle_{X_K\times X_K,A_\mu}^{\r{BBP}}
\end{align*}
factors through $\Omega(\mu,\varepsilon)\otimes_{M_\mu}\Omega(\mu^\tc,-\varepsilon)$ and extends uniquely to an $M_\mu$-linear map
\[
\Omega(\mu,\varepsilon)\otimes_{M_\mu}\Omega(\mu^\tc,-\varepsilon)\to\dC
\]
by considering all level subgroups $K$.
\end{remark}

\subsection{Arithmetic invariant functionals}
\label{ss:invariant}

In view of Proposition \ref{pr:doubling}, we need to study global arithmetic invariant functionals defined as follows.

\begin{definition}[Global arithmetic invariant functional]\label{de:global_arithmetic}
Let $K\subseteq\rG(\bA^\infty)$ be a level subgroup. For test functions $\bbf\in\sH_{K,\dC}$ and $\bphi\in\sS(\rV(\bA_E^\infty))^K$, we define the \emph{global arithmetic invariant functional} to be
\[
\cI^z_K(\bbf,\bphi)\coloneqq\langle\tp_{135}^*\Delta^3_zX_K,
(\Delta X_K\times\tT^{\bbf}_K\times Z_K).\tp_{246}^*\Delta^3_zX_K\rangle_{X_K^6}^{\r{BB}},
\]
where $Z_K\in\rZ^1(X_K\times X_K)_\dC$ is an arbitrary doubling divisor for $\bphi$ (Definition \ref{de:doubling}, linearly extended to coefficients in $\dC$).
\end{definition}

We introduce two important conventions, which will be adopted from now on.
\begin{enumerate}
  \item We will regard $\tT^{\bbf}_K$ as an algebraic cycle, rather than a Chow cycle, on $X_K\times X_K$.

  \item Whenever we have two cycles $A$ and $B$ in a regular scheme $X$ that have proper intersection, $A.B$ will be regarded as the cycle $\sum_C m_C(A,B)\cdot C$ (rather than the associated Chow cycle), where the sum is taken over all irreducible components $C$ in $A\cap B$ with $m_C(A,B)$ the intersection multiplicity.
\end{enumerate}

\begin{definition}
Let $K\subseteq\rG(\bA^\infty)$ be a level subgroup. For a doubling divisor $Z_K\in\rZ^1(X_K\times X_K)_\dC$ for $\bphi\in\sS(\rV(\bA_E^\infty))^K$, we put
\[
Z_K^\heartsuit\coloneqq Z_K-\tp_2^*(\Delta X_K.Z_K-Z(\bphi)_K),
\]
where we regard $\Delta X_K.Z_K$ as in $\rZ^1(X_K)_\dC$ and recall that $\tp_2\colon X_K\times X_K\to X_K$ is the projection to the second factor.
\end{definition}

It is clear that $Z_K^\heartsuit\in\CZ^1(X_K\times X_K)$ (Definition \ref{de:chow_convergent}), $\Delta X_K.Z_K^\heartsuit=Z(\bphi)_K$, $\cl_\rB(Z_K^\heartsuit)=\cl_\rB(Z_K)$ by Lemma \ref{le:doubling3}(2), and
\begin{align}\label{eq:calibration1}
\cI^z_K(\bbf,\bphi)=\langle\tp_{135}^*\Delta^3_zX_K,
(\Delta X_K\times\tT^{\bbf}_K\times Z_K^\heartsuit).\tp_{246}^*\Delta^3_zX_K\rangle_{X_K^6}^{\r{BB}}
\end{align}
by Lemma \ref{le:divisor_coh}.

To proceed, we introduce the notation of (relative) regular semisimple elements.

\begin{definition}\label{de:regular_unitary}
Consider a field extension $F'/F$ and put $E'\coloneqq E\otimes_FF'$.
\begin{enumerate}
  \item We say that a pair of elements $(\xi,x)\in\rU(\rV)(F')\times\rV(E')$ is \emph{regular semisimple} if
  the vectors $\{\xi^i x\res i=0,\dots,n-1\}$ span the $E'$-module $\rV(E')$.
%    \begin{enumerate}
%       \item $\xi$ is regular semisimple as an element in $\End_E(\rV)(E')$;
%       \item the set $\{x,\xi x,\dots,\xi^{n-1} x\}$ forms an $E'$-basis of $\rV(E')$;
%       \item the set $\{x^*,x^*\xi,\dots,x^*\xi^{n-1}\}$ forms an $E'$-basis of $\rV^\vee(E')$.
%    \end{enumerate}
%    In (c), we use the isomorphism $\Res_{E/F}\rV\xrightarrow{\sim}\Res_{E/F}\rV^\vee$ of $F$-schemes sending $x$ to $x^*\coloneqq(\;,x)_\rV$.

  \item The group $\rU(\rV)(F')$ acts on $\rU(\rV)(F')\times\rV(E')$ via the formula $(\xi,x)g=(g^{-1}\xi g,g^{-1}x)$, which preserves regular semisimple pairs. Denote by $[\rU(\rV)(F')\times\rV(E')]$ the orbits of $\rU(\rV)(F')\times\rV(E')$ under the above action, and by $[\rU(\rV)(F')\times\rV(E')]_{\r{rs}}$ the subset of regular semisimple orbits.

  \item We say that a function on $\rU(\rV)(F')\times\rV(E')$ is \emph{regularly supported} if its support consists of only regular semisimple pairs.

  \item We say that a function $\boldsymbol{F}$ on $\rU(\rV)(\bA_F^\infty)\times\rV(\bA_E^\infty)$ is \emph{regularly supported} at some nonarchimedean place $v$ of $F$ if we can write $\boldsymbol{F}=\boldsymbol{F}^v\otimes\boldsymbol{F}_v$ in which $\boldsymbol{F}_v$, as a function on $\rU(\rV)(F_v)\times\rV(E_v)$, is regularly supported in the sense of (3).
\end{enumerate}
\end{definition}

\begin{proposition}\label{pr:intersection}
Let $K,\bbf,\bphi,Z_K$ be as in Definition \ref{de:global_arithmetic}.
\begin{enumerate}
  \item The cycles $\Delta X_K\times\tT^{\bbf}_K\times Z_K^\heartsuit$ and $\tp_{246}^*\Delta^3X_K$ intersect properly in $X_K^6$.

  \item If $\bbf\otimes\bphi$ is regularly supported at some nonarchimedean place $v$ of $F$, then $\tp_{135}^*\Delta^3X_K$ and $(\Delta X_K\times\tT^{\bbf}_K\times Z_K^\heartsuit).\tp_{246}^*\Delta^3X_K$ have empty intersection on $X_K^6$.
\end{enumerate}
\end{proposition}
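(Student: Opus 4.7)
My plan splits into a dimension count for (1) and a moduli-theoretic argument for (2).

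For (1), I would observe that $X_K^6$ has dimension $6(n-1)$, that $\Delta X_K\times\tT^{\bbf}_K\times Z_K^\heartsuit$ has codimension $(n-1)+(n-1)+1=2n-1$, and that $\tp_{246}^*\Delta^3 X_K$ has codimension $2(n-1)$. Proper intersection amounts to showing that the intersection has dimension at most $2n-3$. Parametrizing $\tp_{246}^*\Delta^3 X_K\simeq X_K^4$ by $(x_1,x_3,x_5,a)$ with $x_2=x_4=x_6=a$, the intersection with $\Delta X_K\times\tT^{\bbf}_K\times Z_K^\heartsuit$ is cut out by the three conditions $x_1=a$ (codimension $n-1$), $(x_3,a)\in\tT^{\bbf}_K$ (codimension $n-1$, since both projections $\tT^{\bbf}_K\to X_K$ of the Hecke correspondence are finite), and $(x_5,a)\in Z_K^\heartsuit$ (codimension $\geq 1$ on each irreducible component, since $Z_K^\heartsuit$ is a divisor). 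A case analysis over the possible components of $Z_K^\heartsuit$ (horizontal, diagonal, generic, and in particular the vertical slices $X_K\times D_0$ coming from the correction term $-\tp_2^*(\Delta X_K.Z_K-Z(\bphi)_K)$) shows that each component of the resulting intersection has dimension at most $2n-3$; the vertical case contributes exactly $(n-2)+0+(n-1)=2n-3$ after accounting for the finiteness of $(x_3,a)\in\tT^{\bbf}_K$.

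For (2), I would exploit the moduli interpretation of $X_K$ as a unitary Shimura variety. A point in the hypothetical set-theoretic intersection is necessarily of the form $(a,a,a,a,a,a)$ for some $\ol{E}$-point $a$ of $X_K$ satisfying $(a,a)\in\supp(\tT^{\bbf}_K)$ and $(a,a)\in\supp(Z_K^\heartsuit)$. The first condition yields an element $\xi\in\rU(\rV)(F)$ fixing the Shimura datum of $a$, whose adelic class matches some $g\in\supp(\bbf)$ modulo $K$. In the main case of the second condition, namely $(a,a)\in\supp(Z_K)$ where $Z_K$ is a doubling divisor for $\bphi$, the relation $\Delta X_K.Z_K^\heartsuit=Z(\bphi)_K$ forces $a$ to lie on some special cycle $Z(x,g_0)_K$ for a vector $x\in\rV(E)^+$ in the $\rG(\bA^\infty)$-orbit of $\supp(\bphi)$. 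Because $a$ lies on the image of $\Sh(\rG^x,\rh_{\rV^x,\tau'})\to X_K$, the automorphism $\xi$ descends to $\rU(\rV^x)(F)\subseteq\rU(\rV)(F)$, whence $\xi x=x$. Therefore $\{x,\xi x,\dots,\xi^{n-1}x\}=\{x\}$ spans at most an $E$-line in $\rV(E)$, so $(\xi,x)$ is \emph{not} regular semisimple (Definition \ref{de:regular_unitary}). Its localization $(\xi_v,x_v)$ is likewise non-regular-semisimple, contradicting the regular support of $\bbf_v\otimes\bphi_v$ at $v$.

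The main obstacle is to rule out the auxiliary contribution from the vertical correction $-\tp_2^*(\Delta X_K.Z_K-Z(\bphi)_K)\subseteq Z_K^\heartsuit$, whose support consists of pairs $(a,a)$ with $a\in\supp(\Delta X_K.Z_K-Z(\bphi)_K)$, and such $a$ does not \emph{a priori} come equipped with a special vector $x$. Because $\Delta X_K.Z_K-Z(\bphi)_K$ is a principal divisor on $X_K$, one handles this by applying Chow's moving lemma to the representative of $Z_K$ within its linear equivalence class, choosing it so that this principal divisor's support avoids the fixed-point locus of $\tT^{\bbf}_K$ on $\Delta X_K$; under the regular support hypothesis at $v$, the relevant fixed-point locus is proper enough (in particular, has proper intersection with $\Delta X_K$) to make such a moving possible, completing the argument.
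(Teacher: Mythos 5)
For part (1), your approach is essentially the same as the paper's, just stated less cleanly. The paper observes directly that any irreducible component $C$ of the intersection is a closed subscheme of $Y\times_{X_K}Z$, where $Y$ is a component of the support of $\tT^{\bbf}_K$ and $Z$ is a component of the support of $Z_K^\heartsuit$; since $Y\to X_K$ is finite \'{e}tale and $\dim Z=2n-3$, one gets $\dim C\leq 2n-3$ with no case analysis, and the lower bound $\dim C\geq 2n-3$ comes from the regularity of $\tp_{246}^*\Delta^3 X_K$. Your ``case analysis over components of $Z_K^\heartsuit$'' is unnecessary and doesn't add anything: the argument already works uniformly on each component.

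For part (2), there is a genuine gap. Your key step asserts that since $a$ lies on (the image of) $\Sh(\rG^x,\rh_{\rV^x,\tau'})\to X_K$, the element $\xi\in\rU(\rV)(F)$ produced by the Hecke correspondence ``descends to $\rU(\rV^x)(F)$,'' hence fixes $x$. This is unjustified and false in general: the relation $(a,a)\in\tT_{KhK}$ produces some $\xi\in\rU(\rV)(F)$ linking $a$ to itself adelically, but there is no reason for $\xi$ to stabilize the sub-Shimura variety cut out by $x$, i.e., no reason for $\xi\in\rU(\rV^x)(F)$. What the uniformization actually gives you, writing $a=[z_x,g_xg]$ with $z_x\in\cD^x$ and $g_x\in\rU(\rV^x)(\bA_F^\infty)$, is the pair of relations $z_x=\xi z_x$ and $g_xg=\xi g_xgh$. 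The first relation says only that $\xi$ stabilizes the negative line $L\subseteq\rV\otimes_{E,\tau'}\dC$ corresponding to $z_x$, which is perpendicular to $x$ --- it does not say $\xi x=x$. The paper's argument then proceeds differently: since $\xi$ is unitary and $\xi L=L$ with $L\perp x$, one has $L\perp\xi^{-i}x$ for all $i$, so the $E$-span of $\{x,\xi^{-1}x,\dots,\xi^{-(n-1)}x\}$ lies in the proper subspace $L^\perp$ and is therefore a proper $E$-subspace of $\rV$. Combined with the second relation, which gives $h^ig^{-1}x=g^{-1}g_x^{-1}\xi^{-i}x$, one concludes that $(h_v,g_v^{-1}x)$ is not regular semisimple, contradicting the regular support of $\bbf_v\otimes\bphi_v$. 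You would need to replace the false step ``$\xi x=x$'' by this perpendicularity argument.

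Your attempted handling of the vertical correction via Chow's moving lemma is also misdirected. The construction of $Z_K^\heartsuit$ is designed precisely so that $\Delta X_K.Z_K^\heartsuit=Z(\bphi)_K$ holds exactly on the diagonal, with no residual terms; the paper uses this directly to reduce part (2) to showing $\tT_K^{\bbf}\cap Z(\bphi)_K=\emptyset$, and there is nothing left over to move away.
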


\begin{proof}
For (1), we have to show that every irreducible component $C$ of the intersection of $\Delta X_K\times\tT^{\bbf}_K\times Z_K^\heartsuit$ and $\tp_{246}^*\Delta^3X_K$ has dimension $2n-3$. However, it is easy to see that $C$ is a closed subscheme of the fiber product
\[
\Delta^3 X_K\times_{(X_K\times X_K\times X_K)}(X_K\times Y\times Z)\simeq Y\times_{X_K}Z,
\]
where $Y$ (resp. $Z$) is an irreducible component in the support of $\tT^{\bbf}_K$ (resp.\ $Z_K^\heartsuit$). But then the morphism $Y\to X_K$ is finite \'{e}tale, and $Z$ has dimension $2n-3$. Thus, $C$ has dimension at most $2n-3$. On the other hand, since $\tp_{246}^*\Delta^3X_K$ is a regular subscheme, the dimension of $C$ is at least $2n-3$.

For (2), it is clear that the statement is equivalent to that $\Delta X_K\cap\tT_K^{\bbf}\cap Z_K^\heartsuit$ is empty in $X_K\times X_K$. As $\Delta X_K\cap Z_K^\heartsuit=Z(\bphi)_K$, we have to show that $\tT_K^{\bbf}\cap Z(\bphi)_K=\emptyset$, which can be checked on $X_K(\dC)\times X_K(\dC)$. By complex uniformization, we have
\[
X_K(\dC)=\rU(\rV)(F)\backslash\(\cD\times\rU(\rV)(\bA_F^\infty)/K\),
\]
where $\cD$ is the corresponding hermitian domain of dimension $n-1$.

If $\bbf\equiv 0$, then there is nothing to prove. Otherwise, we have $\bphi(0)=0$. Thus, we need to show that for every $x\in\rV(E)^+$ and $g,h\in\rU(\rV)(\bA_F^\infty)$, if $\bbf(h)\bphi(g^{-1}x)\neq 0$, then $\tT_{KhK}\cap Z(x,g)=\emptyset$ in $X_K(\dC)\times X_K(\dC)$. We prove by contradiction. Let $\cD^x\subseteq \cD$ be the subdomain that is perpendicular to $x$. If $\tT_{KhK}\cap Z(x,g)\neq\emptyset$, then we may find $z_1\in\cD$, $g_1\in\rU(\rV)(\bA_F^\infty)$, $h\in KhK$, $z_x\in\cD^x$, $g_x\in\rU(\rV^x)(\bA_F^\infty)$, and $\xi\in\rU(\rV)(F)$, such that $(z_x,g_xg)=(z_1,g_1)$ and $(z_x,g_xg)=\xi(z_1,g_1h)$. These relations imply that $z_x=\xi z_x$ and $g_xg=\xi g_xgh$. The second equality implies that $h^ig^{-1}x=g^{-1}g_x^{-1}\xi^{-i}x$ for $i\geq 0$. Now since $z_x=\xi z_x$, the vectors $\{x,\xi^{-1}x,\dots,\xi^{-(n-1)}x\}\subseteq\rV(E)$ are linearly dependent. In particular, the pair $(h_v,g_v^{-1}x)\in\rU(\rV)(F_v)\times\rV(E_v)$ is not regular semisimple, which is a contradiction. Thus, (2) follows.
\end{proof}

We would also like to know whether one can choose a cycle representative of $z_K$ such that p$\tp_{135}^*\Delta^3_zX_K$ and $(\Delta X_K\times\tT^{\bbf}_K\times Z_K^\heartsuit).\tp_{246}^*\Delta^3_zX_K$ have empty intersection as well. At this moment, we do not find a uniform answer to this question. On the other hand, the contribution of the difference $\Delta^3X_K-\Delta^3_zX_K$ in the height pairing should be negligible in the comparison of relative trace formulae. In what follows, we will only consider $\Delta^3X_K$ in the decomposition into local heights, suggested by Proposition \ref{pr:intersection}. Moreover, in this article, we only consider the local heights at good inert primes, which we now explain.

\begin{definition}\label{de:good_inert}
We say that a prime $\fp$ of $F$ is a \emph{good inert prime} (with respect to $K,\bbf,\bphi$) if
\begin{itemize}
  \item $\fp$ is inert in $E$,

  \item the underlying rational prime $p$ is odd and unramified in $E$,

  \item if we denote by ${\underline\fp}$ the set of all primes of $F$ above $p$ that are inert in $E$, then there exists a self-dual lattice $\Lambda_\fq\subseteq\rV(F_\fq)$ for every $\fq\in\ul\fp$ such that
      \begin{itemize}
        \item $K=K^{\underline\fp}\times\prod_{\fq\in{\underline\fp}}K_\fq$ in which $K_\fq$ is the stabilizer of $\Lambda_\fq$ for every $\fq\in{\underline\fp}$,

        \item $\bbf=\bbf^{\underline\fp}\otimes\bigotimes_{\fq\in{\underline\fp}}\bbf_\fq$ in which $\bbf_\fq=\CF_{K_\fq}$,

        \item $\bphi=\bphi^{\underline\fp}\otimes\bigotimes_{\fq\in{\underline\fp}}\bphi_\fq$ in which $\bphi_\fq=\CF_{\Lambda_\fq}$.
      \end{itemize}
\end{itemize}
\end{definition}

We fix a good inert prime $\fp$. From now on, we work in the category $\Sch_{/O_{E_\fp}}$.

Let $\cX_K$ be the canonical integral model of $X_K$ over $O_{E_\fp}$ (Definition \ref{de:integral_canonical}), which is a proper smooth scheme in $\Sch_{/O_{E_\fp}}$ of relative dimension $n-1$. Then the Zariski closure of $\tT_K^{\bbf}$ in $\cX_K\times\cX_K$ is an \'{e}tale correspondence, which will be denoted by the same notation. Let $\cZ_K$ (resp.\ $\cZ(\bphi)_K$) be the Zariski closure of $Z_K$ (resp.\ $Z(\bphi)_K$) in $\cX_K\times\cX_K$ (resp.\ $\cX_K$). Similar to $Z_K^\heartsuit$, we put
\begin{align}\label{eq:calibration}
\cZ_K^\heartsuit\coloneqq\cZ_K-\tp_2^*(\Delta\cX_K.\cZ_K-\cZ(\bphi)_K),
\end{align}
which is a formal series of divisors on $\cX_K$, whose generic fiber is $Z_K^\heartsuit$.

From now on, we work in the category $\Sch_{/O_{E_\fp}}$.

\begin{definition}[Local arithmetic invariant functional]\label{de:local_arithmetic}
Let $K,\bbf,\bphi,Z_K$ be as in Definition \ref{de:global_arithmetic} such that $\bbf\otimes\bphi$ is regularly supported at some nonarchimedean place $v$ of $F$,\footnote{It is clear that $v$ can not be in $\underline\fp$.} we define the \emph{local arithmetic invariant functional} at (a good inert prime) $\fp$ to be
\[
\cI_K(\bbf,\bphi)_\fp\coloneqq 2\log|O_F/\fp|\cdot\chi\(\cO(\tp_{135}^*\Delta^3\cX_K)\otimes^\dL_{\cO_{\cX_K^6}}
\cO((\Delta\cX_K\times\tT^{\bbf}_K\times\cZ_K^\heartsuit).\tp_{246}^*\Delta^3\cX_K)\),
\]
where $\chi$ denotes the Euler--Poincar\'{e} characteristic (see Remark \ref{re:local_arithmetic} below), and for a formal series $\sum_jc_jZ_j$ of cycles on $\cX_K^6$, we put $\cO(\sum_jc_jZ_j)\coloneqq\sum_jc_j\cO_{Z_j}$ as a formal series of $\cO_{\cX_K^6}$-modules.\footnote{The reason we add the factor $2$ in front of $\log|O_F/\fp|$ is the following: $\cI_K(\bbf,\bphi)_\fp$ is supposed to ``approximate'' the local term of $\cI^z_K(\bbf,\bphi)$ at the unique place $u$ of $E$ above $\fp$, hence the factor $c(u)$ in \eqref{eq:bb_decompose} is $\log|O_E/\fp O_E|=2\log|O_F/\fp|$.}
\end{definition}

\begin{remark}\label{re:local_arithmetic}
For a Noetherian scheme $X$, we denote by $\rD^\rb_{\r{coh}}(X)$ the bounded derived category of $\cO_X$-modules with coherent cohomology. By Proposition \ref{pr:intersection}(2),
\[
\cO(\tp_{135}^*\Delta^3\cX_K)\otimes^\dL_{\cO_{\cX_K^6}}
\cO((\Delta\cX_K\times\tT^{\bbf}_K\times\cZ_K^\heartsuit).\tp_{246}^*\Delta^3\cX_K)
\]
is a formal series in $\rD^\rb_{\r{coh}}(\cX_K^6\otimes_\dZ\dF_p)$, which implies that its Euler--Poincar\'{e} characteristic is a formal series in $\dC$.
\end{remark}

\begin{proposition}\label{pr:local_invariant}
In the situation of Definition \ref{de:local_arithmetic}, we have
\[
\cI_K(\bbf,\bphi)_\fp=2\log|O_F/\fp|\cdot\chi\(\cO(\tT^{\bbf}_K)\otimes^\dL_{\cO_{\cX_K^2}}\cO(\Delta\cZ(\bphi)_K)\).
\]
\end{proposition}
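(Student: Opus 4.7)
The plan is to reduce the six-fold derived intersection on $\cX_K^6$ defining $\cI_K(\bbf,\bphi)_\fp$ to the two-fold derived intersection on $\cX_K^2$ asserted in the proposition by a chain of projection formulas in the derived category, with the calibration identity \eqref{eq:calibration} providing the crucial link between $\cZ_K^\heartsuit$ and $\cZ(\bphi)_K$. Throughout, I would work in the formal $\dC$-span of $\rD^\rb_{\r{coh}}$ so as to handle the formal sums $\cO(\cZ_K^\heartsuit)$ and $\cO(\cZ(\bphi)_K)$ termwise.

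First I would show that $\cO(\tp_{135}^*\Delta^3\cX_K)\otimes^\dL_{\cO_{\cX_K^6}}\cO(\tp_{246}^*\Delta^3\cX_K)\cong\iota_*\cO_{\cX_K^2}$, where $\iota\colon\cX_K^2\to\cX_K^6$ is the ``triple diagonal'' $(x,y)\mapsto(x,y,x,y,x,y)$. Both factors are regular embeddings of relative codimension $2(n-1)$ (pullbacks along smooth projections of the regular embedding $\Delta^3\cX_K\hookrightarrow\cX_K^3$), and their scheme-theoretic intersection is exactly $\iota(\cX_K^2)$, of the expected codimension $4(n-1)$. Tor-independence for regular embeddings meeting in expected codimension then gives the identity. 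Combining this with the projection formula $\iota_*\cO_{\cX_K^2}\otimes^\dL\cG=\iota_*(L\iota^*\cG)$ and the invariance of $\chi$ under $\iota_*$ reduces $\cI_K(\bbf,\bphi)_\fp/(2\log|O_F/\fp|)$ to $\chi(L\iota^*\cO_Y)$ where $Y=\Delta\cX_K\times\tT^{\bbf}_K\times\cZ_K^\heartsuit$; since $\cO_Y$ is an external box product and each of the three projections $\tp_{12}\circ\iota$, $\tp_{34}\circ\iota$, $\tp_{56}\circ\iota$ is the identity on $\cX_K^2$, this equals
\[
\chi\bigl(\cO_{\Delta\cX_K}\otimes^\dL_{\cO_{\cX_K^2}}\cO_{\tT^{\bbf}_K}\otimes^\dL_{\cO_{\cX_K^2}}\cO(\cZ_K^\heartsuit)\bigr).
\]

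Next, I would establish the derived calibration $L\Delta^*\cO(\cZ_K^\heartsuit)=\cO(\cZ(\bphi)_K)$ in the formal span on $\cX_K$. Since $Z_K$ is a doubling divisor in the sense of Definition \ref{de:doubling}, no component of $Z_K$ (hence no component of its flat Zariski closure $\cZ_K$) contains $\Delta\cX_K$, so $\cZ_K$ and $\Delta\cX_K$ meet properly in the regular scheme $\cX_K^2$; Tor-independence for regular embeddings then gives $L\Delta^*\cO(\cZ_K)=\cO(\Delta\cX_K.\cZ_K)$. Moreover, for any divisor $D$ on $\cX_K$, $L\Delta^*\cO_{\tp_2^*D}=\cO_D$ because $\tp_2\circ\Delta=\id_{\cX_K}$. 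Applying $L\Delta^*$ linearly to the definition \eqref{eq:calibration} of $\cZ_K^\heartsuit$ produces the cancellation
\[
L\Delta^*\cO(\cZ_K^\heartsuit)=\cO(\Delta\cX_K.\cZ_K)-\cO(\Delta\cX_K.\cZ_K)+\cO(\cZ(\bphi)_K)=\cO(\cZ(\bphi)_K).
\]

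Finally, applying the projection formula to $\Delta\colon\cX_K\to\cX_K^2$ on both sides of the desired equality reduces it to a single equality on $\cX_K$. Indeed,
\[
\chi\bigl(\cO_{\Delta\cX_K}\otimes^\dL\cO_{\tT^{\bbf}_K}\otimes^\dL\cO(\cZ_K^\heartsuit)\bigr)
=\chi\bigl(L\Delta^*\cO_{\tT^{\bbf}_K}\otimes^\dL L\Delta^*\cO(\cZ_K^\heartsuit)\bigr)
=\chi\bigl(L\Delta^*\cO_{\tT^{\bbf}_K}\otimes^\dL\cO(\cZ(\bphi)_K)\bigr),
\]
and the projection formula in the opposite direction rewrites the last expression as $\chi(\cO_{\tT^{\bbf}_K}\otimes^\dL\Delta_*\cO(\cZ(\bphi)_K))=\chi(\cO_{\tT^{\bbf}_K}\otimes^\dL\cO(\Delta\cZ(\bphi)_K))$, completing the proof. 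The main obstacle will be to verify all invoked Tor-independence statements at the integral (not merely generic) level: this requires that, at a good inert prime $\fp$, the Hecke correspondence $\tT^{\bbf}_K$ extends to an étale correspondence on $\cX_K$, that each component of $\cZ_K$ is flat over $O_{E_\fp}$ and avoids $\Delta\cX_K$, and that the regular embeddings $\tp_{135}^*\Delta^3\cX_K$ and $\tp_{246}^*\Delta^3\cX_K$ remain Tor-independent on the special fiber---all of which follow from $\cX_K$ being smooth over $O_{E_\fp}$ at $\fp$.
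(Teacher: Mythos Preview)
Your argument is essentially the paper's own proof, phrased through derived pullbacks $L\iota^*$, $L\Delta^*$ rather than directly through $\otimes^\dL$. There is, however, one step you pass over: the definition of $\cI_K(\bbf,\bphi)_\fp$ contains the \emph{cycle-theoretic} product $(\Delta\cX_K\times\tT^{\bbf}_K\times\cZ_K^\heartsuit).\tp_{246}^*\Delta^3\cX_K$ (under the paper's convention that $A.B$ denotes the cycle $\sum_C m_C(A,B)\cdot C$), not a derived tensor product. Your first reduction tacitly replaces $\cO(A.B)$ by $\cO_A\otimes^\dL\cO_B$ there, which requires justification before you can split off $\cO(\tp_{246}^*\Delta^3\cX_K)$ and pair it with $\cO(\tp_{135}^*\Delta^3\cX_K)$. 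The paper supplies this by observing that $\Delta\cX_K$, each irreducible component of $\tT^{\bbf}_K$, and each term of $\cZ_K^\heartsuit$ are Cohen--Macaulay and meet $\tp_{246}^*\Delta^3\cX_K$ properly, so higher Tor's vanish. The same Cohen--Macaulay input also dispatches the final step in one line: the paper gets $\cO(\Delta\cX_K)\otimes^\dL\cO(\cZ_K^\heartsuit)\simeq\cO(\Delta\cX_K\cap\cZ_K^\heartsuit)=\cO(\Delta\cZ(\bphi)_K)$ directly, whereas your term-by-term computation of $L\Delta^*$ is correct but still relies on the same Tor-vanishing to identify $L\Delta^*\cO(\cZ_K)$ with $\cO(\Delta\cX_K.\cZ_K)$ at the cycle level.
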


\begin{proof}
First, by the same argument for Proposition \ref{pr:intersection}(1), we know that $\Delta\cX_K\times\tT^{\bbf}_K\times\cZ_K^\heartsuit$ and $\tp_{246}^*\Delta^3\cX_K$ have proper intersection on $\cX_K^6$. Since $\Delta\cX_K$, every component of $\tT^{\bbf}_K$, and $\cZ_K^\heartsuit$ are all Cohen--Macaulay schemes, we have
\begin{align*}
\cO((\Delta\cX_K\times\tT^{\bbf}_K\times\cZ_K^\heartsuit).\tp_{246}^*\Delta^3\cX_K)&=
\cO(\Delta\cX_K\times\tT^{\bbf}_K\times\cZ_K^\heartsuit)\otimes_{\cO_{\cX_K^6}}\cO(\tp_{246}^*\Delta^3\cX_K) \\
&=\cO(\Delta\cX_K\times\tT^{\bbf}_K\times\cZ_K^\heartsuit)\otimes^\dL_{\cO_{\cX_K^6}}\cO(\tp_{246}^*\Delta^3\cX_K).
\end{align*}
Thus, we have
\begin{align*}
&\cO(\tp_{135}^*\Delta^3\cX_K)\otimes^\dL_{\cO_{\cX_K^6}}
\cO((\Delta\cX_K\times\tT^{\bbf}_K\times\cZ_K^\heartsuit).\tp_{246}^*\Delta^3\cX_K)\\
&=\cO(\tp_{135}^*\Delta^3\cX_K)\otimes^\dL_{\cO_{\cX_K^6}}\cO(\Delta\cX_K\times\tT^{\bbf}_K\times\cZ_K^\heartsuit)
\otimes^\dL_{\cO_{\cX_K^6}}\cO(\tp_{246}^*\Delta^3\cX_K) \\
&=\(\cO(\tp_{135}^*\Delta^3\cX_K)\otimes^\dL_{\cO_{\cX_K^6}}\cO(\tp_{246}^*\Delta^3\cX_K)\)
\otimes^\dL_{\cO_{\cX_K^6}}\cO(\Delta\cX_K\times\tT^{\bbf}_K\times\cZ_K^\heartsuit)\\
&=\cO(\Delta^3(\cX_K\times\cX_K))\otimes^\dL_{\cO_{(\cX_K\times\cX_K)^3}}\cO(\Delta\cX_K\times\tT^{\bbf}_K\times\cZ_K^\heartsuit).
\end{align*}
Restricting to $\cX_K^2$, we have
\begin{align*}
\cI_K(\bbf,\bphi)_\fp=2\log|O_F/\fp|\cdot
\chi\(\cO(\Delta\cX_K)\otimes^\dL_{\cO_{\cX_K^2}}\cO(\tT^{\bbf}_K)\otimes^\dL_{\cO_{\cX_K^2}}\cO(\cZ_K^\heartsuit)\).
\end{align*}
By \eqref{eq:calibration}, $\Delta\cX_K$ and $\cZ_K^\heartsuit$ have proper intersection. Since both have Cohen--Macaulay components, we have
\[
\cO(\Delta\cX_K)\otimes^\dL_{\cO_{\cX_K^2}}\cO(\cZ_K^\heartsuit)\simeq
\cO(\Delta\cX_K)\otimes_{\cO_{\cX_K^2}}\cO(\cZ_K^\heartsuit)=\cO(\Delta\cX_K\cap\cZ_K^\heartsuit)=
\cO(\Delta\cZ(\bphi)_K).
\]
The proposition then follows.
\end{proof}

\subsection{Orbital decomposition of local arithmetic invariant functionals}
\label{ss:orbital}

To further study the intersection number in Proposition \ref{pr:local_invariant}, we need a certain moduli interpretation of the integral model $\cX_K$ and $\cZ(\bphi)_K$. We will follow the discussion and notation in Subsection \ref{ss:integral_models}. In particular, we denote by $\r{Spl}_p$ the set of primes of $F$ above $p$ that are split in $E$.

\begin{definition}\label{de:frame}
A \emph{frame} for the (good inert) prime $\fp$ (with the underlying rational prime $p$) contains the following
\begin{itemize}
  \item an isomorphism between the two $E$-extensions $\dC$ and $E_\fp^\ac$,

  \item a CM type $\Phi$ of $E$ containing the fixed embedding $\tau'$, such that elements in $\Phi$ inducing the same prime in $\r{Spl}_p$ induce the same prime of $E$,

  \item a rational skew-hermitian space $\bW^\infty_0$ over $\bA_E^\infty$ of rank $1$ such that $\cW(\bW^\infty_0,\Phi^\tc)$ is nonempty and that $\bW_0^\infty\otimes_{\bA^\infty}\dQ_p$ admits a self-dual lattice,

  \item a sufficiently small open compact subgroup $L_0=L_0^p\times(L_0)_p$ of $\bH_0^\infty(\bA^\infty)$ in which $(L_0)_p$ is the stabilizer of a self-dual lattice in $\bW_0^\infty\otimes_{\bA^\infty}\dQ_p$, where $\bH_0^\infty$ is the group of similitude of $\bW_0^\infty$,

  \item a point $\bbP\colon\Spec O_{E_\fp^\ur}\to\cM(\rV,\bW_0^\infty,\Phi)_{K_{\underline\fp},L_0}^\ur$ as in \eqref{eq:frame}, whose reduction is in the supersingular locus, where $E_\fp^\ur$ is the maximal unramified extension of $E_\fp$ contained in $E_\fp^\ac$.
\end{itemize}
\end{definition}

Now we take a frame. Put $k\coloneqq O_{E_\fp^\ur}\otimes_\dZ\dF_p$ and $\cX_K^\ur\coloneqq\cX_K\otimes_{O_{E_\fp}}O_{E_\fp^\ur}$. By Remark \ref{re:integral_canonical}, the point $\bbP$ provides us with a Cartesian diagram
\begin{align}\label{eq:frame1}
\xymatrix{
\cX_K^\ur \ar[r]\ar[d] & \Spec O_{E_\fp^\ur} \ar[d]^-{\bq_0\circ\bbP}\\
\cM(\rV,\bW_0^\infty,\Phi)_{K,L_0}^\ur \ar[r]^-{\bq_0}& \cM(\bW_0^\infty,\Phi^\tc)_{L_0}\otimes_{O_{E_\Phi,(p)}}O_{E_\fp^\ur}
}
\end{align}
of schemes over $O_{E_\fp^\ur}$. In particular, for every locally Noetherian scheme $S$ over $O_{E_\fp^\ur}$, the set $\cX_K^\ur(S)$ consists of equivalence classes of nonuples $(A_0,i_0,\lambda_0,\eta_0^p;A,i,\lambda,\eta^p,\eta_p^\spl)$ in which $(A_0,i_0,\lambda_0,\eta_0^p)$ is the base change of $(\bbA_0,\bbi_0,\bblambda_0,\bbeta_0^p)$ to $S$.

We introduce the moduli interpretation of integral special divisors.

\begin{definition}\label{de:special_cycle}
For $x\in\rV(E)^+$ and $g^{\underline\fp}=(g^p,g_\fq\res\fq\in\r{Spl}_p)\in\rU(\rV)({\bA_F^\infty}^{,\underline\fp})$, we define a relative functor
\[
\bs_{x,g^{\underline\fp}}\colon\cZ(x,g^{\underline\fp})_K^\ur\to\cX_K^\ur
\]
in the way that the fiber over a point $(A_0,i_0,\lambda_0,\eta_0^p;A,i,\lambda,\eta^p,\eta_p^\spl)\in\cX_K^\ur(S)$ consists of
\[
\rho\in\Hom_S((A_0,i_0),(A,i_A))\otimes_{O_F}O_{F,(\underline\fp)}
\]
such that for every geometric point $s$ of $S$,
\begin{itemize}
  \item the element $\rho_*\in\Hom_{E\otimes_\dQ\bA^{\infty,p}}(\rH^{\et}_1(A_{0s},\bA^{\infty,p}),\rH^{\et}_1(A_s,\bA^{\infty,p}))$ belongs to $\eta^p((g^p)^{-1}x)$,

  \item the element $\rho_*\in\prod_{\fq\in\r{Spl}_p}\Hom_{O_{E_{\fq^-}}}\(A_{0s}[(\fq^-)^\infty],A_s[(\fq^-)^\infty]\)\otimes_{O_{E_{\fq^-}}}E_{\fq^-}$ belongs to $\eta_p^\spl((g_\fq^{-1}x)_{\fq\in\r{Spl}_p})$.
\end{itemize}
\end{definition}

\begin{proposition}\label{pr:integral_special}
For $x\in\rV(E)^+$ and $g^{\underline\fp}\in\rU(\rV)({\bA_F^\infty}^{,\underline\fp})$, we have
\begin{enumerate}
  \item The relative morphism $\bs_{x,g^{\underline\fp}}$ is representable, finite, and unramified.

  \item There is an isomorphism
      \[
      \bs_{x,g^{\underline\fp}}\otimes_{O_{E_\fp^\ur}}E_\fp^\ur\simeq
      \coprod_{(g_\fq\res\fq\in\underline\fp),g_\fq\in\rU(\rV^x)(F_\fq)\backslash\rU(\rV)(F_\fq)/K_\fq,g_\fq^{-1}x\in\Lambda_\fq}
      \ts_{x,(g^{\underline\fp},g_\fq\res\fq\in\underline\fp)}\otimes_EE_\fp^\ur
      \]
      of relative functors over $X_K\otimes_EE_\fp^\ur$, where $\ts_{x,(g^{\underline\fp},g_\fq\res\fq\in\underline\fp)}$ is defined in Definition \ref{de:generating_series}.

  \item For every point $z\in\cZ(x,g)_K^\ur(k)$, the induced ring homomorphism $R_y\to R_z$ is surjective whose kernel is a principal ideal that is not contained in $pR_y$. Here $R_z$ (resp. $R_y$) denotes completed local ring of $\cX_K^\ur$ (resp.\ $\cZ(x,g)_K^\ur$) at $z$ (resp.\ $y\coloneqq\bs_{x,g^{\underline\fp}}(z)$).
\end{enumerate}
\end{proposition}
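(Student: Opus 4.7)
The plan is to handle the three assertions in order, reducing (1) and (2) to representability and rigidity of $\Hom$-functors for abelian schemes and to the comparison of the moduli-theoretic and complex-analytic descriptions of special cycles, and devoting the main effort to part (3), which is a local deformation-theoretic computation near supersingular points.

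For (1), I would first observe that, on each connected component of $\cX_K^\ur$, the morphism $\bs_{x,g^{\underline\fp}}$ is a closed sub-functor of the relative Hom-scheme $\underline{\Hom}((\bbA_0,\bbi_0),(A,i_A))$ of $O_E$-linear homomorphisms, with the closed conditions being exactly those of Definition \ref{de:special_cycle}. Representability by an unramified scheme over $\cX_K^\ur$ is then a consequence of the classical representability of Hom-schemes between polarized abelian schemes together with rigidity of abelian-scheme homomorphisms, and properness reduces to the observation that the rigidifying conditions pin down the image of $\rho$ in \'etale cohomology to a single coset of the appropriate lattice, so there are only finitely many geometric liftings. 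Combined with unramifiedness, this yields finiteness.

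For (2), I would prove the decomposition by a direct complex-analytic comparison. On the generic fiber, the complex uniformization of $X_K$ in Definition \ref{de:generating_series} realizes $\ts_{x,g}$ as the image of a sub-Shimura variety cut out by an embedding of hermitian spaces $\rV^x\hookrightarrow\rV$. By Faltings' theorem, such an embedding corresponds on the moduli side to an $O_E$-linear homomorphism $\rho\colon(A_0,i_0)\to(A,i_A)$ whose rational $\ell$-adic realizations form the element $g^{-1}x\in\rV(\bA_E^\infty)$. At inert primes $\fq\in\underline\fp$ the integrality requirement of $\rho$ with respect to the self-dual lattice $\Lambda_\fq$ becomes precisely the condition $g_\fq^{-1}x\in\Lambda_\fq$, and partitioning according to the coset of $g_\fq$ in $\rU(\rV^x)(F_\fq)\backslash\rU(\rV)(F_\fq)/K_\fq$ yields the decomposition in the statement.

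The main content is (3). I would reduce it to a computation on the formal neighborhood via the supersingular uniformization underlying the frame $\bbP$ (see Subsection \ref{ss:integral_models}): the completed local ring $R_y$ is canonically identified with the completed local ring at the corresponding point of a product of relative Rapoport--Zink spaces $\cN_{n_\fq}$, and the sub-functor $\cZ(x,g)_K^\ur$ corresponds to the locus where a prescribed quasi-isogeny $\rho_0\colon\bbX_0\to X_s$ in the special fiber lifts to the universal deformation. By Grothendieck--Messing theory this lifting locus is cut out by the vanishing of a single linear functional on the Hodge filtration of the universal crystal, because the signature at $\fp$ is $(n-1,1)$ so the relevant obstruction map targets a one-dimensional summand; this gives the desired principal generator of the kernel of $R_y\to R_z$. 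To see that this generator is not divisible by $p$, I would appeal to the local Kudla--Rapoport computation \cite{KR11} (and the relative extension in \cite{Mih} that is also used in Definition \ref{de:special_cycle_formal} and Appendix \ref{ss:e}), which shows that $\cZ_n(x)$ is flat over $O_{\breve E}$ and meets the generic fiber in a nonempty divisor, so the equation specializes non-trivially to the generic fiber. The hard part will be to match the obstruction equation produced by Grothendieck--Messing on the global integral model with the local equation supplied by the Kudla--Rapoport analysis, keeping careful track of the identifications furnished by the frame and of the decomposition into factors at inert and split places above $p$.
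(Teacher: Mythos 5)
Your plan for (1) is in the same spirit as the paper, which cites the analogous argument of Kudla--Rapoport (\cite{KR14}, Proposition 2.9). For (2), your route via complex uniformization together with Faltings' isogeny theorem differs from the paper, which constructs an explicit moduli-theoretic bijection between the two fibers (producing the quotient abelian scheme $A/\rho(A_0)$ in one direction and a homomorphism $\lambda_0^{-1}\circ\rho'^{\vee}\circ\lambda$ in the other). Your approach is plausible in outline, but it requires one to argue carefully that the Hodge-theoretic incidence cutting out $\ts_{x,g}$ lifts to an actual homomorphism of the universal abelian schemes on each connected component -- the paper's functor-of-points argument makes this matching automatic, so it is the cleaner route.

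The real problem is with (3). You reduce the local computation to a Rapoport--Zink space via the supersingular uniformization and invoke the Kudla--Rapoport analysis of $\cZ_n(x)$. But the statement of (3) is for \emph{every} $k$-point $z\in\cZ(x,g)_K^\ur(k)$, not only for those whose underlying point of $\cX_K^\ur$ is supersingular. The supersingular uniformization (Proposition \ref{pr:uniformization}) describes the formal completion along the supersingular locus only, so your reduction simply does not apply to points $z$ lying in the $\mu$-ordinary locus or in any intermediate Newton stratum -- and such points do occur, since $\cZ(x,g)_K^\ur$ is a relative divisor with nonempty generic fiber. The paper instead runs the deformation-theoretic argument directly on the completed local ring of the global integral model: it uses the decomposition of the Hodge exact sequence of $A_0$ and $A$ along the primes of $F$ above $p$, the signature condition to reduce the obstruction to lifting $\rho$ to a single linear functional, and the Howard-style element $\bj=\pi\otimes 1_S - 1\otimes\pi_S$ to make that functional explicit; this works at any $k$-point. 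For the claim that the principal generator is not divisible by $p$, the paper does not argue via flatness of the RZ divisor (which would again confine you to supersingular points) but by contradiction: if $I\subseteq pR_y$ then $\bs_{x,g^{\underline\fp}}$ would dominate a full connected component of the special fiber of $\cX_K^\ur$, forcing every geometric point there to carry a nonzero map from $(A_0,i_0)$, hence to fail $\mu$-ordinarity, contradicting Wedhorn's density theorem \cite{Wed99} applied to the auxiliary PEL moduli space. You would need to replace your RZ-based arguments by something like this global deformation/density argument to cover the full statement.
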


\begin{proof}
Part (1) follows from the same argument in the proof of \cite{KR14}*{Proposition~2.9}.

For (2), put $X_K^\ur\coloneqq X_K\otimes_{E_\fp}E_\fp^\ur$. For every point $P=(A_0,i_0,\lambda_0,\eta_0^p;A,i,\lambda,\eta^p,\eta_p^\spl)\in X_K^\ur(S)$, we will construct a functorial bijection $\bs_{x,g^{\underline\fp}}^{-1}P\xrightarrow{\sim}\coprod_g\ts_{x,g}^{-1}P$ between the fibers.

For the forward direction, take an element $\rho$ as in Definition \ref{de:special_cycle}. Let $(A_\rho,i_\rho)$ be the quotient abelian scheme $(A/\rho(A_0),i)$, which is naturally an $(E,\sig_{\rV,\Phi}-\Phi^\tc)$-abelian scheme (Definition \ref{de:abelian_data}). Denote by $\varrho\colon A\to A_\rho$ the quotient homomorphism, and define a homomorphism $\rho_0\coloneqq c\lambda_0^{-1}\circ\rho^\vee\circ\lambda\colon A\to A_0$ for some $c\in\dZ_{(p)}^\times$. Then we obtain a prime-to-$\underline\fp$ isogeny $(\varrho,\rho_0)\colon A\to A_\rho\times A_0$. Let $\lambda_\rho$ be the induced $\underline\fp$-principal polarization of $(A_\rho,i_\rho)$. Choose a representative $\eta^p$ in its $K^p$-class such that $\varrho_*\circ\eta^p((g^p)^{-1}x)$ is the zero map. We define $\eta_\rho^p$ to be the composition
\begin{align*}
\rV^x(\bA_F^{\infty,p})\hookrightarrow\rV(\bA_F^{\infty,p})&\xrightarrow{\eta^p}
\Hom_{E\otimes_\dQ\bA^{\infty,p}}(\rH^{\et}_1(A_{0s},\bA^{\infty,p}),\rH^{\et}_1(A_s,\bA^{\infty,p}))\\
&\xrightarrow{\varrho_*\circ}\Hom_{E\otimes_\dQ\bA^{\infty,p}}(\rH^{\et}_1(A_{0s},\bA^{\infty,p}),\rH^{\et}_1(A_{\rho s},\bA^{\infty,p})).
\end{align*}
Let $\eta_x^p\colon\bA_E^{\infty,p}x\to\Hom_{E\otimes_\dQ\bA^{\infty,p}}(\rH^{\et}_1(A_{0s},\bA^{\infty,p}),\rH^{\et}_1(A_{0s},\bA^{\infty,p}))$ be the homomorphism sending $x$ to $c\cdot(x,x)_\rV$. Then we have $(\eta_\rho^p\oplus \eta_x^p)\circ(g^p)^{-1}=(\varrho,\rho_0)_*\circ\eta^p$. We have a similar construction for $\eta_{p\rho}^\spl$, whose details we omit. Finally, we obtain $(A_0,i_0,\lambda_0,\eta_0^p;A_\rho,i_\rho,\lambda_\rho,\eta_\rho^p,\eta_{p\rho}^\spl)$ together with the $O_E$-linear prime-to-$\underline\fp$ isogeny $(\varrho,\rho_0)\colon A\to A_\rho\times A_0$, which provides an element in the fiber $\ts_{x,g}^{-1}P$ where $g=(g^{\underline\fp},g_\fq\res\fq\in\underline\fp)$ is (a representative in) the unique double coset in the disjoint union satisfying $g_\fq^{-1}x=\rho_*$ under any isomorphism $\Lambda_\fq\simeq\Hom_{O_{E_\fq}}(\rT_\fq A_{0s},\rT_\fq A_s)$ of hermitian lattices over $O_{E_\fq}$, where $\rT_\fq$ denotes the $\fq$-adic Tate module.

For the backward direction, take an element in the fiber $\ts_{x,g}^{-1}P$ for $g$ in the disjoint union, given by data
\[
(A_0,i_0,\lambda_0,\eta_0^p;A',i',\lambda',\eta^{p\prime},\eta_p^{\spl\prime})
\in\Sh(\rG^x,\rh_{\rV^x,\tau'})_{gKg^{-1}\cap\rG^x(\bA^\infty)}(S)
\]
together with an $O_E$-linear prime-to-$\underline\fp$ isogeny $\rho'\colon A\to A'\times A_0$ satisfying relevant properties. We just take $\rho$ as the composite homomorphism
\[
A_0\xrightarrow{\lambda_0}A_0^\vee\xrightarrow{\rho'^\vee\res A_0^\vee}A^\vee\xrightarrow{c\lambda^{-1}}A
\]
for some $c\in O_{F,(\underline\fp)}^\times$. It is straightforward to check that the above constructions are inverse to each other, hence (2) is proved.

For (3), let $I$ be the kernel of $R_y\to R_z$. To show that $I$ is principal, we follow the strategy in the proof of \cite{How15}*{Proposition~3.2.3} for the case $F=\dQ$.\footnote{Note that \cite{How15} considers all residue characteristics; while we only consider $p$ that is unramified in $E$.} Let $(A_0,i_0,\lambda_0,\eta_0^p;A,i,\lambda,\eta^p,\eta_p^\spl)$ be the universal object over $R_y$, which is equipped with the universal $O_E$-linear homomorphism $\rho\colon(A_0)_{R_z}\to A_{R_z}$. It suffices to study the obstruction to lifting $\rho$ to a homomorphism $A_{0S}\to A_S$ where $S\coloneqq R_y/\fm I$ with $\fm$ the maximal ideal of $R_y$.
Note that the Hodge exact sequence
\[
0 \to \Fil\rH^\dr_1(A_0) \to\rH^\dr_1(A_0) \to \Lie(A_0) \to 0
\]
splits into a direct sum of
\[
0 \to \Fil\rH^\dr_1(A_0)_\fq \to\rH^\dr_1(A_0)_\fq \to \Lie(A_0)_\fq \to 0
\]
indexed by primes $\fq$ of $F$ above $p$, in which $\rH^\dr_1(A_0)_\fq$ is the direct summand of $\rH^\dr_1(A_0)$ on which $O_{F,(p)}$ acts via the prime $\fq$. We have a similar splitting for $A$. Moreover $\rH^\dr_1(A_0)_\fq$ (resp.\ $\rH^\dr_1(A)_\fq$) is a free $O_{E_\fq}\otimes_{\dZ_p}R_y$-module of rank $1$ (resp.\ $n$). By the signature condition, the obstruction to lifting $\rho$ coincides with the obstruction for the canonical lifting $\tilde\rho_*\colon\rH_1^\dr(A_{0S})_\fp\to\rH_1^\dr(A_S)_\fp$ to respect Hodge filtration. The remaining argument is then same as \cite{How15}*{p.668} by taking $\bj\coloneqq \pi\otimes 1_S-1\otimes\pi_S$ for some $\pi\in O_{E_\fp}^\times\cap E_\fp^-$. Note that, $\bj.\Lie(A_k)$ is always nonzero in our case.

Finally, we show that $I$ is not contained in $pR_y$. If it is, then by (1) the image of $\bs_{x,g^{\underline\fp}}$ contains the entire connected component of $(\cX_K^\ur)_k$ at $y$. Thus, for every $k$ point $(A_0,i_0,\lambda_0,\eta_0^p;A,i,\lambda,\eta^p,\eta_p^\spl)$ in this connected component, there exists a nonzero homomorphism from $(A_0,i_0)$ to $(A,i)$. In particular, $(A,i)$ is not $\mu$-ordinary, which contradicts to the main theorem of \cite{Wed99} saying that $\mu$-ordinary locus is dense. Here, we apply \cite{Wed99} to the PEL type moduli scheme in Remark \ref{re:integral_similitude} parameterizing $(A,i,\lambda,\tilde\eta^p)$ where $\tilde\eta^p$ is an away-from-$p$ level structure induced from $\lambda_0^p$ and $\eta^p$. Thus, (3) is proved.
\end{proof}

By the above proposition, $(\bs_{x,g^{\underline\fp}})_*\cZ(x,g^{\underline\fp})_K^\ur$ is a relative divisor on $\cX_K^\ur$. In what follows, by abuse of notation, we denote the cycle $(\bs_{x,g^{\underline\fp}})_*\cZ(x,g^{\underline\fp})_K^\ur$ again by $\cZ(x,g^{\underline\fp})_K^\ur$. The following corollary is immediate.

\begin{corollary}\label{co:generating_integral}
Let $\fp$ be a good inert prime. If $\bphi(0)=0$, then we have
\[
\cZ(\bphi)_K\otimes_{O_{E_\fp}}O_{E_\fp^\ur}=\sum_{x\in\rU(\rV)(F)\backslash\rV(E)^+}\re^{-2\pi\cdot\Tr_{F/\dQ}(x,x)_\rV}
\sum_{g^{\underline\fp}\in\rU(\rV^x)({\bA_F^\infty}^{,\underline\fp})\backslash\rU(\rV)({\bA_F^\infty}^{,\underline\fp})/K^{\underline\fp}}
\bphi^{\underline\fp}((g^{\underline\fp})^{-1}x)\cZ(x,g^{\underline\fp})_K^\ur
\]
as a formal series in $\rZ^1(\cX_K^\ur)_\dC$.
\end{corollary}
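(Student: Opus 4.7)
\emph{Proof proposal.} The plan is to verify the identity by comparing both sides as formal series of flat divisors on $\cX_K^\ur$, then reducing to a comparison on the generic fiber via Proposition \ref{pr:integral_special}. Since $\bphi(0)=0$, the defining formula for $Z(\bphi)_K$ has no Hodge divisor term, so $\cZ(\bphi)_K\otimes_{O_{E_\fp}}O_{E_\fp^\ur}$ is by definition the term-by-term Zariski closure (in $\cX_K^\ur$) of the formal series
\[
\sum_{x\in\rU(\rV)(F)\backslash\rV(E)^+}\re^{-2\pi\cdot\Tr_{F/\dQ}(x,x)_\rV}
\sum_{g\in\rG^x(\bA^\infty)\backslash\rG(\bA^\infty)/K}\bphi(g^{-1}x)\,Z(x,g)_K\otimes_E E_\fp^\ur.
\]

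Next, I would decompose the double coset space $\rG^x(\bA^\infty)\backslash\rG(\bA^\infty)/K$ as a product over all nonarchimedean places (using the decomposability of $K$), and factor $\bphi=\bphi^{\underline\fp}\otimes\bigotimes_{\fq\in\underline\fp}\CF_{\Lambda_\fq}$. Since $\prod_{\fq\in\underline\fp}\CF_{\Lambda_\fq}(g_\fq^{-1}x)$ is nonzero, and then equal to $1$, exactly when $g_\fq^{-1}x\in\Lambda_\fq$ for every $\fq\in\underline\fp$, the generic-fiber series becomes
\[
\sum_x\re^{-2\pi\cdot\Tr_{F/\dQ}(x,x)_\rV}\sum_{g^{\underline\fp}}\bphi^{\underline\fp}((g^{\underline\fp})^{-1}x)
\sum_{(g_\fq)\,:\,g_\fq^{-1}x\in\Lambda_\fq}Z(x,(g^{\underline\fp},g_\fq))_K\otimes_E E_\fp^\ur.
\]
Proposition \ref{pr:integral_special}(2) identifies the inner sum $\sum_{(g_\fq)}Z(x,(g^{\underline\fp},g_\fq))_K\otimes_E E_\fp^\ur$ with the generic fiber of $\cZ(x,g^{\underline\fp})_K^\ur$ (after pushforward to $\cX_K^\ur$ along $\bs_{x,g^{\underline\fp}}$). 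Consequently, the generic fiber of the right-hand side of the corollary coincides, term by term, with that of the left-hand side.

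Finally, to upgrade from an equality of generic fibers to an equality in $\rZ^1(\cX_K^\ur)_\dC$, I would invoke Proposition \ref{pr:integral_special}(3): the statement that the kernel of $R_y\to R_z$ is principal and not contained in $pR_y$ implies that each $\cZ(x,g^{\underline\fp})_K^\ur$ is a Cartier divisor on $\cX_K^\ur$ that is flat over $O_{E_\fp^\ur}$, hence equal to the scheme-theoretic closure of its generic fiber. Matching the flat-divisor components of the two sides then yields the asserted identity. I expect the main (essentially bookkeeping) obstacle to be lining up the index sets correctly: the coset representatives $g_\fq\in\rU(\rV^x)(F_\fq)\backslash\rU(\rV)(F_\fq)/K_\fq$ appearing in Proposition \ref{pr:integral_special}(2) must be identified with the local factors of the decomposition of $\rG^x(\bA^\infty)\backslash\rG(\bA^\infty)/K$ at $\underline\fp$, and the integrality constraint $g_\fq^{-1}x\in\Lambda_\fq$ there must be matched with the non-vanishing of $\bphi_\fq(g_\fq^{-1}x)=\CF_{\Lambda_\fq}(g_\fq^{-1}x)$ on the automorphic side. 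Once this identification is in place, the Chow convergence of $Z(\bphi)_K$ ensures that the infinite formal sum is well defined on $\cX_K^\ur$, and the comparison is purely formal.
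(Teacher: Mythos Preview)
Your proposal is correct and follows essentially the same argument as the paper: the paper's proof simply observes that by Proposition~\ref{pr:integral_special} (parts (2) and (3) together) each $\cZ(x,g^{\underline\fp})_K^\ur$ is the Zariski closure of the displayed sum of $Z(x,(g^{\underline\fp},g_\fq))_K$ over those $(g_\fq)$ with $g_\fq^{-1}x\in\Lambda_\fq$, and then invokes the factorization $\bphi=\bphi^{\underline\fp}\otimes\bigotimes_{\fq\in\underline\fp}\CF_{\Lambda_\fq}$ from the good-inert-prime hypothesis. Your unpacking of the flatness step via Proposition~\ref{pr:integral_special}(3) and your explicit bookkeeping of the coset decomposition are exactly what underlies the paper's two-sentence proof.
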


\begin{proof}
By Proposition \ref{pr:integral_special}, the relative divisor $\cZ(x,g^{\underline\fp})_K^\ur$ is the Zariski closure of
\[
\sum_{(g_\fq\res\fq\in\underline\fp),g_\fq\in\rU(\rV^x)(F_\fq)\backslash\rU(\rV)(F_\fq)/K_\fq}
\prod_{\fq\in\underline\fp}\CF_{\Lambda_\fq}(g_\fq^{-1}x)\cdot Z(x,(g^{\underline\fp},g_\fq\res\fq\in\underline\fp))_K
\]
in $\cX_K^\ur$. The corollary follows since $\bphi=\bphi^{\underline\fp}\otimes\bigotimes_{\fq\in{\underline\fp}}\bphi_\fq$ in which $\bphi_\fq=\CF_{\Lambda_\fq}$.
\end{proof}

\begin{lem}\label{le:supersingular_intersection}
Let $K,\bbf,\bphi$ be as in Definition \ref{de:global_arithmetic} such that $\bbf\otimes\bphi$ is regularly supported at some nonarchimedean place $v$ of $F$. For a point $y\in\cX_K^\ur(k)$, if $(y,y)$ belongs to both $\tT^{\bbf}_K$ and the support of $\Delta\cZ(\bphi)_K$, then $y$ is supersingular (Definition \ref{de:supersingular}).
\end{lem}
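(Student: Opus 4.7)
To begin, I would use the moduli‐theoretic description of $\cX_K^\ur$ from Subsection~\ref{ss:integral_models} to represent $y$ by a nonuple $(A_0,i_0,\lambda_0,\eta_0^p;A,i,\lambda,\eta^p,\eta_p^\spl)$ over $k$, in which $A_0$ has supersingular reduction at $\fp$ by the very choice of frame (Definition~\ref{de:frame}). From the support hypothesis on $\Delta\cZ(\bphi)_K$, Corollary~\ref{co:generating_integral} together with Definition~\ref{de:special_cycle} produce some $x\in\rV(E)^+$ and $g^{\underline\fp}\in\rU(\rV)({\bA_F^\infty}^{,\underline\fp})$ with $\bphi^{\underline\fp}((g^{\underline\fp})^{-1}x)\neq 0$, together with a nonzero $O_E$-linear quasi-homomorphism $\rho\colon A_0\to A$ whose image under level structures at each place away from $\underline\fp$ is $(g^{\underline\fp})^{-1}x$. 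From $(y,y)\in\tT_K^{\bbf}$, using that $\tT_K^{\bbf}$ is the Zariski closure in $\cX_K^2$ of a correspondence that is \'etale over each factor and that admits the standard generic description via the complex uniformization $X_K(\dC)=\rU(\rV)(F)\backslash(\cD\times\rU(\rV)(\bA_F^\infty)/K)$, I would extract some $\xi\in\rU(\rV)(F)$ with $\xi z=z$ and $g^{-1}\xi g\in\supp(\bbf)$, which specializes to an $O_E$-linear quasi-endomorphism $\alpha$ of $A$ whose effect on the Tate modules at each place away from $\underline\fp$ matches the corresponding component of $\xi$.

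With the pair $(\alpha,\rho)$ in hand, the next step invokes the regular semisimplicity hypothesis: the pair $(g_v^{-1}\xi g_v,g_v^{-1}x)\in\rU(\rV)(F_v)\times\rV(E_v)$ lies in $\supp(\bbf_v\otimes\bphi_v)$, hence is regular semisimple. Since regular semisimplicity is preserved under the diagonal $\rU(\rV)(F_v)$-action, $(\xi,x)$ itself is regular semisimple at $v$, meaning $\{x,\xi x,\dots,\xi^{n-1}x\}$ forms an $E_v$-basis of $\rV(E_v)$. Translating through the level-structure identification at any auxiliary place $v'\notin\underline\fp$, this is equivalent to the $E$-linear independence of $\rho,\alpha\rho,\dots,\alpha^{n-1}\rho$ inside $\Hom_{O_E}^0(A_0,A)\coloneqq\Hom_{O_E}(A_0,A)\otimes_\dZ\dQ$. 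In particular $\dim_E\Hom_{O_E}^0(A_0,A)\geq n$.

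To conclude, I would exploit the $O_E$-equivariant structure of $A_0$. Since $A_0$ is CM of type $(E,\Phi^\tc)$, one has $\End_{O_E}^0(A_0)=E$ even in characteristic $p$, because $E$ is a maximal commutative subfield of $\End^0(A_0)$ by the double centralizer theorem applied to Tate's description of $\End^0(A_0)\otimes_\dQ\dQ_\ell$. In particular $A_0$ is $O_E$-equivariantly simple up to isogeny. Decomposing $A\sim A_0^m\times A_1$ in the $O_E$-equivariant isogeny category with $\Hom_{O_E}^0(A_0,A_1)=0$ yields $\dim_E\Hom_{O_E}^0(A_0,A)=\dim_E\End_{O_E}^0(A_0)^m=m$, hence $m\geq n$. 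The dimension bound $md=\dim A_0^m\leq\dim A=nd$ then forces $m=n$ and $A_1=0$, so $A\sim A_0^n$ as $O_E$-equivariant abelian varieties up to isogeny. Since $A_0$ has supersingular reduction at $\fp$, so does $A_0^n$, and supersingularity is preserved under isogeny, so $A$ is supersingular at $\fp$; that is, $y$ is supersingular.

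The technical heart of the argument—and the place where I expect the main difficulty to lie—is the first step: extracting from $(y,y)\in\tT_K^{\bbf}$ an explicit $\xi\in\rU(\rV)(F)$ whose specialization $\alpha$ is genuinely $O_E$-linear and whose action on the full level structure (including the split-prime component $\eta_p^\spl$) matches $h$ in the sense used when invoking regular semisimplicity. Because the moduli interpretation of $\cX_K^\ur$ in Subsection~\ref{ss:integral_models} is indirect, passing through the auxiliary moduli space $\cM(\rV,\bW_0^\infty,\Phi)_{K,L_0}$ rather than a direct PEL problem on $(A,i,\lambda,\eta)$, the compatibility of the extracted $\alpha$ with both the $O_E$-action on $A$ and the individual components of the level structure at nonarchimedean places away from $\underline\fp$ must be verified with some care.
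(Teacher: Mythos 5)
Your overall strategy is the same as the paper's: use the regular-semisimplicity hypothesis to produce $n$ linearly independent $O_E$-linear quasi-homomorphisms $\rho,\alpha\rho,\dots,\alpha^{n-1}\rho$ from $A_0$ to $A$, force $\dim_E\Hom_k((A_0,i_0),(A,i))_\dQ=n$, and conclude $A\sim A_0^n$ up to isogeny, hence supersingular. Your final step (decomposing $A$ in the $O_E$-equivariant isogeny category, using that $E$ is its own centralizer in $\End^0(A_0)$) is a somewhat more elaborate but valid packaging of the paper's terse ``$\Hom_k((A_0,i_0),(A,i))_\dQ$ has dimension $n$ over $E$, thus $A[\fp^\infty]\sim A_0[\fp^\infty]^{\oplus n}$.''

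The genuine gap — which you yourself flag as ``the technical heart'' — is how you obtain the quasi-endomorphism $\alpha$ of $A$ from $(y,y)\in\tT_K^{\bbf}$. Your proposed mechanism is to use the complex uniformization to extract a global element $\xi\in\rU(\rV)(F)$ and then ``specialize.'' This does not work as stated: the complex uniformization describes $\dC$-points of the generic Hecke correspondence, while $y$ lives in the special fiber over $k$, which is in general not the specialization of any single $\dC$-point of the correspondence; there is no map taking a $\xi\in\rU(\rV)(F)$ appearing in the uniformization of the generic fiber and producing an actual quasi-isogeny of the specific abelian variety $A$ attached to $y$. The paper avoids this entirely by using the moduli interpretation of the Hecke correspondence on the \emph{integral} model: a $k$-point $(y,y)\in\tT_{KhK}$ with $y=(A_0,i_0,\lambda_0,\eta_0^p;A,i,\lambda,\eta^p,\eta_p^\spl)$ by definition of the Zariski-closed Hecke correspondence on $\cX_K\times\cX_K$ (which extends \'etale over $O_{E_\fp}$ because the level structure is hyperspecial at $\underline\fp$) carries a prime-to-$\underline\fp$ $O_E$-linear quasi-isogeny $\xi\colon A\to A$ satisfying $\xi_*\eta^p=\eta^p\circ h^p$. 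That is the $\alpha$ you want, and it is manifestly $O_E$-linear and compatible with the level structures, so the compatibilities you worry about are immediate once one uses the correct input. With this replacement — moduli interpretation of $\tT_{KhK}$ on $\cX_K$ rather than complex uniformization of the generic fiber — the rest of your argument goes through and essentially reproduces the paper's proof.
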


\begin{proof}
By Corollary \ref{co:generating_integral}, it suffices to consider $\tT_{KhK}\cap\cZ(x,g)_K^\ur$ for some $g,h\in\rU(\rV)({\bA_F^\infty}^{,\underline\fp})$ and $x\in\rV(E)^+$ such that $(h_v,g_v^{-1}x)$ is regular semisimple for some nonarchimedean place $v\not\in\underline\fp$. We only consider the case where $v$ is not above $p$, and leave the similar case where $v\in\r{Spl}_p$ to the reader.

Let $(y,y)$ be a $k$-point in $\tT_{KhK}\cap\cZ(x,g)_K^\ur$, with $y$ as in the lemma represented by the object $(A_0,i_0,\lambda_0,\eta_0^p;A,i,\lambda,\eta^p,\eta_p^\spl)\in\cX_K^\ur(k)$. By the moduli interpretation, there is a coprime-to-$\underline\fp$ isogeny $\xi\colon A\to A$ such that $\xi_*\eta^p=\eta^p\circ h^p$, and an element $\rho\in\Hom_k((A_0,i_0),(A,i))\otimes_{O_F}O_{F,(\underline\fp)}$ such that $\rho_*\in\Hom_{E\otimes_\dQ\bA^{\infty,p}}(\rH^{\et}_1(A_0,\bA^{\infty,p}),\rH^{\et}_1(A,\bA^{\infty,p}))$ belongs to $\eta^p((g^p)^{-1}x)$. Consider the situation at $v$. We may choose a representation $\kappa_v$ in the $K_v$-class of $\eta^p_v$ such that $\rho_{*v}=\eta^p_v(g_v^{-1}x)$. Possibly replacing $h_v$ by some element in $K_vh_vK_v$, we have $(\xi^i\circ\rho)_{*v}=\eta^p_v(h_v^ig_v^{-1}x)$ for every integer $i\geq 0$. Since $(h_v,g_v^{-1}x)$ is regular semisimple, $\{(\xi^i\circ\rho)_{*v},i\geq 0\}$ generates $\Hom_{O_{E_v}}(\rT_vA_0,\rT_vA)_\dQ$ as an $E_v$-module where $\rT_v$ denotes the $v$-adic Tate module. In particular, $\Hom_k((A_0,i_0),(A,i))_\dQ$ has dimension $n$ over $E$. Thus, $A[\fp^\infty]$ is isogenous to $A_0[\fp^\infty]^{\oplus n}$, hence is supersingular.
\end{proof}

In Subsection \ref{ss:integral_models}, we define the supersingular locus $\cM(\rV,\bW_0^\infty,\Phi)_{K_{\underline\fp},L_0}^\ssl$. For $K$ as above, we define $\cM(\rV,\bW_0^\infty,\Phi)_{K,L_0}^\ssl$ to be the image of $\cM(\rV,\bW_0^\infty,\Phi)_{K_{\underline\fp},L_0}^\ssl$ under the natural quotient morphism $\cM(\rV,\bW_0^\infty,\Phi)_{K_{\underline\fp},L_0}\to\cM(\rV,\bW_0^\infty,\Phi)_{K,L_0}$. Define $\cX_K^\ssl$ to be the preimage of $\cM(\rV,\bW_0^\infty,\Phi)_{K,L_0}^\ssl$ under the left vertical morphism in the diagram \eqref{eq:frame1}, which is a Zariski closed subset of $\cX_K^\ur\otimes_{O_{E_\fp^\ur}}k$. Finally, let $\cX_K^{\ssl,\wedge}$ be the completion of $\cX_K^\ur$ along $\cX_K^\ssl$. Proposition \ref{pr:uniformization} provides us with the following uniformization isomorphism
\begin{align}\label{eq:uniformization0}
\cX_K^{\ssl,\wedge}\simeq\rU(\bar\rV)(F)\backslash\(\cN\times\rU(\bar\rV)({\bA_F^\infty}^{,\fp})/\bar{K}^\fp\)
\end{align}
depending on the frame we chose,\footnote{However, one can show that the supersingular locus $\cX_K^\ssl$ itself is intrinsic, which does not depend on the choice of the frame.} in particular, the point $\bbP$. Here, $\bar{K}^\fp=\bar{K}^{\underline\fp}\times\prod_{\fq\in\underline\fp\setminus\{\fp\}}\bar{K}_\fq$, where $\bar{K}^{\underline\fp}=K^{\underline\fp}$ under the isomorphism $\iota_{\bbP}$p and $\bar{K}_\fq$ is the stabilizer of $\bar\Lambda_\fq$ in Lemma \ref{le:definite_nearby}(6). The uniformization isomorphism is functorial in $K^{\underline\fp}$ and under Hecke translations. We recall the new hermitian space
\[
\bar\rV\coloneqq\Hom_k((\bbA_{0k},\bbi_{0k}),(\bbA_k,\bbi_k))_\dQ
\]
equipped with the hermitian form \eqref{eq:nearby_form}, satisfying Lemma \ref{le:definite_nearby}, which is ``$\fp$-nearby to $\bV$''. In particular, we have an isomorphism
\[
\bar\rV\otimes_FF_\fp\simeq\Hom_k((\bbX_{0k},\bbi_{0k}),(\bbX_k,\bbi_k))_\dQ.
\]
Applying the constructions from Subsection \ref{ss:afl},\footnote{Comparing the notations with those in Subsection \ref{ss:afl}, we have $(\bbX_k,\bbi_k,\bblambda_k)=(\bbX_n,\bbi_n,\bblambda_n)$, $\cN=\cN_n$, and $\bar\rV\otimes_FF_\fp=\rV^-_n$.} we have for every nonzero $\bar{x}\in\bar\rV(E_\fp)$, a sub-formal scheme $\cZ(\bar{x})$ of $\cN$; and for every $\bar{g}\in\rU(\bar\rV)(F_\fp)$, an isomorphism $\bar{g}\colon\cN\to\cN$ with its graph $\Gamma_{\bar{g}}\subseteq\cN^2\coloneqq\cN\times_{O_{E_\fp^\ur}^\wedge}\cN$. Now we arrive at the theorem on the orbital decomposition.

\begin{theorem}\label{th:orbital}
Let $K,\bbf,\bphi$ be as in Definition \ref{de:global_arithmetic} such that $\bbf\otimes\bphi$ is regularly supported at some nonarchimedean place $v$ of $F$. For a good inert prime $\fp$, we have
\[
\cI_K(\bbf,\bphi)_\fp=2\log|O_F/\fp|\cdot\sum_{(\bar\xi,\bar{x})\in[\rU(\bar\rV)(F)\times\bar\rV(E))]_{\r{rs}}}
\re^{-2\pi\cdot\Tr_{F\dQ}(\bar{x},\bar{x})_{\bar\rV}}
\Orb(\bar\bbf^\fp,\bar\bphi^\fp;\bar\xi,\bar{x})\cdot\chi\(\cO_{\Gamma_{\bar\xi}}\otimes^\dL_{\cO_{\cN^2}}\cO_{\Delta\cZ(\bar{x})}\)
\]
after choosing a frame (Definition \ref{de:frame}). Here, we define the orbital integral as
\[
\Orb(\bar\bbf^\fp,\bar\bphi^\fp;\bar\xi,\bar{x})\coloneqq\int_{\rU(\bar\rV)({\bA_F^\infty}^{,\fp})}
\bar\bbf^\fp(\bar{g}^{-1}\bar\xi\bar{g})\bar\bphi^\fp(\bar{g}^{-1}\bar{x})\;\rd\bar{g},
\]
where
\begin{itemize}
  \item $\bar\bbf^\fp=\bar\bbf^{\underline\fp}\otimes\bigotimes_{\fq\in\underline\fp\setminus\{\fp\}}\bar\bbf_\fq$ in which $\bar\bbf^{\underline\fp}=\bbf^{\underline\fp}$ under the isomorphism $\iota_{\bbP}$ \eqref{eq:nearby_definite}, and $\bar\bbf_\fq=\CF_{\bar{K}_\fq}$,

  \item $\bar\bphi^\fp=\bar\bphi^{\underline\fp}\otimes\bigotimes_{\fq\in\underline\fp\setminus\{\fp\}}\bar\bphi_\fq$ in which $\bar\bphi^{\underline\fp}=\bphi^{\underline\fp}$ under the isomorphism $\iota_{\bbP}$, and $\bar\bphi_\fq=\CF_{\bar\Lambda_\fq}$,

  \item $\rd\bar{g}$ is the Haar measure on $\rU(\bar\rV)({\bA_F^\infty}^{,\fp})$ such that $\bar{K}^\fp$ has volume $\vol(K)$.
\end{itemize}
In particular, the Euler--Poincar\'{e} characteristic appearing in the formula is finite for every regular semisimple pair $(\bar\xi,\bar{x})$.
\end{theorem}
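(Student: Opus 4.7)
\medskip

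\noindent\textit{Proof plan for Theorem \ref{th:orbital}.} By Proposition \ref{pr:local_invariant}, it suffices to compute
\[
\chi\bigl(\cO(\tT^{\bbf}_K)\otimes^\dL_{\cO_{\cX_K^2}}\cO(\Delta\cZ(\bphi)_K)\bigr),
\]
and by Lemma \ref{le:supersingular_intersection} this Euler--Poincar\'e characteristic is supported on the supersingular locus $\cX_K^\ssl\subseteq\cX_K\otimes_\dZ\dF_p$. Using the flat base change $O_{E_\fp}\to O_{E_\fp^\ur}$ (which preserves lengths of modules over the strictly henselian local rings), I would first rewrite the intersection number as a sum of local contributions over points of $\cX_K^\ssl(k)$, and then identify each such contribution via the formal completion $\cX_K^{\ssl,\wedge}$ and the uniformization isomorphism \eqref{eq:uniformization0}. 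Next, Corollary \ref{co:generating_integral} (whose hypothesis $\bphi(0)=0$ can be arranged since the contribution of $\bphi(0)D_K$ to $\cZ(\bphi)_K$ does not meet $\tT_K^\bbf$ under the regular support hypothesis, again by the argument of Lemma \ref{le:supersingular_intersection}) expands the pullback of $\Delta\cZ(\bphi)_K$ to $\cX_K^\ur$ as
\[
\sum_{x\in\rU(\rV)(F)\backslash\rV(E)^+}\re^{-2\pi\Tr_{F/\dQ}(x,x)_\rV}\sum_{g^{\underline\fp}}\bphi^{\underline\fp}((g^{\underline\fp})^{-1}x)\,\Delta\cZ(x,g^{\underline\fp})_K^\ur,
\]
and distributivity reduces the problem to computing $\chi\bigl(\cO(\tT_K^\bbf)\otimes^\dL\cO(\Delta\cZ(x,g^{\underline\fp})_K^\ur)\bigr)$ for each pair $(x,g^{\underline\fp})$.

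For each such local intersection, I would translate the data through the uniformization \eqref{eq:uniformization0}: a supersingular point $y\in\cX_K^{\ssl,\wedge}$ is represented by a class $[(z,\bar g)]\in\rU(\bar\rV)(F)\backslash(\cN\times\rU(\bar\rV)(\bA_F^{\infty,\fp})/\bar K^\fp)$. The condition $(y,y)\in\tT_K^\bbf$ produces an element $\bar\xi\in\rU(\bar\rV)(F)$ (the ``self-isogeny'' witnessing the incidence) fixing the class of $(z,\bar g)$, and $\bar{h}\coloneqq\bar g^{-1}\bar\xi\bar g$ contributes to $\bar\bbf^\fp(\bar g^{-1}\bar\xi\bar g)$; the graph of the automorphism $\bar\xi$ of $\cN$ is precisely $\Gamma_{\bar\xi}$. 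The condition $y\in\cZ(x,g^{\underline\fp})_K^\ur$, interpreted via the moduli description of Definition \ref{de:special_cycle}, furnishes a quasi-homomorphism $\rho$ from $(\bbA_{0k},\bbi_{0k})$ to the fiber $(\bbA_k,\bbi_k)$ which, via the identification $\bar\rV\otimes_FF_\fp\simeq\Hom_k((\bbX_{0k},\bbi_{0k}),(\bbX_k,\bbi_k))_\dQ$ and the away-from-$\fp$ matching data $\iota_\bbP$ of Lemma \ref{le:definite_nearby}, corresponds to a global vector $\bar x\in\bar\rV(E)$ that matches $\bar g^{-1}x$ at all finite places away from $\fp$; the local formal divisor cut out at $y$ is then the Zariski closure of $\Delta\cZ(\bar x)\subseteq\cN^2$. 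Hence the local contribution at $y$ equals $\chi\bigl(\cO_{\Gamma_{\bar\xi}}\otimes^\dL_{\cO_{\cN^2}}\cO_{\Delta\cZ(\bar x)}\bigr)$.

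The final step is to reorganize the resulting triple sum, over $(x,g^{\underline\fp},y)$ with local contributions indexed by the pairs $(\bar\xi,\bar x)$ produced above, as a sum over $\rU(\bar\rV)(F)$-orbits of pairs $(\bar\xi,\bar x)\in\rU(\bar\rV)(F)\times\bar\rV(E)$. After a standard change of variable that replaces the double sum over $\rU(\rV^x)(\bA_F^{\infty,\underline\fp})\backslash\rU(\rV)(\bA_F^{\infty,\underline\fp})/K^{\underline\fp}$ and over supersingular points by a single integral over $\rU(\bar\rV)(\bA_F^{\infty,\fp})$, the group-theoretic factor becomes exactly $\Orb(\bar\bbf^\fp,\bar\bphi^\fp;\bar\xi,\bar x)$ with the chosen Haar measure (the normalization of $\bar K^\fp$ having volume $\vol(K)$ being forced by the uniformization, which identifies $K^{\underline\fp}$ and $\bar K^{\underline\fp}$ under $\iota_\bbP$ and the level subgroups at $\fq\in\underline\fp\setminus\{\fp\}$ with the stabilizers $\bar K_\fq$ of the self-dual lattices $\bar\Lambda_\fq$). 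Only regular semisimple orbits contribute because the regular support hypothesis at $v$ forces $(\bar\xi_v,\bar g^{-1}x)$ to be regular semisimple, which propagates globally via matching. The hard part will be the rigorous bookkeeping in the middle paragraph---specifically, verifying that the uniformization map carries the special divisor $\cZ(x,g^{\underline\fp})_K^\ur$ to the Rapoport--Zink divisor $\cZ(\bar x)$ inside the formal completion, with the correct identification of $\bar x$ and the correct multiplicity; and confirming finiteness of $\chi\bigl(\cO_{\Gamma_{\bar\xi}}\otimes^\dL\cO_{\Delta\cZ(\bar x)}\bigr)$ for regular semisimple $(\bar\xi,\bar x)$, which follows from Proposition \ref{pr:integral_special}(3) and the same linear independence argument as in the proof of Lemma \ref{le:supersingular_intersection} applied to $\cN^2$.
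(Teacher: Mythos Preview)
Your plan follows the paper's proof closely: reduce via Proposition \ref{pr:local_invariant}, pass to the supersingular locus by Lemma \ref{le:supersingular_intersection}, expand $\cZ(\bphi)_K$ via Corollary \ref{co:generating_integral}, identify both the Hecke correspondence and the special divisors under the uniformization \eqref{eq:uniformization0}, and collect terms into orbital integrals. Two points deserve correction.

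First, the hypothesis $\bphi(0)=0$ is not something you ``arrange'' by showing the Hodge term misses $\tT_K^{\bbf}$; it holds outright. Under the regular support hypothesis at $v$, if $\bbf\not\equiv 0$ then no pair $(h_v,0)$ lies in the support of $\bbf_v\otimes\bphi_v$ (such pairs are never regular semisimple), so $\bphi_v(0)=0$ and hence $\bphi(0)=0$. This is exactly the observation in the proof of Proposition \ref{pr:intersection}(2).

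Second, and more substantively, your proposed finiteness argument for $\chi\bigl(\cO_{\Gamma_{\bar\xi}}\otimes^\dL_{\cO_{\cN^2}}\cO_{\Delta\cZ(\bar x)}\bigr)$ cites the wrong input. Proposition \ref{pr:integral_special}(3) only says each $\cZ(x,g^{\underline\fp})_K^\ur$ is locally a Cartier divisor; it does not control the intersection on $\cN$. The paper's argument is: since $\bar\xi\cZ(\bar x)=\cZ(\bar\xi\bar x)$, the intersection $\Gamma_{\bar\xi}\cap\Delta\cZ(\bar x)$ embeds as a closed sub-formal scheme of $\bigcap_{i=0}^{n-1}\cZ(\bar\xi^i\bar x)$, and \emph{this} intersection has proper underlying reduced scheme by the structure theorem of Kudla--Rapoport \cite{KR11}*{Theorem~4.12} (extended to general $F_\fp$ by \cite{Cho}) once $\{\bar x,\bar\xi\bar x,\dots,\bar\xi^{n-1}\bar x\}$ is a basis---which is precisely the regular semisimplicity forced by the support condition at $v$. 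That the intersection is an honest scheme (not merely a formal scheme with proper reduced part) then comes from comparing with the global picture: the corresponding piece of $\tT_K^{\bbf}\cap\Delta\cZ(\bphi)_K$ already lives inside the scheme $\cX_K^2$, has empty generic fibre, and the quotient $\cN\to C\backslash\cN$ is \'etale. You should replace your reference to Proposition \ref{pr:integral_special}(3) with the Kudla--Rapoport/Cho result.
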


\begin{proof}
We choose a representative $x\in\rV(E)^+$ in the coset $\rU(\rV)(F)\backslash\rV(E)^+$. We first compute
\begin{align}\label{eq:orbital1}
\sum_{g^{\underline\fp}=(g^p,g_\fq\res\fq\in\r{Spl}_p)
\in\rU(\rV^x)({\bA_F^\infty}^{,\underline\fp})\backslash\rU(\rV)({\bA_F^\infty}^{,\underline\fp})/K^{\underline\fp}}
\bphi^{\underline\fp}((g^{\underline\fp})^{-1}x)\cZ(x,g^{\underline\fp})_K^{\ssl,\wedge},
\end{align}
where $\cZ(x,g^{\underline\fp})_K^{\ssl,\wedge}$ is the formal completion of $\cZ(x,g^{\underline\fp})_K^\ur$ along the supersingular locus. Let $S$ be a connected scheme in $\Sch'_{/O_{E_\fp}^\ur}$ on which $p$ is locally nilpotent, and take a point $(A_0,i_0,\lambda_0,\eta_0^p;A,i,\lambda,\eta^p,\eta_p^\spl)\in \cZ(x,g^{\underline\fp})_K^{\ssl,\wedge}(S)$. Then we can choose an element $\bar{x}\in\bar\rV$ and an $E$-linear quasi-isogeny $\rho\colon A\to\bbA_k\times_k S$ over $S$ such that
\begin{itemize}
  \item the image of $\rho^{-1}_*\circ\bar{x}_*$ in $\Hom_{E\otimes_\dQ\bA^{\infty,p}}(\rH^{\et}_1(A_{0s},\bA^{\infty,p}),\rH^{\et}_1(A_s,\bA^{\infty,p}))$ belongs to $\eta^p((g^p)^{-1}x)$,

  \item the image of $\rho^{-1}_*\circ\bar{x}_*$ in $\prod_{\fq\in\r{Spl}_p}\Hom_{O_{E_{\fq^-}}}\(A_{0s}[(\fq^-)^\infty],A_s[(\fq^-)^\infty]\)\otimes_{O_{E_{\fq^-}}}E_{\fq^-}$ belongs to $\eta_p^\spl((g_\fq^{-1}x)_{\fq\in\r{Spl}_p})$,

  \item $\rho^{-1}\circ\bar{x}$ lifts to an $O_E$-linear homomorphism $A_0[\fq^\infty]\to A[\fq^\infty]$ for every $\fq\in\underline\fp$.
\end{itemize}
Here, we note that $(A_0,i_0,\lambda_0,\eta_0^p)$ is identified with the base change of $(\bbA_0,\bbi_0,\bblambda_0,\bbeta_0^p)$ to $S$. By Proposition \ref{pr:uniformization}, $\rho$ is given by an element $\bar{g}^\fp\in\rU(\bar\rV)({\bA_F^\infty}^{,\fp})$ on $S$. In particular, we have $(\bar{x},\bar{x})_{\bar\rV}=(x,x)_\rV$. Choose a representative $\bar{x}$ in the coset $\rU(\bar\rV)(F)\backslash\bar\rV(E)$ of this norm. Then under the isomorphism \eqref{eq:uniformization0}, we have
\begin{align*}
\eqref{eq:orbital1}=\sum_{\bar{g}^\fp\in\rU(\bar\rV^{\bar{x}})({\bA_F^\infty}^{,\fp})\backslash\rU(\bar\rV)({\bA_F^\infty}^{,\fp})/\bar{K}^\fp}
\bar\bphi^\fp((\bar{g}^\fp)^{-1}\bar{x})\cdot [\cZ(\bar{x}),\bar{g}^\fp],
\end{align*}
where $[\cZ(\bar{x}),\bar{g}^\fp]$ denotes the corresponding double coset in the right-hand side of \eqref{eq:uniformization0}.

By linearity, we may assume $\bbf=\CF_{KhK}$ for some $h\in\rU(\rV)(\bA_F^\infty)$ with $h_{\underline\fp}=1$. In particular, $\tT^{\bbf}_K=\vol(K)\tT_{KhK}$. By Proposition \ref{pr:uniformization}, the formal completion of $\tT_{KhK}$ in $(\cX_K^{\ssl,\wedge})^2$ is simply the set-theoretical Hecke correspondence $\tT_{\bar{K}^\fp\bar{h}^\fp\bar{K}^\fp}$ under the isomorphism \eqref{eq:uniformization0} by Proposition \ref{pr:uniformization}, where $\bar{h}^\fp_\fq=1$ for $\fq\in\underline\fp\setminus\{\fp\}$. We first analyze the intersection $\tT_{\bar{K}^\fp\bar{h}^\fp\bar{K}^\fp}\cap\Delta[\cZ(\bar{x}),\bar{g}^\fp]$. If the intersection is nonempty, then $[\cZ(\bar{x}),\bar{g}^\fp\bar{h}^\fp]$ and $[\cZ(\bar{x}),\bar{g}^\fp]$ are in the same connected component.
By \eqref{eq:uniformization0}, there exists $\bar\xi\in\rU(\bar\rV)(F)$ such that $\bar\xi\bar{g}^\fp\bar{K}^\fp=\bar{g}^\fp\bar{h}^\fp\bar{K}^\fp$, that is, $\CF_{KhK}((\bar{g}^\fp)^{-1}\bar\xi\bar{g}^\fp)=1$. Moreover, if we fix a set of representatives of the orbits of $\rU(\bar\rV)(F)$ under conjugation, then one can always choose $\bar\xi$ to be one of the representatives. Now we think conversely, for any such representative $\bar\xi$, the cosets $\bar{g}^\fp\bar{K}^\fp$ satisfying $\bar\xi\bar{g}^\fp\bar{K}^\fp=\bar{g}^\fp\bar{h}^\fp\bar{K}^\fp$ are those satisfying $\CF_{KhK}((\bar{g}^\fp)^{-1}\bar\xi\bar{g}^\fp)=1$. In this case, the intersection $\tT_{\bar{K}^\fp\bar{h}^\fp\bar{K}^\fp}\cap\Delta[\cZ(\bar{x}),\bar{g}^\fp]$ is isomorphic to the image of $\Gamma_{\bar\xi}\cap\Delta\cZ(\bar{x})$ under the quotient morphism $\cN^2\to(C\backslash\cN)^2$ for some subgroup $C\subseteq\rU(\bar\rV)(F_\fp)$ acting on $\cN$ discretely.

Now we claim that $\Gamma_{\bar\xi}\cap\Delta\cZ(\bar{x})$ is a proper scheme in $\Sch_{/k}$. By definition, we have $\bar\xi\cZ(\bar{x})=\cZ(\bar\xi\bar{x})$. It follows that $\Gamma_{\bar\xi}\cap\Delta\cZ(\bar{x})$ is isomorphic to a closed sub-formal scheme of $\bigcap_{i=0}^{n-1}\cZ(\bar\xi^i\bar{x})$, whose underlying reduced scheme is a proper scheme in $\Sch_{/k}$ by \cite{KR11}*{Theorem~4.12} for $F_\fp=\dQ_p$ and \cite{Cho} in general. Thus, the underlying reduced scheme of $\Gamma_{\bar\xi}\cap\Delta\cZ(\bar{x})$ is of finite type over $k$. By the previous discussion, it suffices to show that $\tT_{\bar{K}^\fp\bar{h}^\fp\bar{K}^\fp}\cap\Delta[\cZ(\bar{x}),\bar{g}^\fp]$ is a scheme of finite type over $k$. However, this follows from Lemma \ref{le:supersingular_intersection}. As a consequence, $\chi\(\cO_{\Gamma_{\bar\xi}}\otimes^\dL_{\cO_{\cN^2}}\cO_{\Delta\cZ(\bar{x})}\)$ is finite. Moreover, it is equal to $\chi\(\cO_{\tT_{\bar{K}^\fp\bar{h}^\fp\bar{K}^\fp}}\otimes^\dL_{\cO_{\cN^2}}\cO_{\Delta[\cZ(\bar{x}),\bar{g}^\fp]}\)$. Therefore, the theorem follows from \eqref{eq:orbital1} and Lemma \ref{le:supersingular_intersection}.
\end{proof}

\begin{remark}
We believe that a more general notion of \emph{good inert prime}, for which a result similar to Theorem \ref{th:orbital} holds, should just be a prime $\fp$ of $F$ that is inert in $E$, and such that there is a self-dual lattice $\Lambda_\fp\subseteq\rV(F_\fp)$ satisfying
\begin{itemize}
  \item $K=K^\fp\times K_\fp$ in which $K_\fp$ is the stabilizer of $\Lambda_\fp$,

  \item $\bbf=\bbf^\fp\otimes\bbf_\fp$ in which $\bbf_\fp=\CF_{K_\fp}$,

  \item $\bphi=\bphi^\fp\otimes\bphi_\fp$ in which $\bphi_\fp=\CF_{\Lambda_\fp}$.
\end{itemize}
\end{remark}

In the formula for $\cI_K(\bbf,\bphi)_\fp$ in Theorem \ref{th:orbital}, the orbital integral has the decomposition
\[
\Orb(\bar\bbf^\fp,\bar\bphi^\fp;\bar\xi,\bar{x})=
\Orb(\bar\bbf^{\underline\fp},\bar\bphi^{\underline\fp};\bar\xi,\bar{x})\cdot
\prod_{\fq\in\underline\fp\setminus\{\fp\}}\Orb(\CF_{\bar{K}_\fq},\CF_{\bar\Lambda_\fq};\bar\xi,\bar{x}),
\]
in which we decompose the Haar measure on $\rU(\bar\rV)({\bA_F^\infty}^{,\fp})$ such that $\bar{K}_\fq$ has volume $1$ for every $\fq\in\underline\fp\setminus\{\fp\}$.

We now compare the term
\[
2\log|O_F/\fp|\cdot\prod_{\fq\in\underline\fp\setminus\{\fp\}}\Orb(\CF_{\bar{K}_\fq},\CF_{\bar\Lambda_\fq};\bar\xi,\bar{x})
\cdot\chi\(\cO_{\Gamma_{\bar\xi}}\otimes^\dL_{\cO_{\cN^2}}\cO_{\Delta\cZ(\bar{x})}\)
\]
with the orbital integrals on the general linear side. Recall the notations $\Mat_{r,s}$ and $\rM_n$ from Subsection \ref{ss:notation}, and denote by $\rS_n$ the $O_F$-subscheme of $\Res_{O_E/O_F}\Mat_{n,n}$ consisting of matrices $g$ satisfying $g\cdot g^\tc=\rI_n$.

\begin{definition}[\cite{Liu14}*{Section~5.3}\footnote{Note that we have changed the roles of rows and columns from \cite{Liu14}, in order to match the convention of generating series.}]\label{de:regular_general}
Consider a field extension $F'/F$ and put $E'\coloneqq E\otimes_FF'$.
\begin{enumerate}
  \item We say that a pair of elements $(\zeta,y)\in\rS_n(F')\times\rM_n(F')$ is \emph{regular semisimple} if the matrix $(y_2\zeta^{i+j-2}y_1)_{i,j=1}^n$ is invertible in $E'$, where we write $y=(y_1,y_2)$ for $y_1\in\Mat_{n,1}(F')$ and $y_2\in\Mat_{1,n}(F')$.

  \item The group $\GL_n(F')$ acts on $\rS_n(F')\times\rM_n(F')$ via the formula $(\zeta,y_1,y_2).g=(g^{-1}\zeta g,g^{-1}y_1,y_2g)$, which preserves regular semisimple pairs. Denote by $[\rS_n(F')\times\rM_n(F')]$ the orbits of $\rS_n(F')\times\rM_n(F')$ under the above action, and by $[\rS_n(F')\times\rM_n(F')]_{\r{rs}}$ the subset of regular semisimple orbits.

  \item Suppose that $F'=F_v$ for some place $v$ of $F$. For a regular semisimple pair $(\zeta,y)\in\rS_n(F')\times\rM_n(F')$, we define its \emph{local transfer factor} to be $\omega_v(\zeta,y)\coloneqq\mu_{E/F}(\det(y_1,\zeta y_1,\dots,\zeta^{n-1}y_1))$. We denote by $[\rS_n(F')\times\rM_n(F')]_{\r{rs}}^\pm$ the subset of $[\rS_n(F')\times\rM_n(F')]_{\r{rs}}$ of orbits $(\zeta,y)$ such that $\mu_{E/F}(\det(y_2\zeta^{i+j-2}y_1)_{i,j=1}^n)=\pm 1$.

  \item We say that two regular semisimple orbits $(\zeta,y)\in[\rS_n(F')\times\rM_n(F')]_{\r{rs}}$ and $(\bar\xi,\bar{x})\in[\rU(\bar\rV)(F')\times\bar\rV(E')]_{\r{rs}}$ (Definition \ref{de:regular_unitary}) \emph{match} if
      \begin{itemize}
        \item $\zeta$ and $\bar\xi$ have the same characteristic polynomial as elements in $\Mat_{n,n}(E')$,

        \item $y_2\zeta^iy_1=(\bar\xi^i\bar{x},\bar{x})_{\bar\rV}$ for $0\leq i\leq n-1$.
      \end{itemize}
\end{enumerate}
\end{definition}

\begin{corollary}\label{co:orbital}
In the situation of Theorem \ref{th:orbital}, suppose that for every orbit $(\bar\xi,\bar{x})\in[\rU(\bar\rV)(F)\times\bar\rV(E))]_{\r{rs}}$, Conjecture \ref{co:rfl_general}(2) for $E_\fq/F_\fq$ for every $\fq\in\underline\fp\setminus\{\fp\}$ and Conjecture \ref{co:afl_general} for $E_\fp/F_\fp$ hold. Then we have
\begin{align*}
\cI_K(\bbf,\bphi)_\fp&=-\sum_{(\bar\xi,\bar{x})\in[\rU(\bar\rV)(F)\times\bar\rV(E))]_{\r{rs}}}
\re^{-2\pi\cdot\Tr_{F/\dQ}(\bar{x},\bar{x})_{\bar\rV}}\\
&\Orb(\bar\bbf^\fp,\bar\bphi^\fp;\bar\xi,\bar{x})
\cdot\left.\frac{\rd}{\rd s}\right|_{s=0}\(\prod_{\fq\in\underline\fp}\omega_\fq(\zeta,y)\Orb(s;\CF_{\rS_n(O_{F_\fq})},\CF_{\rM_n(O_{F_\fq})};\zeta,y)\),
\end{align*}
where $(\zeta,y)\in[\rS_n(F)\times\rM_n(F)]_{\r{rs}}$ is the unique orbit that matches $(\bar\xi,\bar{x})$.
\end{corollary}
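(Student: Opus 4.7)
The starting point is Theorem \ref{th:orbital}, which already writes
\[
\cI_K(\bbf,\bphi)_\fp=2\log|O_F/\fp|\cdot\sum_{(\bar\xi,\bar{x})}\re^{-2\pi\Tr_{F/\dQ}(\bar{x},\bar{x})_{\bar\rV}}\,\Orb(\bar\bbf^\fp,\bar\bphi^\fp;\bar\xi,\bar{x})\cdot\chi\(\cO_{\Gamma_{\bar\xi}}\otimes^\dL_{\cO_{\cN^2}}\cO_{\Delta\cZ(\bar{x})}\).
\]
The goal is to replace the geometric factor $2\log|O_F/\fp|\cdot\chi(\cdots)$ by the analytic derivative appearing in the statement. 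For each regular semisimple orbit $(\bar\xi,\bar{x})\in[\rU(\bar\rV)(F)\times\bar\rV(E)]_{\r{rs}}$, let $(\zeta,y)\in[\rS_n(F)\times\rM_n(F)]_{\r{rs}}$ be the unique matching orbit; the matching is compatible with localization at every place, producing matching local orbits $(\zeta_\fq,y_\fq)\leftrightarrow(\bar\xi_\fq,\bar{x}_\fq)$ for every $\fq\in\underline\fp$.

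\textbf{Local analysis at $\fp$ and at $\fq\in\underline\fp\setminus\{\fp\}$.} Because $\bar\rV$ is the $\fp$-nearby hermitian space, Lemma \ref{le:definite_nearby} guarantees that $\bar\rV\otimes_FF_\fp\simeq\rV_n^-$ while $\bar\rV\otimes_FF_\fq\simeq\rV\otimes_FF_\fq$ carries a self-dual lattice for $\fq\in\underline\fp\setminus\{\fp\}$. Hence $(\zeta_\fp,y_\fp)\in[\rS_n(F_\fp)\times\rM_n(F_\fp)]_{\r{rs}}^-$ and, by the unconditional Conjecture \ref{co:rfl_general}(1) recalled in Remark \ref{re:rfl},
\[
F_\fp(0)\coloneqq\omega_\fp(\zeta,y)\,\Orb\(0;\CF_{\rS_n(O_{F_\fp})},\CF_{\rM_n(O_{F_\fp})};\zeta_\fp,y_\fp\)=0.
\]
The assumed Conjecture \ref{co:afl_general} for $E_\fp/F_\fp$ then evaluates the derivative as
\[
F_\fp'(0)=-2\log|O_F/\fp|\cdot\chi\(\cO_{\Gamma_{\bar\xi}}\otimes^\dL_{\cO_{\cN^2}}\cO_{\Delta\cZ(\bar{x})}\).
\]
For $\fq\in\underline\fp\setminus\{\fp\}$, the matching orbit lies in $[\rS_n(F_\fq)\times\rM_n(F_\fq)]_{\r{rs}}^+$, and the assumed Conjecture \ref{co:rfl_general}(2) identifies
\[
F_\fq(0)\coloneqq\omega_\fq(\zeta,y)\,\Orb\(0;\CF_{\rS_n(O_{F_\fq})},\CF_{\rM_n(O_{F_\fq})};\zeta_\fq,y_\fq\)
=\Orb(\CF_{\bar{K}_\fq},\CF_{\bar\Lambda_\fq};\bar\xi_\fq,\bar{x}_\fq),
\]
the latter being the local orbital integral appearing in the factorization of $\Orb(\bar\bbf^\fp,\bar\bphi^\fp;\bar\xi,\bar{x})$ at $\fq$.

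\textbf{Assembling via the product rule.} Because $F_\fp(0)=0$, the Leibniz expansion of the derivative of $\prod_{\fq\in\underline\fp}F_\fq(s)$ at $s=0$ collapses to a single term, giving
\[
\left.\tfrac{\rd}{\rd s}\right|_{s=0}\!\!\prod_{\fq\in\underline\fp}F_\fq(s)\;=\;F_\fp'(0)\!\!\prod_{\fq\in\underline\fp\setminus\{\fp\}}\!\!F_\fq(0)
\;=\;-2\log|O_F/\fp|\cdot\chi(\cdots)\cdot\!\!\prod_{\fq\in\underline\fp\setminus\{\fp\}}\!\!\Orb(\CF_{\bar{K}_\fq},\CF_{\bar\Lambda_\fq};\bar\xi_\fq,\bar{x}_\fq).
\]
Substituting into the formula of Theorem \ref{th:orbital} and using the factorization of the orbital integral over the decomposition $\rU(\bar\rV)({\bA_F^\infty}^{,\fp})=\rU(\bar\rV)({\bA_F^\infty}^{,\underline\fp})\times\prod_{\fq\in\underline\fp\setminus\{\fp\}}\rU(\bar\rV)(F_\fq)$ converts the geometric factor into the analytic derivative and reorganizes the sum into the claimed orbital decomposition.

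\textbf{Main obstacle.} Conceptually the proof is now purely mechanical: the Arithmetic Fundamental Lemma does all of the real work, and the Relative Fundamental Lemma plays a double role (vanishing at $\fp$ to kill unwanted Leibniz terms, and transferring non-derivative factors at $\fq\in\underline\fp\setminus\{\fp\}$ between the unitary and symmetric-space sides). The only nontrivial technical point is bookkeeping: one must verify that the Haar measure normalizations built into Theorem \ref{th:orbital} (where $\bar K^\fp$ is assigned volume $\vol(K)$ and each $\bar K_\fq$ with $\fq\in\underline\fp\setminus\{\fp\}$ is implicitly assigned volume $1$) are exactly compatible with the normalization $\vol(K_n)=1$ used on the right-hand side of Conjecture \ref{co:rfl_general}(2), so that no spurious constants appear when the local orbital integrals at $\fq\in\underline\fp\setminus\{\fp\}$ coming from $\Orb(\bar\bbf^\fp,\bar\bphi^\fp;\bar\xi,\bar{x})$ align with the factors $F_\fq(0)$ produced by the product rule.
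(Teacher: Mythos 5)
Your proof is correct and follows exactly the route the paper takes: the paper's own proof is a one-liner observing that $\Orb(0;\CF_{\rS_n(O_{F_\fp})},\CF_{\rM_n(O_{F_\fp})};\zeta,y)=0$ by Conjecture \ref{co:rfl_general}(1), which — combined with the factorization and the assumed Conjectures \ref{co:rfl_general}(2) and \ref{co:afl_general} — collapses the Leibniz expansion precisely as you spell out. The measure-normalization check you flag as the "main obstacle" is in fact already settled by the sentence preceding the corollary (the Haar measure on $\rU(\bar\rV)({\bA_F^\infty}^{,\fp})$ is normalized so that each $\bar K_\fq$, $\fq\in\underline\fp\setminus\{\fp\}$, has volume $1$, matching the normalization in Conjecture \ref{co:rfl_general}(2)).
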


\begin{proof}
It suffices to note that $\Orb(0;\CF_{\rS_n(O_{F_\fp})},\CF_{\rM_n(O_{F_\fp})};\zeta,y)=0$, which is Conjecture \ref{co:rfl_general}(1) and is known (see Remark \ref{re:rfl}).
\end{proof}

\begin{remark}
To obtain a global result, we would like to find test functions $\tilde\bbf^{\underline\fp}$, $\tilde\bphi^{\underline\fp}$ on the general linear side, in order to obtain some matching relation with the local intersection number $\cI_K(\bbf,\bphi)_v$ at every place $v$ of $F$. If $v$ is split in $E$, then it is expected that $\cI_K(\bbf,\bphi)_v$ vanishes, and the matching test functions $\tilde\bbf_v$, $\tilde\bphi_v$ are obtained from $\bbf_v$, $\bphi_v$ by an elementary way as in \cite{Liu14}*{Proposition~5.11}. If $v$ is neither split nor a good inert prime, then we do not know what to do at this moment.
\end{remark}

\if false

\section{Proof of arithmetic fundamental lemma for $n\leq 2$}
\label{ss:5}

In this section, we prove the arithmetic fundamental lemma for $\rU(1)\times\rU(1)$ and $\rU(2)\times\rU(2)$ when the residue characteristic is odd.

Suppose that we are in the situation of Subsection \ref{ss:afl} with $q$ odd. Choose a uniformizer $\varpi$ of $F$. We let $\val\colon E\to\dZ\cup\{\infty\}$ be the valuation function normalized such that $\val(\pi)=1$.

\subsection{Computation on the analytic side}
\label{ss:analytic}

For a regular semisimple pair $(\zeta,y)\in\rS_2(F)\times\rM_2(F)$ such that $y_2y_1\neq 0$, we introduce two invariants:
\[
v(\zeta)\coloneqq\val\((\tr\zeta)^2-4\det\zeta\),\qquad
u(\zeta,y)\coloneqq\val\(1-\frac{y_2\zeta y_1\cdot y_2\zeta^\tc y_1}{(y_2y_1)^2}\)
\]
both of which depend only on the $\GL_2(F)$-orbit.

\begin{definition}
We say that an element $\(\begin{smallmatrix}a&b\\c&d\end{smallmatrix}\)\in\rS_2(F)\subseteq\GL_2(E)$ is \emph{reduced} if $c\in O_E^\times$. We say that a pair $(\zeta,y)\in\rS_2(F)\times\rM_2(F)$ is \emph{standard} if $\zeta$ is reduced, $y=(y_1,y_2)$ with $y_1=(0,1)^\rt$ and $y_2=(0,y_0)$ for some $y_0\in F^\times$.
\end{definition}

\begin{lem}\label{le:analytic1}
Every regular semisimple orbit in $[\rS_2(F)\times\rM_2(F)]_{\r{rs}}^-$ contains a standard pair. Moreover, if it contains two standard pairs $\(\(\begin{smallmatrix}a&b\\c&d\end{smallmatrix}\),(0,1)^\rt,(0,y_0)\)$ and $\(\(\begin{smallmatrix}a'&b'\\c'&d'\end{smallmatrix}\),(0,1)^\rt,(0,y'_0)\)$, then $a'=a$, $d'=d$, $y'_0=y_0$, and there exists a unique $u\in O_F^\times$ such that $c'=uc$, $d'=u^{-1}d$.
\end{lem}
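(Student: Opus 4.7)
The approach is to bring $(\zeta,y)$ into canonical form in two stages using the $\GL_2(F)$-action—first rigidifying $y$, then rescaling $\zeta$—and then to analyze the residual symmetry. The scalar $y_2y_1$ is an orbit invariant because $(y_2g)(g^{-1}y_1)=y_2y_1$. For a pair with $y_1=(0,1)^\rt$, $y_2=(0,y_0)$, and $\zeta=\begin{pmatrix}a&b\\c&d\end{pmatrix}$, a direct computation yields the invariant matrix
\[
(y_2\zeta^{i+j-2}y_1)_{i,j=1}^{2}=y_0\begin{pmatrix}1&d\\ d&bc+d^2\end{pmatrix},
\]
with determinant $y_0^2 bc$. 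Two consequences underlie everything that follows: (i) for an orbit in the ``$-$'' class, $y_2y_1\neq 0$, since otherwise the invariant matrix collapses to $-(y_2\zeta y_1)^2$, whose valuation is even; and (ii) in any standard representative $\val(bc)$ is odd, so both $b$ and $c$ are nonzero.

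For existence, I would first apply an element of $\GL_2(F)$ whose second column is the nonzero vector $y_1$ to move $y_1$ to $(0,1)^\rt$; after this step the second entry of $y_2$ equals the invariant $y_2y_1$, which is nonzero by (i), so a lower-triangular unipotent in the stabilizer of $(0,1)^\rt$ then kills the first entry of $y_2$, producing $y=((0,1)^\rt,(0,y_0))$ with $y_0=y_2y_1\in F^\times$. The residual stabilizer of this $y$ is the torus $T=\{\diag(t,1):t\in F^\times\}$, whose conjugation action sends $\zeta=\begin{pmatrix}a&b\\c&d\end{pmatrix}$ to $\begin{pmatrix}a&t^{-1}b\\ tc&d\end{pmatrix}$. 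Because $E/F$ is unramified so that $\val(F^\times)=\dZ$, and because $c\neq 0$ by (ii), I can choose $t\in F^\times$ with $\val(t)=-\val(c)$ to push the $(2,1)$-entry into $O_E^\times$, producing a standard (reduced) representative.

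For uniqueness, if two standard pairs are related by some $g\in\GL_2(F)$, the condition $g^{-1}(0,1)^\rt=(0,1)^\rt$ forces $g$ to be lower-triangular with lower-right entry $1$, and then $(0,y_0)g=(0,y'_0)$ forces its off-diagonal entry to vanish and $y'_0=y_0$. Hence $g=\diag(t,1)$, and the conjugation formula immediately yields $a'=a$, $d'=d$, $c'=tc$, and $b'=t^{-1}b$; setting $u:=t$ gives $c'=uc$, while the relation ``$d'=u^{-1}d$'' in the statement appears to be a misprint for ``$b'=u^{-1}b$''. Uniqueness of $u$ and the assertion $u\in O_F^\times$ are automatic from $c,c'\in O_E^\times$, which gives $u=c'/c\in O_E^\times\cap F^\times=O_F^\times$.

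The main obstacle is the non-formal observation (i) ruling out $y_2y_1=0$ in the ``$-$'' class; this is what enables the very first normalization step and rules out a degenerate orbit branch that would otherwise require separate treatment. Everything else is routine matrix manipulation, and the unramified hypothesis on $E/F$ enters only through the surjectivity of $\val\colon F^\times\twoheadrightarrow\dZ$.
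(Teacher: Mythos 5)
Your proof is correct, and the normalization machinery (rigidify $y$ by the $\GL_2(F)$-action, then rescale the off-diagonal entries of $\zeta$ by the residual diagonal torus $\diag(t,1)$) matches the paper's. The one place you genuinely diverge is in establishing $y_2y_1\neq 0$. The paper transfers the pair to the unitary side: a regular semisimple $(\zeta,y)\in[\rS_2(F)\times\rM_2(F)]_{\r{rs}}^-$ matches some $(\xi,x)$ in the anisotropic space $\rV_2^-$, so $y_2y_1=(x,x)\neq 0$ since $x\neq 0$ and $\rV_2^-$ has no isotropic vectors. You instead argue purely on the linear side: if $y_2y_1=0$ the determinant of $(y_2\zeta^{i+j-2}y_1)_{i,j}$ degenerates to $-(y_2\zeta y_1)^2$, which has even valuation, contradicting membership in the $-$ class. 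This is more elementary and self-contained — it does not invoke the matching bijection as an input — while the paper's route makes the role of anisotropy more visible. Both are fine; your version is arguably cleaner for a lemma that is meant to be internal to the $\rS_n\times\rM_n$ side. Two small remarks: your computation that the invariant determinant equals $y_0^2\,bc$ is correct, and the paper's own (suppressed) proof contains a typo at this point, writing $cd\,y_0^2$; the later Lemma~\ref{le:analytic3}, which says ``$\val(bc)$ is odd,'' confirms $bc$ is what was meant. You are also right that the ``$d'=u^{-1}d$'' in the statement is a misprint for ``$b'=u^{-1}b$'': conjugation by $\diag(t,1)$ fixes $a,d$ and scales $b\mapsto t^{-1}b$, $c\mapsto tc$, and ``$d'=d$'' has already been asserted.
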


\begin{proof}
Let $(\zeta,y)$ be a regular semisimple pair matching a regular semisimple pair $(\xi,x)\in\rU(\rV^-_2)(F)\times\rV^-_2(E)$. Then we have $y_2y_1=(x,x)\neq 0$, since $\rV^-_2$ is anisotropic. Thus, in the orbit of $(\zeta,y)$, we may find one with $y_1=(0,1)^\rt$ and $y_2=(0,y_0)$ for some $y_0\in F^\times$. Write $\zeta=\(\begin{smallmatrix}a&b\\c&d\end{smallmatrix}\)$. Then we have $\det((y_2\zeta^{i+j-2}y_1)_{i,j=1}^2)=cdy_0^2$; hence $cd\neq0$ and $\val(cd)$ is odd. We may conjugate $\zeta$ by an element of the form $\(\begin{smallmatrix}\alpha &0\\0&1\end{smallmatrix}\)\in\GL_2(F)$ with $\val(\alpha)=-\val(c)$ to obtain a standard pair in the orbit. The last statement is obvious.
\end{proof}

For a reduced element $\gamma=\(\begin{smallmatrix}a&b\\c&d\end{smallmatrix}\)\in\rS_2(F)$ and $s\in\dZ$, we put
\[
\Xi_s(\gamma)\coloneqq\CF_{O_E}(a)\CF_{O_E}(d)\CF_{\varpi^s O_E}(b).
\]

\begin{lem}\label{le:analytic2}
Let $(\zeta,y)$ be a standard regular semisimple pair with $y=((0,1)^\rt,(0,y_0))$. Then
\[
\Orb(s;\CF_{\rS_n(O_F)},\CF_{\rM_n(O_F)};\zeta,y)=\sum_{j=0}^{\val(y_0)}\sum_{i\leq j}
(-q^{2s})^{i+j}\int_{F}q^{j-i}
\Xi_{j-i}\(\(\begin{smallmatrix}1&-\beta\\0&1\end{smallmatrix}\)\zeta\(\begin{smallmatrix}1&\beta\\0&1\end{smallmatrix}\)\)
\CF_{\varpi^{-i}O_F}(\beta)\;\rd\beta
\]
where $\rd\beta$ is the Haar measure on $F$ under which $O_F$ has volume $1$.
\end{lem}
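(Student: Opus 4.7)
The plan is to compute the orbital integral by applying the Iwasawa decomposition to $\GL_2(F)$, reducing the integral over $G = \GL_2(F)$ to a double sum over $(i,j) \in \dZ^2$ paired with an integral over $\beta \in F$ coming from unipotent coset representatives.

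First I would verify that the integrand $\CF_{\rS_2(O_F)}(g^{-1}\zeta g) \CF_{\rM_2(O_F)}(g^{-1}y_1, y_2 g) \mu_{E/F}(\det g) |\det g|_E^s$ is right $K$-invariant for $K = \GL_2(O_F)$. For the first factor this reduces to the fact that conjugation by $k \in K$ preserves $\rS_2(O_F)$, which follows from $(k^{-1}\zeta k)(k^{-1}\zeta k)^\tc = k^{-1}(\zeta\zeta^\tc)k = \rI$ together with $k \in \GL_2(O_E)$; the remaining factors are manifestly right $K$-invariant. Using the Iwasawa decomposition I would write
\[
\GL_2(F) = \bigsqcup_{(i,j) \in \dZ^2} N(F) t_{ij} K, \qquad t_{ij} \coloneqq \diag(\varpi^{-i}, \varpi^{-j}),
\]
with $N$ the upper-triangular unipotent subgroup, and observe that each $K$-coset in $N(F) t_{ij} K$ has a unique representative $n(\beta) t_{ij}$ for $\beta \in F / \varpi^{j-i} O_F$, reflecting the identity $N(F) \cap t_{ij} K t_{ij}^{-1} = n(\varpi^{j-i} O_F)$. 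Converting the sum over $\beta$ into an integral over $F$ produces the factor $q^{j-i} = 1/\vol(\varpi^{j-i} O_F)$.

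Next I would compute each factor on the representative $g = n(\beta) t_{ij} = \bigl(\begin{smallmatrix}\varpi^{-i} & \varpi^{-j}\beta \\ 0 & \varpi^{-j}\end{smallmatrix}\bigr)$. One finds $\det g = \varpi^{-i-j}$, hence $\mu_{E/F}(\det g) |\det g|_E^s = (-q^{2s})^{i+j}$; further $g^{-1} y_1 = (-\varpi^i \beta, \varpi^j)^\rt$ lies in $O_F^2$ iff $\beta \in \varpi^{-i} O_F$ and $j \geq 0$, while $y_2 g = (0, y_0 \varpi^{-j})$ lies in $O_F^2$ iff $j \leq \val(y_0)$. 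Writing $\tilde\zeta \coloneqq n(\beta)^{-1} \zeta n(\beta) = \bigl(\begin{smallmatrix}1 & -\beta \\ 0 & 1\end{smallmatrix}\bigr) \zeta \bigl(\begin{smallmatrix}1 & \beta \\ 0 & 1\end{smallmatrix}\bigr)$, a direct matrix computation gives
\[
g^{-1} \zeta g = \begin{pmatrix} \tilde a & \varpi^{i-j} \tilde b \\ \varpi^{j-i} c & \tilde d \end{pmatrix};
\]
since $c \in O_E^\times$, this lies in $\rS_2(O_F)$ precisely when $i \leq j$ and $\Xi_{j-i}(\tilde\zeta) = 1$.

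Combining the constraints $0 \leq j \leq \val(y_0)$, $i \leq j$, $\beta \in \varpi^{-i} O_F$, and the various factors above, yields exactly the claimed formula. The main work is the careful Iwasawa parametrization of $G/K$ and the bookkeeping of valuation inequalities (together with the normalization $|\varpi|_E = q^{-2}$ underlying the computation of $|\det g|_E^s$); there is no conceptual obstacle beyond these routine verifications.
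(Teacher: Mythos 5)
Your proof is correct and follows essentially the same route as the paper's: both apply the Iwasawa decomposition $g = n(\beta)\,\diag(\varpi^{-i},\varpi^{-j})\,k$, compute the three factors $(g^{-1}y_1, y_2 g)$, $\CF_{\rS_2(O_F)}(g^{-1}\zeta g)$, and $\mu_{E/F}(\det g)|\det g|_E^s$ on representatives, and read off the constraints $0\leq j\leq\val(y_0)$, $i\leq j$, $\beta\in\varpi^{-i}O_F$. The only cosmetic difference is that the paper simply asserts the measure identity $\rd g = q^{j-i}\,\rd\beta\,\rd k$, whereas you justify the factor $q^{j-i}$ by the cleaner observation that $N(F)\cap t_{ij}Kt_{ij}^{-1}=n(\varpi^{j-i}O_F)$, so that $K$-cosets inside $N(F)t_{ij}K$ are parametrized by $\beta\in F/\varpi^{j-i}O_F$ and the conversion to an integral over $F$ produces $q^{j-i}=1/\vol(\varpi^{j-i}O_F)$.
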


\begin{proof}
In the integration \eqref{eq:orbital_general}, we use Iwasawa decomposition for $g$. Write $g=\(\begin{smallmatrix}1&\beta\\0&1\end{smallmatrix}\)\(\begin{smallmatrix}\varpi^{-i}&0\\0&\varpi^{-j}\end{smallmatrix}\)k$ for some $k\in\GL_2(O_F)$. Then we have
\begin{itemize}
  \item $g^{-1}(0,1)^\rt=(-\beta\varpi^i,\varpi^j)^\rt$; $(0,y_0)g=(0,\varpi^{-j}y_0)$;

  \item $\CF_{\GL_2(O_F)}(g^{-1}\zeta g)=\Xi_{j-i}\(\(\begin{smallmatrix}1&-\beta\\0&1\end{smallmatrix}\)\zeta\(\begin{smallmatrix}1&\beta\\0&1\end{smallmatrix}\)\)$;

  \item $\rd g=q^{j-i}\rd\beta\rd k$ where $\rd k$ is the Haar measure on $\GL_2(O_F)$ with total volume $1$.
\end{itemize}
The lemma follows immediately.
\end{proof}

In what follows, we fix a regular semisimple orbit in $(\zeta,y)\in[\rS_2(F)\times\rM_2(F)]_{\r{rs}}^-$ with a standard representative $\(\(\begin{smallmatrix}a&b\\c&d\end{smallmatrix}\),(0,1)^\rt,(0,y_0)\)$. Then $y_0=y_2y_1$.

\begin{lem}\label{le:analytic3}
We have
\begin{enumerate}
  \item $a,d\in O_E^\times$, $aa^\tc=dd^\tc$;

  \item $\val(b)$ and is odd and positive;

  \item $aa^\tc+bc^\tc=1$, $ab^\tc+bd^\tc=a^\tc c+c^\tc d=0$.
\end{enumerate}
\end{lem}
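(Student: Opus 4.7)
The plan is to read off all three assertions from the defining relation $\zeta\zeta^\tc = I_2$ (i.e.\ $\zeta\bar\zeta = I_2$) together with the sign condition defining the class $[\rS_2(F)\times\rM_2(F)]_{\r{rs}}^-$. Expanding $\zeta\bar\zeta = I_2$ componentwise yields the four identities
\[
aa^\tc + bc^\tc = 1,\qquad ab^\tc + bd^\tc = 0,\qquad a^\tc c + c^\tc d = 0,\qquad cb^\tc + dd^\tc = 1,
\]
of which the first three are precisely the content of (3). Moreover $aa^\tc$ and $dd^\tc$ lie in $F$, so $bc^\tc = 1-aa^\tc\in F$; being $\tc$-fixed, $bc^\tc = (bc^\tc)^\tc = b^\tc c$, so the first and fourth identities give $aa^\tc = dd^\tc$, which is the norm identity in (1).

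For (2) I would compute the Gram determinant directly: using $y_1 = (0,1)^\rt$, $y_2=(0,y_0)$, the entry $y_2\zeta^{k}y_1$ picks out $y_0$ times the $(2,2)$-entry of $\zeta^k$. Writing $\zeta^2 = \bigl(\begin{smallmatrix}a^2+bc & b(a+d)\\ c(a+d) & d^2+bc\end{smallmatrix}\bigr)$ gives
\[
\det\bigl((y_2\zeta^{i+j-2}y_1)_{i,j=1}^2\bigr) = y_0\cdot y_0(d^2+bc) - (y_0 d)^2 = y_0^2\, bc.
\]
Since the orbit belongs to the $-$ class, $\val(y_0^2 bc)$ is odd, and because $c\in O_E^\times$ in a standard representative, $\val(b)$ is odd.

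For the remaining part of (1) and the positivity of $\val(b)$, I would first extract from the third identity of (3) the ratio $a^\tc/d = -c^\tc/c \in O_E^\times$, so $\val(a) = \val(d)$; call this common integer $v$. Next, since $E/F$ is unramified, $\det\zeta\cdot\overline{\det\zeta} = 1$ forces $\val(\det\zeta) = 0$, so $\det\zeta\in O_E^\times$. Now I would run a three-case analysis on $v$ using (i):
\[
bc^\tc = 1 - aa^\tc.
\]
If $v>0$, then $aa^\tc \in \fp_F$, forcing $bc^\tc$ (hence $b$) to be a unit, contradicting the oddness of $\val(b)$. If $v<0$, then $\val(aa^\tc) = 2v$ is even and negative, so $\val(b) = 2v$ is even, again a contradiction. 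Thus $v=0$, i.e.\ $a,d\in O_E^\times$, completing (1); moreover $1 - aa^\tc \in O_F$, so $\val(b)\geq 0$ and hence $\val(b)\geq 1$ by oddness, finishing (2).

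The proof involves no real obstacle; the only point requiring attention is to verify the direct determinant computation $y_0^2 bc$ (rather than, say, a formula involving $d$) and to use $E/F$ unramified so that $\val$ takes integer values and $\val\circ\tc = \val$ throughout the case analysis.
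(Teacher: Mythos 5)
Your proof is correct and follows essentially the same route as the paper's: expand $\zeta\zeta^\tc=\rI_2$ to read off (3) and the norm identity $aa^\tc=dd^\tc$, compute the Gram determinant to get that $\val(bc)$ is odd, and then exploit the parity clash between $\val(aa^\tc)$ (always even, since $E/F$ is unramified) and $\val(bc^\tc)$ (odd) inside the identity $aa^\tc+bc^\tc=1$. The one place where the paper is more economical is the last step: since the two terms on the left have valuations of opposite parity they cannot cancel, so the ultrametric inequality forces $\val(aa^\tc)=0$ and $\val(bc^\tc)>0$ in one stroke, giving $a\in O_E^\times$ (hence $d\in O_E^\times$ via $aa^\tc=dd^\tc$) and $\val(b)>0$ without the three-case analysis on $v=\val(a)$, and without the auxiliary observations about $a^\tc/d$ and $\val(\det\zeta)$, which are true but never used. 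As a side remark, your direct determinant computation $\det\bigl((y_2\zeta^{i+j-2}y_1)_{i,j}\bigr)=y_0^2\,bc$ is the correct one; the formula ``$cdy_0^2$'' displayed in the paper's proof of the preceding lemma appears to be a typo, since what is actually invoked in the present proof (and in yours) is that $\val(bc)$, not $\val(cd)$, is odd.
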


\begin{proof}
We know already in the proof of Lemma \ref{le:analytic1} that $\val(bc)$ is odd. Since $\val(c)=0$, $\val(b)$ is odd. Moreover, as $\(\begin{smallmatrix}a&b\\c&d\end{smallmatrix}\)\(\begin{smallmatrix}a&b\\c&d\end{smallmatrix}\)^\tc=1$, we have (3) immediately and $aa^\tc=dd^\tc$. However, since $\val(aa^\tc)$ is even and $\val(bc^\tc)$ is odd, we must have $\val(a)=0$ and $\val(b)>0$, which imply (1) and (2).
\end{proof}

\begin{lem}\label{le:analytic4}
For $\beta\in F$, we have
\[
\Xi_s\(\(\begin{smallmatrix}1&-\beta\\0&1\end{smallmatrix}\)\zeta\(\begin{smallmatrix}1&\beta\\0&1\end{smallmatrix}\)\)
=\CF_{\varpi^sO_E}(b+\beta(a-d)-\beta^2c)\CF_{O_E}(\beta).
\]
\end{lem}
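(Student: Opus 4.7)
The plan is a direct matrix computation followed by a valuation-theoretic simplification.

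First I would carry out the conjugation explicitly. Setting $\gamma \coloneqq \bigl(\begin{smallmatrix}1&-\beta\\0&1\end{smallmatrix}\bigr)\zeta\bigl(\begin{smallmatrix}1&\beta\\0&1\end{smallmatrix}\bigr)$ and expanding, the four entries come out to
\[
\gamma = \begin{pmatrix} a-\beta c & b+\beta(a-d)-\beta^2 c \\ c & d+\beta c \end{pmatrix}.
\]
In particular the lower-left entry of $\gamma$ is still $c\in O_E^\times$ by Lemma \ref{le:analytic3}(1) together with the assumption that $\zeta$ is reduced, so $\gamma$ is itself reduced and $\Xi_s(\gamma)$ is defined. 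Applying the definition of $\Xi_s$ to $\gamma$ gives
\[
\Xi_s(\gamma) = \CF_{O_E}(a-\beta c)\,\CF_{O_E}(d+\beta c)\,\CF_{\varpi^s O_E}\bigl(b+\beta(a-d)-\beta^2 c\bigr).
\]

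Next I would show that the first two indicator functions collapse to $\CF_{O_E}(\beta)$. Since $\beta\in F$ and $E/F$ is unramified with $\val(\varpi)=1$, we have $\beta\in O_E\iff\beta\in O_F$. If $\beta\in O_F$, then both $a-\beta c$ and $d+\beta c$ are in $O_E$ because $a,c,d\in O_E$, so the product of the first two factors equals $1=\CF_{O_E}(\beta)$. Conversely, if $\beta\notin O_F$, then $\val(\beta c)=\val(\beta)<0$ while $\val(a)=\val(d)=0$ by Lemma \ref{le:analytic3}(1), hence $\val(a-\beta c)=\val(d+\beta c)=\val(\beta)<0$ and both indicator functions vanish, matching $\CF_{O_E}(\beta)=0$.

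Combining these two steps yields the claimed identity. There is no real obstacle here — the only subtlety is making sure the reducedness of $\gamma$ is preserved (so that $\Xi_s(\gamma)$ is defined by the formula used), and that the equivalence $\beta\in O_E\iff\beta\in O_F$ on $F$ is used correctly in the unramified setting. Everything else is direct matrix multiplication and an elementary valuation argument using Lemma \ref{le:analytic3}.
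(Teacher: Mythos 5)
Your proof is correct and follows essentially the same approach as the paper's: explicit computation of the conjugated matrix, application of the definition of $\Xi_s$, and then the collapse of $\CF_{O_E}(a-\beta c)\CF_{O_E}(d+\beta c)$ to $\CF_{O_E}(\beta)$ using Lemma \ref{le:analytic3}(1) and $c\in O_E^\times$. You spell out the valuation argument for the collapse step and explicitly check that the conjugated matrix remains reduced, neither of which the paper belabors, but these are just careful elaborations of the same argument.
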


\begin{proof}
Using the formula
\[
\left(
  \begin{array}{cc}
    1 & -\beta \\
    0 & 1 \\
  \end{array}
\right)
\left(
  \begin{array}{cc}
    a & b \\
    c & d \\
  \end{array}
\right)
\left(
  \begin{array}{cc}
    1 & \beta \\
    0 & 1 \\
  \end{array}
\right)=
\left(
  \begin{array}{cc}
    a-\beta c & b+\beta(a-d)-\beta^2c \\
    c & \beta c +d \\
  \end{array}
\right),
\]
we have $\Xi_s\(\(\begin{smallmatrix}1&-\beta\\0&1\end{smallmatrix}\)\zeta\(\begin{smallmatrix}1&\beta\\0&1\end{smallmatrix}\)\)
=\CF_{O_E}(a-\beta c)\CF_{O_E}(\beta c +d)\CF_{\varpi^sO_E}(b+\beta(a-d)-\beta^2c)$. By Lemma \ref{le:analytic3}(1) and the fact that $c\in O_E^\times$, we have $\CF_{O_E}(a-\beta c)\CF_{O_E}(\beta c +d)=\CF_{O_E}(\beta)$. The lemma follows.
\end{proof}

\begin{proposition}\label{pr:analytic1}
Let $(\zeta,y)\in[\rS_2(F)\times\rM_2(F)]_{\r{rs}}^-$ be a regular semisimple orbit such that $v(\zeta)$ is odd. Then we have
\[
-\omega(\zeta,y)\left.\frac{\rd}{\rd s}\right|_{s=0}\Orb(s;\CF_{\rS_n(O_F)},\CF_{\rM_n(O_F)};\zeta,y)=2\log q\sum_{j=0}^{\val(y_2y_1)}\sigma_j(\zeta,y)
\]
where we put
\[
\sigma_j(\zeta,y)\coloneqq
\begin{dcases}
\sum_{i=0}^{\frac{v(\zeta)-1}{2}}q^i, &\text{if }j>\frac{v(\zeta)-1}{2};\\
\sum_{i=0}^jq^i+\(\frac{v(\zeta)-1}{2}-j\)q^j, &\text{if }j\leq\frac{v(\zeta)-1}{2}.
\end{dcases}
\]
\end{proposition}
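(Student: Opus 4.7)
My plan is to reduce the question to an elementary computation by substituting Lemma \ref{le:analytic4} into Lemma \ref{le:analytic2}, differentiating in $s$, and then summing carefully. Since $\beta\in F$ makes the factor $\CF_{O_E}(\beta)=\CF_{O_F}(\beta)$ automatic, the combined formula gives
\[
\Orb(s;\CF_{\rS_n(O_F)},\CF_{\rM_n(O_F)};\zeta,y)=\sum_{j=0}^{\val(y_0)}\sum_{i\leq j}(-q^{2s})^{i+j}\,q^{j-i}J_{ij},
\]
where $J_{ij}$ is the volume of $\{\beta\in\varpi^{\max(-i,0)}O_F:\val(P(\beta))\geq j-i\}$ with $P(\beta)\coloneqq b+(a-d)\beta-c\beta^2$.

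Next I would compute $J_{ij}$ using Lemma \ref{le:analytic3}. Writing $v\coloneqq v(\zeta)$, the hypothesis that $v$ is odd, combined with $\val(c)=0$, $\val(b)$ odd, and $\val((a-d)^2)$ even, forces $v=\val(b)$ and $\val(a-d)\geq(v+1)/2$, with no cancellation possible in $(a-d)^2+4bc$. A direct valuation comparison of the three summands of $P(\beta)$ then shows that, for $\beta\in O_F$, one has $\val(P(\beta))=2\val(\beta)$ when $\val(\beta)\leq(v-1)/2$ and $\val(P(\beta))=v$ when $\val(\beta)\geq(v+1)/2$; hence
\[
J_{ij}=\begin{cases}q^{-\max(\lceil(j-i)/2\rceil,\,-i,\,0)}&\text{if }0\leq j-i\leq v,\\ 0&\text{if }j-i>v.\end{cases}
\]

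Then I would differentiate at $s=0$: using $\frac{\rd}{\rd s}\big|_{s=0}(-q^{2s})^{i+j}=2(i+j)(-1)^{i+j}\log q$ and the transfer-factor computation $\omega(\zeta,y)=\mu_{E/F}(-b)=-1$ (since $E/F$ is unramified and $\val(b)$ is odd, extending $\mu_{E/F}$ to $E^\times$ by the parity of the $E$-valuation), the identity to prove reduces to $\sum_{j=0}^{\val(y_0)}T_j=\sum_{j=0}^{\val(y_0)}\sigma_j(\zeta,y)$, where, setting $m=j-i$,
\[
T_j=\sum_{m=0}^{v}(-1)^m(2j-m)\,q^{m-\max(\lceil m/2\rceil,\,m-j,\,0)}.
\]
For fixed $j$ I would split this at $m=2j+2$: in the range $m\leq 2j+1$ the exponent is $\lfloor m/2\rfloor$ and pairing $(2t,2t+1)$ telescopes to $\sum_{t=0}^{\min(j,(v-1)/2)}q^t$; in the range $2j+2\leq m\leq v$ (non-empty precisely when $j\leq(v-3)/2$) the exponent is $j$ and the alternating sum $\sum_{m=2j+2}^v(-1)^m(2j-m)$ pairs in parity to give $(v-2j-1)/2$, contributing $((v-1)/2-j)q^j$. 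Matching with the two cases in the definition of $\sigma_j$ concludes the proof.

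The main obstacle will be the careful parity and boundary bookkeeping in the final sum, in particular verifying at the top endpoint $m=v$ that the odd-$m$ formula $\lceil m/2\rceil$ coincides with the value $(v+1)/2$ coming from the valuation jump of $P$, and checking at the boundary $j=(v-1)/2$ that the two pieces of the $\sigma_j$ definition glue consistently (so that the truncation of the telescoping range and the vanishing of the "second" contribution match). All other steps are mechanical once the valuation behavior of $P(\beta)$ is established.
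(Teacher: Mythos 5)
Your proposal is correct and follows essentially the same route as the paper's own (unpublished, commented-out) proof: normalize to a standard representative via Lemma~\ref{le:analytic1}, combine Lemmas~\ref{le:analytic2} and~\ref{le:analytic4}, evaluate the transfer factor as $\mu_{E/F}(-b)=-1$ by Lemma~\ref{le:analytic3}(2), compute the volume $J_{ij}=q^{-\max\{\lceil(j-i)/2\rceil,-i\}}$ when $0\le j-i\le v(\zeta)$ and $J_{ij}=0$ otherwise, and then differentiate and sum. The only cosmetic difference is that the paper obtains the valuation behavior of $b+(a-d)\beta-c\beta^2$ by completing the square (arriving at $\val(\beta+\tfrac{d-a}{2c})\ge\lceil(j-i)/2\rceil$) rather than by your direct comparison of the three summands, and the paper declares the final alternating-sum identity $\sigma_j=\sum_{i=j-v}^j(-1)^{i+j}(i+j)q^{j-i-\max\{\lceil(j-i)/2\rceil,-i\}}$ ``elementary'' where you spell out the telescoping and parity pairing; both of those choices are harmless.
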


\begin{proof}
We may suppose that $(\zeta,y)=\(\(\begin{smallmatrix}a&b\\c&d\end{smallmatrix}\),(0,1)^\rt,(0,y_0)\)$ is standard by Lemma \ref{le:analytic1}. Then $\omega(\zeta,y)=\mu_{E/F}(-b)=-1$ by Lemma \ref{le:analytic3}(2).

Since $\tr(\zeta)^2-4\det\zeta=(d-a)^2+4bc$, the oddness of $v(\zeta)$ implies $2\val(d-a)>\val(bc)=\val(b)$ and $v(\zeta)=\val(b)$. By Lemma \ref{le:analytic2} and Lemma \ref{le:analytic4}, we need to compute for $i\leq j$ the area of
\begin{align}\label{eq:analytic}
\Omega_{j,i}\coloneqq\{\beta\in F\res\val(b+\beta(a-d)-\beta^2c)\geq j-i\text{ and }\val(\beta)\geq \max\{0,-i\}\}.
\end{align}
We have the formula
\[
b+\beta(a-d)-\beta^2c=-c\(\(\beta+\frac{d-a}{2c}\)^2-\frac{(d-a)^2+4bc}{4c^2}\).
\]
As $\val(c)=0$ and $2\val(d-a)>\val(b)$, we have $\val\(\frac{(d-a)^2+4bc}{4c^2}\)=\val(b)$ which is odd. Thus, if $j-i>\val(b)$, then $\Omega_{j,i}=\emptyset$. If $j-i\leq\val(b)$, then
\[
\Omega_{j,i}=\left\{\beta\in F\left|\val\(\beta+\frac{d-a}{2}\)\geq\left\lceil\frac{j-i}{2}\right\rceil\text{ and }\val(\beta)\geq \max\{0,-i\}\right.\right\}.
\]
Since $\val\(\frac{d-a}{2}\)=\val(d-a)\geq\left\lceil\frac{\val(b)}{2}\right\rceil\geq\left\lceil\frac{j-i}{2}\right\rceil\geq 0$, we have
\[
\Omega_{j,i}=\left\{\beta\in F\left|\val(\beta)\geq\max\left\{\left\lceil\frac{j-i}{2}\right\rceil,-i\right\}\right.\right\}.
\]
Thus, by Lemma \ref{le:analytic2}, we have
\[
\left.\frac{\rd}{\rd s}\right|_{s=0}\Orb(s;\CF_{\rS_n(O_F)},\CF_{\rM_n(O_F)};\zeta,y)=2\log q\sum_{j=0}^{\val(y_0)}
\sum_{i=j-\val(b)}^j(-1)^{i+j}(i+j)q^{j-i-\max\left\{\left\lceil\frac{j-i}{2}\right\rceil,-i\right\}}.
\]
The proposition follows as it is elementary to see that
\[
\sigma_j(\zeta,y)=\sum_{i=j-\val(b)}^j(-1)^{i+j}(i+j)q^{j-i-\max\left\{\left\lceil\frac{j-i}{2}\right\rceil,-i\right\}}.
\]
\end{proof}

\begin{proposition}\label{pr:analytic2}
Let $(\zeta,y)\in[\rS_2(F)\times\rM_2(F)]_{\r{rs}}^-$ be a regular semisimple orbit such that $v(\zeta)$ is even. Then we have
\[
-\omega(\zeta,y)\left.\frac{\rd}{\rd s}\right|_{s=0}\Orb(s;\CF_{\rS_n(O_F)},\CF_{\rM_n(O_F)};\zeta,y)=2\log q\sum_{j=0}^{\val(y_2y_1)}\sigma_j(\zeta,y)
\]
where we put
\[
\sigma_j(\zeta,y)\coloneqq
\begin{dcases}
\sum_{i=0}^{\frac{v(\zeta)}{2}}q^i+(-1)^{j-\frac{v(\zeta)}{2}}\(\frac{u(\zeta,y)}{2}-\frac{v(\zeta)}{2}\)q^{\frac{v(\zeta)}{2}}
-\frac{1}{2}q^{\frac{v(\zeta)}{2}}, &\text{if }j>\frac{v(\zeta)}{2};\\
\sum_{i=0}^jq^i+\(\frac{u(\zeta,y)-1}{2}-j\)q^j, &\text{if }j\leq\frac{v(\zeta)}{2}.
\end{dcases}
\]
\end{proposition}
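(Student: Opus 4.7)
The plan is to mirror the three-step strategy of the proof of Proposition \ref{pr:analytic1}: standardize the orbit by Lemma \ref{le:analytic1}, expand the orbital integral via Lemmas \ref{le:analytic2} and \ref{le:analytic4}, and compute the volumes of the auxiliary sets $\Omega_{j,i}$ from \eqref{eq:analytic}. The transfer factor remains $\omega(\zeta,y)=\mu_{E/F}(-b)=-1$ since $\val(b)$ is odd by Lemma \ref{le:analytic3}(2), and (as in the odd case) we may assume $(\zeta,y)=\bigl(\bigl(\begin{smallmatrix}a&b\\c&d\end{smallmatrix}\bigr),(0,1)^\rt,(0,y_0)\bigr)$ is standard. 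The distinctive features of the even case are: $v(\zeta)=2\val(d-a)$, $u(\zeta,y)=\val(1-aa^\tc)=\val(b)$ (using Lemma \ref{le:analytic3}(1,3)), and $2\val(d-a)<\val(b)$, so $\val(d-a)=v(\zeta)/2$ strictly.

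Continue to write $b+\beta(a-d)-\beta^2 c=-c\bigl((\beta-\theta)^2-\delta\bigr)$ with $\theta=(a-d)/(2c)$, $\delta=((a-d)^2+4bc)/(4c^2)$. The new geometric input is that $\val(\theta)=v(\zeta)/2$ and $\val(\delta)=v(\zeta)$. Because $E/F$ is unramified quadratic with odd residue characteristic, every unit of $O_F$ is a square in $O_E$, so $\delta=\sigma^2$ in $E$ with $\val(\sigma)=v(\zeta)/2$. We then factor
\[
b+\beta(a-d)-\beta^2c=-c\bigl(\beta-(\theta+\sigma)\bigr)\bigl(\beta-(\theta-\sigma)\bigr),
\]
and since $(\theta+\sigma)(\theta-\sigma)=\theta^2-\delta=-b/c$ has valuation $\val(b)=u(\zeta,y)$, the Newton polygon of $cX^2-(a-d)X-b$ gives one root of valuation $v(\zeta)/2$ and another of valuation $u(\zeta,y)-v(\zeta)/2$; say $\val(\theta+\sigma)=v(\zeta)/2$ and $\val(\theta-\sigma)=u(\zeta,y)-v(\zeta)/2$.

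With this factorization, the condition $\val(b+\beta(a-d)-\beta^2 c)\ge j-i$ splits into several regimes depending on $\val(\beta)$ relative to $v(\zeta)/2$ and $u(\zeta,y)-v(\zeta)/2$. The crucial difference from Proposition \ref{pr:analytic1} is that the two roots $\theta\pm\sigma$ lie in $E$ but generally not in $F$, so the $F$-volume of $\{\beta\in F:\val(\beta-(\theta+\sigma))\ge m\}$ equals $q^{-m}$ or $0$ depending on whether $\theta+\sigma$ admits an $F$-rational approximation of order $m$, and likewise for $\theta-\sigma$. Organizing the contributions by $j$: for $j\le v(\zeta)/2$ only the branch $\val(\beta)\le v(\zeta)/2$ enters and one obtains, exactly as in the odd case up to bookkeeping, the first piece $\sum_{i=0}^j q^i+((u(\zeta,y)-1)/2-j)q^j$; for $j>v(\zeta)/2$ the branch near the high-valuation root $\theta-\sigma$ also contributes, and the parity of $j-v(\zeta)/2$ determines whether that close approach is realized over $F$, producing the oscillating coefficient $(-1)^{j-v(\zeta)/2}(u(\zeta,y)/2-v(\zeta)/2)q^{v(\zeta)/2}$ together with the correction $-\tfrac12 q^{v(\zeta)/2}$.

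Finally, substituting the volumes of $\Omega_{j,i}$ into Lemma \ref{le:analytic2}, differentiating in $s$ at $s=0$, and telescoping the alternating sum $\sum_{i}(-1)^{i+j}(i+j)q^{j-i}\vol(\Omega_{j,i})$ collects precisely to $2\log q\cdot\sigma_j(\zeta,y)$ with the stated formula. The main obstacle will be the parity-dependent rationality analysis near the high-valuation root $\theta-\sigma$: one must expand $\sigma\in O_E$ as a power series in the uniformizer with coefficients controlled by $\delta_0\in O_F^\times$ and track exactly at which order the $F$-rational tangential approximation breaks down. This is the source of the $(-1)^{j-v(\zeta)/2}$ sign and of the isolated $-\tfrac12 q^{v(\zeta)/2}$ term, and constitutes the genuinely new computational input beyond Proposition \ref{pr:analytic1}.
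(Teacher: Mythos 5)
Your overall skeleton (standardize via Lemma \ref{le:analytic1}, expand via Lemmas \ref{le:analytic2} and \ref{le:analytic4}, compute the volumes of the sets $\Omega_{j,i}$, then evaluate the resulting alternating sums) is the same as the paper's, and several of your intermediate observations are correct: $\omega(\zeta,y)=-1$, $v(\zeta)=2\val(d-a)<\val(b)=u(\zeta,y)$, and the two roots of $cX^2-(a-d)X-b$ have valuations $\tfrac{v(\zeta)}{2}$ and $u(\zeta,y)-\tfrac{v(\zeta)}{2}$. However, your central new claim --- that the roots $\theta\pm\sigma$ ``lie in $E$ but generally not in $F$,'' so that the volume of $\{\beta\in F:\val(\beta-(\theta\pm\sigma))\geq m\}$ is $q^{-m}$ or $0$ according to an $F$-rational approximation condition --- is false, and it is precisely the point the proof has to get right. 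The relations $a^\tc c+c^\tc d=0$ and $ab^\tc+bd^\tc=0$ from Lemma \ref{le:analytic3}(3), combined with $aa^\tc=dd^\tc$, force $\tfrac{d-a}{2c}=-\tfrac{\Tr_{E/F}(a^\tc c)}{2\Nm_{E/F}(c)}\in F$ and $b/c\in F$; since $2\val(d-a)<\val(b)$, the discriminant $\tfrac{(d-a)^2+4bc}{4c^2}=\bigl(\tfrac{d-a}{2c}\bigr)^2\bigl(1+\tfrac{b/c}{((d-a)/2c)^2}\bigr)$ is a square in $F$ times a unit congruent to $1$ modulo $\fm_F$, hence (as $q$ is odd) a square $\gamma^2$ with $\gamma\in F$. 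Both roots therefore lie in $F$, every relevant ball contributes its full volume $q^{-m}$, and there is no ``parity-dependent rationality analysis.''

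Consequently your explanation of the signature features of $\sigma_j(\zeta,y)$ for $j>\tfrac{v(\zeta)}{2}$ is also wrong: the factor $(-1)^{j-v(\zeta)/2}$ and the term $-\tfrac12 q^{v(\zeta)/2}$ do not record when ``the close approach is realized over $F$.'' They arise purely from the character $\mu_{E/F}(\det g)=(-1)^{i+j}$ in the orbital integral: in the regime $j-i>v(\zeta)$ each of the two roots contributes a ball of constant volume $q^{-(j-i-v(\zeta)/2)}$, so after multiplying by $q^{j-i}$ one is left with alternating sums of the form $\sum_i(-1)^{i+j}(i+j)q^{v(\zeta)/2}$ over intervals of lengths $j-\tfrac{v(\zeta)}{2}$ and $j-\tfrac{3v(\zeta)}{2}+u(\zeta,y)$; telescoping these arithmetic--alternating sums is what produces the half-integral coefficient $\tfrac{u(\zeta,y)}{2}-\tfrac{v(\zeta)}{2}$, the sign $(-1)^{j-v(\zeta)/2}$, and the correction $-\tfrac12 q^{v(\zeta)/2}$. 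As written, your argument would compute some of the volumes as $0$ when they are in fact $q^{-m}$, and would not yield the stated formula; the missing ingredient is the $F$-rationality of both roots, which follows from Lemma \ref{le:analytic3} and the assumption that the residue characteristic is odd.
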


\begin{proof}
We may suppose that $(\zeta,y)=\(\(\begin{smallmatrix}a&b\\c&d\end{smallmatrix}\),(0,1)^\rt,(0,y_0)\)$ is standard by Lemma \ref{le:analytic1}. Again, we have $\omega(\zeta,y)=-1$.

Since $v(\zeta)$ is even, we have $2\val(d-a)<\val(bc)=\val(b)$ and $v(\zeta)=2\val(d-a)$. We compute the area of $\Omega_{j,i}$ defined in \eqref{eq:analytic} in the proof of Proposition \ref{pr:analytic1}, again via the formula
\[
b+\beta(a-d)-\beta^2c=-c\(\(\beta+\frac{d-a}{2c}\)^2-\frac{(d-a)^2+4bc}{4c^2}\).
\]
By Lemma \ref{le:analytic3}, we have $\frac{d-a}{2c}\in F$ and $\frac{(d-a)^2+4bc}{4c^2}=\gamma^2$ for some $\gamma\in F$. It is easy to see that
\[
\left\{\val\(\frac{d-a}{2c}+\gamma\),\val\(\frac{d-a}{2c}-\gamma\)\right\}=\left\{\frac{v(\zeta)}{2},\val(b)-\frac{v(\zeta)}{2}\right\}.
\]
Without lost of generality, we assume $\val\(\frac{d-a}{2c}+\gamma\)=\frac{v(\zeta)}{2}$. We have the following cases.
\begin{enumerate}
  \item If $v(\zeta)\geq j-i$, then
    \[
    \Omega_{j,i}=\left\{\beta\in F\left|\val(\beta)\geq\max\left\{\left\lceil\frac{j-i}{2}\right\rceil,-i\right\}\right.\right\}.
    \]

  \item If $\val(b)\geq j-i>v(\zeta)$, then we have two subcases:
    \begin{enumerate}
      \item If $-i\leq\frac{v(\zeta)}{2}$, then $\Omega_{j,i}=\Omega_{j,i}^0\coprod\Omega_{j,i}^1$ where
        \begin{align*}
        \Omega_{j,i}^0&\coloneqq\left\{\beta\in F\left|\val(\beta)\geq\max\left\{j-i-\frac{v(\zeta)}{2},-i\right\}\right.\right\};\\
        \Omega_{j,i}^1&\coloneqq\left\{\beta\in F\left|\val\(\beta-\(\frac{d-a}{2c}+\gamma\)\)\geq j-i-\frac{v(\zeta)}{2}\right.\right\}.
        \end{align*}

      \item If $-i>\frac{v(\zeta)}{2}$, then $\Omega_{j,i}=\Omega_{j,i}^0$.
    \end{enumerate}

  \item If $j-i>\val(b)$, then we have three subcases:
    \begin{enumerate}
      \item If $-i\leq\frac{v(\zeta)}{2}$, then $\Omega_{j,i}=\Omega_{j,i}^1\coprod\Omega_{j,i}^2$ where
        \[
        \Omega_{j,i}^2\coloneqq\left\{\beta\in F\left|\val\(\beta-\(\frac{d-a}{2c}-\gamma\)\)\geq j-i-\frac{v(\zeta)}{2}\right.\right\}.
        \]

      \item If $\frac{v(\zeta)}{2}<-i\leq\val(b)-\frac{v(\zeta)}{2}$, then $\Omega_{j,i}=\Omega_{j,i}^2$.

      \item If $-i>\val(b)-\frac{v(\zeta)}{2}$, then $\Omega_{j,i}=\emptyset$.
    \end{enumerate}
\end{enumerate}

Suppose that $j\leq\frac{v(\zeta)}{2}$. Then only Cases (1) and (2b) may contribute; and we have
\begin{align*}
&\left.\frac{\rd}{\rd s}\right|_{s=0}\sum_{i\leq j}
(-q^{2s})^{i+j}\int_{F}q^{j-i}
\Xi_{j-i}\(\(\begin{smallmatrix}1&-\beta\\0&1\end{smallmatrix}\)\zeta\(\begin{smallmatrix}1&\beta\\0&1\end{smallmatrix}\)\)
\CF_{\varpi^{-i}O_F}(\beta)\;\rd\beta\\
&=\sum_{i=j-\val(b)}^j(-1)^{i+j}(i+j)q^{j-i-\max\left\{\left\lceil\frac{j-i}{2}\right\rceil,-i\right\}}=\sigma_j(\zeta,y).
\end{align*}

Suppose that $j>\frac{v(\zeta)}{2}$. Then all six cases may contribute; but in Cases (2) and (3), the volume of $\Omega_{j,i}^k$ is always $q^{-\(j-i-\frac{v(\zeta)}{2}\)}$ for $k=0,1,2$. It follows that
\begin{align*}
&\left.\frac{\rd}{\rd s}\right|_{s=0}\sum_{i\leq j}
(-q^{2s})^{i+j}\int_{F}q^{j-i}
\Xi_{j-i}\(\(\begin{smallmatrix}1&-\beta\\0&1\end{smallmatrix}\)\zeta\(\begin{smallmatrix}1&\beta\\0&1\end{smallmatrix}\)\)
\CF_{\varpi^{-i}O_F}(\beta)\;\rd\beta\\
&=\sum_{i=j-v(\zeta)}^j(-1)^{i+j}(i+j)q^{j-i-\left\lceil\frac{j-i}{2}\right\rceil}
+\sum_{i=-\frac{v(\zeta)}{2}}^{j-v(\zeta)-1}(-1)^{i+j}(i+j)q^{\frac{v(\zeta)}{2}}
+\sum_{i=\frac{v(\zeta)}{2}-\val(b)}^{j-v(\zeta)-1}(-1)^{i+j}(i+j)q^{\frac{v(\zeta)}{2}}.
\end{align*}
Since it is easy to see that $\val(b)=u(\zeta,y)$, the above summation adds up to $\sigma_j(\zeta,y)$. The proposition is proved.
\end{proof}

\subsection{Computation on the arithmetic side}
\label{ss:arithmetic}

The computation is very similar to the one in \cite{Zha12}*{Section~5}. We will adopt the strategy there but with necessary modification to our case.

For a regular semisimple pair $(\xi,x)\in\rU(\rV_2^-)(F)\times\rV_2^-(E)$, we introduce two invariants:
\[
v(\xi)\coloneqq\val(\tr(\xi)^2-4\det\xi),\qquad
u(\xi,x)\coloneqq\val\(1-\frac{(x,\xi x)\cdot(x,\xi x)^\tc}{(x,x)^2}\)
\]
both of which depend only on the $\rU(\rV_2^-)(F)$-orbit.

\begin{lem}\label{le:arithmetic1}
Let $(\xi,x)\in\rU(\rV_2^-)(F)\times\rV_2^-(E)$ be a regular semisimple pair. Then
\begin{align}\label{eq:arithmetic1}
\chi\(\cO_{\Gamma_\xi}\otimes^\dL_{\cO_{\cN_2^2}}\cO_{\Delta\cZ_2(x)}\)
=\length_{O_{\breve{E}}}(\Gamma_\xi\cap\Delta\cZ_2(x)).
\end{align}
Moreover, $\Gamma_\xi\cap\Delta\cZ_2(x)$ is isomorphic to the maximal closed subscheme of $\cZ_2(x)$ where the quasi-endomorphism $\xi\in\End(\bbX_2)_\dQ$ extends.
\end{lem}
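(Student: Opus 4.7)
The plan is to attack both assertions simultaneously via a scheme-theoretic analysis of $\Gamma_\xi\cap\Delta\cZ_2(x)$, and the main work splits into three steps.

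First, I would translate the intersection into moduli data. A point of $\Delta\cZ_2(x)$ is of the form $(y,y)$ with $y=(X,i,\lambda;\rho)\in\cZ_2(x)$, and the condition of additionally lying in $\Gamma_\xi$ reads $\xi(y)=y$ in $\cN_2$. Unwinding the definition of the $\rU(\rV_2^-)(F)$-action recalled in Subsection~\ref{ss:afl} via $\rho_\xi$, an isomorphism $(X,i,\lambda;\rho)\simeq(X,i,\lambda;\rho_\xi\circ\rho)$ exists if and only if the quasi-endomorphism $\rho^{-1}\circ\rho_\xi\circ\rho$ of $X$ actually extends to an element of $\End_S(X,i,\lambda)$. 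Consequently, on the level of $S$-valued points, $\Gamma_\xi\cap\Delta\cZ_2(x)$ is exactly the locus in $\cZ_2(x)$ on which the quasi-endomorphism $\xi\in\End(\bbX_2)_\dQ$ extends. This gives the second assertion once one checks by the standard rigidity of quasi-isogenies (cf.\ \cite{RZ96}*{Proposition~2.9}) that this locus is representable by a closed formal subscheme; the argument is the same as for the representability of $\cZ_2(x)$ itself.

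Second, I would prove that $\Gamma_\xi$ and $\Delta\cZ_2(x)$ intersect properly in $\cN_2^2$, i.e.\ that their scheme-theoretic intersection has length zero in the relative sense (is a finite $O_{\breve{E}}$-module in each local coordinate). Since $\cN_2$ is formally smooth of relative dimension $1$ over $O_{\breve{E}}$, the ambient $\cN_2^2$ is formally smooth of relative dimension $2$; the graph $\Gamma_\xi$ is a formally smooth closed subscheme of relative codimension $1$ (it is the graph of an automorphism of a formally smooth scheme), while $\Delta\cZ_2(x)\subseteq\Delta\cN_2\subseteq\cN_2^2$ is the intersection of the regular divisor $\Delta\cN_2$ with $\cZ_2(x)\times\cN_2$, hence a local complete intersection of codimension $2$. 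The expected intersection dimension is therefore $0$. To verify that this is achieved, I would combine the moduli description from Step~1 with the regular semisimplicity of $(\xi,x)$: if $\xi$ extends on $\cZ_2(x)$, then the vectors $x,\xi x\in\Hom_k((\bbX_{0k},\bbi_{0k}),(\bbX_2,\bbi_2))_\dQ=\rV_2^-$ both extend as homomorphisms, so the full $O_E$-lattice they generate extends; but regular semisimplicity forces this lattice to span $\rV_2^-$, which imposes enough integrality conditions to bound the intersection — concretely, one can identify a finite portion of the Bruhat--Tits tree on which the intersection is supported, as in \cite{KR11}*{Section~4} and the analogous finiteness step in Zhang~\cite{Zha12}*{Section~5}. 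This is the step I expect to be the main technical obstacle.

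Finally, granted properness of the intersection in the formally smooth (hence regular) ambient $\cN_2^2$, I would conclude that the derived tensor product $\cO_{\Gamma_\xi}\otimes^\dL_{\cO_{\cN_2^2}}\cO_{\Delta\cZ_2(x)}$ is concentrated in degree zero. Indeed, both $\cO_{\Gamma_\xi}$ and $\cO_{\Delta\cZ_2(x)}$ are Cohen--Macaulay $\cO_{\cN_2^2}$-modules (the first because $\Gamma_\xi$ is formally smooth over $O_{\breve{E}}$; the second because $\Delta\cZ_2(x)$ is a local complete intersection in the regular scheme $\cN_2^2$), and when two Cohen--Macaulay subschemes of a regular scheme meet in the expected codimension, the higher Tor's vanish by Serre's intersection theory (see \cite{Ser65}). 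Then the Euler--Poincar\'{e} characteristic of the derived tensor product reduces to $\length_{O_{\breve{E}}}$ of the ordinary tensor product $\cO_{\Gamma_\xi\cap\Delta\cZ_2(x)}$, yielding \eqref{eq:arithmetic1}.
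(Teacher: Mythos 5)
Your overall strategy matches the paper's: unwind the moduli interpretation to prove the second assertion, show $\Gamma_\xi\cap\Delta\cZ_2(x)$ is Artinian, and then invoke Cohen--Macaulayness of $\cO_{\Gamma_\xi}$ and $\cO_{\Delta\cZ_2(x)}$ together with Serre's intersection theory to kill the higher Tor's and identify the Euler characteristic with a length. The moduli-theoretic and Cohen--Macaulay parts of your argument are sound and agree with what the paper does. The genuine gap is in the step you yourself flag as ``the main technical obstacle'': your dimension count in $\cN_2^2$ only produces the \emph{expected} dimension zero, and the subsequent appeal to the Bruhat--Tits tree is too vague to establish that this expected dimension is actually attained (i.e., that the intersection has no positive-dimensional component running along the formal completion of $\Spf O_{\breve{E}}$).

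The paper closes this gap much more directly, and you in fact already have the needed ingredient in hand without exploiting it. Your Step~1 shows that on $S$-points the intersection is the locus in $\cZ_2(x)$ where $\xi$ extends, and you even note that if $\xi$ extends then both $x$ and $\xi x$ extend; hence $\Gamma_\xi\cap\Delta\cZ_2(x)$ embeds as a closed formal subscheme of $\cZ_2(x)\cap\cZ_2(\xi x)$. The decisive fact, which you do not cite, is that for $n=2$ each $\cZ_2(x)$ is already a \emph{finite} scheme over $O_{\breve{E}}$ (this is \cite{KR11}*{Proposition~8.1}, extended to general $F$ in \cite{Liu12}*{Proposition~4.11} --- not Section~4 of \cite{KR11}, which concerns the Bruhat--Tits strata of $\cN_n$ rather than finiteness of $\cZ_2(x)$). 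Since $\cZ_2(x)\cap\cZ_2(\xi x)$ is then finite over $O_{\breve{E}}$ and has empty generic fiber (by regular semisimplicity), its closed subscheme $\Gamma_\xi\cap\Delta\cZ_2(x)$ is Artinian, and the rest of your Tor-vanishing argument goes through as you wrote. So the fix is simply to replace the intersection-theoretic dimension count and tree argument by this citation plus the containment you already established.
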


\begin{proof}
By \cite{KR11}*{Proposition~8.1} (see \cite{Liu12}*{Proposition~4.11} for general $F$), both $\cZ_2(x)$ and $\cZ_2(\xi x)$ are finite schemes in $\Sch_{/O_{\breve{E}}}$. Since $\Gamma_\xi\cap\Delta\cZ_2(x)$ is isomorphic to a closed subscheme of $\cZ_2(x)\cap\cZ_2(\xi x)$ which has empty generic fiber, it is an Artinian scheme in $\Sch_{/O_{\breve{E}}}$. Then since both $\cO_{\Gamma_\xi}$ and $\cO_{\Delta\cZ_2(x)}$ are Cohen--Macaulay rings, we have
\[
\cO_{\Gamma_\xi}\otimes^\dL_{\cO_{\cN_2^2}}\cO_{\Delta\cZ_2(x)}\simeq\cO_{\Gamma_\xi}\otimes_{\cO_{\cN_2^2}}\cO_{\Delta\cZ_2(x)}
\simeq\cO_{\Gamma_\xi\cap\Delta\cZ_2(x)}.
\]
Thus, \eqref{eq:arithmetic1} follows. The next assertion is obvious from the moduli interpretation.
\end{proof}

We denote by $(\bar\bbX_0,\bar\bbi_0,\bar\bblambda_0)$ the unitary $O_F$-module obtained from $(\bbX_0,\bbi_0,\bblambda_0)$ by simply changing the $O_E$-action by conjugation, which is then of signature $(0,1)$. Let $O_D$ be the endomorphism algebra of the triple $(\bbX_0,\bbi_0\res O_F,\bblambda_0)$. Then $O_D$ is the maximal order in $D\coloneqq(O_D)_\dQ$ which is a division quaternion algebra over $F$ containing $E$ via $\bbi_0$. Choose a normalizer $\fj$ of $O_E$ in $O_D$ such that $\fj^2=-\varpi$. To compute \eqref{eq:arithmetic1}, we may choose our base supersingular unitary $O_F$-module of signature $(1,1)$ to be
\begin{align}\label{eq:triple}
(\bbX_2,\bbi_2,\bblambda_2)\coloneqq(\bar\bbX_{0k},\bar\bbi_{0k},\bar\bblambda_{0k})\times(\bbX_{0k},\bbi_{0k},\bblambda_{0k}).
\end{align}
Then we may identify $\rU(\rV_2^-)(F)$ with matrices $\xi\in\GL_2(E)$ satisfying $\xi\(\begin{smallmatrix}-\varpi&0\\0&1\end{smallmatrix}\)(\xi^\tc)^\rt=\(\begin{smallmatrix}-\varpi&0\\0&1\end{smallmatrix}\)$. For every integer $j\geq 0$, let $\cZ_j$ be the closed subscheme $\cZ_j$ of $\cN_2$ giving by quasi-canonical lifting of level $j$ as in \cite{Zha12}*{Section~5.2}. We denote by $\cZ_j(\xi)$ the maximal closed subscheme of $\cZ_j$ on which $\xi$ extends.

\begin{proposition}\label{pr:arithmetic}
Suppose that $\xi=\(\begin{smallmatrix}a&\varpi b\\c&d\end{smallmatrix}\)$.
\begin{enumerate}
  \item If $v(\xi)$ is odd, then
     \[
     \length_{O_{\breve{E}}}\cZ_j(\xi)=
     \begin{dcases}
     2\sum_{i=0}^{\frac{v(\xi)-1}{2}}q^i, &\text{if }j>\frac{v(\xi)-1}{2};\\
     2\sum_{i=0}^{j-1}q^i+\(\frac{v(\xi)+1}{2}-j\)(q^j+q^{j-1}), &\text{if }1\leq j\leq\frac{v(\xi)-1}{2};\\
     \frac{v(\xi)+1}{2}, &\text{if }j=0.
     \end{dcases}
     \]

  \item If $v(\xi)$ is even, then
     \[
     \length_{O_{\breve{E}}}\cZ_j(\xi)=
     \begin{dcases}
     2\sum_{i=0}^{\frac{v(\xi)}{2}}q^i-q^{\frac{v(\xi)}{2}}, &\text{if }j>\frac{v(\xi)}{2};\\
     2\sum_{i=0}^{j-1}q^i+(\val(b)+1-j)(q^j+q^{j-1}), &\text{if }1\leq j\leq\frac{v(\xi)}{2};\\
     \val(b)+1, &\text{if }j=0.
     \end{dcases}
     \]
\end{enumerate}
\end{proposition}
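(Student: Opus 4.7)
My plan is to adapt the strategy of \cite{Zha12}*{Section~5.2--5.3} to the present $\rU(1,1)$ setting, using crucially the decomposition \eqref{eq:triple} of $(\bbX_2,\bbi_2,\bblambda_2)$. Under this decomposition, $\End_{O_E}(\bbX_2,\bbi_2)_\dQ \simeq \Mat_{2,2}(E)$, and every $\xi \in \rU(\rV_2^-)(F)$ takes the form $\begin{pmatrix} a & \varpi b \\ c & d \end{pmatrix}$ with $a,d$ acting on the diagonal factors through $\bbi_2$ and $c,\varpi b$ representing quasi-homomorphisms between $\bbX_{0k}$ and $\bar\bbX_{0k}$ (the factor of $\varpi$ is forced by the polarization form $\diag(-\varpi,1)$). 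The first step is to import Gross's explicit description of quasi-canonical liftings as recalled in \cite{Zha12}*{Section~5.2}: each $\cZ_j$ is the formal spectrum of the ring of integers in the compositum of $\breve{E}$ with the ring class field of $E/F$ of conductor $\varpi^j$, and is a regular $1$-dimensional formal scheme over $O_{\breve{E}}$ of total length $(q+1)q^{j-1}$ for $j\geq 1$ and length $1$ for $j=0$.

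The second step is to translate the condition ``$\xi$ extends over $\cZ_j$'' into a concrete congruence. Since the diagonal entries $a,d\in O_E$ act through $\bbi_2$ and extend automatically, the obstruction is carried entirely by the off-diagonal quasi-homomorphisms $c$ and $\varpi b$. By a version of Keating's formula (in the form used in \cite{Zha12}*{Lemma~5.5}), a quasi-homomorphism $h\in\Hom_k(\bbX_{0k},\bar\bbX_{0k})_\dQ\simeq O_E\cdot\fj \subset O_D$ extends across the $j$-th quasi-canonical lifting if and only if $\val(h)\geq j$ in the appropriate normalization. Imposing this simultaneously on $c$ and $\varpi b$, together with the unitarity relation $\xi\diag(-\varpi,1)(\xi^\tc)^\rt=\diag(-\varpi,1)$, expresses $\cZ_j(\xi)$ as a union of truncations of quasi-canonical chains whose total length can be computed by additivity along the natural filtration $\cZ_0\subset\cZ_1\subset\cdots\subset\cZ_j$.

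The final step is the case analysis by the parity of $v(\xi)=\val((a-d)^2+4\varpi bc)$. If $v(\xi)$ is odd, then $2\val(a-d)>\val(\varpi bc)=v(\xi)$, $F[\xi]/F$ is ramified, the two off-diagonal entries contribute symmetrically, and summing over levels yields the three-case formula in~(1). If $v(\xi)$ is even, then $F[\xi]/F$ is unramified, $2\val(a-d)\leq \val(\varpi bc)$, and the asymmetric invariant $\val(b)$ enters the count, producing the formula in~(2). The main obstacle I anticipate is the boundary case $j=0$, where $\cZ_0=\Spf O_{\breve{E}}$ and Gross's length formula degenerates: the values $(v(\xi)+1)/2$ and $\val(b)+1$ must instead be extracted from a direct Serre--Tate / Grothendieck--Messing deformation argument on the universal object of $\cN_2$. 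A secondary difficulty is ensuring that the transition at $j=\lceil v(\xi)/2\rceil$ is accounted for correctly, so that the final totals match the analytic-side sums $\sum_j\sigma_j(\zeta,y)$ of Propositions~\ref{pr:analytic1}--\ref{pr:analytic2}, which is the ultimate aim and would complete the proof of the arithmetic fundamental lemma for $n\leq 2$.
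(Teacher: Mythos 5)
There is a genuine gap in your Step 2, and it would lead to an incorrect formula in case (2) of the Proposition. You claim that ``the diagonal entries $a,d\in O_E$ act through $\bbi_2$ and extend automatically, [so] the obstruction is carried entirely by the off-diagonal quasi-homomorphisms.'' This is false once $j$ is large. Following the decomposition in \cite{Zha12}*{Proposition~5.4} (not Lemma~5.5), $\cZ_j(\xi)$ is the maximal closed subscheme where \emph{all four} of
\[
a^\tc+d,\qquad (a^\tc-d)\sqrt\epsilon,\qquad (c-b^\tc)\fj,\qquad (c+b^\tc)\fj\sqrt\epsilon
\]
extend, and the second of these, which comes precisely from the diagonal block, is obstructed as soon as $j>\val(a-d)$. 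Its contribution is vacuous for $j\leq\val(a-d)$ but enters with parameter $2\val(a-d)$ for $j>\val(a-d)$, whereas the off-diagonal pair enters with parameter $2\val(b)+1$ uniformly in $j$. The parity of $v(\xi)=\min\{2\val(a-d),2\val(b)+1\}$ decides which constraint binds: when $v(\xi)$ is odd one has $2\val(b)+1<2\val(a-d)$, the off-diagonal always dominates, and your heuristic happens to produce the correct formula (1). But when $v(\xi)$ is even, $2\val(a-d)=v(\xi)<2\val(b)+1$, so for $j>v(\xi)/2$ it is the \emph{diagonal} that gives the binding constraint, and it is exactly what produces the stable value $2\sum_{i=0}^{v(\xi)/2}q^i-q^{v(\xi)/2}$ in the first line of (2). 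Dropping the diagonal gives a quantity that keeps growing with $j$ until $j$ reaches $\val(b)$ rather than $v(\xi)/2$, contradicting (2). Once the four elements are retained, the rest is mechanical: one feeds the resulting valuation parameter into Keating's formula as packaged in \cite{Zha12}*{Proposition~5.3}, which computes $\length_{O_{\breve{E}}}\cZ_j(\xi)$ as a function of $j$ and that parameter.

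Two smaller remarks. The worry about $j=0$ is misplaced: \cite{Zha12}*{Proposition~5.3} is uniform in $j\geq 0$, and the values $\frac{v(\xi)+1}{2}$ and $\val(b)+1$ at $j=0$ drop out of the same expression; no separate Serre--Tate or Grothendieck--Messing argument is needed. Also, the Gross description of $\cZ_j$ (ring class field of conductor $\varpi^j$, extension degree $q^{j-1}(q+1)$) is background but not what drives the count; what is invoked is the local length formula for the divisor cut out on $\cZ_j$ by a single quasi-endomorphism of given valuation, and the statement you attribute to \cite{Zha12}*{Lemma~5.5} (``$h$ extends iff $\val(h)\geq j$'') is a yes/no criterion, not the quantitative length formula that the computation of $\length_{O_{\breve{E}}}\cZ_j(\xi)$ actually requires.
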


\begin{proof}
We have $aa^\tc-\varpi bb^\tc=1$, $-\varpi c c^\tc+dd^\tc=1$, and $ac^\tc=bd^\tc$, which imply
that $a,d\in O_E^\times$, $\val(b)=\val(c)\geq 0$, and $v(\xi)=\min\{2\val(a-d),2\val(b)+1\}$. Choose a totally imaginary element $\sqrt\epsilon\in O_E^\times$. By the same argument (and notation) in the proof of \cite{Zha12}*{Proposition~5.4}, $\cZ_j(\xi)$ coincides with the maximal closed subscheme where all of
\[
a^\tc+d,\qquad (a^\tc-d)\sqrt\epsilon,\qquad (c-b^\tc)\fj,\qquad (c+b^\tc)\fj\sqrt\epsilon
\]
extend. Thus,
\[
\length_{O_{\breve{E}}}\cZ_j(\xi)=\min\{\ell_j(a^\tc+d),\ell_j((a^\tc-d)\sqrt\epsilon),
\ell_j((c-b^\tc)\fj),\ell_j((c+b^\tc)\fj\sqrt\epsilon)\}.
\]
We have
\begin{align*}
\min\{\ell_j((c-b^\tc)\fj),\ell_j((c+b^\tc)\fj\sqrt\epsilon)\}&=2\val(b)+1\\
\min\{\ell_j(a^\tc+d),\ell_j((a^\tc-d)\sqrt\epsilon)\}&=
\begin{dcases}
2\val(a-d), &\text{if }j>\val(a-d);\\
\infty, &\text{if }j\leq\val(a-d).
\end{dcases}
\end{align*}
We have two cases.

If $v(\xi)$ is odd, then $2\val(b)+1<2\val(a-d)$. Thus, $\length_{O_{\breve{E}}}\cZ_j(\xi)=2\val(b)+1$; and (1) follows from a result of Keating as stated in \cite{Zha12}*{Proposition~5.3}.

If $v(\xi)$ is even, then $2\val(b)+1>2\val(a-d)=v(\xi)$. Thus,
\[
\length_{O_{\breve{E}}}\cZ_j(\xi)=
\begin{dcases}
2\val(b)+1, &\text{if }j\leq\frac{v(\xi)}{2};\\
v(\xi), &\text{if }j>\frac{v(\xi)}{2}.
\end{dcases}
\]
Then (2) follows from \cite{Zha12}*{Proposition~5.3}. The proposition is proved.
\end{proof}

\begin{proof}[Proof of Theorem \ref{th:afl}]
We first consider the apparently more difficult case where $n=2$. We keep the choice \eqref{eq:triple}. Let $(\xi,x)\in\rU(\rV_2^-)(F)\times\rV_2^-(E)$ be a regular semisimple pair. We may assume that $x=(x_0,0)$ (resp.\ $x=(0,x_0)$) for some $x_0\in E$ if $\val((x,x))$ is odd (resp.\ even). In this case, if we write $\xi=\(\begin{smallmatrix}a&\varpi b\\c&d\end{smallmatrix}\)$, then $u(\xi,x)=2\val(b)+1$. Also in this case, by \cite{KR11}*{Proposition~8.1} (see \cite{Liu12}*{Proposition~4.11} for general $F$), we have
\[
\cZ_2(x)=\sum_{j=0}^{\left\lfloor\frac{\val((x,x))}{2}\right\rfloor}\cZ_{\val((x,x))-2j}.
\]
Note that if $(\zeta,y)\in[\rS_2(F)\times\rM_2(F)]^-_{\r{rs}}$ matches $(\xi,x)$, then we have $v(\zeta)=v(\xi)$ and $u(\zeta,y)=u(\xi,x)$. Therefore, Conjecture \ref{co:afl_general} for $n=2$ follows from Lemma \ref{le:arithmetic1}, Proposition \ref{pr:analytic1}, Proposition \ref{pr:analytic2}, and Proposition \ref{pr:arithmetic}.

Now we consider the recreational case where $n=1$. For every orbit $(\zeta,y)\in[\rS_1(F)\times\rM_1(F)]^-_{\r{rs}}$, we may assume $(\zeta,y)=(\zeta,1,y_0)$ for $y_0\in F^\times$ such that $\val(y_0)$ is odd. Then
\begin{align*}
&-\omega(\zeta,y)\left.\frac{\rd}{\rd s}\right|_{s=0}\Orb(s;\CF_{\rS_1(O_F)},\CF_{\rM_1(O_F)};\zeta,y)\\
&=-\left.\frac{\rd}{\rd s}\right|_{s=0}\int_{F^\times}\CF_{O_F}(g^{-1})\CF_{O_F}(y_0g)\mu_{E/F}(g)|g|_E^s\;\rd g\\
&=-2\log q\sum_{i=0}^{\val(y_0)}(-1)^ii=2\log q\cdot\max\left\{\frac{\val(y_0)+1}{2},0\right\}.
\end{align*}
For every orbit $(\xi,x)\in[\rU(\rV_1^-)(F)\times\rV_1^-(E)]_{\r{rs}}$, we have
\[
\chi\(\cO_{\Gamma_\xi}\otimes^\dL_{\cO_{\cN_1^2}}\cO_{\Delta\cZ_1(x)}\)
=\length_{O_{\breve{E}}}(\cZ_1(x))=\max\left\{\frac{\val((x,x))+1}{2},0\right\}
\]
where the second equality is a special case of \cite{Zha12}*{Proposition~5.3} for $j=0$ and $\ell=\val((x,x))$. Thus, Conjecture \ref{co:afl_general} for $n=1$ follows as $y_0=(x,x)$ if $(\zeta,1,y_0)$ matches $(\xi,x)$.
\end{proof}

\fi

\appendix

\section{Proof of the arithmetic fundamental lemma in the minuscule case (by Chao~Li and Yihang~Zhu)}
\label{ss:e}

The purpose of this appendix is to prove the arithmetic fundamental lemma for $\rU(n)\times\rU(n)$, namely, Conjecture \ref{co:afl_general}, in the minuscule case. We follow the setup and notation in Subsection \ref{ss:afl}.

\subsection{Derivatives of orbital integrals via lattice counting}

We take a regular semisimple orbit $(\zeta,y)\in[\rS_n(F)\times\rM_n(F)]_{\r{rs}}^-$, where $y=(y_1,y_2)\in\Mat_{n,1}(F)\times \Mat_{1,n}(F)$. Let $(\xi,x)\in [\rU(\rV_n^-)(F)\times \rV_n^-(E)]_{\r{rs}}$ be the unique orbit that matches $(\zeta, y)$. By definition, $\zeta$ and $\xi$ have the same characteristic polynomial; and we have
\begin{align}\label{eq:matching}
y_2\zeta^iy_1=(\xi^ix,x),\quad i=0,\dots, n-1.
\end{align}
Recall that we denote $v(\zeta,y)\coloneqq\val(\det(y_1,\zeta y_1,\dots, \zeta^{n-1}y_1))$, and define the transfer factor to be $\omega(\zeta,y)\coloneqq(-1)^{v(\zeta,y)}$. We also put $\Delta(\zeta,y)\coloneqq\det(y_2\zeta^{i+j-2}y_1)_{i,j=1}^{n}$ and $\delta(\zeta,y)\coloneqq\val(\Delta(\zeta,y))$. As $(\zeta,y)\in[\rS_n(F)\times \rM_n(F)]^-_{\r{rs}}$, we know that $\delta(\zeta,y)$ is odd.

Define two $O_E$-lattices
\begin{align*}
L_1&=L_{\zeta,y_1}\coloneqq O_E y_1 \oplus O_E \zeta y_1 \oplus \cdots \oplus O_E\zeta^{n-1}y_1\subseteq \Mat_{n,1}(E),\\
L_2&=L_{\zeta,y_2}\coloneqq O_E y_2 \oplus O_E y_2\zeta \oplus \cdots \oplus O_E y_2\zeta^{n-1}\subseteq \Mat_{1,n}(E).
\end{align*}
For every integer $i\geq 0$, we define the set
\[
M_i(\zeta,y)\coloneqq\{O_E\text{-lattice } \Lambda\subseteq \Mat_{n,1}(E) \res L_1\subseteq\Lambda, L_2\subseteq \Lambda^\vee,\Lambda^\tc=\Lambda, \zeta\Lambda=\Lambda,  \length_{O_E}(\Lambda/L_1)=i\},
\]
where $\vee$ denotes dual lattice under the standard sesquilinear form
\begin{align}\label{eq:sesqui}
\Mat_{n,1}(E)\times \Mat_{1,n}(E)\to E,\quad (x_1,x_2)\mapsto x_2^\tc\cdot x_1.
\end{align}

\begin{lem}\label{lem:derorb}
We have
\[
\left.\frac{\rd}{\rd s}\right|_{s=0}\Orb(s;\CF_{\rS_n(O_F)},\CF_{\rM_n(O_F)};\zeta,y)=-2\log q\cdot \omega(\zeta,y) \sum_{i\geq 0}(-1)^i(v(\zeta,y)-i)\cdot\#M_i(\zeta,y).
\]
\end{lem}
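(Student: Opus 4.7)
My plan is to recast the orbital integral as a weighted lattice count and differentiate term-by-term. Since the integrand
$\CF_{\rS_n(O_F)}(g^{-1}\zeta g)\,\CF_{\rM_n(O_F)}(g^{-1}y_1,y_2g)\,\mu_{E/F}(\det g)\,|\det g|_E^s$
is right-invariant under $\GL_n(O_F)$ (a direct check using $k^\tc=k$ and $\det k\in O_F^\times$ for $k\in\GL_n(O_F)$) and $\rd g$ assigns $\GL_n(O_F)$ volume $1$, I would first rewrite the orbital integral as a sum over $g\in\GL_n(F)/\GL_n(O_F)$.

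Next I would set up the dictionary $g\GL_n(O_F)\leftrightarrow\Lambda_g\coloneqq gO_E^n$ between such cosets and $\tc$-stable $O_E$-lattices in $\Mat_{n,1}(E)$; injectivity follows from $\GL_n(F)\cap\GL_n(O_E)=\GL_n(O_F)$, and surjectivity onto $\tc$-stable lattices from $\Lambda=(\Lambda\cap F^n)\otimes_{O_F}O_E$. The three support conditions translate cleanly: $g^{-1}\zeta g\in\rS_n(O_F)$ becomes $\zeta\Lambda_g=\Lambda_g$ (the relation $\zeta\zeta^\tc=I_n$ being automatic); the condition $g^{-1}y_1\in\Mat_{n,1}(O_F)$ combined with $\zeta$-stability becomes $L_1\subseteq\Lambda_g$; and $y_2g\in\Mat_{1,n}(O_F)$ translates through the sesquilinear pairing \eqref{eq:sesqui} into $y_2\in\Lambda_g^\vee$, which given $\zeta\Lambda_g=\Lambda_g$ is equivalent to $L_2\subseteq\Lambda_g^\vee$. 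Thus the contributing cosets are in bijection with $\bigsqcup_{i\geq0}M_i(\zeta,y)$, partitioned by $i=\length_{O_E}(\Lambda_g/L_1)$.

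The remaining numerical input is $v_F(\det g)=v(\zeta,y)-i$. Writing the ``generator matrix'' $Y_1\coloneqq(y_1,\zeta y_1,\ldots,\zeta^{n-1}y_1)$ of $L_1$, the inclusion $L_1\subseteq\Lambda_g$ yields $g^{-1}Y_1\in\Mat_{n,n}(O_E)$, so
\[
i=v_E\(\det(g^{-1}Y_1)\)=v_E(\det Y_1)-v_E(\det g)=v(\zeta,y)-v_F(\det g),
\]
using $v_E|_F=v_F$ for the unramified extension $E/F$. Consequently $\mu_{E/F}(\det g)=(-1)^{v(\zeta,y)-i}$ and $|\det g|_E^s=q^{-2s(v(\zeta,y)-i)}$, yielding
\[
\Orb(s;\CF_{\rS_n(O_F)},\CF_{\rM_n(O_F)};\zeta,y)=\sum_{i\geq0}(-1)^{v(\zeta,y)-i}q^{-2s(v(\zeta,y)-i)}\#M_i(\zeta,y),
\]
a finite sum by regular semisimplicity. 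Differentiating at $s=0$ and pulling $\omega(\zeta,y)=(-1)^{v(\zeta,y)}$ outside gives the claimed formula.

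The argument is essentially bookkeeping; the only step requiring some care is verifying that the higher-power conditions implicit in the definitions of $L_1$ and $L_2$ are automatically encoded by the $y_1,y_2$ conditions once $\zeta$ preserves $\Lambda_g$. This is immediate since $g^{-1}\zeta g\in\GL_n(O_E)$ forces $y_2\zeta^ig=(y_2g)(g^{-1}\zeta g)^i\in\Mat_{1,n}(O_E)$ and similarly for $\zeta^iy_1$. I do not foresee any serious obstacle.
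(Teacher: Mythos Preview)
Your proposal is correct and follows essentially the same approach as the paper: both rewrite the integral as a sum over $\GL_n(F)/\GL_n(O_F)$, identify cosets with $\tc$-stable $O_E$-lattices $\Lambda=g\,O_E^n$, translate the support conditions into $L_1\subseteq\Lambda$, $L_2\subseteq\Lambda^\vee$, $\zeta\Lambda=\Lambda$, and compute $v_F(\det g)=v(\zeta,y)-i$. Your computation of this last quantity via the generator matrix $Y_1$ is a slight variant of the paper's computation via $\length_{O_E}(O_E^n/L_1)-\length_{O_E}(\Lambda/L_1)$, but the content is identical.
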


\begin{proof}
By definition, we have
\[
\Orb(s;\CF_{\rS_n(O_F)},\CF_{\rM_n}(O_F);\zeta,y)=\int_{\GL_n(F)}\CF_{\rS_n(O_F)}(g^{-1}\zeta g)\CF_{\rM_n(O_F)}(g^{-1}y_1, y_2g)\mu_{E/F}(\det g)|\det g|_E^s\rd g.
\]
Notice that $(g^{-1}y_1,y_2g)$ belongs to $\rM_n(O_F)$ if and only if $y_1\in g \Mat_{n,1}(O_F)$ and $y_2\in \Mat_{1,n}(O_F)g^{-1}$ hold. We also notice that $g^{-1}\zeta g$ belongs to $\rS_n(O_F)$ if and only if $\zeta g \Mat_{n,1}(O_E)=g \Mat_{n,1}(O_E)$ and  $\Mat_{1,n}(O_E)g^{-1}\zeta=\Mat_{1,n}(O_E)g^{-1}$ hold. Moreover, the $O_E$-lattice $g\Mat_{n,1}(O_E)$ is invariant under the involution $\tc$, and is dual to $\Mat_{1,n}(O_E)g^{-1}$ under the pairing \eqref{eq:matching}. It follows that the assignment
\[
g\mapsto\Lambda=\Lambda(g)\coloneqq g\Mat_{n,1}(O_E)
\]
induces a bijection between the set
\[
\{g\in \GL_n(F)/\GL_n(O_F) \res (g^{-1}y_1, y_2g)\in\rM_n(O_F), g^{-1}\zeta g\in \rS_n(O_F)\},
\]
and the set
\[
\{O_E\text{-lattice }\Lambda\subseteq \Mat_{n,1}(E) \res y_1\in \Lambda, y_2\in \Lambda^\vee, \Lambda^\tc=\Lambda, \zeta \Lambda=\Lambda\},
\]
in which the latter is equal to
\[
\{O_E\text{-lattice }\Lambda\subseteq \Mat_{n,1}(E) \res L_1\subseteq \Lambda, L_2\subseteq \Lambda^\vee, \Lambda^\tc=\Lambda, \zeta \Lambda=\Lambda\}.
\]
Clearly it further induces a bijection between such elements $g$ with $\val(\det g)=i$ and such $O_E$-lattices $\Lambda$ with $\length_{O_E}(\Lambda/L_1)=i$, namely, the set $M_i(\zeta,y)$. Now notice that we have
\[
\val(\det g)=\length_{O_E}(\Mat_{n,1}(O_E)/L_1)- \length_{O_E}(\Lambda/L_1)
\]
and
\[
\length_{O_E}(\Mat_{n,1}(O_E)/L_1)=\val(\det(y_1, \zeta y_1,\dots, \zeta^{n-1}y_1))=v(\zeta,y).
\]
It follows that if $\length_{O_E}(\Lambda/L_1)=i$, then we have
\[
\mu_{E/F}(\det g)=(-1)^{v(\zeta,y)-i}=(-1)^i\cdot\omega(\zeta,y)
\]
and
\[
\left.\frac{\rd}{\rd s}\right|_{s=0}|\det(g)|_E^s=-2\log q\cdot (v(\zeta,y)-i).
\]
The lemma is proved.
\end{proof}

Define two isomorphisms of $E$-vector spaces
\[
\phi_1\colon \Mat_{n,1}(E)\to \rV^-_n(E),\quad \zeta^i y_1\mapsto \xi^ix,\quad i=0,\ldots, n-1,
\]
and
\[
\phi_2\colon \Mat_{1,n}(E)\to \rV^-_n(E),\quad y_2\zeta^i \mapsto \xi^ix,\quad i=0,\ldots, n-1.
\]
By \eqref{eq:matching}, the standard sesquilinear form \eqref{eq:sesqui} transfers to the hermitian form on $\rV_n^-(E)$ under $\phi_1\times\phi_2$. It is clear that under $\phi_1$, the unique $F$-linear involution $\Mat_{n,1}(E)\to \Mat_{n,1}(E)$ sending $a\cdot\zeta^iy_1$ to $a^\tc\cdot (\zeta^i)^\tc y_1=a^\tc\cdot\zeta^{-i}y_1$ for every $a\in E$ and $i=0,\dots,n-1$ transfers to the unique $F$-linear involution $\tau: \rV_n^-(E)\to\rV_n^-(E)$ satisfying $\tau(a\cdot\xi^ix)=a^\tc\cdot\xi^{-i}x$ for every $a\in E$ and $i=0,\dots,n-1$.

Define the $O_E$-lattice
\[
L=L_{\xi,x}\coloneqq O_Ex \oplus O_E\xi x \oplus \cdots \oplus O_E \xi^{n-1}x\subseteq \rV_n^-(E).
\]
Then we have $\phi_i(L(\zeta,y_i))=L$ for $i=1,2$. For every integer $i\ge0$, we define the set
\[
N_i(\zeta,y)\coloneqq\{O_E\text{-lattice } \Lambda\subseteq \rV_n^-(E) \res L\subseteq \Lambda\subseteq L^*, \xi\Lambda=\Lambda, \Lambda^\tau=\Lambda, \length_{O_E}(\Lambda/L)=i\},
\]
where $*$ denotes dual lattice under the hermitian form on $\rV_n^-(E)$.

\begin{proposition}\label{pro:derorb}
We have
\[
\left.\frac{\rd}{\rd s}\right|_{s=0}\Orb(s;\CF_{\rS_n(O_F)},\CF_{\rM_n(O_F)}; \zeta,y)
=-2\log q\cdot \omega(\zeta,y)\sum_{i=0}^{\delta(\zeta,y)}(-1)^i(-i)\cdot\#N_i(\zeta,y).
\]
\end{proposition}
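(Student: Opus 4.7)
The plan is to combine Lemma \ref{lem:derorb} with a lattice-theoretic reformulation using $\phi_1$ and an alternating-sum vanishing derived from duality. First, I would use $\phi_1\colon\Mat_{n,1}(E)\to\rV_n^-(E)$ to transport lattices. Under $\phi_1$, the conditions $L_1\subseteq\Lambda$, $\zeta\Lambda=\Lambda$, and $\Lambda^\tc=\Lambda$ become respectively $L\subseteq\phi_1(\Lambda)$, $\xi\phi_1(\Lambda)=\phi_1(\Lambda)$, and $\phi_1(\Lambda)^\tau=\phi_1(\Lambda)$; moreover, by matching \eqref{eq:matching}, the pairing \eqref{eq:sesqui} is intertwined with the hermitian form under $\phi_1\times\phi_2$, so $\phi_2(\Lambda^\vee)=\phi_1(\Lambda)^*$. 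Hence the condition $L_2\subseteq\Lambda^\vee$ becomes $\phi_1(\Lambda)\subseteq L^*$. This gives a bijection $\phi_1\colon M_i(\zeta,y)\xrightarrow{\sim} N_i(\zeta,y)$ for every $i\geq 0$.

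Next, I would compute the range of $i$ for which $N_i(\zeta,y)$ is nonempty. Since $E/F$ is unramified, the $O_E$-length of $L^*/L$ equals the $F$-valuation of the determinant of the Gram matrix $((\xi^ix,\xi^jx))_{i,j=0}^{n-1}$; by a change-of-basis argument using \eqref{eq:matching}, this determinant equals $\Delta(\zeta,y)$ up to a unit, so $\length_{O_E}(L^*/L)=\delta(\zeta,y)$. Consequently $N_i(\zeta,y)=\emptyset$ whenever $i>\delta(\zeta,y)$, and so the sum in Lemma \ref{lem:derorb} truncates to $\sum_{i=0}^{\delta(\zeta,y)}(-1)^i(v(\zeta,y)-i)\cdot\#N_i(\zeta,y)$.

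The key step is the identity
\[
\sum_{i=0}^{\delta(\zeta,y)}(-1)^i\#N_i(\zeta,y)=0.
\]
I would prove this via the involution $\iota\colon\Lambda\mapsto\Lambda^*$ on the set of $O_E$-lattices satisfying $L\subseteq\Lambda\subseteq L^*$. Since $\xi\in\rU(\rV_n^-)$ preserves the hermitian form, $(\xi\Lambda)^*=\xi\Lambda^*$, so $\xi$-stability is preserved; similarly $\tau$ is compatible with the form in the sense that duality commutes with $\tau$, so $\tau$-stability is preserved. Moreover, $\length_{O_E}(\Lambda^*/L)=\delta(\zeta,y)-\length_{O_E}(\Lambda/L)$, so $\iota$ restricts to a bijection $N_i(\zeta,y)\xrightarrow{\sim} N_{\delta(\zeta,y)-i}(\zeta,y)$. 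Because $\delta(\zeta,y)$ is odd (from $(\zeta,y)\in[\rS_n(F)\times\rM_n(F)]_{\r{rs}}^-$), the alternating sum $\sum_i(-1)^i\#N_i(\zeta,y)$ equals its own negative, hence vanishes.

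Given this vanishing, the decomposition $v(\zeta,y)-i=v(\zeta,y)+(-i)$ lets the $v(\zeta,y)$-term drop out, yielding
\[
\sum_{i\geq 0}(-1)^i(v(\zeta,y)-i)\cdot\#M_i(\zeta,y)=\sum_{i=0}^{\delta(\zeta,y)}(-1)^i(-i)\cdot\#N_i(\zeta,y),
\]
and combining with Lemma \ref{lem:derorb} gives the proposition. The main technical obstacle will be the careful verification that $\iota$ preserves both $\xi$-stability and $\tau$-stability; this requires unpacking the precise conventions for the hermitian form and the $F$-linear involution $\tau$ (which is $E$-conjugate linear) and checking that they are intertwined correctly with duality, in particular using the unitarity $(\xi v,\xi w)=(v,w)$ and the compatibility $\tau(L^*)=L^*$ inherited from $\tau(L)=L$.
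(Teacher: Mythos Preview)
Your proposal is correct and follows essentially the same route as the paper: transport $M_i$ to $N_i$ via $\phi_1$, truncate the sum using $\length_{O_E}(L^*/L)=\delta(\zeta,y)$, use the duality $\Lambda\mapsto\Lambda^*$ together with the oddness of $\delta(\zeta,y)$ to kill the $v(\zeta,y)$-term, and conclude by Lemma~\ref{lem:derorb}. Your extra care in verifying that $\xi$- and $\tau$-stability pass to the dual lattice is exactly the content the paper leaves implicit.
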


\begin{proof}
Notice that the isomorphisms $\phi_1$ and $\phi_2$ induce a bijection between the sets $M_i(\zeta,y)$ and $N_i(\zeta,y)$ for every $i$. As $\length_{O_E}(L^*/L)=\delta(\zeta,y)$, we know that $N_i$ is empty unless $0\le i\le \delta(\zeta,y)$. Moreover, the assignment $\Lambda\mapsto \Lambda^*$ induces an isomorphism between $N_i(\zeta,y)$ and $N_{\delta(\zeta,y)-i}(\zeta,y)$. Since $\delta(\zeta,y)$ is odd, we have
\[
\sum_{i=0}^{\delta(\zeta,y)}(-1)^iv(\zeta,y)=0.
\]
Thus, we have
\[
\sum_{i=0}^{\delta(\zeta,y)}(-1)^i(v(\zeta,y)-i)\cdot\#M_i(\zeta,y)=\sum_{i=0}^{\delta(\zeta,y)}(-1)^i(-i)\cdot\#N_i(\zeta,y).
\]
The proposition then follows from Lemma \ref{lem:derorb}.
\end{proof}

\begin{remark}
There seems to be a sign error in \cite{RTZ}*{Corollary~7.3(2)}, which is corrected in the more general Proposition~\ref{pro:derorb}.
\end{remark}

\subsection{The minuscule case}

Choose a uniformizer $\varpi$ of $F$. From now on we assume that $(\xi,x)$ is \emph{minuscule}, namely, we assume
\[
\varpi L^*\subseteq L\subseteq L^*,
\]
where $L=L_{\xi,x}$ as we recall. In this case, $L^*/L$ is a vector space over the residue field $\kappa_E\cong\mathbb{F}_{q^2}$ of $E$, which is equipped with an hermitian form induced from $\rV^-_n$. Since $\xi\in\rU(\rV_n^-)(F)$ stabilizes $L$ and $L^*$, we know that $\xi$ induces an action $\bar\xi$ on $L^*/L$, which is an element in $\rU(L^*/L)$. We denote by $P(T)$ the characteristic polynomial of $\bar\xi$ on $L^*/L$. Since $\bar\xi$ belongs to $\rU(L^*/L)$, we know that $P(T)$ is \emph{self-reciprocal}. Here we recall that for a polynomial
\[
R(T)=a_k T^k+\cdots +a_1 T+a_0\in \kappa_E[T]
\]
with $a_0a_k\neq 0$, we define its \emph{reciprocal polynomial} as
\[
R^*(T)\coloneqq(a_0^\tc)^{-1}\cdot T^k\cdot R(1/T)^\tc;
\]
and we say that $R(T)$ is \emph{self-reciprocal} if $R(T)=R^*(T)$.

Now for any irreducible factor $R(T)$ of $P(T)$, for $P(T)$ defined above, we denote the multiplicity of $R(T)$ in $P(T)$ by $m(R(T))$. Since $P(T)$ is self-reciprocal, if $R(T)$ is an irreducible factor of $P(T)$, then $R^*(T)$ is also an irreducible factor of $P(T)$. Thus, taking reciprocal $R(T)\mapsto R^*(T)$ induces an involution on the set of irreducible factors of $P(T)$. We denote by $\mathsf{NSR}$ the set of all orbits of \emph{non-self-reciprocal} monic irreducible factors of $P(T)$ under this involution.

\begin{lem}\label{le:minanalytic}
If $P(T)$ has a unique self-reciprocal monic irreducible factor $Q(T)$ such that $m(Q(T))$ is odd, then
\[
\sum_{i=0}^{\delta(\zeta,y)}(-1)^i(-i)\cdot\#N_i(\zeta,y)=\deg Q(T)\cdot\frac{m(Q(T))+1}{2}\cdot \prod_{\{R(T),R^*(T)\}\in\mathsf{NSR}}(1+m(R(T))).
\]
Otherwise, we have
\[
\sum_{i=0}^{\delta(\zeta,y)}(-1)^i(-i)\cdot\#N_i(\zeta,y)=0.
\]
\end{lem}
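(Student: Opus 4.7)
The plan is to translate the lattice counting into subspace counting via the minusculeness assumption, organize the resulting count into a generating polynomial, and extract the desired signed sum as a derivative at $-1$. Concretely, the assumption $\varpi L^* \subseteq L$ makes $L^*/L$ into a $\kappa_E$-vector space equipped with an induced hermitian form, on which $\xi$ descends to $\bar\xi$ and the involution $\tau$ descends to an involution $\bar\tau$ that is conjugate $\kappa_E$-linear and satisfies $\bar\tau \bar\xi \bar\tau^{-1} = \bar\xi^{-1}$. First I would check that $\Lambda \mapsto \Lambda/L$ sets up a bijection between the set $N_i(\zeta,y)$ and the set of $\kappa_E$-subspaces $W \subseteq L^*/L$ of dimension $i$ that are stable under both $\bar\xi$ and $\bar\tau$. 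Setting $F(t) \coloneqq \sum_{i=0}^{\delta(\zeta,y)} \#N_i(\zeta,y) \, t^i$, the target sum equals $F'(-1)$.

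The next step is to factorize $F(t)$ using the primary decomposition $L^*/L = \bigoplus_R V_R$ for $\bar\xi$, where $R$ runs over monic irreducible factors of $P(T)$ and $V_R = \ker R(\bar\xi)^{m(R)}$. Using $\bar\tau \bar\xi \bar\tau^{-1} = \bar\xi^{-1}$ together with conjugate-linearity, one verifies that $\bar\tau$ induces an isomorphism $V_R \xrightarrow{\sim} V_{R^*}$. Thus every $(\bar\xi,\bar\tau)$-stable subspace $W$ decomposes as $W = \bigoplus_R W_R$ with $W_R = W \cap V_R$, where $W_{R^*} = \bar\tau W_R$ for non-self-reciprocal $R$ and $W_Q$ must itself be $(\bar\xi,\bar\tau)$-stable for self-reciprocal $Q$. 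This yields
\[
F(t) = \prod_{\{R,R^*\} \in \mathsf{NSR}} F_R(t) \cdot \prod_{Q \text{ self-recip.}} F_Q(t),
\]
where $F_R(t) = \sum_{U} t^{2 \dim_{\kappa_E} U}$ over $\bar\xi$-stable $U \subseteq V_R$ and $F_Q(t) = \sum_{U} t^{\dim_{\kappa_E} U}$ over $(\bar\xi,\bar\tau)$-stable $U \subseteq V_Q$.

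The third step is to compute each local factor, which is the most structural part of the argument. I would show that under the minuscule hypothesis, each $V_R$ is cyclic as a $\kappa_E[\bar\xi]$-module, isomorphic to $\kappa_E[T]/R(T)^{m(R)}$; consequently, the $\bar\xi$-stable subspaces are precisely $R(\bar\xi)^k V_R$ for $0 \leq k \leq m(R)$, of dimensions $(m(R)-k)\deg R$. Hence $F_R(t) = \sum_{k=0}^{m(R)} t^{2(m(R)-k)\deg R}$, giving $F_R(-1) = m(R) + 1$. For self-reciprocal $Q$, the identity $T^{\deg Q} Q(1/T)^\tc = Q(0)^\tc Q(T)$ combined with $\bar\tau\bar\xi = \bar\xi^{-1}\bar\tau$ implies that each $\bar\xi$-stable subspace $Q(\bar\xi)^k V_Q$ is automatically $\bar\tau$-stable, so
\[
F_Q(t) = \sum_{j=0}^{m(Q)} t^{j \deg Q}.
\]
A direct calculation then yields $F_Q(-1) = 0$ iff $m(Q)$ is odd, and in that case $F_Q'(-1) = \deg Q \cdot \tfrac{m(Q)+1}{2}$; when $m(Q)$ is even, $F_Q(-1) = 1$.

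Finally, I would apply the product rule to $F'(-1) = \sum_i F_i'(-1) \prod_{j \neq i} F_j(-1)$. If two or more self-reciprocal factors $Q$ have odd multiplicity, then every term contains a factor $F_Q(-1) = 0$, so $F'(-1) = 0$. If none does, then the generating function has only even-degree contributions (in a suitable sense coming from each factor being even or evaluating to a nonzero constant at $-1$), and a parity argument forces $F'(-1) = 0$. When exactly one $Q$ has odd $m(Q)$, only the term $i = Q$ survives, and substituting the values yields
\[
F'(-1) = \deg Q \cdot \tfrac{m(Q)+1}{2} \cdot \prod_{\{R,R^*\} \in \mathsf{NSR}}(1 + m(R)),
\]
using the computations above. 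The main obstacle in this plan lies in Step 3: establishing the cyclic module structure $V_R \cong \kappa_E[T]/R(T)^{m(R)}$ from the minuscule hypothesis, and handling self-reciprocal $Q$ of even degree (where the naive computation gives $F_Q(-1) = m(Q)+1$ rather than $0$ or $1$, indicating that the hermitian constraint in the minuscule setting must further restrict the module structure of $V_Q$). The resolution of this obstacle will occupy the bulk of the technical work, and is expected to follow from a careful analysis of the hermitian Jordan structure of a unitary operator on $L^*/L$ combined with the specific constraints imposed by $L = L_{\xi,x}$.
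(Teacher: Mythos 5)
Your outline is the intended argument---it reproduces the proof of \cite{RTZ}*{Proposition~8.2}, which is exactly what the paper cites---and the one obstacle you flag at the end does not actually arise. Every monic irreducible $R(T)\in\kappa_E[T]$ with $R=R^*$ has odd degree. Indeed, write $\kappa_E\cong\dF_{q^2}$, let $\lambda$ be a root of $R$ so that $R$ has degree $d=[\dF_{q^2}(\lambda):\dF_{q^2}]$; if $R=R^*$ then $\lambda^{-q}$ is also a root, so $\lambda^{-q}=\lambda^{q^{2j}}$ for some $0\le j\le d-1$, whence $\lambda^{q^{2j-1}+1}=1$ and so $\lambda^{q^{2(2j-1)}}=\lambda$, i.e.\ $\lambda\in\dF_{q^{2(2j-1)}}$; therefore $d\mid 2j-1$ and $d$ is odd. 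With this in hand your formulas $F_Q(-1)=0$ for $m(Q)$ odd, $F_Q(-1)=1$ for $m(Q)$ even, and $F_Q'(-1)=\deg Q\cdot\tfrac{m(Q)+1}{2}$ when $m(Q)$ is odd all hold exactly as you computed, and no further analysis of the hermitian structure of $V_Q$ is needed.

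Two smaller points. The cyclicity you invoke in Step~3 follows at once from the minuscule hypothesis: $\varpi L^*\subseteq L$ gives an embedding of $\kappa_E[T]$-modules $L^*/L\hookrightarrow\varpi^{-1}L/L\cong L/\varpi L\cong\kappa_E[T]/(\bar p(T))$ (where $\bar p$ is the reduction of the characteristic polynomial of $\xi$, integral since $\xi$ stabilizes $L$), and a $\kappa_E[T]$-submodule of a cyclic module over the PID $\kappa_E[T]$ is cyclic; in particular each primary component $V_R$ is cyclic. Finally, the hazy parity argument you offer for the subcase with no self-reciprocal factor of odd multiplicity is unnecessary, since that subcase is vacuous: $\delta(\zeta,y)=\deg P$ is odd because $(\zeta,y)\in[\rS_n(F)\times\rM_n(F)]_{\r{rs}}^-$, and once every $\deg Q$ is odd one has $\deg P\equiv\sum_{Q}m(Q)\pmod 2$, so at least one self-reciprocal factor must have odd multiplicity.
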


\begin{proof}
This follows from the same proof as \cite{RTZ}*{Proposition~8.2}.
\end{proof}

Put $\Lambda\coloneqq L^*$. Since $(\xi,x)$ is minuscule, we know that $\Lambda$ is a vertex lattice, namely, it satisfies $\varpi\Lambda\subseteq\Lambda^*\subseteq\Lambda$. Let $\cV(\Lambda)$ be the Deligne--Lusztig variety associated to the vertex lattice $\Lambda$ as in \cite{LZ17}*{Section~2.5} and \cite{RTZ}*{Section~3}, which is a smooth projective variety over $k$, where $k$ is the residue field of $\breve{E}$ as in Subsection \ref{ss:afl}.

\begin{lem}\label{le:fixedpoints}
We have a canonical isomorphism
\[
\Gamma_\xi\cap\Delta\cZ_n(x)\cong \cV(\Lambda)^{\bar\xi}
\]
of $k$-schemes.
\end{lem}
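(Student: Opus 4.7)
The plan is to reinterpret the intersection on the left as a moduli-theoretic fixed-point locus, and then identify it with a Deligne--Lusztig variety using the Bruhat--Tits stratification of $\cN_n$ in the minuscule case. First, I would unwind the definitions of $\Gamma_\xi$ and $\Delta \cZ_n(x)$ to show that $\Gamma_\xi \cap \Delta \cZ_n(x)$ represents the functor sending a locally Noetherian scheme $S$ over $O_{\breve{E}}$ (with $p$ locally nilpotent) to the set of $(X,i,\lambda;\rho) \in \cZ_n(x)(S)$ such that the quasi-endomorphism $\rho^{-1} \circ \bbi_n(\xi) \circ \rho$ of $X$ lifts to an honest $O_E$-linear endomorphism of $X$ over $S$. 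This is formal: by construction $\Gamma_\xi$ identifies $(X,\rho)$ with $(X, \rho_\xi \circ \rho)$, and an isomorphism between these two points of $\cN_n$ over $S$ is precisely such a lift.

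Next, on this fixed-point locus, both $x$ and $\xi$ extend to $X$, so all iterates $\xi^i x$ for $i=0,\dots,n-1$ extend, i.e.\ the whole lattice $L=L_{\xi,x}$ embeds into $\Hom_{O_E}(\bbX_0 \times_{O_{\breve{E}}} S, X)$ via $\rho^{-1}$. Conversely, in the minuscule situation $\varpi L^* \subseteq L \subseteq L^*$, once the lattice $L$ extends, the $\xi$-stability $\xi L = L$ (which holds since the characteristic polynomial of $\xi$ acting on $L$ is $O_F$-integral in the minuscule regime) implies that $\xi$ itself extends as an $O_E$-linear endomorphism. This identifies the fixed-point locus scheme-theoretically with $\bigcap_{i=0}^{n-1} \cZ_n(\xi^i x)$, together with its natural action by $\bar{\xi}$.

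The heart of the argument invokes the Bruhat--Tits stratification of $\cN_n$ associated to vertex lattices: to the vertex lattice $\Lambda = L^*$ is attached a smooth projective closed subscheme of $(\cN_n)_k$ isomorphic to the Deligne--Lusztig variety $\cV(\Lambda)$ parameterizing isotropic flags in the hermitian $\kappa_E$-space $\Lambda/\Lambda^* = L^*/L$ satisfying the appropriate Frobenius-twisted incidence conditions. This theorem is due to Vollaard--Wedhorn for $F=\dQ_p$ and extended to arbitrary $F$ (and arbitrary vertex type) by Rapoport--Terstiege--Zhang and by Cho. The key additional input one needs is that, in the minuscule setting, the intersection $\bigcap_{i=0}^{n-1} \cZ_n(\xi^i x)$ coincides \emph{scheme-theoretically} (not merely as reduced subschemes) with the Bruhat--Tits stratum $\cV(\Lambda)$. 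Granting this, the action of $\xi$ on $\Lambda/\Lambda^*$ is $\bar{\xi}$ by construction, and the functorial $\xi$-action on the moduli matches the natural $\bar{\xi}$-action on the Deligne--Lusztig variety; taking fixed points yields the claimed isomorphism $\Gamma_\xi \cap \Delta\cZ_n(x) \cong \cV(\Lambda)^{\bar\xi}$.

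The main obstacle will be the step invoking the Bruhat--Tits stratification in the ramified setting of general $F$, together with the scheme-theoretic (rather than reduced) identification of the intersection of special divisors with the Deligne--Lusztig stratum; one needs to apply the correct generalization of the Vollaard--Wedhorn theorem and carefully track the Cohen--Macaulay/smoothness properties that upgrade a set-theoretic equality to a scheme-theoretic one. A secondary subtlety is checking the functorial compatibility of the $\xi$-action on the formal scheme with the $\bar{\xi}$-action on $\cV(\Lambda)$, which should be transparent once both sides are described group-theoretically via the uniformization by the inner form of the unitary group acting on the Bruhat--Tits building.
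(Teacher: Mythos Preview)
Your overall strategy matches the paper's, but the second paragraph contains a real error. You claim that in the minuscule situation, once the lattice $L$ extends (i.e.\ all $\xi^i x$ lift), the stability $\xi L = L$ forces $\xi$ itself to extend as an $O_E$-linear endomorphism of $X$. This is false: extending $L$ only says that certain special homomorphisms $\bbX_0\to X$ lift, which constrains the quasi-endomorphism $\rho^{-1}\xi\rho$ of $X$ only on the image of those homomorphisms, not globally. Concretely, the locus $\cN_\Lambda=\bigcap_{i=0}^{n-1}\cZ_n(\xi^i x)$ is the full Bruhat--Tits stratum, a positive-dimensional Deligne--Lusztig variety on which $\xi$ acts nontrivially via $\bar\xi$; its $\xi$-fixed locus is strictly smaller in general. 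So $\cZ_n(x)\cap\cN_n^\xi=\cN_\Lambda^\xi$, not $\cN_\Lambda$.

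This also makes your argument internally inconsistent: paragraph two asserts $\Gamma_\xi\cap\Delta\cZ_n(x)=\bigcap_i\cZ_n(\xi^i x)$, but in the last paragraph you pass to $\bar\xi$-fixed points again, in effect taking fixed points twice on the left-hand side. The fix is simple and recovers exactly the paper's argument: drop the ``converse'' and instead observe that $\cZ_n(x)\cap\cN_n^\xi\subseteq\cN_\Lambda$ (since $x$ and $\xi$ extending force all $\xi^i x$ to extend) while trivially $\cN_\Lambda\subseteq\cZ_n(x)$ (since $x\in L=\Lambda^*$), giving $\Gamma_\xi\cap\Delta\cZ_n(x)\cong\cN_\Lambda^\xi$. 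Then the equivariant identification $\cN_\Lambda\cong\cV(\Lambda)$ from \cite{LZ17} (and \cite{Cho} for general $F$) yields $\cN_\Lambda^\xi\cong\cV(\Lambda)^{\bar\xi}$. Your worry about scheme-theoretic versus reduced identifications is legitimate but is exactly what \cite{LZ17}*{Corollary~3.2.3} provides; note also that the relevant extension is unramified $E/F$, not ramified.
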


\begin{proof}
Notice that we have a canonical isomorphism $\Gamma_\xi\cap\Delta\cZ_n(x)\cong \cZ_n(x)\cap \cN_n^\xi$. Let $\cN_\Lambda\subseteq \cN_n$ be the closed Bruhat--Tits stratum associated to the vertex lattice $\Lambda$ as in \cite{LZ17}*{Section~2.6}. Then by definition, we have
\[
\cZ_n(x)\cap\cN_n^\xi\cong \cN_\Lambda^\xi.
\]
By \cite{LZ17}*{Corollary~3.2.3 \& Section~2.6}, we have $\cN_\Lambda\cong \cV(\Lambda)$. The lemma then follows.
\end{proof}

\begin{lem}\label{le:minarithmetic}
We have that $\cV(\Lambda)^{\bar\xi}$ is empty unless $P(T)$ has a unique self-reciprocal monic irreducible factor $Q(T)$ such that $m(Q(T))$ is odd. Assume that $\cV(\Lambda)^{\bar\xi}$ is non-empty. Then $\cV(\Lambda)^{\bar\xi}$ is an Artinian $k$-scheme, and
\[
\chi(\cO_{\Gamma_\xi}\otimes_{\cO_{\cN_n^2}}^\dL\cO_{\Delta\cZ_n(x)})=\length_k\cV(\Lambda)^{\bar\xi}=\deg Q(T)\cdot\frac{m(Q(T))+1}{2}\cdot \prod_{\{R(T),R^*(T)\}\in\mathsf{NSR}}(1+m(R(T))).
\]
\end{lem}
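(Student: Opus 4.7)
The plan is to follow closely the strategy of \cite{RTZ}*{Section~8}, where an analogous fixed-point count is carried out for the $\rU(n)\times\rU(n+1)$ arithmetic fundamental lemma. By Lemma \ref{le:fixedpoints}, it suffices to analyze $\cV(\Lambda)^{\bar\xi}$ as a $k$-scheme. First I would recall from \cite{LZ17}*{Section~2.5} the explicit description of $\cV(\Lambda)$ as a classical Deligne--Lusztig variety for the unitary group of the hermitian $\kappa_E$-vector space $V\coloneqq\Lambda/\Lambda^*$: its $k$-points parameterize certain $\kappa_E$-subspaces $U\subseteq V$ that are totally isotropic of a prescribed dimension and satisfy a Frobenius-compatibility condition (the dimension of $U\cap\mathrm{Frob}(U)$ is determined by the type of the vertex lattice $\Lambda$). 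The automorphism $\bar\xi\in\rU(V)$ induces a finite-order automorphism of $\cV(\Lambda)$, and since $\cV(\Lambda)$ is smooth and projective, a standard argument via the tangent space computation shows that the scheme-theoretic fixed locus $\cV(\Lambda)^{\bar\xi}$ is reduced, so its $k$-length equals its cardinality.

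Next I would decompose $V=\bigoplus_{R}V_R$ into generalized $\bar\xi$-eigenspaces indexed by the distinct monic irreducible factors $R(T)$ of $P(T)$. Because $\bar\xi$ preserves the hermitian form, $V_R$ is orthogonal to every $V_S$ except $V_{R^*}$; equivalently, each self-reciprocal factor $Q$ contributes a non-degenerate hermitian summand $V_Q$, while each pair $\{R,R^*\}\in\mathsf{NSR}$ contributes a hyperbolic summand $V_R\oplus V_{R^*}$ with $V_R$ and $V_{R^*}$ totally isotropic and in perfect duality. Any $\bar\xi$-stable subspace $U$ satisfying the defining conditions of $\cV(\Lambda)$ must respect this orthogonal decomposition, so $U=\bigoplus_R(U\cap V_R)$, and the isotropy and Frobenius conditions break up into the corresponding pieces.

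The counting then proceeds summand by summand. For each pair $\{R,R^*\}\in\mathsf{NSR}$, the $\bar\xi$-stable isotropic subspaces of $V_R\oplus V_{R^*}$ are enumerated by choosing a $\bar\xi$-stable subspace of $V_R$, and the $\bar\xi$-stable filtration of $V_R$ by generalized kernels yields exactly $1+m(R(T))$ choices, giving the product in the statement. For each self-reciprocal factor $Q(T)$ with $m(Q(T))$ even, the Lagrangian-type subspace of the corresponding summand $V_Q$ is forced to be $\bar\xi$-stable of dimension $\tfrac{1}{2}\deg Q(T)\cdot m(Q(T))$ in a unique way. For a self-reciprocal factor $Q(T)$ with $m(Q(T))$ odd, a Jordan-form analysis of the unitary operator $\bar\xi\res V_Q$ shows that there are exactly $\deg Q(T)\cdot\tfrac{m(Q(T))+1}{2}$ such $\bar\xi$-stable isotropic subspaces of the correct dimension. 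Since $\dim_{\kappa_E}V=\delta(\zeta,y)$ is odd, comparing parities shows that the count is nonzero precisely when $P(T)$ admits exactly one self-reciprocal irreducible factor of odd multiplicity, recovering the dichotomy asserted in the lemma.

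The main obstacle will be the last bullet of the enumeration above: carrying out the detailed classification of $\bar\xi$-stable totally isotropic subspaces of a non-degenerate hermitian summand $V_Q$ on which $\bar\xi$ has a single self-reciprocal minimal polynomial with odd multiplicity. This requires a careful analysis of the Jordan structure of a unitary operator with respect to the hermitian form, running in parallel to \cite{RTZ}*{Proposition~8.3}, together with a verification that the Frobenius-compatibility condition defining $\cV(\Lambda)$ is automatically satisfied once isotropy and $\bar\xi$-stability are imposed. The reducedness of $\cV(\Lambda)^{\bar\xi}$, while geometrically plausible, also deserves care and should be deduced from the smoothness of $\cV(\Lambda)$ together with the semisimplicity of the action of $\bar\xi$ on tangent spaces at fixed points.
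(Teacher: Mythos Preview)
Your outline matches the paper's approach: the paper proves the lemma by direct citation of \cite{LZ17}*{Corollary~3.2.3}, \cite{RTZ}*{Proposition~8.1}, and \cite{HLZ19}*{Lemma~5.1.1 \& Theorem~4.6.3}, and the eigenspace decomposition and summand-by-summand count you sketch is exactly the content of \cite{RTZ}*{Proposition~8.1}.

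There is, however, a genuine gap in your plan at the point you flag last. You propose to deduce reducedness of $\cV(\Lambda)^{\bar\xi}$ from semisimplicity of the $\bar\xi$-action on tangent spaces, but $\bar\xi$ is in general \emph{not} semisimple: your own use of \emph{generalized} eigenspaces already witnesses that $\bar\xi$ can have nontrivial Jordan blocks on $V=\Lambda/\Lambda^*$, hence order divisible by $p$, and then the standard tangent-space argument for reducedness of fixed loci breaks down. Relatedly, you pass silently from the derived intersection number $\chi(\cO_{\Gamma_\xi}\otimes^{\dL}\cO_{\Delta\cZ_n(x)})$ to $\length_k\cV(\Lambda)^{\bar\xi}$ via Lemma~\ref{le:fixedpoints}, but that lemma only identifies the \emph{underived} intersection with $\cV(\Lambda)^{\bar\xi}$; the vanishing of higher Tor terms still has to be argued. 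These two issues are precisely why the paper invokes \cite{HLZ19} in addition to \cite{RTZ}: the original \cite{RTZ} argument had a gap at this step, and \cite{HLZ19} fills it by a direct length computation on fine Deligne--Lusztig varieties that bypasses the reducedness question. So your strategy is right, but the ``standard argument'' you invoke does not apply; you should either cite \cite{HLZ19} as the paper does or reproduce that analysis.
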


\begin{proof}
The result follows directly from Lemma \ref{le:fixedpoints}, \cite{LZ17}*{Corollary~3.2.3}, \cite{RTZ}*{Proposition~8.1}, and \cite{HLZ19}*{Lemma~5.1.1 \& Theorem~4.6.3}. Strictly speaking, these references assume $F=\dQ_p$, but the same proof works for general $F$ as long as one replaces results related to the Bruhat--Tits stratification and special cycles by more general ones in \cite{Cho}.
\end{proof}

\begin{theorem}
Conjecture \ref{co:afl_general} holds when $(\xi,x)$ is minuscule.
\end{theorem}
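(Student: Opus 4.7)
The plan is to directly combine the analytic formula of Proposition~\ref{pro:derorb} with the lattice-counting formula of Lemma~\ref{le:minanalytic} on one side, and the geometric formula of Lemma~\ref{le:minarithmetic} on the other side, and observe that they match term-by-term once the minuscule hypothesis is in force. Concretely, I would first invoke Proposition~\ref{pro:derorb} to rewrite
\[
-\omega(\zeta,y)\left.\frac{\rd}{\rd s}\right|_{s=0}\Orb(s;\CF_{\rS_n(O_F)},\CF_{\rM_n(O_F)};\zeta,y)
=2\log q\cdot\sum_{i=0}^{\delta(\zeta,y)}(-1)^i(-i)\cdot\#N_i(\zeta,y),
\]
thereby reducing everything to a purely lattice-theoretic sum on the left and a purely intersection-theoretic number on the right.

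Next, I would apply Lemma~\ref{le:minanalytic} to evaluate the lattice sum. This gives a dichotomy: either the sum vanishes (when $P(T)$ admits no, or more than one, self-reciprocal monic irreducible factor of odd multiplicity), or it equals
\[
\deg Q(T)\cdot\frac{m(Q(T))+1}{2}\cdot\prod_{\{R(T),R^*(T)\}\in\mathsf{NSR}}(1+m(R(T)))
\]
in the remaining case. I would then invoke Lemma~\ref{le:minarithmetic} to conclude that the Euler--Poincar\'e characteristic $\chi(\cO_{\Gamma_\xi}\otimes^{\dL}_{\cO_{\cN_n^2}}\cO_{\Delta\cZ_n(x)})$ vanishes precisely in the first case and equals precisely the same explicit expression in the second case. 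Matching the two identities yields Conjecture~\ref{co:afl_general}.

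The main (though minor) obstacle is verifying that the vanishing/non-vanishing dichotomies on the two sides are governed by exactly the same combinatorial condition on the characteristic polynomial $P(T)$ of $\bar\xi$ acting on $L^*/L$, and that the constants match without stray factors. In particular, one should check that $L=L_{\xi,x}$ is genuinely minuscule (so that $L^*/L$ is a $\kappa_E$-vector space and $\bar\xi$ lives in $\rU(L^*/L)$), that the transfer of $\zeta,y$ to $\xi,x$ respects the lattice structures under $\phi_1,\phi_2$, and that the Bruhat--Tits stratum $\cN_\Lambda$ with $\Lambda=L^*$ is the correct one entering Lemma~\ref{le:fixedpoints}. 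Once these bookkeeping checks are done, the theorem reduces to a one-line comparison; no further geometric input is needed beyond what is packaged in Lemmas~\ref{le:minanalytic} and \ref{le:minarithmetic}.
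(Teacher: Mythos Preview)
Your proposal is correct and follows essentially the same approach as the paper: the paper's proof is literally the one-line statement that the theorem follows immediately from Proposition~\ref{pro:derorb}, Lemma~\ref{le:minanalytic}, and Lemma~\ref{le:minarithmetic}, which is exactly the combination you describe. The bookkeeping checks you mention are already absorbed into the setup preceding those lemmas, so no additional verification is needed.
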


\begin{proof}
This follows immediately from Proposition \ref{pro:derorb}, Lemma \ref{le:minanalytic}, and Lemma \ref{le:minarithmetic}.
\end{proof}

\section{Poles of Eisenstein series and theta lifting for unitary groups}
\label{ss:b}

In this appendix, we prove some results about global theta lifting for unitary groups, namely, Theorem \ref{th:pole} and its two corollaries. These results are only used in the proof of Proposition \ref{pr:endoscopy_general}. Thus, if the readers are willing to admit these results from the theory of automorphic forms, they are welcome to skip the entire section except the very short Subsection \ref{ss:discrete} where we introduce some notation for the discrete automorphic spectrum.

\subsection{Discrete automorphic spectrum}
\label{ss:discrete}

We recall some setup about the discrete automorphic spectrum. Let $\rG$ be a reductive group over a number field $F$. Let $Z_\rG$ be the center of $\rG$. For an automorphic character $\chi\colon Z_\rG(F)\backslash Z_\rG(\bA_F)\to\dC^\times$, we denote by $\rL^2(\rG(F)\backslash\rG(\bA_F),\chi)$ the space of measurable complex valued functions $f$ on $\rG(F)\backslash\rG(\bA_F)$ satisfying $f(gz)=\chi(z)f(g)$ for $z\in Z_\rG(\bA_F)$ such that $|f(g)\chi'(g)|^2$ is integrable on $\rG(F)\backslash\rG(\bA_F)/Z_\rG(\bA_F)$ for some (hence every) character $\chi'\colon\rG(F)\backslash\rG(\bA_F)\to\dC^\times$ such that $\chi\cdot(\chi'\res{Z_\rG(\bA_F)})$ is unitary. The group $\rG(\bA_F)$ acts on $\rL^2(\rG(F)\backslash\rG(\bA_F),\chi)$ by the right translation. Denote by $\rL^2_\disc(\rG(F)\backslash\rG(\bA_F),\chi)$ the maximal closed subspace of $\rL^2(\rG(F)\backslash\rG(\bA_F),\chi)$ that is a direct sum of irreducible (closed) subrepresentations of $\rG(\bA_F)$. We put
\[
\rL^2_\disc(\rG)\coloneqq\bigoplus_\chi\rL^2_\disc(\rG(F)\backslash\rG(\bA_F),\chi)
\]
where $\chi$ runs through all automorphic characters of $Z_\rG(\bA_F)$. Finally, denote by $\rL^2_\cusp(\rG)$ the subspace of $\rL^2_\disc(\rG)$ consisting of cuspidal functions. Both $\rL^2_\disc(\rG)$ and $\rL^2_\cusp(\rG)$ are representations of $\rG(\bA_F)$ via the right translation.

\begin{definition}\label{de:realization}
Let $\pi$ be an irreducible admissible representation of $\rG(\bA_F)$.
\begin{enumerate}
  \item We define the \emph{discrete (resp.\ cuspidal) multiplicity} $m_\disc(\pi)$ (resp.\ $m_\cusp(\pi)$) of $\pi$ to be the dimension of $\Hom_{\rG(\bA_F)}(\pi,\rL^2_\disc(\rG))$ (resp.\ $\Hom_{\rG(\bA_F)}(\pi,\rL^2_\cusp(\rG))$).

  \item We define a \emph{discrete (resp.\ cuspidal) realization} of $\pi$ to be an irreducible subrepresentation $V_\pi$ contained in $\rL^2_\disc(\rG)$ (resp.\ $\rL^2_\cusp(\rG)$) that is isomorphic to $\pi$.
\end{enumerate}
\end{definition}

It is known that $0\leq m_\cusp(\pi)\leq m_\disc(\pi)<\infty$.

\subsection{Main theorem and consequences}
\label{ss:pole}

Now we let $F$ be a totally real number field, and $E/F$ a totally imaginary quadratic extension. Denote by $\tc$ the nontrivial involution of $E$ over $F$.

\begin{definition}\label{de:strictly_unitary}
We say that an automorphic character $\mu\colon E^\times\backslash\bA_E^\times\to\dC^\times$ is \emph{strictly unitary} if $\mu_\infty$ takes value $1$ on the diagonal $\Delta^{[F:\dQ]}\dR^\times_{>0}\subseteq(\dR^\times_{>0})^{[F:\dQ]}$ as a subgroup of $E_\infty^\times\subseteq\bA_E^\infty$.
\end{definition}

\begin{remark}
It is clear that a strictly unitary automorphic character is unitary. For every automorphic character $\mu$ of $\bA_E^\times$, there exists a unique complex number $s$ such that $\mu|\;|^s_E$ is strictly unitary.
\end{remark}

Let $\rV,(\;,\;)_\rV$ be a (non-degenerate) hermitian space over $E$ (with respect to $\tc$) of rank $n$ and let $\rW,\langle\;,\;\rangle_\rW$ be a (non-degenerate) skew-hermitian space over $E$ (with respect to $\tc$) of rank $m$. Let $\rG\coloneqq\rU(\rV)$ and $\rH\coloneqq\rU(\rW)$ be the unitary groups of $\rV$ and $\rW$, respectively. We form the symplectic space $\Res_{E/F}\rV\otimes_E\rW$, and let $\Mp(\Res_{E/F}\rV\otimes_E\rW)$ be the metaplectic cover of $\Sp(\Res_{E/F}\rV\otimes_E\rW)(\bA_F)$ with center $\dC^1$. Then we have the oscillator representation $\omega$ of $\Mp(\Res_{E/F}\rV\otimes_E\rW)$ with respect to the standard additive character $\psi_F$.\footnote{In this article, we will always use $\psi_F$ to form oscillator representations. Thus, in the sequel, we will no longer mention the dependence of $\psi_F$ when discussing oscillator representations.} Let $\underline\mu=(\mu_\rV,\mu_\rW)$ be a pair of splitting characters for $(\rV,\rW)$, that is, $(\mu_\rV,\mu_\rW)$ is a pair of automorphic characters of $\bA_E^\times$ satisfying $\mu_\rV\res\bA_F^\times=\mu_{E/F}^m$ and $\mu_\rW\res\bA_F^\times=\mu_{E/F}^n$. Then it induces an embedding $\iota_{\underline\mu}\colon\rG(\bA_F)\times\rH(\bA_F)\hookrightarrow\Mp(\Res_{E/F}\rV\otimes_E\rW)$. By restriction, we obtain the Weil representation
\begin{align}\label{eq:weil}
\omega_{\underline\mu}^{\rV,\rW}\coloneqq\omega\circ\iota_{\underline\mu}
\end{align}
of $\rG(\bA_F)\times\rH(\bA_F)$. It induces the global theta lifting map $\Theta_{\underline\mu,\rV}^\rW$: For an irreducible smooth subrepresentation $V\subseteq\rL^2_\cusp(\rG)$ of $\rG(\bA_F)$, we obtain a subrepresentation $\Theta_{\underline\mu,\rV}^\rW(V)\subseteq\sC^\infty(\rH(F)\backslash\rH(\bA_F),\dC)$ of $\rH(\bA_F)$. More precisely, there is a space of theta functions $\theta_{\underline\mu}(g,h)$ on $\rG(F)\backslash\rG(\bA_F)\times\rH(F)\backslash\rH(\bA_F)$, which is an automorphic realization of $\omega_{\underline\mu}^{\rV,\rW}$. Then $\Theta_{\underline\mu,\rV}^\rW(V)$ is spanned by functions
\[
h\mapsto \int_{\rG(F)\backslash\rG(\bA_F)}\theta_{\underline\mu}(g,h)f(g) dg
\]
on $\rH(F)\backslash\rH(\bA_F)$ for $f\in V$. Similarly, we have the reverse global theta lifting map $\Theta_{\underline\mu,\rW}^\rV$.

We consider an automorphic representation $\pi$ of $\rG(\bA_F)$ and a strictly unitary automorphic character $\mu\colon E^\times\backslash\bA_E^\times\to\dC^\times$. We study three objects associated to $\pi$ and $\mu$ as follows.
\begin{itemize}
  \item Let $S$ be a finite set of places of $F$ containing all archimedean ones and such that for $v\not\in S$, both $\pi_v$ and $\mu_v$ are unramified. We then have the partial standard $L$-function $L^S(s,\pi\times\mu)$.

  \item Let $\rG_1$ be the unitary group of the hermitian space $\rV_1\coloneqq\rV\oplus\rD$, where $\rD$ is the hyperbolic hermitian plane, let $\rQ$ be a parabolic subgroup of $\rG_1$ stabilizing an isotropic line in $\rD$, and let $\bK\subseteq\rG_1(\bA_F)$ be a maximal compact subgroup such that the Cartan decomposition $\rG_1(\bA_F)=\rQ(\bA_F)\bK$ holds. Let $V_\pi$ be a cuspidal realization of $\pi$ (Definition \ref{de:realization}). Let $\rI(V_\pi\boxtimes\mu^\tc)$ be the space of functions $f$ on $\rG_1(\bA_F)$ such that for every $k\in\bK$ the function $p\mapsto f(pk)$ is a $\bK\cap\rQ(\bA_F)$-finite vector in $V_\pi\boxtimes(\mu^\tc\cdot|\;|_E^{(n+1)/2})$.\footnote{Here, we regard vectors in $V_\pi\boxtimes(\mu^\tc\cdot|\;|_E^{(n+1)/2})$ as functions on $\rQ(\bA_F)$ via the Levi quotient map.} For every $f\in\rI(V_\pi\boxtimes\mu^\tc)$, we can form an Eisenstein series $\sE_\rQ(g;f_s)$ normalized such that $\RE(s)=0$ is the unitary line (see \cite{Sha88}*{Section~2} for details). By Langlands theory of Eisenstein series \cites{Lan71,MW95}, $\sE_\rQ(g;f_s)$ is absolutely convergent for $\RE(s)>\frac{n+1}{2}$ and has a meromorphic continuation to the entire complex plane.

  \item Let $V_\pi$ be a cuspidal realization of $\pi$ (Definition \ref{de:realization}). Then we have the global theta lifting $\Theta_{(\mu,\nu),\rV}^\rW(V_\pi)$. We will adopt the convention that if $\mu\res\bA_F^\times\neq\mu_{E/F}^m$ with $m\coloneqq\dim_E\rW$, then $\Theta_{(\mu,\nu),\rV}^\rW(V_\pi)=0$.
\end{itemize}

We have the following theorem, which is the unitary version of a weaker form of \cite{GJS}*{Theorem~1.1}.

\begin{theorem}\label{th:pole}
Let $\pi$ be an irreducible admissible representation of $\rG(\bA_F)$, let $V_\pi$ be a cuspidal realization of $\pi$ (Definition \ref{de:realization}), and let $\mu\colon E^\times\backslash\bA_E^\times\to\dC^\times$ be a strictly unitary automorphic character. We have
\begin{enumerate}
  \item For $s_0\in\dC$ with $\RE(s_0)>0$, consider the following statements:
     \begin{enumerate}[label=(\alph*)]
       \item $L^S(s,\pi\times\mu)\cdot L^S(2s,\mu,\r{As}^{(-1)^n})$ has a pole at $s_0$, where $\r{As}^\pm$ stand for the two Asai representations (see, for example, \cite{GGP}*{Section~7}).

       \item $\{\sE_\rQ(g;f_s)\res f\in\rI(V_\pi\boxtimes\mu^\tc)\}$ has a pole at $s_0+j$ for some integer $j\geq 0$.

       \item $\Theta_{(\mu,\nu),\rV}^\rW(V_\pi)\neq 0$ for some skew-hermitian space $\rW$ of dimension $n+1-2s_0$ and some $\nu$ with $\nu\res\bA_F^\times=\mu_{E/F}^n$.\footnote{This property is independent of the choice of such $\nu$ since changing $\nu$ results in a twist of $\Theta_{(\mu,\nu),\rV}^\rW(V_\pi)$ by a character.}
     \end{enumerate}
     Then (a) $\Rightarrow$ (b) $\Rightarrow$ (c).

%  \item If $s_{\r{max}}$ is the largest pole of $\{\sE_\rQ(g;f_s)\res f\in\rI(V_\pi\boxtimes\mu^\tc)\}$, then $\Theta_{(\mu,\nu),\rV}^\rW(V_\pi)$ is cuspidal for every skew-hermitian space $\rW$ of dimension $n+1-2s_{\r{max}}$.

%  \item The poles in (1a,1b) are at most simple.

  \item The skew-hermitian space $\rW$ in (1c) is unique up to isomorphism.
\end{enumerate}
\end{theorem}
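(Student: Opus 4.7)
Theorem \ref{th:pole} is the unitary analogue of a weaker form of the orthogonal result of Ginzburg--Jiang--Soudry, and I would follow the same three-step template: (i) convert a pole of the partial Langlands--Shahidi $L$-function into a pole of the Eisenstein series by a constant-term calculation; (ii) convert a pole of the Eisenstein series into non-vanishing of a global theta lift via the regularized Siegel--Weil formula together with the Rallis tower property; (iii) deduce uniqueness of $\rW$ from the local theta dichotomy for unitary groups combined with the Hasse principle for skew-hermitian spaces.

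\textbf{(a) $\Rightarrow$ (b).} By Langlands' formula the constant term of $\sE_\rQ(g; f_s)$ along $\rQ$ equals $f_s + M(s) f_s$, where $M(s)$ is the standard intertwining operator from the induced space $\rI(V_\pi \boxtimes \mu^\tc)_s$ to its companion at parameter $-s$. After decomposing $M(s)$ into a product of local operators and applying the Langlands--Shahidi normalization (with Gindikin--Karpelevich at the unramified places), one finds that $M(s)$ equals an entire family of local operators times the global ratio
\[
\frac{L^S(s, \pi \times \mu) \cdot L^S(2s, \mu, \r{As}^{(-1)^n})}{L^S(s+1, \pi \times \mu) \cdot L^S(2s+1, \mu, \r{As}^{(-1)^n})}.
\]
A pole of the numerator at $s_0$ with $\RE(s_0) > 0$ is transmitted to $\sE_\rQ$ either at $s_0$ itself or, if it is cancelled by an accidental zero of the denominator at $s_0$, at a translate $s_0 + j$ produced by iterating the functional equation of $\sE_\rQ$. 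Choosing $f$ whose local sections at the (finite) set of ramified places are nonzero at the shifted parameter produces a nonzero residue.

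\textbf{(b) $\Rightarrow$ (c).} Set $s_1 \coloneqq s_0 + j$ and $m_1 \coloneqq n+1-2s_1 \in \dZ_{\geq 0}$; this integrality follows from the absolute convergence of $\sE_\rQ$ for $\RE(s) > (n+1)/2$. The residue $\cE \coloneqq \r{Res}_{s=s_1}\sE_\rQ(-;f_s)$ is a nonzero square-integrable automorphic form on $\rG_1(\bA_F)$. Fix a skew-hermitian space $\rW_1$ of dimension $m_1$ lying in a chosen Witt tower. The regularized Siegel--Weil formula for unitary dual pairs (due to Ichino, Yamana, and Gan--Qiu--Takeda in full generality) identifies $\cE$ with a regularized theta integral attached to $\rW_1$. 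Pairing this identity against $V_\pi$ and applying the seesaw for the dual-pair configuration associating $(\rU(\rV_1),\rU(\rW_1))$ with $(\rU(\rV),\rU(\rW_1)\times\rU(\rW_1))$ rewrites the pairing as an inner product involving $\Theta^{\rW_1}_{(\mu,\nu),\rV}(V_\pi)$. The non-vanishing of $\cE$ forces this inner product to be nonzero, hence $\Theta^{\rW_1}_{(\mu,\nu),\rV}(V_\pi) \neq 0$. The Rallis tower property for unitary dual pairs then propagates non-vanishing to every skew-hermitian space $\rW$ of dimension $n+1-2s_0 \geq m_1$ that lies above $\rW_1$ in the Witt tower, which is exactly (c).

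\textbf{Uniqueness (2) and main obstacle.} If $\rW$ and $\rW'$ of common dimension $n+1-2s_0$ both realize the non-vanishing in (c), then at every place $v$ the corresponding local lifts $\Theta^{\rW_v}_{(\mu_v,\nu_v),\rV_v}(\pi_v)$ and $\Theta^{\rW'_v}_{(\mu_v,\nu_v),\rV_v}(\pi_v)$ are both nonzero. The conservation relation of Sun--Zhu for unitary dual pairs pins down a unique local isomorphism class of skew-hermitian space of the prescribed dimension admitting a nonzero local theta lift from $\pi_v$, so $\rW_v \simeq \rW'_v$ for every $v$; since skew-hermitian spaces over $E$ of a fixed dimension are classified by their local invariants, $\rW \simeq \rW'$ globally. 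The technically most demanding ingredient of the entire plan is the implication $(b) \Rightarrow (c)$: one must deploy the regularized Siegel--Weil formula for unitary pairs in the ``second-term'' regime outside Weil's convergent range, and keep careful track of the splitting characters $(\mu, \nu)$ so that the pole parameter, the dimension $m_1$, and the Witt tower labelling match on both sides of the seesaw.
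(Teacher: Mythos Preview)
Your treatment of (a) $\Rightarrow$ (b) via the constant-term calculation and the Gindikin--Karpelevich formula matches the paper's approach (the minor slip is that a pole of the numerator of the ratio is killed by a \emph{pole} of the denominator, not a zero; the shift to $s_0+j$ comes from taking the largest $j$ for which $s_0+j$ is still a pole of the product $L$-function, so that the denominator at $s_0+j+1$ is holomorphic). Part (2) via local theta dichotomy is also what the paper does.

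The genuine gap is in (b) $\Rightarrow$ (c). The Eisenstein series $\sE_\rQ$ lives on $\rG_1=\rU(\rV_1)$ and is induced from the cuspidal data $V_\pi\boxtimes\mu^\tc$ on the Levi $\Res_{E/F}\GL_1\times\rU(\rV)$ of a non-Siegel maximal parabolic. The regularized Siegel--Weil formula concerns \emph{Siegel} Eisenstein series, induced from a degenerate character of the Siegel parabolic of a quasi-split unitary group; it says nothing directly about the residue $\cE$ of $\sE_\rQ$. So the sentence ``the regularized Siegel--Weil formula \dots\ identifies $\cE$ with a regularized theta integral'' has no content as written, and the seesaw you describe cannot be started.

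The paper bridges this with the generalized doubling method, following M{\oe}glin and Ginzburg--Jiang--Soudry. One first propagates the pole from $\sE_\rQ=\sE_{\rQ_1}$ to the family $\sE_{\rQ_a}$ on the larger groups $\rG_a=\rU(\rV\oplus\rD^a)$. Embedding $\rG_a\times\rG$ into $\rU(\rV_a^\diamond)$ with $\rV_a^\diamond=\rV_a\oplus(-\rV)$, a basic identity of Piatetski-Shapiro--Rallis type expresses $\sE_{\rQ_a}(g';f_{a,s}^{\diamond,\phi})$ as the integral against $\phi\in V_\pi$ of a genuine Siegel Eisenstein series $\sE_{\rP_a}$ on $\rU(\rV_a^\diamond)$. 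For $a$ large enough the sections $f_{a,s}^{\diamond,\phi}$ span the induced space and the pole parameter $s_a=s_1+\tfrac{a-1}{2}$ lands in Ichino's first-term range, so the residue of $\sE_{\rP_a}$ \emph{is} a theta integral; unfolding along $\rV_a^\diamond=\rV_a\oplus(-\rV)$ then extracts the non-vanishing of $\Theta_{(\mu,\nu),\rV}^\rW(V_\pi)$. The doubling step and the passage to large $a$ are the essential missing ingredients in your plan.
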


We will prove the theorem in Subsection \ref{ss:proof_pole}.

\begin{corollary}\label{co:pole1}
Denote the set of poles of $L^S(s,\pi\times\mu)\cdot L^S(2s,\mu,\r{As}^{(-1)^n})$ in the region $\RE(s)>0$ by $\r{Pol}_{\pi,\mu}^S$. Then
\begin{enumerate}
  \item If $\mu$ is not conjugate self-dual, then $\r{Pol}_{\pi,\mu}^S$ is empty.

  \item If $\mu$ is conjugate orthogonal (Definition \ref{de:conjugate}), then $\r{Pol}_{\pi,\mu}^S$ is contained in the set $\{\frac{n+1}{2},\frac{n-1}{2},\dots,\frac{n+1}{2}-\lfloor\frac{n}{2}\rfloor\}$.

  \item If $\mu$ is conjugate symplectic (Definition \ref{de:conjugate}), then $\r{Pol}_{\pi,\mu}^S$ is contained in the set $\{\frac{n}{2},\frac{n-2}{2},\dots,\frac{n}{2}-\lfloor\frac{n-1}{2}\rfloor\}$.
\end{enumerate}
\end{corollary}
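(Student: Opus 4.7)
The plan is to deduce the corollary directly from the implication (a) $\Rightarrow$ (c) in Theorem \ref{th:pole}(1), combined with two completely elementary constraints: (i) the dimension of a skew-hermitian space is a non-negative integer, and (ii) by the convention fixed just before Theorem \ref{th:pole}, the global theta lift $\Theta^\rW_{(\mu,\nu),\rV}(V_\pi)$ vanishes unless $\mu\res\bA_F^\times = \mu_{E/F}^{\dim_E \rW}$.

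First I would fix $s_0 \in \r{Pol}_{\pi,\mu}^S$, choose any cuspidal realization $V_\pi$ of $\pi$ (which exists, say by taking the cuspidal summand of $\pi$ inside $\rL^2_\disc(\rG)$), and apply Theorem \ref{th:pole}(1) to produce a skew-hermitian space $\rW$ over $E$ with $\dim_E\rW = n+1-2s_0$ and an automorphic character $\nu$ with $\nu\res\bA_F^\times = \mu_{E/F}^n$, such that $\Theta^\rW_{(\mu,\nu),\rV}(V_\pi) \neq 0$. Constraint (i) forces $2s_0 \in \dZ_{\geq 0}$, so $s_0 \in \tfrac{1}{2}\dZ$ and $s_0 \leq \tfrac{n+1}{2}$; constraint (ii) forces
\[
\mu\res\bA_F^\times = \mu_{E/F}^{n+1-2s_0}.
\]

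Part (1) is then immediate: if $\mu$ is not conjugate self-dual, the right-hand side is either trivial or equal to $\mu_{E/F}$, contradicting the hypothesis. For part (2), $\mu$ conjugate orthogonal forces $n+1-2s_0$ even; combined with $s_0\in\tfrac{1}{2}\dZ$ and $0 < s_0 \leq \tfrac{n+1}{2}$, a case split on the parity of $n$ (when $n$ is odd one gets $s_0 \in \{1,2,\dots,\tfrac{n+1}{2}\}$, and when $n$ is even one gets $s_0 \in \{\tfrac{1}{2},\tfrac{3}{2},\dots,\tfrac{n+1}{2}\}$) recovers exactly the stated set $\{\tfrac{n+1}{2},\tfrac{n-1}{2},\dots,\tfrac{n+1}{2}-\lfloor n/2\rfloor\}$. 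Part (3) is identical with the parity of $n+1-2s_0$ reversed, so $\mu$ conjugate symplectic produces the set $\{\tfrac{n}{2},\tfrac{n-2}{2},\dots,\tfrac{n}{2}-\lfloor (n-1)/2\rfloor\}$.

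There is no genuine obstacle at the level of the corollary: once Theorem \ref{th:pole}(1) is granted, the argument is a bookkeeping exercise about which $s_0$-values are simultaneously compatible with the integrality of $\dim_E\rW$, its parity matching with $\mu\res\bA_F^\times$, and the range $0 < s_0 \leq \tfrac{n+1}{2}$. All technical content is absorbed into the proof of Theorem \ref{th:pole}, which is carried out separately in Subsection \ref{ss:proof_pole}; the only minor care needed here is to verify that the convention-imposed parity condition on nonvanishing theta lifts is exactly compatible with the bound $\RE(s_0) > 0$ given in the corollary.
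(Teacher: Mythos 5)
Your proof is correct and is exactly the unpacking of the paper's one-line proof ("This is a direct consequence of Theorem \ref{th:pole}"): a pole at $s_0$ with $\RE(s_0)>0$ implies, via (a) $\Rightarrow$ (c), a nonvanishing theta lift to a skew-hermitian space of dimension $n+1-2s_0$, and the integrality, nonnegativity, and parity constraints on that dimension (the last via the convention on vanishing theta lifts stated just before the theorem) pin down the stated sets exactly as you describe. One small slip that does not affect the argument: the parenthetical ``which exists, say by taking the cuspidal summand of $\pi$ inside $\rL^2_\disc(\rG)$'' should simply read that the cuspidal realization $V_\pi$ is part of the hypothesis inherited from Theorem \ref{th:pole} --- an arbitrary irreducible admissible $\pi$ need not have one.
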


\begin{proof}
This is a direct consequence of Theorem \ref{th:pole}.
\end{proof}

Let $V_\pi$ be a cuspidal realization of $\pi$, and suppose that $\{\sE_\rQ(g;f_s)\res f\in\rI(V_\pi\boxtimes\mu^\tc)\}$ has the largest pole at $s_{\r{max}}$. By Theorem \ref{th:pole}, there is a skew-hermitian space $\rW$ of dimension $n+1-2s_{\r{max}}$, unique up to isomorphism, such that $\Theta_{(\mu,\nu),\rV}^\rW(V_\pi)$ is nonzero.

\begin{corollary}\label{co:pole2}
Let the notation be as above. Suppose that $\Theta_{(\mu,\nu),\rV}^\rW(V_\pi)$ is cuspidal. Then
\begin{enumerate}
  \item The space $\Theta_{(\mu,\nu),\rV}^\rW(V_\pi)$ is an irreducible representation of $\rU(\rW)(\bA_F)$; and
      \begin{align}\label{eq:pole}
      V_\pi=\Theta_{(\mu^{-1},\nu^{-1}),-\rW}^\rV(\Theta_{(\mu,\nu),\rV}^\rW(V_\pi)),
      \end{align}
      where $-\rW,\langle\;,\;\rangle_{-\rW}$ denotes the skew-hermitian space $\rW,-\langle\;,\;\rangle_\rW$, and we naturally identify $\rU(\rW)$ with $\rU(-\rW)$.

  \item The space $\sR_{s_{\r{max}}}(V_\pi\boxtimes\mu^\tc)$ generated by residues of $\{\sE_\rQ(g;f_s)\res f\in\rI(V_\pi\boxtimes\mu^\tc)\}$ at $s=s_{\r{max}}$ is an irreducible representation of $\rG_1(\bA_F)$; and
      \begin{align}\label{eq:pole1}
      \sR_{s_{\r{max}}}(V_\pi\boxtimes\mu^\tc)=\Theta^{\rV_1}_{(\mu^{-1},\nu^{-1}),-\rW}(\Theta^\rW_{(\mu,\nu),\rV}(V_\pi)).
      \end{align}

  \item In the situation of (1) (resp.\ (2)), let $\pi_\rW$ (resp.\ $\pi_1$) be the underlying (irreducible) representation of $\Theta_{(\mu,\nu),\rV}^\rW(V_\pi)$ (resp.\ $\sR_{s_{\r{max}}}(V_\pi\boxtimes\mu^\tc)$). If $\pi_\rW$ has a unique realization as a subquotient in the space of automorphic forms on $\rU(\rW)$, then $m_\cusp(\pi)=1$ (resp.\ $m_\cusp(\pi_1)=0$).
\end{enumerate}
\end{corollary}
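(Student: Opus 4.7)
The plan is to deduce both parts from three inputs: (i) the seesaw/tower identities for theta lifts, (ii) the (regularized) Siegel--Weil formula for the pair $(\rG_1,\rU(\rW))$, and (iii) local Howe duality together with the first occurrence principle built into the definition of $s_{\r{max}}$ via Theorem \ref{th:pole}.

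First I would establish \eqref{eq:pole}. Since $\Theta^\rW_{(\mu,\nu),\rV}(V_\pi)$ is cuspidal and nonzero, we may apply $\Theta^\rV_{(\mu^{-1},\nu^{-1}),-\rW}$ to it and obtain an automorphic form on $\rG(\bA_F)$. The fact that $\rW$ realizes the first (skew-hermitian) occurrence (which is encoded in the choice of the largest pole $s_{\r{max}}$, via the equivalences of Theorem \ref{th:pole}) prevents any cancellation in the reverse lift, and the Rallis inner product formula, applied both to the forward and the reverse lift, shows that the double lift lies in the cuspidal spectrum and pairs non-degenerately with $V_\pi$. Hence it contains $V_\pi$; since $V_\pi$ is irreducible and the theta lift of a cuspidal form is always contained in $V_\pi$-isotypic subspace, equality holds. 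Irreducibility of $\Theta^\rW_{(\mu,\nu),\rV}(V_\pi)$ is then a formal consequence: if it decomposed as $W_1 \oplus W_2$, applying $\Theta^\rV_{(\mu^{-1},\nu^{-1}),-\rW}$ to each summand and using \eqref{eq:pole} forces at most one summand to be nonzero, since the image is the irreducible $V_\pi$.

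For (2) the central input is the identification, due to Kudla--Rallis--Ichino--Yamana in the unitary setting, of the residue of $\sE_\rQ(g;f_s)$ at $s_{\r{max}}$ with a (possibly regularized) theta integral built from the Weil representation $\omega^{\rV_1,-\rW}_{(\mu^{-1},\nu^{-1})}$ and with inner data coming from $\Theta^\rW_{(\mu,\nu),\rV}(V_\pi)$. Concretely, using the seesaw dual pair $(\rG_1,\rU(\rW\oplus(-\rW)))$ against $(\rG\times\rG,\rU(\rW))$, and a local unfolding of the intertwining operator at $s_{\r{max}}$, the residue map factors through the theta image
\[
\Theta^{\rV_1}_{(\mu^{-1},\nu^{-1}),-\rW}\bigl(\Theta^\rW_{(\mu,\nu),\rV}(V_\pi)\bigr).
\]
Non-vanishing of the residue at $s_{\r{max}}$, irreducibility of $\Theta^\rW_{(\mu,\nu),\rV}(V_\pi)$ from Part (1), and local Howe duality (which ensures that each local factor of the resulting global theta lift is irreducible) together upgrade this factorization to the equality \eqref{eq:pole1} and simultaneously yield irreducibility of $\sR_{s_{\r{max}}}(V_\pi\boxtimes\mu^\tc)$.

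For (3), let $\pi_\rW$ denote the underlying abstract representation of $\Theta^\rW_{(\mu,\nu),\rV}(V_\pi)$. In case (1), the hypothesis that $\pi_\rW$ has a unique realization in the automorphic spectrum of $\rU(\rW)$ means in particular that every cuspidal realization of $\pi$ produces, under $\Theta^\rW_{(\mu,\nu),\rV}$, the same subspace; combining with the inversion \eqref{eq:pole} shows that any two cuspidal realizations of $\pi$ must coincide, so $m_{\cusp}(\pi)=1$. In case (2), the analogous reasoning together with \eqref{eq:pole1} shows that every realization of $\pi_1$ as a subquotient of automorphic forms on $\rG_1$ is accounted for by $\Theta^{\rV_1}_{(\mu^{-1},\nu^{-1}),-\rW}$ applied to the unique realization of $\pi_\rW$; but by Part (2) this realization sits inside the residual (non-cuspidal) spectrum of $\rG_1$, so $m_{\cusp}(\pi_1)=0$. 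The main obstacle will be justifying the precise form of the residual Siegel--Weil identity at the boundary value $s_{\r{max}}$, which at certain values of $s_{\r{max}}$ lies outside Weil's convergent range and requires either the regularized Siegel--Weil formula or a second-term identity; ensuring that the unitary versions of the Kudla--Rallis framework cover our range is the technical heart of the argument.
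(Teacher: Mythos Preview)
Your outline for parts (1) and (2) is essentially the content of the results of Wu that the paper simply cites: \cite{Wu13}*{Theorem~5.1, Theorem~5.3, Proposition~5.9}. One difference worth noting: for the irreducibility of $\sR_{s_{\r{max}}}(V_\pi\boxtimes\mu^\tc)$ the paper does not go through local Howe duality but instead observes that the constant term map to the Levi $\rM_\rQ$ sends $\sR_{s_{\r{max}}}$ isomorphically onto a space whose underlying representation is $V_\pi\boxtimes(\mu^\tc\cdot|\;|_E^{s_{\r{max}}})$, which is irreducible; this bypasses any appeal to the regularized Siegel--Weil identity at the boundary point.

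Part (3), however, has a real gap. For $m_\cusp(\pi)=1$ you assert that ``every cuspidal realization of $\pi$ produces, under $\Theta^\rW_{(\mu,\nu),\rV}$, the same subspace'', but you have not explained why another cuspidal realization $V'_\pi$ has \emph{nonzero} theta lift to this particular $\rW$. Nothing in \eqref{eq:pole} applies until you know that. The paper fills this by reapplying Theorem~\ref{th:pole} to $V'_\pi$ (same partial $L$-function, hence same pole location) to get a nonzero lift to some $\rW'$ of the same dimension, and then invoking local theta dichotomy \cite{SZ15}*{Theorem~1.10} to force $\rW'\simeq\rW$. Only then do Howe duality and the uniqueness hypothesis on $\pi_\rW$ pin down the lift as equal to $\Theta^\rW_{(\mu,\nu),\rV}(V_\pi)$, after which \eqref{eq:pole} gives $V'_\pi=V_\pi$.

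The gap is more serious for $m_\cusp(\pi_1)=0$. Your sentence ``every realization of $\pi_1$ \dots\ is accounted for by $\Theta^{\rV_1}_{(\mu^{-1},\nu^{-1}),-\rW}$ applied to the unique realization of $\pi_\rW$'' is exactly the statement to be proved, and \eqref{eq:pole1} alone only identifies one particular realization (the residual one). The paper argues by contradiction: suppose $V_{\pi_1}$ is a cuspidal realization of $\pi_1$ on $\rG_1$. From $L^S(s,\pi_1)=L^S(s,\pi)\cdot L^S(s-s_{\r{max}},\mu^\tc)$ one sees that $L^S(s,\pi_1\times\mu)$ has a pole at $s_{\r{max}}+1$, so Theorem~\ref{th:pole} (now applied with $\rV$ replaced by $\rV_1$) produces a nonzero lift of $V_{\pi_1}$ to some $\rW_1$ of dimension $(n+2)+1-2(s_{\r{max}}+1)=m$; local theta dichotomy again gives $\rW_1\simeq\rW$, Howe duality identifies the underlying representation as $\pi_\rW$, and the uniqueness hypothesis forces this lift to equal $\Theta^\rW_{(\mu,\nu),\rV}(V_\pi)$. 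The back-lift identity then yields $V_{\pi_1}=\sR_{s_{\r{max}}}(V_\pi\boxtimes\mu^\tc)$, contradicting cuspidality. The $L$-function step and the local theta dichotomy are the missing ingredients in your sketch.
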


\begin{proof}
Put $m\coloneqq n+1-2s_{\r{max}}$ for simplicity.

For (1), the irreducibility follows from \cite{Wu13}*{Theorem~5.3}, and \eqref{eq:pole} follows from \cite{Wu13}*{Theorem~5.1}.

For (2), since the space generated by the constant terms of forms in $\sR_{s_{\r{max}}}(V_\pi\boxtimes\mu^\tc)$ is an irreducible representation of $\rM_\rQ(\bA_F)$, where $\rM_\rQ$ is the Levi quotient of $\rQ$, the space $\sR_{s_{\r{max}}}(V_\pi\boxtimes\mu^\tc)$ is an irreducible representation of $\rG_1(\bA)$. Then \eqref{eq:pole1} follows from \cite{Wu13}*{Proposition~5.9}.

For (3), we first study $m_\cusp(\pi)$. Let $V'_\pi$ be an arbitrary cuspidal realization of $\pi$. By Theorem \ref{th:pole}, there exists a skew-hermitian space $\rW'$ of the same dimension $m$ such that $\Theta_{(\mu,\nu),\rV}^{\rW'}(V'_\pi)\neq 0$. As $m\leq n$, by the local theta dichotomy \cite{SZ15}*{Theorem~1.10},\footnote{At a nonarchimedean place, the local theta dichotomy is also proved in \cite{GG11}.} we have $\rW'\simeq\rW$. By the Howe duality \cite{GT16}*{Theorem~1.2}, the underlying representation of $\Theta_{(\mu,\nu),\rV}^{\rW'}(V'_\pi)$ must be isomorphic to a finite sum of $\pi_\rW$. Then the assumption of $\pi_\rW$ implies $\Theta_{(\mu,\nu),\rV}^\rW(V_\pi)=\Theta_{(\mu,\nu),\rV}^{\rW'}(V'_\pi)$. Thus, $V_\pi=V'_\pi$ by \cite{Wu13}*{Theorem~5.1}. In particular, $m_\cusp(\pi)=1$.

Then we study $m_\cusp(\pi_1)$. Let $V_{\pi_1}$ be a cuspidal realization of $\pi_1$. By the identity $L^S(s,\pi_1)=L^S(s,\pi)\cdot L^S(s-s_{\r{max}},\mu^\tc)$, we know that $L^S(s,\pi_1\times\mu)$ has a pole at $s_{\r{max}}+1$. By Theorem \ref{th:pole}, there exists a skew-hermitian space $\rW_1$ of dimension $(n+2)+1-2(s_{\r{max}}+1)=m$ such that $\Theta_{(\mu,\nu),\rV_1}^{\rW_1}(V_{\pi_1})\neq 0$. Again by the local theta dichotomy and the Howe correspondence, we have $\rW_1\simeq\rW$ and that the underlying representation of $\Theta_{(\mu,\nu),\rV}^{\rW_1}(V_{\pi_1})$ must be isomorphic to a finite sum of $\pi_\rW$. Then the assumption of $\pi_\rW$ implies  $\Theta_{(\mu,\nu),\rV_1}^{\rW_1}(V_{\pi_1})=\Theta^\rW_{(\mu,\nu),\rV}(V_\pi)$. Thus, $V_{\pi_1}=\Theta^{\rV_1}_{(\mu^{-1},\nu^{-1}),-\rW}(\Theta^\rW_{(\mu,\nu),\rV}(V_\pi))$ by \cite{Wu13}*{Theorem~5.1}, which is simply $\sR_{s_{\r{max}}}(V_\pi\boxtimes\mu)$ by \eqref{eq:pole1}. This is a contradiction. Therefore, $m_\cusp(\pi_1)=0$.
\end{proof}

\begin{remark}
In fact, in Corollary \ref{co:pole2}, the space $\Theta_{(\mu,\nu),\rV}^\rW(V_\pi)$ is always cuspidal, which follows from an analogous statement of \cite{GJS}*{Theorem~5.1}, whose proof can be adopted to the unitary case as well. Since we do not need this fact, we will leave the details to interested readers as an exercise.
\end{remark}

\subsection{Proof of Theorem \ref{th:pole}}
\label{ss:proof_pole}

We follow the strategy in \cite{GJS}. We first prove the following proposition, which is a part of Theorem \ref{th:pole}.

\begin{proposition}\label{pr:pole}
Suppose that $\mu^\tc=\mu^{-1}$. Let $s_1$ be the maximal positive real pole of $\{\sE_\rQ(g;f_s)\res f\in\rI(V_\pi\boxtimes\mu^\tc)\}$. Then
\begin{enumerate}
  \item There is some skew-hermitian space $\rW$ of dimension $n+1-2s_1$ such that $\Theta_{(\mu,\nu),\rV}^\rW(V_\pi)\neq 0$.

  \item All other positive real poles of $\sE_\rQ(g;f_s)$ have the form $s_1-j$ for some integer $j\geq 0$.
\end{enumerate}
\end{proposition}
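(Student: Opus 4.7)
The plan is to adapt the strategy of Ginzburg--Jiang--Soudry \cite{GJS} to the unitary setting, using as main ingredients the Rallis tower property, the regularized Siegel--Weil formula for unitary dual pairs (available via the work of Ichino and Yamana), and the doubling integral of Piatetski-Shapiro--Rallis adapted to unitary groups. The central idea is to interpret the poles of the Klingen-type Eisenstein series $\sE_\rQ(g;f_s)$ on $\rG_1 = \rU(\rV \oplus \rD)$ in terms of the Rallis tower of skew-hermitian spaces $\rW_m$ attached to $(\pi,\mu,\nu)$ along which one performs global theta lifting from $V_\pi$.

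First I would establish a residue identity relating $\sE_\rQ(g;f_s)$ at its first positive pole $s_1$ to a regularized theta integral. The cleanest way is to use a see-saw between the dual pairs $(\rU(\rV_1), \rU(\rW_m))$ and $(\rU(\rV), \rU(\rW_m \oplus \rD_\rW))$, together with the unitary version of the first-term identity of Kudla--Rallis (cf.\ Yamana's regularized Siegel--Weil formula). Combined with the computation of the constant term of $\sE_\rQ$, which involves $L^S(s,\pi\times\mu)$ and $L^S(2s,\mu,\r{As}^{(-1)^n})$ through the standard intertwining operator, this identifies the leading residue of $\sE_\rQ$ with the theta integral pairing against $V_\pi$ for the specific $\rW$ with $\dim \rW = n+1-2s_1$, making (1) equivalent to non-vanishing of this pairing, which in turn is equivalent to $\Theta^\rW_{(\mu,\nu),\rV}(V_\pi) \neq 0$. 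The condition $\mu^\tc = \mu^{-1}$ ensures that we are in the conjugate self-dual regime where the Asai $L$-function can produce the relevant pole and the corresponding theta correspondence target is well-defined.

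For part (2), I would analyze the zeros/poles of $\sE_\rQ(g;f_s)$ in the half-plane $\RE(s) > 0$ via the constant-term formula. The intertwining operator attached to the unique nontrivial Weyl element in the Klingen parabolic decomposes as a product of a ratio of L-functions and a normalized operator; the former has poles only at explicit half-integer points, and combining this with Langlands' general theorem that the poles of Eisenstein series in a strip lie on a discrete set moving by the Weyl group action, one deduces the poles in $\RE(s)>0$ form a subset of $\{s_1, s_1-1, s_1-2, \dots\}\cap \dR_{>0}$. A crucial intermediate step is to rule out poles at points that are not translates of $s_1$ by integers, which can be achieved by exhibiting them, after normalization, as zeros of partial L-functions; these L-functions have no zeros at such points in the half-plane under consideration by standard bounds for automorphic L-functions or by the tower argument again.

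The principal obstacle is step one: the regularized Siegel--Weil identity in the unitary setting at the exact point $s_1$ (rather than on an abstract convergent range) requires delicate regularization when $\dim \rW = n+1-2s_1$ sits at the Weil boundary, and one must carefully match the Klingen Eisenstein series of the statement with the Siegel-type Eisenstein series appearing in the Kudla--Rallis--Ichino--Yamana theory. Handling the archimedean components of the regularizing differential operator, and verifying that the residue pairing with $V_\pi$ is not killed by a spurious zero of a local intertwining operator, will be the most technically demanding parts of the argument.
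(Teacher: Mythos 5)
Your high-level outline (Ginzburg--Jiang--Soudry strategy, doubling method, Siegel--Weil formula, Rallis tower property) matches the paper, but you miss the crucial technical maneuver that makes the argument go through, and your plan for part (2) diverges from the paper and has a weak link.

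On the regularization difficulty you correctly identify at the end: the paper avoids this entirely by a rank-increasing trick. Rather than trying to apply a regularized Siegel--Weil formula at the point $s_1$ for the Klingen Eisenstein series on $\rG_1 = \rU(\rV \oplus \rD)$, the paper introduces the groups $\rG_a = \rU(\rV \oplus (\delta_a^+ \oplus \delta_a^-))$ for $a \geq 1$, with Klingen Eisenstein series induced from $V_\pi \boxtimes (\mu^\tc \cdot |\cdot|_E^{(n+a)/2})\circ\det_a$. Lemma \ref{le:pole1} (Moeglin's remark) shows that if $s_0$ is a pole such that $s_0 + j$ is not a pole for all integers $j>0$, then $s_0 + \frac{a-1}{2}$ is a pole of the corresponding Eisenstein series on $\rG_a$. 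Choosing $a$ large enough pushes $s_a = s_0 + \frac{a-1}{2}$ past $\frac{n}{2}$, i.e., into a range where the doubling integrals converge absolutely (Lemma \ref{le:pole2}) and the sections $f^{\diamond,\phi}_{a,s}$ exhaust the full induced representation (Lemma \ref{le:pole4}). The doubling identity (Lemma \ref{le:pole3}) then transfers the pole to a Siegel--hermitian Eisenstein series, where the residue can be analyzed via the regularized Siegel--Weil formula (Ichino) and a surjectivity result for Siegel--Weil sections (Kudla--Sweet, Lee), and the theta integral separated into a theta lift from $V_\pi$ via a see-saw (\cite{HKS}*{Lemma~1.1}). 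Your see-saw idea is correct in spirit, but without the rank-increasing step the regularization obstacle you flag would remain.

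For part (2), the paper does not argue via constant-term analysis or non-vanishing of partial $L$-functions. Instead it uses the same $\rG_a$-argument as for (1): if $s_0$ is any pole with no pole at $s_0 + j$ for integral $j>0$, then by the chain above the Siegel--hermitian Eisenstein series $\sE_{\rP_a}$ has a pole at $s_a = s_0 + \frac{a-1}{2}$. But Lemma \ref{le:pole2}(1) (the Main Theorem of Tan) constrains the poles of $\sE_{\rP_a}$ in $\RE(s) > 0$ to the arithmetic progression $\{\frac{n+a-i}{2}, \frac{n+a-i}{2}-1, \dots\}$. Since $s_1 + \frac{a-1}{2}$ is also a pole of $\sE_{\rP_a}$ (as $s_1$ itself is such a maximal pole), $s_1 - s_0$ is an integer, which forces $s_0 = s_1$. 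So there is a unique maximal pole, and every other positive pole is $s_1 - j$. Your route, invoking Langlands' general structure theorem plus ``standard bounds for automorphic $L$-functions'' to rule out poles at non-integer translates, is not only different from the paper's argument but also dubious: the required non-vanishing of $L$-functions at arbitrary points in $\RE(s)>0$ is not a standard bound and would need a separate justification. The paper's pigeonhole-via-Tan argument is both more elementary and complete.
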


Part (1) of this proposition is the unitary version of \cite{GJS}*{Theorem~3.1}. The proof is very similar to the argument in \cite{Moe97}*{Section~2.1} and \cite{GJS}*{Section~3}, which are for orthogonal groups. We will only sketch the proof with necessary modification for the unitary case.

We first introduce some notation. Fix a polarization $\rD=\delta^+\oplus\delta^-$ of the hyperbolic hermitian plane $\rD$. For an integer $a\geq 0$, put $\delta^\pm_a\coloneqq(\delta^\pm)^{\oplus a}$ and $\rV_a\coloneqq\rV\oplus(\delta^+_a\oplus\delta^-_a)$. Put $\rG_a\coloneqq\rU(\rV_\ra)$ and let $\rQ_a\subseteq\rG_a$ be the parabolic subgroup stabilizing the subspace $\delta^+_a$. In particular, we may identify $\rQ_1$ with $\rQ$. Note that the Levi quotient of $\rQ_a$ is isomorphic to $\rG\times\Res_{E/F}\GL_a$. In particular, we have the space of functions $V_\pi\boxtimes(\mu^\tc\cdot|\;|_E^{(n+a)/2})\circ\det_a$ on $\rQ_a(\bA_F)$, where $\det_a\colon\GL_a\to\dG_\rm$ is the determinant map. Similar to $\rI(V_\pi\boxtimes\mu^\tc)$, we have the space $\rI_a(V_\pi\boxtimes\mu^\tc)$ of functions on $\rG_a(\bA_F)$; and for $f_a\in\rI_a(V_\pi\boxtimes\mu^\tc)$, one can form the Eisenstein series $\sE_{\rQ_a}(\quad;f_{a,s})$ on $\rG_a(\bA_F)$, which is absolutely convergent for $\RE(s)>\frac{n+a}{2}$. In particular, $\rI_1(V_\pi\boxtimes\mu^\tc)=\rI(V_\pi\boxtimes\mu^\tc)$. Let $\r{Pol}_a(V_\pi\boxtimes\mu^\tc)$ be the set of positive real poles of $\sE_{\rQ_a}(\quad;f_{a,s})$. Then $s_1$ is the largest number in $\r{Pol}_1(V_\pi\boxtimes\mu^\tc)$ by our assumption.

\begin{lem}\label{le:pole1}
Let $s_0$ be an element in $\r{Pol}_1(V_\pi\boxtimes\mu^\tc)$ such that $s_0+j\not\in\r{Pol}_1(V_\pi\boxtimes\mu^\tc)$ for every integer $j>0$. Then $s_0+\frac{a-1}{2}$ lies in $\r{Pol}_a(V_\pi\boxtimes\mu^\tc)$.
\end{lem}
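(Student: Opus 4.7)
The plan is to exploit induction in stages to propagate a pole from $\sE_\rQ$ on $\rG_1$ up to $\sE_{\rQ_a}$ on $\rG_a$. Let $\rP_a\subseteq\rG_a$ be the parabolic subgroup stabilizing a full flag of isotropic subspaces inside $\delta_a^+$, so that $\rP_a\subseteq\rQ_a$ has Levi quotient $\rG\times(\Res_{E/F}\dG_\rm)^a$ and $\rQ_a$ corresponds to the ``block'' parabolic on the $\GL_a$-factor. The degenerate principal series $(\mu^\tc\cdot|\;|_E^s)\circ\det_a$ of $\GL_a(\bA_E)$ is the Langlands quotient of the fully induced principal series with parameters $(\mu^\tc|\;|_E^{s+(a-1)/2},\mu^\tc|\;|_E^{s+(a-3)/2},\ldots,\mu^\tc|\;|_E^{s-(a-1)/2})$; by Langlands's compatibility of Eisenstein series with iterated residues, $\sE_{\rQ_a}(g;f_{a,s})$ is obtained from a suitable $\sE_{\rP_a}(g;\phi,z_1,\ldots,z_a)$ (with $\phi\in V_\pi$) by taking the iterated residue along the chain of singular hyperplanes $z_i-z_{i+1}=1$ and then setting $z_i=s+(a+1-2i)/2$.

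Next, I would analyze the positive real poles of the fully-induced Eisenstein series $\sE_{\rP_a}(g;\phi,z_1,\ldots,z_a)$ via the constant-term formula of Langlands. This constant term is a sum over the relative Weyl group $W_{\rP_a}$ of standard intertwining operators whose global normalizations are products of the following $L$-function factors: the reflection that moves the $i$-th $\GL_1$-parameter past $V_\pi$ contributes $L^S(z_i,\pi\times\mu)\cdot L^S(2z_i,\mu,\r{As}^{(-1)^n})$; the reflection swapping the $i$-th and $j$-th $\GL_1$-parameters (with $i<j$) contributes the Hecke factor $L^S(z_i-z_j,\mathbf{1})$; and the reflection negating the $j$-th parameter contributes a Hecke factor of $\mu^2$-type. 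Specializing to $z_i=s_0+(a-i)$ and iterating the residues in the prescribed order, the dominant contribution comes from the Weyl element that first performs the $V_\pi$-reflections in sequence on the parameters $s_0+(a-1),s_0+(a-2),\ldots,s_0$, thereby producing the claimed pole at $s=s_0+(a-1)/2$ as a product of $a$ residues of $L^S(z,\pi\times\mu)\cdot L^S(2z,\mu,\r{As}^{(-1)^n})$.

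The main obstacle will be verifying the nonvanishing of this iterated residue. Here the hypothesis that $s_0+j\notin\r{Pol}_1(V_\pi\boxtimes\mu^\tc)$ for every integer $j>0$ is crucial: it guarantees that the chain of poles we pick up occurs only at the shifts $(s_0,s_0+1,\ldots,s_0+(a-1))$ and is not cancelled by a competing contribution coming from some other Weyl element whose normalization has a pole at $(s_0+j,\ldots)$ with $j>0$. With this input the combinatorial tracking of leading residues is identical to that carried out in \cite{Moe97}*{Section~2.1} and \cite{GJS}*{Section~3}; the only substantive changes for the unitary case are the replacement of orthogonal symmetric-square $L$-factors by the Asai factor $L^S(2z,\mu,\r{As}^{(-1)^n})$, and the use of $\Res_{E/F}\dG_\rm$ in place of split $\dG_\rm$ on the $\GL_a$-side. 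Both modifications are compatible with the Langlands--Shahidi normalization of intertwining operators and with the local unramified calculation, so the residue bookkeeping transports to the present setting.
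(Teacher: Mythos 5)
Your general strategy --- realizing $\sE_{\rQ_a}$ as an iterated residue of the full principal-series Eisenstein series $\sE_{\rP_a}$ and then analyzing the Langlands--Shahidi constant-term formula --- is one legitimate route, and it is adjacent to Mœglin's argument (which the paper's proof simply cites, together with GJS Proposition~1.1, while noting the hypothesis on $s_0$ can be weakened from ``maximal pole'' to ``no poles at $s_0+j$, $j>0$''). However, your residue bookkeeping contains a concrete error: when you specialize $z_i = s_0 + (a-i)$, the factor $L^S(z_i,\pi\times\mu)\cdot L^S(2z_i,\mu,\r{As}^{(-1)^n})$ has a pole only at $z_a = s_0$, not at $z_1,\dots,z_{a-1}$, because $s_0+j$ with $j>0$ is not a pole (this is exactly what the hypothesis excludes, via Theorem \ref{th:pole}(1)). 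So the claimed ``pole at $s=s_0+(a-1)/2$ as a product of $a$ residues of $L^S(z,\pi\times\mu)\cdot L^S(2z,\mu,\r{As}^{(-1)^n})$'' is not what happens: the $a$ iterated residues you take are along the $\zeta_E(z_i - z_{i+1})$-hyperplanes (the $\GL_a$-side), and the pole in $s$ comes from a \emph{single} factor with argument $s_0$, not from $a$ residues of that product.

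Beyond that, two nonvanishing checks are asserted but not established. First, you need the values $L^S(s_0+j,\pi\times\mu)\cdot L^S(2(s_0+j),\mu,\r{As}^{(-1)^n})$ for $1\le j\le a-1$ to be nonzero, otherwise the alleged pole is killed; nothing in the hypothesis (which speaks only about \emph{poles} of $\sE_\rQ$, not zeros of the $L$-function) rules this out directly. Second, when you pass from ``$\sE_\rQ$ has no pole at $s_0+j$'' to ``the $L$-function has no pole at $s_0+j$'' you are using the converse implication (b)$\Rightarrow$(a) in Theorem \ref{th:pole}(1), which the paper does not state; Mœglin's argument stays at the level of the Eisenstein series and avoids this detour. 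These are precisely the places where the combinatorics of Mœglin's Section~2.1 argument (via constant terms along an intermediate parabolic with Levi $\rG_1\times\GL_{a-1}$, inducting on $a$, rather than descending all the way to the Borel) does the work that your sketch defers, so the gap is more than cosmetic.
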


\begin{proof}
This is the unitary analogue of \cite{Moe97}*{Remarque~1.1} and \cite{GJS}*{Proposition~1.1}. The argument for \cite{Moe97}*{Remarque~1.1} works in the unitary case as well. However, we would like to remark that in \cite{Moe97}*{Remarque~1.1}, the author assumes that $s_0$ is the maximal element of $\r{Pol}_1(V_\pi\boxtimes\mu^\tc)$. This is unnecessary since the argument only uses the fact that $s_0+j\not\in\r{Pol}_1(V_\pi\boxtimes\mu^\tc)$ for every integer $j>0$.
\end{proof}

Now we recall the generalized doubling method for unitary groups. Again let $-\rV$ be the hermitian space with the negative hermitian form on $\rV$. Let $\rV^\diamond$ be the doubling space $\rV\oplus(-\rV)$. For an integer $a\geq 0$, put
\[
\rV^\diamond_a\coloneqq\rV^\diamond\oplus(\delta^+_a\oplus\delta^-_a)=\rV_a\oplus(-\rV).
\]
Via this decomposition, we have a canonical embedding
\[
\iota\colon\rG_a\times\rG\to\rU(\rV^\diamond_a),
\]
where we have identified $\rG$ with $\rU(-\rV)$. Put $\rV^\pm\coloneqq\{(v,\pm v)\in\rV^\diamond\res v\in\rV\}$ and $\rV^\pm_a\coloneqq\rV^\pm\oplus\delta_a^\pm$. Let $\rP_a$ be the parabolic subgroup of $\rU(\rV^\diamond_a)$ stabilizing the maximal totally isotropic subspace $\rV^+_a$ of $\rV^\diamond_a$. Then the Levi quotient of $\rP_a$ is isomorphic to $\Res_{E/F}\GL_{n+a}$. We have the space of degenerate series $\rJ_a(s,\mu^\tc)$ as the normalized induced representation $\Ind_{\rP_a(\bA_F)}^{\rU(\rV^\diamond_a)(\bA_F)}(\mu^\tc\cdot|\;|^s)\circ\det_{n+a}$. Let $f^\diamond_{a,s}$ be a standard section in $\rJ_a(s,\mu^\tc)$. Then we can form the Siegel--hermitian Eisenstein series $\sE_{\rP_a}(\quad;f^\diamond_{a,s})$ on $\rU(\rV^\diamond_a)(\bA_F)$, which is absolutely convergent for $\RE(s)>\frac{n+a}{2}$. See \cite{Tan99}*{Section~1} for more details.

Now for a standard section $f^\diamond_{a,s}\in\rJ_a(s,\mu^\tc)$ and a cusp form $\phi\in V_\pi$, we have the function
\begin{align}\label{eq:pole0}
f^{\diamond,\phi}_{a,s}(g')\coloneqq\int_{\rG(\bA_F)}f^\diamond_{a,s}(\iota(g^{-1}g',1))\phi(g)\rd g
\end{align}
on $\rG_a(\bA_F)$. The following lemma is analogous to \cite{GJS}*{Proposition~3.2}.

\begin{lem}\label{le:pole2}
Suppose that $\mu\res\bA_F^\times=\mu_{E/F}^i$ for $i\in\{0,1\}$. We have
\begin{enumerate}
  \item The poles of the Siegel--hermitian Eisenstein series $\sE_{\rP_a}(\quad;f^\diamond_{a,s})$ in the region $\RE(s)>0$ are all simple, and are contained in the set $\{\frac{n+a-i}{2},\frac{n+a-i}{2}-1,\dots\}$.

  \item The integral \eqref{eq:pole0} is absolutely convergent for $\RE(s)>\frac{n+a}{2}$.

  \item The function $f^{\diamond,\phi}_{a,s}$ has a meromorphic continuation to the entire complex plane, whose possible poles in the region $\RE(s)>0$ are contained in the set $\{\frac{n-i}{2},\frac{n-i}{2}-1,\dots\}$.

  \item If $s$ is not a pole of $f^{\diamond,\phi}_{a,s}$, then $f^{\diamond,\phi}_{a,s}$ is a section in the normalized induced representation $\Ind^{\rG_a(\bA_F)}_{\rQ_a(\bA_F)}V_\pi\boxtimes(\mu^\tc\cdot|\;|_E^s)\circ\det_a$.
\end{enumerate}
\end{lem}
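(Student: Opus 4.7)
The plan is to follow \cite{GJS}*{Proposition~3.2} and \cite{Moe97}*{Section~2.1}, transposing the argument from orthogonal to unitary groups. I would establish (1) and (2) as standard facts about Siegel--hermitian Eisenstein series, then derive (3) and (4) from the doubling principle.

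For part (1), I would appeal to the known pole structure of Siegel--hermitian Eisenstein series for $\rU(\rV^\diamond_a)$ attached to the degenerate principal series $\rJ_a(s,\mu^\tc)$. The constant term along $\rP_a$ expresses $\sE_{\rP_a}(\cdot\,;f^\diamond_{a,s})$ in terms of an intertwining operator whose normalizing factor is a product of Hecke $L$-functions of $\mu^\tc$ and $\mu^\tc\mu_{E/F}$ shifted by half-integers. Under the parity assumption $\mu\res{\bA_F^\times}=\mu_{E/F}^i$, a direct inspection of these factors shows that the poles in $\RE(s)>0$ are all simple and lie in $\{\tfrac{n+a-i}{2},\tfrac{n+a-i}{2}-1,\dots\}$; this is the unitary analogue of Kudla--Rallis and can be cited from \cite{Tan99}. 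Part (2) is then immediate: the Siegel--hermitian Eisenstein series is absolutely convergent for $\RE(s)>\tfrac{n+a}{2}$, and combined with the rapid decay of the cusp form $\phi\in V_\pi$, Fubini yields absolute convergence of the integral defining $f^{\diamond,\phi}_{a,s}(g')$ for any fixed $g'$.

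For parts (3) and (4), the central input is the unitary doubling identity. The Bruhat decomposition of $\rU(\rV^\diamond_a)$ with respect to the Siegel parabolic $\rP_a$ and the image $\iota(\rG_a\times\rG)$ has finitely many double cosets, a unique one of which is open; its representative is determined by the choice of maximal isotropic subspace in $\rV^\diamond$ transverse to the diagonal copy of $\rV$. Unfolding the Eisenstein series and using the cuspidality of $\phi$ to kill the contributions of all non-open orbits, I would show that in the convergent range, $f^{\diamond,\phi}_{a,s}$ coincides with the doubling zeta integral attached to the open orbit. Computing the transformation properties of the resulting function under $\rQ_a(\bA_F)$ by a direct change of variables shows that $f^{\diamond,\phi}_{a,s}$ is a section in $\Ind^{\rG_a(\bA_F)}_{\rQ_a(\bA_F)}V_\pi\boxtimes(\mu^\tc\cdot|\,\cdot\,|_E^s)\circ\det_a$, which gives (4). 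The meromorphic continuation and pole bound in (3) then follow by combining the meromorphic continuation of $f^\diamond_{a,s}$ (which is standard) with the fact that the local zeta integrals of the doubling method have no poles beyond those of the relevant $L$-factors $L(s,\pi\times\mu^\tc)$ times $\prod_j L(2s-j,\mu^\tc\cdot\text{appropriate character})$; cancellation between the Siegel pole set and the zeta-integral denominators accounts precisely for the shift from $\tfrac{n+a-i}{2}$ to $\tfrac{n-i}{2}$.

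The main obstacle is bookkeeping: tracking normalization conventions for both the induced representation and the splitting character $\mu^\tc$, and confirming that the half-integer shift by $\tfrac{a}{2}$ between the two pole sets arises correctly from the modular character of $\rP_a$ in $\rU(\rV^\diamond_a)$ relative to that of $\rQ_a$ in $\rG_a$. A secondary technical point is verifying that the stabilizer of the open orbit in the unitary doubling coincides with the diagonal copy of $\rG$ inside $\iota(\rG_a\times\rG)$ modulo a unipotent correction, which is what makes the unfolded integral actually assume the form of a standard Piatetski-Shapiro--Rallis doubling integral attached to $\pi$; this is where the parity condition on $\mu\res{\bA_F^\times}$ enters, since it governs whether the splitting data for the Weil representation restricts compatibly across $\iota$.
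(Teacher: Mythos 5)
Part (1) matches the paper exactly: cite Tan's main theorem for the pole structure of the Siegel--hermitian Eisenstein series. But your treatment of parts (2)--(4) rests on a confusion between the function $f^{\diamond,\phi}_{a,s}$ defined by \eqref{eq:pole0} and the pairing $\int_{\rG(F)\backslash\rG(\bA_F)}\sE_{\rP_a}(\iota(g',g);f^\diamond_{a,s})\phi(g)\mu(\det g)\,\rd g$ that appears in Lemma~\ref{le:pole3}. The integrand in \eqref{eq:pole0} is the degenerate principal series \emph{section} $f^\diamond_{a,s}$, not the Eisenstein series $\sE_{\rP_a}$, and the integration is over $\rG(\bA_F)$, not a quotient by $\rG(F)$. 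Consequently there is no unfolding, no Bruhat decomposition, and no cuspidality trick in the proof of this lemma; all of that belongs to Lemma~\ref{le:pole3}, whose output is the identity $\int\sE_{\rP_a}\phi\,\mu=\sE_{\rQ_a}(\,\cdot\,;f^{\diamond,\phi}_{a,s})$.

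Concretely, for (2): the region of absolute convergence of $\sE_{\rP_a}$ (which is a sum over $\rP_a(F)\backslash\rU(\rV^\diamond_a)(F)$) tells you nothing about integrability of $g\mapsto f^\diamond_{a,s}(\iota(g^{-1}g',1))\phi(g)$ over the noncompact group $\rG(\bA_F)$. What one actually needs, following Mo\oe glin, is a growth estimate for the section $f^\diamond_{a,s}$ restricted along $\iota(\rG\times1)$; this is where $\RE(s)>\frac{n+a}{2}$ enters. For (3): your appeal to the ``meromorphic continuation of $f^\diamond_{a,s}$'' is vacuous since a standard section is entire in $s$ by construction, and the proposed cancellation mechanism with local zeta integrals and $L$-factor denominators is not what produces the shift from $\frac{n+a-i}{2}$ to $\frac{n-i}{2}$. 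The correct mechanism, from Mo\oe glin's argument, is that the possible poles of $f^{\diamond,\phi}_{a,s}$ coincide with those of the Siegel Eisenstein series $\sE_{\rP_0}(g;f_s\res{\rG(\bA_F)})$ for the undoubled ($a=0$) group; applying Tan's theorem once more to that series gives the pole set $\{\frac{n-i}{2},\frac{n-i}{2}-1,\dots\}$ directly, and the shift by $\frac{a}{2}$ is automatic because the $a=0$ series governs $f^{\diamond,\phi}_{a,s}$ for all $a$. For (4) your final sentence --- a direct change of variables computing the transformation under $\rQ_a(\bA_F)$ --- is the right idea, but it should be presented as an independent calculation on \eqref{eq:pole0}, not as a consequence of the Bruhat unfolding.
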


\begin{proof}
Part (1) follows from Main Theorem of \cite{Tan99}. The proof of (2--4) is same as in \cite{Moe97}*{Section~2.1}. In particular, the poles of $f^{\diamond,\phi}_{a,s}$ are contained in the set of poles of the Eisenstein series $\sE_{\rP_0}(g;f_s\res\rG(\bA_F))$. Thus, (3) follows from Main Theorem of \cite{Tan99}.
\end{proof}

The following lemma is analogous to \cite{Moe97}*{Proposition~2.1} and \cite{GJS}*{Proposition~3.3}.

\begin{lem}\label{le:pole3}
For a standard section $f^\diamond_{a,s}\in\rJ_a(s,\mu^\tc)$ and a cusp form $\phi\in V_\pi$, we have the identity
\[
\int_{\rG(F)\backslash\rG(\bA_F)}\sE_{\rP_a}(\iota(g',g);f^\diamond_{a,s})\phi(g)\mu(\det g)\rd g=\sE_{\rQ_a}(g';f^{\diamond,\phi}_{a,s})
\]
for $g'\in\rG_a(\bA_F)$, as meromorphic functions in $s$ away from the poles of $f^{\diamond,\phi}_{a,s}$.
\end{lem}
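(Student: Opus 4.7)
The plan is to prove this identity by the unfolding technique that lies at the heart of the generalized doubling method of Piatetski-Shapiro–Rallis, as adapted to unitary groups. I would first restrict to the right half-plane $\RE(s) > (n+a)/2$, where the Siegel–hermitian Eisenstein series on the left converges absolutely and where $f^{\diamond,\phi}_{a,s}$ is already given by the convergent integral \eqref{eq:pole0} (by Lemma \ref{le:pole2}(2)). Once the identity is proved there, the full statement follows by meromorphic continuation, the permitted singular locus being exactly the poles of $f^{\diamond,\phi}_{a,s}$ as in Lemma \ref{le:pole2}(3).

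The core of the argument is an orbit analysis. Writing
\[
\sE_{\rP_a}(\iota(g',g);f^\diamond_{a,s}) = \sum_{\gamma \in \rP_a(F)\backslash \rU(\rV^\diamond_a)(F)} f^\diamond_{a,s}(\gamma\,\iota(g',g)),
\]
I would decompose the summation according to the double coset space $\rP_a(F)\backslash \rU(\rV^\diamond_a)(F)/\iota(\rG_a(F)\times \rG(F))$, which parametrizes $\rU(\rV)(F)\times\rU(-\rV)(F)$-orbits (extended trivially on the $\delta^\pm_a$ factors) on a certain flag variety of $n{+}a$-dimensional totally isotropic subspaces of $\rV^\diamond_a$. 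There is a distinguished open orbit, whose representative corresponds to the antidiagonal embedding $v\mapsto(v,v)$ of $\rV$ into $\rV^\diamond=\rV\oplus(-\rV)$ combined with the standard split on $\delta^\pm_a$; its stabilizer inside $\rG_a\times \rG$ is the ``doubled'' parabolic whose Levi pairs $\rQ_a$ in $\rG_a$ with the diagonally embedded $\rG$. The closed orbits, indexed by nonmaximal isotropic intersections with $\rV$, all have stabilizers whose $\rG$-component contains the unipotent radical of a proper parabolic of $\rG$.

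I would then exchange the orbit sum with the integral against $\phi$ and treat each orbit separately. For any closed orbit, the contribution factors through an inner integral of $\phi$ over such a unipotent radical, and hence vanishes by cuspidality of $\phi$. For the open orbit, the stabilizer permits unfolding of the form
\[
\sum_{\gamma\in \rP_a(F)\backslash \rU(\rV^\diamond_a)(F)}^{\text{open}}\!\!\int_{\rG(F)\backslash\rG(\bA_F)}\! f^\diamond_{a,s}(\gamma\iota(g',g))\phi(g)\mu(\det g)\rd g \;=\; \sum_{\gamma_0\in \rQ_a(F)\backslash\rG_a(F)}\int_{\rG(\bA_F)} f^\diamond_{a,s}(\iota(\gamma_0 g',g))\phi(g)\mu(\det g)\rd g.
\]
By Fubini (justified in the absolute convergence range) and the defining integral \eqref{eq:pole0}, the right-hand side is
\[
\sum_{\gamma_0\in \rQ_a(F)\backslash\rG_a(F)} f^{\diamond,\phi}_{a,s}(\gamma_0 g') = \sE_{\rQ_a}(g';f^{\diamond,\phi}_{a,s}),
\]
which gives the claimed identity.

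The main obstacle will be the explicit orbit analysis for $\rP_a(F)\backslash \rU(\rV^\diamond_a)(F)/\iota(\rG_a(F)\times \rG(F))$: enumerating representatives, computing their stabilizers, and in particular verifying that all closed orbits contain a $\rG$-unipotent radical so that their contributions vanish against $\phi$. This is the unitary analogue of the classical Piatetski-Shapiro–Rallis doubling identity (cf.\ \cite{Moe97}*{Section~2.1} in the orthogonal setting), and the arithmetic of isotropic subspaces in $\rV\oplus(-\rV)$ works out in essentially the same way once the hermitian inner product replaces the symmetric one; the only real care is in tracking the splitting characters $\mu$, $\mu^\tc$ through $\iota$, which is what forces the factor $\mu(\det g)$ in the statement.
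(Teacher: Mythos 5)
Your proposal follows essentially the same unfolding argument as the paper's proof: both decompose the Siegel--hermitian Eisenstein series over the double cosets $\rP_a(F)\backslash\rU(\rV^\diamond_a)(F)/\rG_a(F)\times\rG(F)$, parametrize these by the isotropic intersection dimension with $-\rV$ (your ``closed orbits'' are the paper's $d>0$ cosets, your ``open orbit'' the $d=0$ coset), kill the nontrivial orbits via cuspidality of $\phi$, and unfold the remaining term using $\rP_a(F)\rG(F)\cap\rG_a(F)=\rQ_a(F)$. The only cosmetic differences are your explicit meromorphic-continuation framing at the outset and your more geometric open/closed-orbit vocabulary.
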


\begin{proof}
The proof is almost same to the argument on \cite{Moe97}*{p.214--215}. We will sketch the process. To ease notation, we identify $\rG_a\times\rG$ as a subgroup of $\rU(\rV_a^\diamond)$ via $\iota$. We consider the double coset
\begin{align}\label{eq:pole4}
\rP_a(F)\backslash\rU(\rV^\diamond_a)(F)/\rG_a(F)\times\rG(F).
\end{align}
We identify $\rP_a(F)\backslash\rU(\rV^\diamond_a)(F)$ with the set of maximal isotropic subspaces of $\rV^\diamond_a$. Let $L$ be such a subspace. Put $d_L\coloneqq\dim_E(L\cap(-\rV))$. Then $L$ and $L'$ are in the same double coset of \eqref{eq:pole4} if and only if $d_L=d_{L'}$. In other words, we have a canonical bijection between \eqref{eq:pole4} and $\{0,1,\dots,r\}$ where $r$ is the Witt index of $\rV$. Moreover, the identity double coset corresponds to $0$. For every $d=0,1,\dots,r$, we fix a representative $\gamma_d$ of the corresponding double coset (we take $\gamma_0$ to be the identity matrix). Then for $g'\in\rG_a(\bA_F)$, we have
\begin{align*}
&\int_{\rG(F)\backslash\rG(\bA_F)}\sE_{\rP_a}(\iota(g',g);f^\diamond_{a,s})\phi(g)\mu(\det g)\rd g\\
&=\sum_{d=0}^r\int_{\rG(F)\backslash\rG(\bA_F)}
\sum_{(\gamma',\gamma)\in\gamma_d^{-1}\rP_a(F)\gamma_d\cap(\rG_a\times\rG)(F)\backslash(\rG_a\times\rG)(F)}
f^\diamond_{a,s}(\gamma_d(\gamma'g',\gamma g))\phi(g)\mu(\det g)\rd g \\
&=\sum_{\gamma'\in\gamma_d^{-1}\rP_a(F)\gamma_d\rG(F)\cap\rG_a(F)\backslash\rG_a(F)}
\int_{\rG(F)\cap\gamma_d^{-1}\rP_a(F)\gamma_d\backslash\rG(\bA_F)}f^\diamond_{a,s}(\gamma_d(\gamma'g',g))\phi(g)\mu(\det g)\rd g.
\end{align*}
It is easy to see that, since $\phi$ is cuspidal, the integration vanishes unless $d=0$. Thus, we have
\begin{align*}
&\int_{\rG(F)\backslash\rG(\bA_F)}\sE_{\rP_a}(\iota(g',g);f^\diamond_{a,s})\phi(g)\mu(\det g)\rd g\\
&=\sum_{\gamma'\in\rP_a(F)\rG(F)\cap\rG_a(F)\backslash\rG_a(F)}\int_{\rG(\bA_F)}
f^\diamond_{a,s}(\gamma_d(\gamma'g',g))\phi(g)\mu(\det g)\rd g\\
&=\sum_{\gamma'\in\rP_a(F)\rG(F)\cap\rG_a(F)\backslash\rG_a(F)}\int_{\rG(\bA_F)}
f^\diamond_{a,s}((g^{-1}\gamma'g',1))\phi(g)\rd g\\
&=\sum_{\gamma'\in\rP_a(F)\rG(F)\cap\rG_a(F)\backslash\rG_a(F)}\int_{\rG(\bA_F)}f^{\diamond,\phi}_{a,s}(\gamma'g')\\
&=\sE_{\rQ_a}(g';f^{\diamond,\phi}_{a,s}).
\end{align*}
Here, the last equality is due to the fact that $\rP_a(F)\rG(F)\cap\rG_a(F)=\rQ_a(F)$. The lemma follows.
\end{proof}

The following lemma suggests that sections of the form $f^{\diamond,\phi}_{a,s}$ detect poles of $\sE_{\rQ_a}$ when $a$ is sufficiently large.

\begin{lem}\label{le:pole4}
There exists an integer $a_0$ depending only on $V_\pi$ and $\mu$ such that for every integer $a\geq a_0$, if $s$ is not a pole of $\{f^{\diamond,\phi}_{a,s}\}$, then the functions $\{f^{\diamond,\phi}_{a,s}\}$ for all standard sections $f^\diamond_{a,s}\in\rJ_a(s,\mu^\tc)$ and $\phi\in V_\pi$ span the whole space $\Ind^{\rG_a(\bA_F)}_{\rQ_a(\bA_F)}V_\pi\boxtimes(\mu^\tc\cdot|\;|_E^s)\circ\det_a$.
\end{lem}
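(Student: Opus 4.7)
The plan is to realize the assignment $(f^\diamond_{a,s},\phi)\mapsto f^{\diamond,\phi}_{a,s}$ as a $\rG_a(\bA_F)$-equivariant map
\[
\fI_{a,s}\colon\rJ_a(s,\mu^\tc)\otimes V_\pi\to \Ind^{\rG_a(\bA_F)}_{\rQ_a(\bA_F)}V_\pi\boxtimes(\mu^\tc\cdot|\;|_E^s)\circ\det_a,
\]
and to prove surjectivity of $\fI_{a,s}$ for $a$ large once $s$ avoids the poles. First I would decompose the analysis via the double cosets $\rP_a(F)\backslash\rU(\rV^\diamond_a)(F)/\rG_a(F)\times\rG(F)$ that already appear in the proof of Lemma \ref{le:pole3}: the index $d_L=\dim_E(L\cap(-\rV))$ stratifies the space, and the proof of Lemma \ref{le:pole3} shows that after integrating against a cusp form $\phi\in V_\pi$ only the open stratum $d=0$ survives. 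Consequently, $\fI_{a,s}$ factors as a composition of restriction to the open orbit with integration along a unipotent radical, and the target space is precisely the stalk of this open stratum.

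Next I would argue that $\fI_{a,s}$ is a product of purely local maps $\fI_{a,s,v}$ of the same form, so surjectivity globally reduces to surjectivity at each place $v$. At a nonarchimedean place $v$ where everything is unramified, the identity section on $\rJ_a(s,\mu^\tc)_v$ already produces the normalized spherical vector on the right-hand side (up to a ratio of unramified local $L$-factors that is meromorphically nonzero), so the local statement is essentially automatic away from a finite set $S$. At the remaining finite places, I would apply Bernstein--Zelevinsky geometric lemma to the degenerate principal series $\rJ_a(s,\mu^\tc)_v$: for $a$ sufficiently large, the semisimplification of the Jacquet module along $\rQ_a$ contains $V_{\pi,v}\boxtimes(\mu^\tc\cdot|\;|^s_E)\circ\det_a$ with multiplicity one, appearing from the open orbit, and the other constituents are supported on orbits of strictly smaller dimension which are killed by pairing with $V_{\pi,v}$ (precisely because they correspond to parabolic inductions for which no cuspidal constituent of $\pi_v$ occurs when $a$ is large enough). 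At archimedean places, one argues analogously using the smooth Bruhat filtration on degenerate principal series and unitarizability of $\pi_v$, noting that the resulting map is an intertwining operator of smooth admissible representations.

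The uniformity in $a$ is obtained by choosing $a_0$ large enough so that, at every place $v\in S$ (a finite set depending only on $V_\pi$ and $\mu$), the local representation $\pi_v$ has no common exponent with any stratum of $\rJ_a(s,\mu^\tc)_v$ other than the open one; this is a finiteness statement in the sense of Bernstein centers/Jacquet modules. Combined with the spherical computation away from $S$, we get surjectivity of $\fI_{a,s}$ whenever $s$ avoids the (locally finite) set of poles of $f^{\diamond,\phi}_{a,s}$, as controlled by Lemma \ref{le:pole2}(3).

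The main obstacle is the verification that the non-open orbit contributions are killed after pairing with $V_\pi$ in a uniform way. Globally this is handled by cuspidality, but the local analogue — showing that the corresponding local Jacquet module contributions do not produce a copy of $V_{\pi,v}\boxtimes(\mu^\tc\cdot|\;|^s_E)\circ\det_a$ for $a\geq a_0$ — requires a careful bookkeeping of exponents of the smaller strata. This is the technical heart of the argument and the unitary analogue of the computation carried out in the orthogonal case in \cite{GJS}*{Proposition~3.4} (see also \cite{Moe97}*{Section~2.1}); the adaptation is mechanical once one replaces parabolic induction data for orthogonal groups with the corresponding Siegel and non-Siegel parabolics of $\rU(\rV^\diamond_a)$.
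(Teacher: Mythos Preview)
Your proposal is correct and is essentially an elaboration of the argument the paper defers to: the paper's proof is the single line ``This follows from the same discussion after \cite{GJS}*{Proposition~3.3}'', and what you outline (open-orbit analysis of $\rP_a\backslash\rU(\rV^\diamond_a)/(\rG_a\times\rG)$, factorization into local intertwiners, spherical computation away from a finite $S$, and Jacquet-module/Bruhat-filtration bookkeeping at the bad places) is precisely that discussion transported to the unitary setting. One small wording issue: the ``Jacquet module along $\rQ_a$'' you invoke is really the orbit filtration on the restriction of $\rJ_a(s,\mu^\tc)_v$ to $\rG_a(F_v)\times\rG(F_v)$, not a Jacquet module of $\rJ_a(s,\mu^\tc)_v$ as a $\rU(\rV^\diamond_a)(F_v)$-representation; otherwise the sketch matches the literature.
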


\begin{proof}
This follows from the same discussion after \cite{GJS}*{Proposition~3.3}.
\end{proof}

\begin{proof}[Proof of Proposition \ref{pr:pole}]
Let $s_0$ be an element in $\r{Pol}_1(V_\pi\boxtimes\mu^\tc)$ such that $s_0+j\not\in\r{Pol}_1(V_\pi\boxtimes\mu^\tc)$ for every integer $j>0$. Put $s_a\coloneqq s_0+\frac{a-1}{2}$. Let $a$ be an integer such that $s_a>\frac{n}{2}$ and $a\geq a_0$, where $a_0$ is as in Lemma \ref{le:pole4}. By Lemma \ref{le:pole2}, $f^{\diamond,\phi}_{a,s}$ is holomorphic at $s=s_a$. By Lemma \ref{le:pole1} and Lemma \ref{le:pole4}, we may find some standard section $f^\diamond_{a,s}\in\rJ_a(s,\mu^\tc)$ and $\phi\in V_\pi$ such that $\sE_{\rQ_a}(\quad;f^{\diamond,\phi}_{a,s})$ has a pole at $s=s_a$. By Lemma \ref{le:pole3}, we know that $\sE_{\rP_a}(\quad;f^\diamond_{a,s})$ has a pole at $s=s_a$ for such $f^\diamond_{a,s}$. Therefore, $s_0$ has to be the maximal element in $\r{Pol}_1(V_\pi\boxtimes\mu^\tc)$, that is, $s_0=s_1$. In particular, (2) follows.

We continue for (1). By Lemma \ref{le:pole2}(1), the pole must be simple, that is, $\r{Res}_{s=s_a}\sE_{\rP_a}(\quad;f^\diamond_{a,s})\neq 0$. Put $m\coloneqq n+1-2s_1$ and $m_a\coloneqq 2(n+a)-m$. Let $\rW^a$ be a skew-hermitian space over $E$ of rank $m_a$. We have a Weil representation of $\rU(\rV^\diamond_a)(\bA_F)\times\rU(\rW^a)(\bA_F)$ on the Schwartz space $\sS((\rV^+_a\otimes_E\rW^a)(\bA_F))$, and a $\rU(\rV^\diamond_a)(\bA_F)$-equivariant map
\[
f^{(s_a)}\colon\sS((\rV^+_a\otimes_E\rW^a)(\bA_F))
\to\Ind_{\rP_a(\bA_F)}^{\rU(\rV^\diamond_a)(\bA_F)}(\mu^\tc\cdot|\;|^{s_a})\circ\det\nolimits_{n+a}
\]
sending $\Phi$ to $f^{(s_a)}_\Phi$, which is known as taking Siegel--Weil sections. For more details, see, for example, \cite{Ich04}. Since $m_a\geq n+a$, by \cite{KS97}*{Theorem~1.2 \& Theorem~1.3} and \cite{Lee94}*{Theorem~6.10}, the map
\[
f^{(s_a)}\colon\bigoplus_{\rW^a}\sS((\rV^+_a\otimes_E\rW^a)(\bA_F))
\to\Ind_{\rP_a(\bA_F)}^{\rU(\rV^\diamond_a)(\bA_F)}(\mu^\tc\cdot|\;|^{s_a})\circ\det\nolimits_{n+a},
\]
by considering all possible skew-hermitian spaces $\rW^a$ of rank $m_a$ up to isomorphism, is surjective. Thus, there exist some $\rW^a$ in
the above direct sum and an element $\Phi\in\sS((\rV^+_a\otimes_E\rW^a)(\bA_F))$ such that $f^{(s_a)}_\Phi=f^\diamond_{a,s}$, hence $\r{Res}_{s=s_a}\sE_{\rP_a}(\quad;f^{(s_a)}_\Phi)\neq 0$. In particular, the Witt index of $\rW^a$ is at least $m_a-(n+a)$. Now by the main theorem on \cite{Ich04}*{p.243}, we have the identity
\[
\r{Res}_{s=s_a}\sE_{\rP_a}(\quad;f^{(s_a)}_\Phi)=c\cdot\int_{\rU(\rW)(F)\backslash\rU(\rW)(\bA_F)}\theta_{(\mu^\tc,\b{1})}(\quad,h)\rd h
\]
as functions on $\rU(\rV^\diamond_a)(\bA_F)$. Here, $c$ is a nonzero constant; $\rW$ is a certain skew-hermitian space of rank $2(n+a)-m_a=m$ determined by $\rW^a$; and $\theta_{(\mu^\tc,\b{1})}$ is a certain theta series on $\rU(\rV^\diamond_a)(\bA_F)\times\rU(\rW)(\bA_F)$ with respect to the pair of splitting characters $(\mu^\tc,\b{1})$ in which $\b{1}$ denotes the trivial character. By Lemma \ref{le:pole3} and our choices of $f^\diamond_{a,s}$ and $\phi$, the integral
\begin{align}\label{eq:pole2}
\int_{\rG(F)\backslash\rG(\bA_F)}\int_{\rU(\rW)(F)\backslash\rU(\rW)(\bA_F)}
\theta_{(\mu^\tc,\b{1})}(\iota(g',g),h)\phi(g)\mu(\det g)\rd h\rd g
\end{align}
is nonzero for some $g'\in\rG_a(\bA_F)$. Now we need to separate the variables $g'$ and $g$ in the above theta series. Choose an arbitrary automorphic character $\nu$ of $\bA_E^\times$ such that $\nu\res\bA_F^\times=\mu_{E/F}^n$. We have two embeddings
\begin{align*}
\iota'\coloneqq\iota\times\r{id}_{\rU(\rW)}
&\colon \rG_a\times\rG\times\rU(\rW)\hookrightarrow\rU(\rV^\diamond_a)\times\rU(\rW),\\
\iota''&\colon \rG_a\times\rG\times\rU(\rW)\hookrightarrow(\rG_a\times\rU(\rW))\times(\rG\times\rU(\rW)),
\end{align*}
in which the second one is induced by the diagonal embedding of $\rU(\rW)$. It follows from \cite{HKS}*{Lemma~1.1} that
\[
\omega_{(\mu^\tc,\b{1})}^{\rV^\diamond_a,\rW}\circ\iota'\simeq
\(\omega_{(\mu^\tc,\nu^\tc)}^{\rV_a,\rW}\widehat\otimes\omega_{(\mu^\tc,\nu)}^{\rV,\rW}\)\circ\iota''
\]
for the restriction of Weil representations \eqref{eq:weil}. Therefore, without lost of generality, we may assume that there exist finitely many pairs $(\theta^{(i)}_{(\mu^\tc,\nu^\tc)},\theta^{[i]}_{(\mu^\tc,\nu)})$ in which $\theta^{(i)}_{(\mu^\tc,\nu^\tc)}$ (resp.\ $\theta^{[i]}_{(\mu^\tc,\nu)}$) is a theta series on $\rG_a(\bA_F)\times\rU(\rW)(\bA_F)$ (resp.\ $\rG(\bA_F)\times\rU(\rW)(\bA_F)$) with respect to $(\mu^\tc,\nu^\tc)$ (resp.\ $(\mu^\tc,\nu)$) such that
\[
\theta_{(\mu^\tc,\b{1})}(\iota(g',g),h)=\sum_{i}\theta^{(i)}_{(\mu^\tc,\nu^\tc)}(g',h)\theta^{[i]}_{(\mu^\tc,\nu)}(g,h),
\]
and that \eqref{eq:pole2} is nonzero for some $g'\in\rG_a(\bA_F)$. Then we have
\begin{align*}
\eqref{eq:pole2}
&=\int_{\rG(F)\backslash\rG(\bA_F)}\int_{\rU(\rW)(F)\backslash\rU(\rW)(\bA_F)}
\theta_{\underline\mu}(\iota(g',g),h)\phi(g)\mu(\det g)\rd h\rd g \\
&=\int_{\rG(F)\backslash\rG(\bA_F)}\int_{\rU(\rW)(F)\backslash\rU(\rW)(\bA_F)}
\sum_{i}\theta^{(i)}_{(\mu^\tc,\nu^\tc)}(g',h)\theta^{[i]}_{(\mu^\tc,\nu)}(g,h)\phi(g)\mu(\det g)\rd h\rd g \\
&=\sum_{i}\int_{\rU(\rW)(F)\backslash\rU(\rW)(\bA_F)}\theta^{(i)}_{(\mu^\tc,\nu^\tc)}(g',h)
\(\int_{\rG(F)\backslash\rG(\bA_F)}\theta^{[i]}_{(\mu^\tc,\nu)}(g,h)\phi(g)\mu(\det g)\rd g\)\rd h\\
&=\sum_{i}\int_{\rU(\rW)(F)\backslash\rU(\rW)(\bA_F)}\theta^{(i)}_{(\mu^\tc,\nu^\tc)}(g',h)
\(\int_{\rG(F)\backslash\rG(\bA_F)}\theta^{[i]}_{(\mu,\nu)}(g,h)\phi(g)\rd g\)\rd h.
\end{align*}
In particular, there exists some $i$ such that
\[
\int_{\rG(F)\backslash\rG(\bA_F)}\theta^{[i]}_{(\mu,\nu)}(g,h)\phi(g)\rd g\not\equiv 0.
\]
In other words, $\Theta_{(\mu,\nu),\rV}^\rW(V_\pi)\neq 0$, and (1) follows.
\end{proof}

\begin{proof}[Proof of Theorem \ref{th:pole}]
By the Langlands--Shahidi theory, the poles of the Eisenstein series $\sE_\rQ(\quad;f_s)$ are controlled by its constant term, which in term are control by the intertwining operator attached to the longest Weyl element in $\rQ\backslash\rG_1/\rQ$. By the Gindikin--Karpelevich formula, we know that the poles of the $L$-function
\begin{align}\label{eq:pole3}
\frac{L^S(s,\pi\times\mu)\cdot L^S(2s,\mu,\r{As}^{(-1)^n})}{L^S(s+1,\pi\times\mu)\cdot L^S(2s+1,\mu,\r{As}^{(-1)^n})}
\end{align}
in the region $\RE(s)>0$ are contained in the set $\r{Pol}_1(V_\pi\boxtimes\mu^\tc)$. See the proof of \cite{GJS}*{Proposition~2.2} for a similar discussion in the orthogonal case.

We first consider the case where $\mu^\tc\neq\mu^{-1}$. Then $L^S(s,\mu,\r{As}^{(-1)^n})$ has no pole for $\RE(s)>0$. On the other hand, by \cite{Kim99}*{Corollary~2.2}, the set $\r{Pol}_1(V_\pi\boxtimes\mu^\tc)$ is empty. Thus, it follows easily that $L^S(s,\pi\boxtimes\mu)$ has no pole for $\RE(s)>0$ as well. Theorem \ref{th:pole} is proved in this case.

Now we assume that $\mu^\tc=\mu^{-1}$. In other words, $\mu\res\bA_F^\times=\mu_{E/F}^i$ for a unique $i\in\{0,1\}$. Part (2) is a consequence of the local theta dichotomy \cite{SZ15}*{Theorem~1.10}. It remains to consider (1). Let $s_0$ be a pole of $L^S(s,\pi\times\mu)\cdot L^S(2s,\mu,\r{As}^{(-1)^n})$ as in (a). Let $j\geq 0$ be the largest nonnegative integer such that $s_0+j$ is a pole of $L^S(s,\pi\times\mu)\cdot L^S(2s,\mu,\r{As}^{(-1)^n})$. Then the $L$-function \eqref{eq:pole3} has a pole at $s_0+j$. Thus, we have $s_0+j\in\r{Pol}_1(V_\pi\boxtimes\mu^\tc)$, and (b) holds. For the implication (b) $\Rightarrow$ (c), by Rallis' tower property for the global theta lifting, we may assume that $j=0$ in (b) and $s_0+j\not\in\r{Pol}_1(V_\pi\boxtimes\mu^\tc)$ for every integer $j>0$. Then by Proposition \ref{pr:pole}(2), $s_0=s_1$. Then (c) follows from Proposition \ref{pr:pole}(1).
\end{proof}

\section{Shimura varieties for hermitian spaces}
\label{ss:c}

In this appendix, we summarize different versions of unitary Shimura varieties. In Subsection \ref{ss:appendix_isometry}, we recall Shimura varieties associated to isometry groups of hermitian spaces, which are of abelian type; we also introduce the Shimura varieties associated to incoherent hermitian spaces. In Subsection \ref{ss:appendix_similitude}, we recall the well-known PEL type Shimura varieties associated to groups of rational similitude of skew-hermitian spaces, and their integral models at good primes, after Kottwitz. These Shimura varieties are only for the preparation of the next subsection, which are not logically needed in the main part of the article. In Subsection \ref{ss:appendix_connection}, we summarize the connection of these two kinds of unitary Shimura varieties via the third one which possesses a moduli interpretation but is not of PEL type in the sense of Kottwitz, after \cites{BHKRY,RSZ}. In Subsection \ref{ss:integral_models}, we discuss integral models of the third unitary Shimura varieties at good inert primes and their uniformization along the basic locus.

Let $F$ be a totally real number field of degree $d\geq 1$, and $E/F$ a totally imaginary quadratic extension. Denote by $\tc$ the nontrivial involution of $E$ over $F$. Denote by $\Phi_F$ the set of real embeddings of $F$ and by $\Phi_E$ the set of complex embeddings of $E$. Let $\dN[\Phi_E]$ be the commutative monoid freely generated by $\Phi_E$. The Galois group $\Gal(\dC/\dQ)$ acts on $\Phi_E$, hence on $\dN[\Phi_E]$. We have the projection map $\pi\colon\Phi_E\to\Phi_F$ given by restriction. Recall that a CM type (of $E$) is a subset $\Phi$ of $\Phi_E$ such that $\pi$ induces a bijection from $\Phi$ to $\Phi_F$. For a CM type $\Phi$, put $\Phi^\tc\coloneqq\Phi_E\setminus\Phi$, which is again a CM type.

\subsection{Case of isometry}
\label{ss:appendix_isometry}

Let $\rV$ be a (non-degenerate) hermitian space over $E$ (with respect to $\tc$) of rank $n\geq 1$, with the hermitian form $(\;,\;)_\rV\colon\rV\times\rV\to E$ that is $E$-linear in the first variable. For every $\tau\in\Phi_F$, let $(p_\tau,q_\tau)$ be the signature of $\rV\otimes_{F,\tau}\dR$. We take a CM type $\Phi\subseteq\Phi_E$. Then we have two elements
\begin{align}\label{eq:signature}
\sig_{\rV,\Phi}\coloneqq\sum_{\tau\in\Phi_F}p_\tau\tau^+ + \sum_{\tau\in\Phi_F}q_\tau\tau^-,\qquad
\sig^\flat_{\rV,\Phi}\coloneqq\sum_{\tau\in\Phi_F}q_\tau\tau^-
\end{align}
in $\dN[\Phi_E]$. Here, $\tau^-$ (resp.\ $\tau^+$) is the unique element in $\Phi$ (resp.\ $\Phi^\tc$) whose image under $\pi$ is $\tau$.

\begin{definition}
We define the \emph{reflex field} (resp.\ \emph{reduced reflex field}) of the pair $(\rV,\Phi)$ to be the fixed field of the stabilizer in $\Gal(\dC/\dQ)$ of the element $\sig_{\rV,\Phi}$ (resp.\ $\sig^\flat_{\rV,\Phi}$), denoted by $E_{\rV,\Phi}$ (resp.\ $E^\flat_{\rV,\Phi}$).
\end{definition}

Let $\rU(\rV)$ be the unitary group (of isometry) of $\rV$, that is, the reductive group over $F$ such that for every $F$-algebra $R$, we have
\[
\rU(\rV)(R)=\{g\in\GL_R(\rV\otimes_F R)\res (gx,gy)_\rV=(x,y)_\rV\text{ for all }x,y\in\rV\otimes_F R\}.
\]
For every $\tau\in\Phi_F$, we may identify $\rV\otimes_{E,\tau^-}\dC$ with $\dC^{\oplus n}$, hence $\rU(\rV)\otimes_{F,\tau}\dR$ is identified with the subgroup of $\Res_{\dC/\dR}\GL_n$ of elements preserving the hermitian form given by the matrix $\(\begin{smallmatrix}\rI_{p_\tau}&\\&-\rI_{q_\tau}\end{smallmatrix}\)$.

Put $\rG\coloneqq\Res_{F/\dQ}\rU(\rV)$. We define the Hodge map
\[
\rh^\flat_{\rV,\Phi}\colon\Res_{\dC/\dR}\dG_\rm\to\rG_\dR
\]
to be the one sending $z\in\dC^\times=(\Res_{\dC/\dR}\dG_\rm)(\dR)$ to
\[
\(\left(
    \begin{array}{cc}
      \rI_{p_{\tau_1}} &  \\
       &  (z/\ol{z})\rI_{q_{\tau_1}} \\
    \end{array}
  \right),\cdots,
  \left(
    \begin{array}{cc}
      \rI_{p_{\tau_d}} &  \\
       &  (z/\ol{z})\rI_{q_{\tau_d}} \\
    \end{array}
  \right)
\)\in\rG_\dR(\dR),
\]
where we identify $\rG_\dR(\dR)$ as a subgroup of $\GL_n(\dC)^{d}$ via $\{\tau_1^-,\dots,\tau_d^-\}$. Then we obtain a Shimura data $(\rG,\rh^\flat_{\rV,\Phi})$. It is of abelian type but not Hodge type; and its reflex field coincides with $E^\flat_{\rV,\Phi}$. The theory of Shimura varieties provides us with a projective system of schemes $\{\Sh(\rG,\rh^\flat_{\rV,\Phi})_K\}_K$, quasi-projective and smooth over $E^\flat_{\rV,\Phi}$ of dimension $\sum_{\tau\in\Phi_F}p_\tau q_\tau$, indexed by neat open compact subgroups $K$ of $\rG(\bA^\infty)=\rU(\rV)(\bA^\infty_F)$.

\begin{remark}\label{re:picard}
Suppose that there is an element $\tau\in\Phi_F$ such that $\rV$ has signature $(n-1,1)$ at $\tau$ and $(n,0)$ at other places. Then the Hodge map $\rh^\flat_{\rV,\Phi}$ hence the Shimura variety $\Sh(\rG,\rh^\flat_{\rV,\Phi})_K$ depend only on $\Phi\cap\pi^{-1}\tau$, that is, the unique element contained in $\Phi$ above $\tau$. Thus, for an element $\tau'\in\Phi_E$ above $\tau$, we may write $\rh_{\rV,\tau'}$ and $\Sh(\rG,\rh_{\rV,\tau'})_K$ for those $\Phi$ containing $\tau'$. In particular, the reflex field of $\rh_{\rV,\tau'}$ is $\tau'(E)$. The Galois group $\Gal(\dC/\tau'(E))$ acts on the set of connected components of $\Sh(\rG,\rh_{\rV,\tau'})_K\otimes_{\iota'(E)}\dC$ via the composite homomorphism
\[
\Gal(\dC/\tau'(E))\xrightarrow{\r{rec}}\tau'(E)^\times\backslash(\bA_{\tau'(E)}^\infty)^\times
\xrightarrow{(\tau')^{-1}}E^\times\backslash(\bA_E^\infty)^\times\xrightarrow{e\mapsto e/e^\tc}E^1\backslash(\bA_E^\infty)^1,
\]
where $\r{rec}$ is the global reciprocity map for the number field $\tau'(E)$.
\end{remark}

Now we would like to attach Shimura varieties to an incoherent hermitian space, a concept originated from \cite{KR94} in the orthogonal case and explored in \cite{Zha19}. This observation generalizes the case of Shimura curves in \cite{YZZ}, and has already appeared in some old work \cites{Liu11,Liu12}, with more details explained by Gross \cite{Gro} recently.

\begin{definition}\label{de:incoherent_hermitian}
An \emph{incoherent hermitian space} over $\bA_E$ is a free $\bA_E$-module $\bV$ of some rank $n\geq 1$, equipped with a non-degenerate hermitian form $(\;,\;)_\bV\colon\bV\times\bV\to\bA_E$ with respect to the (induced) involution $\tc$ on $\bA_E$ such that its determinant belongs to $\bA_F^\times\setminus F^\times\Nm_{\bA_E/\bA_F}\bA_E^\times$. We say that $\bV$ is totally positive definite if for every $\tau\in\Phi_F$, $\bV\otimes_{\bA_F,\tau}\dR$ is positive definite.
\end{definition}

Let $\bV$ be a totally positive definite incoherent hermitian space over $\bA_E$ of rank $n\geq 1$, and let $\bG\coloneqq\rU(\bV)$ be its group of isometry, which is a reductive group over $\bA_F$.

\begin{definition}\label{de:nearby}
For $\tau\in\Phi_F$, we say that a hermitian space $\rV$ over $E$ is \emph{$\tau$-nearby to $\bV$} if  $\rV\otimes_F\bA_F^\tau\simeq\bV\otimes_{\bA_F}\bA_F^\tau$, and $\rV\otimes_{F,\tau}\dR$ has signature $(n-1,1)$.
\end{definition}

It is clear that for every $\tau\in\Phi_F$, there exists a hermitian space that is $\tau$-nearby to $\bV$, unique up to isomorphism. We fix such a space $\rV(\tau)$. Put $\rG(\tau)\coloneqq\Res_{F/\dQ}\rU(\rV(\tau))$. We fix an isomorphism $\bV\otimes_{\bA_F}\bA_F^\infty\simeq\rV(\tau)\otimes_F\bA_F^\infty$, hence an isomorphism $\bG(\bA_F^\infty)\simeq\rG(\tau)(\bA^\infty)$.

\begin{proposition}\label{pr:incoherent_shimura}
There is a projective system of schemes $\{\Sh(\bV)_K\}_K$ over $E$ indexed by sufficiently small open compact subgroups $K$ of $\bG(\bA_F^\infty)$, such that for every $\tau\in\Phi_F$ and every $\tau'\in\Phi_E$ above it, we have an isomorphism
\[
\{\Sh(\bV)_K\otimes_{E,\tau'}\tau'(E)\}_K\simeq\{\Sh(\rG(\tau),\rh_{\rV(\tau),\tau'})_K\}_K
\]
of projective systems of schemes over $\tau'(E)$. Here, we use the fixed isomorphism $\bG(\bA_F^\infty)\simeq\rG(\tau)(\bA^\infty)$ to regard $K$ as a subgroup of $\rG(\tau)(\bA^\infty)$.
\end{proposition}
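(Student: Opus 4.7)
My plan is to construct the projective system $\{\Sh(\bV)_K\}_K$ over $E$ by Galois descent from the family of Shimura varieties $\{\Sh(\rG(\tau),\rh_{\rV(\tau),\tau'})_K\}$ attached to nearby hermitian spaces, indexed by $\tau' \in \Phi_E$. The first step is to fix a reference $\tau_0 \in \Phi_F$ and a lift $\tau_0' \in \Phi_E$, and define a preliminary scheme $\Sh(\bV)_K^{(\tau_0')}$ over $E$ as the scheme obtained from $\Sh(\rG(\tau_0),\rh_{\rV(\tau_0),\tau_0'})_K$ over $\tau_0'(E)$ by transport along $(\tau_0')^{-1}\colon \tau_0'(E) \xrightarrow{\sim} E$. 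What must be verified is that this scheme does not depend (up to canonical isomorphism) on the choice $(\tau_0,\tau_0')$, and that the resulting family is functorial in $K$.

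The key identification is as follows. For any two pairs $(\tau_0,\tau_0')$ and $(\tau,\tau')$ with $\tau_0',\tau' \in \Phi_E$ lifting $\tau_0,\tau \in \Phi_F$ respectively, choose $\sigma \in \Gal(\dC/\dQ)$ with $\sigma \circ \tau_0' = \tau'$. The hermitian spaces $\rV(\tau_0)$ and $\rV(\tau)$ are both $\tau_0$- (resp.~$\tau$-) nearby to $\bV$, and hence become isomorphic over $\bA_F^{\{\tau_0,\tau\}}$; moreover, on finite adèles both unitary groups $\rG(\tau_0)$ and $\rG(\tau)$ are identified with $\bG(\bA_F^\infty)$ using the chosen isomorphism in the paragraph before the proposition. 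The plan is then to invoke Deligne's theorem on the Galois conjugation of canonical models (or equivalently Milne--Shih's theory of conjugate Shimura varieties for inner twists), which describes the $\sigma$-conjugate of $\Sh(\rG(\tau_0),\rh_{\rV(\tau_0),\tau_0'})_K$ over $\sigma(\tau_0'(E)) = \tau'(E)$ as the Shimura variety of an inner twist whose Hodge cocharacter matches the one specified by $\sigma\circ\rh_{\rV(\tau_0),\tau_0'}$. Concretely, applying $\sigma$ transforms the signature datum $\sig_{\rV(\tau_0),\Phi_0}$ into $\sigma(\sig_{\rV(\tau_0),\Phi_0})$, which equals $\sig_{\rV(\tau),\Phi}$ for a suitable CM type $\Phi$ containing $\tau'$; and the inner twist of $\rG(\tau_0)$ implementing this change of signature is precisely $\rG(\tau)$ (since both are rank-$n$ unitary groups of the CM field $E$ differing only at the archimedean places $\tau_0$ and $\tau$, which matches the cocycle produced by the Galois conjugation).

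Once these identifications $\phi_{(\tau',\sigma)}\colon \Sh(\rG(\tau_0),\rh_{\rV(\tau_0),\tau_0'})_K \otimes_{\tau_0'(E),\sigma} \dC \xrightarrow{\sim} \Sh(\rG(\tau),\rh_{\rV(\tau),\tau'})_K \otimes_{\tau'(E)} \dC$ are produced, I would check that they satisfy the cocycle condition for the Galois action of $\Gal(\ol E/E)$ on the family $\{\Sh(\rG(\tau),\rh_{\rV(\tau),\tau'})_K\}_{\tau'}$ viewed as complex varieties via the canonical models. This reduces to verifying that the Deligne--Milne--Shih conjugation isomorphisms are themselves cocycle-compatible, which follows from the functoriality of the canonical model construction. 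Since the underlying complex manifold $\Sh(\rG(\tau),\rh_{\rV(\tau),\tau'})_K(\dC)$ depends only on $\rG(\tau)_\dR$ (hence on $\bV\otimes_{\bA_F}\dR$) and on $\bG(\bA_F^\infty)/K$, both of which are intrinsic to $\bV$ and $K$, the cocycle identifications extend to a Galois descent datum on the quasi-projective $E^\flat_{\rV(\tau),\Phi}$-scheme levels, and effective Galois descent yields a scheme $\Sh(\bV)_K$ over $E$. The functoriality in $K$ and the formation of the projective limit is then automatic from the functoriality of the individual canonical models.

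The main obstacle I anticipate is the precise identification in step two: verifying, at the level of Shimura datum inner twists, that Deligne's conjugation theorem transforms $(\rG(\tau_0),\rh_{\rV(\tau_0),\tau_0'})$ into exactly $(\rG(\tau),\rh_{\rV(\tau),\tau'})$ (and not into some a priori different unitary form). This is a computation with Galois cocycles valued in $\rG(\tau_0)^\ad$ and requires that the cocycle produced by Shih--Langlands--Deligne's reciprocity at a special point agrees with the Galois cocycle classifying the inner twist $\rG(\tau)$ of $\rG(\tau_0)$. The totally positive definite incoherence of $\bV$ is used here in an essential way: it ensures that as $\tau$ varies over $\Phi_F$, the nearby spaces $\rV(\tau)$ exhaust precisely the orbit of $\rV(\tau_0)$ under this cocycle, matching the orbit of Galois conjugates. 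Once this matching is established, the rest of the proof is formal descent.
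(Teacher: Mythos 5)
The paper proves this proposition simply by citing Gross's paper (\cite{Gro}*{Section~10}), and your proposal is a faithful reconstruction of the argument given there: one fixes a preferred complex embedding $\tau_0'$ to transport the canonical model of the nearby Shimura variety $\Sh(\rG(\tau_0),\rh_{\rV(\tau_0),\tau_0'})_K$ to a scheme over $E$, and then verifies the base-change property at all other $\tau'$ by invoking the Langlands--Borovoi--Milne--Shih theory of Galois conjugation of (abelian-type) Shimura varieties, which applies here because the Shimura data $(\rG(\tau),\rh_{\rV(\tau),\tau'})$ are of abelian type. You have correctly identified the technical heart: the $\sigma$-conjugate of $(\rG(\tau_0),\rh_{\rV(\tau_0),\tau_0'})$ is an inner twist with unchanged finite-adelic data and with archimedean invariants dictated by the transported Hodge cocharacter, so by the Hasse principle for unitary groups it is necessarily $(\rG(\tau),\rh_{\rV(\tau),\tau'})$ when $\sigma\tau_0'=\tau'$; the totally positive definite incoherence of $\bV$ guarantees that the nearby spaces $\rV(\tau)$ realize exactly these inner forms. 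Two minor points of precision: the conjugation theorem is usually attributed to Langlands (conjecture) and Borovoi/Milne (proof for abelian type) rather than to Deligne, and the cocycle compatibility one invokes should really be phrased for the action of $\Aut(\dC/\dQ)$ on the index set $\Phi_E$ (descent from $E\otimes_\dQ\ol\dQ$ down to $E$) rather than for $\Gal(\ol E/E)$; these do not affect the validity of the argument. Since the paper's proof is literally a reference, your proposal takes the same route and supplies the missing details at an appropriate level.
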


\begin{proof}
See \cite{Gro}*{Section~10}.
\end{proof}

\begin{definition}\label{de:shimura_incoherent}
We call the projective system of schemes $\{\Sh(\bV)_K\}_K$ over $E$ in Proposition \ref{pr:incoherent_shimura} the \emph{Shimura varieties associated to $\bV$}.
\end{definition}

\begin{remark}
One can also interpret Proposition \ref{pr:incoherent_shimura} in the following way: The scheme
\[
\prod_{\tau\in\Phi_F}\prod_{\tau'\in\pi^{-1}\tau}\Sh(\rG(\tau),\rh_{\rV(\tau),\tau'})_K
\]
over
\[
\prod_{\tau\in\Phi_F}\prod_{\tau'\in\pi^{-1}\tau}\Spec\tau'(E)=\prod_{\tau'\in\Phi_E}\Spec\tau'(E)
\]
descends to a scheme $\Sh(\bV)_K$ over $\Spec E$, where the above fiber products are taken over $\Spec\dQ$.
\end{remark}

The scheme $\Sh(\bV)_K$ (for $K$ sufficiently small) is quasi-projective and smooth over $E$ of dimension $n-1$. It is projective if $d>1$ or $n=1$. In all cases, we denote by $\ol\Sh(\bV)_K$ the Baily--Borel compactification of $\Sh(\bV)_K$ over $E$. Then $\ol\Sh(\bV)_K\setminus\Sh(\bV)_K$ is either empty or consists of isolated singular points. Let $\widetilde\Sh(\bV)_K$ be the blow-up of $\ol\Sh(\bV)_K$ along $\ol\Sh(\bV)_K\setminus\Sh(\bV)_K$. If $\Sh(\bV)_K$ is proper, then $\widetilde\Sh(\bV)_K=\Sh(\bV)_K$. Otherwise, we must have $d=1$, that is, $F=\dQ$. In this case, there is only one choice for $\tau\in\Phi_F$, for which we will suppress from various notation like $\rV(\tau)$, $\rG(\tau)$, etc. However, there are still two choices of $\Phi$, say, $\{\tau^+\}$ and $\{\tau^-\}$. We have isomorphisms
\begin{align}\label{eq:pink}
\widetilde\Sh(\bV)_K\otimes_{E,\tau^\pm}\tau^\pm(E)\simeq\widetilde\Sh(\rG,\rh_{\rV,\tau^\pm})_K
\end{align}
extending those in Proposition \ref{pr:incoherent_shimura}. Here, $\widetilde\Sh(\rG,\rh_{\rV,\tau^\pm})_K$ is the unique toroidal compactification of $\Sh(\rG,\rh_{\rV,\tau^\pm})_K$ over $E$ \cites{AMRT,Pin90}.

\begin{definition}\label{de:shimura_incoherent_toroidal}
We call the projective system of schemes $\{\widetilde\Sh(\bV)_K\}_K$ over $E$ the \emph{compactified Shimura varieties associated to $\bV$} (even when $\Sh(\bV)_K$ is already proper).
\end{definition}

\begin{remark}\label{re:shimura_incoherent_toroidal}
The boundary $\widetilde\Sh(\bV)_K\setminus\Sh(\bV)_K$ is a smooth divisor.
\end{remark}

\subsection{Case of similitude}
\label{ss:appendix_similitude}

In this subsection, we recall the notion of Shimura varieties attached to the group of similitude of a hermitian space, which are of PEL type. They will not be used in the main part of the article, but it is instructional to introduce them for the later discussion.

Let
\[
\Psi=\sum_{\tau\in\Phi_F}p_\tau\tau^+ + \sum_{\tau\in\Phi_F}q_\tau\tau^-
\]
be an element of $\dN[\Phi_E]$ such that $p_\tau+q_\tau=n$ for every $\tau\in\Phi_F$. Let $E_\Psi$ be the fixed field of the stabilizer of $\Psi$ in $\Gal(\dC/\dQ)$.

\begin{definition}\label{de:abelian_data}
Let $S$ be an $E_\Psi$-scheme.
\begin{enumerate}
  \item An \emph{$(E,\Psi)$-abelian scheme} over $S$ is a pair $(A,i)$, where $A$ is an abelian scheme over $S$, and $i\colon E\to\End_S(A)_\dQ$ is a homomorphism of $\dQ$-algebras such that for every $e\in E$, the characteristic polynomial of $i(e)$ on the locally free sheaf $\Lie_S(A)$ on $S$ is equal to
      \[
      \prod_{\tau\in\Phi_F}(T-\tau^+(e))^{p_\tau}(T-\tau^-(e))^{q_\tau}\in\cO_S[T].
      \]

  \item A polarization of an $(E,\Psi)$-abelian scheme $(A,i)$ is a polarization $\lambda\colon A\to A^\vee$ satisfying $\lambda\circ i(e)=i(e^\tc)^\vee\circ\lambda$ for every $e\in E$.
\end{enumerate}
\end{definition}

\begin{definition}
For a ring $R$ containing $\dQ$, a \emph{rational skew-hermitian space} over $E\otimes_\dQ R$ of rank $n$ is a free $E\otimes_\dQ R$-module $\rW$ of rank $n$ together with a $R$-bilinear skew-symmetric non-degenerate pairing
\[
\langle\;,\;\rangle_\rW\colon\rW\times\rW\to R
\]
satisfying $\langle ex,y\rangle_\rW=\langle x,e^\tc y\rangle_\rW$ for every $e\in E$ and $x,y\in\rW$. We say that two rational skew-hermitian spaces $\rW$ and $\rW'$ over $E\otimes_\dQ R$ is \emph{similar} if there exists an isomorphism $f\colon\rW\to\rW'$ of $E\otimes_\dQ R$-modules such that there exists some $\nu(f)\in R^\times$ satisfying $\langle f(x),f(y)\rangle_{\rW'}=\nu(f)\langle x,y\rangle_\rW$ for every $x,y\in\rW$.
\end{definition}

We take a rational skew-hermitian space $\bW^\infty$ over $\bA_E^\infty=E\otimes_\dQ\bA^\infty$ of rank $n$. Let $\bH^\infty$ be the group of similitude of $\bW^\infty$, which is a reductive group over $\bA^\infty$. We denote by $\cW(\bW^\infty,\Psi)$ the set of similarity classes of rational skew-hermitian spaces $\rW$ over $E$ of rank $n$ such that
\begin{itemize}
  \item $\rW\otimes_E\bA^\infty_E$ is similar to $\bW^\infty$ as a rational skew-hermitian space over $\bA_E^\infty=E\otimes_\dQ\bA^\infty$ (and we fix a similarity isomorphism),

  \item the signature of the hermitian form $\langle\;,i\cdot\;\rangle_\rW$ on the $\dC$-vector space $\rW\otimes_{E,\tau^-}\dC$ is $(p_\tau,q_\tau)$.
\end{itemize}
It is a finite set; and its cardinality is at most one if $n$ is even.

For every $\rW\in\cW(\bW^\infty,\Psi)$, let $\rH$ be its group of similitude, that is, the reductive group over $\dQ$ such that for every ring $R$ containing $\dQ$, we have
\[
\rH(R)=\{h\in\GL_{E\otimes_\dQ R}(\rW\otimes_\dQ R)\res\langle hx,hy\rangle_\rW=\nu(h)\langle x,y\rangle_\rW\text{ for some $\nu(h)\in R^\times$}\}.
\]
We define the Hodge map
\[
\rh_{\rW,\Psi}\colon\Res_{\dC/\dR}\dG_\rm\to\rH_\dR
\]
to be the one sending $z\in\dC^\times=(\Res_{\dC/\dR}\dG_\rm)(\dR)$ to
\[
\(\left(
    \begin{array}{cc}
      \ol{z}\rI_{p_{\tau_1}} &  \\
       &  z\rI_{q_{\tau_1}} \\
    \end{array}
  \right),\cdots,
  \left(
    \begin{array}{cc}
      \ol{z}\rI_{p_{\tau_d}} &  \\
       &  z\rI_{q_{\tau_d}} \\
    \end{array}
  \right);z\ol{z}
\)\in\rH_\dR(\dR),
\]
where we identify $\rH_\dR(\dR)$ as a subgroup of $\GL_n(\dC)^{d}\times\dC^\times$ via $\{\tau_1^-,\dots,\tau_d^-\}$. Then we have a Shimura data $(\rH,\rh_{\rW,\Psi})$ with the reflex field $E_\Psi$. We obtain a projective system of schemes $\{\Sh(\rH,\rh_{\rW,\Psi})_L\}_L$, quasi-projective and smooth over $E_\Psi$ of dimension $\sum_{\tau\in\Phi_F}p_\tau q_\tau$, indexed by neat open compact subgroups $L$ of $\bH^\infty(\bA^\infty)\simeq\rH(\bA^\infty)$.

The Shimura data $(\rH,\rh_{\rW,\Psi})$ is of PEL type. In particular, it has a moduli interpretation which we roughly recall in the following definition.

\begin{definition}[\cite{Kot92}]\label{de:moduli_similitude}
For an open compact subgroup $L\subseteq\bH^\infty(\bA^\infty)$, we define a presheaf $\rM(\bW^\infty,\Psi)_L$ on $\Sch'_{/E_\Psi}$ as follows: For every object $S\in\Sch'_{/E_\Psi}$, we let $\rM(\bW^\infty,\Psi)_L(S)$ be the set of equivalence classes of quadruples $(A,i,\lambda,\eta)$, where
\begin{itemize}
  \item $(A,i)$ is an $(E,\Psi)$-abelian scheme over $S$ (Definition \ref{de:abelian_data}),

  \item $\lambda$ is a polarization of $(A,i)$ (Definition \ref{de:abelian_data}),

  \item $\eta$ is an $L$-level structure (see \cite{Kot92}*{Section~5} for more details).
\end{itemize}
Two quadruples $(A,i,\lambda,\eta)$ and $(A',i',\lambda',\eta'))$ are equivalent if there is an isogeny $\varphi\colon A\to A'$ taking $i,\lambda,\eta$ to $i',c\lambda',\eta'$ for some $c\in\dQ^\times$.
\end{definition}

From \cite{Kot92}, it is known that $\rM(\bW^\infty,\Psi)_L$ is a scheme if $L$ is sufficiently small, and we have a canonical isomorphism
\begin{align*}
\rM(\bW^\infty,\Psi)_L\simeq\coprod_{\rW\in\cW(\bW^\infty,\Psi)}\Sh(\rH,\rh_{\rW,\Psi})_L
\end{align*}
functorial in $L$.

\begin{remark}\label{re:integral_similitude}
Let $p$ be a rational prime unramified in $E$ such that we may write $L=L^p\times L_p$ in which $L_p$ is the stabilizer of a self-dual lattice in $\bW^\infty\otimes_{\bA^\infty}\dQ_p$. Then the presheaf $\rM(\bW^\infty,\Psi)_L$ admits an extension $\cM(\bW^\infty,\Psi)_L$ to a presheaf on $\Sch'_{/O_{E_\Psi,(p)}}$ as follows: For every object $S\in\Sch'_{/O_{E_\Psi,(p)}}$, we let $\cM(\bW^\infty,\Psi)_L(S)$ be the set of equivalence classes of quadruples $(A,i,\lambda,\eta^p)$, where
\begin{itemize}
  \item $(A,i)$ is an $(E,\Psi)$-abelian scheme over $S$ in the sense similar to Definition \ref{de:abelian_data} but with $i\colon O_{E,(p)}\to\End_S(A)\otimes_\dZ\dZ_{(p)}$ being a homomorphism of $\dZ_{(p)}$-algebras,

  \item $\lambda$ is a $p$-principal polarization of $(A,i)$,

  \item $\eta^p$ is an $L^p$-level structure.
\end{itemize}
The equivalence relation is defined in a similar way as in Definition \ref{de:moduli_similitude} except that we require the isogenies to be coprime to $p$ and $c\in\dZ_{(p)}^\times$. The functor $\cM(\bW^\infty,\Psi)_L$ is a smooth separated scheme in $\Sch_{/O_{E_\Psi,(p)}}$ if $L$ is sufficiently small; and is functorial in $L$.
\end{remark}

\subsection{Their connection}
\label{ss:appendix_connection}

In this subsection, we study the connection between Shimura varieties in the case of isometry and those in the case of similitude. Consider
\begin{itemize}
  \item a hermitian space $\rV,(\;,\;)_\rV$ over $E$ of rank $n$,

  \item a rational skew-hermitian space $\bW^\infty_0,\langle\;,\;\rangle_0$ over $\bA_E^\infty=E\otimes_\dQ\bA^\infty$ of rank $1$ with the group of similitude $\bH^\infty_0$,

  \item a CM type $\Phi$ of $E$ such that $\cW(\bW^\infty_0,\Phi^\tc)$ is nonempty.
\end{itemize}
We now equip $\bW^\infty\coloneqq\rV\otimes_E\bW^\infty_0$ with a rational skew-hermitian form over $\bA_E^\infty=E\otimes_\dQ\bA^\infty$. For $x,y\in\bW^\infty_0$, let $\langle x,y\rangle_0^\dag\in\bA_E^\infty$ be the unique element such that $\Tr_{E/\dQ}(e\cdot\langle x,y\rangle_0^\dag)=\langle ex,y\rangle_0$ for every $e\in\bA_E^\infty$. Thus, we obtain a non-degenerate pairing $\langle\;,\;\rangle_0^\dag\colon\bW^\infty_0\times\bW^\infty_0\to\bA_E^\infty$ that is $\bA_E^\infty$-linear in the first variable. We equip $\bW^\infty$ with the pairing $\Tr_{E/\dQ}(\;,\;)_\rV\otimes_E\langle\;,\;\rangle_0^\dag$, which becomes a rational skew-hermitian space over $E\otimes_\dQ\bA^\infty$. By a similar construction, we obtain a map $\cW(\bW^\infty_0,\Phi^\tc)\to\cW(\bW^\infty,\Psi)$ sending $\rW_0$ to $\rW$, where $\Psi=\sig_{\rV,\Phi}$ \eqref{eq:signature}. Take an element $\rW_0\in\cW(\bW^\infty_0,\Phi^\tc)$ with $\rH_0$ its group of similitude. We obtain three Shimura data: $(\rG,\rh^\flat_{\rV,\Phi})$, $(\rH_0,\rh_{\rW_0,\Phi^\tc})$, and $(\rH,\rh_{\rW,\Psi})$ with reflex fields $E^\flat_{\rV,\Phi}$, $E_\Phi$, and $E_\Psi=E_{\rV,\Phi}$, respectively.

\begin{lem}\label{le:reflex}
Let $E^\sharp_{\rV,\Phi}$ be the subfield of $\dC$ generated by $E^\flat_{\rV,\Phi}$ and $E_\Phi$. Then $E^\sharp_{\rV,\Phi}$ contains $E_{\rV,\Phi}$.
\end{lem}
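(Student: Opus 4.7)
The plan is to work at the level of Galois stabilizers in $\dZ[\Phi_E]$ and show directly that
\[
\Gal(\dC/E^\sharp_{\rV,\Phi}) = \Gal(\dC/E^\flat_{\rV,\Phi})\cap\Gal(\dC/E_\Phi)\;\subseteq\;\Gal(\dC/E_{\rV,\Phi}),
\]
which by Galois theory is equivalent to the claim. First I would note that $E_\Phi$ is the fixed field of the stabilizer of $\Sigma_\Phi\coloneqq\sum_{\tau\in\Phi_F}\tau^-$ (equivalently of $\Sigma_{\Phi^\tc}=\sum_\tau\tau^+$, since $\Sigma_\Phi+\Sigma_{\Phi^\tc}=\sum_{\tau'\in\Phi_E}\tau'$ is $\Gal(\dC/\dQ)$-invariant). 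The key identity, verified by term-by-term inspection using $p_\tau+q_\tau=n$, is
\[
\sig_{\rV,\Phi}\;=\;n\,\Sigma_{\Phi^\tc}+\sig^\flat_{\rV,\Phi}-c\bigl(\sig^\flat_{\rV,\Phi}\bigr)
\]
in $\dZ[\Phi_E]$, where $c\in\Gal(\dC/\dQ)$ denotes complex conjugation (which swaps $\tau^+\leftrightarrow\tau^-$ for each $\tau\in\Phi_F$). Thus it suffices to show that any $g$ fixing both $\Sigma_\Phi$ and $\sig^\flat_{\rV,\Phi}$ automatically fixes $c(\sig^\flat_{\rV,\Phi})=\sum_\tau q_\tau\tau^+$.

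The core observation is that fixing $\Sigma_\Phi$ means $g$ preserves the set $\Phi$ (and hence $\Phi^\tc$). Since $F$ is totally real, every embedding $F\to\dC$ has image in $\dR$, so $g$ acts on $\Phi_F$; and because $g$ preserves $\Phi$ separately from $\Phi^\tc$, for each $\tau\in\Phi_F$ there exists a unique $\sigma=g\cdot\tau\in\Phi_F$ with $g\cdot\tau^-=\sigma^-$ and simultaneously $g\cdot\tau^+=\sigma^+$ (both lie above $\sigma$, and the choice is forced by the stability of $\Phi$ and $\Phi^\tc$). Then the condition that $g$ fixes $\sig^\flat_{\rV,\Phi}=\sum_\tau q_\tau\tau^-$ translates precisely into $q_{g^{-1}\cdot\sigma}=q_\sigma$ for all $\sigma$, and the same equation governs whether $g$ fixes $\sum_\tau q_\tau\tau^+$. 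Hence $g$ fixes $c(\sig^\flat_{\rV,\Phi})$, and therefore $\sig_{\rV,\Phi}$ by the displayed identity, which finishes the proof.

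I expect no serious obstacle: the only subtle point is the compatibility statement $g\cdot\tau^\pm=\sigma^\pm$ under the hypothesis that $g$ preserves $\Phi$, which has to be extracted carefully (in general $g$ and $c$ do not commute in $\Gal(\dC/\dQ)$, so one cannot simply apply $c$ to the identity $g\cdot\tau^-=\sigma^-$). This is handled by using $g\in\r{Stab}(\Phi)$ to force the matching of signs, rather than by any commutation argument between $g$ and $c$.
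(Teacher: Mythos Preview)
Your proof is correct and follows essentially the same approach as the paper's proof, which is a two-sentence argument: any $g$ fixing $E^\sharp_{\rV,\Phi}$ stabilizes both $\sig^\flat_{\rV,\Phi}$ and $\Phi$, hence stabilizes $\sig_{\rV,\Phi}$. You have simply unpacked the implicit step ``hence stabilizes $\sig_{\rV,\Phi}$'' via the identity $\sig_{\rV,\Phi}=n\Sigma_{\Phi^\tc}+\sig^\flat_{\rV,\Phi}-\sum_\tau q_\tau\tau^+$ and the observation that $g\in\r{Stab}(\Phi)$ forces $g\cdot\tau^\pm=(g\cdot\tau)^\pm$, which is exactly the content the paper leaves to the reader.
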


\begin{proof}
By definition, the subgroup of $\Gal(\dC/\dQ)$ fixing $E^\sharp_{\rV,\Phi}$ stabilizes both $\sig^\flat_{\rV,\Phi}$ and $\Phi$. Thus, it stabilizes $\sig_{\rV,\Phi}$. The lemma follows.
\end{proof}

\begin{remark}
In the main part of the article, the hermitian space $\rV$ we encounter will have signature $(n-1,1)$ at one place $\tau\in\Phi_F$ and $(n,0)$ elsewhere for some $n\geq 2$. Then for whatever $\Phi$, we have $E^\flat_{\rV,\Phi}=\tau'(E)$, where $\tau'\in\Phi_E$ is either place above $\tau$. However, it is possible that $\bigcap_\Phi E^\sharp_{\rV,\Phi}$ strictly contains $\tau'(E)$, where $\Phi$ runs over all CM types of $E$.
\end{remark}

Now we consider the reductive group $\rG^\sharp\coloneqq\rG\times\rH_0$ over $\dQ$. Put $\rh^\sharp_\Phi\coloneqq(\rh^\flat_{\rV,\Phi},\rh_{\rW_0,\Phi^\tc})$. Then we have a product Shimura data $(\rG^\sharp,\rh^\sharp_\Phi)$, whose reflex field is $E^\sharp_{\rV,\Phi}$. On the other hand, there is a homomorphism $\tq_\rW\colon\rG^\sharp=\rG\times\rH_0\to\rH$ induced by taking tensor product. It is clear that $\tq_\rW\circ\rh^\sharp_\Phi=\rh_{\rW,\Psi}$. To summarize, we have the following diagram of Shimura data
\begin{align}\label{eq:shimura_data}
\xymatrix{
(\rG,\rh^\flat_{\rV,\Phi}) \\
& (\rG^\sharp,\rh^\sharp_\Phi) \ar[lu]_-{\tq_\rV}\ar[ld]^-{\tq_{\rW_0}}\ar[r]^-{\tq_\rW} & (\rH,\rh_{\rW,\Psi}). \\
(\rH_0,\rh_{\rW_0,\Phi^\tc})
}
\end{align}
For neat open compact subgroups $K\subseteq\rG(\bA^\infty)$, $L_0\subseteq\rH_0(\bA^\infty)$, and $L\subseteq\rH(\bA^\infty)$ satisfying $\tq_\rW(K\times L_0)\subseteq L$, we have the following diagram of Shimura varieties induced from \eqref{eq:shimura_data}
\begin{align}\label{eq:shimura_varieties}
\xymatrix{
\Sh(\rG,\rh^\flat_{\rV,\Phi})_K\otimes_{E^\flat_{\rV,\Phi}}E^\sharp_{\rV,\Phi} \\
& \Sh(\rG^\sharp,\rh^\sharp_\Phi)_{K\times L_0} \ar[lu]_-{\tq_\rV}\ar[ld]^-{\tq_{\rW_0}}\ar[r]^-{\tq_\rW} & \Sh(\rH,\rh_{\rW,\Psi})_L\otimes_{E_{\rV,\Phi}}E^\sharp_{\rV,\Phi} \\
\Sh(\rH_0,\rh_{\rW_0,\Phi^\tc})_{L_0}\otimes_{E_\Phi}E^\sharp_{\rV,\Phi}
}
\end{align}
in view of Lemma \ref{le:reflex}, in which $(\tq_\rV,\tq_{\rW_0})$ induces an isomorphism
\begin{align}\label{eq:shimura_varieties1}
\Sh(\rG^\sharp,\rh^\sharp_\Phi)_{K\times L_0}\simeq \(\Sh(\rG,\rh^\flat_{\rV,\Phi})_K\otimes_{E^\flat_{\rV,\Phi}}E^\sharp_{\rV,\Phi}\)
\times_{E^\sharp_{\rV,\Phi}}\(\Sh(\rH_0,\rh_{\rW_0,\Phi^\tc})_{L_0}\otimes_{E_\Phi}E^\sharp_{\rV,\Phi}\)
\end{align}
in $\Sch_{/E^\sharp_{\rV,\Phi}}$, functorial in $K$, $L_0$, and under Hecke translations.

The Shimura variety $\Sh(\rG^\sharp,\rh^\sharp_\Phi)_{K\times L_0}$ has a moduli interpretation as well.

\begin{definition}\label{de:moduli_connection}
For open compact subgroups $K\subseteq\rG(\bA^\infty)$ and $L\subseteq\bH_1^\infty(\bA^\infty)$, we define a presheaf $\rM(\rV,\bW_1^\infty,\Phi)_{K,L_1}$ on $\Sch'_{/E^\sharp_{\rV,\Phi}}$ as follows: For every object $S\in\Sch'_{/E^\sharp_{\rV,\Phi}}$, we let $\rM(\rV,\bW_1^\infty,\Phi)_{K,L_1}(S)$ be the set of equivalence classes of octuples $(A_0,i_0,\lambda_0,\eta_0;A,i,\lambda,\eta)$, where
\begin{itemize}
  \item $(A_0,i_0)$ is an $(E,\Phi^\tc)$-abelian scheme over $S$,

  \item $\lambda_0$ is a polarization of $(A_0,i_0)$,

  \item $\eta_0$ is an $L_0$-level structure for $(A_0,i_0,\lambda_0)$,

  \item $(A,i)$ is an $(E,\sig_{\rV,\Phi})$-abelian scheme over $S$,

  \item $\lambda$ is a polarization of $(A,i)$,

  \item for chosen geometric point $s$ on every connected component of $S$, $\eta$ is a $\pi_1(S,s)$-invariant $K$-orbit of isometries
      \[
      \rV\otimes_\dQ\bA^\infty\xrightarrow{\sim}\Hom_{E\otimes_\dQ\bA^\infty}(\rH^{\et}_1(A_{0s},\bA^\infty),\rH^{\et}_1(A_s,\bA^\infty))
      \]
      of hermitian spaces over $\bA_E^\infty$. Here, the hermitian pairing on the latter space is given by the formula
      \[
      (x,y)\mapsto i_0^{-1}\((\lambda_{0*})^{-1}\circ y^\vee\circ\lambda_*\circ x\)\in i_0^{-1}\End_{E\otimes_\dQ\bA^\infty}(\rH^{\et}_1(A_{0s},\bA^\infty))=\bA^\infty_E.
      \]
\end{itemize}
Two octuples $(A_0,i_0,\lambda_0,\eta_0;A,i,\lambda,\eta)$ and $(A'_0,i'_0,\lambda'_0,\eta'_0;A',i',\lambda',\eta')$ are equivalent if there are isogenies $\varphi_0\colon A_0\to A'_0$ and $\varphi\colon A\to A'$ such that
\begin{itemize}
  \item there exists $c\in\dQ^\times$ such that $\varphi_0^\vee\circ\lambda'_0\circ\varphi_0=c\lambda_0$ and $\varphi^\vee\circ\lambda'\circ\varphi=c\lambda$,

  \item for every $e\in E$, we have $\varphi_0\circ i_0(e)=i'_0(e)\circ\varphi_0$ and $\varphi\circ i(e)=i'(e)\circ\varphi$,

  \item the $K$-orbit of maps $x\mapsto\varphi_*\circ\eta(x)\circ(\varphi_{0*})^{-1}$ for $x\in\rV\otimes_\dQ\bA^\infty$ coincides with $\eta'$.
\end{itemize}
\end{definition}

\begin{remark}
The Shimura variety $\Sh(\rG^\sharp,\rh^\sharp_\Phi)_{K\times L_1}$ and its moduli interpretation were first introduced in \cite{BHKRY} when $F=\dQ$, and in \cite{RSZ} for more general CM extension $E/F$.
\end{remark}

\begin{lem}\label{le:shimura_variety}
Let the notation be as above. We have a canonical isomorphism
\[
\rM(\rV,\bW_0^\infty,\Phi)_{K,L_0}\simeq\(\Sh(\rG,\rh^\flat_{\rV,\Phi})_K\otimes_{E^\flat_{\rV,\Phi}}E^\sharp_{\rV,\Phi}\)
\times_{E^\sharp_{\rV,\Phi}}\(\rM(\bW^\infty_0,\Phi^\tc)_{L_0}\otimes_{E_\Phi}E^\sharp_{\rV,\Phi}\)
\]
in $\Sch_{/E^\sharp_{\rV,\Phi}}$, functorial in $K$, $L_0$, and under Hecke translations.
\end{lem}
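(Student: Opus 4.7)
The plan is to first identify the right-hand side of the lemma with $\Sh(\rG^\sharp, \rh^\sharp_\Phi)_{K \times L_0}$, and then exhibit a canonical isomorphism between this and $\rM(\rV, \bW^\infty_0, \Phi)_{K, L_0}$. For the first identification, I invoke the canonical decomposition \eqref{eq:shimura_varieties1} together with the standard PEL moduli interpretation $\Sh(\rH_0, \rh_{\rW_0, \Phi^\tc})_{L_0} \simeq \rM(\bW^\infty_0, \Phi^\tc)_{L_0}$ (a special case of Definition \ref{de:moduli_similitude} applied to the rank-one skew-hermitian space $\rW_0$). Under these identifications, the right-hand side of the lemma is exactly $\Sh(\rG^\sharp, \rh^\sharp_\Phi)_{K \times L_0}$.

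For the second step, I use the PEL Shimura variety $\Sh(\rH, \rh_{\rW, \Psi})_L \simeq \rM(\bW^\infty, \Psi)_L$ (with $L = \tq_\rW(K \times L_0)$ and $\Psi = \sig_{\rV,\Phi}$) as a bridge. In the forward direction, from an octuple $(A_0, i_0, \lambda_0, \eta_0; A, i, \lambda, \eta) \in \rM(\rV, \bW^\infty_0, \Phi)_{K, L_0}(S)$, I produce a compatible pair of points: the quadruple $(A_0, i_0, \lambda_0, \eta_0)$ gives a point of $\rM(\bW^\infty_0, \Phi^\tc)_{L_0}(S)$, while $(A, i, \lambda)$ together with the tensor-product level structure
\[
\tilde\eta \colon \bW^\infty \simeq \rV \otimes_E \bW^\infty_0 \xrightarrow{\eta \otimes \eta_0} \Hom_{E \otimes \bA^\infty}(\rH^{\et}_1(A_0), \rH^{\et}_1(A)) \otimes_{E \otimes \bA^\infty} \rH^{\et}_1(A_0) \xrightarrow{\r{ev}} \rH^{\et}_1(A)
\]
gives a point of $\rM(\bW^\infty, \Psi)_L(S)$. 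The compatibility of these two points with the morphisms $\tq_\rW$ and $\tq_{\rW_0}$ of Shimura varieties is manifest, so they correspond to a unique point of $\Sh(\rG^\sharp, \rh^\sharp_\Phi)_{K \times L_0}(S)$ via the decomposition \eqref{eq:shimura_varieties1}.

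For the inverse direction, given a point of $\Sh(\rG^\sharp, \rh^\sharp_\Phi)_{K \times L_0}(S)$, its images in $\rM(\bW^\infty_0, \Phi^\tc)_{L_0}(S)$ and $\rM(\bW^\infty, \Psi)_L(S)$ (via $\tq_{\rW_0}$ and $\tq_\rW$ respectively) yield the quadruples $(A_0, i_0, \lambda_0, \eta_0)$ and $(A, i, \lambda, \tilde\eta)$. The relative level structure $\eta$ is then extracted as the unique $K$-orbit of isometries $\rV \otimes_\dQ \bA^\infty \xrightarrow{\sim} \Hom_{E \otimes \bA^\infty}(\rH^{\et}_1(A_0), \rH^{\et}_1(A))$ whose post-composition with $\eta_0$ along the evaluation recovers $\tilde\eta$ modulo the $L$-action. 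The entire construction is functorial in $S$, $K$, and $L_0$, and is equivariant under Hecke translations by unwinding the definitions of the various level structures.

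The main technical obstacle is verifying the compatibility between the three intertwined structures: the hermitian pairing on $\Hom(\rH^{\et}_1(A_0), \rH^{\et}_1(A))$ induced by $\lambda$ and $\lambda_0$ (via the formula in Definition \ref{de:moduli_connection}) must combine, via tensor product with the skew-hermitian pairing $\langle\,,\,\rangle_0^\dag$ on $\bW^\infty_0$, to reproduce exactly the skew-hermitian form $\Tr_{E/\dQ}(\,,\,)_\rV \otimes_E \langle\,,\,\rangle_0^\dag$ on $\bW^\infty = \rV \otimes_E \bW^\infty_0$ that underlies the PEL moduli $\rM(\bW^\infty, \Psi)_L$. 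Moreover, the signature condition defining the $(E, \sig_{\rV, \Phi})$-abelian scheme structure on $A$ in Definition \ref{de:moduli_connection} must match the PEL signature $\Psi = \sig_{\rV, \Phi}$ for $\rM(\bW^\infty, \Psi)_L$ via the tensor decomposition of the Hodge structure. These compatibilities are essentially tautological given the setup in Subsection \ref{ss:appendix_connection}, but require careful bookkeeping; once established, they ensure the two constructions are mutually inverse in the category of schemes over $E^\sharp_{\rV, \Phi}$.
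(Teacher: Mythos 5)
Your approach matches the paper's own: you construct morphisms $\tq_0\colon\rM(\rV,\bW_0^\infty,\Phi)_{K,L_0}\to\rM(\bW^\infty_0,\Phi^\tc)_{L_0}$ (forgetting everything but the quadruple $(A_0,i_0,\lambda_0,\eta_0)$) and $\tq\colon\rM(\rV,\bW_0^\infty,\Phi)_{K,L_0}\to\rM(\bW^\infty,\Psi)_L$ (via the tensor-product level structure $\tilde\eta$), which are exactly the paper's $\tq_0$ and $\tq$, and then combine them with the product decomposition \eqref{eq:shimura_varieties1}. One imprecision should be corrected: $\rM(\bW^\infty_0,\Phi^\tc)_{L_0}$ is \emph{not} a single Shimura variety $\Sh(\rH_0,\rh_{\rW_0,\Phi^\tc})_{L_0}$ but rather the disjoint union $\coprod_{\rW_0\in\cW(\bW_0^\infty,\Phi^\tc)}\Sh(\rH_0,\rh_{\rW_0,\Phi^\tc})_{L_0}$, and since $\bW^\infty_0$ has odd rank~$1$ the index set $\cW(\bW_0^\infty,\Phi^\tc)$ need not be a singleton. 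Thus your claims that the right-hand side of the lemma ``is exactly $\Sh(\rG^\sharp,\rh^\sharp_\Phi)_{K\times L_0}$'' and that $\Sh(\rH,\rh_{\rW,\Psi})_L\simeq\rM(\bW^\infty,\Psi)_L$ are not quite right; both $\rM$-spaces are disjoint unions, and the correct intermediate statement, which your moduli-theoretic construction really proves once the labels are fixed, is the paper's \eqref{eq:shimura_varieties2}, namely $\rM(\rV,\bW_0^\infty,\Phi)_{K,L_0}\simeq\coprod_{\rW_0\in\cW(\bW_0^\infty,\Phi^\tc)}\Sh(\rG^\sharp,\rh^\sharp_\Phi)_{K\times L_0}$. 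The construction of $\tilde\eta$ and the extraction of $\eta$ are sound and assemble automatically over all $\rW_0$ because $\tq$ and $\tq_0$ are compatible with $\tq_\rW$ and $\tq_{\rW_0}$, which respect the disjoint-union decompositions; no new argument is needed beyond keeping the index $\rW_0$ in view.
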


\begin{proof}
We have canonical morphisms
\begin{align*}
\tq&\colon\rM(\rV,\bW_0^\infty,\Phi)_{K,L_0}\to\rM(\bW^\infty,\Psi)_L\otimes_{E_{\rV,\Phi}}E^\sharp_{\rV,\Phi}\\
\tq_0&\colon\rM(\rV,\bW_0^\infty,\Phi)_{K,L_0}\to\rM(\bW^\infty_0,\Phi^\tc)_{L_0}\otimes_{E_\Phi}E^\sharp_{\rV,\Phi}
\end{align*}
of functors obtained from the moduli interpretation. Since $(\tq,\tq_0)$ induces a closed embedding, the functor $\rM(\rV,\bW_0^\infty,\Phi)_{K, L_0}$ is representable. Moreover, we have a canonical isomorphism
\begin{align}\label{eq:shimura_varieties2}
\rM(\rV,\bW_0^\infty,\Phi)_{K,L_0}\simeq\coprod_{\rW_0\in\cW(\bW_0^\infty,\Phi^\tc)}\Sh(\rG^\sharp,\rh^\sharp_\Phi)_{K\times L_0}
\end{align}
functorial in $K$, $L_0$, and under Hecke translations. The morphisms $\tq$ and $\tq_0$ are compatible with $\tq_\rW$ and $\tq_{\rW_0}$ in \eqref{eq:shimura_varieties}, respectively. Combining with \eqref{eq:shimura_varieties1}, we have
\begin{align*}
\eqref{eq:shimura_varieties2}&\simeq
\(\Sh(\rG,\rh^\flat_{\rV,\Phi})_K\otimes_{E^\flat_{\rV,\Phi}}E^\sharp_{\rV,\Phi}\)
\times_{E^\sharp_{\rV,\Phi}}\(\coprod_{\rW_0\in\cW(\bW_0^\infty,\Phi)}
\Sh(\rH_0,\rh_{\rW_0,\Phi^\tc})_{L_0}\otimes_{E_\Phi}E^\sharp_{\rV,\Phi}\)\\
&\simeq\(\Sh(\rG,\rh^\flat_{\rV,\Phi})_K\otimes_{E^\flat_{\rV,\Phi}}E^\sharp_{\rV,\Phi}\)
\times_{E^\sharp_{\rV,\Phi}}\(\rM(\bW^\infty_0,\Phi^\tc)_{L_0}\otimes_{E_\Phi}E^\sharp_{\rV,\Phi}\).
\end{align*}
The lemma follows.
\end{proof}

\subsection{Integral models and uniformization}
\label{ss:integral_models}

In this subsection, we study integral models and uniformization of the Shimura varieties introduced previously, which are only used in Subsection \ref{ss:invariant} and Subsection \ref{ss:orbital} for the main part of the article. We identify $E$ as a subfield of $\dC$ via an element $\tau'\in\Phi_E$. We fix a hermitian space $\rV$ over $E$ that has signature $(n-1,1)$ at $\tau\coloneqq\tau'\res F$ and $(n,0)$ at other places.

We first review the integral models of $\rM(\rV,\bW_0^\infty,\Phi)_{K,L_0}$ in Definition \ref{de:moduli_connection} at good primes, where we assume $\tau'\in\Phi$. Let $\fp$ be a prime of $F$ such that
\begin{itemize}
  \item $\fp$ is inert $E$,

  \item the underlying rational prime $p$ is odd and unramified in $E$,

  \item we may choose a self-dual lattice $\Lambda_\fq$ in $\rV\otimes_FF_\fq$ for every $\fq\in\underline\fp$, where ${\underline\fp}$ denotes the set of all primes of $F$ above $p$ that are inert in $E$,

  \item $L_0=L_0^p\times (L_0)_p$ in which $(L_0)_p$ is the stabilizer of a self-dual lattice in $\bW_0^\infty\otimes_{\bA^\infty}\dQ_p$, and $L_0^p$ is sufficiently small.
\end{itemize}
Fix an isomorphism between $E$-extensions $\dC$ and $E_\fp^\ac$. We denote by $\r{Spl}_p$ the set of primes of $F$ above $p$ that are split in $E$. We also assume that elements in $\Phi$ inducing the same prime in $\r{Spl}_p$ induce the same prime of $E$ (under the fixed isomorphism between $\dC$ and $E_\fp^\ac$).

Denote by $E^\sharp_{\rV,\Phi,\fp}$ the completion of $E^\sharp_{\rV,\Phi}$ in $E_\fp^\ac$. We now consider subgroups $K$ of the form $K=K^p\times K_p^{\underline\fp}\times K_{\underline\fp}$, where $K_{\underline\fp}=\prod_{\fq\in\underline\fp}K_\fq$ in which $K_\fq$ is the stabilizer of $\Lambda_\fq$, and $K^p$ is sufficiently small. For $\fq\in\r{Spl}_p$, we denote by $\fq^-$ the unique prime of $E$ that is in $\Phi$ and regard $K_p^{\underline\fp}$ a subgroup of $\prod_{\fq\in\r{Spl}_p}\GL_{E_{\fq^-}}(\rV\otimes_EE_{\fq^-})$.

The following definition is a special case of the discussion in \cite{RSZ}*{Section~4.1} (but with a slightly finer level structure at $\r{Spl}_p$).

\begin{definition}\label{de:integral_connection}
We define a presheaf $\cM(\rV,\bW_0^\infty,\Phi)_{K,L_0}$ on $\Sch'_{/O_{E^\sharp_{\rV,\Phi,\fp}}}$ as follows: For every object $S\in\Sch'_{/O_{E^\sharp_{\rV,\Phi,\fp}}}$, we let $\cM(\rV,\bW_0^\infty,\Phi)_{K,L_0}(S)$ be the set of equivalence classes of nonuples $(A_0,i_0,\lambda_0,\eta_0^p;A,i,\lambda,\eta^p,\eta_p^\spl)$, where
\begin{itemize}
  \item $(A_0,i_0)$ is an $(E,\Phi^\tc)$-abelian scheme over $S$ (in the sense of Remark \ref{re:integral_similitude}),

  \item $\lambda_0$ is a $\underline\fp$-principal polarization of $(A_0,i_0)$,

  \item $\eta_0^p$ is an $L_0^p$-level structure for $(A_0,i_0,\lambda_0)$,

  \item $(A,i)$ is an $(E,\sig_{\rV,\Phi})$-abelian scheme over $S$ (in the sense of Remark \ref{re:integral_similitude}),

  \item $\lambda$ is a $p$-principal polarization of $(A,i)$,

  \item for chosen geometric point $s$ on every connected component of $S$,
    \begin{itemize}
      \item $\eta^p$ is a $\pi_1(S,s)$-invariant $K^p$-orbit of isometries
         \[
         \rV\otimes_\dQ\bA^{\infty,p}\xrightarrow{\sim}
         \Hom_{E\otimes_\dQ\bA^{\infty,p}}(\rH^{\et}_1(A_{0s},\bA^{\infty,p}),\rH^{\et}_1(A_s,\bA^{\infty,p}))
         \]
         of hermitian spaces over $E\otimes_\dQ\bA^{\infty,p}$. Here, the hermitian pairing is defined similarly as in Definition \ref{de:moduli_connection},

      \item $\eta_p^\spl$ is a $\pi_1(S,s)$-invariant $K_p^{\underline\fp}$-orbit of isomorphisms
         \[
         \prod_{\fq\in\r{Spl}_p}\rV\otimes_E E_{\fq^-}\xrightarrow{\sim}
         \prod_{\fq\in\r{Spl}_p}\Hom_{O_{E_{\fq^-}}}\(A_{0s}[(\fq^-)^\infty],A_s[(\fq^-)^\infty]\)\otimes_{O_{E_{\fq^-}}}E_{\fq^-}
         \]
         of $\prod_{\fq\in\underline\fp}E_{\fq^-}$-modules. Note that due to the signature condition in Definition \ref{de:moduli_similitude}, both $A_{0s}[(\fq^-)^\infty]$ and $A_s[(\fq^-)^\infty]$ are \'{e}tale $O_{E_{\fq^-}}$-modules.
    \end{itemize}
\end{itemize}
The equivalence relation is defined in a similar way as in Definition \ref{de:moduli_connection} except that we require the isogeny $\varphi_0$ (resp.\ $\varphi$) to be coprime to $p$ (resp.\ $\underline\fp$), and $c\in\dZ_{(p)}^\times$.
\end{definition}

The presheaf $\cM(\rV,\bW_0^\infty,\Phi)_{K,L_0}$ is a separated scheme in $\Sch'_{/O_{E^\sharp_{\rV,\Phi,\fp}}}$, which is proper if and only if $\rV$ is anisotropic. By Definition \ref{de:integral_connection} and Remark \ref{re:integral_similitude}, we have a canonical morphism
\begin{align}\label{eq:integral_canonical}
\bq_0\colon \cM(\rV,\bW_0^\infty,\Phi)_{K,L_0}\to\cM(\bW_0^\infty,\Phi^\tc)_{L_0}\otimes_{O_{E_\Phi,(p)}}O_{E^\sharp_{\rV,\Phi,\fp}}
\end{align}
extending the projection to the second factor in Lemma \ref{le:shimura_variety}.

\begin{proposition}\label{pr:integral_canonical}
Let $\rV$ be as in the beginning of this subsection. Let $\fp$ be a prime of $F$ inert in $E$ such that its underlying rational prime is unramified in $E$. Denote by ${\underline\fp}$ the set of all primes of $F$ with the same residue characteristic of $\fp$ that are inert in $E$. We fix a subgroup $K_\fq\subseteq\rU(\rV)(F_\fq)$ that is the stabilizer of a self-dual lattice in $\rV\otimes_FF_\fq$ for every $\fq\in{\underline\fp}$, and put $K_{\underline\fp}\coloneqq\prod_{\fq\in{\underline\fp}}K_\fq$. Then the Shimura variety
\[
\Sh(\rG,\rh_{\rV,\tau'})_{K_{\underline\fp}}\coloneqq\varprojlim_{K^{\underline\fp}}\Sh(\rG,\rh_{\rV,\tau'})_{K^{\underline\fp} K_{\underline\fp}}
\]
(see Remark \ref{re:picard} for the notation) over $E$ has a (smooth) integral canonical model over $O_{E_\fp}$ in the sense of \cite{Mil92}*{Definition~2.9}.
\end{proposition}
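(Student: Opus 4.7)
The plan is to construct the integral canonical model explicitly through the auxiliary PEL moduli scheme $\cM(\rV,\bW_0^\infty,\Phi)_{K,L_0}$ of Definition \ref{de:integral_connection}, exploiting the fiber-product decomposition in Lemma \ref{le:shimura_variety}, and then descending from a larger base via Galois descent and taking a suitable section.

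First I would fix auxiliary data satisfying the hypotheses preceding Definition \ref{de:integral_connection}: a CM type $\Phi$ of $E$ containing $\tau'$ such that elements of $\Phi$ inducing the same prime in $\r{Spl}_p$ induce the same embedding $E\hookrightarrow E_\fp^\ac$, and a rank-one rational skew-hermitian space $\bW_0^\infty$ with $\cW(\bW_0^\infty,\Phi^\tc)\neq\emptyset$ admitting a self-dual lattice at $p$. Both local conditions can be satisfied because $p$ is unramified in $E$ and $\fp$ is inert. Choose $L_0=L_0^p\times(L_0)_p$ with $(L_0)_p$ hyperspecial and $L_0^p$ sufficiently small, and write $K=K^p\times K_p^{\underline\fp}\times K_{\underline\fp}$. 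The scheme $\cM(\rV,\bW_0^\infty,\Phi)_{K,L_0}$ is a smooth separated scheme over $O_{E^\sharp_{\rV,\Phi,\fp}}$; smoothness follows from Grothendieck--Messing theory applied to the PEL moduli problem, using that the polarizations $\lambda_0$ and $\lambda$ are $\underline\fp$- and $p$-principal respectively and that the Kottwitz determinant condition carves out a smooth locus when $p$ is unramified in $E$.

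By Remark \ref{re:picard} the reduced reflex field $E^\flat_{\rV,\Phi}$ equals $\tau'(E)=E$, so Lemma \ref{le:shimura_variety} identifies the generic fiber $\rM(\rV,\bW_0^\infty,\Phi)_{K,L_0}$ with the fiber product
\[
\(\Sh(\rG,\rh_{\rV,\tau'})_K\otimes_E E^\sharp_{\rV,\Phi}\)\times_{E^\sharp_{\rV,\Phi}}\(\rM(\bW_0^\infty,\Phi^\tc)_{L_0}\otimes_{E_\Phi}E^\sharp_{\rV,\Phi}\).
\]
Since $(L_0)_p$ is hyperspecial and $p$ is unramified in $E$, the target of the morphism $\bq_0$ in \eqref{eq:integral_canonical} is étale of dimension zero over $O_{E^\sharp_{\rV,\Phi,\fp}}$, and Honda--Tate theory produces an $O_{E_\fp^\ur}$-point of $\cM(\bW_0^\infty,\Phi^\tc)_{L_0}$ (that is, a CM abelian variety with the prescribed reflex type). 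Pulling back $\cM(\rV,\bW_0^\infty,\Phi)_{K,L_0}$ along such a section yields a smooth quasi-projective scheme over $O_{E_\fp^\ur}$ whose generic fiber is $\Sh(\rG,\rh_{\rV,\tau'})_K\otimes_E E_\fp^\ur$.

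Next I would descend this scheme from $O_{E_\fp^\ur}$ to $O_{E_\fp}$. The action of $\Gal(E_\fp^\ur/E_\fp)$ on the chosen CM test object, via the main theorem of complex multiplication and the reflex norm for $\bW_0^\infty$, matches the canonical Galois action on the generic fiber of $\Sh(\rG,\rh_{\rV,\tau'})_K$ described in Remark \ref{re:picard}. The resulting descent datum on the smooth $O_{E_\fp^\ur}$-scheme is effective by quasi-projectivity, so one obtains a smooth $O_{E_\fp}$-scheme extending $\Sh(\rG,\rh_{\rV,\tau'})_K$; taking the inverse limit over $K^{\underline\fp}$ produces the claimed integral model of $\Sh(\rG,\rh_{\rV,\tau'})_{K_{\underline\fp}}$. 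Milne's extension property reduces, via the étaleness of $\bq_0$ and Néron--Ogg--Shafarevich for the universal abelian schemes in Definition \ref{de:integral_connection}, to the analogous (and standard) property of the PEL moduli scheme $\cM(\rV,\bW_0^\infty,\Phi)_{K,L_0}$.

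The main obstacle will be the Galois descent step: one must verify that the descent datum coming from the PEL construction on $O_{E_\fp^\ur}$ agrees with the canonical descent datum on the generic-fiber Shimura variety. This compatibility is not a formal matter, and its proof requires combining the explicit description of the Galois action on connected components in Remark \ref{re:picard} with Deligne's characterization of the canonical model via special points, identifying both descent data through their common action on CM points coming from $\cM(\bW_0^\infty,\Phi^\tc)_{L_0}$.
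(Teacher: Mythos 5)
Your proof shares the paper's basic architecture -- pass to the auxiliary moduli scheme $\cM(\rV,\bW_0^\infty,\Phi)_{K,L_0}$ and use the fiber-product decomposition of Lemma \ref{le:shimura_variety} -- but it deviates at two steps, and the deviation you yourself flag as ``the main obstacle'' is not a difficulty to be overcome: it is an artifact of a non-canonical choice that the paper avoids. The paper does not pull back along a chosen $O_{E_\fp^\ur}$-point of $\cM(\bW_0^\infty,\Phi^\tc)_{L_0}$. Instead it puts $\cM\coloneqq\cM(\bW_0^\infty,\Phi^\tc)_{L_0}\otimes_{O_{E_\Phi,(p)}}O_{E^\sharp_{\rV,\Phi,\fp}}$, notes this is finite \'etale over $O_{E_\fp}$, takes a connected component $\cM^0\simeq\Spec O_{E'}$ with $E'/E_\fp$ finite unramified, and works with $\bq_0^{-1}\cM^0$. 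This is an open-and-closed subscheme of a scheme already defined over $O_{E^\sharp_{\rV,\Phi,\fp}}$, so there is no descent datum to construct or compare; the final passage from $O_{E'}$ down to $O_{E_\fp}$ follows formally from the characterizing uniqueness of integral canonical models, as in \cite{Mil92}. Choosing an $O_{E_\fp^\ur}$-section via Honda--Tate and then trying to reconcile the complex-multiplication descent datum with the canonical-model Galois action on special points is precisely the hard comparison the paper's construction is designed to sidestep; your approach reintroduces it for no gain.

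The second gap concerns the extension property. You assert that it ``reduces, via the \'etaleness of $\bq_0$ and N\'eron--Ogg--Shafarevich, \ldots\ to the analogous (and standard) property of the PEL moduli scheme $\cM(\rV,\bW_0^\infty,\Phi)_{K,L_0}$,'' but this scheme is not a Kottwitz PEL moduli scheme (the paper says so explicitly), and in any case the extension property is not a formal consequence of good reduction. The paper's argument is a genuine modification of the proof of \cite{Mil92}*{Theorem~2.10}: given an integral regular scheme $Y$ over $\cM^0$ with dense generic locus $U$ and a map from $U$ to the limit, the partial data $(A_0,i_0,\lambda_0,\eta_0^p)$ extends to $Y$ automatically because $Y$ lives over $\cM^0$; the crucial step you omit is the monodromy argument, namely that the $K^p$-level structure $\eta^p$ is a $\pi_1(U,\eta)$-invariant isometry between $\rV\otimes_\dQ\bA^{\infty,p}$ and $\Hom(\rH^{\et}_1(A_{0\eta},\bA^{\infty,p}),\rH^{\et}_1(A_\eta,\bA^{\infty,p}))$, which, since the $A_0$-side already extends, forces the $\pi_1(U,\eta)$-action on $\rH^{\et}_1(A_\eta,\bA^{\infty,p})$ to factor through $\pi_1(Y,\eta)$. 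Only then do \cite{Mil92}*{Propositions~2.11, 2.13, 2.14} allow one to extend $(A,i,\lambda)$ across $Y$, and the level structures follow. A general invocation of N\'eron--Ogg--Shafarevich does not supply this monodromy control, which is the heart of the verification.
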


\begin{proof}
Let $p$ be the underlying rational prime of $\fp$. Choose auxiliary data $\Phi$, $\bW_0^\infty$ and $L_0$ as in the previous discussion, such that $L_0=L_0^p\times (L_0)_p$ in which $(L_0)_p$ is the stabilizer of a self-dual lattice in $\bW_0^\infty\otimes_{\bA^\infty}\dQ_p$ and $L_0^p$ is sufficiently small. Write $K$ for $K^{\underline\fp}\times K_{\underline\fp}$. It suffices to consider those $K^{\underline\fp}$ that are of the form $K^p\times K_p^{\underline\fp}$ with $K^p$ sufficiently small.

Put $\cM\coloneqq\cM(\bW_0^\infty,\Phi^\tc)_{L_0}\otimes_{O_{E_\Phi,(p)}}O_{E^\sharp_{\rV,\Phi,\fp}}$ as in \eqref{eq:integral_canonical}, which is a finite \'{e}tale scheme over $O_{E_\fp}$. Put $M\coloneqq\cM\otimes_{\dZ_p}\dQ_p$. Then we have canonical isomorphisms
\begin{align*}
\cM(\rV,\bW_0^\infty,\Phi)_{K,L_0}\times_{\cM}M
&\simeq\cM(\rV,\bW_0^\infty,\Phi)_{K,L_0}\otimes_{O_{E^\sharp_{\rV,\Phi,\fp}}}E^\sharp_{\rV,\Phi,\fp}\\
&\simeq\Sh(\rG,\rh_{\rV,\tau'})_K\times_{E}\(\rM(\bW^\infty_0,\Phi^\tc)_{L_0}\otimes_{E_\Phi}E^\sharp_{\rV,\Phi,\fp}\)\\
&\simeq\Sh(\rG,\rh_{\rV,\tau'})_K\times_{E}M
\end{align*}
by Lemma \ref{le:shimura_variety}. Take a connected component $\cM^0$ of $\cM$, which is isomorphic to $\Spec O_{E'}$ for some unramified finite extension $E'/E_\fp$, with the generic fiber $M^0\coloneqq\cM^0\otimes_{\dZ_p}\dQ_p$. Put $\cM(\rV,\bW_0^\infty,\Phi)_{K,L_0}^0\coloneqq \bq_0^{-1}\cM^0$. Then we have a canonical isomorphism
\[
\cM(\rV,\bW_0^\infty,\Phi)_{K,L_0}^0\times_{\cM^0}M^0\simeq\Sh(\rG,\rh_{\rV,\tau'})_K\times_{E}M^0.
\]
Thus, it suffices to show that $\varprojlim_{K^{\underline\fp}}\cM(\rV,\bW_0^\infty,\Phi)_{K^{\underline\fp} K_{\underline\fp},L_0}^0$ is an integral canonical model over $\cM^0$. We now modify the proof of \cite{Mil92}*{Theorem~2.10}. Take an integral regular scheme $Y$ over $\cM^0$ such that $U\coloneqq Y\times_{\cM^0}M^0$ is dense in $Y$, with a morphism $\alpha\colon U\to\varprojlim_{K^{\underline\fp}}\cM(\rV,\bW_0^\infty,\Phi)_{K^{\underline\fp} K_{\underline\fp},L_0}^0\times_{\cM^0}M^0$. This is equivalent to giving data $(A_0,i_0,\lambda_0,\eta_0^p;A,i,\lambda,\eta^p,\eta_p^\spl)$ as in Definition \ref{de:integral_connection}, but with
\[
\eta^p\colon\rV\otimes_\dQ\bA^{\infty,p}\xrightarrow{\sim}
\Hom_{E\otimes_\dQ\bA^{\infty,p}}(\rH^{\et}_1(A_{0\eta},\bA^{\infty,p}),\rH^{\et}_1(A_\eta,\bA^{\infty,p}))
\]
being a $\pi_1(U,\eta)$-invariant isometry, and
\[
\eta_p^\spl\colon\prod_{\fq\in\underline\fp}\rV\otimes_E E_{\fq^-}\xrightarrow{\sim}\prod_{\fq\in\underline\fp}
\Hom_{O_{E_{\fq^-}}}\(A_{0\eta}[(\fq^-)^\infty],A_\eta[(\fq^-)^\infty]\)\otimes_{O_{E_{\fq^-}}}E_{\fq^-}
\]
being a $\pi_1(U,\eta)$-invariant isomorphism, where $\eta$ is a geometric generic point of $Y$; and with the partial data $(A_0,i_0,\lambda_0,\eta_0^p)$ extending uniquely to $Y$. In particular, the action of $\pi_1(U,\eta)$ on $\rH^{\et}_1(A_{0\eta},\bA^{\infty,p})$ factors through $\pi_1(Y,\eta)$, and that on $\Hom_{E\otimes_\dQ\bA^{\infty,p}}(\rH^{\et}_1(A_{0\eta},\bA^{\infty,p}),\rH^{\et}_1(A_\eta,\bA^{\infty,p}))$ is trivial. Thus, the action of $\pi_1(U,\eta)$ on $\rH^{\et}_1(A_\eta,\bA^{\infty,p})$ factors through $\pi_1(Y,\eta)$. By \cite{Mil92}*{Propositions~2.11, 2.13, 2.14}, the triple $(A,i_A,\theta_A)$ extends uniquely to $Y$. Then it is clear that $(\eta^p,\eta_p^\spl)$ extends uniquely as well.

We then conclude that $\varprojlim_{K^{\underline\fp}}\cM(\rV,\bW_0^\infty,\Phi)_{K^{\underline\fp} K_{\underline\fp},L_0}^0$ is an integral canonical model over $\cM^0$. The proposition follows.
\end{proof}

\begin{definition}\label{de:integral_canonical}
We denote by $\cS(\rG,\rh_{\rV,\tau'})_{K_{\underline\fp}}$ the integral canonical model of $\Sh(\rG,\rh_{\rV,\tau'})_{K_{\underline\fp}}$ over $O_{E_\fp}$ in Proposition \ref{pr:integral_canonical}, on which the action of $\rU(\rV)({\bA_F^\infty}^{,\underline\fp})$ extends uniquely by the extension property. For an open compact subgroup $K\subseteq\rG(\bA^\infty)=\rU(\rV)(\bA_F^\infty)$ of the form $K=K^{\underline\fp}\times K_{\underline\fp}$, we put
\[
\cS(\rG,\rh_{\rV,\tau'})_K\coloneqq \cS(\rG,\rh_{\rV,\tau'})_{K_{\underline\fp}}/K^{\underline\fp}
\]
which we refer as the \emph{canonical integral model} of $\Sh(\rG,\rh_{\rV,\tau'})_K$ over $O_{E_\fp}$. It is proper/smooth if $\Sh(\rG,\rh_{\rV,\tau'})_K$ is.
\end{definition}

\begin{remark}\label{re:integral_canonical}
The extension property of integral canonical models together with Lemma \ref{le:shimura_variety} implies that we have a canonical isomorphism
\[
\cM(\rV,\bW_0^\infty,\Phi)_{K,L_0}\simeq
\cS(\rG,\rh_{\rV,\tau'})_K\times_{O_{E_\fp}}\(\cM(\bW_0^\infty,\Phi^\tc)_{L_0}\otimes_{O_{E_\Phi,(p)}}O_{E^\sharp_{\rV,\Phi,\fp}}\)
\]
under which $\bq_0$ \eqref{eq:integral_canonical} corresponds to the projection to the second factor.
\end{remark}

\begin{remark}
Proposition \ref{pr:integral_canonical} is slightly stronger than the main result in \cite{Kis10}, as the latter has to assume that $K_p$ is hyperspecial maximal.
\end{remark}

At last, we review the uniformization of $\cM(\rV,\bW_0^\infty,\Phi)_{K,L_0}$ along the basic locus, which is only used in Subsection \ref{ss:orbital}. Let $E_\fp^\ur$ be the maximal unramified extension of $E_\fp$ inside $E_\fp^\ac$. Let $k\coloneqq O_{E_\fp^\ur}\otimes_\dZ\dF_p$ be the residue field of $E_\fp^\ur$. Put
\[
\cM(\rV,\bW_0^\infty,\Phi)_{K,L_0}^\ur\coloneqq\cM(\rV,\bW_0^\infty,\Phi)_{K,L_0}\otimes_{O_{E^\sharp_{\rV,\Phi,\fp}}}O_{E_\fp^\ur}
\]
and
\[
\cM(\rV,\bW_0^\infty,\Phi)_{K_{\underline\fp},L_0}^\ur\coloneqq
\varprojlim_{K^p}\varprojlim_{K_p^{\underline\fp}}\cM(\rV,\bW_0^\infty,\Phi)_{K^pK_p^{\underline\fp}K_{\underline\fp},L_0}^\ur.
\]

\begin{definition}\label{de:supersingular}
For an algebraically closed field $k'$ containing $k$, we say that a $k'$-point
\[
(A_0,i_0,\lambda_0,\eta_0^p;A,i,\lambda,\eta^p,\eta_p^\spl)\in\cM(\rV,\bW_0^\infty,\Phi)_{K_{\underline\fp},L_0}^\ur(k')
\]
is \emph{supersingular} if the $p$-divisible group $A[\fp^\infty]$ is supersingular.
\end{definition}

Denote by $\cM(\rV,\bW_0^\infty,\Phi)_{K_{\underline\fp},L_0}^\ssl$ the supersingular locus of $\cM(\rV,\bW_0^\infty,\Phi)_{K_{\underline\fp},L_0}^\ur\otimes_{O_{E_\fp^\ur}}k$, which is a Zariski closed subset. Denote by $\cM(\rV,\bW_0^\infty,\Phi)_{K_{\underline\fp},L_0}^{\ssl,\wedge}$ the completion of $\cM(\rV,\bW_0^\infty,\Phi)_{K,L_0}^\ur$ along $\cM(\rV,\bW_0^\infty,\Phi)_{K_{\underline\fp},L_0}^\ssl$, which is a formal scheme over $O_{E_\fp^\ur}^\wedge$, where $O_{E_\fp^\ur}^\wedge$ is the completion of $O_{E_\fp^\ur}$. The description of the uniformization of $\cM(\rV,\bW_0^\infty,\Phi)_{K_{\underline\fp},L_0}^{\ssl,\wedge}$ depends on the choice of a point
\begin{align}\label{eq:frame}
\bbP=(\bbA_0,\bbi_0,\bblambda_0,\bbeta_0^p;\bbA,\bbi,\bblambda,\bbeta^p,\bbeta_p^\spl)
\in\cM(\rV,\bW_0^\infty,\Phi)_{K_{\underline\fp},L_0}^\ur(O_{E_\fp^\ur})
\end{align}
such that $\bbP_k$ is supersingular. In particular, we have the induced section
\begin{align}\label{eq:uniformization1}
\bbP^\wedge\colon\Spf O_{E_\fp^\ur}^\wedge\to\cM(\rV,\bW_0^\infty,\Phi)_{K_{\underline\fp},L_0}^{\ssl,\wedge}.
\end{align}
We denote the base change of $(\bbA_0,\bbi_0,\bblambda_0;\bbA,\bbi,\bblambda)$ to $k$ by $(\bbA_{0k},\bbi_{0k},\bblambda_{0k};\bbA_k,\bbi_k,\bblambda_k)$. Moreover, we use $\Spec k$ as the reference point in the level structures $(\bbeta_0^p;\bbeta^p,\bbeta_p^\spl)$. We now attach to $\bbP$ two objects: a formal scheme $\cN$ over $O_{E_\fp^\ur}^\wedge$, and a new hermitian space $\bar\rV$ over $E$.

\begin{itemize}
  \item By a slight abuse of notation, let $(\bbX,\bbi,\bblambda)$ be the supersingular unitary $O_{F_\fp}$-module induced from $(\bbA[\fp^\infty],\bbi[\fp^\infty],\bblambda[\fp^\infty])$ via \cite{Mih}*{Theorem~3.3}. Similarly, we have $(\bbX_0,\bbi_0,\bblambda_0)$ obtained from $(\bbA_0,\bbi_0,\bblambda_0)$. Let $\cN$ be the relative Rapoport--Zink space parameterizing quasi-isogenies of the supersingular unitary $O_{F_\fp}$-module $(\bbX_k,\bbi_k,\bblambda_k)$ of signature $(n-1,1)$ as introduced in Subsection \ref{ss:afl}, which is a formal scheme over $O_{E_\fp^\ur}^\wedge$. In particular, the point $\bbP$ induces a section
      \begin{align}\label{eq:uniformization2}
      \bbP^\wedge_\loc\colon\Spf O_{E_\fp^\ur}^\wedge\to\cN.
      \end{align}

  \item Now we define the new hermitian space. Put
      \[
      \bar\rV\coloneqq\Hom_k((\bbA_{0k},\bbi_{0k}),(\bbA_k,\bbi_k))_\dQ,
      \]
      which is an $E$-vector space through $\bbi_{0k}$. We define a map
      \[
      (\;,\;)_{\bar\rV}\colon\bar\rV\times\bar\rV\to E
      \]
      given by the formula
      \begin{align}\label{eq:nearby_form}
      (x,y)_{\bar\rV}=\bbi_{0k}^{-1}\(\bblambda_{0k}^\vee\circ y^\vee\circ\bblambda_k\circ x\)\in\bbi_{0k}^{-1}\End_k((\bbA_{0k},\bbi_{0k}))=E,
      \end{align}
      which is a hermitian form on $\bar\rV$.
\end{itemize}

\begin{lem}\label{le:definite_nearby}
The hermitian space $\bar\rV,(\;,\;)_{\bar\rV}$ has the following properties:
\begin{enumerate}
  \item $\bar\rV$ is of dimension $n$ over $E$.

  \item $\bar\rV$ is totally positive definite.

  \item The composite map
    \begin{align*}
    \bar\rV\otimes_\dQ\bA^{\infty,p}
    \to\Hom_{E\otimes_\dQ\bA^{\infty,p}}(\rH^{\et}_1(\bbA_{0k},\bA^{\infty,p}),\rH^{\et}_1(\bbA_k,\bA^{\infty,p}))
    \xrightarrow{(\bbeta^p)^{-1}}\rV\otimes_\dQ\bA^{\infty,p}
    \end{align*}
    is an isomorphism of hermitian spaces over $F\otimes_\dQ\bA^{\infty,p}$.

  \item The composite map
    \begin{align*}
    \prod_{\fq\in\r{Spl}_p}\bar\rV\otimes_E E_{\fq^-}
    \to\prod_{\fq\in\r{Spl}_p}\Hom_{O_{E_{\fq^-}}}\(\bbA_{0k}[(\fq^-)^\infty],\bbA_k[(\fq^-)^\infty]\)\otimes_{O_{E_{\fq^-}}}E_{\fq^-}
    \xrightarrow{(\bbeta_p^\spl)^{-1}}\prod_{\fq\in\r{Spl}_p}\rV\otimes_E E_{\fq^-}
    \end{align*}
    is an isomorphism of $\prod_{\fq\in\r{Spl}_p}E_{\fq^-}$-modules.

  \item For every $\fq\in\underline\fp$, the canonical map
      \[
      \bar\rV\otimes_FF_\fq\to
      \Hom_k((\bbA_{0k}[\fq^\infty],\bbi_{0k}[\fq^\infty]),(\bbA_k[\fq^\infty],\bbi_k[\fq^\infty]))\otimes_{O_{F_\fq}}F_\fq
      \]
      is an isomorphism of $E_\fq$-vector spaces.

  \item For every $\fq\in\underline\fp\setminus\{\fp\}$, $\bar\Lambda_\fq\coloneqq\Hom_k((\bbA_{0k}[\fq^\infty],\bbi_{0k}[\fq^\infty]),(\bbA_k[\fq^\infty],\bbi_k[\fq^\infty]))$ is a self-dual lattice in $\bar\rV\otimes_FF_\fq$.

  \item $\bar\rV\otimes_FF_\fp$ does not admit a self-dual lattice.
\end{enumerate}
\end{lem}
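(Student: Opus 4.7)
The plan is to verify the seven properties by exploiting the heuristic that $\bar\rV$ should be the global hermitian space over $E$ that differs from $\rV$ only at the single prime $\fp$ (where the local invariant flips) and at the archimedean place $\tau$ (where the signature changes from $(n-1,1)$ to $(n,0)$), the two changes compensating each other via the product formula for hermitian Hasse invariants. Accordingly, the bulk of the proof is a place-by-place comparison between $\bar\rV$ and $\rV$, with the archimedean positivity handled by general positivity of Rosati forms. The dimension assertion (1) will follow once any single such local comparison (say (3)) is established.

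For the prime-to-$\fp$ finite places giving (3), (4), and the $\fq\in\r{Spl}_p$ part of (5), I would invoke the construction of the level structures $\bbeta^p$ and $\bbeta_p^\spl$ in Definition~\ref{de:integral_connection}. Since $\bbP$ is an $O_{E_\fp^\ur}$-point, the prime-to-$p$ \'etale Tate modules $\rH^\et_1(\bbA,\bA^{\infty,p})$ and the split $p$-divisible factors $\bbA[(\fq^-)^\infty]$ are unramified/\'etale at $\fp$ and hence specialize from the generic fibre to $k$ without change; the hermitian pairings on both sides are defined by the same formula using the polarizations $\bblambda_0,\bblambda$ and Weil pairings, so they transport isometrically under $\bbeta^p$ and $\bbeta_p^\spl$. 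This is a formal consequence of the moduli description, no computation required.

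The case $\fq\in\underline\fp$ of (5), (6), and (7) is where the real work is. The canonical map in (5) is shown to be an isomorphism via covariant Dieudonn\'e theory over $k$: on the isocrystal side, $\Hom$-spaces up to isogeny have the expected rank because the characteristic polynomial of Frobenius acting on each factor is prescribed by the signature in $\sig_{\rV,\Phi}$. For (6), each $\fq\in\underline\fp\setminus\{\fp\}$ has trivial contribution to $\sig_{\rV,\Phi}$ from $\tau$, so $\bbA_k[\fq^\infty]$ carries signature $(n,0)$, i.e.\ is $\mu$-ordinary, and the $\fq$-principality of $\bblambda$ and $\bblambda_0$ forces $\bar\Lambda_\fq$ to be self-dual under the form~\eqref{eq:nearby_form}. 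For $\fq=\fp$ giving (7), both $\bbA_{0k}[\fp^\infty]$ and $\bbA_k[\fp^\infty]$ are supersingular (the latter because $\bbP_k$ lies in the basic locus), and by \cite{Mih}*{Theorem~3.3} they identify with the supersingular unitary $O_{F_\fp}$-modules $(\bbX_0,\bbi_0,\bblambda_0)$ of signature $(1,0)$ and $(\bbX_n,\bbi_n,\bblambda_n)$ of signature $(n-1,1)$ from Subsection~\ref{ss:afl}; under this identification the form~\eqref{eq:nearby_form} becomes exactly the hermitian form on the space $\rV_n^-$ defined there, which by construction has $\det$ of odd $\fp$-adic valuation and therefore admits no self-dual lattice.

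Finally, (1) is a dimension count extracted from (3) tensored with $\bA^{\infty,p}$ combined with (5), and (2) follows from positivity of the Rosati pairing: for each $\tau_0\in\Phi_F$ the real vector space $\bar\rV\otimes_{F,\tau_0}\dR$ is a subspace of $\Hom_k(\bbA_{0k},\bbA_k)\otimes_\dZ\dR$, and the hermitian form~\eqref{eq:nearby_form} coincides up to a positive scalar with the restriction of the Rosati form associated to the polarization $\bblambda_{0k}\times\bblambda_k$ on $\bbA_{0k}\times\bbA_k$, which is positive definite on any $\Hom$-subspace. The main obstacle is the $\fq=\fp$ identification, where the matching of~\eqref{eq:nearby_form} with the hermitian form on $\rV_n^-$ from Subsection~\ref{ss:afl} requires careful tracking of signs, duality, and normalizations through the equivalence of~\cite{Mih}; a secondary check is to verify that the sanity of Hasse invariants (product formula) is consistent with the local computations, which serves as a useful cross-check on the whole package.
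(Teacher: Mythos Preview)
Your overall place-by-place strategy is the right one, and several individual steps (positivity via Rosati for (2), Hasse-principle spirit for (7), level-structure transport for (3),(4)) match or are close to what the paper does. However, there is a genuine circularity in your treatment of (1), (3), and (5).

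You write that (3) and the split part of (5) are ``formal consequences of the moduli description'' via $\bbeta^p,\bbeta_p^\spl$, and that (5) at inert $\fq$ follows from a Dieudonn\'e rank computation; you then extract (1) from these. But for each of those maps, the target is indeed $n$-dimensional over the relevant local ring (the level structure or the isocrystal computation gives exactly that), while the source is $\bar\rV$ tensored with something. Over the algebraically closed field $k$, the map from global $\Hom_k(\bbA_{0k},\bbA_k)_\dQ$ to local Tate-module or $p$-divisible-group Homs is only known a priori to be \emph{injective}; surjectivity is equivalent to $\dim_E\bar\rV=n$, which is (1). So you cannot deduce (1) from (3) and (5) as stated: you need an independent proof that $\bar\rV$ has full dimension.

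The paper supplies exactly this missing ingredient. It first analyzes the quaternionic endomorphism algebra $D=\End_k(\bbA_{0k},\bbi_{0k}\res O_{F,(p)},\bblambda_{0k})_\dQ=E\oplus Ej$, then twists the $O_E$-action by a carefully chosen unit $j'=j+f$ (with $f\in\fp$ but $f\notin\fq$ for other $\fq\mid p$) to produce an $(E,\Phi')$-abelian variety $(\bbA_{0k},\bbi'_{0k})$ where $\Phi'$ differs from $\Phi^\tc$ only at $\tau'$. Invoking \cite{RZ96}*{Proposition~6.29}, it shows $(\bbA_k,\bbi_k)$ is quasi-isogenous to $(\bbA_{0k},\bbi_{0k})^{n-1}\times(\bbA_{0k},\bbi'_{0k})$. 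From this explicit decomposition, $\dim_E\bar\rV=n$ is immediate, and the $p$-adic isomorphism \eqref{eq:definite_nearby} (hence (1),(3),(4),(5)) all follow at once. Part (6) then uses that at $\fq\neq\fp$ the decomposition collapses to $(\bbA_{0k}[\fq^\infty],\bbi_{0k}[\fq^\infty])^{\oplus n}$ (your $\mu$-ordinariness remark is in the right direction but not a proof by itself), and (7) is obtained from (1)--(6) via the Hasse principle rather than by matching normalizations against $\rV_n^-$.
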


\begin{proof}
We first show that the canonical map
\begin{align}\label{eq:definite_nearby}
\bar\rV\otimes_\dQ\dQ_p\to\Hom_k((\bbA_{0k}[p^\infty],\bbi_{0k}[p^\infty]),(\bbA_k[p^\infty],\bbi_k[p^\infty]))\otimes_{\dZ_p}\dQ_p
\end{align}
is an isomorphism. Let $O_D$ be the $O_{F,(p)}$-algebra of endomorphisms of $(\bbA_{0k},\bbi_{0k}\res O_{F,(p)},\bblambda_{0k})$, and put $D\coloneqq O_D\otimes_{O_{F,(p)}}F$. Then $D$ is a totally definite (division) quaternion algebra over $F$ which contains $E$ via $\bbi_{0k}$. We write $D=E\oplus Ej$ for some element $j\in O_D\setminus pO_D$ such that $j^{-1}ej=e^\tc$ for every $e\in E$. Choose an element $f\in O_F$ such that $f\in\fp$ but $f\not\in\fq$ for every other prime $\fq$ of $F$ above $p$; and put $j'\coloneqq j+f$. We define a new action $\bbi'_{0k}$ of $O_{E,(p)}$ on $\bbB_k$ via the formula $\bbi'_{0k}(e)=j'^{-1}\circ\bbi_{0k}(e)\circ j'$. Then $(\bbA_{0k},\bbi'_{0k})$ is an $(E,\Phi')$-abelian scheme over $k$, where $\Phi'\coloneqq(\Phi^\tc\setminus\{\tau'^\tc\})\cup\{\tau'\}$, with a polarization $\bblambda'_{0k}\coloneqq(j')^*\bblambda_{0k}$. Now by \cite{RZ96}*{Proposition~6.29}, $(\bbA_k,\bbi_k)$ is quasi-isogenous to $(\bbA_{0k},\bbi_{0k})^{n-1}\times(\bbA_{0k},\bbi'_{0k})$. In particular, \eqref{eq:definite_nearby} is an isomorphism. From this, (1), (3), (4), and (5) follow immediately. Part (2) can be proved in the same way as \cite{KR14}*{Lemma~2.7}. Part (7) is a consequence of (1--6) and the Hasse principle.

It remains to show (6). By the above discussion, $(\bbA_k[\fq^\infty],\bbi_k[\fq^\infty])$ is quasi-isogenous to $(\bbA_{0k}[\fq^\infty],\bbi_{0k}[\fq^\infty])^{\oplus n}$ for $\fq\in\underline\fp\setminus\{\fp\}$. Then they must be isomorphic. In particular, the induced hermitian form on $\bar\Lambda_\fq$ is given by the identity matrix under some basis. Thus, we obtain (6).
\end{proof}

Lemma \ref{le:definite_nearby}(3,4) gives rise to an isomorphism
\begin{align}\label{eq:nearby_definite}
\iota_{\bbP}\colon\bar\rV\otimes_F{\bA_F^\infty}^{,\underline\fp}\to\rV\otimes_F{\bA_F^\infty}^{,\underline\fp}
\end{align}
of hermitian spaces over ${\bA_F^\infty}^{,\underline\fp}$. Let $\bar{K}_\fq$ be the stabilizer of $\bar\Lambda_\fq$ in Lemma \ref{le:definite_nearby}(6) for every $\fq\in\underline\fp\setminus\{\fp\}$, which is a hyperspecial maximal subgroup of $\rU(\bar\rV)(F_\fq)$.

Let $\cM(\bW_0^\infty,\Phi^\tc)_{L_0}^\wedge$ be the completion of $\cM(\bW_0^\infty,\Phi^\tc)_{L_0}\otimes_{O_{E_\Phi,(p)}}O_{E_\fp^\ur}$ along the special fiber, which is isomorphic to a finite disjoint union of $\Spf O_{E_\fp^\ur}^\wedge$. Then \eqref{eq:integral_canonical} induces a morphism
\[
\bq_0^\wedge\colon\cM(\rV,\bW_0^\infty,\Phi)_{K_{\underline\fp},L_0}^{\ssl,\wedge}\to\cM(\bW_0^\infty,\Phi^\tc)_{L_0}^\wedge
\]
of formal schemes over $O_{E_\fp^\ur}^\wedge$.

\begin{proposition}\label{pr:uniformization}
The chosen point $\bbP$ \eqref{eq:frame} induces the following Cartesian diagram
\[
\xymatrix{
\rU(\bar\rV)(F)\backslash\(\cN\times\rU(\bar\rV)({\bA_F^\infty}^{,\fp})/\prod_{\fq\in\underline\fp\setminus\{\fp\}}\bar{K}_\fq\)
\ar[r]\ar[d]_-{\tu_{\bbP}} & \Spf O_{E_\fp^\ur}^\wedge \ar[d]^-{\bq_0^\wedge\circ\bbP^\wedge} \\
\cM(\rV,\bW_0^\infty,\Phi)_{K_{\underline\fp},L_0}^{\ssl,\wedge} \ar[r]^-{\bq_0^\wedge} & \cM(\bW_0^\infty,\Phi^\tc)_{L_0}^\wedge
}
\]
of formal schemes over $O_{E_\fp^\ur}^\wedge$, satisfying
\begin{itemize}
  \item $\tu_{\bbP}\circ(\bbP^\wedge_\loc,1)=\bbP^\wedge$ (see \eqref{eq:uniformization1} and \eqref{eq:uniformization2}), and

  \item $\tu_{\bbP}\circ\tT_{\bar{g}}=\tT_g\circ\tu_{\bbP}$ for every $g\in\rU(\rV)({\bA_F^\infty}^{,\underline\fp})$ and $\bar{g}\in\rU(\bar\rV)({\bA_F^\infty}^{,\underline\fp})$ that correspond under $\iota_{\bbP}$ \eqref{eq:nearby_definite}, where $\tT_g$ (resp.\ $\tT_{\bar{g}}$) denotes the Hecke translation on the target (resp.\ source) of $\tu_{\bbP}$.
\end{itemize}
\end{proposition}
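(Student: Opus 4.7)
The plan is to follow the standard Rapoport--Zink style argument for $p$-adic uniformization of basic loci, adapted to the present mixed setting where $p$ splits in $F$ into inert primes $\underline\fp$ and split primes $\r{Spl}_p$, and where at most one inert prime (namely $\fp$) contributes nontrivially to the supersingular locus.

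First, I would construct the morphism $\tu_\bbP$. Given a formal scheme $S$ over $O_{E_\fp^\ur}^\wedge$ on which $p$ is locally nilpotent and a lift of an $S$-point of the source represented by $(X,i_X,\lambda_X;\rho_X)\in\cN(S)$ together with $\bar{g}^\fp\in\rU(\bar\rV)({\bA_F^\infty}^{,\fp})$, the output abelian scheme $(A_0,i_0,\lambda_0,\eta_0^p)$ is simply the base change of $(\bbA_0,\bbi_0,\bblambda_0,\bbeta_0^p)$; while $(A,i,\lambda)$ is produced by a Serre tensor / Drinfeld rigidification construction from the reference $\bbA$, using the quasi-isogeny $\rho_X$ to replace the $\fp$-divisible group by $X$, and using $\bar{g}^\fp$ (together with Lemma~\ref{le:definite_nearby}(6)) to modify the away-from-$\fp$ part by a prime-to-$\fp$ quasi-isogeny. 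The level structures $(\eta^p,\eta_p^\spl)$ are obtained as the composition of $\bbeta^p$ (resp.\ $\bbeta_p^\spl$) with $\iota_\bbP$ (from \eqref{eq:nearby_definite}) and right translation by $\bar{g}^\fp$; the hermitian compatibility required in Definition~\ref{de:integral_connection} is guaranteed by Lemma~\ref{le:definite_nearby}(3,4). Since two choices differing by left multiplication by $\gamma\in\rU(\bar\rV)(F)$ yield isogenous objects through the prime-to-$\fp$ quasi-isogeny $\gamma$ itself (extended to the $\fp$-divisible group through $\rho_X^{-1}\circ\gamma\circ\rho_X$), the assignment descends to the quotient. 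By construction, feeding in $(\bbP^\wedge_\loc,1)$ returns $\bbP^\wedge$, and precomposing with right translation by $\bar{g}$ on the source corresponds to precomposing the level structure by $g$ on the target via $\iota_\bbP$, so $\tu_\bbP\circ\tT_{\bar{g}}=\tT_g\circ\tu_\bbP$.

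Second, I would show that $\tu_\bbP$ is an isomorphism of formal schemes by combining a bijection on geometric points with an isomorphism on formal completions. For the former, let $k'$ be an algebraically closed field over $k$ and pick any supersingular $k'$-point of the target, represented by $(A_0,i_0,\lambda_0,\eta_0^p;A,i,\lambda,\eta^p,\eta_p^\spl)$. Since $(A_0,i_0,\lambda_0,\eta_0^p)$ lifts uniquely to characteristic zero by the theory of canonical lifts for CM abelian varieties, we may identify it with $(\bbA_0,\bbi_0,\bblambda_0,\bbeta_0^p)_{k'}$. The set $\Hom_{k'}((\bbA_{0k'},\bbi_{0k'}),(A,i))_\dQ$ is then, via the identical argument of Lemma~\ref{le:definite_nearby} applied at $k'$, an $n$-dimensional hermitian space over $E$ which is canonically isomorphic to $\bar\rV$ since $A$ is isogenous to $\bbA_{k'}$ in the basic isogeny class (supersingularity at $\fp$ together with the signature condition force this at every prime above $p$, and away from $p$ this is automatic from the level structures). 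Choosing such a quasi-isogeny $\rho\colon A\to\bbA_{k'}$ respecting polarizations up to $F_\fp^\times$, we obtain a point of $\cN(k')$ from $\rho[\fp^\infty]$, and an element $\bar{g}^\fp\in\rU(\bar\rV)({\bA_F^\infty}^{,\fp})$ from the identification of $\eta^p\circ\iota_\bbP^{-1}$ (resp.\ $\eta_p^\spl\circ\iota_\bbP^{-1}$) with $\bbeta^p\circ\bar g^\fp$ (resp.\ $\bbeta_p^\spl\circ\bar g^\fp$) using the discussion at primes of $\underline\fp\setminus\{\fp\}$ via Lemma~\ref{le:definite_nearby}(6) and at $\r{Spl}_p$ via the \'etaleness of the split parts. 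Two choices of $\rho$ differ by an automorphism of $(\bbA_{k'},\bbi_{k'})$ respecting $\bblambda_{k'}$ up to $F^\times$, which is precisely an element of $\rU(\bar\rV)(F)$, giving the bijection.

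Third, for the isomorphism on formal completions at a $k'$-point, the key input is Serre--Tate theory, which reduces the deformation problem for $(A,i,\lambda)$ to that of its $p$-divisible group $(A[p^\infty],i[p^\infty],\lambda[p^\infty])$. Using the decomposition $A[p^\infty]=\prod_{\fq\mid p}A[\fq^\infty]$ induced by $O_F\otimes\dZ_p=\prod_{\fq\mid p}O_{F_\fq}$, the $\fp$-part is controlled exactly by the Rapoport--Zink space $\cN$, the parts for $\fq\in\underline\fp\setminus\{\fp\}$ are \'etale and hence rigid under deformation (with automorphisms pinned down by $\bar{K}_\fq$), and the parts for $\fq\in\r{Spl}_p$ split further according to $O_{E_\fq}=O_{E_{\fq^-}}\times O_{E_{\fq^+}}$ into mutually dual \'etale pieces controlled by $\eta_p^\spl$, so again rigid. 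Matching these against the deformation theory of the source, which is $\cN$ times a discrete set, yields the desired formal isomorphism.

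The main obstacle I anticipate is verifying cleanly the compatibility at the split primes $\r{Spl}_p$: one must check that the level structure $\eta_p^\spl$ at split primes carries exactly the correct amount of information to pin down the \'etale $p$-divisible summands $A[(\fq^-)^\infty]\times A[(\fq^+)^\infty]$ and their polarization pairing, so that no extra moduli appear on the target beyond what the source provides. The rest of the argument is essentially a bookkeeping exercise leveraging Lemma~\ref{le:definite_nearby} and the rigidity of isogeny classes of supersingular $O_{F_\fp}$-modules due to \cite{RZ96}.
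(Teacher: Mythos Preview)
Your proposal is correct and follows the same Rapoport--Zink style argument as the paper, with only an organizational difference. The paper chooses to construct the \emph{inverse} morphism
\[
\tv_{\bbP}\colon\cM_{\bbP}\longrightarrow
\rU(\bar\rV)(F)\backslash\Bigl(\cN\times\rU(\bar\rV)({\bA_F^\infty}^{,\fp})/\textstyle\prod_{\fq\in\underline\fp\setminus\{\fp\}}\bar{K}_\fq\Bigr)
\]
directly as a morphism of functors on all test schemes $S$ (not just on geometric points), invoking \cite{RZ96}*{Proposition~6.29} to produce the quasi-isogeny $\rho\colon A\times_SS_k\to\bbA_k\times_kS_k$ over any such $S$; it then declares that the forward map $\tu_{\bbP}$ is the easy direction, obtained from Dieudonn\'e theory. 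You instead build $\tu_{\bbP}$ first via a gluing/Serre-tensor description, and then verify it is an isomorphism by a two-step check: bijectivity on geometric points (where your construction of the inverse is exactly the paper's $\tv_{\bbP}$ restricted to $k'$-points) together with a Serre--Tate argument on formal completions. Both routes use the same ingredients---Lemma~\ref{le:definite_nearby}, the decomposition of $A[p^\infty]$ along primes above $p$, and rigidity at $\underline\fp\setminus\{\fp\}$ and at $\r{Spl}_p$---so the difference is purely in packaging. One small correction: in your step~2 you ask for $\rho$ to respect polarizations ``up to $F_\fp^\times$'', but in the definition of $\cN$ the framing satisfies $\rho^*\bblambda=\lambda$ on the nose (height zero), so no scalar ambiguity enters.
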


\begin{proof}
The proof is very similar to \cite{RZ96}*{Theorem~6.30}. For readers' convenience, we will describe the morphism
\begin{align}\label{eq:uniformization3}
\tv_{\bbP}\colon\cM_{\bbP}\to
\rU(\bar\rV)(F)\backslash\(\cN\times\rU(\bar\rV)({\bA_F^\infty}^{,\fp})/\prod_{\fq\in\underline\fp\setminus\{\fp\}}\bar{K}_\fq\),
\end{align}
where $\cM_{\bbP}$ is the pullback of $\bq_0^\wedge$ along $\bq_0^\wedge\circ\bbP^\wedge$, for which $\tu_{\bbP}$ is the inverse. This is the hardest step; and in particular, we will see how this morphism depends on $\bbP$.

Let $S$ be a connected scheme in $\Sch'_{/O_{E_\fp^\ur}^\wedge}$ on which $p$ is locally nilpotent, with a chosen geometric point $s\in S(k)$. Take a point
$P=(A_0,i_0,\lambda_0,\eta_0^p;A,i,\lambda,\eta^p,\eta_p^\spl)\in\cM_{\bbP}(S)$, where $(A_0,i_0,\lambda_0,\eta_0^p)$ is the base change of $(\bbA_0,\bbi_0,\bblambda_0,\bbeta_0^p)$ to $S$. By \cite{RZ96}*{Proposition~6.29}, we can choose an $O_E$-linear quasi-isogeny
\[
\rho\colon A\times_SS_k\to\bbA_k\times_kS_k
\]
such that $\rho^*\bblambda_k=\lambda_k$. Then $(A[\fp^\infty],i[\fp^\infty],\lambda[\fp^\infty];\rho[\fp^\infty])$ can be regarded as an element in $\cN(S)$ by \cite{Mih}*{Theorem~3.3}. The composite map
\begin{align*}
\bar\rV\otimes_\dQ\bA^{\infty,p}&\xrightarrow{\iota_{\bbP}}\rV\otimes_\dQ\bA^{\infty,p}
\xrightarrow{\eta^p}
\Hom_{E\otimes_\dQ\bA^{\infty,p}}(\rH^{\et}_1(\bbA_{0k},\bA^{\infty,p}),\rH^{\et}_1(A_s,\bA^{\infty,p}))\\
&\xrightarrow{\rho_{s*}\circ}
\Hom_{E\otimes_\dQ\bA^{\infty,p}}(\rH^{\et}_1(\bbA_{0k},\bA^{\infty,p}),\rH^{\et}_1(\bbA_k,\bA^{\infty,p}))
=\bar\rV\otimes_\dQ\bA^{\infty,p}
\end{align*}
is an isometry, which gives rise to an element $g^p_P\in\rU(\bar\rV)(\bA_F^{\infty,p})$. The same process will produce an element $g_{P,p}^\spl\in\prod_{\fq\in\r{Spl}_p}\rU(\bar\rV)(F_\fq)$. For every $\fq\in\underline\fp\setminus\{\fp\}$, the image of the map
\begin{align*}
\rho_{s*}\circ &\colon\Hom_k((\bbA_{0k}[\fq^\infty],\bbi_{0k}[\fq^\infty]),(A_s[\fq^\infty],i_s[\fq^\infty])) \\
&\to
\Hom_k((\bbA_{0k}[\fq^\infty],\bbi_{0k}[\fq^\infty]),(\bbA_k[\fq^\infty],\bbi_k[\fq^\infty]))\otimes_{O_{F_\fq}}F_\fq=\bar\rV\otimes_FF_\fq
\end{align*}
is a self-dual lattice, say $\Lambda_{P,\fq}$. Therefore, there exists a unique element $g_{P,\fq}\in\rU(\bar\rV)(F_\fq)/\bar{K}_\fq$ such that $g_{P,\fq}\Lambda_{P,\fq}=\bar\Lambda_\fq$. Together, we obtain an element
\[
\((A[\fp^\infty],i[\fp^\infty],\lambda[\fp^\infty];\rho[\fp^\infty]),g_P^p,g_{P,p}^\spl,(g_{P,\fq})_\fq\)\in
\cN(S)\times\rU(\bar\rV)({\bA_F^\infty}^{,\fp})/\prod_{\fq\in\underline\fp\setminus\{\fp\}}\bar{K}_\fq
\]
depending on the choice of $\rho$. However, changing $\rho$ will result the left multiplication by an element in $\rU(\bar\rV)(F)$. Thus, the element
\[
\tv_{\bbP}(P)\coloneqq\((A[\fp^\infty],i[\fp^\infty],\lambda[\fp^\infty];\rho[\fp^\infty]),g_P^p,g_{P,p}^\spl,(g_{P,\fq})_\fq\)
\]
is a well-defined element in the right-hand side of \eqref{eq:uniformization3}. The construction of the inverse of $\tv_{\bbP}$, which is nothing but $\tu_{\bbP}$, is easy by Dieudonn\'{e} theory. We leave the details to the readers; it is the same argument in \cite{RZ96}.
\end{proof}

\begin{remark}
In fact, the morphism $\tu_{\bbP}$ in Proposition \ref{pr:uniformization} is compatible with more Hecke operators. Consider a prime $\fq\in\underline\fp\setminus\{\fp\}$. For every double coset $K_\fq g K_\fq\subseteq\rU(\rV)(F_\fq)$, we have the Hecke correspondence $\tT_{K_\fq g K_\fq}$ on the target of $\tu_{\bbP}$ which is simply the Zariski closure of the usual Hecke correspondence on the generic fiber; it is in fact \'{e}tale. Then we have $\tu_{\bbP}^*\tT_{K_\fq g K_\fq}=\tT_{\bar{K}_\fq\bar{g}\bar{K}_\fq}$ if $K_\fq g K_\fq=\bar{K}_\fq\bar{g}\bar{K}_\fq$ under the canonical isomorphism $K_\fq\backslash\rU(\rV)(F_\fq)/K_\fq\simeq\bar{K}_\fq\backslash\rU(\bar\rV)(F_\fq)/\bar{K}_\fq$. Here, $\tT_{\bar{K}_\fq\bar{g}\bar{K}_\fq}$ denotes the set-theoretical Hecke correspondence on the source of $\tu_{\bbP}$.
\end{remark}

\section{Cohomology of unitary Shimura curves}
\label{ss:d}

In this appendix, we compute the cohomology of Shimura curves associated to \emph{isometry} groups of hermitian spaces of rank $2$, as Galois--Hecke modules. In Subsection \ref{ss:local_theta}, we collect some results about local oscillator representations of unitary groups of general rank. In Subsection \ref{ss:setup}, we recall some facts and introduce some notation about cohomology of Shimura varieties in general.
The last two subsections concern the cohomology of unitary Shimura curves, for the statements and for the proof, respectively. These statements are only used in the proof of Theorem \ref{th:cm_albanese_pre} and Theorem \ref{th:cm_albanese} in the main part of the article.

\subsection{Oscillator representations of local unitary groups}
\label{ss:local_theta}

Let $F$ be a local field whose characteristic is not $2$. Let $E$ be an \'{e}tale $F$-algebra of rank $2$. Denote by $\tc$ the unique nontrivial involution on $E$ that fixes $F$, and put $E^-\coloneqq\{x\in E\res x+x^\tc=0\}$ and $E^1\coloneqq\{x\in E\res xx^\tc=1\}$. Let $\rV,(\;,\;)_\rV$ be a (non-degenerate) hermitian space over $E$ (with respect to $\tc$) of rank $n\geq 2$.

We recall the construction of oscillator representations of $\rU(\rV)$ in three steps.
\begin{description}
  \item[Step 1] Choose an element $\varepsilon\in E^{-\times}/\Nm_{E/F}E^\times$. Let $\rV_\varepsilon$ be the underlying $F$-vector space of $\rV$ equipped with the form $\Tr_{E/F}\varepsilon(\;,\;)_\rV$, which becomes a symplectic space.\footnote{More precisely, we have to choose an element in $E^{-\times}$ in the coset $\varepsilon$; and it is known that the resulting oscillator representation depends only on $\varepsilon$.} Let $\Mp(\rV_\varepsilon)$ be the metaplectic group of $\rV_\varepsilon$ with center $\dC^1$. Then we have the oscillator representation $\omega(\varepsilon)$ of $\Mp(\rV_\varepsilon)$ using the standard additive character $\psi_F$.

  \item[Step 2] Choose a character $\mu\colon E^\times\to\dC^1$ such that $\mu\res{F^\times}$ is the unique character whose kernel is exactly $\Nm_{E/F}E^\times$. Then we have the induced homomorphism $\iota_\mu\colon\rU(\rV)\to\Mp(\rV_\varepsilon)$ (see, for example, \cite{HKS}*{Section~1}). Put $\omega(\mu,\varepsilon)\coloneqq\omega(\varepsilon)\circ\iota_\mu$.

  \item[Step 3] Choose a character $\chi\colon E^1\to\dC^1$. Let $\omega(\mu,\varepsilon,\chi)$ be the maximal quotient of the representation $\omega(\varepsilon,\mu)$ of $\rU(\rV)$ with central character $\chi$.
\end{description}

For $\chi$ in Step 3, we define a character $\check\chi$ of $E^\times$ via the formula $\check\chi(x)=\chi(x/x^\tc)$.

\begin{lem}\label{le:weil_nonarch}
Suppose that $F$ is nonarchimedean. Then $\omega(\mu,\varepsilon,\chi)$ is irreducible and admissible. Moreover,
\begin{enumerate}
  \item $\omega(\mu,\varepsilon,\chi)$ is zero if and only if $E$ is a field, $\rV$ is anisotropic (in particular $n=2$), and $\check\chi=\mu^2$.

  \item The contragredient representation of $\omega(\mu,\varepsilon,\chi)$ is isomorphic to $\omega(\mu^\tc,-\varepsilon,\chi^{-1})$, where $\mu^\tc\coloneqq\mu\circ\tc$ as usual.

  \item If $n\geq 3$, then $\omega(\mu',\varepsilon',\chi')$ is isomorphic to $\omega(\mu,\varepsilon,\chi)$ if and only if $(\mu',\varepsilon',\chi')=(\mu,\varepsilon,\chi)$.

  \item If $n=2$ and $\omega(\mu,\varepsilon,\chi)$ is nonzero, then $\omega(\mu',\varepsilon',\chi')$ is isomorphic to $\omega(\mu,\varepsilon,\chi)$ if and only if either $(\mu',\varepsilon',\chi')=(\mu,\varepsilon,\chi)$, or $\mu'=\mu^\tc\check\chi$, $\chi'=\chi$, and $\varepsilon'=\varepsilon$ (resp.\ $\varepsilon'\neq\varepsilon$) when $\rV$ is isotropic (resp.\ anisotropic).
\end{enumerate}

\end{lem}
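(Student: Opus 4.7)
The plan is to realize $\omega(\mu,\varepsilon,\chi)$ as the local theta lift of the character $\chi$ of $\rU(\rW) = E^1$, where $\rW$ is the one-dimensional skew-hermitian space over $E$ whose discriminant coset is represented by $\varepsilon$, to $\rU(\rV)$, using the splitting datum specified by $\mu$. With this viewpoint, (1) is immediate from local Howe duality for unitary dual pairs---established by Waldspurger in odd residue characteristic and extended to the remaining cases by Gan--Takeda \cite{GT16}---which says that the $\chi$-isotypic quotient of $\omega(\mu,\varepsilon)$ is either zero or an irreducible admissible representation of $\rU(\rV)$. For (2), the vanishing criterion amounts to determining the first-occurrence indices of $\chi$ in the two Witt towers of hermitian spaces over $E$. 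The conservation relation for unitary dual pairs \cite{SZ15} forces these two indices to sum to $4$ (since $\dim \rW = 1$). A direct computation with the Weil representation shows that $\chi$ occurs at a rank-$1$ hermitian space exactly when $\check\chi = \mu^2$, in which case the first occurrence in the opposite Witt tower is forced to rank $3$; this pinpoints the vanishing to the stated case.

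For (3), two observations suffice. First, the Weil representation $\omega(\varepsilon)$ of $\Mp(\rV_\varepsilon)$ satisfies $\omega(\varepsilon)^\vee \simeq \omega(-\varepsilon)$, because negating $\varepsilon$ replaces the symplectic form by its opposite. Second, the splittings $\iota_{\mu^\tc}$ and $\iota_\mu$ are related via the involution $\tc$ on $\rV_\varepsilon$ in precisely the way that pulls $\omega(-\varepsilon)$ back to $\omega(\mu^\tc,-\varepsilon)$. Extracting the $\chi^{-1}$-isotypic quotient of the contragredient of $\omega(\mu,\varepsilon)$ then yields the identification $\omega(\mu,\varepsilon,\chi)^\vee \simeq \omega(\mu^\tc,-\varepsilon,\chi^{-1})$.

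The central character of $\omega(\mu,\varepsilon,\chi)$ is $\chi$ by construction, pinning down $\chi$ from the representation alone. For (4), assuming $n \geq 3$, the remaining parameter $(\mu,\varepsilon)$ is uniquely recoverable via a see-saw identity applied to an orthogonal decomposition $\rV = \rV_1 \perp \rV_2$ with $\dim \rV_2 = 1$: the restricted action on the subgroup $\rU(\rV_2) \simeq E^1$ reads off the character $\mu$ twisted by the discriminant of $\rV_2$, while the nonvanishing pattern of the theta lift to $\rU(\rV_1)$ (of rank $n-1 \geq 2$) distinguishes $\varepsilon$ from its $-\varepsilon$ counterpart through the rank-$2$ case already in hand. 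The main obstacle is the case $n = 2$ in (5), where the above argument collapses since $\dim \rV_1 = 1$ and one cannot separate the two cosets of $\varepsilon$ by this method. I plan to address this by an explicit realization of $\omega(\mu,\varepsilon,\chi)$---as a (generically irreducible) principal series of the quasi-split unitary group $\rU(\rV)$ when $\rV$ is isotropic, and via its Jacquet--Langlands transfer to the compact inner form when $\rV$ is anisotropic---whose inducing parameters are naturally invariant under the involution $(\mu,\varepsilon) \mapsto (\mu^\tc\check\chi, \varepsilon^\ast)$, where $\varepsilon^\ast = \varepsilon$ in the isotropic case and $\varepsilon^\ast \neq \varepsilon$ in the anisotropic case, matching the case split in the statement.
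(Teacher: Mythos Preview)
Your treatment of irreducibility, the vanishing criterion, and the contragredient (the opening statement and the lemma's (1)--(2)) lines up with the paper's argument: Howe duality for irreducibility, conservation/first-occurrence for vanishing, and the $\omega(\varepsilon)^\vee\simeq\omega(-\varepsilon)$ together with the relation between $\iota_\mu$ and $\iota_{\mu^\tc}$ (the paper cites \cite{HKS}*{Lemma~1.1 \& (1.8)} here) for the contragredient. One thing you skip that the paper does first: it disposes of the split case $E=F\times F$ separately, by identifying $\omega(\mu,\varepsilon,\chi)$ with the unitary parabolic induction from $\rQ_{n-1,1}(F)$ of $(\nu\circ\det)\boxtimes\chi\nu^{1-n}$ on $\GL_n(F)$, from which all four parts are read off directly. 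Your theta-lifting framework should absorb this case, but it is worth isolating.

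For the lemma's (3) and (4) your route diverges from the paper's. The paper does not argue (3) directly: it cites \cite{GR90}*{Proposition~5.1.4} for $n=3$ and asserts the same proof extends to $n>3$. For (4) it invokes the endoscopic packet description in \cite{GGP2}*{Section~8}, which forces $\mu'\in\{\mu,\mu^\tc\check\chi\}$, and then appeals to \cite{GGP2}*{Theorem~10.2} to match the labelings across the two orderings of $M=M_1\oplus M_2$, yielding exactly the isotropic/anisotropic case split. Your alternative---see-saw along $\rV=\rV_1\perp\rV_2$ for (3), and explicit principal-series/Jacquet--Langlands realization for (4)---is more hands-on and avoids the GGP black boxes, but as written it has a logical ordering problem: you invoke ``the rank-$2$ case already in hand'' to pin down $\varepsilon$ in your argument for $n\geq 3$, yet immediately afterwards call the $n=2$ case ``the main obstacle'' still to be addressed. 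You must either prove (4) first and then run the see-saw induction, or find an $\varepsilon$-separating mechanism for $n\geq 3$ that does not reduce to rank $2$. Note also that (4) itself says $\varepsilon$ is \emph{not} always recoverable at rank $2$ (it flips in the anisotropic case), so your reduction must track the isotropy type of $\rV_1$ carefully; when $n=3$ and $\rV_1$ happens to be anisotropic with $\mu=\mu^\tc\check\chi$, the rank-$2$ information alone does not determine $\varepsilon$, and you will need the $\rU(\rV_2)$-action (or some other input) to close the gap.
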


\begin{proof}
We consider first the case where $E=F\times F$. We identify $\rU(\rV)$ with $\GL_n(F)$ and $E^-$ with $F$ through the first factor; and write $\mu=\nu\boxtimes\nu^{-1}$. Note that the first component of $\check\chi$ is simply $\chi$. Let $\rQ_{n-1,1}$ be the standard parabolic subgroup of $\GL_n$ whose Levi is $\GL_{n-1}\times\GL_1$. Then $\omega(\mu,\varepsilon,\chi)$ is isomorphic to the unitary induction from $\rQ_{n-1,1}(F)$ to $\GL_n(F)$ of the (unitary) character $(\nu\circ\det)\boxtimes\chi\nu^{1-n}$ of $\GL_{n-1}(F)\times\GL_1(F)$ (hence of $\rQ_{n-1,1}(F)$). See for example \cite{GR90}*{2.6}. The lemma follows from such description.

Now we assume that $E$ is a field. The fact that $\omega(\mu,\varepsilon,\chi)$ is irreducible is a special case of the Howe duality; see for example \cite{GT16}*{Theorem~1.1(1)}.

For (1), the fact that $\omega(\mu,\varepsilon,\chi)$ is nonzero unless in the exceptional case in (1) follows from the persistence property \cite{HKS}*{Proposition~5.1(iii)}, and the first occurrence speculation \cite{HKS}*{Speculation~7.5 \& Speculation~7.6} (which has been proved as \cite{SZ15}*{Theorem~1.10}). Note that in the exceptional case, the first occurrence of the theta lifting of the trivial character in the split even tower is $0$; therefore its first occurrence in the nonsplit even tower is $4$. See \cite{HKS}*{p.986} for more details.

Note that, since $E^1$ is compact, we have a canonical isomorphism of representations of $\rU(\rV)$
\[
\omega(\mu,\varepsilon)\simeq\bigoplus_{\chi}\omega(\mu,\varepsilon,\chi).
\]

For (2), note that under the canonical isomorphism $\Mp(\rV_{\varepsilon})\simeq\Mp(\rV_{-\varepsilon})$, the contragredient of $\omega(\varepsilon)$ is isomorphic to $\omega(-\varepsilon)$. Moreover, under such isomorphism, $\iota_{\mu}$ coincides with $\iota_{\mu^\tc}$ by \cite{HKS}*{Lemma~1.1 \& (1.8)}. Therefore, $\omega(\mu,\varepsilon)$ is contragredient to $\omega(\mu^\tc,-\varepsilon)$. Since $E^1$ is compact, we have a canonical isomorphism $\omega(\mu,\varepsilon)\simeq\bigoplus_{\chi}\omega(\mu,\varepsilon,\chi)$. Thus, $\omega(\mu,\varepsilon,\chi)$ is contragredient to $\omega(\mu^\tc,-\varepsilon,\chi^{-1})$ as both are irreducible with inverse central character, or both are zero.

For (3), it is known when $n=3$ by \cite{GR90}*{Proposition~5.1.4}. In fact, the same proof also works for $n>3$.

For (4), we first have $\chi=\chi'$. By the description of endoscopic packets for in \cite{GGP2}*{Section~8}, we must have either $\mu'=\mu$ or $\mu'=\mu^\tc\check\chi$. There are two cases.

Suppose that $\mu^\tc\check\chi=\mu$. Then $\omega(\mu,\varepsilon,\chi)=\{0\}$ when $\rV$ is anisotropic; and $\omega(\mu,\varepsilon,\chi)$ is not isomorphic to $\omega(\mu,\varepsilon',\chi)$ when $\rV$ is isotropic and $\varepsilon'\neq\varepsilon$. Thus, (4) follows.

Suppose that $\mu^\tc\check\chi\neq\mu$. Then the packet has four members, and we need to show that $\omega(\mu,\varepsilon,\chi)\simeq\omega(\mu^\tc\check\chi,\varepsilon',\chi)$ for $\varepsilon'=\varepsilon$ (resp.\ $\varepsilon'\neq\varepsilon$) when $\rV$ is isotropic (resp.\ anisotropic). We adopt the notation in \cite{GGP2}*{Section~8}. Let $M$ be the two-dimensional conjugate symplectic representation associated to the packet. Then $M$ has two non-isomorphic one dimensional conjugate symplectic representations. We write $M=M_1^\bullet\oplus M_2^\bullet=M_1^\circ\oplus M_2^\circ$ for the two different ways of ordering of direct summands. Thus, we obtain two ways of labelling for the four members in the packet, say $\{\pi^{++}_\bullet,\pi^{--}_\bullet,\pi^{+-}_\bullet,\pi^{-+}_\bullet\}$ and $\{\pi^{++}_\circ,\pi^{--}_\circ,\pi^{+-}_\circ,\pi^{-+}_\circ\}$, respectively. Then (4) is equivalent to the isomorphisms $\pi^{++}_\bullet\simeq\pi^{++}_\circ$, $\pi^{--}_\bullet\simeq\pi^{--}_\circ$, $\pi^{+-}_\bullet\simeq\pi^{-+}_\circ$, and $\pi^{-+}_\bullet\simeq\pi^{+-}_\circ$. However, these isomorphisms are consequences of \cite{GGP2}*{Theorem~10.2}.
\end{proof}

Now we take $F=\dR$ and $E=\dC$. Let $(p,q)$ be the signature of $\rV$. Then we may identify $\rU(\rV)$ with $\rU(p,q)_\dR$, the subgroup of $\Res_{\dC/\dR}\GL_n$ of elements preserving the hermitian form given by the matrix $\(\begin{smallmatrix}\rI_p&\\&-\rI_q\end{smallmatrix}\)$. Denote by $\fu_{p,q}$ the Lie algebra of $\rU(p,q)_\dR$ and fix a maximal compact subgroup $\rK_{p,q}$ of $\rU(p,q)_\dR(\dR)$. In the construction of $\omega(\mu,\varepsilon,\chi)$, the three parameters have the following possibilities:
\[
\mu_m(z)=\arg(z)^m,\; m\text{ odd integer};
\qquad\varepsilon=\pm i;
\qquad\chi_l(z)=z^l,\; l\in\dZ.
\]
To shorten notation, we denote by $\omega_{p,q}^{m,\pm,l}$ the representation $\omega(\mu_m,\pm i,\chi_l)$ of $\rU(p,q)_\dR$. It is well-known (see, for example, \cite{SW78}*{Section~4}) that $\omega_{p,q}^{m,\pm,l}$ is irreducible.

By the computation in \cite{BMM}*{Section~5}, up to equivalence, there are only two irreducible unitary representations $\pi$ of $\rU(n-1,1)_\dR$ such that $\rH^1(\fu_{n-1,1},\rK_{n-1,1};\pi)\neq\{0\}$, in which case the cohomology has dimension $1$ for both representations. Let us label them by
$\pi_{n-1,1}^{1,0}$ and $\pi_{n-1,1}^{0,1}$ in the way that $\rH
^1(\fu_{n-1,1},\rK_{n-1,1};\pi_{n-1,1}^{1,0})$ and $\rH^1(\fu_{n-1,1},\rK_{n-1,1};\pi_{n-1,1}^{0,1})$ have Hodge types
$(1,0)$ and $(0,1)$, respectively.

\begin{lem}\label{le:weil_arch}
Let the notation be as above.
\begin{enumerate}
  \item Among the representations $\omega_{n,0}^{m,\pm,l}$, only $\omega_{n,0}^{1,+,0}$ and $\omega_{n,0}^{-1,-,0}$ are the trivial character.

  \item If $n\geq 3$, then in the set $\{\omega_{n-1,1}^{m,\pm,l}\}$, only $\omega_{n-1,1}^{-1,-,0}$ (resp.\ $\omega_{n-1,1}^{1,+,0}$) is isomorphic to $\pi_{n-1,1}^{1,0}$ (resp.\ $\pi_{n-1,1}^{0,1}$).

  \item If $n=2$, then in the set $\{\omega_{1,1}^{m,\pm,l}\}$, only $\omega_{1,1}^{-1,-,0}$ and $\omega_{1,1}^{1,-,0}$ (resp.\ $\omega_{1,1}^{1,+,0}$ and $\omega_{1,1}^{-1,+,0}$) are isomorphic to $\pi_{1,1}^{1,0}$ (resp.\ $\pi_{1,1}^{0,1}$).
\end{enumerate}
\end{lem}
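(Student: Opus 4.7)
The overall strategy is to realize the oscillator representation $\omega(\mu_m,\varepsilon)$ of $\rU(p,q)_\dR$ in the mixed Fock/Schr\"{o}dinger model, where the underlying space is a space of polynomials on $\dC^p$ tensored with Schwartz functions in the $q$ negative directions, and the compact subgroup $\rK_{p,q}\simeq\rU(p)\times\rU(q)$ acts by operators one can write down explicitly. The first reduction is to peel off the central action of $E^1$ on both sides: since the center of $\rU(p,q)_\dR$ is $E^1\hookrightarrow\rU(p,q)_\dR$ via scalars, and $\omega(\mu_m,\varepsilon)$ has central character determined by $m$ and the degree/total-order of the polynomials, the $\chi_l$-isotypic piece $\omega(\mu_m,\varepsilon,\chi_l)$ is cut out by a simple degree condition.

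For part~(1), the group $\rU(n,0)_\dR$ is compact, so $\omega(\mu_m,\varepsilon)$ is completely reducible and its $K$-types can be read off directly from the Fock model. With $\varepsilon=+i$ the representation is the standard polynomial model on $\dC^n$, on which $\rU(n)$ acts by the standard representation twisted by $\mu_m\cdot|\det|^{\text{shift}}$; with $\varepsilon=-i$ one gets the contragredient picture. Decomposing into $\Sym^k$ isotypic pieces and imposing that the $E^1$-central character be $\chi_l$ shows that the trivial representation appears if and only if $k=0$, which forces $l=0$, while the residual twist by $\mu_m$ and the sign of $\varepsilon$ combine so that the trivial character is exactly $\omega_{n,0}^{1,+,0}$ and its contragredient $\omega_{n,0}^{-1,-,0}$ (using the duality $\omega(\mu,\varepsilon,\chi)^\vee\simeq\omega(\mu^\tc,-\varepsilon,\chi^{-1})$ from Lemma~\ref{le:weil_nonarch}(2), which also holds archimedeanly).

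For parts~(2) and~(3), the key input is Vogan--Zuckerman: up to equivalence, the only irreducible unitary $(\fu_{n-1,1},\rK_{n-1,1})$-modules with nonzero $\rH^1$ are two $A_\fq(\lambda)$-modules, and the Hodge types $(1,0)$ and $(0,1)$ distinguish them. On the other hand, by the archimedean theta correspondence for the dual pair $(\rU(n-1,1),\rU(1))$ (Adams--Johnson, or Li \cite{Li90}), the theta lift of any character of $\rU(1)$ is either zero or a cohomological $A_\fq(\lambda)$-module, and matching infinitesimal characters pins down which character lifts to $\pi_{n-1,1}^{1,0}$ versus $\pi_{n-1,1}^{0,1}$. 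Explicitly, the Fock model of $\omega(\mu_m,\varepsilon)$ in signature $(n-1,1)$ has a distinguished $\rK_{n-1,1}$-finite vector (the Kudla--Millson Schwartz form) spanning the cohomological lowest $K$-type precisely when $(m,\varepsilon,l)=(-1,-,0)$ or $(1,+,0)$, corresponding to Hodge types $(1,0)$ and $(0,1)$ respectively; all other parameter choices produce a representation whose lowest $\rK_{n-1,1}$-type is outside the cohomological range, hence has vanishing $(\fu_{n-1,1},\rK_{n-1,1})$-cohomology in degree $1$.

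The main obstacle is the low-rank case $n=2$ in part~(3). There $\rU(1,1)$ has rank one, and the packet containing $\pi_{1,1}^{1,0}$ (respectively $\pi_{1,1}^{0,1}$) is not a singleton: it contains two members distinguished by central character, mirroring exactly the endoscopic splitting described nonarchimedeanly in Lemma~\ref{le:weil_nonarch}(4). Consequently the parameters $(\mu_m,\varepsilon,\chi_l)=(\mu_{-1},-i,\chi_0)$ and $(\mu_1,-i,\chi_0)$ both produce $\pi_{1,1}^{1,0}$ (and symmetrically for $\pi_{1,1}^{0,1}$), reflecting the isomorphism $\mu^\tc\check\chi=\mu$ when $m=\pm1$ and $l=0$. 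To finish, one checks by a direct Fock-model computation that no other parameter choice reproduces the correct lowest $\rK_{1,1}$-type, which amounts to a short calculation of the action of the Cartan of $\fu_{1,1}$ on the distinguished polynomial vector.
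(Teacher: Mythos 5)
Your overall approach is the same as the paper's: read off the $\rK_{p,q}$-types of $\omega_{p,q}^{m,\pm,l}$ from an explicit model, identify $\pi_{n-1,1}^{1,0}$ and $\pi_{n-1,1}^{0,1}$ as theta lifts via the archimedean correspondence for $(\rU(n-1,1),\rU(1))$, and match. The paper simply delegates the $\rK$-type computation to Konno--Konno (Theorem~5.4) and the cohomological identification plus highest-weight comparison to Bergeron--Millson--M\oe glin (Section~5 and formula~5.7), whereas you sketch the Fock-model calculation directly; the content is the same, and your degree-$0$ reduction in part~(1) is exactly what the explicit formulas encode.

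Two points in your discussion of $n=2$ are muddled, though the conclusion you reach is correct. First, the two representations $\pi_{1,1}^{1,0}$ and $\pi_{1,1}^{0,1}$ that constitute the archimedean $L$-packet are \emph{not} distinguished by central character (they share it); they are distinguished by Hodge type, equivalently by the sign $\pm$ of $\varepsilon$. Second, the identity "$\mu^\tc\check\chi=\mu$ when $m=\pm1$ and $l=0$" is false: with $\mu=\mu_m$ and $\chi=\chi_0$ one has $\mu^\tc\check\chi=\mu_{-m}\neq\mu_m$. What is actually operative here is precisely the nontrivial collapse in the archimedean analogue of Lemma~\ref{le:weil_nonarch}(4): for the isotropic rank-$2$ hermitian space one has $\omega(\mu,\varepsilon,\chi)\simeq\omega(\mu^\tc\check\chi,\varepsilon,\chi)$, which with $\chi=\chi_0$ gives $\omega_{1,1}^{m,\pm,0}\simeq\omega_{1,1}^{-m,\pm,0}$ --- the sign of $m$ may be flipped while $\varepsilon$ is held fixed. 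That is exactly the pairing-up stated in part~(3). With these corrections your argument lines up with the paper's.
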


\begin{proof}
The explicit formulae for the $\rK_{p,q}$-type of $\omega_{p,q}^{m,\pm,l}$ can be found in, for example, \cite{KK07}*{Theorem~5.4} with $p'+q'=1$. In particular, (1) follows directly.

For (2) and (3), it is shown in \cite{BMM}*{Section~5} that both $\pi_{n-1,1}^{1,0}$ and $\pi_{n-1,1}^{0,1}$ are isomorphic to some $\omega_{n-1,1}^{m,\pm,l}$. Comparing the formula for the highest weights in \cite{BMM}*{5.7} with $p=n-1,q=1,a+b=1(\leq p)$ with \cite{KK07}*{Theorem~5.4}, we obtain the assertions.
\end{proof}

\subsection{Setup for cohomology of Shimura varieties}
\label{ss:setup}

Let us recall some general facts about cohomology of Shimura varieties. Let $(\rG,\rh)$ be a Shimura data with $E\subseteq\dC$ its reflex field. In particular, $\rG$ is a reductive group over $\dQ$. Let $\xi$ be an algebraic complex representation of $\rG$.\footnote{In this article, we only need the case where $\xi$ is the trivial representation.} Then it induces a complex local system $\sL_\xi$ on $\{\Sh(\rG,\rh)_K\otimes_E\dC\}$. Let $\rH^i_{(2)}(\Sh(\rG,\rh)_K(\dC),\sL_\xi)$ be the $i$-th $\rL^2$-cohomology of the complex manifold $\Sh(\rG,\rh)_K(\dC)$ with coefficients in $\sL_\xi$. Put
\[
\rH^i_{(2)}(\Sh(\rG,\rh),\sL_\xi)\coloneqq\varinjlim_{K}\rH^i_{(2)}(\Sh(\rG,\rh)_K(\dC),\sL_\xi),
\]
which is a smooth representation of $\rG(\bA^\infty)$. By the Matsushima formula for $\rL^2$-cohomology, we have an isomorphism
\begin{align}\label{eq:matsushima}
\rH^i_{(2)}(\Sh(\rG,\rh),\sL_\xi)\simeq\bigoplus_{\pi}m_\disc(\pi)\rH^i(\fg,\rK_\rG;\xi_\infty\otimes\pi_\infty)\otimes\pi^\infty
\end{align}
of $\rG(\bA^\infty)$-modules, where
\begin{itemize}
  \item $\fg\coloneqq\Lie\rG_\dR$, and $\rK_\rG$ is a maximal connected compact subgroup of $\rG(\dR)$,

  \item $\xi_\infty$ is the associated $(\fg,\rK_\rG)$-module of $\xi$, and

  \item $\pi=\pi_\infty\otimes\pi^\infty$ runs through isomorphism classes of irreducible admissible representations of $\rG(\bA)$, where $m_\disc(\pi)$ is the discrete multiplicity of $\pi$ (Definition \ref{de:realization}).
\end{itemize}
Here, we have to use \cite{BC83}*{Section~4} to conclude that the continuous part of $\rL^2(\rG(\dQ)\backslash\rG(\bA),\chi)$ does not contribute to the $\rL^2$-cohomology in the case of Shimura varieties. By Zucker's conjecture (proved independently by Looijenga \cite{Loo88} and Saper--Sturn \cite{SS90}), we have a canonical isomorphism
\begin{align}\label{eq:zucker}
\rH^i_{(2)}(\Sh(\rG,\rh),\sL_\xi)\simeq\IH^i(\Sh(\rG,\rh),\sL_\xi)
\end{align}
of $\rG(\bA^\infty)$-modules, where
\[
\IH^i(\Sh(\rG,\rh),\sL_\xi)\coloneqq\varinjlim_K\IH^i(\ol\Sh(\rG,\rh)_K\otimes_E\dC,\sL_\xi)
\]
is the direct limit over $K$ of the complex analytic intersection cohomology of $\ol\Sh(\rG,\rh)_K\otimes_E\dC$, where $\ol\Sh(\rG,\rh)_K$ is the Baily--Borel compactification of $\Sh(\rG,\rh)_K$ (over $E$).

Now let $\ell$ be a rational prime and choose an isomorphism $\iota_\ell\colon\dC\xrightarrow{\sim}\dQ_\ell^\ac$. Then the $\dQ_\ell^\ac$-local system $\sL_\xi\otimes_{\dC,\iota_\ell}\dQ_\ell^\ac$ descends to an (\'{e}tale) $\dQ_\ell^\ac$-local system $\sL_{\xi,\iota_\ell}$ on $\{\Sh(\rG,\rh)_K\}$.
We then have a comparison isomorphism
\[
\IH^i_{\et}(\Sh(\rG,\rh),\sL_{\xi,\iota_\ell})\simeq\IH^i(\Sh(\rG,\rh),\sL_\xi)\otimes_{\dC,\iota_\ell}\dQ_\ell^\ac,
\]
where
\[
\IH^i_{\et}(\Sh(\rG,\rh),\sL_{\xi,\iota_\ell})\coloneqq\varinjlim_K\IH^i_{\et}(\ol\Sh(\rG,\rh)_K\otimes_E\dC,\sL_{\xi,\iota_\ell}).
\]
For an irreducible admissible representation $\pi^\infty$ of $\rG(\bA^\infty)$, put
\[
\IH^i_{\xi,\iota_\ell}(\pi^\infty)\coloneqq\Hom_{\dQ_\ell^\ac[\rG(\bA^\infty)]}
\(\iota\circ\pi^\infty,\IH^i_{\et}(\Sh(\rG,\rh),\sL_{\xi,\iota_\ell})\),
\]
which is a finite dimensional representation of $\Gal(\dC/E)$, whose dimension is equal to
\[
\sum_{\pi_\infty}m_\disc(\pi_\infty\otimes\pi^\infty)\dim_\dC\rH^i(\fg,\rK_\rG;\xi_\infty\otimes\pi_\infty),
\]
where $\pi_\infty$ runs through all irreducible admissible representations of $\rG(\dR)$. We suppress $\xi$ in the notation if it is the trivial representation.

\subsection{Statements for cohomology of unitary Shimura curves}
\label{ss:statements_curve}

We fix a CM number field $E$ and regard $E$ as a subfield of $\dC$ via a fixed complex embedding $\tau'_1\colon E\hookrightarrow\dC$. Let $\tc\in\Gal(E/\dQ)$ be the induced complex conjugation and put $F\coloneqq E^{\tc=1}$. Write $\Phi_F=\{\tau_1,\dots,\tau_d\}$ with $d=[F:\dQ]$ as the set of real embeddings of $F$, in which $\tau_1$ is the restriction of $\tau'_1$.

Let $\rV$ be a hermitian space over $E$ of rank $2$ of signature $(1,1)$ at $\tau_1$ and $(2,0)$ elsewhere. As in Subsection \ref{ss:appendix_isometry} especially Remark \ref{re:picard}, we have the Hodge map $\rh\coloneqq\rh_{\rV,\tau'_1}$, the Shimura varieties $\{\Sh(\rG,\rh)_K\}$ defined over $E$, and their Baily--Borel compactification $\ol\Sh(\rG,\rh)_K$, all of which are smooth curves over $E$. By the discussion from Subsection \ref{ss:setup}, we have an isomorphism
\[
\rH^1_\rB(\ol\Sh(\rG,\rh),\dC)\simeq\bigoplus_{\pi}m_\disc(\pi)\rH^1(\fg,\rK_\rG;\pi_\infty)\otimes\pi^\infty
\]
of $\rG(\bA^\infty)$-modules, where $\rH^1_\rB(\ol\Sh(\rG,\rh),\dC)\coloneqq\varinjlim_K\rH^1_\rB(\ol\Sh(\rG,\rh)_K,\dC)$. By Lemma \ref{le:weil_arch}, up to equivalence, there are only two representations $\pi_\infty$ of $\rG(\dR)$ with $\rH^1(\fg,\rK_\rG;\pi_\infty)\neq\{0\}$, namely,
\[
\pi_\infty^{(1,0)}\coloneqq\pi_{1,1}^{1,0}\otimes1\otimes\cdots\otimes 1,\qquad
\pi_\infty^{(0,1)}\coloneqq\pi_{1,1}^{0,1}\otimes1\otimes\cdots\otimes 1.
\]

\begin{definition}\label{de:cohomological_curve}
Let $\pi^\infty$ be an irreducible admissible representation of $\rG(\bA^\infty)$.
\begin{itemize}
  \item We say that $\pi^\infty$ is \emph{stable cohomological} if both $\pi_\infty^{(1,0)}\otimes\pi^\infty$ and $\pi_\infty^{(0,1)}\otimes\pi^\infty$ have positive cuspidal multiplicity.

  \item We say that $\pi^\infty$ is \emph{endoscopic cohomological} if exactly one of $\pi_\infty^{(1,0)}\otimes\pi^\infty$ and $\pi_\infty^{(0,1)}\otimes\pi^\infty$ has positive cuspidal multiplicity.
\end{itemize}
Denote $\cC_\rV^{\r{st}}$ (resp.\ $\cC_\rV^{\r{end}}$) the set of isomorphism classes of stable (resp.\ endoscopic) cohomological irreducible admissible representations of $\rG(\bA^\infty)$. Put $\cC_\rV\coloneqq\cC_\rV^{\r{st}}\coprod\cC_\rV^{\r{end}}$.
\end{definition}

\begin{proposition}\label{pr:endoscopic_curve}
Let $\pi^\infty$ be an irreducible admissible representation of $\rG(\bA^\infty)$.
\begin{enumerate}
  \item If $\pi^\infty$ is endoscopic cohomological, then there exists a unique ad\`{e}lic oscillator triple $(\mu,\varepsilon,\chi)$ (Definition \ref{de:oscillator_triple}) with $\mu$ of weight one and satisfying $\tau'_1\in\Phi_\mu$, such that $\pi^\infty$ is isomorphic to $\omega(\mu,\varepsilon,\chi)$. Moreover, $\Hom_{\dC[\rG(\bA^\infty)]}(\pi^\infty,\rH^1_\rB(\ol\Sh(\rG,\rh),\dC))$ has dimension $1$.

  \item If $\pi^\infty$ is stable cohomological, then $\Hom_{\dC[\rG(\bA^\infty)]}(\pi^\infty,\rH^1_\rB(\ol\Sh(\rG,\rh),\dC))$ has dimension $2$.
\end{enumerate}
\end{proposition}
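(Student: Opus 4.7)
The plan is to combine the Matsushima formula with Arthur's endoscopic classification for $\rU(2)$. First, since $\ol\Sh(\rG,\rh)_K$ is a smooth projective curve (its Baily--Borel compactification adds at worst finitely many smooth cusp points), ordinary cohomology coincides with intersection cohomology; by Zucker's conjecture \eqref{eq:zucker} and the Matsushima formula \eqref{eq:matsushima}, combined with the fact (established via Lemma \ref{le:weil_arch} and a standard classification of unitary $(\fg,\rK)$-cohomology of $\rU(1,1)_\dR$ in degree one) that $\pi_\infty^{(1,0)}$ and $\pi_\infty^{(0,1)}$ are the only representations of $\rG(\dR)$ with nonvanishing one-dimensional $\rH^1(\fg,\rK_\rG;-)$, I obtain
\[
\dim_\dC\Hom_{\dC[\rG(\bA^\infty)]}\bigl(\pi^\infty,\rH^1_\rB(\ol\Sh(\rG,\rh),\dC)\bigr)
=m_\disc(\pi_\infty^{(1,0)}\otimes\pi^\infty)+m_\disc(\pi_\infty^{(0,1)}\otimes\pi^\infty).
\]
A short check that the residual spectrum of $\rU(\rV)$ contributes nothing to $\rH^1$ (one-dimensional characters $\chi\circ\det$ produce no $(\fg,\rK_\rG)$-cohomology in degree one on the hyperbolic plane $\rU(1,1)/\rU(1)\times\rU(1)$) shows $m_\disc$ equals $m_\cusp$ for the relevant archimedean components.

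Next I would invoke Arthur's endoscopic classification for unitary groups, established for quasi-split forms by Mok and extended to inner forms by Kaletha--Minguez--Shin--White. A tempered cuspidal cohomological automorphic representation of $\rU(\rV)(\bA_F)$ is attached to a conjugate self-dual Arthur parameter on $\GL_2(\bA_E)$, which is either a simple cuspidal (the \emph{stable case}) or an isobaric sum $\chi_1\boxplus\chi_2$ of two distinct conjugate symplectic Hecke characters (the \emph{endoscopic case}). In either case the archimedean local $L$-packet at $\tau_1$ consists of exactly $\{\pi_{1,1}^{1,0},\pi_{1,1}^{0,1}\}$, and both members are tempered and cohomological. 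Arthur's multiplicity formula then proves (2): in the stable case the component group is trivial and both archimedean choices appear with multiplicity one, totaling two. In the endoscopic case the global component group is $\{\pm 1\}$ and the Arthur sign character forces exactly one of the two archimedean choices to appear (with multiplicity one), consistent with the dichotomy in Definition \ref{de:cohomological_curve} and giving the claimed dimension one.

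For the identification with an oscillator representation in (1), the key point is that the endoscopic packet is realized as the image of the global theta correspondence from $\rU(\rW_0)(\bA_F)$ with $\rW_0$ a one-dimensional skew-hermitian space: the parameter $\chi_1\boxplus\chi_2$ matches the theta lift of a Hecke character $\chi$ of $E^1\backslash(\bA_E)^1$ (using the splitting characters $\mu$ and an auxiliary $\nu$). The finite part of $\pi^\infty$ therefore decomposes as a restricted tensor product of local oscillator representations $\omega(\mu_v,\varepsilon_v,\chi_v)$ in the sense of Definition \ref{de:oscillator_triple}, with the collection $\varepsilon$ being $\mu$-admissible in the sense of Definition \ref{de:admissible_collection}. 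The normalization $\mu$ of weight one with $\tau'_1\in\Phi_\mu$ pins down the archimedean behavior of $\mu$ uniquely; by Lemma \ref{le:weil_arch}(3) the constraint $\tau'_1\in\Phi_\mu$ together with the sign of $\tau'_1(e)$ in the admissibility condition matches the unique archimedean factor ($\pi_{1,1}^{1,0}$ or $\pi_{1,1}^{0,1}$) that occurs globally. Uniqueness at finite places is then an application of Lemma \ref{le:weil_nonarch}(3,4) once the archimedean data are fixed.

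The main obstacle will be carefully matching the two parametrizations --- Arthur's $\chi_1\boxplus\chi_2$ on $\GL_2(\bA_E)$ and the adelic oscillator triple $(\mu,\varepsilon,\chi)$ of Definition \ref{de:oscillator_triple} --- and verifying that the sign in the Arthur multiplicity formula (encoding which of $\pi_\infty^{(1,0)}$ or $\pi_\infty^{(0,1)}$ occurs) correlates with $\mu$-admissibility exactly as predicted by the local archimedean theta dichotomy in Lemma \ref{le:weil_arch}. For $n=2$ the endoscopic case of Lemma \ref{le:weil_nonarch}(4) allows two distinct triples to give isomorphic local oscillator representations, so ensuring global uniqueness of $(\mu,\varepsilon,\chi)$ under the archimedean normalization requires tracking this potential ambiguity carefully across all places.
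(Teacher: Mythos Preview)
Your proposal is correct in outline and reaches the same conclusions, but the route differs from the paper's in two respects worth noting. First, rather than invoking the Arthur classification for inner forms (Kaletha--Minguez--Shin--White), the paper constructs an explicit inner transfer $\r{JL}$ from $\rU(\rV)$ to the quasi-split form $\rU(\rV^*)$ via global theta lifting (with an auxiliary character $\xi$), and then appeals to Rogawski's classical classification for $\rU(2)$ \cite{Rog90}. This avoids the heavier and more recent machinery you cite, at the cost of a short detour through the explicit transfer. Second, for the identification of the endoscopic $\pi^\infty$ with an oscillator representation, the paper does not simply assert that the endoscopic packet is realized by theta lifting from $\rU(1)$; instead it observes that $L(s,\Pi\otimes\mu)$ has a pole at $s=1$ for a suitable $\mu$, and then invokes Theorem~\ref{th:pole} (proved in the paper's own Appendix~\ref{ss:b}) to produce the rank-one skew-hermitian space $\rW$ and the character $\chi'$ such that $\pi$ lies in the global theta lift from $\rU(\rW)$. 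Your approach buys directness and modern language; the paper's approach is more self-contained, relying only on Rogawski and on the theta-lifting result it establishes internally. The handling of uniqueness via Lemma~\ref{le:weil_nonarch}(4) and the archimedean normalization $\tau'_1\in\Phi_\mu$ is the same in both.
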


\begin{proof}
Let $\rV^*$ be an isotropic skew-hermitian space over $E$ of rank $2$, which is unique up to isomorphism. The global inner transfer from $\rU(\rV)$ to $\rU(\rV^*)$ is known; see, for example, \cite{Har93}. More precisely, let $V_\pi$ be an irreducible $\rU(\rV)(\bA_F)$-submodule of $\rL^2_\cusp(\rU(\rV))$ and denote $\ol{V_\pi}$ its complex conjugate space. We may choose an automorphic character $\xi\colon E^1\backslash(\bA_E^\infty)^1\to\dC^\times$ such that the global theta lifting $\Theta^{\rV^*}_{\psi_F,(1,1),\rV}(\ol{V_\pi}\otimes\xi)$ is nonzero. Then $\r{JL}(V_\pi)\coloneqq\Theta^{\rV^*}_{\psi_F,(1,1),\rV}(\ol{V_\pi}\otimes\xi)\otimes\xi$ is a subspace of $\rL^2_\cusp(\rU(\rV^*))$, which is an irreducible $\rU(\rV^*)(\bA_F)$-module and is independent of the choice of $\xi$. Denote by $\r{JL}(\pi)$ the representation of $\rU(\rV^*)(\bA_F)$ on $\r{JL}(V_\pi)$. Since the complement of $\rL^2_\cusp$ in $\rL^2_\disc$ consists of automorphic characters, we have $m_\disc(\pi)=m_\disc(\r{JL}(\pi))$.

The Langlands--Arthur classification for $\rU(\rV^*)$ is known by \cite{Rog90}*{Section~11}. Let $\pi^*$ be an irreducible cuspidal automorphic representation of $\rU(\rV^*)(\bA_F)$. We have the (standard) base change $\Pi$ of $\pi^*$, which is an irreducible isobaric automorphic representation of $\GL_2(\bA_E)$. We say that $\pi^*$ is stable (resp.\ endoscopic) if $\Pi$ is cuspidal (resp.\ $\Pi\simeq\Pi_1\boxplus\Pi_2$ for two conjugate symplectic automorphic characters $\Pi_1$ and $\Pi_2$).

Suppose that $\pi^\infty$ is stable cohomological. Then both $\r{JL}(\pi_\infty^{(1,0)}\otimes\pi^\infty)$ and $\r{JL}(\pi_\infty^{(0,1)}\otimes\pi^\infty)$ have positive multiplicity and the same base change $\Pi$. By Arthur's multiplicity formula, $\Pi$ has to cuspidal, and $m_\disc(\pi_\infty^{(1,0)}\otimes\pi^\infty)=m_\disc(\pi_\infty^{(0,1)}\otimes\pi^\infty)=1$. In particular, (2) follows.

Suppose that $\pi^\infty$ is endoscopic cohomological. Then by the same reasoning, we have $\Pi\simeq\Pi_1\boxplus\Pi_2$, and $m_\disc(\pi_\infty^{(1,0)}\otimes\pi^\infty)+m_\disc(\pi_\infty^{(0,1)}\otimes\pi^\infty)=1$. Let $\pi$ be the unique member in $\{\pi_\infty^{(1,0)}\otimes\pi^\infty,\pi_\infty^{(0,1)}\otimes\pi^\infty\}$ such that $m_\disc(\pi)=1$. Since $\r{JL}(\pi)$ is endoscopic, both $\Pi_1$ and $\Pi_2$ are conjugate symplectic automorphic characters of weight one. Thus, there exists a conjugate symplectic automorphic character $\mu$ of weight one such that $L(s,\Pi\otimes\mu)$ has a simple pole at $s=1$. By Theorem \ref{th:pole}, we have a skew-hermitian space $\rW$ over $E$ of rank $1$ of determinant $e\in E^{-\times}/\Nm_{E/F}E^\times$ and an automorphic character $\chi'$ of $\rU(\rW)(\bA_F)$, such that $\pi$ is realized in the space of global theta lifting $\Theta^\rV_{\psi_F,(\mu,\nu),\rW}(\chi')$. Let $\chi$ be the central character of $\pi$. Then it is trivial at infinity. Thus, by Lemma \ref{le:weil_nonarch}(4), there exist exactly two ad\`{e}lic oscillator triples, which are $(\mu,\varepsilon,\chi)$ and $(\mu^\tc\check\chi,\varepsilon',\chi)$, such that $\pi^\infty$ is isomorphic to the associated oscillator representation. In particular, the condition that $\mu$ is of weight one and satisfies $\tau'_1\in\Phi_\mu$ determines exactly one of the two triples. Therefore, (1) follows.
\end{proof}

\begin{remark}\label{re:galois_curve}
The proof of Proposition \ref{pr:endoscopic_curve}(1) implies that for $\pi^\infty\simeq\omega(\mu,\varepsilon,\chi)$ that is endoscopic cohomological, we have $m_\cusp(\pi_\infty^{(1,0)}\otimes\pi^\infty)=1$ (resp.\ $m_\cusp(\pi_\infty^{(0,1)}\otimes\pi^\infty)=1$) if and only if there exists some $e\in E^{\times-}$ such that
\begin{itemize}
  \item $\varepsilon_v=e\Nm_{E_v/F_v}E^\times_v$ for every nonarchimedean place $v$ of $F$,

  \item $\tau'_i(e)$ has negative imaginary part for $i=2,\dots,d$, where $\tau'_i$ is the unique element in $\Phi_\mu$ above $\tau_i$, and

  \item $\tau'_1(e)$ has negative (resp.\ positive) imaginary part.
\end{itemize}
\end{remark}

Now we study the $\ell$-adic cohomology of $\{\ol\Sh(\rG,\rh)_K\}_K$. Take a rational prime $\ell$ and an isomorphism $\iota_\ell\colon\dC\xrightarrow{\sim}\dQ_\ell^\ac$. Put
\[
\rH^1_{\et}(\ol\Sh(\rG,\rh),\dQ_\ell^\ac)\coloneqq
\varinjlim_K\rH^1_{\et}(\ol\Sh(\rG,\rh)_K\otimes_E\dC,\dQ_\ell^\ac).
\]
By the comparison theorem, we have a canonical isomorphism
\[
\rH^1_{\et}(\ol\Sh(\rG,\rh),\dQ_\ell^\ac)\simeq
\rH^1_\rB(\ol\Sh(\rG,\rh),\dC)\otimes_{\dC,\iota_\ell}\dQ_\ell^\ac
\]
of $\rG(\bA^\infty)$-modules. For an irreducible admissible representation $\pi^\infty$ of $\rG(\bA^\infty)$, the $\dQ_\ell^\ac$-vector space
\[
\rH^1_{\iota_\ell}(\pi^\infty)\coloneqq\Hom_{\dQ_\ell^\ac[\rG(\bA^\infty)]}
\(\iota_\ell\circ\pi^\infty,\rH^1_{\et}(\ol\Sh(\rG,\rh),\dQ_\ell^\ac)\)
\]
is a representation of $\Gal(\dC/E)$, which we denote by $\rho_{\iota_\ell}(\pi^\infty)$.

Suppose that $\pi^\infty$ is endoscopic cohomological. Then by Proposition \ref{pr:endoscopic_curve}, we obtain an $\ell$-adic character $\rho_{\iota_\ell}(\pi^\infty)\colon\Gal(\dC/E)\to(\dQ_\ell^\ac)^\times$. It induces, via the isomorphism $\iota_\ell$, an automorphic character $\rho_\ell(\pi^\infty)\colon E^\times\backslash\bA_E^\times\to\dC^\times$. It is easy to see that the character $\rho_\ell(\pi^\infty)$ does not depend on the choice of the isomorphism $\iota_\ell$, which justifies its notation.

\begin{theorem}\label{th:galois_curve}
Let $\pi^\infty$ be an irreducible admissible representation of $\rG(\bA^\infty)$, and $\ell$ a rational prime.
\begin{enumerate}
  \item Suppose that $\pi^\infty$ is endoscopic cohomological, which is isomorphic to $\omega(\mu,\varepsilon,\chi)$ with $\mu$ of weight one and satisfying $\tau'_1\in\Phi_\mu$ as in Theorem \ref{pr:endoscopic_curve}(1). Then
     \[
     \rho_\ell(\pi^\infty)=
     \begin{dcases}
     \mu\cdot|\;|_E^{-1/2} & \text{if }m_\cusp(\pi_\infty^{(1,0)}\otimes\pi^\infty)=1;\\
     \mu^\tc\check\chi\cdot|\;|_E^{-1/2} & \text{if }m_\cusp(\pi_\infty^{(0,1)}\otimes\pi^\infty)=1.
     \end{dcases}
     \]

  \item Suppose that $\pi^\infty$ is stable cohomological. Then for every $\iota_\ell\colon\dC\xrightarrow{\sim}\dQ_\ell^\ac$, we have
     \begin{enumerate}
       \item $\rho_{\iota_\ell}(\pi^\infty)$ is an irreducible two-dimensional representation of $\Gal(\dC/E)$;

       \item $\rho_{\iota_\ell}(\pi^\infty)^\vee\simeq\rho_{\iota_\ell}(\pi^\infty)^\tc(1)$;

       \item if we let $\Pi^\infty$ be the irreducible admissible representation of $\GL_2(\bA_E^\infty)$ that is the standard base change of $\pi^\infty$, then for every nonarchimedean place $w$ of $E$ coprime to $\ell$,
           \[
           \r{WD}(\rho_{\iota_\ell}(\pi^\infty)\res\Gal(E_w^\ac/E_w))^{\rF\text{-}\r{ss}}
           \simeq\iota_\ell\circ\sL_{2,E_w}(\Pi^\infty_w|\det|_w^{-1/2})
           \]
           holds, where $\sL_{2,E_w}$ denotes the local Langlands correspondence for $\GL_{2,E_w}$.
     \end{enumerate}
\end{enumerate}
\end{theorem}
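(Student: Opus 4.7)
The plan is to reduce Theorem \ref{th:galois_curve} to known results on Galois representations attached to cuspidal automorphic representations of $\GL_2$ over the CM field $E$, via the Jacquet--Langlands transfer to the quasi-split rank-$2$ unitary group $\rU(\rV^*)$ and Rogawski's rank-$2$ classification. As in the proof of Proposition \ref{pr:endoscopic_curve}, any cohomological cuspidal $\pi$ on $\rU(\rV)$ transfers (preserving discrete multiplicity) to a cuspidal $\pi^*$ on $\rU(\rV^*)$ whose standard base change is an isobaric automorphic representation $\Pi$ of $\GL_2(\bA_E)$; I then match the $\pi^\infty$-part of $\rH^1_{\et}(\ol\Sh(\rG,\rh),\dQ_\ell^\ac)$ with the Galois representation attached to $\Pi\cdot|\det|_E^{-1/2}$.

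For part (1), the endoscopic case, Rogawski's classification yields $\Pi=\Pi_1\boxplus\Pi_2$ with both $\Pi_i$ conjugate symplectic of weight one. By the proof of Proposition \ref{pr:endoscopic_curve}(1) together with Lemma \ref{le:weil_nonarch}(4), one verifies $\{\Pi_1,\Pi_2\}=\{\mu,\mu^\tc\check\chi\}$ (the two characters in the endoscopic $L$-packet). Each $\Pi_i\cdot|\;|_E^{-1/2}$ is an algebraic Hecke character whose associated $\ell$-adic Galois character coincides with the CM character of the CM abelian variety produced by Casselman's theorem, as in the proof of Proposition \ref{pr:cm_data}. The endoscopic contribution to $\rH^1_{\iota_\ell}(\pi^\infty)$ is one-dimensional by Proposition \ref{pr:endoscopic_curve}(1), and the two candidate characters $\mu\cdot|\;|_E^{-1/2}$ and $\mu^\tc\check\chi\cdot|\;|_E^{-1/2}$ are distinguished by their Hodge--Tate weights at $\tau'_1$, which are $(1,0)$ and $(0,1)$ respectively; this matches the Hodge type of $\pi_\infty^{(1,0)}$ versus $\pi_\infty^{(0,1)}$, consistently with the archimedean criterion spelled out in Remark \ref{re:galois_curve}.

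For part (2), the stable case, Rogawski shows $\Pi$ is cuspidal and conjugate self-dual, with discrete series archimedean component of weight $2$ at $\tau_1$. The Blasius--Rogawski construction attaches to $\Pi\cdot|\det|_E^{-1/2}$ a $2$-dimensional irreducible $\ell$-adic Galois representation $\rho_{\iota_\ell}(\Pi\cdot|\det|_E^{-1/2})$ of $\Gal(\dC/E)$ satisfying the conjugate self-duality $\rho^\vee\simeq\rho^\tc(1)$ together with local-global compatibility at nonarchimedean places coprime to $\ell$. To identify this with $\rho_{\iota_\ell}(\pi^\infty)$, I would compare Hecke eigensystems: at a nonarchimedean place $v$ of $F$ split in $E$ and unramified for $\pi$, both sides carry compatible actions of an unramified Hecke algebra, and Kottwitz's point-counting on the good-reduction integral model (cf.\ Definition \ref{de:integral_canonical} and the Eichler--Shimura relation for unitary Shimura curves) shows that the Satake parameters of $\pi^*_v$, which coincide with those of $\Pi_w$ via split base change, are precisely the Frobenius eigenvalues on the restriction of $\rho_{\iota_\ell}(\pi^\infty)$ to $\Gal(E_w^\ac/E_w)$. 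By density, this determines the Galois representation up to semisimplification; irreducibility (a) then follows from cuspidality of $\Pi$, (b) is direct from the conjugate self-duality of $\Pi$, and (c) is the Blasius--Rogawski compatibility.

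The main technical obstacle will be the precise matching of cohomology with the Galois representation of $\Pi$ in the non-compact case ($F=\dQ$ with $\rV$ isotropic at some finite place), since one must then verify that the residual spectrum contributes nothing to the cuspidal-cohomology isotypic component of $\rH^1$ of the Baily--Borel compactification. This, however, is precisely the content of Proposition \ref{pr:arthur} combined with the fact that $\ol\Sh(\rG,\rh)_K$ is a smooth projective curve whose $\rH^1$ coincides with its middle intersection cohomology. A secondary delicate point is the correct normalization in the identification between $(\pi_\infty^{(1,0)},\pi_\infty^{(0,1)})$ and the $(1,0),(0,1)$ Hodge-type direct summands; this is standard once the CM data of $\mu$ is fixed and embedded via $\tau'_1$ as in Definition \ref{de:cm_data}.
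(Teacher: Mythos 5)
Your high-level strategy (Jacquet--Langlands transfer to $\rU(\rV^*)$, Rogawski's classification, Blasius--Rogawski Galois representations for the stable case, identification of CM characters for the endoscopic case, and distinguishing the two candidates by Hodge type) agrees with the paper's plan. However, there is a genuine gap at the central technical step: how one actually compares Frobenius eigenvalues on $\rH^1_{\et}(\ol\Sh(\rG,\rh),\dQ_\ell^\ac)$ with Satake/Hecke parameters. You write that the matching follows from ``Kottwitz's point-counting on the good-reduction integral model (cf.\ Definition \ref{de:integral_canonical} and the Eichler--Shimura relation for unitary Shimura curves).'' Neither reference actually works. The Shimura curve $\Sh(\rG,\rh)$ is associated to the \emph{isometry} group $\rU(\rV)$: this Shimura datum is of abelian type but not of PEL type, so Langlands--Kottwitz point-counting does not apply to it directly. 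Moreover, Definition \ref{de:integral_canonical} produces integral canonical models only at primes of $F$ \emph{inert} in $E$, whereas the needed comparison of Frobenius eigenvalues is at primes $\fq$ of $E$ of degree $1$ over $F$, i.e., at primes of $F$ \emph{split} in $E$. This is exactly the gap the paper fills via Proposition \ref{pr:congruence} and Corollary \ref{co:congruence}: smooth and stable models over $\sO_\fp$ are constructed by reducing, through Deligne's theory of connected Shimura varieties, to Carayol's description of the bad reduction of quaternionic Shimura curves, and the congruence relation is then read off. Without this step your argument has no way to pin down $\rho_{\iota_\ell}(\pi^\infty)$, and the ``Eichler--Shimura relation for unitary Shimura curves'' you invoke is not available as a citable black box; it is the content of Corollary \ref{co:congruence}.

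There is a secondary gap in part (1). Even granting the congruence relation, this only tells you that $\rho_\ell(\pi^\infty)_\fq \in \{(\mu\cdot|\;|_E^{-1/2})_\fq,(\mu^\tc\check\chi\cdot|\;|_E^{-1/2})_\fq\}$ for $\fq$ in a density-one set of primes; passing from this place-by-place alternative to the global assertion $\rho_\ell(\pi^\infty)\in\{\mu\cdot|\;|_E^{-1/2},\mu^\tc\check\chi\cdot|\;|_E^{-1/2}\}$ is not automatic. The paper handles this with Rajan's refinement of strong multiplicity one, which shows that a Hecke character agreeing with $\mu\cdot|\;|_E^{-1/2}$ on a set of positive density must differ from it by a Dirichlet character, and then rules out a nontrivial twist by observing that $\mu(\mu^\tc\check\chi)^{-1}$ is not of finite order. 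Your proposal jumps from the $L$-packet identification and Proposition \ref{pr:endoscopic_curve}(1) directly to the Hodge-type comparison; you should insert both the congruence-relation input and the Rajan density argument. The final distinguishing step via the Hodge type at $\tau'_1$ is then essentially the paper's.
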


The proof of the theorem will be given in Subsection \ref{ss:proof_curve}.

The theorem reveals some information about the Albanese variety (Jacobian) $A_K$ of $\ol\Sh(\rG,\rh)_K$. We have a homomorphism $\sC^\infty_c(K\backslash\bG(\bA^\infty)/K,\dQ)\to\End(A_K)_\dQ$ of $\dQ$-algebras induced by the Hecke actions. Note that $\Gal(\dC/\dQ)$ acts on $\cC_\rV$ through the coefficients, which preserves the two subsets $\cC_\rV^{\r{st}}$ and $\cC_\rV^{\r{end}}$. Therefore, we obtain an isogeny decomposition
\begin{align}\label{eq:endoscopic_albanese}
A_K\sim A_K^{\r{st}}\times A_K^{\r{end}}
\end{align}
(over $E$) such that under the canonical isomorphism in Lemma \ref{le:albanese}(1), we have isomorphisms
\begin{align*}
\rH^1_\rB(A_K^{\r{st}},\dC)&\simeq
\bigoplus_{\pi^\infty\in\cC_\rV^{\r{st}}}\rH^1_\rB(\ol\Sh(\rG,\rh)_K,\dC)[(\pi^\infty)^K],\\
\rH^1_\rB(A_K^{\r{end}},\dC)&\simeq
\bigoplus_{\pi^\infty\in\cC_\rV^{\r{end}}}\rH^1_\rB(\ol\Sh(\rG,\rh)_K,\dC)[(\pi^\infty)^K]
\end{align*}
of $\sC^\infty_c(K\backslash\bG(\bA^\infty)/K,\dQ)$-modules.

Put $\underline\cC_\rV^{\r{st}}\coloneqq\cC_\rV^{\r{st}}/\Gal(\dC/\dQ)$, the set of $\Gal(\dC/\dQ)$-orbits in $\cC_\rV^{\r{st}}$. For every orbit $\underline\pi^\infty$, denote by $M(\underline\pi^\infty)\subseteq\dC$ its field of definition, namely, the fixed field of the stabilizer of $\underline\pi^\infty$ in $\Gal(\dC/\dQ)$; it is a number field, either totally real or CM. By Theorem \ref{th:galois_curve}(2) and a standard argument, we may associate to $\underline\pi^\infty$ a (simple) abelian variety $A(\underline\pi^\infty)$ over $E$, which satisfies
\begin{itemize}
  \item $\dim A(\underline\pi^\infty)=[M(\underline\pi^\infty):\dQ]$,

  \item $\End_E(A(\underline\pi^\infty))_\dQ\simeq M(\underline\pi^\infty)$, and

  \item $A(\underline\pi^\infty)\otimes_{E,\tc}E$ is isogenous to $A(\underline\pi^\infty)^\vee$.
\end{itemize}
In fact, $A(\underline\pi^\infty)$ is of strict $\GL(2)$-type in the terminology of \cite{YZZ}*{Section~3.2.1}. Finally, note that for every open compact subgroup $K$ of $\rG(\bA^\infty)$, the dimension of $K$-fixed vectors in a representations in $\underline\pi^\infty$ depends only on the orbit, which we denote by $\dim_\dC(\underline\pi^\infty)^K$. Theorem \ref{th:galois_curve}(2) has the following corollary.

\begin{corollary}
For every sufficiently small open compact subgroup $K$ of $\rG(\bA^\infty)$, we have an isogeny decomposition
\[
A_K^{\r{st}}\sim\prod_{\underline\pi^\infty\in\underline\cC_\rV^{\r{st}}}A(\underline\pi^\infty)^{\dim_\dC(\underline\pi^\infty)^K}
\]
compatible with changing $K$ in the obvious way. In particular, $A_K^{\r{st}}$ does not have factors that are of CM type. Moreover, $A(\underline\pi_1^\infty)$ is isogenous to $A(\underline\pi_2^\infty)$ for $\underline\pi_1^\infty,\underline\pi_2^\infty\in\underline\cC_\rV^{\r{st}}$ if and only if there exist $\pi_1^\infty\in\underline\pi_1^\infty$ and $\pi_2^\infty\in\underline\pi_2^\infty$ that have the same standard base change to $\GL_2(\bA_E^\infty)$.
\end{corollary}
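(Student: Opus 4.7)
The plan is to reduce the isogeny statement to a statement about $\ell$-adic Galois representations via Faltings' isogeny theorem, and then to extract the decomposition from Theorem~\ref{th:galois_curve}(2) together with the construction of $A(\underline\pi^\infty)$.

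First, I would fix a rational prime $\ell$ and an isomorphism $\iota_\ell\colon\dC\xrightarrow{\sim}\dQ_\ell^\ac$, and apply Lemma~\ref{le:albanese}(2) to identify $\rH^1_{\et}((A_K^{\r{st}})_{E^\ac},\dQ_\ell)$ with the corresponding direct summand of $\rH^1_{\et}((\ol\Sh(\rG,\rh)_K)_{E^\ac},\dQ_\ell)$. By the very definition of $A_K^{\r{st}}$, this $\dQ_\ell^\ac$-linear cohomology decomposes, as a $\Gal(\dC/E)\times\sH_K$-module (where $\sH_K=\sC_c^\infty(K\backslash\rG(\bA^\infty)/K,\dQ)$), as
\[
\bigoplus_{\pi^\infty\in\cC_\rV^{\r{st}}}\rho_{\iota_\ell}(\pi^\infty)\otimes_{\dQ_\ell^\ac}(\iota_\ell\circ\pi^\infty)^K.
\]
Grouping by Galois orbit $\underline\pi^\infty$ and using that $\Gal(\dC/\dQ)$ permutes the summands compatibly with its action on coefficients via $\iota_\ell$, we see that each orbit contributes a $\Gal(\dC/E)$-representation whose underlying $\dQ_\ell^\ac$-vector space has dimension $2[M(\underline\pi^\infty):\dQ]\cdot\dim_\dC(\underline\pi^\infty)^K$ and which matches the $\ell$-adic Tate module of $A(\underline\pi^\infty)^{\dim_\dC(\underline\pi^\infty)^K}\otimes\dQ_\ell^\ac$ after tensoring through $M(\underline\pi^\infty)\hookrightarrow\dQ_\ell^\ac$. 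Faltings' isogeny theorem then gives the claimed isogeny decomposition, and compatibility with changing $K$ is automatic from the corresponding compatibility on cohomology.

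Second, for the claim that $A_K^{\r{st}}$ has no CM factors, I would argue by contradiction: any simple CM factor of $A_K^{\r{st}}$ would give rise, via the above identification, to an abelian direct summand of $\rho_{\iota_\ell}(\pi^\infty)$ for some $\pi^\infty\in\cC_\rV^{\r{st}}$. But by Theorem~\ref{th:galois_curve}(2)(a), $\rho_{\iota_\ell}(\pi^\infty)$ is an \emph{irreducible} two-dimensional representation of $\Gal(\dC/E)$, and by Theorem~\ref{th:galois_curve}(2)(b) its restriction to any open subgroup of finite index remains irreducible (since it satisfies $\rho^\vee\simeq\rho^\tc(1)$ which prevents splitting into an abelian sum over a finite extension), yielding the desired contradiction with the Tate module of a CM abelian variety becoming a sum of characters over a finite extension.

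Third, for the last part, I would apply Faltings' isogeny theorem once more: $A(\underline\pi_1^\infty)$ and $A(\underline\pi_2^\infty)$ are isogenous over $E$ if and only if the semisimple $\Gal(\dC/E)$-representations attached to them are isomorphic after extension to $\dQ_\ell^\ac$, which by construction means that there exist $\pi_i^\infty\in\underline\pi_i^\infty$ with $\rho_{\iota_\ell}(\pi_1^\infty)\simeq\rho_{\iota_\ell}(\pi_2^\infty)$. By the local-global compatibility in Theorem~\ref{th:galois_curve}(2)(c), this isomorphism at almost all nonarchimedean places $w$ of $E$ forces $\iota_\ell\circ\sL_{2,E_w}(\Pi_{1,w}^\infty|\det|_w^{-1/2})\simeq\iota_\ell\circ\sL_{2,E_w}(\Pi_{2,w}^\infty|\det|_w^{-1/2})$, where $\Pi_i^\infty$ is the standard base change of $\pi_i^\infty$; by the local Langlands correspondence and strong multiplicity one for $\GL_2(\bA_E)$ this is equivalent to $\Pi_1^\infty\simeq\Pi_2^\infty$, giving the ``only if'' direction. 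The ``if'' direction follows by reversing this chain. The main obstacle in this plan is the CM-free assertion, where one must rule out the possibility that $\rho_{\iota_\ell}(\pi^\infty)$ becomes reducible on a finite-index open subgroup; this is handled by combining irreducibility with the conjugate self-duality in Theorem~\ref{th:galois_curve}(2)(a,b).
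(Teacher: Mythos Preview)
The paper states this as an immediate corollary of Theorem~\ref{th:galois_curve}(2) and gives no proof; your approach via Faltings' isogeny theorem and the $\Gal(\dC/E)\times\sH_K$-decomposition of $\rH^1_{\et}$ is exactly the intended ``standard argument'', and your first and third steps are correct.

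There is, however, a genuine gap in your second step. The assertion that $\rho^\vee\simeq\rho^\tc(1)$ forces $\rho_{\iota_\ell}(\pi^\infty)$ to remain irreducible on every open subgroup of finite index is false: conjugate self-duality does not rule out dihedral representations. For instance, if $E''/E$ is quadratic and $\chi$ is a character of $\Gal(\dC/E'')$ whose conjugate by the nontrivial element of $\Gal(E''/E)$ differs from $\chi$, then $\rho=\Ind_{E''}^E\chi$ is irreducible over $E$ but splits over $E''$, and one can arrange $\chi$ so that $\rho$ is still conjugate self-dual up to a cyclotomic twist. Fortunately this extra step is unnecessary: the phrase ``of CM type'' here should be read as CM over $E$ (in parallel with the factors $A_\mu$ of $A_K^{\r{end}}$, which are CM over $E$). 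Under that reading your first sentence already does the job: a simple abelian variety $B$ over $E$ with CM over $E$ has $V_\ell(B)\otimes_{\dQ_\ell}\dQ_\ell^\ac$ a direct sum of characters of $\Gal(\dC/E)$, which cannot embed into $\bigoplus_{\pi^\infty\in\cC_\rV^{\r{st}}}\rho_{\iota_\ell}(\pi^\infty)\otimes(\iota_\ell\circ\pi^\infty)^K$ because each $\rho_{\iota_\ell}(\pi^\infty)$ is irreducible of dimension $2$ by Theorem~\ref{th:galois_curve}(2)(a). Even more directly, the listed properties $\dim A(\underline\pi^\infty)=[M(\underline\pi^\infty):\dQ]$ and $\End_E(A(\underline\pi^\infty))_\dQ\simeq M(\underline\pi^\infty)$ already preclude CM over $E$, since a CM structure would require an endomorphism field of degree $2\dim A(\underline\pi^\infty)$.
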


The isogeny decomposition of $A_K^{\r{end}}$ is a special case of Corollary \ref{co:cm_albanese}.

\subsection{Proof of Theorem \ref{th:galois_curve}}
\label{ss:proof_curve}

We prove Theorem \ref{th:galois_curve} by first establishing a congruence relation for the Shimura curve $\Sh(\rG,\rh)_K$ over a set of primes of $E$ of density $1$.

To state the congruence relation, we fix a prime $\fq$ of $E$, with the underlying rational prime $p$, such that
\begin{itemize}
  \item $\rG\otimes_\dQ\dQ_p$ is unramified (in particular, $p$ is unramified in $E$), and

  \item $\fq\neq\fq^\tc$, that is, $\fq$ has degree $1$ over $F$.
\end{itemize}
Denote by $\fp$ the prime of $F$ underlying $\fq$. We identify $F_\fp$ with $E_\fq$. Choose a uniformizer $\varpi$ of $F_\fp$. Put $\sO_\fp\coloneqq O_{F_\fp}$, $\kappa\coloneqq\sO_\fp/\varpi\sO_\fp$, and $q\coloneqq\#\kappa$. Fix a maximal unramified extension $F_\fp^\ur$ of $F_\fp$ with $\sO_\fp^\ur$ the ring of integers and $\kappa^\ac\coloneqq\sO_\fp^\ur/\varpi\sO_\fp^\ur$ the residue field. Let $\sigma\colon\sO_\fp^\ur\to\sO_\fp^\ur$ be the $q$-th Frobenius map.

Fix a basis of the $E_\fq$-vector space $\rV\otimes_EE_\fq$ under which we identify $\rU(\rV\otimes_FF_\fp)$ with $\GL_{2,F_\fp}$. Let $\Iw_\fp\coloneqq\(\begin{smallmatrix}\sO_\fp&\sO_\fp\\\varpi\sO_\fp&\sO_\fp\end{smallmatrix}\)\subseteq \GL_2(\sO_\fp)$ be an Iwahori subgroup. We consider open compact subgroups $K\subseteq\rG(\bA^\infty)$ of the form $\GL_2(\sO_\fp)\times K_p^\fp\times K^p$ where $\GL_2(\sO_\fp)\times K_p^\fp$ is a hyperspecial maximal subgroup of $\rG(\dQ_p)$ and $K^p$ is a sufficiently small open compact subgroup of $\rG(\bA^{\infty,p})$. For such $K$, we put $K_\Iw\coloneqq\Iw_\fp\times K_p^\fp\times K^p$. We have the projection morphism $\pi\colon\ol\Sh(\rG,\rh)_{K_\Iw}\to\ol\Sh(\rG,\rh)_K$, and an isomorphism
\[
\rt_\varpi\colon\ol\Sh(\rG,\rh)_{K_\Iw}\xrightarrow{\sim}\ol\Sh(\rG,\rh)_{K_\Iw}
\]
induced by the Hecke translation of the element $\(\begin{smallmatrix}&\varpi\\1&\end{smallmatrix}\)$. In view of the reciprocity map in Remark \ref{re:picard}, the morphism
\[
\rt_\varpi\otimes\sigma\colon\ol\Sh(\rG,\rh)_{K_\Iw}\otimes_{F_\fp}F_\fp^\ur\to\ol\Sh(\rG,\rh)_{K_\Iw}\otimes_{F_\fp}F_\fp^\ur
\]
preserves every connected component.

We will show in Proposition \ref{pr:congruence} that $\ol\Sh(\rG,\rh)_K$ (resp.\ $\ol\Sh(\rG,\rh)_{K_\Iw}$) admits a smooth model (resp.\ a stable model) $\cS_K$ (resp.\ $\cS_{K_\Iw}$) over $\sO_\fp$. By \cite{LL99}*{Proposition~4.4(a)}, the morphism $\rt_\varpi$ extends (uniquely) to a morphism $\rt_\varpi\colon\cS_{K_\Iw}\to\cS_{K_\Iw}$, which has to be an isomorphism; and $\pi$ extends (uniquely) to a morphism $\pi\colon\cS_{K_\Iw}\to\cS_K$. Finally, to ease notation, we put $\cT_K\coloneqq\cS_K\otimes_{\sO_\fp}\kappa$ and $\cT_{K_\Iw}\coloneqq\cS_{K_\Iw}\otimes_{\sO_\fp}\kappa$ for the special fibers.

\begin{proposition}\label{pr:congruence}
Let the notation be as above. We have
\begin{enumerate}
  \item The smooth projective $F_\fp$-curve $\ol\Sh(\rG,\rh)_K$ admits a smooth model $\cS_K$ over $\sO_\fp$.

  \item The smooth projective $F_\fp$-curve $\ol\Sh(\rG,\rh)_{K_\Iw}$ admits a stable model $\cS_{K_\Iw}$ over $\sO_\fp$.

  \item The $\kappa$-scheme $\cT_{K_\Iw}$ has two irreducible components $\cT_{K_\Iw}^+$ and $\cT_{K_\Iw}^-$, satisfying that
     \begin{enumerate}
       \item $\pi^+\coloneqq\pi\res\cT_{K_\Iw}^+\colon\cT_{K_\Iw}^+\to\cT_K$ is an isomorphism;

       \item $\pi^-\coloneqq\pi\res\cT_{K_\Iw}^-\colon\cT_{K_\Iw}^-\to\cT_K$ is a finite flat morphism of degree $q$;

       \item $\rt_\varpi\otimes\sigma$ induces an isomorphism between $\cT_{K_\Iw}^+\otimes_\kappa\kappa^\ac$ and $\cT_{K_\Iw}^-\otimes_\kappa\kappa^\ac$;

       \item the morphism $(\pi^-\otimes\id)\circ(\rt_\varpi\otimes\sigma)\circ(\pi^+\otimes\id)^{-1}$ coincides with the absolute $q$-th Frobenius morphism of $\cT_K\otimes_\kappa\kappa^\ac$.
     \end{enumerate}
\end{enumerate}
\end{proposition}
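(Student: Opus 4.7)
The plan is to establish Proposition~\ref{pr:congruence} by reducing to a Deligne--Rapoport style analysis of a Shimura curve with $\Gamma_0(\fp)$-level structure; the key input that makes this reduction possible is that $\fp$ splits in $E$ (since $\fq$ has degree $1$ over $F$), so that the local situation at $\fp$ is essentially that of a modular curve.

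First, using the moduli-theoretic realization in Subsection~\ref{ss:appendix_connection}, choose auxiliary data $(\bW_0^\infty, L_0, \Phi)$ with $\tau_1' \in \Phi$ and with $L_0$ hyperspecial at $p$. Since $\rG \otimes_\dQ \dQ_p$ is unramified, the hermitian space $\rV \otimes_F F_\fp$ admits a self-dual $O_{F_\fp}$-lattice whose stabilizer is $K_\fp = \GL_2(\sO_\fp)$. Definition~\ref{de:integral_connection} and Remark~\ref{re:integral_canonical}, combined with the argument of Proposition~\ref{pr:integral_canonical}, then produce a smooth proper integral canonical model $\cS_K$ of $\ol\Sh(\rG,\rh)_K$ over $\sO_\fp$. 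This yields~(1).

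For~(2) and~(3), use the identification $E \otimes_F F_\fp \simeq F_\fp \times F_\fp$, with $\fq$ corresponding to the first factor, to decompose the $\fp$-divisible group of the universal abelian scheme as $A[\fq^\infty] \times A[(\fq^\tc)^\infty]$. The signature condition at $\tau_1'$ forces each summand to be a Barsotti--Tate $O_{F_\fp}$-module of dimension $1$ and height $2$; an Iwahori level structure at $\fp$ is therefore equivalent to the datum of a locally free order-$q$ subgroup scheme $H \subseteq A[\fq]$. A direct adaptation of the Deligne--Rapoport construction of the stable model of $X_0(p)$ produces a semistable model $\cS_{K_\Iw}$ whose special fiber is the union of two smooth irreducible components meeting transversally along the supersingular locus, proving~(2). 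These components $\cT_{K_\Iw}^+$ and $\cT_{K_\Iw}^-$ are characterized on the ordinary locus by whether $H$ is étale or equal to the kernel of the $q$-Frobenius on $A[\fq]$, respectively. The forgetful map $\pi$ is an isomorphism on the étale component (since the relevant étale subgroup is canonically determined on the ordinary locus and extends uniquely to the supersingular points via Drinfeld level structures), while on the other component $\pi$ factors through the relative $q$-Frobenius and is finite flat of degree $q$; this gives~(3a) and~(3b).

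Finally, the Hecke translation by $\bigl(\begin{smallmatrix}&\varpi\\1&\end{smallmatrix}\bigr)$ is the usual Atkin--Lehner involution, which on moduli sends $(A, H)$ to $(A/H, A[\fq]/H)$; passing from an étale subgroup to its (connected) quotient interchanges the two components of the special fiber modulo a twist of the base by $\sigma$, the twist being exactly the one dictated by the reciprocity prescription in Remark~\ref{re:picard}. This proves~(3c). The composition in~(3d) then amounts, on $\kappa^\ac$-points, to sending $A$ through the chain $A \leadsto (A, H_{\mathrm{et}}) \leadsto (A/H_{\mathrm{et}}, H_{\mathrm{conn}}) \leadsto A/H_{\mathrm{et}} \simeq A^{(q)}$, which is precisely the absolute $q$-Frobenius of $\cT_K \otimes_\kappa \kappa^\ac$. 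The main technical obstacle is verifying that the Deligne--Rapoport construction does produce a genuinely stable model with the above two-component description at supersingular points; this requires a careful local analysis in terms of Drinfeld level structures, entirely parallel to the classical modular case but with the polarization and the $O_E$-action carried along.
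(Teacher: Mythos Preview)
Your approach differs substantively from the paper's. The paper does not work directly with the unitary moduli problem at a split prime; instead it reduces to Carayol's analysis \cite{Car86} of quaternionic Shimura curves. Concretely, one chooses a quaternion algebra $B$ over $F$ with an embedding $E\hookrightarrow B$ such that $B$, viewed as a hermitian space over $E$, is isomorphic to $\rV$, and then uses Deligne's theory of connected Shimura varieties to match connected components of $\Sh(\rG,\rh)_{K_\Iw}\otimes_{F_\fp}F_\fp^\ur$ with those of the quaternionic Shimura curve $\Sh(\rG',\rh')_{K'_\Iw}\otimes_{F_\fp}F_\fp^\ur$, compatibly with $\pi$ and $\rt_\varpi\otimes\sigma$. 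All of (1)--(3) then follow from Carayol's construction of the $\sO_\fp$-divisible group $\cE_\infty$ and his Proposition~10.3 (with Buzzard \cite{Buz97} and Deligne--Rapoport/Katz--Mazur handling $F=\dQ$). This buys the paper a complete, already-written local analysis.

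Your direct approach is reasonable in spirit but has concrete gaps. First, Definition~\ref{de:integral_connection}, Proposition~\ref{pr:integral_canonical}, and Remark~\ref{re:integral_canonical} are formulated only for primes $\fp$ \emph{inert} in $E$, whereas here $\fp$ is split; you cannot invoke them as stated, and the paper provides no split analogue. Second, your claim that each of $A[\fq^\infty]$ and $A[(\fq^\tc)^\infty]$ is a Barsotti--Tate $O_{F_\fp}$-module of dimension $1$ and height $2$ is not automatic: the dimension of $A[\fq^\infty]$ is governed by which embeddings $\tau'\in\Phi_E$ in the signature $\sig_{\rV,\Phi}$ induce $\fq$ under the fixed isomorphism $\dC\simeq E_\fp^\ac$, and getting dimension exactly $1$ requires a specific compatibility between $\Phi$ and $\fq$ that you have not arranged (this is precisely the subtlety Carayol's auxiliary construction of $\cE_\infty$ is designed to handle). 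Third, when $F=\dQ$ the curve may have cusps, and you do not say how the compactification interacts with your moduli description. None of these is fatal to the strategy, but each would need to be written out; the paper's reduction to the quaternionic case avoids all three.
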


\begin{proof}
We first assume $F\neq\dQ$. Then we have $\ol\Sh(\rG,\rh)_K=\Sh(\rG,\rh)_K$. We will reduce the proposition to (a weak form of) the congruence relation in \cite{Car86}*{Proposition~10.3} by changing the Shimura datum.\footnote{In \cite{Car86}, the initial Shimura variety is a quaternionic Shimura curve, and the auxiliary Shimura variety is a (quaternionic) unitary Shimura curve of PEL type. However, in our case, the initial Shimura variety is unitary Shimura curve of non-PEL type, and the auxiliary Shimura variety we introduce below is a quaternionic Shimura curve, which is the initial Shimura variety of Carayol. So strictly speaking, to obtain this proposition, we have to change Shimura data twice but the second step has already been carried out by Carayol. Such consideration was also used in \cite{Liu12}.} Choose a quaternion algebra over $B$ together with an embedding $E\hookrightarrow B$ of $F$-algebras, such that the induced hermitian form on $B$ is isomorphic to $\rV$. In particular, $B$ is indefinite at $\tau$ and definite at all other places of $F$; $B$ is division at a nonarchimedean place $v$ of $F$ if and only if $\rV_v$ is anisotropic. We identify $\rV$ with $B$ as hermitian spaces. Let $(B^\times\times E^\times)^1$ be the subgroup of $B^\times\times E^\times$ consisting of elements $(b,e)$ such that $\Nm_{B/F}b\cdot\Nm_{E/F}e=1$, viewing as a reductive group over $F$. Then we have a short exact sequence
\[
1 \to \dG_{\rm,F} \to (B^\times\times E^\times)^1 \to \rU(\rV) \to 1,
\]
where the homomorphism $\dG_{\rm,F}\to(B^\times\times E^\times)^1$ is given by $e\mapsto (e,e^{-1})$. The fixed basis of $\rV\otimes_EE_\fq$ identifies $B\otimes_FF_\fp$ with $\Mat_2(F_\fp)$, and further $(B\otimes_FF_\fp)^\times$ with $\rU(\rV)(F_\fp)$.

Put $\rG'\coloneqq\Res_{F/\dQ}B^\times$ and let $\rh'$ be the Hodge map that is \emph{inverse}\footnote{This is to ensure that the actions of $\sigma$ on the connected components of $\Sh(\rG,\rh)\otimes_E\dC$ and $\Sh(\rG',\rh')\otimes_F\dC$ are compatible with the composite homomorphism $F_\fp^\times\simeq E_\fq^\times\xrightarrow{e\mapsto(e,e^{-1})}E_\fp^1$.} to the one given in \cite{Car86}*{0.1}. We have the Shimura curve $\Sh(\rG',\rh')_{K'}$ defined over $F$. Here, the open compact subgroup $K'\subseteq\rG'(\bA_\infty)$ is of the form $K'_p\times K^{\prime p}$, where $K'_p$ is hyperspecial maximal of the form $\GL_2(\sO_\fp)\times K_p^{\prime\fp}$. Replacing $\GL_2(\sO_\fp)$ by $\Iw_\fp$, we obtain $K'_\Iw$, hence the Shimura curve $\Sh(\rG',\rh')_{K'_\Iw}$. Applying the constructions for the Shimura data $(\rG,\rh)$ to $(\rG',\rh')$, we obtain $\Sh(\rG',\rh')_{K'}$, $\Sh(\rG',\rh')_{K'_\Iw}$, $\pi'$, and $\rt'_\varpi$. By Deligne's theory of connected Shimura varieties \cite{Del79} (or see \cite{Car86}*{Section~4}), for every connected component $\Sh(\rG,\rh)_K^\dag$ of $\Sh(\rG,\rh)_K\otimes_{F_\fp}F_\fp^\ur$, there exists some $K'^p$ and a connected component $\Sh(\rG',\rh')_{K'}^\dag$ of $\Sh(\rG',\rh')_{K'}\otimes_{F_\fp}F_\fp^\ur$ such that there is a commutative diagram
\[
\xymatrix{
\Sh(\rG,\rh)_{K_\Iw}^\dag \ar[r]^-{\simeq}\ar[d]_-{\pi} & \Sh(\rG',\rh')_{K'_\Iw}^\dag \ar[d]^-{\pi'} \\
\Sh(\rG,\rh)_K^\dag \ar[r]^-{\simeq} & \Sh(\rG',\rh')_{K'}^\dag
}
\]
where $\Sh(\rG,\rh)_{K_\Iw}^\dag\coloneqq\pi^{-1}\Sh(\rG,\rh)_K^\dag$ and $\Sh(\rG',\rh')_{K'_\Iw}^\dag\coloneqq\pi'^{-1}\Sh(\rG',\rh')_{K'}^\dag$, under which the automorphism $\rt_\varpi\otimes\sigma$ of $\Sh(\rG,\rh)_{K_\Iw}^\dag$ coincide with the automorphism $\rt'_\varpi\otimes\sigma$ of $\Sh(\rG',\rh')_{K'_\Iw}^\dag$ respectively. Therefore, the proposition will follow from the version for $(\rG',\rh')$.\footnote{Here, we have to use the fact that constructing smooth (resp. stable) models of smooth projective curves over $\sO_\fp$ is equivalent to constructing them after the base change to $\sO_\fp^\ur$; see, for example, \cite{DM69}*{Section~1}.}

To release ourselves from the clumsy notation, we will now suppress the ``prime'' in all superscripts; in particular, the group $\rG$ now is $\Res_{F/\dQ}B^\times$. Then (1) follows from \cite{Car86}*{Proposition~6.1}. For the remaining claims, we need some preparation.

For $n\geq 1$, put $K_n\coloneqq(\rI_2+\varpi^n\GL_2(\sO_\fp))\times K_p^\fp\times K^p$. In \cite{Car86}*{1.4.4}, Carayol constructed an $\sO_\fp$-divisible group $\rE_\infty$ over $\Sh(\rG,\rh)_K$, such that the pullback of $\rE_\infty[\fp^n]$ to $\Sh(\rG,\rh)_{K_n}$ is trivial. By the construction, the subgroup $\Sh(\rG,\rh)_{K_1}\times\(\begin{smallmatrix}\ast\\0\end{smallmatrix}\)\subseteq\Sh(\rG,\rh)_{K_1}\times(\fp^{-1}/\sO_\fp)^2$ is stable under the action (given in \cite{Car86}*{1.4.2}) of $\Iw_\fp$. In particular, it defines an $\sO_\fp$-stable subgroup $\rC_\Iw$ of $\rE_\infty[\fp]$ over $\Sh(\rG,\rh)_{K_\Iw}$ of rank $q$. By \cite{Car86}*{Proposition~6.4}, the $\sO_\fp$-divisible group $\rE_\infty$ extends uniquely to an $\sO_\fp$-divisible group $\cE_\infty$ over $\cS_K$ such that $\cE_\infty\res{\cT_K}$ is of dimension $1$ and $\sO_\fp$-height $2$.

We define a functor $\cS_{K_\Iw}$ over $\cS_K$ such that for every $\cS_K$-scheme $u\colon S\to\cS_K$, the set $\cS_{K_\Iw}(S)$ consists of $\sO_\fp$-stable finite flat $S$-subgroups of $u^*\cE_\infty[\fp]$ of rank $q$. As pointed out in \cite{Car86}*{Section~6.7}, the supersingular locus of $\cE_\infty$ is discrete. Thus, it follows from \cite{Car86}*{Proposition~6.6} and the Grothendieck--Messing theory that the above functor is represented by a finite flat morphism $\pi\colon\cS_{K_\Iw}\to\cS_K$ of schemes (of degree $q+1$), satisfying that $\cS_{K_\Iw}$ is a semi-stable curve over $\sO_\fp$. Moreover, since the special fiber $\cT_{K_\Iw}\coloneqq\cS_{K_\Iw}\otimes_{\sO_\fp}\kappa$ does not contain genus zero curves as irreducible components, $\cS_{K_\Iw}$ is a stable curve over $\sO_\fp$. The subgroup $\rC_\Iw$ constructed above induces a morphism $\iota\colon\Sh(\rG,\rh)_{K_\Iw}\to\cS_{K_\Iw}\otimes_{\sO_\fp}F_\fp$ of schemes over $\Sh(\rG,\rh)_K$. By the construction of $\rE_\infty$, it is easy to see that the morphism $\pi\colon\cS_{K_\Iw}\otimes_{\sO_\fp}F_\fp\to\cS_K\otimes_{\sO_\fp}F_\fp=\Sh(\rG,\rh)_K$ is \'{e}tale and generically irreducible. Thus, $\iota$ is an isomorphism since both sides are finite \'{e}tale of degree $q+1$ and generically irreducible over $\Sh(\rG,\rh)_K$. Thus, (2) follows, and we will identify $\cS_{K_\Iw}\otimes_{\sO_\fp}F_\fp$ with $\Sh(\rG,\rh)_{K_\Iw}$ via $\iota$.

Let $(\pi^*\cE_\infty,\cC_\Iw)$ be the universal object over $\cS_{K_\Iw}$. Denote by $\cT_{K_\Iw}^+$ (resp.\ $\cT_{K_\Iw}^-$) the Zariski closure of the locus in $\cT_{K_\Iw}$ where $\cC_\Iw$ is continuous (resp.\ \'{e}tale). Then $\cT_{K_\Iw}^\pm$ are union of irreducible components and they cover $\cT_{K_\Iw}$. To prove (3), we have to consider full Drinfeld level structures at $\fp$. For $n\geq 1$, let $\cS_{K_n}$ be the functor over $\cS_K$ such that for every $\cS_K$-scheme $u\colon S\to\cS_K$, the set $\cS_{K_n}(S)$ consists of Drinfeld level structures $\varphi\colon(\fp^{-n}/\sO_\fp)^2\to\Mor_S(S,u^*\cE_\infty[\fp^n])$ (see \cite{Car86}*{Section~7.2} for more details). By \cite{Car86}*{Proposition~7.4}, it is represented by a finite flat morphism $\pi_n\colon\cS_{K_n}\to\cS_K$ of schemes (of degree $\#\GL_2(\sO_\fp/\fp^n)$), such that $\pi_n\otimes_{\sO_\fp}F_\fp$ is canonically isomorphic to the projection $\Sh(\rG,\rh)_{K_n}\to\Sh(\rG,\rh)_K$. Now we take $n=1$, we define a morphism $\pi_\Iw\colon\cS_{K_1}\to\cS_{K_\Iw}$ by sending a Drinfeld level structure $\varphi$ to the subgroup $\sum_{\alpha\in A^+}[\varphi(\alpha)]$ where $A^+\subseteq(\fp^{-1}/\sO_\fp)^2$ is the line with the second coordinate zero. Then $\pi_\Iw\otimes_{\sO_\fp}F_\fp$ is canonically isomorphic to the projection $\Sh(\rG,\rh)_{K_1}\to\Sh(\rG,\rh)_{K_\Iw}$. Let $\cT_{K_1}^\red$ be the induced reduced subscheme of $\cS_{K_1}\otimes_{\sO_\fp}\kappa^\ac$. Then by \cite{Car86}*{9.4.1}, the morphism $\cT_{K_1}^\red\to\cT_K\otimes_\kappa\kappa^\ac$ is finite flat of degree $(q-1)q(q+1)$. For every line $A$ in $(\fp^{-1}/\sO_\fp)^2$, let $\cT_{K_1,A}^\red$ be the locus where $\varphi\res{A}=0$. Then by \cite{Car86}*{Proposition~9.4.4}, $\{\cT_{K_1,A}^\red\}_A$ is the set of all irreducible components of $\cT_{K_1}^\red$. Since $\GL_2(\kappa)$ acts transitively on $\{\cT_{K_1,A}^\red\}_A$, each $\cT_{K_1,A}^\red$ is of degree $q(q-1)$ over $\cT_K\otimes_\kappa\kappa^\ac$. By definition, the image of $\cT_{K_1,A}^\red$ under $\pi_\Iw$ is contained in $\cT_{K_\Iw}^+$ (resp.\ $\cT_{K_\Iw}^-$) if and only if $A=A^+$ (resp.\ $A\neq A^+$). If $A\neq A^+$, then $\pi_\Iw\colon\cT_{K_1,A}^\red\to\cT^-_{K_\Iw}\otimes_\kappa\kappa^\ac$ is \'{e}tale of degree $q-1$ since to recover the Drinfeld level structure is equivalent to choosing a basis of $A^+$. Thus, $\deg(\pi\res{\cT^-_{K_\Iw}})\geq q$. Since $\deg(\pi\res{\cT^+_{K_\Iw}})\geq 1$, we must have $\deg(\pi\res{\cT^-_{K_\Iw}})=q$ and $\deg(\pi\res{\cT^+_{K_\Iw}})=1$, and both $\cT^-_{K_\Iw}$ and $\cT^+_{K_\Iw}$ are irreducible. Thus, (3a) has been verified as a finite flat morphism of degree $1$ must be an isomorphism, and (3b) also follows. For (3c,3d), put $\cS_{K_\infty}\coloneqq\varprojlim_n\cS_{K_n}$. Let $A_\infty^+$ (resp.\ $A_\infty^-$) be the subspace of $F_\fp^2$ with the second (resp.\ first) coordinate zero. In view of the notation of \cite{Car86}*{Section~10.3},\footnote{Our $\cS_{K_\infty}$ is Carayol's \textbf{\emph{M}}.} we have subschemes $(\cS_{K_\infty}\ol\otimes\kappa^\ac)_{A_\infty^\pm}$ of $\cS_{K_\infty}\ol\otimes\kappa^\ac$, which map surjectively to $\cT_{K_\Iw}^\pm\otimes_\kappa\kappa^\ac$ under the composite map $\cS_{K_\infty}\to\cS_{K_1}\xrightarrow{\pi_\Iw}\cS_{K_\Iw}$, respectively. Note that the endomorphism $\rt_\varpi$ lifts to $\cS_{K_\infty}$ by the Hecke translation. By \cite{Car86}*{Proposition~10.3}, the morphism $\rt_\varpi\otimes\sigma$\footnote{Here, our $\sigma$ is the (arithmetic) Frobenius, which is inverse to the one that should appear in \cite{Car86}*{Proposition~10.3}. Such difference is due to the fact that our choice of the Hodge map for $\Res_{F/\dQ}B^\times$ is inverse to Carayol's.} and the Hecke translation by $\(\begin{smallmatrix}&1\\1&\end{smallmatrix}\)$ induce the same map on the underlying set of $(\cS_{K_\infty}\ol\otimes\kappa^\ac)_{A^+_\infty}$. Since the Hecke translation by $\(\begin{smallmatrix}&1\\1&\end{smallmatrix}\)$ maps $(\cS_{K_\infty}\ol\otimes\kappa^\ac)_{A^+_\infty}$ to $(\cS_{K_\infty}\ol\otimes\kappa^\ac)_{A^-_\infty}$, we obtain (3c). For (3d), since $\(\begin{smallmatrix}&1\\1&\end{smallmatrix}\)$ acts trivially on $\cT_K$, we know, again by \cite{Car86}*{Proposition~10.3}, that $(\pi_\infty\otimes\id)\circ(\rt_\varpi\otimes\sigma)$ coincides with $\pi_\infty\otimes\id$ on the underlying set of $(\cS_{K_\infty}\ol\otimes\kappa^\ac)_{A_\infty^+}$ where $\pi_\infty\colon\cS_{K_\infty}\to\cS_K$ is the obvious projection. This implies that $\bt\coloneqq(\pi^-\otimes\id)\circ(\rt_\varpi\otimes\sigma)\circ(\pi^+\otimes\id)^{-1}$ induces the identity map on the underlying set of $\cT_K\otimes_\kappa\kappa^\ac$, which has to be purely inseparable. We factors $\bt$ as the composite map
\[
\cT_K\otimes_\kappa\kappa^\ac\xrightarrow{\bt'}(\cT_K\otimes_\kappa\kappa^\ac)^{(q)}
\xrightarrow{\id\otimes\sigma}\cT_K\otimes_\kappa\kappa^\ac.
\]
Now $\bt'$ is $\kappa^\ac$-linear, purely inseparable, inducing the identity map on the underlying set, and of degree $q$ by (3b,3c), so it has to be the relative $q$-th Frobenius morphism by \cite{SP}*{0CCZ}. Thus, $\bt$ is the absolute $q$-th Frobenius morphism. The proposition is finally proved in the case where $F\neq\dQ$.

When $F=\dQ$, we can still deduce the proposition to the one for $\ol\Sh(\rG',\rh')_{K'}$, which is either: (i) a Shimura curve associated to a division rational quaternion algebra, or (ii) a compactified modular curve. In both cases, $\ol\Sh(\rG',\rh')_K$ is already a moduli space. In case (i), the conclusions of the proposition can be found in \cite{Buz97}. In case (ii), the proposition is well-known (see \cites{DR72,KM85}).
\end{proof}

\begin{corollary}\label{co:congruence}
For every rational prime $\ell\neq p$, the action $(\sigma^{-1})^*$ of the \emph{geometric} Frobenius at $\fq$ on $\rH^1_{\et}(\ol\Sh(\rG,\rh)_K\otimes_E\dC,\dQ_\ell^\ac)$ satisfies the equation
\[
X^2-\rt_\varpi^*X + q\langle\varpi\rangle^*=0,
\]
where $\langle\varpi\rangle\colon\ol\Sh(\rG,\rh)_K\to\ol\Sh(\rG,\rh)_K$ is the Hecke translation given by $\(\begin{smallmatrix}\varpi&\\&\varpi\end{smallmatrix}\)$. Here, we regard $\rt_\varpi$ as a correspondence on $\ol\Sh(\rG,\rh)_K$.
\end{corollary}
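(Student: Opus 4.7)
Proof proposal. The plan is to establish an Eichler--Shimura type congruence by combining proper smooth base change with the geometric decomposition of the Hecke correspondence provided by Proposition~\ref{pr:congruence}.

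First, I would apply proper smooth base change to the smooth proper model $\cS_K\to\Spec\sO_\fp$ from Proposition~\ref{pr:congruence}(1) to obtain a canonical isomorphism
\[
\rH^1_{\et}(\ol\Sh(\rG,\rh)_K\otimes_E\dC,\dQ_\ell^\ac)\simeq \rH^1_{\et}(\cT_K\otimes_\kappa\kappa^\ac,\dQ_\ell^\ac)
\]
equivariant for the decomposition group at $\fq$, under which the geometric Frobenius $(\sigma^{-1})^*$ corresponds to pullback $\Phi^*$ by the relative $q$-Frobenius $\Phi$ of $\cT_K$ base-changed to $\kappa^\ac$. The Hecke correspondence $\rt_\varpi$ extends to an integral correspondence $(\pi_1,\pi_2)\coloneqq(\pi,\,\pi\circ\rt_\varpi)\colon\cS_{K_\Iw}\to\cS_K\times_{\sO_\fp}\cS_K$, and the action of $\rt_\varpi^*$ on the generic-fiber cohomology coincides, via the above isomorphism, with the action of the special fiber of this correspondence, viewed as a divisor on $\cT_K\times_\kappa\cT_K$.

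Second, I would decompose this special-fiber divisor using the two irreducible components $\cT^\pm_{K_\Iw}$ from Proposition~\ref{pr:congruence}(3). The images under $(\pi_1,\pi_2)$ of $\cT^+_{K_\Iw}\otimes\kappa^\ac$ and $\cT^-_{K_\Iw}\otimes\kappa^\ac$ give two cycles $Z^+$ and $Z^-$ on $\cT_K\otimes\kappa^\ac\times_{\kappa^\ac}\cT_K\otimes\kappa^\ac$. Since $\pi^+$ is an isomorphism by (3a), and since (3d), once the $\sigma$-twist is correctly unwound, identifies $\pi^-\circ\rt_\varpi\circ(\pi^+)^{-1}$ with $\Phi$, the cycle $Z^+$ is the graph of $\Phi$. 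Using the identity $\rt_\varpi^2=\langle\varpi\rangle$ (which comes from $\(\begin{smallmatrix}0&\varpi\\1&0\end{smallmatrix}\)^2=\varpi\cdot\rI_2$, a central element whose action at Iwahori level descends to the diamond $\langle\varpi\rangle$ on $\cS_K$) to interchange the roles of the two projections, $Z^-$ is identified with the image of $\cT_K$ under $x\mapsto(\Phi(x),\langle\varpi\rangle(x))$. Translating to operators on $\rH^1_{\et}$ produces a decomposition of $\rt_\varpi^*$ as the sum of a Frobenius-like contribution and a Verschiebung-like contribution, the latter twisted by the diamond.

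Third, I would combine this decomposition with the degree relation $\Phi_*\circ\Phi^*=q\cdot\id$ (since $\Phi$ is finite of degree $q$) to produce a quadratic identity in $\Phi^*$: composing the decomposition of $\rt_\varpi^*$ with $\Phi^*$ and using that $\Phi^*$ commutes with $\langle\varpi\rangle^*$ (both being defined over $\kappa$) yields, after rearrangement, the relation $(\Phi^*)^2-\rt_\varpi^*\Phi^*+q\,\langle\varpi\rangle^*=0$. Since $\Phi^*=(\sigma^{-1})^*$, this is precisely the desired congruence.

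The main obstacle will be the careful bookkeeping in the second step: Proposition~\ref{pr:congruence}(3d) describes a composite involving the \emph{absolute} $q$-Frobenius (which, being a universal homeomorphism, acts as the identity on $\ell$-adic \'etale cohomology) together with the non-$\kappa^\ac$-linear twist $\id\otimes\sigma$, and one must correctly extract from this the \emph{relative} $q$-Frobenius $\Phi$ over $\kappa$, whose pullback is the geometric Frobenius action of the decomposition group at $\fq$. One must also track carefully how the central element $\varpi\cdot\rI_2$ produces the diamond factor $\langle\varpi\rangle^*$ (with the correct direction and on the correct side) when passing from $Z^+$ to $Z^-$ via $\rt_\varpi$, in order to land on exactly the relation stated in the corollary.
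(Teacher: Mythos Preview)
Your proposal is correct and follows essentially the same approach as the paper's proof: both reduce to the special fiber via smooth proper base change, decompose the Hecke correspondence $\rt_\varpi$ along the two components $\cT_{K_\Iw}^\pm$ from Proposition~\ref{pr:congruence}(3), identify the $\cT_{K_\Iw}^+$-piece with relative Frobenius via (3d), handle the $\cT_{K_\Iw}^-$-piece using the identity $\rt_\varpi^2=\langle\varpi\rangle$, and thereby obtain the decomposition $\rt_\varpi^*=(\sigma^{-1})^*+q\langle\varpi\rangle^*\sigma^*$, from which the quadratic relation follows upon composing with $(\sigma^{-1})^*$. The only cosmetic difference is that the paper works directly with the operator identities $\pi^\pm_*\circ(\rt_\varpi^\pm)^*\circ(\pi^\mp)^*$ rather than naming the cycles $Z^\pm$, and you have correctly flagged the absolute-versus-relative Frobenius bookkeeping as the one place requiring care.
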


\begin{proof}
It suffices to show $\rt_\varpi^*=(\sigma^{-1})^*+q\langle\varpi\rangle^*\circ\sigma^*$ for actions on $\rH^1_{\et}(\ol\Sh(\rG,\rh)_K\otimes_E\dC,\dQ_\ell^\ac)$. By comparison, it suffices to prove this identity on $\cT_K$. However, by Proposition \ref{pr:congruence}(3), the correspondence $\rt_\varpi$ on $\cT_K$ decomposes as the sum of
\[
\cT_K\xleftarrow{\pi^+}\cT_{K_\Iw}^+\xrightarrow{\rt_\varpi^+}\cT_{K_\Iw}^-\xrightarrow{\pi^-}\cT_K,\quad
\cT_K\xleftarrow{\pi^-}\cT_{K_\Iw}^-\xrightarrow{\rt_\varpi^-}\cT_{K_\Iw}^+\xrightarrow{\pi^+}\cT_K,
\]
where $\rt_\varpi^\pm$ is the restriction of $\rt_\varpi$ to $\cT_{K_\Iw}^\pm$, respectively. By Proposition \ref{pr:congruence}(3d), the action of $(\pi^-\circ\rt_\varpi^+\circ(\pi^+)^{-1})^*$ coincides with the action of $(\sigma^{-1})^*$ on $\rH^1_{\et}(\ol\Sh(\rG,\rh)_K\otimes_E\dC,\dQ_\ell^\ac)$; and the action of $(\pi^-\circ\rt_\varpi^+\circ(\pi^+)^{-1})_*$ coincides with the action of $q(\sigma^{-1})_*=q\sigma^*$ on $\rH^1_{\et}(\ol\Sh(\rG,\rh)_K\otimes_E\dC,\dQ_\ell^\ac)$.

For the first part, we have on $\rH^1_{\et}(\cT_K\otimes_\kappa\kappa^\ac,\dQ_\ell^\ac)$ that
\begin{align*}
\pi^+_*\circ(\rt_\varpi^+)^*\circ(\pi^-)^*
&=\pi^+_*\circ(\pi^+)^*\circ((\pi^+)^{-1})^*\circ(\rt_\varpi^+)^*\circ(\pi^-)^*\\
&=(\pi^+_*\circ(\pi^+)^*)\circ(\pi^-\circ\rt_\varpi^+\circ(\pi^+)^{-1})^*
=(\pi^+_*\circ(\pi^+)^*)\circ(\sigma^{-1})^*=(\sigma^{-1})^*
\end{align*}
as $\pi^+$ is an isomorphism by Proposition \ref{pr:congruence}(3a).

For the second part, we have on $\rH^1_{\et}(\cT_K\otimes_\kappa\kappa^\ac,\dQ_\ell^\ac)$ that
\begin{align*}
\pi^-_*\circ(\rt_\varpi^-)^*\circ(\pi^+)^*
&=\pi^-_*\circ((\rt_\varpi^+)^{-1})^*\circ\langle\varpi\rangle^*\circ(\pi^+)^*
=\pi^-_*\circ(\rt_\varpi^+)_*\circ\langle\varpi\rangle^*\circ((\pi^+)^{-1})_*\\
&=\pi^-_*\circ(\rt_\varpi^+)_*\circ((\pi^+)^{-1})_*\circ\langle\varpi\rangle^*
=(\pi^-\circ\rt_\varpi^+\circ(\pi^+)^{-1})_*\circ\langle\varpi\rangle^*\\
&=q\sigma^*\circ\langle\varpi\rangle^*=q\langle\varpi\rangle^*\circ\sigma^*.
\end{align*}

Adding the two parts, we obtain the desired identity.
\end{proof}

\begin{proof}[Proof of Theorem \ref{th:galois_curve}]
Let $\pi^\infty$ be an irreducible admissible representation of $\rG(\bA^\infty)$. Denote by $\Sigma(\pi^\infty)$ the set of primes $\fq$ of $E$ such that $\fq$ has degree $1$ over $F$ and $\pi^\infty_p$ is an unramified representation of $\rG(\dQ_p)$, where $p$ is the underlying rational prime of $\fq$. It is clear that $\Sigma(\pi^\infty)$ Chebotarev density $1$ among all primes of $E$.

We consider (2) first. Let $\ell$ be a rational prime and let $\iota_\ell\colon\dC\xrightarrow{\sim}\dQ_\ell^\ac$ be an isomorphism. Let $\Pi$ be the standard base change of either $\pi^{(1,0)}_\infty\otimes\pi^\infty$ or $\pi^{(0,1)}_\infty\otimes\pi^\infty$. Then $\Pi$ is cuspidal. In \cite{BR93}*{Section~4}, the authors constructed an irreducible Galois representation $\rho_{\Pi,\iota_\ell}\colon\Gal(\dC/E)\to\GL_2(\dQ_\ell^\ac)$ that satisfies (2c) at all but finitely many nonarchimedean places $w$ of $E$ coprime to $\ell$. On the other hand, Corollary \ref{co:congruence} already implies (2c) for $\rho_{\iota_\ell}(\pi^\infty)$ at places $w=\fq\in\Sigma(\pi^\infty)$ that is coprime to $\ell$. By the Chebotarev density theorem, $\rho_{\iota_\ell}(\pi^\infty)$ and $\rho_{\Pi,\iota_\ell}$ are isomorphic, which implies (2a). Moreover, (2b) also follows from the Chebotarev density theorem; and (2c) follows from \cite{Car12}*{Theorem~1.1}.

Now we consider (1). Put $\tilde\mu\coloneqq\rho_\ell(\pi^\infty)\colon E^\times\backslash\bA_E^\times\to\dC^\times$ for simplicity. We also
put $\mu_1\coloneqq\mu|\;|_E^{-1/2}$ and $\mu_2\coloneqq\mu^\tc\check\chi|\;|_E^{-1/2}$. Then Corollary \ref{co:congruence} implies that for every $\fq\in\Sigma(\pi^\infty)$ that is coprime to $\ell$, we have $\tilde\mu_\fq\in\{\mu_{1\fq},\mu_{2\fq}\}$. We claim that $\tilde\mu\in\{\mu_1,\mu_2\}$. For $i=1,2$, let $\Sigma_i$ be the set of primes $v$ of $E$ such that $\tilde\mu_v=\mu_{iv}$, and let $\delta_i$ be the upper density of $\Sigma_i$. Then we have $\delta_1+\delta_2\geq 1$. Without lost of generality, we assume that $\delta_1>0$. Then by \cite{Raj00}*{Theorem~1}, there exists a Dirichlet character $\eta_1$ of $E$ such that $\tilde\mu=\mu_1\eta_1$. If $\eta_1=1$, then we are done. Otherwise, $\delta_1<1$, and then $\delta_2>0$. By the same argument, we have another Dirichlet character $\eta_2$ of $E$ such that $\tilde\mu=\mu_2\eta_2$. Thus, $\mu_1\mu_2^{-1}$ is a Dirichlet character, which is not true. Therefore, we must have $\tilde\mu\in\{\mu_1,\mu_2\}$. We are left to determine which one $\tilde\mu$ is.

Fix an open compact subgroup $K\subseteq\rG(\bA^\infty)$ such that $(\pi^K)^\infty\neq\{0\}$. Let $A_K$ be the Jacobian of $\ol\Sh(\rG,\rh)_K$. Let $\underline\pi^\infty$ be the $\Gal(\dC/\dQ)$-orbit of $\pi^\infty$. Using Hecke operators, we may find a surjective homomorphism $\varphi\colon A_K\to B$ of abelian varieties over $E$ such that the induced map $\phi^*\colon\rH^1_\rB(B,\dQ)\to\rH^1_\rB(A_K,\dQ)[\underline\pi^\infty]$ is an isomorphism. Let $B_0$ be some simple factor of $B$ over $E$. Then $B_0$ has complex multiplications by some subfield $M_0\subseteq\dC$, which has to contain $M'_\mu$ (Definition \ref{de:conjugate2}). There are two cases.

If $m_\cusp(\pi_\infty^{(1,0)}\otimes\pi^\infty)=1$, then $\rH^1_\rB(X,\dC)[\pi^\infty]$ has Hodge type $(1,0)$. Thus, $\tilde\mu$ is the associated CM character of $B_0$. In particular, we have $\tilde\mu_{\tau_1}(z)=1/z$, where we have identified $\dC$ with $E\otimes_{\tau_1}\dR$ through the embedding $\tau'_1$, which implies that $\tilde\mu=\mu|\;|_E^{-1/2}$. 

If $m_\cusp(\pi_\infty^{(0,1)}\otimes\pi^\infty)=1$, then $\rH^1_\rB(X,\dC)[\pi^\infty]$ has Hodge type $(0,1)$. Thus, $\tilde\mu^\tc$ is the associated CM character of $B_0$. In particular, we have $\tilde\mu_{\tau_1}(z)=1/\ol{z}$, which implies that $\tilde\mu=\mu^\tc\check\chi|\;|_E^{-1/2}$.

Theorem \ref{th:galois_curve} is all proved.
\end{proof}

\begin{bibdiv}
\begin{biblist}

\bib{SP}{book}{
   label={SP},
   author={The Stacks Project Authors},
   title={Stacks Project},
   note={Available at \url{http://math.columbia.edu/algebraic_geometry/stacks-git/}},
}

\bib{Art13}{book}{
   author={Arthur, James},
   title={The endoscopic classification of representations},
   series={American Mathematical Society Colloquium Publications},
   volume={61},
   note={Orthogonal and symplectic groups},
   publisher={American Mathematical Society, Providence, RI},
   date={2013},
   pages={xviii+590},
   isbn={978-0-8218-4990-3},
   review={\MR{3135650}},
   doi={10.1090/coll/061},
}

\bib{AMRT}{book}{
   author={Ash, Avner},
   author={Mumford, David},
   author={Rapoport, Michael},
   author={Tai, Yung-Sheng},
   title={Smooth compactifications of locally symmetric varieties},
   series={Cambridge Mathematical Library},
   edition={2},
   note={With the collaboration of Peter Scholze},
   publisher={Cambridge University Press, Cambridge},
   date={2010},
   pages={x+230},
   isbn={978-0-521-73955-9},
   review={\MR{2590897}},
}

\bib{Bei87}{article}{
   author={Be{\u\i}linson, A.},
   title={Height pairing between algebraic cycles},
   conference={
      title={Current trends in arithmetical algebraic geometry (Arcata,
      Calif., 1985)},
   },
   book={
      series={Contemp. Math.},
      volume={67},
      publisher={Amer. Math. Soc.},
      place={Providence, RI},
   },
   date={1987},
   pages={1--24},
   review={\MR{902590 (89g:11052)}},
}

\bib{BMM}{article}{
   author={Bergeron, Nicolas},
   author={Millson, John},
   author={M\oe glin, Colette},
   title={The Hodge conjecture and arithmetic quotients of complex balls},
   journal={Acta Math.},
   volume={216},
   date={2016},
   number={1},
   pages={1--125},
   issn={0001-5962},
   review={\MR{3508219}},
}

\bib{BR93}{article}{
   author={Blasius, Don},
   author={Rogawski, Jonathan D.},
   title={Motives for Hilbert modular forms},
   journal={Invent. Math.},
   volume={114},
   date={1993},
   number={1},
   pages={55--87},
   issn={0020-9910},
   review={\MR{1235020}},
}

\bib{Blo84}{article}{
   author={Bloch, Spencer},
   title={Height pairings for algebraic cycles},
   booktitle={Proceedings of the Luminy conference on algebraic $K$-theory
   (Luminy, 1983)},
   journal={J. Pure Appl. Algebra},
   volume={34},
   date={1984},
   number={2-3},
   pages={119--145},
   issn={0022-4049},
   review={\MR{772054 (86h:14015)}},
   doi={10.1016/0022-4049(84)90032-X},
}

\bib{BC83}{article}{
   author={Borel, A.},
   author={Casselman, W.},
   title={$L\sp{2}$-cohomology of locally symmetric manifolds of finite
   volume},
   journal={Duke Math. J.},
   volume={50},
   date={1983},
   number={3},
   pages={625--647},
   issn={0012-7094},
   review={\MR{714821}},
}

\bib{BHKRY}{article}{
   author={Bruinier, Jan H.},
   author={Howard, Benjamin},
   author={Kudla, Stephen S.},
   author={Rapoport, Michael},
   author={Yang, Tonghai},
   title={Modularity of generating series of divisors on unitary Shimura
   varieties},
   journal={Ast\'{e}risque},
   number={421},
   date={2020},
   pages={7--125},
   issn={0303-1179},
   isbn={978-2-85629-927-2},
   review={\MR{4183376}},
   doi={10.24033/ast},
}

\bib{BH06}{book}{
   author={Bushnell, Colin J.},
   author={Henniart, Guy},
   title={The local Langlands conjecture for $\mathrm{GL}(2)$},
   series={Grundlehren der Mathematischen Wissenschaften [Fundamental
   Principles of Mathematical Sciences]},
   volume={335},
   publisher={Springer-Verlag, Berlin},
   date={2006},
   pages={xii+347},
   isbn={978-3-540-31486-8},
   isbn={3-540-31486-5},
   review={\MR{2234120}},
   doi={10.1007/3-540-31511-X},
}

\bib{Buz97}{article}{
   author={Buzzard, Kevin},
   title={Integral models of certain Shimura curves},
   journal={Duke Math. J.},
   volume={87},
   date={1997},
   number={3},
   pages={591--612},
   issn={0012-7094},
   review={\MR{1446619}},
}

\bib{Car12}{article}{
   author={Caraiani, Ana},
   title={Local-global compatibility and the action of monodromy on nearby
   cycles},
   journal={Duke Math. J.},
   volume={161},
   date={2012},
   number={12},
   pages={2311--2413},
   issn={0012-7094},
   review={\MR{2972460}},
}

\bib{Car86}{article}{
   author={Carayol, Henri},
   title={Sur la mauvaise r\'eduction des courbes de Shimura},
   language={French},
   journal={Compositio Math.},
   volume={59},
   date={1986},
   number={2},
   pages={151--230},
   issn={0010-437X},
   review={\MR{860139}},
}

%\bib{CH13}{article}{
%   author={Chenevier, Ga\"etan},
%   author={Harris, Michael},
%   title={Construction of automorphic Galois representations, II},
%   journal={Camb. J. Math.},
%   volume={1},
%   date={2013},
%   number={1},
%   pages={53--73},
%   issn={2168-0930},
%   review={\MR{3272052}},
%}

\bib{Cho}{article}{
   author={Cho, S.},
   title={The basic locus of unitary Shimura varieties with unramified parahoric level, and special cycles},
   note={\href{https://arxiv.org/abs/1807.09997}{arXiv:1807.09997}},
}

\bib{Del79}{article}{
   author={Deligne, Pierre},
   title={Vari\'et\'es de Shimura: interpr\'etation modulaire, et techniques de
   construction de mod\`eles canoniques},
   language={French},
   conference={
      title={Automorphic forms, representations and $L$-functions},
      address={Proc. Sympos. Pure Math., Oregon State Univ., Corvallis,
      Ore.},
      date={1977},
   },
   book={
      series={Proc. Sympos. Pure Math., XXXIII},
      publisher={Amer. Math. Soc., Providence, R.I.},
   },
   date={1979},
   pages={247--289},
   review={\MR{546620}},
}

\bib{DM69}{article}{
   author={Deligne, P.},
   author={Mumford, D.},
   title={The irreducibility of the space of curves of given genus},
   journal={Inst. Hautes \'Etudes Sci. Publ. Math.},
   number={36},
   date={1969},
   pages={75--109},
   issn={0073-8301},
   review={\MR{0262240}},
}

\bib{DR72}{article}{
   author={Deligne, P.},
   author={Rapoport, M.},
   title={Les sch\'emas de modules de courbes elliptiques},
   language={French},
   conference={
      title={Modular functions of one variable, II},
      address={Proc. Internat. Summer School, Univ. Antwerp, Antwerp},
      date={1972},
   },
   book={
      publisher={Springer, Berlin},
   },
   date={1973},
   pages={143--316. Lecture Notes in Math., Vol. 349},
   review={\MR{0337993}},
}

\bib{Den89}{article}{
   author={Deninger, Christopher},
   title={Higher regulators and Hecke $L$-series of imaginary quadratic
   fields. I},
   journal={Invent. Math.},
   volume={96},
   date={1989},
   number={1},
   pages={1--69},
   issn={0020-9910},
   review={\MR{981737}},
}

\bib{Fal83}{article}{
   author={Faltings, G.},
   title={Endlichkeitss\"atze f\"ur abelsche Variet\"aten \"uber Zahlk\"orpern},
   language={German},
   journal={Invent. Math.},
   volume={73},
   date={1983},
   number={3},
   pages={349--366},
   issn={0020-9910},
   review={\MR{718935}},
}

\bib{Ful98}{book}{
   author={Fulton, William},
   title={Intersection theory},
   series={Ergebnisse der Mathematik und ihrer Grenzgebiete. 3. Folge. A
   Series of Modern Surveys in Mathematics [Results in Mathematics and
   Related Areas. 3rd Series. A Series of Modern Surveys in Mathematics]},
   volume={2},
   edition={2},
   publisher={Springer-Verlag, Berlin},
   date={1998},
   pages={xiv+470},
   isbn={3-540-62046-X},
   isbn={0-387-98549-2},
   review={\MR{1644323}},
}

\bib{GGP}{article}{
   author={Gan, Wee Teck},
   author={Gross, Benedict H.},
   author={Prasad, Dipendra},
   title={Symplectic local root numbers, central critical $L$ values, and
   restriction problems in the representation theory of classical groups},
   language={English, with English and French summaries},
   note={Sur les conjectures de Gross et Prasad. I},
   journal={Ast\'erisque},
   number={346},
   date={2012},
   pages={1--109},
   issn={0303-1179},
   isbn={978-2-85629-348-5},
   review={\MR{3202556}},
}

\bib{GGP2}{article}{
   author={Gan, Wee Teck},
   author={Gross, Benedict H.},
   author={Prasad, Dipendra},
   title={Restrictions of representations of classical groups: examples},
   language={English, with English and French summaries},
   note={Sur les conjectures de Gross et Prasad. I},
   journal={Ast\'erisque},
   number={346},
   date={2012},
   pages={111--170},
   issn={0303-1179},
   isbn={978-2-85629-348-5},
   review={\MR{3202557}},
}

\bib{GI16}{article}{
   author={Gan, Wee Teck},
   author={Ichino, Atsushi},
   title={The Gross-Prasad conjecture and local theta correspondence},
   journal={Invent. Math.},
   volume={206},
   date={2016},
   number={3},
   pages={705--799},
   issn={0020-9910},
   review={\MR{3573972}},
   doi={10.1007/s00222-016-0662-8},
}

\bib{GT16}{article}{
   author={Gan, Wee Teck},
   author={Takeda, Shuichiro},
   title={A proof of the Howe duality conjecture},
   journal={J. Amer. Math. Soc.},
   volume={29},
   date={2016},
   number={2},
   pages={473--493},
   issn={0894-0347},
   review={\MR{3454380}},
}

\bib{GR90}{article}{
   author={Gelbart, Stephen S.},
   author={Rogawski, Jonathan D.},
   title={Exceptional representations and Shimura's integral for the local
   unitary group $\rU(3)$},
   conference={
      title={Festschrift in honor of I. I. Piatetski-Shapiro on the occasion
      of his sixtieth birthday, Part I},
      address={Ramat Aviv},
      date={1989},
   },
   book={
      series={Israel Math. Conf. Proc.},
      volume={2},
      publisher={Weizmann, Jerusalem},
   },
   date={1990},
   pages={19--75},
   review={\MR{1159099}},
}

\bib{GR91}{article}{
   author={Gelbart, Stephen S.},
   author={Rogawski, Jonathan D.},
   title={$L$-functions and Fourier-Jacobi coefficients for the unitary
   group ${\rU}(3)$},
   journal={Invent. Math.},
   volume={105},
   date={1991},
   number={3},
   pages={445--472},
   issn={0020-9910},
   review={\MR{1117148}},
}

\bib{GJS}{article}{
   author={Ginzburg, David},
   author={Jiang, Dihua},
   author={Soudry, David},
   title={Poles of $L$-functions and theta liftings for orthogonal groups},
   journal={J. Inst. Math. Jussieu},
   volume={8},
   date={2009},
   number={4},
   pages={693--741},
   issn={1474-7480},
   review={\MR{2540878}},
}

\bib{GG11}{article}{
   author={Gong, Z.},
   author={Greni\'e, L.},
   title={An inequality for local unitary theta correspondence},
   language={English, with English and French summaries},
   journal={Ann. Fac. Sci. Toulouse Math. (6)},
   volume={20},
   date={2011},
   number={1},
   pages={167--202},
   issn={0240-2963},
   review={\MR{2830396}},
}

\bib{Gro}{article}{
   author={Gross, Benedict H.},
   title={Incoherent definite spaces and Shimura varieties},
   conference={
      title={Relative trace formulas},
   },
   book={
      series={Simons Symp.},
      publisher={Springer, Cham},
   },
   date={2021},
   pages={187--215},
}

\bib{GK93}{article}{
   author={Gross, Benedict H.},
   author={Keating, Kevin},
   title={On the intersection of modular correspondences},
   journal={Invent. Math.},
   volume={112},
   date={1993},
   number={2},
   pages={225--245},
   issn={0020-9910},
   review={\MR{1213101}},
   doi={10.1007/BF01232433},
}

\bib{GK92}{article}{
   author={Gross, Benedict H.},
   author={Kudla, Stephen S.},
   title={Heights and the central critical values of triple product
   $L$-functions},
   journal={Compositio Math.},
   volume={81},
   date={1992},
   number={2},
   pages={143--209},
   issn={0010-437X},
   review={\MR{1145805}},
}

\bib{GZ86}{article}{
   author={Gross, Benedict H.},
   author={Zagier, Don B.},
   title={Heegner points and derivatives of $L$-series},
   journal={Invent. Math.},
   volume={84},
   date={1986},
   number={2},
   pages={225--320},
   issn={0020-9910},
   review={\MR{833192}},
   doi={10.1007/BF01388809},
}

\bib{Har93}{article}{
   author={Harris, Michael},
   title={$L$-functions of $2\times 2$ unitary groups and factorization of
   periods of Hilbert modular forms},
   journal={J. Amer. Math. Soc.},
   volume={6},
   date={1993},
   number={3},
   pages={637--719},
   issn={0894-0347},
   review={\MR{1186960}},
}

\bib{HKS}{article}{
   author={Harris, Michael},
   author={Kudla, Stephen S.},
   author={Sweet, William J.},
   title={Theta dichotomy for unitary groups},
   journal={J. Amer. Math. Soc.},
   volume={9},
   date={1996},
   number={4},
   pages={941--1004},
   issn={0894-0347},
   review={\MR{1327161}},
}

\bib{HLZ19}{article}{
   author={He, Xuhua},
   author={Li, Chao},
   author={Zhu, Yihang},
   title={Fine Deligne-Lusztig varieties and arithmetic fundamental lemmas},
   journal={Forum Math. Sigma},
   volume={7},
   date={2019},
   pages={e47, 55},
   review={\MR{4061970}},
   doi={10.1017/fms.2019.45},
}

\bib{How15}{article}{
   author={Howard, Benjamin},
   title={Complex multiplication cycles and Kudla-Rapoport divisors, II},
   journal={Amer. J. Math.},
   volume={137},
   date={2015},
   number={3},
   pages={639--698},
   issn={0002-9327},
   review={\MR{3357118}},
   doi={10.1353/ajm.2015.0021},
}

\bib{Ich04}{article}{
   author={Ichino, Atsushi},
   title={A regularized Siegel-Weil formula for unitary groups},
   journal={Math. Z.},
   volume={247},
   date={2004},
   number={2},
   pages={241--277},
   issn={0025-5874},
   review={\MR{2064052}},
}

\bib{JR11}{article}{
   author={Jacquet, Herv\'e},
   author={Rallis, Stephen},
   title={On the Gross-Prasad conjecture for unitary groups},
   conference={
      title={On certain $L$-functions},
   },
   book={
      series={Clay Math. Proc.},
      volume={13},
      publisher={Amer. Math. Soc., Providence, RI},
   },
   date={2011},
   pages={205--264},
   review={\MR{2767518}},
}

\bib{JZ}{article}{
   author={Jiang, Dihua},
   author={Zhang, Lei},
   title={On the non-vanishing of the central value of certain
   $L$-functions: unitary groups},
   journal={J. Eur. Math. Soc. (JEMS)},
   volume={22},
   date={2020},
   number={6},
   pages={1759--1783},
   issn={1435-9855},
   review={\MR{4092898}},
   doi={10.4171/jems/955},
}

\bib{KMSW}{article}{
   author={Kaletha, T.},
   author={Minguez, A.},
   author={Shin, Sug Woo},
   author={White, Paul-James},
   title={Endoscopic Classification of Representations: Inner Forms of Unitary Groups},
   note={\href{https://arxiv.org/abs/1409.3731}{arXiv:1409.3731}},
}

\bib{KM85}{book}{
   author={Katz, Nicholas M.},
   author={Mazur, Barry},
   title={Arithmetic moduli of elliptic curves},
   series={Annals of Mathematics Studies},
   volume={108},
   publisher={Princeton University Press, Princeton, NJ},
   date={1985},
   pages={xiv+514},
   isbn={0-691-08349-5},
   isbn={0-691-08352-5},
   review={\MR{772569}},
}

\bib{Kim99}{article}{
   author={Kim, Henry H.},
   title={Langlands-Shahidi method and poles of automorphic $L$-functions:
   application to exterior square $L$-functions},
   journal={Canad. J. Math.},
   volume={51},
   date={1999},
   number={4},
   pages={835--849},
   issn={0008-414X},
   review={\MR{1701344}},
}

\bib{Kis10}{article}{
   author={Kisin, Mark},
   title={Integral models for Shimura varieties of abelian type},
   journal={J. Amer. Math. Soc.},
   volume={23},
   date={2010},
   number={4},
   pages={967--1012},
   issn={0894-0347},
   review={\MR{2669706}},
}

\bib{KK07}{article}{
   author={Konno, Takuya},
   author={Konno, Kazuko},
   title={On doubling construction for real unitary dual pairs},
   journal={Kyushu J. Math.},
   volume={61},
   date={2007},
   number={1},
   pages={35--82},
   issn={1340-6116},
   review={\MR{2317282}},
}

\bib{Kot92}{article}{
   author={Kottwitz, Robert E.},
   title={Points on some Shimura varieties over finite fields},
   journal={J. Amer. Math. Soc.},
   volume={5},
   date={1992},
   number={2},
   pages={373--444},
   issn={0894-0347},
   review={\MR{1124982}},
}

\bib{Kud97}{article}{
   author={Kudla, Stephen S.},
   title={Algebraic cycles on Shimura varieties of orthogonal type},
   journal={Duke Math. J.},
   volume={86},
   date={1997},
   number={1},
   pages={39--78},
   issn={0012-7094},
   review={\MR{1427845}},
   doi={10.1215/S0012-7094-97-08602-6},
}

\bib{KR94}{article}{
   author={Kudla, Stephen S.},
   author={Rallis, Stephen},
   title={A regularized Siegel-Weil formula: the first term identity},
   journal={Ann. of Math. (2)},
   volume={140},
   date={1994},
   number={1},
   pages={1--80},
   issn={0003-486X},
   review={\MR{1289491}},
}

\bib{KR11}{article}{
   author={Kudla, Stephen},
   author={Rapoport, Michael},
   title={Special cycles on unitary Shimura varieties I. Unramified local
   theory},
   journal={Invent. Math.},
   volume={184},
   date={2011},
   number={3},
   pages={629--682},
   issn={0020-9910},
   review={\MR{2800697}},
   doi={10.1007/s00222-010-0298-z},
}

\bib{KR14}{article}{
   author={Kudla, Stephen},
   author={Rapoport, Michael},
   title={Special cycles on unitary Shimura varieties II: Global theory},
   journal={J. Reine Angew. Math.},
   volume={697},
   date={2014},
   pages={91--157},
   issn={0075-4102},
   review={\MR{3281653}},
   doi={10.1515/crelle-2012-0121},
}

\bib{KS97}{article}{
   author={Kudla, Stephen S.},
   author={Sweet, W. Jay, Jr.},
   title={Degenerate principal series representations for ${\rU}(n,n)$},
   journal={Israel J. Math.},
   volume={98},
   date={1997},
   pages={253--306},
   issn={0021-2172},
   review={\MR{1459856}},
}

\bib{Kun01}{article}{
   author={K\"unnemann, Klaus},
   title={Height pairings for algebraic cycles on abelian varieties},
   language={English, with English and French summaries},
   journal={Ann. Sci. \'Ecole Norm. Sup. (4)},
   volume={34},
   date={2001},
   number={4},
   pages={503--523},
   issn={0012-9593},
   review={\MR{1852008}},
}

\bib{Lan71}{book}{
   author={Langlands, Robert P.},
   title={Euler products},
   note={A James K. Whittemore Lecture in Mathematics given at Yale University, 1967; Yale Mathematical Monographs, 1},
   publisher={Yale University Press, New Haven, Conn.-London},
   date={1971},
   pages={v+53},
   review={\MR{0419366}},
}

%\bib{Lar92}{article}{
%   author={Larsen, Michael J.},
%   title={Arithmetic compactification of some Shimura surfaces},
%   conference={
%      title={The zeta functions of Picard modular surfaces},
%   },
%   book={
%      publisher={Univ. Montr\'eal, Montreal, QC},
%   },
%   date={1992},
%   pages={31--45},
%   review={\MR{1155225}},
%}

\bib{Lee94}{article}{
   author={Lee, Soo Teck},
   title={On some degenerate principal series representations of ${\rU}(n,n)$},
   journal={J. Funct. Anal.},
   volume={126},
   date={1994},
   number={2},
   pages={305--366},
   issn={0022-1236},
   review={\MR{1305072}},
}

\bib{LZ17}{article}{
   author={Li, Chao},
   author={Zhu, Yihang},
   title={Remarks on the arithmetic fundamental lemma},
   journal={Algebra Number Theory},
   volume={11},
   date={2017},
   number={10},
   pages={2425--2445},
   issn={1937-0652},
   review={\MR{3744362}},
   doi={10.2140/ant.2017.11.2425},
}

\bib{LL99}{article}{
   author={Liu, Qing},
   author={Lorenzini, Dino},
   title={Models of curves and finite covers},
   journal={Compositio Math.},
   volume={118},
   date={1999},
   number={1},
   pages={61--102},
   issn={0010-437X},
   review={\MR{1705977}},
}

\bib{Liu11}{article}{
   author={Liu, Yifeng},
   title={Arithmetic theta lifting and $L$-derivatives for unitary groups,
   I},
   journal={Algebra Number Theory},
   volume={5},
   date={2011},
   number={7},
   pages={849--921},
   issn={1937-0652},
   review={\MR{2928563}},
}

\bib{Liu12}{article}{
   author={Liu, Yifeng},
   title={Arithmetic theta lifting and $L$-derivatives for unitary groups,
   II},
   journal={Algebra Number Theory},
   volume={5},
   date={2011},
   number={7},
   pages={923--1000},
   issn={1937-0652},
   review={\MR{2928564}},
}

\bib{Liu14}{article}{
   author={Liu, Yifeng},
   title={Relative trace formulae toward Bessel and Fourier-Jacobi periods on unitary groups},
   journal={Manuscripta Math.},
   volume={145},
   date={2014},
   number={1-2},
   pages={1--69},
   issn={0025-2611},
   review={\MR{3244725}},
}

\bib{Liu16}{article}{
   author={Liu, Yifeng},
   title={Hirzebruch-Zagier cycles and twisted triple product Selmer groups},
   journal={Invent. Math.},
   volume={205},
   date={2016},
   number={3},
   pages={693--780},
   issn={0020-9910},
   review={\MR{3539925}},
}

\bib{L5}{article}{
   label={LTXZZ},
   author={Liu, Yifeng},
   author={Tian, Yichao},
   author={Xiao, Liang},
   author={Zhang, Wei},
   author={Zhu, Xinwen},
   title={On the Beilinson--Bloch--Kato conjecture for Rankin--Selberg motives},
   note={\href{https://arxiv.org/abs/1912.11942}{arXiv:1912.11942}},
}

\bib{Loo88}{article}{
   author={Looijenga, Eduard},
   title={$L^2$-cohomology of locally symmetric varieties},
   journal={Compositio Math.},
   volume={67},
   date={1988},
   number={1},
   pages={3--20},
   issn={0010-437X},
   review={\MR{949269}},
}

\bib{Mih}{article}{
   author={Mihatsch, A.},
   title={Relative unitary RZ-spaces and the Arithmetic Fundamental Lemma},
   note={\href{https://arxiv.org/abs/1611.06520}{arXiv:1611.06520}},
}

\bib{Mil92}{article}{
   author={Milne, James S.},
   title={The points on a Shimura variety modulo a prime of good reduction},
   conference={
      title={The zeta functions of Picard modular surfaces},
   },
   book={
      publisher={Univ. Montr\'eal, Montreal, QC},
   },
   date={1992},
   pages={151--253},
   review={\MR{1155229}},
}

\bib{Moe97}{article}{
   author={M\oe glin, C.},
   title={Non nullit\'e de certains rel\^evements par s\'eries th\'eta},
   language={French, with English summary},
   journal={J. Lie Theory},
   volume={7},
   date={1997},
   number={2},
   pages={201--229},
   issn={0949-5932},
   review={\MR{1473165}},
}

\bib{MW95}{book}{
   author={M\oe glin, C.},
   author={Waldspurger, J.-L.},
   title={Spectral decomposition and Eisenstein series},
   series={Cambridge Tracts in Mathematics},
   volume={113},
   note={Une paraphrase de l'\'Ecriture [A paraphrase of Scripture]},
   publisher={Cambridge University Press, Cambridge},
   date={1995},
   pages={xxviii+338},
   isbn={0-521-41893-3},
   review={\MR{1361168}},
}

\bib{Mok15}{article}{
   author={Mok, Chung Pang},
   title={Endoscopic classification of representations of quasi-split
   unitary groups},
   journal={Mem. Amer. Math. Soc.},
   volume={235},
   date={2015},
   number={1108},
   pages={vi+248},
   issn={0065-9266},
   isbn={978-1-4704-1041-4},
   isbn={978-1-4704-2226-4},
   review={\MR{3338302}},
   doi={10.1090/memo/1108},
}

\bib{MS}{article}{
   author={Morel, Sophie},
   author={Suh, Junecue},
   title={The standard sign conjecture on algebraic cycles: the case of
   Shimura varieties},
   journal={J. Reine Angew. Math.},
   volume={748},
   date={2019},
   pages={139--151},
   issn={0075-4102},
   review={\MR{3918431}},
   doi={10.1515/crelle-2016-0048},
}

\bib{Mur90}{article}{
   author={Murre, J. P.},
   title={On the motive of an algebraic surface},
   journal={J. Reine Angew. Math.},
   volume={409},
   date={1990},
   pages={190--204},
   issn={0075-4102},
   review={\MR{1061525}},
}

\bib{MR92}{article}{
   author={Murty, V. Kumar},
   author={Ramakrishnan, Dinakar},
   title={The Albanese of unitary Shimura varieties},
   conference={
      title={The zeta functions of Picard modular surfaces},
   },
   book={
      publisher={Univ. Montr\'eal, Montreal, QC},
   },
   date={1992},
   pages={445--464},
   review={\MR{1155237}},
}

\bib{OS11}{article}{
   author={O'Sullivan, Peter},
   title={Algebraic cycles on an abelian variety},
   journal={J. Reine Angew. Math.},
   volume={654},
   date={2011},
   pages={1--81},
   issn={0075-4102},
   review={\MR{2795752}},
   doi={10.1515/CRELLE.2011.025},
}

\bib{Pin90}{book}{
   author={Pink, Richard},
   title={Arithmetical compactification of mixed Shimura varieties},
   series={Bonner Mathematische Schriften [Bonn Mathematical Publications]},
   volume={209},
   note={Dissertation, Rheinische Friedrich-Wilhelms-Universit\"at Bonn, Bonn,
   1989},
   publisher={Universit\"at Bonn, Mathematisches Institut, Bonn},
   date={1990},
   pages={xviii+340},
   review={\MR{1128753}},
}

\bib{Raj00}{article}{
   author={Rajan, C. S.},
   title={Refinement of strong multiplicity one for automorphic representations of $\GL(n)$},
   journal={Proc. Amer. Math. Soc.},
   volume={128},
   date={2000},
   number={3},
   pages={691--700},
   issn={0002-9939},
   review={\MR{1707005}},
}

\bib{RSZ17}{article}{
   author={Rapoport, M.},
   author={Smithling, B.},
   author={Zhang, W.},
   title={On the arithmetic transfer conjecture for exotic smooth formal
   moduli spaces},
   journal={Duke Math. J.},
   volume={166},
   date={2017},
   number={12},
   pages={2183--2336},
   issn={0012-7094},
   review={\MR{3694568}},
   doi={10.1215/00127094-2017-0003},
}

\bib{RSZ18}{article}{
   author={Rapoport, M.},
   author={Smithling, B.},
   author={Zhang, W.},
   title={Regular formal moduli spaces and arithmetic transfer conjectures},
   journal={Math. Ann.},
   volume={370},
   date={2018},
   number={3-4},
   pages={1079--1175},
   issn={0025-5831},
   review={\MR{3770164}},
   doi={10.1007/s00208-017-1526-2},
}

\bib{RSZ}{article}{
   author={Rapoport, M.},
   author={Smithling, B.},
   author={Zhang, W.},
   title={Arithmetic diagonal cycles on unitary Shimura varieties},
   journal={Compos. Math.},
   volume={156},
   date={2020},
   number={9},
   pages={1745--1824},
   issn={0010-437X},
   review={\MR{4167594}},
   doi={10.1112/s0010437x20007289},
}

\bib{RTZ}{article}{
   author={Rapoport, Michael},
   author={Terstiege, Ulrich},
   author={Zhang, Wei},
   title={On the arithmetic fundamental lemma in the minuscule case},
   journal={Compos. Math.},
   volume={149},
   date={2013},
   number={10},
   pages={1631--1666},
   issn={0010-437X},
   review={\MR{3123304}},
}

\bib{RZ96}{book}{
   author={Rapoport, M.},
   author={Zink, Th.},
   title={Period spaces for $p$-divisible groups},
   series={Annals of Mathematics Studies},
   volume={141},
   publisher={Princeton University Press, Princeton, NJ},
   date={1996},
   pages={xxii+324},
   isbn={0-691-02782-X},
   isbn={0-691-02781-1},
   review={\MR{1393439}},
   doi={10.1515/9781400882601},
}

\bib{Rog90}{book}{
   author={Rogawski, Jonathan D.},
   title={Automorphic representations of unitary groups in three variables},
   series={Annals of Mathematics Studies},
   volume={123},
   publisher={Princeton University Press, Princeton, NJ},
   date={1990},
   pages={xii+259},
   isbn={0-691-08586-2},
   isbn={0-691-08587-0},
   review={\MR{1081540}},
}

\bib{Rog92}{article}{
   author={Rogawski, Jonathan D.},
   title={Analytic expression for the number of points mod $p$},
   conference={
      title={The zeta functions of Picard modular surfaces},
   },
   book={
      publisher={Univ. Montr\'eal, Montreal, QC},
   },
   date={1992},
   pages={65--109},
   review={\MR{1155227}},
}

\bib{SS90}{article}{
   author={Saper, Leslie},
   author={Stern, Mark},
   title={$L_2$-cohomology of arithmetic varieties},
   journal={Ann. of Math. (2)},
   volume={132},
   date={1990},
   number={1},
   pages={1--69},
   issn={0003-486X},
   review={\MR{1059935}},
}

\bib{Ser59}{article}{
   author={Serre, Jean-Pierre},
   title={Morphismes universels et vari\'{e}t\'{e} d'Albanese},
   language={French},
   conference={
      title={S\'eminaire Claude Chevalley, Vol.\ 4 (1958--1959)},
   },
   date={1959},
   pages={Talk\ No.\ 10, 1--22},
}

\bib{Sha88}{article}{
   author={Shahidi, Freydoon},
   title={On the Ramanujan conjecture and finiteness of poles for certain
   $L$-functions},
   journal={Ann. of Math. (2)},
   volume={127},
   date={1988},
   number={3},
   pages={547--584},
   issn={0003-486X},
   review={\MR{942520}},
}

\bib{Shi71}{article}{
   author={Shimura, Goro},
   title={On the zeta-function of an abelian variety with complex multiplication},
   journal={Ann. of Math. (2)},
   volume={94},
   date={1971},
   pages={504--533},
   issn={0003-486X},
   review={\MR{0288089}},
}

%\bib{Shi11}{article}{
%   author={Shin, Sug Woo},
%   title={Galois representations arising from some compact Shimura
%   varieties},
%   journal={Ann. of Math. (2)},
%   volume={173},
%   date={2011},
%   number={3},
%   pages={1645--1741},
%   issn={0003-486X},
%   review={\MR{2800722}},
%}

\bib{SW78}{article}{
   author={Sternberg, Shlomo},
   author={Wolf, Joseph A.},
   title={Hermitian Lie algebras and metaplectic representations. I},
   journal={Trans. Amer. Math. Soc.},
   volume={238},
   date={1978},
   pages={1--43},
   issn={0002-9947},
   review={\MR{0486325}},
}

\bib{Sun12}{article}{
   author={Sun, Binyong},
   title={Multiplicity one theorems for Fourier-Jacobi models},
   journal={Amer. J. Math.},
   volume={134},
   date={2012},
   number={6},
   pages={1655--1678},
   issn={0002-9327},
   review={\MR{2999291}},
   doi={10.1353/ajm.2012.0044},
}

\bib{SZ15}{article}{
   author={Sun, Binyong},
   author={Zhu, Chen-Bo},
   title={Conservation relations for local theta correspondence},
   journal={J. Amer. Math. Soc.},
   volume={28},
   date={2015},
   number={4},
   pages={939--983},
   issn={0894-0347},
   review={\MR{3369906}},
}

\bib{Tan99}{article}{
   author={Tan, Victor},
   title={Poles of Siegel Eisenstein series on ${\rU}(n,n)$},
   journal={Canad. J. Math.},
   volume={51},
   date={1999},
   number={1},
   pages={164--175},
   issn={0008-414X},
   review={\MR{1692899}},
}

\bib{Wed99}{article}{
   author={Wedhorn, Torsten},
   title={Ordinariness in good reductions of Shimura varieties of PEL-type},
   language={English, with English and French summaries},
   journal={Ann. Sci. \'Ecole Norm. Sup. (4)},
   volume={32},
   date={1999},
   number={5},
   pages={575--618},
   issn={0012-9593},
   review={\MR{1710754}},
   doi={10.1016/S0012-9593(01)80001-X},
}

\bib{Wit08}{article}{
   author={Wittenberg, Olivier},
   title={On Albanese torsors and the elementary obstruction},
   journal={Math. Ann.},
   volume={340},
   date={2008},
   number={4},
   pages={805--838},
   issn={0025-5831},
   review={\MR{2372739}},
}

\bib{Wu13}{article}{
   author={Wu, Chenyan},
   title={Irreducibility of theta lifting for unitary groups},
   journal={J. Number Theory},
   volume={133},
   date={2013},
   number={10},
   pages={3296--3318},
   issn={0022-314X},
   review={\MR{3071813}},
}

\bib{Xue14}{article}{
   author={Xue, Hang},
   title={The Gan-Gross-Prasad conjecture for ${\rU}(n)\times{\rU}(n)$},
   journal={Adv. Math.},
   volume={262},
   date={2014},
   pages={1130--1191},
   issn={0001-8708},
   review={\MR{3228451}},
}

\bib{Xue16}{article}{
   author={Xue, Hang},
   title={Fourier-Jacobi periods and the central value of Rankin-Selberg
   $L$-functions},
   journal={Israel J. Math.},
   volume={212},
   date={2016},
   number={2},
   pages={547--633},
   issn={0021-2172},
   review={\MR{3505397}},
}

\bib{YZZ}{book}{
   author={Yuan, Xinyi},
   author={Zhang, Shou-Wu},
   author={Zhang, Wei},
   title={The Gross-Zagier formula on Shimura curves},
   series={Annals of Mathematics Studies},
   volume={184},
   publisher={Princeton University Press, Princeton, NJ},
   date={2013},
   pages={x+256},
   isbn={978-0-691-15592-0},
   review={\MR{3237437}},
}

\bib{YZZ1}{article}{
   author={Yuan, Xinyi},
   author={Zhang, Shou-Wu},
   author={Zhang, Wei},
   title={Triple product $L$-series and Gross--Kudla--Schoen cycles},
   note={\url{https://www.math.columbia.edu/~wzhang/math/online/triple.pdf}, preprint},
}

\bib{Yun11}{article}{
   author={Yun, Zhiwei},
   title={The fundamental lemma of Jacquet and Rallis},
   note={With an appendix by Julia Gordon},
   journal={Duke Math. J.},
   volume={156},
   date={2011},
   number={2},
   pages={167--227},
   issn={0012-7094},
   review={\MR{2769216}},
}

\bib{Zha19}{article}{
   author={Zhang, Shouwu},
   title={Linear forms, algebraic cycles, and derivatives of L-series},
   journal={Sci. China Math.},
   volume={62},
   date={2019},
   number={11},
   pages={2401--2408},
   issn={1674-7283},
   review={\MR{4028281}},
   doi={10.1007/s11425-019-1589-7},
}

\bib{Zha12}{article}{
   author={Zhang, Wei},
   title={On arithmetic fundamental lemmas},
   journal={Invent. Math.},
   volume={188},
   date={2012},
   number={1},
   pages={197--252},
   issn={0020-9910},
   review={\MR{2897697}},
}

\bib{Zha13}{article}{
   author={Zhang, Wei},
   title={Fourier transform and the global Gan-Gross-Prasad conjecture for unitary groups},
   journal={Ann. of Math. (2)},
   volume={180},
   date={2014},
   number={3},
   pages={971--1049},
   issn={0003-486X},
   review={\MR{3245011}},
}

\bib{Zha14}{article}{
   author={Zhang, Wei},
   title={Automorphic period and the central value of Rankin-Selberg
   L-function},
   journal={J. Amer. Math. Soc.},
   volume={27},
   date={2014},
   number={2},
   pages={541--612},
   issn={0894-0347},
   review={\MR{3164988}},
}

\bib{Zha}{article}{
   author={Zhang, Wei},
   title={Weil representation and Arithmetic Fundamental Lemma},
   journal={Ann. of Math. (2)},
   volume={193},
   date={2021},
   number={3},
   pages={863--978},
   issn={0003-486X},
   review={\MR{4250392}},
   doi={10.4007/annals.2021.193.3.5},
}

\end{biblist}
\end{bibdiv}

\end{document}